\numberwithin{equation}{section}
\newcommand{\bC}{\mathbb{C}}
\newcommand{\bF}{\mathbb{F}}
\newcommand{\bH}{\mathbb{H}}
\newcommand{\bN}{\mathbb{N}}
\newcommand{\bO}{\mathbb{O}}
\newcommand{\bQ}{\mathbb{Q}}
\newcommand{\bR}{\mathbb{R}}
\newcommand{\bZ}{\mathbb{Z}}
\renewcommand{\phi}{\varphi}
\newcommand\lra{\longrightarrow}
\newcommand\lla{\longleftarrow}
\newcommand\Diff{\mathrm{Diff}}
\newcommand\Emb{\mathrm{Emb}}
\newcommand\Bun{\mathrm{Bun}}
\newcommand\colim{\operatorname*{colim}}
\newcommand\hocolim{\operatorname*{hocolim}}
\newcommand\Ker{\operatorname*{Ker}}
\newcommand{\Ob}{\mathrm{Ob}}
\newcommand{\Mor}{\mathrm{Mor}}
\newcommand{\map}{\mathrm{map}}
\let\emptyset\varnothing
\newcommand{\surg}{\mathscr{S}}
\newcommand{\R}{\bR}
\newcommand{\Int}{\mathrm{int}}
\newcommand{\Hom}{\mathrm{Hom}}
\newcommand{\IM}{\mathrm{Im}}
\renewcommand{\epsilon}{\varepsilon}
\newcommand{\Spin}{\mathrm{Spin}}
\newcommand{\Map}{\mathrm{Map}}
\newcommand{\Gr}{\mathrm{Gr}}
\newcommand{\nin}{\not\in}
\newcommand{\nat}{\,\natural\,}
\mathchardef\ordinarycolon\mathcode`\:
\theoremstyle{plain}
\newtheorem{theorem}{Theorem}[section]
\newtheorem{addendum}[theorem]{Addendum}
\newtheorem{proposition}[theorem]{Proposition}
\newtheorem{lemma}[theorem]{Lemma}
\newtheorem{corollary}[theorem]{Corollary}
\theoremstyle{definition}
\newtheorem{definition}[theorem]{Definition}
\newtheorem{claim}[theorem]{Claim}
\theoremstyle{remark}
\newtheorem{remark}[theorem]{Remark}
\newtheorem*{remark*}{Remark}
\title[Stable moduli spaces]{Stable moduli spaces\\ of high dimensional manifolds}
\author{S{\o}ren Galatius}
\email{galatius@stanford.edu}
\address{Department of Mathematics\\
	Stanford University\\
	Stanford CA, 94305}
\author{Oscar Randal-Williams}
\thanks{S. Galatius was partially supported by NSF grants DMS-0805843
  and DMS-1105058 and the Clay Mathematics Institute.  Both authors
  were supported by ERC Advanced Grant No.\ 228082, and the Danish
  National Research Foundation through the Centre for Symmetry and
  Deformation.}
\email{o.randal-williams@math.ku.dk}
\address{Institut for Matematiske Fag\\
Universitetsparken 5\\
DK-2100 K{\o}benhavn {\O}\\
Denmark}
\dedicatory{Dedicated to Ib Madsen on the occasion of his 70th birthday}
\date{\today}
\subjclass[2010]{
57S05,  
57R15,  
57R50,  
57R65,  
57R90,  
57R20,  
55P47}  
\begin{document}
\begin{abstract}
  We prove an analogue of the Madsen--Weiss theorem for
  high-dimensional manifolds.  For example, we explicitly describe the
  ring of characteristic classes of smooth fibre bundles whose fibres
  are connected sums of $g$ copies of $S^n \times S^n$, in the limit
  $g \to \infty$.  Rationally it is a polynomial ring in certain
  explicit generators, giving a high-dimensional analogue of Mumford's
  conjecture.

  More generally, we study a moduli space $\mathcal{N}(P)$ of those
  nullbordisms of a fixed $(2n-1)$-dimensional manifold $P$ which are
  $(n-1)$-connected relative to $P$.  We determine the homology of
  $\mathcal{N}(P)$ after stabilisation using certain self-bordisms of
  $P$.  The stable homology is identified with that of a certain
  infinite loop space.
\end{abstract}

\maketitle

\section{Introduction and statement of results}
\label{sec:intr-stat-results}

For any smooth compact manifold $W$, the diffeomorphism group
$\Diff(W)$ has a classifying space $B\Diff(W)$. This classifies smooth
fibre bundles with fibre $W$, in the sense that for a smooth manifold
$X$, there is a natural bijection between the set of isomorphism
classes of smooth fibre bundles $E \to X$ with fibre $W$ and the set
$[X, B\Diff(W)]$ of homotopy classes of maps.  The cohomology groups
$H^k(B\Diff(W))$ therefore give characteristic classes of such
bundles, and it is desirable to understand as much as possible about
these cohomology groups.  The difficulty of this question depends
highly on $W$: it is essentially completely understood when the
dimension of $W$ is 0 or 1, and much effort has been devoted to
understanding the case where the dimension of $W$ is 2. Mumford
(\cite{Mumford}) formulated a conjecture about the case where $W =
\Sigma_g$ is an oriented surface of genus $g$, in the limit $g \to
\infty$.  If we let $\Diff(\Sigma_g, D^2)$ denote the diffeomorphism
group which fixes some chosen disc $D^2 \subset \Sigma_g$, Mumford's
conjecture predicted an isomorphism
\begin{equation*}
  \varprojlim H^*(B\Diff(\Sigma_g,
  D^{2});\bQ) \cong \bQ[\kappa_1, \kappa_2, \kappa_3, \dots] 
\end{equation*}
for certain classes $\kappa_i \in H^{2i}(B\Diff(\Sigma_g,D^{2}))$.
Mumford's conjecture was finally proved by Madsen and Weiss
(\cite{MW}) in a strengthened form.

The goal of the present paper is to prove analogues of the
Madsen--Weiss theorem and Mumford's conjecture for manifolds of higher
dimension.  We have results for manifolds of any even dimension
greater than 4.  As an interesting special case of our results, we
completely determine the stable rational cohomology ring
\begin{equation*}
  \varprojlim H^*(B\Diff(W_g,D^{2n});\bQ),
\end{equation*}
where $W_g = \#^g S^n \times S^n$ denotes the connected sum of $g$
copies of $S^n \times S^n$.  To state our result, we recall that for
each characteristic class of oriented $2n$-dimensional vector bundles $c \in H^{2n+k}(BSO(2n))$, we can define the
associated \emph{generalised Mumford--Morita--Miller} class of a
smooth fibre bundle $\pi: E \to B$ with oriented $2n$-dimensional
fibres as
\begin{equation*}
  \kappa_c(E) = \pi_!(c(T_v E)) \in H^{k}(B),
\end{equation*}
where $T_v(E) = \Ker(D\pi)$ is the fibrewise tangent bundle of $\pi$.
When the fibre is taken to be $W_g$, there is a corresponding
universal class $\kappa_c \in H^k(B\Diff(W_g,D^{2n}))$ which for $k >
0$ is compatible with increasing $g$.
\begin{theorem}\label{thmcor:rational-coho}
  Let $2n > 4$ and let $\mathcal{B} \subset H^*(BSO(2n);\bQ)$ be
  the set of monomials in the classes $e, p_{n-1}, p_{n-2}, \dots,
  p_{\lceil\frac{n+1}{4}\rceil}$ of total degree greater than $2n$.
  Then the natural map
  \begin{equation*}
    \bQ[\kappa_c \,|\, c \in \mathcal{B}] \lra \varprojlim
    H^*(B\Diff(W_g,D^{2n});\bQ)
  \end{equation*}
  is an isomorphism.
\end{theorem}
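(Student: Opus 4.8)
The plan is to deduce Theorem~\ref{thmcor:rational-coho} from the main homological stability and stable-homology results of the paper (the Madsen--Weiss-type theorem identifying $\varprojlim H_*(B\Diff(W_g,D^{2n}))$ with the homology of a component of an infinite loop space $\Omega^\infty_0 \mathbf{MT}\theta$ for the appropriate tangential structure $\theta$, which here will be the structure given by $(n-1)$-connected maps to a point, i.e.\ essentially $\mathbf{MTSO}(2n)$ restricted to the relevant component). First I would recall that by the stability theorem the inverse limit $\varprojlim H^*(B\Diff(W_g,D^{2n});\bQ)$ is computed in each degree by $H^*(B\Diff(W_g,D^{2n});\bQ)$ for $g$ large, and that this in turn is isomorphic as a ring to $H^*(\Omega^\infty_0 \mathbf{MTSO}(2n);\bQ)$ --- more precisely to the cohomology of the relevant path component --- via a map under which the universal classes $\kappa_c$ correspond to the standard generators of the cohomology of the infinite loop space coming from the fibre-integration/Thom-class construction. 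This reduces the problem to a purely homotopy-theoretic computation of $H^*(\Omega^\infty_0 \mathbf{MTSO}(2n);\bQ)$ as a ring, together with a check that the $\kappa_c$ for $c$ in the monomial basis $\mathcal B$ map to a polynomial generating set.

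The core computation is then rational homotopy theory of Thom spectra. The spectrum $\mathbf{MTSO}(2n)$ is the Thom spectrum of $-\theta^*\gamma_{2n}$ over $B\SO(2n)$, and by Thom isomorphism its rational homology is a shift of $H_*(B\SO(2n);\bQ)$, which is a polynomial algebra on the Pontryagin classes $p_1,\dots,p_{n-1}$ and the Euler class $e$. For a connective-after-truncation situation one uses that a simply-connected (or nilpotent) infinite loop space has rational cohomology a free graded-commutative algebra on the dual of the rational stable homotopy, equivalently on $\pi_*(\mathbf{MTSO}(2n))\otimes\bQ$ in positive degrees; and rationally $\pi_*(X)\otimes\bQ \cong H_*(X;\bQ)$ for a rational spectrum, so $H^*(\Omega^\infty_0\mathbf{MTSO}(2n);\bQ)$ is a polynomial algebra on a basis of $(H_*(B\SO(2n);\bQ)$ shifted by $2n)$ in positive degrees. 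Dualising, a basis is given by monomials in $e,p_1,\dots,p_{n-1}$ of degree $>2n$, and the class corresponding to such a monomial $c$ is exactly $\kappa_c$. The only remaining point is the range restriction: here one must invoke the precise form of the stable-homology theorem for $W_g$, which identifies the answer not with all of $\mathbf{MTSO}(2n)$ but with a summand/cover corresponding to the $2$-connected structure, and check that rationally this kills precisely the classes $p_1,\dots,p_{\lceil (n+1)/4\rceil - 1}$ below the relevant range, leaving the stated generators $e,p_{n-1},\dots,p_{\lceil(n+1)/4\rceil}$.

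The main obstacle I expect is this last bookkeeping step: correctly identifying the tangential structure $\theta$ that appears in the $W_g$ case and computing which Pontryagin classes survive. One has to track that $W_g = \#^g S^n\times S^n$ bounds a highly connected manifold, that the relevant stabilisation is by $S^n\times S^n$-connect-sums, and that the resulting $\theta$ is the $n$-connected cover of $\mathrm{SO}(2n)$ (or the appropriate Postnikov truncation), so that $\pi_i(B\theta)\otimes\bQ$ vanishes for $i \le n$; feeding this through the Thom isomorphism and the Serre spectral sequence / rational homotopy argument then pins down the surviving generators. A secondary technical point is verifying compatibility of the ring structure on $H^*(\Omega^\infty\mathbf{MT}\theta;\bQ)$ (Pontryagin product versus cup product) with the $\kappa$-classes and their multiplicativity under the stabilisation maps, so that the displayed map of the theorem is a genuine ring map. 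Once $\theta$ and its rational homotopy are correctly identified, the polynomiality and the explicit generating set follow formally from freeness of rational cohomology of infinite loop spaces.
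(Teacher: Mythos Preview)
Your overall strategy matches the paper's: deduce Theorem~\ref{thmcor:rational-coho} from Theorem~\ref{thm:main-A} (the homology equivalence $\hocolim_g B\Diff(W_g,D^{2n}) \to \Omega^\infty_\bullet MT\theta^n$) together with the standard rational computation of $H^*(\Omega^\infty_\bullet MT\theta;\bQ)$ as the free graded-commutative algebra on $H^{>2n}(B;\bQ^{w_1})$ (Section~\ref{sec:GMTW}).

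However, your write-up wavers on the crucial point you yourself flag as the main obstacle. The tangential structure is \emph{not} $\mathrm{SO}(2n)$, nor a ``2-connected structure'': the paper is explicit that $\theta^n : BO(2n)\langle n\rangle \to BO(2n)$ is the $n$-connective cover. This is not a summand or localisation of $MTSO(2n)$ but a genuinely different Thom spectrum. Once this is fixed, the bookkeeping is straightforward: rationally $BSO(2n)$ has $\pi_{4i}\otimes\bQ = \bQ$ (detected by $p_i$) and $\pi_{2n}\otimes\bQ=\bQ$ (detected by $e$), so taking the $n$-connective cover kills exactly those $p_i$ with $4i \leq n$, leaving $H^*(BO(2n)\langle n\rangle;\bQ) = \bQ[e, p_{n-1},\dots,p_{\lceil(n+1)/4\rceil}]$. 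Then $H^*(\Omega^\infty_\bullet MT\theta^n;\bQ)$ is polynomial on the $\kappa_c$ for $c$ ranging over the monomial basis $\mathcal{B}$ of $H^{>2n}(BO(2n)\langle n\rangle;\bQ)$, exactly as stated.

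Two minor points: you do not need the homological stability theorem at all, since the statement is about the inverse limit; Theorem~\ref{thm:main-A} directly gives $\varprojlim H^*(B\Diff(W_g,D^{2n});\bQ) \cong H^*(\Omega^\infty_\bullet MT\theta^n;\bQ)$. And since $BO(2n)\langle n\rangle$ is simply connected (as $n\geq 3$), the orientation character $w_1$ is trivial and there is no twisting to worry about.
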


The strengthened form of Mumford's conjecture proved by Madsen and
Weiss states that a certain map
\begin{equation*}
  \hocolim_{g \to \infty} B\Diff(\Sigma_g, D^2) \lra
  \Omega^\infty_\bullet MTSO(2)
\end{equation*}
induces an isomorphism in integral homology.  We will prove a similar
homotopy theoretic strengthening of
Theorem~\ref{thmcor:rational-coho}, which also applies to more general
manifolds.  

\subsection{Definitions and recollections}
\label{sec:definitions}
To state the main results in their general form, we recall the
following definitions.

\subsubsection{Classifying spaces}
\label{sec:classifying-spaces}
We shall use the following model for the classifying space
$B\Diff(W,\partial W)$ of the topological group of diffeomorphisms of
a compact manifold $W$, restricting to the identity on a neighbourhood
of $\partial W$.  We first pick an embedding $\partial W \hookrightarrow \{0\}
\times \R^\infty$ and let $\Emb^\partial(W,(-\infty, 0] \times
\R^\infty)$ denote the space of all extensions to an embedding of $W$
(required to be standard on a collar neighbourhood of $\partial W$).
We then let $B\Diff(W,\partial W) = \Emb^\partial(W, (-\infty, 0] \times
\R^\infty)/ \Diff(W,\partial W)$ be the orbit space.  If $W$ is closed
and $A \subset W$ is a compact codimension 0 submanifold, we write
$B\Diff(W,A) = B\Diff(W - \Int (A), \partial A)$.  The construction of
$B\Diff(W,\partial W)$ has the following naturality property: any
inclusion $W \subset W'$ of a codimension 0 submanifold induces a
continuous map $B\Diff(W, \partial W) \to B\Diff(W',\partial W')$,
well defined up to homotopy.  (On the point-set level it depends on a
choice of embedding of the cobordism $W' - \Int(W)$ into $[0,1] \times
\R^\infty$.)  For example, a choice of inclusion $W_g - \Int (D^{2n})
\to W_{g+1}$ induces a map $B\Diff(W_g,D^{2n}) \to
B\Diff(W_{g+1},D^{2n})$; these define the inverse system in
Theorem~\ref{thmcor:rational-coho}.

\subsubsection{Thom spectra}
\label{sec:thom-spectra}

For any space $B$ and any map $\theta: B \to
BO(d)$, where $BO(d) = \mathrm{Gr}_d(\R^\infty)$, there is a Thom
spectrum $MT\theta = B^{-\theta}$ constructed in the following way.
First, we let $B(\R^n) = \theta^{-1}(\mathrm{Gr}_d(\R^n))$.  The
Grassmannian $\mathrm{Gr}_d(\R^n)$ carries a $(n-d)$-dimensional vector
bundle $\gamma_n^\perp$, the orthogonal complement of the tautological bundle. Then the $n$th space of the spectrum $MT\theta$ is the Thom space
$B(\R^n)^{\theta^*\gamma_n^\perp}$.  The associated infinite loop
space is the direct limit
\begin{equation*}
  \Omega^\infty MT\theta = \colim_{n \to \infty} \Omega^n
  \left(B(\R^n)^{\theta^* \gamma_n^\perp}\right),
\end{equation*}
and we shall write $\Omega^\infty_\bullet MT\theta$ for the basepoint
component.  The rational cohomology of this space is easy to describe;
in the case where the bundle classified by $\theta$ is oriented, it is
as follows: for each $c \in H^{d+k}(B)$, there is a corresponding
``generalised Mumford--Morita--Miller class'' $\kappa_c \in
H^k(\Omega^\infty MT\theta)$, and $H^*(\Omega^\infty_\bullet MT\theta;
\bQ)$ is the free graded-commutative algebra on the classes
$\kappa_c$, where $c$ runs through a basis for the vector space $H^{>
  d}(B;\bQ)$. We describe the general case in Section \ref{sec:GMTW}.

\subsubsection{Moore--Postnikov towers}
\label{sec:moore-postn-towers}

For any map $A \to X$ of spaces and any $n \geq 0$, there is a
factorisation $A \to B \to X$ with the property that $\pi_i (A) \to
\pi_i(B)$ is surjective for $i = n$ and bijective for $i < n$, and
$\pi_i(B) \to \pi_i(X)$ is injective for $i =n$ and bijective for $i >
n$ (the requirements are imposed for all basepoints).  This is the
$n$th stage of the Moore--Postnikov tower for the map $A \to X$, and
can be constructed for example by attaching cells of dimension greater
than $n$ to $A$.  It is well known that a factorisation $A \to B \to
X$ with these properties is unique up to weak homotopy equivalence.
In the case where $A$ is a point, $B = X\langle n\rangle \to X$ is the
$n$-connective cover of the based space $X$, characterised by the
property that $\pi_i(X\langle n\rangle) = 0$ for $0 \leq i \leq n$,
and that $\pi_i(X\langle n\rangle) \to \pi_i(X)$ is an isomorphism for
$i > n$.  (Some authors write $X\langle n+1\rangle$ for what we denote
$X\langle n\rangle$.)

\subsection{Connected sums of copies of $S^n \times S^n$}

We can now state our homotopy theoretic version of
Theorem~\ref{thmcor:rational-coho}, generalising Madsen--Weiss'
theorem to dimension $2n$ (recall that we assume $2n > 4$
throughout).  As before, we write $W_g = \#^g S^n \times S^n$ for the
connected sum of $g$ copies of $S^n \times S^n$.

If we pick a disc $D^{2n} \subset W_g$, there is a classifying space
$B\Diff(W_g, D^{2n})$ and there are maps $B\Diff(W_g, D^{2n}) \to
B\Diff(W_{g+1},D^{2n})$ induced by taking connected sum with one more
copy of $S^n \times S^n$.  Let $\theta^n: BO(2n)\langle n\rangle \to
BO(2n)$ be the $n$-connective cover, and $MT\theta^n$ the associated
Thom spectrum.  Let us also say that a continuous map is a
\emph{homology equivalence} if it induces an isomorphism in integral
homology (and hence in any homology or cohomology theory).
\begin{theorem}\label{thm:main-A}
  Let $2n > 4$.  There is a homology equivalence
  \begin{equation*}
    \hocolim_{g \to \infty} B\Diff(W_g, D^{2n}) \lra
    \Omega^\infty_\bullet MT\theta^n.
  \end{equation*}
  More generally, if $W$ is any $(n-1)$-connected closed $2n$-manifold
  which is parallelisable in the complement of a point, there is a
  homology equivalence
  \begin{equation*}
    \hocolim_{g \to \infty} B\Diff(W \# W_g, D^{2n}) \lra
    \Omega^\infty_\bullet MT\theta^n.
  \end{equation*}
\end{theorem}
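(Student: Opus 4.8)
The plan is to follow the now-standard three-step strategy that underlies the Madsen–Weiss theorem, adapted to dimension $2n$. First I would introduce a cobordism category $\mathcal{C}_\theta$ of $(2n-1)$-dimensional manifolds with $\theta^n$-structure (i.e.\ a lift of the normal bundle along $\theta^n : BO(2n)\langle n\rangle \to BO(2n)$), together with its subcategory $\mathcal{C}_\theta^{n-1}$ whose morphisms are $(n-1)$-connected cobordisms. The first step is a \emph{Galatius--Madsen--Tillmann--Weiss-type theorem}: the classifying space $B\mathcal{C}_\theta$ is weakly equivalent to $\Omega^{\infty-1} MT\theta^n$, and this is the deepest input, but it is available from the existing cobordism-category literature and does not need to be reproved here. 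The second step is a \emph{homological stability} result: the homology of $B\Diff(W\#W_g,D^{2n})$ is independent of $g$ in a range growing with $g$; since $W$ is $(n-1)$-connected and $2n>4$, the manifolds $W\#W_g$ satisfy the hypotheses of the high-dimensional homological stability theorems for $\#^g S^n\times S^n$, so $\hocolim_g B\Diff(W\#W_g,D^{2n})$ computes the stable homology. The third step is the \emph{group-completion / scanning argument}: one builds a map from the stabilised diffeomorphism spaces to (a component of) the loop space of $B\mathcal{C}_\theta^{n-1}$, and then compares $B\mathcal{C}_\theta^{n-1}$ with $B\mathcal{C}_\theta$.

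Concretely, here is the order of the argument. (i) Identify the relevant moduli space: the moduli space $\mathcal{N}(P)$ of $(n-1)$-connected nullbordisms of a fixed $P$ (here $P = S^{2n-1}$ or $\partial(\text{collar of }D^{2n})$) is, by a parametrised surgery-theory argument, the space one actually wants to stabilise, and for $P$ the relevant boundary sphere a component of $\mathcal{N}(P)$ is $B\Diff(W\#W_g,D^{2n})$ for suitable $W_g$. (ii) Prove that stabilising $\mathcal{N}(P)$ by gluing on self-bordisms of $P$ computes $\Omega_\bullet B\mathcal{C}_\theta^{n-1}$; this is a group-completion argument applied to the monoid-like structure coming from composition of cobordisms, using that $\pi_0$ of the relevant monoid is cofinal. (iii) Show the inclusion $\mathcal{C}_\theta^{n-1}\hookrightarrow\mathcal{C}_\theta$ induces a homology equivalence on classifying spaces: because every $\theta^n$-cobordism between $(n-1)$-connected manifolds can be made $(n-1)$-connected by surgery on its interior (dimension $2n$, so middle-dimensional surgery below the middle is unobstructed by Whitney once $2n>4$), the two categories have the same "stable" homotopy type after passing to classifying spaces — this is where the connectivity assumption $2n>4$ is essential. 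Combining (i)–(iii) with Step~1 yields $\hocolim_g B\Diff(W\#W_g,D^{2n})\simeq_{H_*}\Omega^\infty_\bullet MT\theta^n$.

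The main obstacle I expect is Step (iii), the comparison of the $(n-1)$-connected cobordism category with the full one. The na\"ive surgery argument — surger the interior of a cobordism to kill $\pi_k$ for $k<n$ — works fibrewise but must be carried out \emph{parametrised over a simplicial space of cobordisms}, and one must ensure the surgeries can be done compatibly in families and that the space of "surgery data" is contractible (or at least highly connected) so that it does not change the homotopy type. This is precisely the kind of delicate parametrised-surgery bookkeeping that made the original Madsen--Weiss proof (and Galatius--Randal-Williams' later work) technically heavy; the key geometric input is that for $2n>4$ embedded surgery data in the middle range exists and is essentially unique up to isotopy by the Whitney trick, but assembling this into a statement about simplicial spaces of manifolds requires care with transversality and with the $\theta^n$-structures. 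A secondary technical point is verifying that the homological stability range is genuinely uniform in the "background" manifold $W$, so that the second clause of the theorem (arbitrary $(n-1)$-connected $W$ parallelisable away from a point) follows from the first; here the input is that $W\#W_g$ and $W_g$ become diffeomorphic after further stabilisation in a controlled way, together with stability with respect to the choice of path component.
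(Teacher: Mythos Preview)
Your proposal has the right architecture (GMTW identification, parametrised surgery to compare subcategories, group completion), but there are two genuine gaps.

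First, homological stability is neither needed nor used. The theorem is a statement about the homotopy colimit as $g\to\infty$; whether the individual terms $B\Diff(W\# W_g,D^{2n})$ have stable homology is irrelevant to that statement. (Stability is a separate, complementary theorem that the paper cites but does not invoke.) Your Step 2 should be dropped, and with it your ``secondary technical point'' about uniformity of the stable range in $W$.

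Second, and more seriously, you only discuss surgery on \emph{morphisms} (making cobordisms $(n-1)$-connected relative to their outgoing boundary). But the category $\mathcal{C}_\theta^{n-1}$ still has \emph{all} closed $\theta^n$-structured $(2n-1)$-manifolds as objects, and there is no reason the stabilised endomorphisms of $S^{2n-1}$ in that category should compute $\Omega B\mathcal{C}_\theta^{n-1}$: group completion for a category with many objects does not reduce to a single Hom-set without further input. The paper therefore performs a second filtration, by surgery on \emph{objects}: first restricting to the full subcategory on those $M$ with $\pi_j(M)\to\pi_j(B)$ injective for $j\le n-2$ (parametrised $l$-surgery on objects, $l<n-1$), and then to the full subcategory on those $M$ diffeomorphic to $S^{2n-1}$ (parametrised $(n-1)$-surgery on $(2n-1)$-manifolds). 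This last step is the most delicate; it is where the hypothesis that $\theta^n$ is spherical enters, and producing enough surgery data requires handle-trading arguments from the $s$-cobordism theorem. Only after this reduction does the category have connected object space, so that it is equivalent to the classifying space of a monoid $\mathcal{M}$ and ordinary group completion applies.

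Finally, your cofinality claim in step (ii) needs an actual mechanism: one must show that inverting only $[S^n\times S^n]\in\pi_0\mathcal{M}$ already inverts all of $\pi_0\mathcal{M}$. The paper's argument is the doubling trick: for $W$ an $(n-1)$-connected closed $2n$-manifold parallelisable away from a point, $W\#\overline{W}\cong \partial\bigl((W-\Int D^{2n})\times[0,1]\bigr)\cong \#^b(S^n\times S^n)$ with $b=\mathrm{rank}\,H_n(W)$, so $[W]$ becomes invertible once $[S^n\times S^n]$ is. This same observation, not homological stability, is what yields the second clause of the theorem for general $W$.
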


It is easy to deduce Theorem~\ref{thmcor:rational-coho} from
Theorem~\ref{thm:main-A}.  In \cite{Stability} we prove that the maps
$B\Diff(W_g,D^{2n}) \to B\Diff(W_{g+1},D^{2n})$ induce isomorphisms in
integral homology up to degree $\lfloor (g-4)/2\rfloor$ (cf.\ also
\cite{BerglundMadsen}).  Thus, Theorem~\ref{thm:main-A} also
determines the homology and cohomology of $B\Diff(W_g,D^{2n})$ in this
range.

\subsection{The moduli space of highly connected null-bordisms}
\label{sec:moduli-space-highly}

The determination, in Theorems~\ref{thmcor:rational-coho} and
\ref{thm:main-A}, of the stable homology and cohomology of the space $B\Diff(W
\# g S^n \times S^n, D^{2n})$ is a special case of
Theorem~\ref{thm:main-C-new} below, in which we determine the stable
homology of $B\Diff(W)$ for more general manifolds $W$.  We also
consider manifolds equipped with an additional \emph{tangential
  structure}, defined as follows.
  
\begin{definition}\label{defn:tangential-structures}
  Let $\theta: B \to BO(2n)$ be a map.  A $\theta$-structure on a
  $2n$-dimensional manifold $W$ is a bundle map $\ell: TW \to \theta^*
  \gamma$, i.e.\ a fibrewise linear isomorphism. Such a pair $(W, \ell)$ will be called a $\theta$-manifold. A $\theta$-structure
  on a $(2n-1)$-dimensional manifold $M$ is a bundle map $\epsilon^1
  \oplus TM \to \theta^* \gamma$.  If $\ell$ is a $\theta$-structure
  on $W$, the \emph{induced} structure on $\partial W$ is obtained by
  composing with a certain isomorphism $\epsilon^1 \oplus T(\partial
  W) \to TW\vert_{\partial W}$.  In fact, there are two such
  isomorphisms: One comes from a collar $[0,1) \times \partial W \to
  W$ of $\partial W$.  Differentiating this gives an isomorphism
  $\epsilon^1 \oplus T(\partial W) \to TW\vert_{\partial W}$, and the
  resulting $\theta$-structure on $\partial W$ will be called the
  \emph{incoming} restriction.  Another comes from a collar $(-1,0]
  \times \partial W \to W$; this is the \emph{outgoing} restriction.
  When $W$ is a cobordism, we will generally use the incoming
  restriction to induce a $\theta$-structure on the source of $W$ and
  the outgoing restriction on the target.
  
  Let $\ell_0: TW\vert_{\partial W} \to \theta^* \gamma$ be a
  $\theta$-structure on $\partial W$, and $\Bun^\partial (TW,\theta^*
  \gamma;\ell_0)$ denote the space of all bundle maps $\ell: TW \to
  \theta^* \gamma$ which restrict to $\ell_0$ over $\partial W$,
  equipped with the compact-open topology.  The group
  $\Diff(W, \partial W)$ of diffeomorphisms of $W$ which restrict to
  the identity near $\partial W$ acts on $\Bun^\partial(TW,\theta^*
  \gamma;\ell_0)$ by precomposing a bundle map with the differential
  of a diffeomorphism.
\end{definition}

The most general case of our theorem concerns the \emph{moduli space
  of highly connected null-bordisms}, defined as follows.

\begin{definition}\label{defn:N}
  Let $P\subset \R^\infty$ be a closed $(2n-1)$-dimensional manifold
  with $\theta$-structure $\ell_P: \epsilon^1 \oplus TP \to \theta^*
  \gamma$.  A null-bordism is a pair $(W,\ell_W)$, where $W \subset
  (-\infty,0] \times \R^\infty$ is a compact manifold with $\partial W
  = \{0\} \times P$ and $(-\epsilon,0] \times P \subset W$ for some
  $\epsilon > 0$, and $\ell_W: TW \to \theta^*\gamma$ is a
  $\theta$-structure satisfying $\ell_W \vert_{\partial W} = \ell_P$.
  A null-bordism $(W,\ell_W)$ is highly connected if $(W,P)$ is
  $(n-1)$-connected, and the \emph{moduli space of highly connected
    null-bordisms} is the set $\mathcal{N}^\theta(P,\ell_P)$ of all
  highly connected null-bordisms of $(P,\ell_P)$.  It is topologised
  as the disjoint union
  \begin{equation}\label{eq:24}
    \coprod_W
    (\Emb^\partial(W,(-\infty,0] \times \R^\infty) \times
    \Bun^\partial(TW, \theta^* \gamma; \ell_P))/\Diff(W,\partial W)
  \end{equation}
  where the disjoint union is over compact manifolds $W$ with
  $\partial W = P$ for which $(W,P)$ is $(n-1)$-connected, one of each
  diffeomorphism class.

  If $K \subset [0,1] \times \R^\infty$ is a cobordism with collared
  boundary $\partial K = (\{0\} \times P_0) \cup (\{1\} \times P_1)$
  we say that $K$ is \emph{highly connected} if each pair $(K,\{i\} \times
  P_i)$ is $(n-1)$-connected.  If $K$ is equipped with a $\theta$-structure
  $\ell_K$ restricting to $\ell_0$ and $\ell_1$ on the boundaries,
  then there is an induced map $\mathcal{N}^\theta(P_0,\ell_0) \to
  \mathcal{N}^\theta(P_1,\ell_1)$ defined by taking union with $K$ and
  subtracting 1 from the first coordinate.
\end{definition}

This moduli space classifies smooth families of null-bordisms of $P$,
in the sense that if $X$ is a smooth manifold without boundary, there
is a natural bijection between the set of homotopy classes $[X, \mathcal{N}^\theta(P,\ell_P)]$,
and the set of equivalence classes of triples $(\pi,\phi, \ell)$,
where $\pi: E \to X$ is a proper submersion (i.e.\ smooth fibre
bundle), $\phi$ is a diffeomorphism $\partial E \cong X \times P$ over
$X$, such that
$(E, \partial E)$ is $(n-1)$-connected, and $\ell$ is a
$\theta$-structure on the fibrewise tangent bundle $T_\pi E =
\Ker(D\pi)$.

Let us also introduce notation for each of the disjoint summands
in~\eqref{eq:24}.

\begin{definition}\label{defn:BDifftheta}
  Let $W$ be a compact $2n$-dimensional manifold, and $\ell_0:
  TW\vert_{\partial W} \to \theta^* \gamma$ be a $\theta$-structure on
  $\partial W$. We shall write
  \begin{equation*}
    B\Diff^\theta(W; \ell_0) = (E\Diff(W,\partial W) \times
    \Bun^\partial(TW,\theta^* \gamma;\ell_0))/\Diff(W,\partial W)
  \end{equation*}
  for the homotopy orbit space of the action of $\Diff(W,\partial W)$ on $\Bun^\partial(TW,\theta^* \gamma;\ell_0)$.  If $\ell: TW \to
  \theta^* \gamma$ is a particular extension, we shall write
  $B\Diff^\theta(W;\ell_0)_\ell \subset B\Diff^\theta(W; \ell_0)$
  for the path component containing $\ell$.
\end{definition}

Using the model $E\Diff(W,\partial W) = \Emb^\partial(W,(-\infty,0]
\times \R^\infty)$, we have the homeomorphism
\begin{equation*}
  \mathcal{N}^\theta(P,\ell_P) =\coprod_{W} B\Diff^\theta(W;\ell_P).
\end{equation*}

\begin{definition}
  \label{defn:spherical}
  A tangential structure $\theta: B \to BO(2n)$ is called
  \emph{spherical} if any $\theta$-structure on the lower hemisphere
  $\partial_- D^{2n+1} \subset \partial D^{2n+1}$ extends to some
  $\theta$-structure on the whole sphere.  (If $B$ is path connected, this is
  equivalent to the sphere $S^{2n}$ admitting a $\theta$-structure.)
\end{definition}
Most of the usual structures, for example $SO$, $\Spin$, $\Spin^c$,
etc.\ are spherical, but some are not, e.g.\
framings. Theorem~\ref{thm:main-C-new} below determines the homology
of $\mathcal{N}^\theta(P,\ell_P)$ after stabilising with cobordisms in
the $(P,\ell_P)$-variable.  The following definition makes the
stabilisation procedure precise.

\begin{definition}\label{defn:universalthetaend}
  Let $\theta: B \to BO(2n)$ be spherical, and $K \subset [0,\infty)
  \times \R^\infty$ be a submanifold with $\theta$-structure $\ell_K$,
  such that $x_1 : K \to [0,\infty)$ has the natural numbers as
  regular values. For $A \subset [0, \infty)$, we let $(K\vert_A,
  \ell_K \vert_A)$ denote the $\theta$-manifold $K \cap x_1^{-1}(A)$.
\begin{enumerate}[(i)]
\item Let $W \subset [0,1] \times \R^\infty$ be a cobordism with
  $\theta$-structure $\ell_W$, and suppose that $(W\vert_0,
  \ell_W\vert_0) = (K\vert_0,\ell_K\vert_0)$.  We say that
  $(K,\ell_K)$ \emph{absorbs} $(W,\ell_W)$ if there exists an
  embedding $j: W \to K$ which is the identity on $W\vert_0 =
  K\vert_0$, such that $\ell_K \circ Dj: TW \to \theta^*\gamma$ is
  homotopic to $\ell_W$ relative to $W\vert_0$.  That
  $K\vert_{[i,\infty)}$ absorbs a $\theta$-bordism $W \subset [i,i+1]
  \times \R^\infty$ is defined similarly.

\item\label{it:univ:3} We say that $(K,\ell_K)$ is a \emph{universal
    $\theta$-end} if for
  each integer $i \geq 0$, $K\vert_{[i,i+1]}$ is a highly connected
  cobordism and $K\vert_{[i,\infty)}$ absorbs $W$ for any highly
  connected cobordism $W \subset [i,i+1] \times \R^\infty$ with
  $\theta$-structure $\ell_W$ such that $(W\vert_i, \ell_W\vert_i) =
  (K\vert_i,\ell_K\vert_i)$.
\end{enumerate}
\end{definition}

For example, in dimension 2 with $\theta = \mathrm{id}: BO(2) \to
BO(2)$, we can construct a universal $\theta$-end by letting each
$K_{[i,i+1]}$ be diffeomorphic to $\R P^2$ with two discs removed.
For $\theta = \theta^n: BO(2n)\langle n\rangle \to BO(2n)$, a
universal $\theta$-end can be constructed by letting each
$K_{[i,i+1]}$ be diffeomorphic to $S^n \times S^n$ with two discs
removed.  In many other cases, a universal $\theta$-end $K$ can be
constructed as the infinite iteration of a single self-bordism
$K\vert_{[0,1]}$.  In particular, this will be the case in the
examples in Section~\ref{sec:exampl-appl} below.

As we shall see, universal $\theta$-ends are
unique up to isomorphism in the following sense.  If $(K,\ell_K)$ and
$(K', \ell_K')$ are two universal $\theta$-ends with $K\vert_0 =
K'\vert_0$, then there exists a diffeomorphism $K \to K'$ preserving
$\theta$-structure up to homotopy, relative to $K\vert_0$.  More
generally, given a highly connected cobordism $(W,\ell_W)$ from
$K\vert_0$ to $K'\vert_0$, there exists a similar diffeomorphism from
$K$ to $W \circ K'$.

\begin{theorem}\label{thm:main-C-new}
  Let $2n > 4$ and let $\theta : B \to BO(2n)$ be spherical. Let $(K,\ell_K)$ be a universal $\theta$-end with
  $\mathcal{N}^\theta(K\vert_0, \ell_K\vert_0) \neq \emptyset$. Then
  there is a homology
  equivalence
  \begin{equation*}
    \hocolim_{i \to \infty} \mathcal{N}^\theta(K\vert_{i},
    \ell_{K}\vert_{i}) \lra \Omega^\infty MT\theta',
  \end{equation*}
  where $\theta' : B' \to B \overset{\theta}\to BO(2n)$ is the
  $n$th stage of the Moore--Postnikov tower for $\ell_K : K \to B$.
\end{theorem}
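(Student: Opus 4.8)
The plan is to prove Theorem~\ref{thm:main-C-new} by following the now-standard parametrised surgery strategy of Galatius--Randal-Williams, adapting it to the relative setting of null-bordisms of $P = K\vert_0$. The starting point is the cobordism category $\mathscr{C}_\theta$ of $(2n-1)$-manifolds and $\theta$-cobordisms between them; by the main theorem of \cite{GMTW} (generalised Galatius--Madsen--Tillmann--Weiss) there is a weak equivalence $B\mathscr{C}_\theta \simeq \Omega^{\infty-1} MT\theta$. We then pass to the subcategory $\mathscr{C}_\theta^{hc}$ whose morphisms are \emph{highly connected} cobordisms; the space $\mathscr{N}^\theta(P,\ell_P)$ appears (up to the usual scanning/parametrised-transversality identification) as a path component of a homotopy fibre over an object of this category, or more precisely, as the space of $\theta$-cobordisms from $\varnothing$ to $P$ whose associated pair is $(n-1)$-connected. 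The key point is that stabilising by the universal $\theta$-end $K$ exactly realises group-completion with respect to the monoid of self-cobordisms generated by the $K\vert_{[i,i+1]}$.

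First I would set up the relative version of the ``genus one'' surgery moves: using that $\theta$ is spherical and that $\mathscr{N}^\theta(P,\ell_P)\neq\varnothing$, one shows that the stabilisation maps $\mathscr{N}^\theta(K\vert_i,\ell_K\vert_i) \to \mathscr{N}^\theta(K\vert_{i+1},\ell_K\vert_{i+1})$ are, after passing to the colimit, a model for group completion in a suitable sense — this uses the uniqueness of universal $\theta$-ends (stated in the excerpt) to see that the colimit is independent of auxiliary choices and that the monoid acting is cofinal among highly connected self-cobordisms. Second, I would use parametrised surgery below the middle dimension, exactly as in the absolute case: given a family of highly connected null-bordisms $W \to X$, one performs fibrewise surgery to kill the relative homotopy groups $\pi_k(W,P)$ for $k \le n-1$ — but these already vanish by the high-connectivity hypothesis — and more importantly to kill $\pi_k$ of the $\theta$-structure map $\ell_W\colon W \to B$ for $k < n$, pushing $\ell_W$ through $B' \to B$ and thereby producing $\theta'$-structures. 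This is where the Moore--Postnikov tower enters: $\theta'\colon B' \to B \to BO(2n)$ is precisely the $n$th stage for $\ell_K$, and the universality of $K$ guarantees that the obstructions to performing these surgeries in the stable range vanish.

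Third — and this is the genuine content — I would prove that the ``surgery semi-simplicial resolution'' of $\hocolim_i \mathscr{N}^\theta(K\vert_i)$ has the homotopy type of the corresponding object built from the cobordism category $\mathscr{C}_{\theta'}$, and that the latter computes $\Omega^\infty MT\theta'$. Concretely, one builds a zigzag
\begin{equation*}
  \hocolim_{i\to\infty} \mathscr{N}^\theta(K\vert_i,\ell_K\vert_i) \longleftarrow |Z_\bullet| \longrightarrow \Omega^\infty MT\theta'
\end{equation*}
where $Z_\bullet$ is a semi-simplicial space of null-bordisms together with surgery data, the left map is a homology equivalence by a spectral-sequence argument (each augmentation map $\|Z_\bullet\| \to \mathscr{N}$ being highly connected because $\theta$ is spherical, so surgeries can be undone and redone), and the right map comes from scanning. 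The right-hand map is a homology equivalence after group completion by the relative Galatius--Randal-Williams machinery: the homotopy colimit over the stabilising $\theta$-end corresponds to the basepoint component of the loop space of $B\mathscr{C}_{\theta'}$, and since a universal $\theta$-end corresponds under $\theta'$ to iterating a highly connected self-cobordism whose homology class generates (together with the structure maps) the relevant part of $\pi_0 B\mathscr{C}_{\theta'}$, the group-completion theorem applies.

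The main obstacle will be step three: proving that the surgery resolution is highly connected \emph{relative to the fixed boundary $P$}, uniformly in the stabilisation parameter $i$, and that the identification with $\mathscr{C}_{\theta'}$ respects the stabilisation. The delicate points are (a) controlling the connectivity of the space of surgery data in families — one must show that the space of embedded spheres with trivialised normal bundle carrying the correct $\theta'$-structure is highly connected, which uses that $\theta$ is spherical together with Haefliger's embedding theorem in the range $2n > 4$; (b) ensuring that after surgery the boundary $P$ and its $\theta$-structure are literally unchanged, so that one stays inside $\mathscr{N}^\theta(P,\ell_P)$ rather than merely in a weakly equivalent space; and (c) handling the passage from $\theta$ to $\theta'$, i.e.\ verifying that the universality of the $\theta$-end $K$ implies the obstruction groups $\pi_k(B,B')$ for $k\le n$ are killed by the admissible surgeries — this is ultimately a consequence of the definition of ``absorbs'' combined with obstruction theory along the Moore--Postnikov tower. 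Once these connectivity statements are in hand, the homology equivalence follows formally from the group-completion theorem applied to the simplicial resolution, exactly as in the surface case.
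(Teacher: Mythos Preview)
Your proposal has the right high-level shape---cobordism categories, parametrised surgery, group completion---but the architecture differs from the paper's in ways that create genuine gaps.

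The paper does \emph{not} build a surgery resolution $Z_\bullet$ of the null-bordism spaces $\mathcal{N}^\theta(K\vert_i)$. Instead, all parametrised surgery is performed on the cobordism category itself: one passes through a chain of subcategories
\[
\mathcal{C}_{\theta,L}^{n-1,\mathcal{A}} \subset \mathcal{C}_{\theta,L}^{n-1,n-2} \subset \mathcal{C}_{\theta,L}^{n-1} \subset \mathcal{C}_{\theta,L} \subset \mathcal{C}_\theta,
\]
each inclusion inducing a weak equivalence on classifying spaces (Theorems~\ref{thm:kappafiltration}, \ref{thm:lfiltration}, \ref{thm:MidSurgery}). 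The crucial device you are missing is the $(2n-1)$-manifold $L$: one picks a self-indexing Morse function on $P = K\vert_0$ and sets $L = f^{-1}([0,n-\tfrac12])$, so that $L$ is a sub-$n$-handlebody of $P$ with $(n-1)$-connected structure map to $B$. The category $\mathcal{C} = \mathcal{C}_{\theta,L}^{n-1,\mathcal{A}}$ is then small enough that its morphism spaces are tractable, and the key identification (Proposition~\ref{prop:cut-L}) is
\[
\mathcal{N}^\theta(P,\ell_P) \simeq \mathcal{C}(\overline{L}, P^\circ),
\]
where $\overline{L} = D(L)\setminus\Int(L)$. Without this, your sentence ``$\mathcal{N}^\theta(P,\ell_P)$ appears as the space of $\theta$-cobordisms from $\varnothing$ to $P$'' is correct but useless: $\varnothing$ is not an object of the surgered-down subcategory, so you cannot run group completion there. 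The $L$-trick replaces $\varnothing$ by $D(L)$, which \emph{is} such an object.

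Two further points. First, your handling of $\theta\to\theta'$ via surgery on $\ell_W$ is not what happens: the reduction is pure obstruction theory (Lemma~\ref{lem:ChangeOfThetaManifold}). Since the homotopy fibre of $B'\to B$ is an $(n-2)$-type and $(W,\partial W)$ is $(n-1)$-connected, every $\theta$-structure on $W$ lifts uniquely (up to homotopy) to a $\theta'$-structure, so $\mathcal{N}^{\theta'}(P)\simeq\mathcal{N}^\theta(P)$ immediately. Second, the group completion step (Theorem~\ref{thm:GC}) is not just ``the theorem applies'': one must show that the functor $F_\infty = \hocolim_i \mathcal{C}(-,K\vert_i)$ sends every morphism to a homology equivalence. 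This reduces, via Lemma~\ref{lem:7.8}, to morphisms consisting of a single $n$-handle, and for those one constructs explicit diagonal maps (Lemma~\ref{lem:GCbijection}) using the absorbing property of the universal $\theta$-end. Your appeal to Haefliger is a red herring; the embedding arguments use Smale--Hirsch immersion theory and the Whitney trick, which is where $2n\geq 6$ enters.
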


The property of being a universal $\theta$-end can often be checked in
practice, using the following addendum, as it is essentially a
homotopical property.

\begin{addendum}\label{add:1}
  Let $\theta: B \to BO(2n)$ be spherical, let $K \subset [0,\infty)
  \times \R^\infty$ be a submanifold such that $K \vert_{[i,i+1]}$ is
  a highly connected cobordism for each integer $i$, and let $\ell_K$
  be a $\theta$-structure on $K$.  Then $(K,\ell_K)$ is a universal
  $\theta$-end if and only if the following conditions
  hold.
  \begin{enumerate}[(i)]
  \item\label{it:add:1} For each integer $i$, the map
    $\pi_n(K\vert_{[i, \infty)}) \to \pi_n(B)$ is surjective, for all
    basepoints in $K$.
    
  \item\label{it:add:2} For each integer $i$, the map
    $\pi_{n-1}(K\vert_{[i, \infty)}) \to \pi_{n-1}(B)$ is injective,
    for all basepoints in $K$.
    
  \item\label{it:add:3} For each integer $i$, each path component of
    $K\vert_{[i, \infty)}$ contains a submanifold diffeomorphic to
    $S^n \times S^n - \Int(D^{2n})$, which in addition has
    null-homotopic structure map to $B$.
  \end{enumerate}
\end{addendum}

\begin{remark}\label{rem:limit-of-B-diff-theta}
  It is often useful to
  consider the homology equivalence in Theorem~\ref{thm:main-C-new}
  one path component at a time, so we spell out the resulting
  statement using the notation of Definition~\ref{defn:BDifftheta}.
  Any path component of the infinite loop space $\Omega^\infty
  MT\theta'$ is homotopy equivalent to the basepoint component
  $\Omega^\infty_\bullet MT\theta'$.  On the left hand side of the
  homology equivalence, the path component of an element $(W,\ell_W)
  \in \mathcal{N}^\theta(K\vert_0,\ell_K\vert_0)$ is the homotopy
  colimit of the spaces
  \begin{equation*}
    B\Diff^\theta(W \cup K\vert_{[0,i]}; \ell_i)_{\ell_W \cup
      \ell_K\vert_{[0,i]}}.
  \end{equation*}
  
  Conversely, given a triple $(W,K,\ell)$ where $K \subset
  [0,\infty) \times \R^\infty$ is a non-compact manifold such that
  $K\vert_{[i,i+1]}$ is a highly connected cobordism for each integer $i
  \geq 0$, $W \subset (-\infty, 0] \times \R^\infty$ is a compact
  manifold with collared boundary $\partial W = K\vert_0$ such that
  $(W, \partial W)$ is $(n-1)$-connected, and $\ell: T(W \cup K) \to
  \theta^* \gamma$ is a bundle map, the Pontryagin--Thom construction
  described below provides a map
  \begin{equation}
    \label{eq:26}
    \hocolim_{i \to \infty} B\Diff^\theta(W \cup K\vert_{[0,i]}; \ell_i)_\ell \lra
    \Omega^\infty_\bullet MT\theta',
  \end{equation}
  where $\theta': B' \to B \to BO(2n)$ is obtained from the $n$th
  Moore--Postnikov stage of the underlying map $W \cup K \to B$.  By
  Theorem~\ref{thm:main-C-new}, the map~\eqref{eq:26} is a homology
  isomorphism, provided that $K$ is a universal $\theta'$-end.  In
  particular, Theorem~\ref{thm:main-A} can be deduced this way: If we
  let each $K\vert_{[i,i+1]}$ be diffeomorphic to $S^n \times S^n -
  \Int(D^{2n})$ and let $\theta = \mathrm{id}: BO(2n) \to BO(2n)$,
  then $\theta' = \theta^n: BO(2n)\langle n\rangle \to BO(2n)$, and
  $K$ is a universal $\theta^n$-end.  Similarly, all examples in
  Section~\ref{sec:exampl-appl} below arise in this way.

  Let us also remark that the homotopy colimit~\eqref{eq:26} may be
  replaced by the strict colimit $B\Diff^\theta_c(W \cup K;\ell)$,
  defined by
  \begin{equation*}
    B\Diff^\theta_c(W \cup K;\ell) = \big(E\Diff_c(W \cup K) \times \Bun_c(T(W
    \cup K), \theta^* \gamma;\ell)\big)/\Diff_c(W \cup K),
  \end{equation*}
  where $\Diff_c(W \cup K)$ is the topological group of compactly
  supported diffeomorphisms of the non-compact manifold $W \cup K$,
  and $\Bun_c(T(W \cup K), \theta^*\gamma;\ell)$ is the space of bundle
  maps which agree outside of a compact subset of $W \cup K$ with
  $\ell$.
\end{remark}

\begin{remark}\label{rem:what-is-the-map}
  The maps in all
  the theorems above are induced by the Pontryagin--Thom construction.
  We shall briefly explain this in the setting of
  Theorem~\ref{thm:main-C-new}, after replacing
  $\mathcal{N}^\theta(P,\ell_P)$ by a weakly equivalent space, and
  refer the reader to \cite[\S 2.3]{MT} for further details. First we say that a submanifold $W \subset (-\infty,0] \times
  \R^{q-1}$ with collared boundary is \emph{fatly embedded} if the
  canonical map from the normal bundle $\nu W$ to $\R^q$ restricts to
  an embedding of the disc bundle into $(-\infty,0] \times \R^{q-1}$.
  In that case the Pontryagin--Thom collapse construction gives a
  continuous map from $[-\infty,0] \wedge S^{q-1}$ to the Thom space
  of $\nu W$.  Secondly we replace $\theta': B' \to BO(2n)$ by a
  fibration, and redefine $\mathcal{N}^\theta(P,\ell_P)$ as a space of
  pairs $(W,\ell_W)$ where $W \subset (-\infty,0] \times \R^\infty$
  is a fatly embedded submanifold, collared near $\partial W = \{0\}
  \times P$, and $\ell_W: W \to B'$ is a continuous map such that
  $\theta' \circ \ell_W: W \to BO(2n) = \Gr_{2n}(\R^\infty)$ is
  \emph{equal} to the Gauss map and whose
  restriction to $\partial W$ is equal to a specified map $\ell_P: P
  \to B'$.  There is a forgetful map from the space of such pairs to
  the space in Definition~\ref{defn:N}, and standard homotopy
  theoretic methods imply that it is a weak equivalence.  If $P \subset
  \R^{q-1} \subset \R^\infty$, the Pontryagin--Thom construction
  (composed with $\ell_P$) gives a point
  \begin{equation*}
    \alpha(P,\ell_P) \in \Omega^{q-1}\big(B'(\R^q)^{(\theta')^*
      \gamma^\perp_q} \big) \subset \Omega^{\infty-1} MT\theta',
  \end{equation*}
  and if $(W,\ell_W) \in \mathcal{N}^\theta(P,\ell_P)$ has $W
  \subset (-\infty,0] \times \R^{q-1}$, it gives a path
  \begin{equation*}
    \alpha(W,\ell_W): [-\infty,0] \lra
    \Omega^{q-1}\big(B'(\R^q)^{(\theta')^* \gamma^\perp_q} \big)
    \subset \Omega^{\infty-1} MT\theta',
  \end{equation*}
  starting at the basepoint and ending at $\alpha(P,\ell_P)$.  The
  space of such paths is homotopy equivalent to the based loop space,
  which is $\Omega^\infty MT\theta'$.  Finally, the non-compact
  manifold $K\subset [0,\infty) \times \R^\infty$ admits a
  homotopically unique $\theta'$-structure lifting its
  $\theta$-structure and extending the canonical $\theta'$-structure
  on $P = K\vert_0$.  The Pontryagin--Thom construction applied to each
  cobordism $K\vert_{[i,i+1]}$ then gives a path
  $\alpha(K\vert_{[i,i+1]}, \ell_K): [i,i+1] \to \Omega^{\infty-1}
  MT\theta'$ and the entire process now commutes (strictly) with the
  stabilisation maps.
\end{remark}

\subsection{Algebraic localisation}

There is one final algebraic version of our main theorem. Fix $P$, a
closed $(2n-1)$-manifold with $\theta$-structure $\ell_P : \epsilon^1
\oplus TP \to \theta^*\gamma$.  As explained in
Definition~\ref{defn:N}, a cobordism $(K,\ell_K)$ from $(P, \ell_P)$ to
itself with $K \subset [0,1] \times \R^\infty$, which is
$(n-1)$-connected with respect to both boundaries, gives a self-map of $\mathcal{N}^\theta(P,\ell_P)$ defined by $W \mapsto
W \cup_P K - e_1$.  We shall write $\mathcal{K}_0$ for the set of
isomorphism classes of such $(K,\ell_K)$, where we identify
$(K,\ell_K)$ with $(K', \ell_{K'})$ if there is a diffeomorphism $\phi:
K \to K'$ which is the identity near $\partial K$ such that $\phi^*
\ell_{K'}$ is homotopic to $\ell_K$ relative to $\partial K$.  It is
clear that the homotopy class of the self-map of
$\mathcal{N}^\theta(P,\ell_P)$ induced by $(K,\ell_K)$ depends only on
the isomorphism class of $(K,\ell_K)$, and we get an action of the
non-commutative monoid $\mathcal{K}_0$ on $H_*(\mathcal{N}^\theta(P, \ell_P))$.  Our theorem determines the algebraic localisation
\begin{equation*}
  H_*(\mathcal{N}^\theta(P,\ell_P))[\mathcal{K}^{-1}]
\end{equation*}
at a certain commutative submonoid $\mathcal{K}\subset
\mathcal{K}_0$ which we now describe.

We say that a $\theta$-cobordism $K : P \leadsto P$ has \emph{support}
in a closed subset $A \subset P$ if it contains $[0,1] \times (P - A)
: (P - A) \leadsto (P - A)$ as a sub-cobordism with the product
$\theta$-structure. We let $\mathcal{K} \subset \mathcal{K}_0$ consist of those
elements which admit a representative with support in a regular
neighbourhood of a simplicial complex of dimension at most $n-1$
inside $P$, and prove the following lemma.

\begin{lemma}\label{lem:Kcomm}
  The subset $\mathcal{K}\subset \mathcal{K}_0$ is a commutative
  submonoid.
\end{lemma}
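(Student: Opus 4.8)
\textit{Proof proposal.} The plan is to reduce the lemma to the elementary principle that $\theta$-cobordisms of $P$ supported in disjoint subsets commute, and then to arrange disjointness of supports by general position, using that $\dim X+\dim X'\le 2(n-1)<2n-1=\dim P$ for simplicial complexes $X,X'$ of dimension at most $n-1$ inside $P$. The unit is the product $\theta$-cobordism $[0,1]\times P$, which (being a product everywhere) has support in a regular neighbourhood of a point, so lies in $\mathcal{K}$; recall also that $\mathcal{K}_0$ is a monoid because a composite of cobordisms that are $(n-1)$-connected relative to both boundaries is again such.

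The key fact I would isolate is the following. Suppose $\theta$-cobordisms $K$ and $K'$ of $P$ have support in closed sets $A$ and $B$ with $A\cap B=\emptyset$, so that $K=([0,1]\times(P-A))\cup Q$ and $K'=([0,1]\times(P-B))\cup Q'$ for $\theta$-cobordisms $Q: A\leadsto A$ and $Q': B\leadsto B$ which are products near their lateral boundaries and near their ends. Then both $K\circ K'$ and $K'\circ K$ are isomorphic, relative to $\partial$, to the $\theta$-cobordism $L:=([0,1]\times(P-(A\cup B)))\cup Q\cup Q'$ obtained by inserting $Q$ over $A$ and $Q'$ over $B$ simultaneously: over $A$ one sees $Q$ followed by a product cylinder, which is $\cong Q$; over $B$ symmetrically; over $P-(A\cup B)$ everything is a product; and since $A\cap B=\emptyset$ the two insertions sit over disjoint regions and do not interfere, all of these identifications taking place where the $\theta$-structures are the product structure. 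Hence $[K\circ K']=[L]=[K'\circ K]$, and $L$ has support in $A\cup B$.

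Now take $[K],[K']\in\mathcal{K}$ and choose representatives $K,K'$ with support in regular neighbourhoods $N,N'$ of complexes $X,X'\subset P$ of dimension at most $n-1$. By general position I may isotope $X'$ by an isotopy supported in $\Int(N')$ so that $X\cap X'=\emptyset$; this leaves the isomorphism class of $K'$ unchanged, and $N'$ remains a regular neighbourhood of the new $X'$. I would then \emph{shrink the supports}: a $\theta$-cobordism with support in a regular neighbourhood of $X'$ is isomorphic to one with support in an arbitrarily small regular neighbourhood of $X'$. Indeed, a handle decomposition of $Q'$ relative to its bottom has all attaching data in a compact part of $N'$, which (as $N'$ deformation retracts onto any smaller regular neighbourhood $N'_0$ of $X'$) may be isotoped into $N'_0$, so that $Q'$ becomes, as a manifold, a product over $N'-N'_0$; the transported $\theta$-structure there is a priori only homotopic to the product one, so one repairs it by a relative homotopy of bundle maps over a collar, at the cost of enlarging $N'_0$ slightly. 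Performing this for both $K$ and $K'$, and taking the neighbourhoods small enough, I obtain representatives with support in disjoint regular neighbourhoods $N_0\ni X$, $N'_0\ni X'$. The fact of the previous paragraph then gives $[K\circ K']=[L]=[K'\circ K]$ with $L$ supported in $N_0\cup N'_0$, a regular neighbourhood of the disjoint union $X\sqcup X'$, again a complex of dimension at most $n-1$. Hence $[K\circ K']\in\mathcal{K}$, so $\mathcal{K}$ is a submonoid, and it is commutative.

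The only genuine work is the shrinking step: one must verify that the cobordism can be made to be an honest product carrying the honest product $\theta$-structure over the enlarged complement, which is where the relative homotopy of bundle maps over a collar enters. The general position argument and the ``disjoint supports commute'' fact are routine.
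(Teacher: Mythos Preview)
Your proposal is correct and follows essentially the same route as the paper: perturb the spines $X,X'$ to be disjoint by general position in the $(2n-1)$-manifold $P$, shrink the regular neighbourhoods to be disjoint, and then observe that cobordisms with disjoint supports commute via the ``simultaneous'' cobordism you call $L$ (the paper calls it $K_{01}$). The paper's proof is terser---it simply asserts ``perturb the $X_i$ to be disjoint and then shrink the $A_i$ so they are disjoint''---whereas you give more detail on the shrinking step; your handle-decomposition argument there is a bit more elaborate than needed (one can instead use that an ambient isotopy of $P$ carrying $N'$ to $N_0'$, being isotopic to the identity, extends over collars to a diffeomorphism of $K'$ restricting to that isotopy on both ends), but it is correct.
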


We may localise the $\bZ[\mathcal{K}]$-module
$H_*(\mathcal{N}^\theta(P,\ell_P))$ at any submonoid $\mathcal{L}
\subset \mathcal{K}$.  The content of Theorem \ref{thm:main-D} below
is an isomorphism
$$H_*(\mathcal{N}^\theta(P, \ell_P))[\mathcal{L}^{-1}] \cong
H_*(\Omega^\infty MT\theta')$$ under certain conditions, where
$\theta' : B' \to B \overset{\theta}\to BO(2n)$ is the $(n-1)$st stage
of the Moore--Postnikov tower for $\ell_P : P \to B$.  To describe the
isomorphism explicitly, recall that in
Remark~\ref{rem:what-is-the-map} we described a map
\begin{equation*}
  \mathcal{N}^\theta(P,\ell_P) \lra \Omega^\infty MT\theta',
\end{equation*}
compatible with gluing highly connected cobordisms of $(P,\ell_P)$
equipped with $\theta'$-structures, and hence the induced map 
\begin{equation}
  \label{eq:21}
  H_*(\mathcal{N}^\theta(P,\ell_P)) \lra H_*(\Omega^\infty MT\theta')
\end{equation}
is a map of $\bZ[\mathcal{K}']$-modules, where the monoid
$\mathcal{K}'$ is defined like $\mathcal{K}$ but using $\theta'$
instead of $\theta$.  An obstruction theoretic argument, which we
explain in more detail in Section~\ref{sec:proof-lemma-refl}, shows
that the natural map $\mathcal{K}' \to \mathcal{K}$ is a bijection,
so~\eqref{eq:21} is naturally a homomorphism of
$\bZ[\mathcal{K}]$-modules.

\begin{theorem}\label{thm:main-D}
  Let $2n > 4$ and let $\theta : B \to BO(2n)$ be spherical. Let $P$
  be a closed $(2n-1)$-manifold with $\theta$-structure $\ell_P :
  \epsilon^1 \oplus TP \to \theta^*\gamma$, such that
  $\mathcal{N}^\theta(P, \ell_P)$ is non-empty.  Then the
  morphism~\eqref{eq:21} induces an isomorphism
  \begin{equation*}
    H_*(\mathcal{N}^{\theta}(P, \ell_P))[\mathcal{K}^{-1}] \lra
  H_*(\Omega^\infty MT\theta').
  \end{equation*}

  Furthermore, localisation at a submonoid $\mathcal{L} \subset
  \mathcal{K}$ agrees with localisation at $\mathcal{K}$, provided
  $\mathcal{L}$ satisfies the following conditions.
  \begin{enumerate}[(i)]
  \item\label{item:16} The group $\pi_n(B)$ is generated by the subgroups
    $\IM(\pi_n(K) \to \pi_n(B))$, $K \in \mathcal{L}$.
    
  \item\label{item:20} The subgroup of $\pi_{n-1}(P)$ generated by
    $\Ker(\pi_{n-1}(P) \to \pi_{n-1}(K))$, $K \in \mathcal{L}$,
    contains $\Ker(\pi_{n-1}(P) \to \pi_{n-1}(B))$.
    
  \item\label{item:21} There is an element of $\mathcal{L}$ containing
    a submanifold diffeomorphic to $S^n \times S^n -
    \Int(D^{2n})$.
  \end{enumerate}
  (There is a bijection $\pi_0 (P) = \pi_0 (K)$, and if $P$ is not
  connected, conditions (\ref{item:16}), (\ref{item:20}) and (\ref{item:21}) are
  required to hold for each path component of $P$.)
\end{theorem}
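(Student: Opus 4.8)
The plan is to derive the theorem from Theorem~\ref{thm:main-C-new} by manufacturing a universal $\theta$-end out of the monoid $\mathcal{L}$; throughout one uses that $\mathcal{K}$, hence any submonoid, is commutative (Lemma~\ref{lem:Kcomm}). I would first record a comparison of Moore--Postnikov stages. Suppose $K \subset [0,\infty)\times\R^\infty$ is obtained by concatenating representatives of elements of $\mathcal{K}$, so that each $K\vert_{[i,i+1]}$ is a highly connected self-cobordism of $(P,\ell_P)$, $K\vert_i = P$, and $\ell_K$ restricts to $\ell_P$ on each $K\vert_i$. Then $P = K\vert_0 \hookrightarrow K$ is $(n-1)$-connected, being a composite of the $(n-1)$-connected inclusions $K\vert_j \hookrightarrow K\vert_{[j,j+1]}$. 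If moreover $K$ is a universal $\theta$-end, then $\pi_n(K)\to\pi_n(B)$ is onto and $\pi_{n-1}(K)\to\pi_{n-1}(B)$ is injective by Addendum~\ref{add:1}(\ref{it:add:1})--(\ref{it:add:2}) with $i=0$, and a short diagram chase with long exact sequences of pairs then shows that the $n$th Moore--Postnikov stage of $\ell_K : K \to B$ agrees with the $(n-1)$st Moore--Postnikov stage of $\ell_P : P \to B$, i.e.\ with the map $\theta'$ in the statement of the theorem. Thus for such a $K$ the target appearing in Theorem~\ref{thm:main-C-new} is exactly $\Omega^\infty MT\theta'$.

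Next I would construct $K$. Choose a sequence $K_1,K_2,K_3,\dots$ of representatives of elements of $\mathcal{L}$ in which every member of a fixed generating family of $\mathcal{L}$ recurs infinitely often and for which the partial products $[K_1]\cdots[K_j]\in\mathcal{L}$ are divisibility-cofinal in $\mathcal{L}$ (both can be arranged by a diagonal enumeration; set-theoretic size is not an issue once one reduces, by a direct-limit argument, to the case that $\mathcal{L}$ is countably generated). Concatenating the $K_j$ gives $K$ with $K\vert_{[j-1,j]}\cong K_j$ and a glued $\theta$-structure $\ell_K$, the pieces matching because each $K_j$ is a $\theta$-cobordism from $(P,\ell_P)$ to itself. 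I would then verify, using Addendum~\ref{add:1}, that $(K,\ell_K)$ is a universal $\theta$-end: condition~(\ref{it:add:1}) follows from hypothesis~(\ref{item:16}) of the theorem since the $\pi_n$-generating cobordisms recur in every tail $K\vert_{[i,\infty)}$; condition~(\ref{it:add:3}) follows from hypothesis~(\ref{item:21}) together with sphericality of $\theta$, which allows the embedded copy of $S^n\times S^n - \Int(D^{2n})$ to be replaced, inside $K\vert_{[i,\infty)}$, by one with null-homotopic structure map to $B$; and condition~(\ref{it:add:2}) follows from hypothesis~(\ref{item:20}), using the support condition defining $\mathcal{K}$ to identify the incoming and outgoing copies of $\pi_{n-1}(P)$ in each $K_j$ away from an $(n-1)$-complex, whence $\Ker(\pi_{n-1}(P)\to\pi_{n-1}(K\vert_{[i,\infty)}))$ is the subgroup generated by the kernels $\Ker(\pi_{n-1}(P)\to\pi_{n-1}(K_j))$, $j>i$, and so contains $\Ker(\pi_{n-1}(P)\to\pi_{n-1}(B))$. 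This last point --- that the kernels contributed by the individual cobordisms add up correctly along the infinite tower --- is the step I expect to be the main obstacle.

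Granting this, $\mathcal{N}^\theta(K\vert_0,\ell_K\vert_0)=\mathcal{N}^\theta(P,\ell_P)$ is nonempty by hypothesis, so Theorem~\ref{thm:main-C-new} and the first step above give a homology equivalence $\hocolim_i \mathcal{N}^\theta(K\vert_i,\ell_K\vert_i)\lra\Omega^\infty MT\theta'$. Since $K\vert_i=P$ with structure $\ell_P$ for every $i$, the left-hand side is the mapping telescope of $\mathcal{N}^\theta(P,\ell_P)\xrightarrow{[K_1]}\mathcal{N}^\theta(P,\ell_P)\xrightarrow{[K_2]}\cdots$, so its homology is the localisation of $H_*(\mathcal{N}^\theta(P,\ell_P))$ at the multiplicative set of partial products, which by cofinality equals $H_*(\mathcal{N}^\theta(P,\ell_P))[\mathcal{L}^{-1}]$. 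By the Pontryagin--Thom description of Remark~\ref{rem:what-is-the-map}, the map of Theorem~\ref{thm:main-C-new} is compatible with the stabilisation maps and restricts on the $0$th term to the map~\eqref{eq:21}; hence the resulting isomorphism $H_*(\mathcal{N}^\theta(P,\ell_P))[\mathcal{L}^{-1}]\xrightarrow{\ \cong\ }H_*(\Omega^\infty MT\theta')$ is the one induced by~\eqref{eq:21} (no localisation is needed on the target, as $\Omega^\infty MT\theta'$ is a group-like $H$-space and the $\mathcal{L}$-action on its homology is already by isomorphisms). Finally I would take $\mathcal{L}=\mathcal{K}$ --- which satisfies~(\ref{item:16})--(\ref{item:21}) under the standing hypotheses, again by a surgery construction using sphericality --- to obtain the first displayed isomorphism; and for a general $\mathcal{L}$ satisfying~(\ref{item:16})--(\ref{item:21}) I would compare with the case $\mathcal{L}=\mathcal{K}$ via the natural map from localising at $\mathcal{L}$ to localising at $\mathcal{K}$, which is then identified with the identity of $H_*(\Omega^\infty MT\theta')$, giving the remaining ``furthermore'' clause.
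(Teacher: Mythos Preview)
Your overall strategy coincides with the paper's: build a universal $\theta$-end by concatenating representatives of elements of $\mathcal{L}$, verify the conditions of Addendum~\ref{add:1}, and invoke Theorem~\ref{thm:main-C-new}; the mapping-telescope and Moore--Postnikov identifications you give are correct and match the paper's.

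There is, however, a genuine gap hidden in your parenthetical ``set-theoretic size is not an issue once one reduces, by a direct-limit argument, to the case that $\mathcal{L}$ is countably generated''. Condition~(\ref{item:16}) is \emph{not} inherited by countable submonoids: if $\pi_n(B)$ is uncountable (which is allowed), no countable $\mathcal{L}'\subset\mathcal{L}$ can satisfy~(\ref{item:16}), since each $\pi_n(K)$ is countable. The paper's fix (Corollary~\ref{cor:L-countable}) is to prove, for each countable $\mathcal{L}'$ satisfying only~(\ref{item:20}) and~(\ref{item:21}), an isomorphism
\[
  H_*(\mathcal{N}^{\theta_{\mathcal{L}'}}(P,\ell_P^{\mathcal{L}'}))[(\mathcal{L}')^{-1}] \;\cong\; H_*(\Omega^\infty MT\theta_{\mathcal{L}'}),
\]
where $\theta_{\mathcal{L}'}:B_{\mathcal{L}'}\to BO(2n)$ is the $n$th Moore--Postnikov stage of the union $X_{\mathcal{L}'}\to B$ of all cobordisms in $\mathcal{L}'$ glued along their common incoming boundary. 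This requires a separate obstruction-theoretic argument that elements of $\mathcal{L}'$ lift to $\theta_{\mathcal{L}'}$-endomorphisms of $(P,\ell_P^{\mathcal{L}'})$ compatibly at \emph{both} ends (using the support condition), and then one must check that $\mathcal{N}^{\theta_{\mathcal{L}'}}(P)$ is a union of path components of $\mathcal{N}^{\theta}(P)$, exhausting it in the colimit over $\mathcal{L}'$, and that $\hocolim_{\mathcal{L}'} B_{\mathcal{L}'}\simeq B'$ so that the targets also converge. None of this is automatic from a formal direct limit.

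A smaller point: your claim that sphericality alone lets you replace the embedded $S^n\times S^n-\Int(D^{2n})$ by one with null-homotopic structure map to $B$ is not right as stated. The paper proves this as a separate lemma (Lemma~\ref{lem:3KContains}): given $[W]\in\mathcal{K}$ containing such a submanifold, one shows that $W\circ W\circ W$ contains one with null-homotopic structure map, by tubing each of the two core $n$-spheres with a reflected copy in another factor of $W$ to cancel its class in $\pi_n(B)$. Sphericality enters only indirectly, through Proposition~\ref{prop:ConnectSum}.
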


Applying the functor $\Hom_{\bZ[\mathcal{K}]}(-, \bQ)$ to both sides
of the isomorphism in the theorem identifies the subring of
$H^*(\mathcal{N}^\theta(P,\ell_P);\bQ)$ consisting of
$\mathcal{K}$-invariants with $H^*(\Omega^\infty_\bullet
MT\theta';\bQ)$.  Observing that these classes are also invariant under the larger monoid
$\mathcal{K}_0$, we deduce the isomorphism
\begin{equation*}
  H^*(\mathcal{N}^\theta(P,\ell_P);\bQ)^{\mathcal{K}_0} \cong
  H^*(\Omega^\infty_\bullet MT\theta';\bQ).
\end{equation*}
The left hand side can be interpreted as characteristic classes of
certain bundles, invariant under fibrewise gluing of trivial bundles.

\subsection{Examples and applications}
\label{sec:exampl-appl}

Recall that the connective cover $BO(d)\langle k\rangle$ is $BSO(d)$
if $k = 1$, $B\mathrm{Spin}(d)$ if $k = 2$ or $3$, and is often called
$B\mathrm{String}(d)$ if $k = 4$, $5$, $6$ or $7$.  We write
$MTSO(d)$, $MT\mathrm{Spin}(d)$ and $MT\mathrm{String}(d)$ for the
corresponding Thom spectra.  As special cases of
Theorem~\ref{thm:main-C-new} we have the following maps, which become
homology equivalences in the limit $g \to \infty$.  All are deduced
from Theorem~\ref{thm:main-C-new} as in
Remark~\ref{rem:limit-of-B-diff-theta}, with $\theta = \mathrm{id}:
BO(2n) \to BO(2n)$. 
\begin{align*}
  B\Diff(g S^3 \times S^3, D^6) &\lra
  \Omega^\infty_\bullet  MT\mathrm{Spin}(6)\\
  B\Diff(g(\bH P^2\# \overline{\bH P}^2), D^8) &\lra
  \Omega^\infty_\bullet MT\mathrm{Spin}(8)\\
  B\Diff(g S^4 \times S^4, D^8) &\lra
  \Omega^\infty_\bullet MT\mathrm{String}(8)\\
  B\Diff(g S^5 \times S^5, D^{10}) &\lra
  \Omega^\infty_\bullet MT\mathrm{String}(10)\\
  B\Diff(g S^6 \times S^6, D^{12}) &\lra
  \Omega^\infty_\bullet MT\mathrm{String}(12)\\
  B\Diff(g S^7 \times S^7, D^{14}) &\lra
  \Omega^\infty_\bullet MT\mathrm{String}(14)\\
  B\Diff(g(\bO P^2 \# \overline{\bO P}^2), D^{16}) &\lra \Omega^\infty_\bullet
  MT\mathrm{String}(16)
\end{align*}

A slightly different type of example is given by $B\Diff(\bC P^3 \# g
S^3 \times S^3, U)$, where $U \subset \bC P^3$ is a tubular
neighbourhood of $\bC P^1$.  In this case the stable homology is that
of $\Omega^\infty_\bullet MT\mathrm{Spin}^c(6)$, where $B\Spin^c(6)$
is the homotopy fibre of the map $\beta w_2: BSO(6) \to K(\bZ,3)$.

An example where we need a more complicated stabilisation (not induced
by connected sum) comes from $\bR P^6$.  The map $\bR P^6 \to BO(6)$
lifts canonically to a 3-connected map $\bR P^6 \to
B\mathrm{Pin}^-(6)$, where $\theta: B\mathrm{Pin}^-(6)\to BO(6)$ is the
homotopy fibre of $w_2 + w_1^2: BO(6) \to K(\bZ/2,2)$.  The standard
self-indexing Morse function $f: \bR P^6 \to [0,6]$ given by
\begin{equation*}
  f(x_0;\cdots;x_6) = \sum_{i=0}^6 i \cdot x_i^2
\end{equation*}
has one critical point of each index, and we let $W = f^{-1}([0,2.5])
\cong \R P^2 \times D^4$.  Cutting out a parallel copy of $W$ gives
a $\theta$-bordism $\widetilde K \cong f^{-1}([2.5,3.5])$ from $\partial
W = \R P^2 \times S^3$ to $-\partial W$ (i.e.\ $\R P^2 \times S^3$
equipped with the opposite $\theta$-structure).  Hence $K_0 =
\widetilde K \circ (-\widetilde K)$ is a cobordism from $\partial W$ to
itself, and we let $K$ be the infinite iteration.  In this situation
we get a stable homology equivalence
\begin{equation*}
  B\Diff((\R P^2 \times D^4) \cup_\partial g K_0, \partial) \lra
  \Omega^\infty_\bullet MT\mathrm{Pin}^-(6).
\end{equation*}

Another interesting special case concerning the manifolds $W_g = \#^g
S^n \times S^n$ is the following.  Let $(Y,y)$ be a pointed space, and
consider the homotopy orbit space
\begin{equation*}
  \mathcal{S}_g^n(Y,y) = (E\Diff(W_g,D^{2n}) \times
  \Map((W_g,D^{2n}), (Y,y)))/\Diff(W_g,D^{2n}).
\end{equation*}
We can determine the stable homology of these spaces using a
Pontryagin--Thom map
\begin{equation}\label{eq:27}
  \coprod_{g \geq 0} \mathcal{S}_g^n(Y,y) \lra \Omega^\infty(Y\langle
  n -1 \rangle_+ \wedge MT\theta^n),
\end{equation}
defined as in Remark~\ref{rem:what-is-the-map}.  Any map $f: (S^n,D^n)
\to (Y,y)$ may be composed with the projection $S^n \times S^n \to
S^n$ to give a map $(W_1,D^{2n}) \to (Y,y)$, which induces a map
$\mathcal{S}_g^n(Y,y) \to \mathcal{S}_{g+1}^n(Y,y)$.  Thus each such
$f$ gives a self-map of the left hand side of~\eqref{eq:27} and a
compatible self-map of the right hand side which is a weak
equivalence.  Up to homotopy, the self-maps depend only on $[f] \in
\pi_n(Y,y)$ and different elements of $\pi_n(Y,y)$ give homotopy
commuting self-maps.  Therefore,~\eqref{eq:27} induces a map from the
stabilised homology
\begin{equation}\label{eq:28}
  \bigg(\bigoplus_{g \geq 0}
  H_*(\mathcal{S}_g^n(Y,y))\bigg)[\pi_n(Y,y)^{-1}] \lra
  H_*(\Omega^\infty(Y\langle n -1 \rangle_+ \wedge MT\theta^n)).
\end{equation}
Applying Theorem~\ref{thm:main-D} to the projection $\theta: BO(2n)
\times Y \to BO(2n)$ implies that~\eqref{eq:28} becomes an
isomorphism, after restricting to appropriate path components.  This
result is a generalisation of the result of Cohen and Madsen
\cite{CM}, who proved the special case where $2n=2$ and $Y$ is simply
connected.  (The case $2n=2$ was generalised to non-simply connected
$Y$ in \cite{GR-W}.)

As a final application, in \cite{Detection} we deduce a generalisation
of the detection result of Ebert (\cite{Ebert09Independence}).  We
will prove that for any abelian group $k$ and any non-zero cohomology
class $c \in H^*(\Omega^\infty_\bullet MTSO(2n);k)$, there exists a
bundle $p: E \to B$ of smooth oriented manifolds, such that the
characteristic class associated to $c$ is non-vanishing in $H^*(B;k)$.
(The case $k=\bQ$ was proved by Ebert.)

\subsection{Cobordism categories and outline of proof}
\label{sec:cobordism-categories}

Finally, let us say a few words about our method of proof, which
follows the strategy in \cite{GR-W} and \cite{GMTW}.  A central object
is the \emph{cobordism category} $\mathcal{C}_{\theta}(\R^N)$, whose
objects are closed $(d-1)$-dimensional manifolds $M \subset \R^{N}$
and whose morphisms are $d$-dimensional cobordisms $W \subset [0,t]
\times \R^N$, both equipped with $\theta$-structures.  

\begin{remark}
  The applications described above use only the case where $d$ is
  even.  Our results about cobordism categories are valid for odd $d$
  as well, but we do not know an interpretation in terms of stable
  homology in that case. In fact, Ebert (\cite{Ebert09}) has shown
  that there are non-trivial classes in $H^*(\Omega^\infty_\bullet
  MTSO(2n+1);\bQ)$ which are trivial when restricted to any
  $B\Diff^+(M^{2n+1})$.  Thus there can be no analogue of e.g.\
  Theorem~\ref{thm:main-C-new}, expressing $H_*(\Omega^\infty_\bullet
  MTSO(2n+1))$ as a direct limit of $H_*(B\Diff(W\cup
  K\vert_{[0,i]}, K\vert_i))$'s. It is
  an interesting question to find an odd-dimensional analogue of our
  results.
\end{remark}

In the limit $N \to \infty$, the main result of \cite{GMTW} gives a
weak equivalence
\begin{equation}\label{eq:2}
  \Omega B\mathcal{C}_\theta \simeq \Omega^\infty MT\theta.
\end{equation}
As in \cite{GR-W}, our strategy will be to find subcategories
$\mathcal{C} \subset \mathcal{C}_\theta$, as small as possible, such
that the inclusion induces a weak equivalence $\Omega B \mathcal{C}
\to \Omega B\mathcal{C}_\theta$.  The proof of
Theorem~\ref{thm:main-C-new} will consist of applying a version of the
``group completion'' theorem to a very small subcategory of
$\mathcal{C}_\theta$.

Let $P$ be a $(2n-1)$-dimensional manifold with $\theta$-structure
$\ell_P : \epsilon^1 \oplus TP \to \theta^*\gamma$, and suppose the
underlying map $P \to B$ is $(n-1)$-connected. We pick a
self-indexing Morse function $f: P \to [0,2n-1]$ and set $L =
f^{-1}([0,n-\tfrac{1}{2}])$. The restriction $L \to B$ is then still
$(n-1)$-connected.  Then we pick a (collared) embedding $L \to
(-\infty,0] \times \R^\infty$, and consider the subcategory
$\mathcal{C}_{\theta,L} \subset \mathcal{C}_{\theta}$ where objects $M
\subset \R \times \R^\infty$ satisfy $M \cap \big((-\infty, 0] \times
\R^{\infty}\big) = L$ and morphisms $W \subset [0,t] \times \R \times
\R^\infty$ satisfy $W \cap \big([0,t] \times (-\infty,0] \times
\R^\infty\big) = [0,t] \times L$.  For both objects and morphisms,
these identities are required to hold as $\theta$-manifolds.  (For
later purposes, we note that the category only depends on $\partial
L$: If $\partial L_1 = \partial L_2$, then there is an isomorphism of
categories which cuts out $\Int(L_1)$ and replaces it with $L_2$.  In
fact, it is convenient to mentally cut out $\Int(L)$ and think of
objects as manifolds with boundary, and morphisms as manifolds with
corners.) In Section~\ref{sec:defin-recoll} we prove that the
inclusion map induces a weak equivalence
\begin{equation}\label{eq:3}
  B\mathcal{C}_{\theta,L} \lra B \mathcal{C}_\theta.
\end{equation}

Secondly, we filter $\mathcal{C}_{\theta,L}$ by \emph{connectivity of morphisms}: for $\kappa \geq -1$, the subcategory $\mathcal{C}_{\theta,L}^{\kappa}$ has
the same objects, but a morphism $W$ from $M_0$ to $M_1$ is required
to satisfy that the inclusion $M_1 \to W$ is $\kappa$-connected, i.e.\
that any map $(D^i,\partial D^i) \to (W,M_1)$ is homotopic to one with
image in $M_1$, for $i \leq \kappa$.  In
Section~\ref{sec:surgery-morphisms} we prove that the inclusion map
induces a weak equivalence
\begin{equation}\label{eq:4}
  B\mathcal{C}_{\theta,L}^\kappa \lra B\mathcal{C}_{\theta,L},
\end{equation}
as long as $\kappa \leq (d-2)/2$.  (In the case where $\kappa=0$, this
is the ``positive boundary subcategory'', and this case was proved in
\cite{GMTW}.)

Thirdly, we filter $\mathcal{C}_{\theta, L}^\kappa$ by \emph{connectivity of objects}: for $l \geq -1$, the subcategory $\mathcal{C}_{\theta,L}^{\kappa,l}
\subset \mathcal{C}_{\theta,L}^{\kappa}$ is the full subcategory on
those objects where the structure map $M \to B$ induces an injection
$\pi_i(M) \to \pi_i(B)$ for all $i \leq l$ and all basepoints, or
equivalently the inclusion $L \to M$ is $l$-connected.  In
Section~\ref{sec:surg-objects-below} we prove that the inclusion map
induces a weak equivalence
\begin{equation}\label{eq:5}
  B\mathcal{C}_{\theta,L}^{\kappa,l} \lra
  B\mathcal{C}_{\theta,L}^\kappa,
\end{equation}
provided $l \leq (d-3)/2$ and $l \leq \kappa$.  (In the case where $l
= 0$ and $B$ is connected, this is the full subcategory on objects
which are path connected, and this case was proved in \cite{GR-W}.)

Fourthly, we focus on the case where $d = 2n > 4$, where we have
now reduced to $\mathcal{C}_{\theta,L}^{n-1,n-2}$, the full
subcategory on those objects for which the inclusion $L \to M$ is
$(n-2)$-connected.  In the final step we let $\mathcal{C}$ denote the
full subcategory on those objects $M$ which can be obtained from $L$ by attaching handles of index at least $n$. (This is equivalent to the condition that $M - \Int(L)$ is
diffeomorphic to a handlebody with handles of index at most $(n-1)$, which if $n > 3$ is in turn equivalent to the inclusion $L \to M$ being $(n-1)$-connected.)  In Section~\ref{sec:surg-objects-middle} we prove that the inclusion map induces a weak equivalence
\begin{equation}\label{eq:6}
  \Omega B \mathcal{C} \lra \Omega B \mathcal{C}_{\theta,L}^{n-1,n-2},
\end{equation}
provided that $\theta$ is spherical.

In the setup and notation of Theorem \ref{thm:main-C-new}, let us
suppose for simplicity that the map $\ell_K: K \to B$ is $n$-connected
(so that $B'=B$), and apply the above discussion with
$P=K\vert_0$. This gives a $\theta$-manifold $L$, and we will show how
to construct a canonical ``double" $\theta$-manifold $D(L)$ having the
following special property: for any object $P \in \mathcal{C}$ there
is a homotopy equivalence
\begin{equation*}
  \mathcal{C}(D(L),P) \simeq \mathcal{N}^\theta(P,\ell_P).
\end{equation*}
The weak equivalences~\eqref{eq:2}, \eqref{eq:3}, \eqref{eq:4},
\eqref{eq:5} and \eqref{eq:6} establish the homotopy equivalence
\begin{equation*}
  \Omega B\mathcal{C} \simeq \Omega^\infty MT\theta,
\end{equation*}
and the proof of Theorem~\ref{thm:main-C-new} in this case will be
completed by applying a suitable version of the ``group completion''
theorem to the canonical map $\mathcal{C}(D(L),P) \to \Omega
B\mathcal{C}$.

The weak equivalences~\eqref{eq:4}, \eqref{eq:5} and \eqref{eq:6} are
established using a parametrised surgery procedure, and the proof
depends on the contractibility of certain spaces of surgery data.
Contractibility is proved in a similar way in all three cases, and we
defer this to Section~\ref{sec:Connectivity}.  Finally, in
Section~\ref{sec:ApplyingGC} we explain how to use a version of the
group completion theorem to prove Theorem~\ref{thm:main-C-new} and tie
things together.

Sections~\ref{sec:surgery-morphisms}--\ref{sec:Connectivity} contain
the main technical steps, but on a first reading it is possible to
skip to Section~\ref{sec:ApplyingGC} after reading
Section~\ref{sec:defin-recoll}, to see the overall structure of the
argument.  The
reader mainly interested in Theorems~\ref{thmcor:rational-coho} and
\ref{thm:main-A} can take $\theta = \theta^n: BO(2n)\langle n\rangle
\to BO(2n)$ and $L \cong D^{2n-1}$ in the above outline and throughout
the paper.  Considering only this special case would not significantly
simplify the main technical steps in
Sections~\ref{sec:surgery-morphisms}--\ref{sec:Connectivity}, but the
group completion arguments in Section~\ref{sec:ApplyingGC} do
simplify, and we incorporate a separate discussion of this case in
Section~\ref{sec:outl-proof-theor}.


\section{Definitions and recollections}
\label{sec:defin-recoll}

\subsection{Tangential structures}
Throughout this paper, an important role will be played by the notion
of a tangential structure on manifolds.  This will be important even
for the proof of theorems which do not explicitly mention tangential
structures on manifolds.  However, for the proofs of
Theorems~\ref{thmcor:rational-coho} and \ref{thm:main-A}, the structure
$\theta = \theta^n: BO(2n)\langle n\rangle \to BO(2n)$
suffices.

\begin{definition}
  A \emph{tangential structure} is a map $\theta : B \to BO(d)$. A
  \emph{$\theta$-structure} on a $d$-manifold $W$ is a bundle map
  (i.e.\ fibrewise linear isomorphism) $\ell : TW \to
  \theta^*\gamma$. A $\theta$-manifold is a pair $(W, \ell)$. More
  generally, a $\theta$-structure on a $k$-manifold $M$ (with $k \leq
  d$) is a bundle map $\ell : \epsilon^{d-k} \oplus TM \to
  \theta^*\gamma$.
\end{definition}

Given vector bundles $U$ and $V$ of the same dimension, but not
necessarily over the same space, we write $\Bun(U, V)$ for the
subspace of $\map(U,V)$ (with the compact-open topology) consisting of
the bundle maps. Thus, $\Bun(TW, \theta^*\gamma)$ is the space of
$\theta$-structures on $W$.

\subsection{Spaces of manifolds}
We recall the definition and main properties of spaces of
submanifolds, from \cite{GR-W}. Fix a tangential structure $\theta : B
\to BO(d)$.

\begin{definition}
  For an open subset $U \subset \bR^n$, we denote by $\Psi_\theta(U)$
  the set of pairs $(M^d, \ell)$ where $M^d \subset U$ is a smooth
  $d$-dimensional submanifold that is a closed as a topological
  subspace, and $\ell$ is a $\theta$-structure on $M$.

  We denote by $\Psi_{\theta_{d-m}}(U)$ the set of pairs $(M, \ell)$
  where $M \subset U$ is a smooth $(d-m)$-dimensional submanifold that
  is closed as a topological subspace, and $\ell$ is a
  $\theta$-structure on $M$, i.e.\ a bundle map $\epsilon^m \oplus TM
  \to \theta^*\gamma$.
\end{definition}

In \cite[\S 2]{GR-W} we have defined a topology on these sets so that
$U \mapsto \Psi_{\theta_{d-m}}(U)$ defines a continuous sheaf of
topological spaces on the site of open subsets of $\bR^n$. We will not
give full details of the topology again here, but remind the reader
that the topology is ``compact-open" in flavour: disregarding
tangential structures, points nearby to $M$ are those which near some
large compact subset $K \subset U$ look like small normal deformations
of $M$. In particular, a typical neighbourhood of the empty manifold
$\emptyset \in \Psi_\theta(U)$ consists of all those manifolds in $U$
disjoint from some compact $K$.

\begin{definition}
  We define $\psi_\theta(n,k) \subset \Psi_\theta(\bR^n)$ to be the
  subspace consisting of those $\theta$-manifolds $(M, \ell)$ such
  that $M \subset \bR^k \times (-1,1)^{n-k}$. We make the analogous
  definition of $\psi_{\theta_{d-m}}(n,k)$.
\end{definition}

\subsection{Semi-simplicial spaces and non-unital categories}

Let $\Delta$ denote the category of finite non-empty totally ordered
sets and monotone maps, the simplicial indexing category. Let
$\Delta_{\mathrm{inj}} \subset \Delta$ denote the subcategory with the
same objects but only injective monotone maps as morphisms.  For a
category $\mathcal{C}$, a \emph{simplicial object in $\mathcal{C}$} is
a contravariant functor $X : \Delta \to \mathcal{C}$, and a
\emph{semi-simplicial object in $\mathcal{C}$} is a contravariant
functor $X : \Delta_{\mathrm{inj}} \to \mathcal{C}$. A map of
(semi-)simplicial objects is a natural transformation of functors.

We call a semi-simplicial object in the category of topological spaces
a \emph{semi-simplicial space}. More concretely, it consists of a
space $X_n = X(0 < 1 < \cdots < n)$ for each $n \geq 0$ and face maps
$d_i : X_n \to X_{n-1}$ defined for $i=0,\ldots,n$ satisfying the
simplicial identities $d_i d_j = d_{j-1} d_i$ for $i < j$. We often
denote a semi-simplicial space by $X_\bullet$, where we treat
$\bullet$ as a place-holder for the simplicial degree.

The \emph{geometric realisation} of a semi-simplicial space
$X_\bullet$ is defined to be
\begin{equation*}
  \vert X_\bullet \vert = \coprod_{n \geq 0} X_n \times \Delta^n / \sim
\end{equation*}
where $\Delta^n$ denotes the standard topological $n$-simplex and the
equivalence relation is generated by $(d_i(x), y) \sim (x, d^i(y))$
where $d^i : \Delta^{n} \to \Delta^{n+1}$ the inclusion of the $i$th
face. This space is given the quotient topology.

The \emph{$k$-skeleton} of $\vert X_\bullet \vert$ is
\begin{equation*}
  \vert X_\bullet \vert^{(k)} = \coprod_{n = 0}^k X_n \times \Delta^n / \sim
\end{equation*}
with the quotient topology, and one easily checks that $\vert
X_\bullet \vert = \cup_{k \geq 0} \vert X_\bullet \vert^{(k)}$ with
the direct limit topology. A useful consequence of this is the
following: a map from a compact space to $\vert X_\bullet \vert$ lands
in a finite skeleton.  We recall the following result.
\begin{lemma}
  If $X_\bullet \to Y_\bullet$ is a map of semi-simplicial spaces such
  that each $X_n \to Y_n$ is a weak homotopy equivalence, then $\vert
  X_\bullet \vert \to \vert Y_\bullet \vert$ is too.\qed
\end{lemma}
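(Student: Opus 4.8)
The plan is to induct up the skeletal filtration. Recall that for a semi-simplicial space $X_\bullet$ there are no degeneracies to account for, and the $k$-skeleton is obtained from the $(k-1)$-skeleton by attaching a copy of $X_k \times \Delta^k$ along $X_k \times \partial \Delta^k$; that is, there is a pushout square with top horizontal map $X_k \times \partial\Delta^k \to \vert X_\bullet\vert^{(k-1)}$ assembled from the face maps $d_i$, left vertical map the inclusion $X_k \times \partial\Delta^k \hookrightarrow X_k \times \Delta^k$, and bottom right corner $\vert X_\bullet\vert^{(k)}$. The inclusion $\partial\Delta^k \hookrightarrow \Delta^k$ is a cofibration, hence so is $X_k \times \partial\Delta^k \hookrightarrow X_k \times \Delta^k$, and therefore this square is a homotopy pushout; the same holds for $Y_\bullet$, and the map $X_\bullet \to Y_\bullet$ induces a map between the two pushout squares.

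First I would treat the base case $k = 0$, where $\vert X_\bullet\vert^{(0)} = X_0 \to Y_0 = \vert Y_\bullet\vert^{(0)}$ is a weak equivalence by hypothesis. For the inductive step, assume $\vert X_\bullet\vert^{(k-1)} \to \vert Y_\bullet\vert^{(k-1)}$ is a weak equivalence. Since $X_k \to Y_k$ is a weak equivalence, so are $X_k \times \partial\Delta^k \to Y_k \times \partial\Delta^k$ and $X_k \times \Delta^k \to Y_k \times \Delta^k$. Thus in the induced map of pushout diagrams all three ``corner'' maps are weak equivalences, and the maps being pushed out along are cofibrations, so the gluing lemma for homotopy pushouts shows that $\vert X_\bullet\vert^{(k)} \to \vert Y_\bullet\vert^{(k)}$ is a weak equivalence. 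This completes the induction.

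Finally I would pass to the colimit. Both $\vert X_\bullet\vert$ and $\vert Y_\bullet\vert$ are the colimits of their skeletal filtrations along the closed cofibrations $\vert X_\bullet\vert^{(k-1)} \hookrightarrow \vert X_\bullet\vert^{(k)}$. Using that any map from a compact space — in particular a sphere or a disc — into such a colimit factors through a finite skeleton, as noted just above the statement, one gets $\pi_i(\vert X_\bullet\vert, x) = \colim_k \pi_i(\vert X_\bullet\vert^{(k)}, x)$ for every $i$ and every basepoint $x$, and similarly for $Y_\bullet$; the induced map on these colimits is an isomorphism by the inductive step, which proves the lemma. I do not anticipate a genuine obstacle: the only points requiring attention are checking that the skeletal inclusions and the maps $X_k \times \partial\Delta^k \hookrightarrow X_k \times \Delta^k$ are honest cofibrations, so that the gluing lemma and the compatibility of homotopy groups with these filtered colimits both apply, and keeping track of all path components and basepoints since the spaces are not assumed connected.
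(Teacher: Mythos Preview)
Your argument is correct and is the standard one. The paper gives no proof at all: the statement is immediately followed by \qed, indicating the authors regard it as well known, so there is nothing to compare against beyond noting that your skeletal-induction-plus-gluing-lemma approach is exactly the expected justification.
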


\begin{remark}
The term \emph{semi-simplicial object} we have defined above is not quite standard  (though is gaining popularity) and deserves some justification. Our justification is that it agrees with Eilenberg and Zilber's original usage of ``semi-simplicial complex" \cite{EZ}. Another is that the alternative used in the literature is \emph{$\Delta$-space}, but as $\Delta$ is the indexing category for full simplicial objects this seems counterintuitive.
\end{remark}

A \emph{non-unital topological category} $\mathcal{C}$ consists of a pair of spaces $(\mathcal{O}, \mathcal{M})$ of objects and morphisms, equipped with source and target maps $s, t : \mathcal{M} \to \mathcal{O}$. We let $\mathcal{M} \times_{t\mathcal{O}s} \mathcal{M}$ denote the fibre product made with the maps $t$ and $s$, and require in addition a composition map $\mu : \mathcal{M} \times_{t\mathcal{O}s} \mathcal{M} \to \mathcal{M}$
which satisfies the evident associativity requirement.

A non-unital topological category $\mathcal{C}$ has a semi-simplicial nerve, generalising the simplicial nerve of a topological category \cite{SegalNerve}. Define $N_\bullet \mathcal{C}$ by $N_0\mathcal{C} = \mathcal{O}$ and
$$N_k \mathcal{C} = \mathcal{M} \times_{t\mathcal{O}s} \mathcal{M} \times_{t\mathcal{O}s} \cdots \times_{t\mathcal{O}s} \mathcal{M} \quad k > 0$$
being the space of $k$-tuples of composable morphisms, and let the face maps be given by composing and forgetting morphisms, as in the simplicial nerve of a topological category. We define the \emph{classifying space} of a non-unital topological category by
$$B\mathcal{C} = \vert N_\bullet \mathcal{C} \vert.$$

\subsection{Definition of the cobordism categories}

For convenience in the rest of the paper, we introduce the following notation. All of our constructions will take place inside $\bR \times \bR^N$, and we write $x_1 : \bR \times \bR^N \to \bR$ for the projection to the first coordinate. Given a manifold $W \subset \bR \times \bR^N$ and a set $A \subset \bR$, we write
$$W \vert_A = W \cap x_1^{-1}(A),$$
and we also write $\ell\vert_A$ for the restriction of a $\theta$-structure $\ell$ on $W$ to this manifold.

Our definition of the cobordism category of $\theta$-manifolds is similar to that of \cite{GR-W} (the only difference is that here will we only define a non-unital category); it follows that of \cite{GMTW} in spirit, but is different in some technical points. We use the spaces of manifolds of the last section in order to describe the point-set topology of these categories.

\begin{definition}
  For each $\epsilon > 0$ we let the non-unital topological category
  $\mathcal{C}_\theta(\bR^N)_\epsilon$ have space of objects
  $\psi_{\theta_{d-1}}(N, 0)$. The space of morphisms from $(M_0,
  \ell_0)$ to $(M_1, \ell_1)$ is the subspace of those $(t, (W, \ell))
  \in \bR \times \psi_\theta(N+1, 1)$ such that $t>0$
  and
  \begin{equation*}
    W \vert_{(-\infty, \epsilon)} = (\bR \times M_0)
    \vert_{(-\infty, \epsilon)} \in \Psi_\theta((-\infty,
    \epsilon)\times \bR^N)
  \end{equation*}
  and
  \begin{equation*}
    W \vert_{(t- \epsilon, \infty)} = (\bR \times M_1)
    \vert_{(t- \epsilon, \infty)} \in \Psi_\theta((t-
    \epsilon, \infty)\times \bR^N).
  \end{equation*}
  Here $\bR \times M_i$ denotes the $\theta$-manifold with underlying
  manifold $\bR \times M_i \subset \bR \times \bR^{N}$ and
  $\theta$-structure
  $$T(\bR \times M_i) \lra \epsilon^1 \oplus TM_i \overset{\ell_i}\lra
  \theta^*\gamma.$$ Composition in this category is defined by
  $$(t, W) \circ (t, W') = (t+t', W\vert_{(-\infty, t]}
  \cup (W' + t\cdot e_1)\vert_{[t, \infty)} )$$ where $W' + t\cdot
  e_1$ denotes the manifold $W'$ translated by $t$ in the first
  coordinate. We topologise the total space of morphisms as a subspace
  of $(0, \infty) \times \psi_\theta(N+1,
  1)$.
  
  If $\epsilon < \epsilon'$ there is an inclusion $\mathcal{C}_\theta(\bR^N)_{\epsilon'} \subset \mathcal{C}_\theta(\bR^N)_\epsilon$, and we define $\mathcal{C}_\theta(\bR^N)$ to be the colimit  over all $\epsilon>0$.
\end{definition}

Note that a morphism $(t, (W, \ell))$ in this category is uniquely
determined by the restriction $(t, (W\vert_{[0,t]},
\ell\vert_{[0,t]}))$. We often think of morphisms in this
category as being given by such restricted manifolds, but the topology
on the space of morphisms is best described as we did above.

As explained in the introduction, we will also require a version of this category where the objects and
morphisms contain a fixed codimension zero submanifold. In order to
define this, we let
\begin{equation*}
  L \subset (-1/2, 0] \times (-1,1)^{N-1}
\end{equation*}
be a compact $(d-1)$-manifold which near $\{0\} \times \R^{N-1}$
agrees with $(-1,0]\times \partial L$. Furthermore, we let $\ell\vert_L : \epsilon^1 \oplus TL \to
\theta^*\gamma$ be a $\theta$-structure on $L$.  Near $\partial L$
we require that the structure is a product (i.e.\ that translation in
the collar direction preserves the structure).  Such an $\ell$ makes
$\R \times L$ into a $\theta$-manifold with boundary, and we make
the following definition.
\begin{definition}\label{defn:sub-cat-with-L}
  The topological subcategory $\mathcal{C}_{\theta, L}(\bR^N) \subset
  \mathcal{C}_{\theta}(\bR^N)$ has space of objects those $(M, \ell)$
  such that
  $$M \cap \big((-\infty, 0] \times \bR^{N-1}\big) = L$$
  as $\theta$-manifolds. It has space of morphisms from $(M_0,
  \ell_0)$ to $(M_1, \ell_1)$ given by those $(t, (W, \ell))$ such
  that
  $$W \cap \big(\bR \times (-\infty, 0] \times \bR^{N-1}\big) = \bR \times L$$
  as $\theta$-manifolds.
\end{definition}

\begin{remark}\label{rem:IndependenceL}
  The category $\mathcal{C}_{\theta, L}(\bR^N)$ does not really depend
  on $L$, but only on $\partial L$. It is sometimes convenient to
  think of the interior of $L$ as being cut out, so that objects in
  the category are manifolds with boundary $\partial L$ and morphisms
  are cobordisms between manifolds with boundary which are trivial
  along the boundary.

  If we take $L = D^{d-1}$ then the category $\mathcal{C}_{\theta,
    L}(\bR^N)$ is equivalent to the category of ``manifolds with
  basepoint'' defined in \cite[Definition 4.2]{GR-W}.  That case is
  sufficient for the proofs of Theorems~\ref{thmcor:rational-coho}
  and~\ref{thm:main-A}.
\end{remark}

The subject of our main technical theorem, from which we shall show
how to obtain results on diffeomorphism groups in Section \ref{sec:ApplyingGC}, is certain subcategories of
$\mathcal{C}_{\theta, L}(\bR^N)$ where we require the morphisms to
have certain connectivities relative to their outgoing boundaries, and
objects to be those $(M, \ell_M)$ whose Gauss map $M \to B$  (i.e.\ the map underlying $\ell_M : \epsilon^1 \oplus TM \to \theta^*\gamma$) has a certain injectivity range on homotopy groups.

\begin{definition}
  The topological subcategory $\mathcal{C}^{\kappa}_{\theta, L}(\bR^N)
  \subset \mathcal{C}_{\theta, L}(\bR^N)$ has the same space of
  objects. It has space of morphisms from $(M_0, \ell_0)$ to $(M_1,
  \ell_1)$ given by those $(t, (W, \ell))$ such that the pair
  $(W\vert_{[0,t]}, W \vert_{t})$ is $\kappa$-connected, i.e.\ such
  that $\pi_i(W\vert_{[0,t]}, W \vert_{t}) = 0$ for all basepoints and
  all $i \leq \kappa$. Thus this is the subcategory on those morphisms
  which are $\kappa$-connected relative to their outgoing
  boundary.
\end{definition}

The category $\mathcal{C}^0_{\theta}$ is the ``positive boundary
category" as in \cite{GMTW}, where each path component of a cobordism
is required to have non-empty outgoing boundary.

\begin{definition}
  The topological subcategory $\mathcal{C}^{\kappa, l}_{\theta,
    L}(\bR^N) \subset \mathcal{C}^{\kappa}_{\theta, L}(\bR^N)$ is the
  full subcategory on those objects $(M, \ell)$ such that the
  map
  \begin{equation*}
    \ell_* : \pi_i(M) \lra \pi_i(B)
  \end{equation*}
  is injective for all $i \leq l$ and all basepoints.  (In our main
  application in Section \ref{sec:ApplyingGC}, the map $L \to B$ will
  be $(l+1)$-connected.  In that case the requirement is equivalent to
  $(M,L)$ being $l$-connected.)
\end{definition}

For our final definition we specialise to even dimensions.
\begin{definition}\label{defn:CobCatA}
  Let $d=2n$ and let
  \begin{equation*}
    \mathcal{A} \subset \pi_0\left(\Ob(\mathcal{C}_{\theta, L}^{n-1,
        n-2}(\bR^N))\right)
  \end{equation*}
  be a collection of path components of the space of objects. The topological subcategory
  $\mathcal{C}^{n-1, \mathcal{A}}_{\theta, L}(\bR^N) \subset
  \mathcal{C}^{n-1, n-2}_{\theta, L}(\bR^N)$ is the full subcategory
  on the subspace of those objects in $\mathcal{A}$.
\end{definition}

For $N = \infty$, we shall often denote $\mathcal{C}_\theta(\bR^\infty) = \colim_N \mathcal{C}_\theta(\R^N)$ by $\mathcal{C}_\theta$, and similarly with any decorations.

\subsection{The homotopy type of the cobordism category}\label{sec:GMTW}

The main theorem of \cite{GMTW} identifies the homotopy type $\Omega B
\mathcal{C}_\theta$ in terms of the infinite loop space of a certain
Thom spectrum $MT\theta$.

Recall from the introduction that given a map $\theta : B \to BO(d) =
\mathrm{Gr}_d(\bR^\infty)$ we let $B(\bR^n) =
\theta^{-1}(\mathrm{Gr}_d(\bR^n))$ and define $\gamma_{n}^\perp \to
\mathrm{Gr}_d(\bR^n)$ to be the orthogonal complement of the
tautological bundle. The canonical map $B(\bR^n) \to B(\bR^{n+1})$
pulls back $\theta^*\gamma_{n+1}^\perp$ to $\theta^*\gamma_{n}^\perp
\oplus \epsilon^1$ and hence we obtain pointed maps
\begin{equation*}
  \left(B(\bR^n)^{\theta^*\gamma_{n}^\perp}\right)\wedge S^1 \lra
  B(\bR^{n+1})^{\theta^*\gamma_{n+1}^\perp}
\end{equation*}
of Thom spaces, which form a spectrum $MT\theta$. Its associated
infinite loop space is
\begin{equation*}
  \Omega^\infty MT\theta = \colim_{n \to \infty}
  \Omega^n\left(B(\bR^n)^{\theta^*\gamma_{n}^\perp}\right).
\end{equation*}

\begin{theorem}[Galatius--Madsen--Tillmann--Weiss \cite{GMTW}]
There is a canonical map
$$\Omega B\mathcal{C}_\theta \lra \Omega^\infty MT\theta$$
which is a weak homotopy equivalence.
\end{theorem}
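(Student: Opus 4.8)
This is the Galatius--Madsen--Tillmann--Weiss theorem, and the plan is to follow \cite{GMTW}, in the reformulation via spaces of manifolds given in \cite{GR-W}: identify both sides with a common ``space of $\theta$-manifolds'' and prove the resulting comparison by a parametrised surgery argument. Concretely there are three steps: (1) a weak equivalence relating $B\mathcal{C}_\theta$ to a space of $\theta$-manifolds in $\bR \times \bR^\infty$ that are ``long'' in the first coordinate; (2) a Pontryagin--Thom map from that space to $\Omega^{\infty-1}MT\theta$; and (3) the proof that this map is a weak equivalence. Looping the composite equivalence $B\mathcal{C}_\theta \simeq \Omega^{\infty-1}MT\theta$ then yields the stated map $\Omega B\mathcal{C}_\theta \to \Omega^\infty MT\theta$ and the theorem.

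For step (1) I would use a ``cutting'' resolution. Form the semi-simplicial space $X_\bullet$ whose space of $p$-simplices consists of a long $\theta$-manifold $(W,\ell)$ together with an ordered tuple $a_0 < \cdots < a_p$ of regular values of $x_1 \colon W \to \bR$, with face maps forgetting one of the $a_i$; cutting $W$ along the slices $\{x_1 = a_i\}$ identifies $X_\bullet$ with (a resolution of) the nerve $N_\bullet \mathcal{C}_\theta$, so that $\vert X_\bullet \vert \simeq B\mathcal{C}_\theta$. On the other hand the augmentation $\vert X_\bullet \vert \to (\text{space of long manifolds})$ forgetting the cutting data is a weak equivalence, because for fixed $W$ the space of admissible tuples of regular values is contractible (a standard argument with partitions of unity and spaces of Morse-type functions). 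This step, together with the comparison of this space with the spaces $\psi_\theta(n,1)$ introduced above, is essentially formal.

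For step (2) I would first dispatch the elementary identification $\psi_\theta(n,0) \simeq B(\bR^n)^{\theta^*\gamma_n^\perp}$, the $n$th space of $MT\theta$: send a point of the Thom space --- an affine $d$-plane in $\bR^n$ carrying a $\theta$-structure, or the point at infinity --- to its intersection with $(-1,1)^n$, or to the empty manifold, and contract the complement of the image by a deformation that ``zooms in'' near a point of each manifold and scales down its normal variation. The Pontryagin--Thom collapse, applied in the directions transverse to a long manifold and assembled over $n$ with the standard bookkeeping that turns the extra suspension coordinates of $MT\theta$ into loop coordinates, then upgrades this to the desired map into $\Omega^{\infty-1}MT\theta$.

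Step (3) is the hard part, and I expect it to be the main obstacle. One must show that the Pontryagin--Thom map is a weak equivalence, and the tool is parametrised surgery: given a compact family of $\theta$-manifolds, one performs --- supported away from a neighbourhood of infinity, so as not to disturb the boundary conditions --- a family of surgeries and compressions after which every member of the family is ``standard'' near each scanning window, i.e.\ a disjoint union of graphs of small functions over affine planes; on such standard families the Pontryagin--Thom map is visibly an equivalence. The genuine content is that the relevant spaces of surgery data are highly connected, which one proves by writing down an explicit deformation retraction of such a space onto a point. In \cite{GMTW} this is organised through a general ``homotopy colimit theorem'' for sheaves on the category of open subsets of Euclidean spaces, which reduces the assertion to exactly this kind of local contractibility statement. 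This is the same mechanism, and essentially the same type of surgery estimate, that the present paper must itself establish in order to prove the weak equivalences \eqref{eq:4}, \eqref{eq:5} and \eqref{eq:6}.
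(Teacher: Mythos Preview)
The paper does not prove this statement at all: it is quoted as the main theorem of \cite{GMTW} and used as a black box. There is therefore no ``paper's own proof'' to compare against; the weak equivalence $B\mathcal{C}_\theta \simeq \psi_\theta(\infty,1) \simeq \Omega^{\infty-1} MT\theta$ is simply imported (see \eqref{eq:SpaceModel} and the surrounding discussion), with the intermediate identification $B\mathcal{C}_\theta \simeq BD_\theta \simeq \psi_\theta(N+1,1)$ recalled from \cite[Theorems~3.9 and~3.10]{GR-W}.

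Your sketch is a reasonable outline of the \cite{GMTW}/\cite{GR-W} argument, but your Step~(3) mischaracterises the mechanism. The hard step in those papers is a \emph{scanning} argument --- one proves $\psi_\theta(n,k) \simeq \Omega\,\psi_\theta(n,k-1)$ by ``zooming in'' in one coordinate direction at a time (or, in the original \cite{GMTW} formulation, by the homotopy colimit theorem for concordance sheaves). This is not a parametrised-surgery argument in the sense you describe, and it is not the same mechanism as the surgery in Sections~\ref{sec:surgery-morphisms}--\ref{sec:surg-objects-middle} of the present paper. The equivalences \eqref{eq:4}, \eqref{eq:5}, \eqref{eq:6} concern passing to \emph{subcategories} of $\mathcal{C}_\theta$ defined by connectivity conditions on morphisms and objects; those do require genuine surgery on manifolds and contractibility of spaces of surgery data, whereas the GMTW theorem itself does not. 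So your last paragraph conflates two different ingredients of the overall story.
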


We write $\Omega^\infty_\bullet MT\theta$ for the basepoint component
of $\Omega^\infty MT\theta$, and now describe the rational cohomology
of this space. The map $B \overset{\theta}\to BO(d) \overset{\det}\lra
BO(1)$ on fundamental groups defines a character $w_1 : \pi_1(B) \to
\bZ^{\times}$, and we write $H^*(B;\bQ^{w_1})$ for the rational
cohomology of $B$ with local coefficients given by this character. For
each $n$ there are evaluation maps
\begin{equation*}
  ev : \Sigma^n \Omega^n
  \left(B(\bR^n)^{\theta^*\gamma_{n}^\perp}\right) \lra
  B(\bR^n)^{\theta^*\gamma_{n}^\perp}
\end{equation*}
and so we can define the dotted map in the diagram
\begin{equation*}
	\xymatrix{
		H^{*+d}(B(\bR^n);\bQ^{w_1}) \ar@{.>}[r] \ar@{=}[d]_{\text{Thom iso.}} & H^*(\Omega^n (B(\bR^n)^{\theta^*\gamma_{n}^\perp});\bQ) \ar@{=}[d]^{\text{ Suspension iso.}}\\
		{\widetilde{H}^{*+n}(B(\bR^n)^{\theta^*\gamma_{n}^\perp};\bQ)} \ar[r]^-{ev^*} & {\widetilde{H}^{*+n}(\Sigma^n \Omega^n (B(\bR^n)^{\theta^*\gamma_{n}^\perp});\bQ)}
	}
\end{equation*}
by commutativity.  Taking limits and restricting to the basepoint
component, we obtain a map
\begin{equation*}
  \sigma : H^{*+d}(B;\bQ^{w_1}) \lra H^*(\Omega^\infty_\bullet MT\theta;\bQ)
\end{equation*}
and the right-hand side is a graded-commutative algebra, so $\sigma$
extends to the free graded-commutative algebra on the part of
$H^{*+d}(B;\bQ^{w_1})$ of degree $> 0$,
$$\Lambda(H^{*+d > 0}(B;\bQ^{w_1})) \lra H^*(\Omega^\infty_\bullet MT\theta;\bQ).$$
This is an isomorphism of graded-commutative algebras.

\subsection{Poset models}\label{sec:PosetModels}

A key step in the proofs of \cite{GMTW} and \cite{GR-W} identifying
the infinite loop space $B \mathcal{C}_\theta$ is to first identify
this classifying space with the classifying space of a certain
topological poset.  The result holds for all variations of the
cobordism category mentioned above; we prove the general result in
Proposition~\ref{prop:PosetModel} below.
\begin{definition}
  Let $\mathcal{C} \subset \mathcal{C}_\theta(\bR^N)$ be a
  subcategory. Let
$${D}^\mathcal{C}_\theta \subset \bR \times \bR_{>0} \times \psi_\theta(N+1, 1)$$
denote the subspace of triples $(t, \epsilon, (W, \ell))$ such that $[t-\epsilon, t+\epsilon]$ consists of regular values for $x_1 : W \to \bR$, and $W \vert_t \in \Ob(\mathcal{C})$. Define a partial order on $D_\theta^\mathcal{C}$ by
$$(t, \epsilon,  (W, \ell)) < (t', \epsilon', (W', \ell'))$$
if and only if $(W, \ell) = (W', \ell')$, $t+\epsilon < t' -\epsilon$ and $W \vert_{[t, t']} \in \Mor( \mathcal{C})$.
\end{definition}

\begin{proposition}\label{prop:PosetModel}
  Let $\mathcal{C} \subset \mathcal{C}_{\theta, L}(\bR^N) \subset
  \mathcal{C}_{\theta}(\bR^N)$ be a subcategory which consists of
  entire path components of the object and morphism spaces of
  $\mathcal{C}_{\theta, L}(\bR^N)$. Then there is a weak homotopy
  equivalence
$$B\mathcal{C} \simeq B{D}_\theta^\mathcal{C}.$$
\end{proposition}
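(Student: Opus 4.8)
The plan is to compare the semi-simplicial nerves $N_\bullet D_\theta^{\mathcal{C}}$ and $N_\bullet\mathcal{C}$ directly, show the comparison is a levelwise weak equivalence, and conclude via the realisation lemma above that $BD_\theta^{\mathcal{C}} = \vert N_\bullet D_\theta^{\mathcal{C}}\vert \simeq \vert N_\bullet\mathcal{C}\vert = B\mathcal{C}$. A $p$-simplex of $N_\bullet D_\theta^{\mathcal{C}}$ is a chain $(t_0,\epsilon_0,(W,\ell)) < \cdots < (t_p,\epsilon_p,(W,\ell))$; the order relation forces a common underlying $\theta$-manifold $(W,\ell)$, forces $t_0 < t_1 < \cdots < t_p$ with room around each level, and — the key point — forces each $W\vert_{[t_{i-1},t_i]}$, reinterpreted as a morphism $W\vert_{t_{i-1}} \to W\vert_{t_i}$, to lie in $\Mor(\mathcal{C})$. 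Hence there is a tautological map of semi-simplicial spaces
\[
  F_\bullet \colon N_\bullet D_\theta^{\mathcal{C}} \lra N_\bullet\mathcal{C},
  \qquad (t_0 < \cdots < t_p,\ \vec\epsilon,\ (W,\ell))\ \longmapsto\
  \big(W\vert_{[t_0,t_1]},\,\dots,\,W\vert_{[t_{p-1},t_p]}\big).
\]
This is simplicial: compatibility with an inner face map is the identity $W\vert_{[t_{i-1},t_i]}\circ W\vert_{[t_i,t_{i+1}]} = W\vert_{[t_{i-1},t_{i+1}]}$ (composition in $\mathcal{C}_\theta(\bR^N)$ being gluing of the underlying manifolds), and compatibility with the two outer face maps is immediate since $F_\bullet$ records only the restrictions over the intervals $[t_{i-1},t_i]$. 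By the realisation lemma it therefore suffices to prove that $F_p$ is a weak homotopy equivalence for each $p$.

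For this I would exhibit, for fixed $p$, a deformation retraction of $N_p D_\theta^{\mathcal{C}}$ onto the subspace $S_p$ of those tuples with $t_0 = 0$, with $(W,\ell)$ cylindrical (as a $\theta$-manifold) on a neighbourhood of each $t_i$, and with $(W,\ell)$ equal to the product ends $(-\infty,0]\times W\vert_0$ and $[t_p,\infty)\times W\vert_{t_p}$ outside $[0,t_p]$. Gluing a composable $p$-string of morphisms end to end and capping it with product ends identifies $S_p$, up to a contractible space of admissible $\epsilon_i$, with $N_p\mathcal{C}$, and $F_p$ restricted to $S_p$ is this identification followed by the (contractible) forgetting of the $\epsilon_i$; so once the deformation retraction is in place $F_p$ is a weak equivalence. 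The retraction is assembled from four moves, each carried out continuously in the family and each staying inside $N_pD_\theta^{\mathcal{C}}$: (1) translate $W$ in the $x_1$-direction so that $t_0 \mapsto 0$; (2) using the flow of a vector field projecting to $\partial/\partial x_1$ near each regular level $t_i$, isotope $W$ — with support in disjoint neighbourhoods of the $t_i$ and fixing the slices $W\vert_{t_i}$ — to make it a genuine product there, and homotope the $\theta$-structure to the corresponding translation-invariant one; the spaces of such collars and of bundle maps extending a fixed one over a slice are contractible, and throughout this move $W\vert_{[t_{i-1},t_i]}$ stays in its path component of $\Mor(\mathcal{C}_{\theta,L}(\bR^N))$, hence in $\Mor(\mathcal{C})$ by hypothesis; (3) push the two tails $W\vert_{(-\infty,0]}$ and $W\vert_{[t_p,\infty)}$ out to product ends by the standard stretching homotopy in $\psi_\theta$, i.e.\ translate the tail towards $\mp\infty$ while filling in with the cylinder on $W\vert_0$ resp.\ $W\vert_{t_p}$, a homotopy which is continuous up to and including the limiting parameter because the topology on $\psi_\theta$ is compact-open in flavour; (4) shrink each $\epsilon_i$ into the cylindrical region produced by (2).

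I expect the main obstacle to be the continuity-in-families and compatibility bookkeeping in moves (2) and (3): making the choices of straightening vector fields, homotopies of $\theta$-structures, and stretching reparametrisations simultaneously and continuously over all of $N_pD_\theta^{\mathcal{C}}$, and verifying continuity of the tail-stretch at the limiting value where the tails become honest product ends. These are exactly the point-set manoeuvres for spaces of manifolds developed in \cite{GR-W} (choosing sections of bundles with convex fibres via partitions of unity, and exploiting that a neighbourhood of the empty manifold in $\psi_\theta$ consists of manifolds avoiding a prescribed compact set), and I would carry them out by those methods; the argument runs parallel to the identifications of $B\mathcal{C}_\theta$ with poset models in \cite{GMTW} and \cite{GR-W}. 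Finally, the hypothesis that $\mathcal{C}$ is a union of path components of the object and morphism spaces of $\mathcal{C}_{\theta,L}(\bR^N)$ is used precisely to ensure that the deformations above never leave $\mathcal{C}$; alternatively one could first treat $\mathcal{C} = \mathcal{C}_{\theta,L}(\bR^N)$ and then restrict, since a levelwise weak equivalence of semi-simplicial spaces is a bijection on path components in each degree and hence restricts over any collection of them.
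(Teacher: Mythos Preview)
Your map $F_\bullet$ is not well-defined. An element of $D_\theta^{\mathcal{C}}$ is a triple $(t,\epsilon,(W,\ell))$ where the only constraint on $W$ near height $t$ is that $[t-\epsilon,t+\epsilon]$ consist of \emph{regular values} of $x_1\colon W \to \bR$; there is no requirement that $W$ be a product (cylindrical) there. But a morphism in $\mathcal{C}_\theta(\bR^N)$ from $M_0$ to $M_1$ is by definition a pair $(t,(W,\ell))$ with $W$ equal to $\bR \times M_0$ on some $(-\infty,\epsilon)$ and to $\bR \times M_1$ on some $(t-\epsilon,\infty)$. So the restriction $W\vert_{[t_{i-1},t_i]}$ (translated and given tails) does not in general land in $\Mor(\mathcal{C})$, and the composition identity $W\vert_{[t_{i-1},t_i]} \circ W\vert_{[t_i,t_{i+1}]} = W\vert_{[t_{i-1},t_{i+1}]}$ you invoke for simpliciality does not even parse: composition in $\mathcal{C}_\theta(\bR^N)$ is only defined when the pieces are cylindrical at the gluing level. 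The clause ``$W\vert_{[t,t']} \in \Mor(\mathcal{C})$'' in the definition of the order on $D_\theta^{\mathcal{C}}$ is a path-component condition, not a claim that this restriction literally lies in the morphism space.

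The paper handles this by inserting an auxiliary subposet $D_\theta^{\mathcal{C},\perp} \subset D_\theta^{\mathcal{C}}$ consisting of those $(t,\epsilon,(W,\ell))$ for which $W$ \emph{is} a product over $(t-\epsilon,t+\epsilon)$, and using the zig-zag of functors
\[
  D_\theta^{\mathcal{C}} \longleftarrow D_\theta^{\mathcal{C},\perp} \longrightarrow \mathcal{C}.
\]
The right-hand functor is now genuinely defined (it is your $F_\bullet$, but only on the locus where it makes sense), and both legs are shown to be levelwise weak equivalences on nerves by the methods of \cite[Theorem~3.9]{GR-W}. Your deformation-retraction moves (1)--(4) are essentially the argument that the inclusion $N_p D_\theta^{\mathcal{C},\perp} \hookrightarrow N_p D_\theta^{\mathcal{C}}$ is a weak equivalence, so the substance of your proposal is correct; what is missing is the recognition that no direct simplicial map $N_\bullet D_\theta^{\mathcal{C}} \to N_\bullet\mathcal{C}$ exists, and that a zig-zag through the cylindrical locus is required.
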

\begin{proof}
We introduce an auxiliary topological
  poset $D^{\mathcal{C},\perp}_\theta$ which maps to both
  $D_\theta^\mathcal{C}$ and $\mathcal{C}$.  It is the subposet of
  $D_\theta^\mathcal{C}$ consisting of $(t,\epsilon,(W,\ell)$ such
  that $(W,\ell)$ is a product over $(t-\epsilon,t+\epsilon)$.  This
  conditions means that if we write $W \vert_{t} = \{t\} \times
  M$ and give $M$ the inherited $\theta$-structure, then
  \begin{equation*}
    W \vert_{(t-\epsilon,t+\epsilon)} = (t-\epsilon,
    t+\epsilon) \times M
  \end{equation*}
  as $\theta$-manifolds.  Then there is a zig-zag of functors
  \begin{equation*}
    D_\theta^\mathcal{C} \lla D_\theta^{\mathcal{C},\perp}
    \lra \mathcal{C},
  \end{equation*}
  where the first arrow is the inclusion of the subposet and the
  second is the functor that sends a morphism $(a < b, W,\ell)$ to the manifold $(W\vert_{[a,b]} -a\cdot e_1)$ extended cylindrically in $(-\infty,0] \times \R^N$ and $[b-a,\infty) \times \R^N$.  This induces a
  zig-zag diagram 
  \begin{equation*}
    N_kD_\theta^\mathcal{C} \lla N_kD_\theta^{\mathcal{C},\perp}
    \lra N_k\mathcal{C},
  \end{equation*}
  and we prove that both maps are weak equivalence for all $k$ in the
  same way as in \cite[Theorem 3.9]{GR-W}.
\end{proof}

Applying the above construction to the categories
$\mathcal{C}_{\theta, L}^{\kappa, l}(\bR^N)$ we obtain topological posets
${D}_{\theta, L}^{\kappa, l}(\bR^N)$ and weak homotopy equivalences
\begin{equation}\label{eq:PosetModel}
  B\mathcal{C}_{\theta, L}^{\kappa, l}(\bR^N) \simeq B{D}_{\theta, L}^{\kappa, l}(\bR^N).
\end{equation}
Similarly, when we specialise to the case $d=2n$ and let $\mathcal{A}
\subset \pi_0(\Ob(\mathcal{C}_{\theta, L}^{n-1,
  n-2}(\bR^N)))$ be a collection of path components of objects, we
obtain weak homotopy equivalences
\begin{equation}\label{eq:PosetModelA}
B\mathcal{C}_{\theta, L}^{n-1, \mathcal{A}}(\bR^N) \simeq B{D}_{\theta, L}^{n-1, \mathcal{A}}(\bR^N).
\end{equation}

\subsection{The homotopy type of $\mathcal{C}_{\theta, L}(\bR^N)$}
In \cite[Theorems 3.9 and 3.10]{GR-W} we proved that there is a weak homotopy equivalence $BD_\theta(\bR^N) \simeq \psi_\theta(N+1, 1)$, which combined with Proposition \ref{prop:PosetModel} gives
\begin{equation}\label{eq:SpaceModel}
B\mathcal{C}_\theta(\bR^N) \simeq B{D}_\theta(\bR^N) \simeq \psi_\theta(N+1, 1).
\end{equation}
(Strictly speaking, in that paper we worked with a version of
${D}_\theta(\bR^N)$ where $\epsilon = 0$, but the obvious map induces
a levelwise weak equivalence of nerves.) For the purposes of this paper we
require a slightly stronger version of this result, taking into
account the submanifold $L$.
\begin{proposition}\label{prop:SpaceModelL}
  There are weak homotopy equivalences
  \begin{equation}\nonumber
    B\mathcal{C}_{\theta, L}(\bR^N) \simeq B{D}_{\theta, L}(\bR^N)
    \simeq \psi_{\theta, L}(N+1, 1)
  \end{equation}
  where $\psi_{\theta, L}(N+1, 1) \subset \psi_{\theta}(N+1, 1)$ is
  the subspace consisting of those $(W, \ell)$ such that $W \cap (\bR
  \times (-\infty,0] \times \bR^{N-1}) = \bR \times L$ as
  $\theta$-manifolds.
\end{proposition}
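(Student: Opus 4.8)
The first of the two weak equivalences requires no new argument: it is the special case $\mathcal{C}=\mathcal{C}_{\theta,L}(\bR^N)$ of Proposition~\ref{prop:PosetModel}, since a category trivially consists of all the path components of its own spaces of objects and morphisms. (Here $D_{\theta,L}(\bR^N)$ denotes the topological poset $D_\theta^{\mathcal{C}_{\theta,L}(\bR^N)}$, in the notation of the definition preceding that proposition, just as $D_{\theta,L}^{\kappa,l}(\bR^N)$ was obtained from $\mathcal{C}_{\theta,L}^{\kappa,l}(\bR^N)$.) Thus the real content is the second equivalence, $BD_{\theta,L}(\bR^N)\simeq\psi_{\theta,L}(N+1,1)$, and the plan is to reprove \cite[Theorems 3.9 and 3.10]{GR-W} --- which give the unconstrained statement $BD_\theta(\bR^N)\simeq\psi_\theta(N+1,1)$ --- while carrying the submanifold $\bR\times L$ along.

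Recall the shape of the \cite{GR-W} argument. One introduces a semi-simplicial space whose $p$-simplices consist of a $\theta$-manifold $(W,\ell)\in\psi_\theta(N+1,1)$ together with reals $t_0<\cdots<t_p$, each admitting a neighbourhood consisting of regular values of $x_1:W\to\bR$ over which $W$ is a $\theta$-product, the face maps forgetting a $t_i$. Forgetting all the $t_i$ gives an augmentation to the constant semi-simplicial space on $\psi_\theta(N+1,1)$, and comparing this semi-simplicial space with the nerve of $D_\theta(\bR^N)$ by the product-near-a-cut manipulation --- exactly as in the proof of Proposition~\ref{prop:PosetModel} --- identifies its realisation with $BD_\theta(\bR^N)$. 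The substantive step is then to prove the augmentation a weak equivalence, by exhibiting, naturally in $W$, a contraction of the realised space of admissible tuples of cuts of $W$. I would run all of this inside the subspace $\psi_{\theta,L}(N+1,1)$.

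This carries over because $\bR\times L$ is a $\theta$-product in the $x_1$-direction. Hence, for every $(W,\ell)\in\psi_{\theta,L}(N+1,1)$ and every $t\in\bR$, the part of $W$ lying over $(-\infty,0]\times\bR^{N-1}$ in the $\bR^N$-factor --- which is precisely $\bR\times L$ --- automatically has $t$ as a regular value of $x_1$, is automatically a $\theta$-product near $t$, and the slice $W\vert_t$ automatically meets $\{t\}\times(-\infty,0]\times\bR^{N-1}$ in $\{t\}\times L$, hence lies in $\Ob(\mathcal{C}_{\theta,L}(\bR^N))$. So the $L$-constraint places no restriction on the admissible cuts of $W$, and the space of such cuts coincides with the space of cuts of $W$ regarded merely as an element of $\psi_\theta$. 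Moreover the deformations used in \cite{GR-W} to contract that space are assembled from $x_1$-translations, which preserve $\bR\times L$, together with isotopies and $\theta$-homotopies supported over a compact interval of the $x_1$-line in the locus where $W$ is not already a product --- a locus disjoint from $\bR\times L$. Arranging these to be cylindrical in $x_1$ on $\bR\times L$, they restrict to self-deformations of $\psi_{\theta,L}(N+1,1)\subset\psi_\theta(N+1,1)$, and the \cite{GR-W} proof then applies verbatim.

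The one step calling for genuine care is thus this last bookkeeping: going through the contractibility argument of \cite{GR-W} and checking that each isotopy, concordance and reparametrisation occurring there can be taken to leave the cylinder $\bR\times L$ invariant. I do not anticipate real difficulty --- $\bR\times L$ is translation-invariant in the cutting direction and so is as inert as a submanifold can be with respect to this machinery --- but it is precisely where the hypotheses of the proposition enter, and it is worth making explicit that the supports of all the relevant deformations may be kept away from $\bR\times L$.
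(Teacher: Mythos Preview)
Your proposal is correct and takes essentially the same approach as the paper: the first equivalence is Proposition~\ref{prop:PosetModel}, and for the second the paper simply asserts that ``the proof of \cite[Theorem 3.10]{GR-W} applies verbatim.'' Your write-up is a faithful unpacking of why that is so --- the cylinder $\bR\times L$ is a $\theta$-product in the $x_1$-direction, so it is invisible to the cut-finding and deformation arguments --- and is more informative than the paper's one-line citation.
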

\begin{proof}
  The proof of \cite[Theorem 3.10]{GR-W} applies verbatim.
\end{proof}
\begin{proposition}\label{prop:forget-L}
  The inclusion
  \begin{equation}\nonumber
    i : \psi_{\theta, L}(N+1, 1) \lra \psi_{\theta}(N+1, 1)
  \end{equation}
  is a weak homotopy equivalence.
\end{proposition}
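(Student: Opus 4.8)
The plan is to show that $i$ is a weak homotopy equivalence by constructing a homotopy inverse from two geometric moves on spaces of manifolds — a \emph{push-off} and a \emph{grafting} — and to verify that both composites are homotopic to the identity. By the standard reduction (a weak equivalence may be checked on relative homotopy groups, so it suffices to deform a family of $\theta$-manifolds parametrised by a disc $D^k$, rel $\partial D^k$), everything below is carried out at the level of compact families, and the homotopies are produced functorially in the disc parameter.

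For the push-off, one deformation-retracts $\psi_\theta(N+1,1)$ onto the subspace of those $(W,\ell)$ with $W\subset \bR\times(0,1)\times(-1,1)^{N-1}$. Simply compressing the $x_2$-coordinate into $(0,1)$ is not allowed, since the image of a closed submanifold under a non-proper open embedding need not be closed as a subspace. The remedy is to use an isotopy $e_t$ through open self-embeddings of $\bR\times(-1,1)^N$, with $e_0=\mathrm{id}$, which compresses the $x_2$-coordinate into $(0,1)$ while simultaneously bending the unbounded $x_1$-coordinate off to $+\infty$ as $x_2\to -1$. Then any sequence in $W$ escaping the slab $\{x_2\le 0\}$ towards the open face $x_2=-1$ with bounded $x_1$ is carried to infinity, so $e_t(W)$ remains closed for every $t$; transporting $\theta$-structures along $De_t$ gives the required deformation $H$, with $H_1$ landing in manifolds supported in $\{x_2>0\}$. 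This deformation does \emph{not} preserve $\psi_{\theta,L}(N+1,1)$, so it serves only as an auxiliary construction.

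After the push-off one grafts in, by a connected sum performed entirely inside $\{x_2>0\}$ near the face $x_2=0$, the fixed closed $\theta$-manifold $\bR\times D(L)$, where $D(L)=L\cup_{\partial L} L$ is the double of $L$, placed so that one of its two $L$-halves is exactly the fixed piece $\bR\times L\subset\{x_2\le 0\}$ and the other caps it off inside $\{x_2>0\}$, and carrying the $\theta$-structure obtained by doubling $\ell\vert_L$ (possible since $\ell\vert_L$ is a product near $\partial L$). The result lies in $\psi_{\theta,L}(N+1,1)$, defining $j\colon\psi_\theta(N+1,1)\to\psi_{\theta,L}(N+1,1)$, and all of this can be done continuously over $D^k$. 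It then remains to produce homotopies $i\circ j\simeq\mathrm{id}$ and $j\circ i\simeq\mathrm{id}$: in both cases, after a further push-off the only discrepancy is an extra standard summand $\bR\times D(L)$, which must be cancelled against the ambient manifold by reversing the connected sum and absorbing the double, where one is free to replace $L$ by any convenient $(d-1)$-manifold with boundary $\partial L$ using Remark~\ref{rem:IndependenceL}. \emph{The main obstacle is precisely this cancellation step} — trivialising up to homotopy the operation ``connected sum with the cylinder $\bR\times D(L)$'' on $\psi_\theta(N+1,1)$, and doing so coherently over the disc-family while keeping every member a closed submanifold (ensured by the bending above) and smooth along $\bR\times\partial L$. (An alternative route that avoids building $j$ is to show that the restriction of $W$ to the slab $\bR\times(-1,0]\times(-1,1)^{N-1}$ defines a suitably fibrant map whose fibre over $\bR\times L$ is $\psi_{\theta,L}(N+1,1)$ and whose target is weakly contractible by sweeping manifolds towards the open end $x_2\to -1$; in that route the crux becomes the fibrancy property.) Once $i$ is known to be a weak equivalence, combining it with Proposition~\ref{prop:SpaceModelL} and \eqref{eq:SpaceModel} yields the weak equivalence $B\mathcal{C}_{\theta,L}(\bR^N)\simeq B\mathcal{C}_\theta(\bR^N)$ used in the introduction.
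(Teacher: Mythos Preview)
Your overall strategy---compress into $\{x_2>0\}$, then insert $\bR\times D(L)$ to produce a map $r$ back into $\psi_{\theta,L}(N+1,1)$---is the same as the paper's, but you have introduced two unnecessary complications and left the decisive step undone.

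The complications: first, the paper's compression is simply the affine diffeomorphism $(-1,1)\cong(1/2,1)$ in the $x_2$-coordinate, which extends to an affine diffeomorphism of all of $\bR^{N+1}$ and therefore carries closed submanifolds to closed submanifolds; your bending isotopy is not needed. Second, the paper's grafting is a \emph{disjoint union} with $\bR\times D(L)$ (placed in $\{-1/2<x_2<1/2\}$, disjoint from the compressed manifold in $\{x_2>1/2\}$), not a connected sum. Connected sum is both unnecessary and awkward here (what if $W=\emptyset$?), and it is the source of your difficulty with cancellation.

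The genuine gap is the cancellation you flag as the main obstacle. The paper supplies the missing idea: an \emph{explicit} $\theta$-null-bordism $V:\emptyset\leadsto D(L)$, obtained by rotating $L$ in the half-plane of the first two coordinates so as to sweep out a $d$-manifold $V$ with $\partial V=D(L)$; since $L\hookrightarrow V$ is a homotopy equivalence, the $\theta$-structure on $L$ extends uniquely over $V$. With $V$ in hand, $i\circ r\simeq\mathrm{id}$ is immediate: cap one end of the disjoint cylinder $\bR\times D(L)$ by $V$ and translate the capped piece to $x_1=+\infty$ in the topology of $\Psi_\theta$, then undo the affine compression. For $r\circ i\simeq\mathrm{id}$ (which must stay inside $\psi_{\theta,L}$ throughout), the paper rotates $L$ around a different centre to produce a second $\theta$-cobordism $U$ that absorbs the extra copy of $\overline{L}$ while preserving the condition $W\cap\{x_2\le 0\}=\bR\times L$. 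There is no connected sum to reverse: the extra cylinder is simply capped and slid away.
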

\begin{proof}
  This is similar to \cite[Lemma 4.6]{GR-W}, which is essentially the
  case $L = D^{d-1}$. It requires careful analysis of
  $\theta$-structures, so let us, for this proof only, denote the
  $\theta_{d-1}$-structure on $L$ by $\ell_L : \epsilon^1 \oplus TL
  \to \theta^*\gamma$. We first want to construct the \emph{double}
  $D(L)$ of $L$ as a $\theta_{d-1}$-manifold, and a canonical
  $\theta$-null-bordism of it. Recall that $L$ is a submanifold of
  $(-1/2, 0] \times (-1,1)^{N-1}$ which we identify with $\{0\} \times
  (-1/2, 0] \times (-1,1)^{N-1} \subset (-1,0] \times (-1/2, 0] \times
  (-1,1)^{N-1}$.  Let $V \subset (-1,0] \times (-1/2,1/2) \times
  (-1,1)^{N-1}$ denote the subset swept out by rotating $L$ around
  $(0, 0)$ in the half-plane $(-1,0] \times (-1,1)$.  As $L$ was
  collared, this subset is a $d$-dimensional submanifold with
  boundary, and $L$ lies in its boundary. We define $D(L) = \partial
  V$, and $\overline{L} = D(L) - \Int(L)$. The inclusion $L
  \hookrightarrow V$ is a homotopy equivalence, so there is a unique
  extension up to homotopy
  \begin{equation*}
  \xymatrix{
  	\epsilon^1 \oplus TL \ar[r]^-{\ell_L} \ar[d]& \theta^*\gamma\\
  	TV, \ar@{.>}[ru]
  }
  \end{equation*}
  where the vertical map sends $\epsilon^1$ to the outwards pointing
  vector. This restricts to a $\theta$-structure on $D(L)$, and hence
  on $\overline{L}$, and $V$ gives a $\theta$-cobordism $V : \emptyset
  \leadsto D(L)$.

  Similarly, we can rotate $L$ in the half-plane $[0,1) \times (-1,1)$
  around the point $(0, -1/2)$ to obtain a submanifold of $[0,1]
  \times [-1,0] \times (-1,1)^{N-1}$, extending to a
  $\theta$-cobordism $U \subset [0,1] \times [-1,0] \times
  (-1,1)^{N-1}$, ending at $\{1\} \times [-1,0] \times \partial L$ and
  starting at $\{0\} \times (L \cup (\overline{L} - e_1))$, where
  $\overline{L} - e_1 \subset [-1,\frac12) \times (-1,1)^{N-1}$
  denotes the parallel translate of $\overline{L}$.

  The $\theta$-manifolds $U$ and $V$ give us the tools we need.
  $D(L)$ is a submanifold of $(-1/2, 1/2) \times (-1,1)^{N-1}$, so we
  have a $\theta$-manifold $\bR \times D(L) \subset \bR \times (-1/2,
  1/2) \times (-1,1)^{N-1}$. We define a map
  $$ r : \psi_{\theta}(N+1, 1) \lra \psi_{\theta, L}(N+1, 1)$$
  which given $(W, \ell) \subset \bR \times (-1,1) \times
  (-1,1)^{N-1}$ applies the affine diffeomorphism $(-1,1) \cong (1/2,
  1)$ to its second coordinate, and then takes the (disjoint) union
  with $\bR \times D(L)$.

  \begin{figure}[h]
    \includegraphics[bb=0 0 342 149]{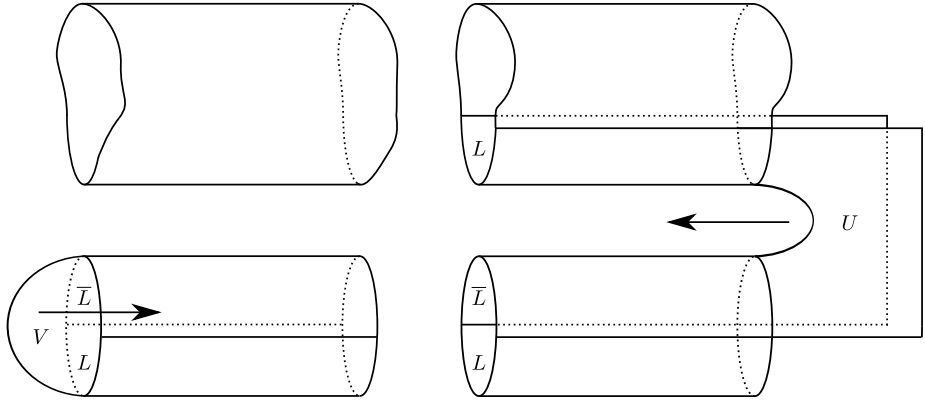}
    \caption{Adding and removing $L$.}\label{fig:AddingL}
  \end{figure}

  The composition $i \circ r$ is homotopic to the identity as the
  $\theta$-null-bordism $V$ of $D(L)$ may be used to push the cylinder
  $\bR \times D(L)$ off to the right.  A similar argument, pushing $U$
  to the left, proves that the composition $r \circ i$ is homotopic to
  the identity. Figure \ref{fig:AddingL} shows how.
\end{proof}

Combining this proposition with Proposition \ref{prop:SpaceModelL} and
the homotopy equivalence~(\ref{eq:SpaceModel}) gives the following corollary.

\begin{corollary}
  For any  pair $(L, \ell_L)$ as in Definition~\ref{defn:sub-cat-with-L}, the
  inclusion
  $$B\mathcal{C}_{\theta, L}(\bR^N) \lra B\mathcal{C}_{\theta}(\bR^N)$$
  is a weak homotopy equivalence.\qed
\end{corollary}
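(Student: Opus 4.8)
The plan is to deduce the statement from three facts already in hand: the weak equivalence $B\mathcal{C}_\theta(\bR^N) \simeq \psi_\theta(N+1,1)$ of~\eqref{eq:SpaceModel}, its refinement $B\mathcal{C}_{\theta,L}(\bR^N) \simeq \psi_{\theta,L}(N+1,1)$ from Proposition~\ref{prop:SpaceModelL}, and the fact from Proposition~\ref{prop:forget-L} that the inclusion $i\colon \psi_{\theta,L}(N+1,1) \to \psi_\theta(N+1,1)$ is a weak homotopy equivalence. First I would assemble these into the square
\begin{equation*}
  \xymatrix{
    B\mathcal{C}_{\theta,L}(\bR^N) \ar[r] \ar[d]_{\simeq} & B\mathcal{C}_\theta(\bR^N) \ar[d]^{\simeq}\\
    \psi_{\theta,L}(N+1,1) \ar[r]^{i} & \psi_\theta(N+1,1),
  }
\end{equation*}
whose horizontal maps are the evident inclusions and whose vertical maps are the (zig-zags of) equivalences just cited. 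Granting that this square commutes up to homotopy, the right-hand, bottom, and left-hand arrows are weak homotopy equivalences, so by the two-out-of-three property the top arrow is one as well, which is the assertion.

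Thus the only point to verify is that the two vertical identifications are natural with respect to the inclusion $\mathcal{C}_{\theta,L}(\bR^N) \subset \mathcal{C}_\theta(\bR^N)$, i.e.\ that the square commutes. Both identifications are, by Proposition~\ref{prop:PosetModel} and \cite[Theorems~3.9, 3.10]{GR-W}, built from the zig-zag $B\mathcal{C} \lla BD^{\mathcal{C},\perp}_\theta \lra BD^{\mathcal{C}}_\theta$ followed by the scanning map $BD^{\mathcal{C}}_\theta \to \psi$ (respectively $\psi_{\theta,L}$) to the relevant sheaf of submanifolds. The inclusion $\mathcal{C}_{\theta,L}(\bR^N) \subset \mathcal{C}_\theta(\bR^N)$ induces inclusions of the associated posets $D^{\mathcal{C}_{\theta,L},\perp}_\theta \subset D^{\mathcal{C}_\theta,\perp}_\theta$ and $D^{\mathcal{C}_{\theta,L}}_\theta \subset D^{\mathcal{C}_\theta}_\theta$ that commute strictly with both functors of the zig-zag, and the scanning maps to $\psi_{\theta,L}(N+1,1)$ and $\psi_\theta(N+1,1)$ are manifestly compatible with these inclusions. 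Hence the square commutes, strictly on the level of the zig-zag diagrams and therefore up to the canonical homotopy these determine.

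The step I expect to require the most care is purely organisational: ensuring that all the intermediate posets, the levelwise weak equivalences of their nerves, and the collar conventions entering the definition of $\mathcal{C}_{\theta,L}(\bR^N)$ are set up so that the square above genuinely commutes, rather than commuting only after a further zig-zag. No idea beyond those in \cite[\S 3]{GR-W} is needed here; one is simply running the proof of \cite[Theorem~3.10]{GR-W} in the presence of the fixed submanifold $L$ and then invoking Proposition~\ref{prop:forget-L} to discard $L$ at the very end.
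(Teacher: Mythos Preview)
Your proposal is correct and follows exactly the same approach as the paper: the paper simply states that the corollary follows by combining Proposition~\ref{prop:SpaceModelL}, the equivalence~\eqref{eq:SpaceModel}, and Proposition~\ref{prop:forget-L}, which is precisely the square you assemble and the two-out-of-three argument you give. Your added discussion of why the square commutes is more detail than the paper supplies (the corollary carries a \qed with no further proof), but it is the right point to check and your explanation is sound.
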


\subsection{A more flexible model}

From the poset models of Section \ref{sec:PosetModels} we construct the semi-simplicial spaces
$$D_{\theta, L}^{\kappa, l}(\bR^N)_\bullet = N_\bullet D_{\theta, L}^{\kappa, l}(\bR^N).$$
The remarks of Section \ref{sec:PosetModels} and Proposition \ref{prop:SpaceModelL} show that the geometric
realisations of these semi-simplicial spaces are models for the
classifying spaces of the categories $\mathcal{C}_{\theta, L}^{\kappa,
  l}(\bR^N)$ in which we are interested. The benefit of working with
these semi-simplicial spaces instead of the cobordism categories is
that we can often make constructions which are \emph{not} functorial, yet
give well-defined maps between geometric realisations of the
semi-simplicial spaces involved.

To make this technique easier to apply, we will define an auxiliary
semi-simplicial space $X_\bullet^{\kappa, l}$.  We will prove that its
geometric realisation is weakly equivalent to
$B\mathcal{C}_{\theta,L}^{\kappa,l}(\R^N)$, but it will be easier to
construct a simplicial map into $X_\bullet^{\kappa,l}$ than into
$N_\bullet\mathcal{C}_{\theta,L}^{\kappa,l}(\R^N)$ or $D_{\theta, L}^{\kappa, l}(\bR^N)_\bullet$.  The space $X_\bullet^{\kappa,l}$ also depends on $N$, but we omit that from the notation.

\begin{definition}\label{defn:X}
Let $\theta : B \to BO(d)$, $N$ and $L$ be as before. Let $-1 \leq \kappa \leq \tfrac{d-1}{2}$, $-1 \leq l \leq \kappa$ and $-1 \leq l \leq d-\kappa-2$. Define ${X}_\bullet^{\kappa, l}$ to be the semi-simplicial space with $p$-simplices consisting of certain tuples $(a,\epsilon, (W, \ell))$ such that $a = (a_0, \dots, a_p) \in
\R^{p+1}$, $\epsilon = (\epsilon_0, \dots, \epsilon_p) \in
(\R_{>0})^{p+1}$, and $(W, \ell) \in \Psi_\theta((a_0-\epsilon_0,
a_p+\epsilon_p) \times \bR^N)$, satisfying
\begin{enumerate}[(i)]
  \item \label{item:1}$W \subset (a_0 - \epsilon_0, a_p +
    \epsilon_p) \times (-1,1)^N$,
    
  \item \label{item:2}$W$ and $(a_0 - \epsilon_0,a_p + \epsilon_p)
    \times L$ agree as $\theta$-manifolds on the subspace
    $x_2^{-1}(-\infty,0]$,
    
  \item $a_{i-1} + \epsilon_{i-1} < a_i - \epsilon_i$ for all $i = 1,
    \dots, p$,\label{it:Disjoint}
    
  \item for each pair of regular values $t_0 < t_1 \in
    \cup_i(a_i-\epsilon_i, a_i+\epsilon_i)$, the cobordism $W \vert_{[t_0, t_1]}$ is $\kappa$-connected relative to its outgoing
    boundary,\label{it:MorConn}
    
  \item for each regular value $t \in (a_i-\epsilon_i, a_i+\epsilon_i)$, the map
    \begin{equation*}
      \pi_j(W \vert_{t}) \lra \pi_j(B),
    \end{equation*}
    induced by $\ell\vert_t$, is injective for all basepoints and all $j \leq
    l$.\label{it:ObConn}
  \end{enumerate}
  We topologise this set as a subspace of $\bR^{p+1} \times
  (\bR_{>0})^{p+1} \times \Psi_\theta((-1,1) \times \bR^N)$, where we
  use the standard affine diffeomorphism $(-1,1) \cong
  (a_0-\epsilon_0, a_p + \epsilon_p)$ to identify
  $\Psi_\theta((a_0-\epsilon_0, a_p+\epsilon_p) \times \bR^N)$ with
  $\Psi_\theta((-1,1) \times \bR^N)$. The $j$th face map is given by
  forgetting $a_j$ and $\epsilon_j$, and if $j=0$, composing with the
  restriction map $\Psi_\theta((a_0-\epsilon_0, a_p+\epsilon_p) \times
  \bR^N) \to \Psi_\theta((a_1-\epsilon_1, a_p+\epsilon_p) \times
  \bR^N)$, and similarly if $j=p$.

  There are semi-simplicial maps $D_{\theta, L}^{\kappa,
    l}(\bR^N)_\bullet \to X_\bullet^{\kappa,l}$, which on
  $p$-simplices are given by sending $(a,\epsilon,(W, \ell))$ with
  $(W, \ell) \in \Psi_\theta(\bR \times \bR^N)$ to the same thing
  restricted down to $\Psi_\theta((a_0-\epsilon_0, a_p+\epsilon_p)
  \times \bR^N)$.
\end{definition}

The semi-simplicial space ${X}_\bullet^{\kappa,l}$ is easier to map
into (by a semi-simplicial map) than $D_{\theta, L}^{\kappa,
  l}(\bR^N)_\bullet$ for two reasons. Firstly, we do not require that
the intervals $(a_i-\epsilon_i, a_i+\epsilon_i)$ consist entirely of
regular values: instead we allow critical values, and conditions
(\ref{it:MorConn})--(\ref{it:ObConn}) ensure that the critical values
do not affect the essential properties of the space. Secondly, we
discard those parts of the manifold outside of $(a_0-\epsilon_0,
a_p+\epsilon_p)$, and so do not need to worry about controlling parts
of the manifold outside of the region.

\begin{definition}\label{defn:XA}
  In the case $d=2n$, with $\mathcal{A} \subset \pi_0(\Ob(\mathcal{C}_{\theta, L}^{n-1, n-2}(\bR^N)))$ a collection of
  path components of objects, we make the entirely analogous
  definition of ${X}_\bullet^{n-1, \mathcal{A}}$. Precisely, in
  Definition \ref{defn:X} we replace condition (\ref{it:ObConn}) by
  \begin{enumerate}[(\ref{it:ObConn}$^\prime$)]
  \item for each regular value $t \in (a_i-\epsilon_i,
    a_i+\epsilon_i)$, the $\theta_{d-1}$-manifold $(W \vert_{t},
    \ell\vert_{t})$ lies in $\mathcal{A}$.
  \end{enumerate}
\end{definition}

The following is our main result concerning these models, and together
with (\ref{eq:PosetModel}) and (\ref{eq:PosetModelA}) provides weak
homotopy equivalences $B\mathcal{C}_{\theta, L}^{\kappa, l}(\bR^N)
\simeq |X_\bullet^{\kappa, l}|$ and, in the case $d=2n$,
$B\mathcal{C}_{\theta, L}^{n-1, \mathcal{A}}(\bR^N) \simeq
|X_\bullet^{n-1, \mathcal{A}}|$.

\begin{proposition}\label{prop:X}
  Let $\kappa$ and $l$ satisfy the inequalities in
  Definition~\ref{defn:X}.  The semi-simplicial map $D_{\theta,
    L}^{\kappa, l}(\bR^N)_\bullet \to X_\bullet^{\kappa,l}$, and in
  the case $d=2n$ also the map $D_{\theta, L}^{n-1,
    \mathcal{A}}(\bR^N)_\bullet \to X_\bullet^{n-1, \mathcal{A}}$,
  induce weak homotopy equivalences after geometric realisation.
\end{proposition}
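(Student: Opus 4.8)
The plan is to factor the comparison map of Definition~\ref{defn:X} as
\begin{equation*}
  D_{\theta,L}^{\kappa,l}(\bR^N)_\bullet \lra \widehat{X}_\bullet^{\kappa,l} \lra X_\bullet^{\kappa,l}
\end{equation*}
through an intermediate semi-simplicial space $\widehat{X}_\bullet^{\kappa,l}$, and to handle the two maps by the two standard techniques of \cite{GR-W}: a scanning argument for the second map and the finite-skeleton method for the first. Here $\widehat{X}_\bullet^{\kappa,l}$ is defined exactly as $X_\bullet^{\kappa,l}$ in Definition~\ref{defn:X}, except that $(W,\ell)$ is required to lie in $\psi_{\theta,L}(N+1,1)$ --- so that it is defined on all of $\bR\times\bR^N$ rather than only on the window $(a_0-\epsilon_0,a_p+\epsilon_p)\times\bR^N$ --- while conditions (\ref{it:MorConn}) and (\ref{it:ObConn}) are still imposed only at regular values of $x_1|_W$ lying in the windows. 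The second map is restriction to the window, and the first is the inclusion of the sub-semi-simplicial space on those simplices for which every window $(a_i-\epsilon_i,a_i+\epsilon_i)$ consists of regular values.

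That this inclusion is well defined, with the claimed target, is a routine connectivity check. When the windows are regular, condition (\ref{it:ObConn}) says exactly that $W\vert_{a_i}\in\Ob(\mathcal{C}_{\theta,L}^{\kappa,l})$, and condition (\ref{it:MorConn}) with $t_0=a_i$, $t_1=a_{i+1}$ says that $W\vert_{[a_i,a_{i+1}]}\in\Mor(\mathcal{C}_{\theta,L}^{\kappa,l})$; conversely a simplex of $D_{\theta,L}^{\kappa,l}(\bR^N)_\bullet$ satisfies (\ref{it:MorConn}) because, for regular values $t_0<t_1$ of $x_1|_W$ in the windows, $W\vert_{[t_0,t_1]}$ differs from the composite of morphisms $W\vert_{[a_i,a_{i+1}]}\circ\dots\circ W\vert_{[a_{j-1},a_j]}$ of $\mathcal{C}_{\theta,L}^{\kappa}$ only by cylinders glued on at the two ends, and composites and cylinder-extensions preserve $\kappa$-connectivity relative to the outgoing boundary; and it satisfies (\ref{it:ObConn}) because $W\vert_t\cong W\vert_{a_i}$ for $t$ in the $i$th window. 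The composite $D_{\theta,L}^{\kappa,l}(\bR^N)_\bullet\to X_\bullet^{\kappa,l}$ is then the map of Definition~\ref{defn:X}, and in the case $d=2n$ one defines $\widehat{X}_\bullet^{n-1,\mathcal{A}}$ and argues verbatim, using that $\mathcal{A}$ is a union of path components so that ``$(W\vert_t,\ell\vert_t)\in\mathcal{A}$'' plays the role of (\ref{it:ObConn}).

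I would first show that $\widehat{X}_\bullet^{\kappa,l}\to X_\bullet^{\kappa,l}$ is a levelwise weak homotopy equivalence, and hence a weak equivalence on geometric realisations. On $p$-simplices the map forgets the part of $W$ outside the window; its ``fibre'' over a windowed $\theta$-manifold is the space of ways to extend the latter to an element of $\psi_{\theta,L}(N+1,1)$. This space is weakly contractible: the critical values of $x_1|_W$ are discrete and $W\cap x_1^{-1}(K)$ is compact for compact $K\subset\bR$, so $W$ is a product near each end of its window and admits a cylindrical extension, while any extension can be pushed off towards $\pm\infty$ --- this is the scanning argument used to prove Proposition~\ref{prop:SpaceModelL} (see \cite{GR-W}). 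Promoting this fibrewise statement to a weak equivalence of total spaces --- via the cylindrical-extension section together with a weak deformation of $\widehat{X}_p^{\kappa,l}$ onto its image, using the sheaf structure of $\Psi_\theta$ to make the choices continuously --- proceeds exactly as in \cite{GR-W}.

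The remaining point, which I expect to be the main obstacle, is that the inclusion $D_{\theta,L}^{\kappa,l}(\bR^N)_\bullet\hookrightarrow\widehat{X}_\bullet^{\kappa,l}$ induces a weak equivalence on geometric realisations. This is not a levelwise equivalence --- a simplex of $\widehat{X}_\bullet^{\kappa,l}$ may have critical values of $x_1|_W$ inside its windows --- so one argues by the finite-skeleton method of \cite{GR-W}: a map of a compact polyhedron into $|\widehat{X}_\bullet^{\kappa,l}|$ factors through a finite skeleton and thus involves finitely many simplices, each carrying only finitely many critical values of $x_1|_W$, so by Sard's theorem each $a_i$ can be moved to a nearby regular value $a_i'$ still inside its window and $\epsilon_i$ shrunk so that $[a_i'-\epsilon_i',a_i'+\epsilon_i']$ becomes regular; conditions (\ref{it:MorConn}) and (\ref{it:ObConn}) then guarantee that the perturbed tuple lies in $D_{\theta,L}^{\kappa,l}(\bR^N)_\bullet$. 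Surjectivity on homotopy groups follows by performing this perturbation coherently over the faces of each simplex and over the compact parameter polyhedron at once --- organised by induction over skeleta, with partitions of unity used to select the $a_i'$ and $\epsilon_i'$ continuously --- and injectivity follows by applying the same construction to null-homotopies; the $d=2n$ case is identical. The single-simplex move is elementary, but making the whole family of moves mutually compatible across all faces, in the present relative setting where $\bR\times L$ must be left untouched, is the delicate bookkeeping that constitutes the real work, and is run here just as for the cobordism categories of \cite{GR-W,GMTW}.
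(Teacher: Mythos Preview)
Your factorisation through $\widehat{X}_\bullet^{\kappa,l}$ is exactly the paper's (there called $\bar{X}_\bullet^{\kappa,l}$), and your overall strategy---levelwise equivalence for the second map, a relative-homotopy/cover argument for the first---is the paper's strategy.  There is, however, one genuine error.

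For the map $\widehat{X}_p^{\kappa,l}\to X_p^{\kappa,l}$ you assert that ``$W$ is a product near each end of its window and admits a cylindrical extension''.  This is false.  An element of $X_p^{\kappa,l}$ is an \emph{arbitrary} element of $\Psi_\theta$ over the open interval $(a_0-\epsilon_0,a_p+\epsilon_p)$; nothing in Definition~\ref{defn:X} prevents critical values of $x_1\vert_W$ (or worse behaviour) from accumulating at the endpoints, and in general such a $W$ need not extend to $\bR\times\bR^N$ at all, so there is no ``cylindrical-extension section''.  The paper's argument avoids this entirely: one chooses, continuously in $(a_0,a_p,\epsilon_0,\epsilon_p)$, an increasing diffeomorphism $(a_0-\epsilon_0,a_p+\epsilon_p)\cong\bR$ equal to the identity on $[a_0,a_p]$, and pushes $W$ forward along it.  This \emph{stretching} produces a continuous map $X_p^{\kappa,l}\to\widehat{X}_p^{\kappa,l}$ which is homotopy inverse to restriction, with no regularity hypothesis near the ends; conditions~(\ref{it:MorConn}) and~(\ref{it:ObConn}) are preserved because the diffeomorphism carries regular values in the windows to regular values in the windows.

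For the first map your outline is right in spirit, but two of your supporting claims are false and unneeded: the critical values of $x_1\vert_W$ need not be discrete, and a map of a compact polyhedron into $|\widehat{X}_\bullet^{\kappa,l}|$ does not involve only ``finitely many simplices'' (it involves a compact \emph{family} of them).  The paper's argument does not use either.  Given $f:(D^k,\partial D^k)\to(|\widehat{X}_\bullet|,|D_\bullet|)$, for each $x\in D^k$ one uses Sard to pick a regular value $a^x$ inside one of the windows of $f(x)$ (avoiding the $a_i$ themselves), notes by properness of $x_1\vert_{W(x)}$ that a small interval $[a^x-\epsilon^x,a^x+\epsilon^x]$ remains regular on an open $U_x\ni x$, extracts a finite subcover $\{U_j\}$ with data $(a^j,\epsilon^j)$, and after making the $a^j$ distinct and the intervals disjoint uses a subordinate partition of unity to define a new map $\hat f:D^k\to|D_\bullet^{\kappa,l}|$ with the same underlying family of $\theta$-manifolds.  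A straight-line homotopy in the barycentric and interval coordinates (through the join of the two sets of interval data) connects $\hat f$ to $f$ relative to $\partial D^k$.
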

\begin{proof}
  For the proof we introduce an auxiliary semi-simplicial space
  $\bar{X}_\bullet^{\kappa,l}$. Its $p$-simplices are those tuples
  $$(a,\epsilon,(W, \ell)) \in \bR^{p+1} \times (\bR_{>0})^{p+1} \times
  \psi_\theta(N+1, 1)$$
  satisfying the conditions of Definition~\ref{defn:X}, except that
  the interval $(a_0 - \epsilon_0, a_p + \epsilon_p)$ is replaced with
  $\R$ in (\ref{item:1}) and (\ref{item:2}).
  We can regard $D_{\theta, L}^{\kappa, l}(\bR^N)_\bullet$ as a
  subspace of $\bar{X}_\bullet^{\kappa,l}$, and we have a
  factorisation
  $$D_{\theta, L}^{\kappa, l}(\bR^N)_\bullet \overset{i}\lra
  \bar{X}_\bullet^{\kappa,l} \lra X_\bullet^{\kappa,l}.$$ 
  
  The map $\bar{X}_\bullet^{\kappa,l} \to X_\bullet^{\kappa,l}$ is a
  weak homotopy equivalence in each simplicial degree, by methods
  similar to \cite[Theorem 3.9]{GR-W}. Briefly, in simplicial degree
  $p$ choose---continuously in the data $(a_0, a_p, \epsilon_0,
  \epsilon_p)$---diffeomorphisms $(a_0-\epsilon_0, a_p+\epsilon_p)
  \cong \bR$ which are the identity on $[a_0, a_p]$. Using this family
  of diffeomorphisms to stretch gives a map ${X}_p^{\kappa,l} \to
  \bar{X}_p^{\kappa,l}$, which is homotopy inverse to the restriction
  map $\bar{X}_p^{\kappa,l} \to X_p^{\kappa,l}$.

  To show that the first map induces a weak homotopy equivalence on
  geometric realisation, we use a technique which we shall use many
  times in this paper. That is, we consider a map
  $$f : (D^n, \partial D^n) \lra (\vert \bar{X}_\bullet^{\kappa,l} \vert, \vert D_{\theta, L}^{\kappa, l}(\bR^N)_\bullet \vert)$$
  representing an element of the $n$th relative homotopy group, and
  show that it may be homotoped through maps of pairs to a map with
  image in $\vert D_{\theta, L}^{\kappa, l}(\bR^N)_\bullet \vert$.

  For each $x \in D^n$ the point $f(x)$ is a tuple $(t, a, \epsilon,
  (W(x), \ell))$, and we may choose a pair $(a^x, \epsilon^x)$ such
  that $[a^x-\epsilon^x, a^x + \epsilon^x] \subset \cup_i (
  (a_i-\epsilon_i, a_i+\epsilon_i) - \{a_i\})$ and that $[a^x -
  \epsilon^x, a^x + \epsilon^x]$ consists of regular values of $x_1:
  W(x) \to \R$.  By properness of $x_1: W(x) \to \R$, there is a
  neighbourhood $U_x \ni x$ for which $[a^x-\epsilon^x, a^x +
  \epsilon^x]$ still consists of regular values.  The $U_x$'s cover
  $D^n$ and we let $\{U_j\}_{j \in J}$ be a finite subcover. We may
  suppose that $a^j \neq a^k$, as otherwise we may change the cover by
  letting $U'_j = U_j \cup U_k$ with $(a^j)' = a^j=a^k$ and
  $(\epsilon^j)' = \min(\epsilon^j, \epsilon^k)$. Once the $a^j$ are
  distinct, we may shrink the $\epsilon^j$ so that the intervals
  $[a^j+\epsilon^j, a^j-\epsilon^j]$ are pairwise disjoint, and so
  that no $a_i$ lies in such an interval.

  As the intervals $[a^j+\epsilon^j, a^j-\epsilon^j]$ are chosen
  to consist of regular values, the data $\{(U_j, a^j,
  \epsilon^j)\}_{j \in J}$, together with a choice of partition of
  unity subordinate to the cover by the $U_j$'s, determine a map
  $\hat{f}: D^n \to \vert {D}^{-1, -1}_{\theta, L}(\bR^N)_\bullet
  \vert$ with the same underlying family of $\theta$-manifolds. As
  $[a^j-\epsilon^j, a^j+\epsilon^j] \subset \cup_i (a_i-\epsilon_i,
  a_i+\epsilon_i)$, this new family satisfies conditions
  (\ref{it:MorConn}) and (\ref{it:ObConn}) of Definition \ref{defn:X}
  (as the old family did) so $\hat{f}$ actually has image in the
  subspace $\vert {D}^{\kappa, l}_{\theta, L}(\bR^N)_\bullet \vert$.
  There is a homotopy $H$ of $p \circ \hat{f}$ to $f$ as follows: on
  underlying $\theta$-manifolds it is constant, but on the interval
  data we first use the straight-line homotopy from the data $\{(a^j,
  \epsilon^j)\}$ to the data $\{(a_i, \epsilon)\}$ where we choose
  $\epsilon \leq \min(\epsilon_i)$ small enough so that
  $[a_i-\epsilon, a_i+\epsilon]$ is disjoint from the
  $[a^j-\epsilon^j, a^j+\epsilon^j]$. This straight-line homotopy is
  in the barycentric coordinates: as the intervals are all disjoint,
  the join of the simplices they describe also lies in $\vert
  {D}^{\kappa, l}_{\theta, L}(\bR^N)_\bullet \vert$, and so there is a
  canonical straight line between them. Then we use the obvious
  homotopy from the data $\{(a_i, \epsilon)\}$ to the data $\{(a_i,
  \epsilon_i)\}$ that stretches the $\epsilon$'s. The restriction of
  $H$ to $\partial D^n$ remains in the subspace $\vert {D}^{\kappa,
    l}_{\theta, L}(\bR^N)_\bullet \vert$, and so $H$ gives a relative
  null-homotopy of $f$.
  
  The case when $d=2n$ and $\mathcal{A}$ is chosen is identical.
\end{proof}


\section{Surgery on morphisms}
\label{sec:surgery-morphisms}

In this section we wish to study the filtration
$$\mathcal{C}_{\theta, L}^\kappa(\bR^N) \subset \cdots \subset
\mathcal{C}_{\theta, L}^1(\bR^N) \subset \mathcal{C}_{\theta,
  L}^0(\bR^N) \subset \mathcal{C}_{\theta, L}^{-1}(\bR^N) =
\mathcal{C}_{\theta,L}(\R^N)$$ and in particular establish the
following theorem.  The reader mainly interested in
Theorems~\ref{thmcor:rational-coho} and \ref{thm:main-A} can take
$d=2n$, $\theta = \theta^n: BO(2n)\langle n\rangle \to BO(2n)$, $L
\cong D^{2n-1}$, and $N = \infty$ (but the proof does not simplify
much in this special case).

\begin{theorem}\label{thm:kappafiltration}
Suppose that the following conditions are satisfied
\begin{enumerate}[(i)]
\item\label{item:10} $2\kappa \leq d-2$,

\item $\kappa+1+d < N$,

\item $L$ admits a handle decomposition only using handles of index $< d-\kappa-1$.
\end{enumerate}
Then the map
$$B\mathcal{C}_{\theta, L}^\kappa(\bR^N) \lra B\mathcal{C}_{\theta, L}^{\kappa-1}(\bR^N)$$
is a weak homotopy equivalence.
\end{theorem}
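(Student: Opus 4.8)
The plan is to reduce the statement to a parametrised surgery problem on families of cobordisms, following the scheme of \cite{GMTW} and \cite{GR-W}. First I would replace both classifying spaces by the flexible semi-simplicial models of Section~\ref{sec:defin-recoll}: by the poset models~\eqref{eq:PosetModel} together with Proposition~\ref{prop:X} there are weak equivalences $B\mathcal{C}_{\theta,L}^{\kappa}(\bR^N)\simeq|X_\bullet^{\kappa,-1}|$ and $B\mathcal{C}_{\theta,L}^{\kappa-1}(\bR^N)\simeq|X_\bullet^{\kappa-1,-1}|$ compatible with the inclusions, noting that condition~(\ref{it:ObConn}) in Definition~\ref{defn:X} is vacuous for $l=-1$ (so objects are unconstrained) and that the hypothesis $2\kappa\leq d-2$ makes both $\kappa$ and $\kappa-1$ satisfy the inequalities of Definition~\ref{defn:X}. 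It then suffices to show that $|X_\bullet^{\kappa,-1}|\to|X_\bullet^{\kappa-1,-1}|$ is a weak equivalence; as this is the inclusion of a union of path components in each simplicial degree, it is enough to homotope every map of pairs $f:(D^k,\partial D^k)\to(|X_\bullet^{\kappa-1,-1}|,|X_\bullet^{\kappa,-1}|)$, relative to $\partial D^k$, into $|X_\bullet^{\kappa,-1}|$. Since the image of $f$ lies in a finite skeleton, a subdivision argument lets me assume $f$ is adjoint to a continuous $D^k$-family $x\mapsto(a(x),\epsilon(x),(W(x),\ell_{W(x)}))$ of $p$-simplices, which for $x\in\partial D^k$ already satisfies the stronger connectivity condition~(\ref{it:MorConn}).

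Next I would describe the surgery move on one cobordism. For a single tuple, let $V=W(x)\vert_{[t_0,t_1]}$ be a sub-cobordism between consecutive windows, with outgoing boundary $M_1=W(x)\vert_{t_1}$; by hypothesis $(V,M_1)$ is $(\kappa-1)$-connected, so $\pi_i(V,M_1)=0$ for $i<\kappa$, and by the relative Hurewicz theorem and compactness $\pi_\kappa(V,M_1)$ is a finitely generated $\bZ[\pi_1(M_1)]$-module. The hypothesis that $L$ has a handle decomposition with handles of index $<d-\kappa-1$ implies that $(L,\partial L)$ is $\kappa$-connected, so the variable part of $W(x)$ (obtained by cutting out $\bR\times L$) carries the same relative homotopy in this range, and I may represent each generator of $\pi_\kappa(V,M_1)$ by an embedded disc $\phi:(D^\kappa,S^{\kappa-1})\hookrightarrow(V,M_1)$ disjoint from $\bR\times L$, with $\phi^{-1}(M_1)=S^{\kappa-1}$ and meeting $M_1$ transversely. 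Such $\phi$ exist because $2\kappa\leq d-2$ gives room for general position in the $d$-manifold $V$ and the $(d-1)$-manifold $M_1$, and inside $(-1,1)^N$ because $\kappa+1+d<N$; the normal bundle is trivial since $D^\kappa$ is contractible, and a framing compatible with the $\theta$-structure may be chosen. Surgery on $V$ along a framed such $\phi$, performed in a collar of $M_1$, produces a cobordism with the same incoming and outgoing boundary, carrying a $\theta$-structure extending the given one (the obstruction lies over the contractible surgery disc, hence vanishes), with one fewer generator of $\pi_\kappa(V,M_1)$, and still $(\kappa-1)$-connected relative to $M_1$, since the only new relative homotopy is in degree $d-\kappa-1>\kappa$ --- this strict inequality being exactly $2\kappa\leq d-2$. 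Iterating makes the cobordism $\kappa$-connected relative to its outgoing boundary. The surgery is carried out in a thin new window obtained by enlarging the tuple $(a,\epsilon)$ by one coordinate, which is precisely the flexibility for which $X_\bullet^{\kappa,l}$ was introduced in place of the poset model.

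To parametrise, I would assemble the surgery data into an auxiliary semi-simplicial space $Y_\bullet$ whose $p$-simplices are those of $X_\bullet^{\kappa-1,-1}$ together with a choice of framed, $\theta$-structured surgery data in a designated window, sufficient to make the adjacent sub-cobordism $\kappa$-connected relative to its outgoing boundary. There is a forgetful map $Y_\bullet\to X_\bullet^{\kappa-1,-1}$ and a ``perform the surgery'' map $Y_\bullet\to X_\bullet^{\kappa,-1}$; the latter is not semi-simplicial on the nose, but because the surgery takes place in an inserted window it induces a well-defined map on geometric realisations, and over data already lying above $|X_\bullet^{\kappa,-1}|$ it is canonically homotopic to the inclusion. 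One then checks that $|Y_\bullet|\to|X_\bullet^{\kappa-1,-1}|$ is a weak equivalence: its homotopy fibres are the spaces of surgery data, which are weakly contractible in the relevant range. Lifting $f$ along this equivalence and pushing forward along the surgery map produces the desired relative homotopy of $f$ into $|X_\bullet^{\kappa,-1}|$.

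The \emph{main obstacle} is this last contractibility statement: that, parametrised over a compact family of cobordisms, the space of framed embedded surgery discs --- disjoint from $\bR\times L$ and with compatible $\theta$-structure --- realising the required $\kappa$-connectivity has vanishing homotopy groups through the needed range. Unparametrised this is merely the existence and essential uniqueness of a framed surgery disc; parametrised it requires a careful general-position analysis of spaces of such discs, in which the inequality $2\kappa\leq d-2$ is indispensable, and this is treated uniformly --- together with the analogous inputs needed for~\eqref{eq:5} and~\eqref{eq:6} --- in Section~\ref{sec:Connectivity}. The remaining ingredients, namely keeping $\bR\times L$ untouched throughout, tracking $\theta$-structures across surgeries, and the subdivision and skeletal bookkeeping over $D^k$, are routine.
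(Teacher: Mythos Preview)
Your overall architecture---replace both classifying spaces by the models $|X_\bullet^{\kappa,-1}|$ and $|X_\bullet^{\kappa-1,-1}|$, enrich by a space of surgery data, prove contractibility of the fibres, and build a surgery map---matches the paper exactly. The real difficulty, and the place where your proposal diverges from what the paper actually does, is the surgery move itself.

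Your description of the move is problematic. You propose to represent a generator of $\pi_\kappa(V,M_1)$ by a framed disc $\phi:(D^\kappa,S^{\kappa-1})\hookrightarrow(V,M_1)$ and then ``do surgery along $\phi$, performed in a collar of $M_1$'', producing a cobordism \emph{with the same incoming and outgoing boundary}. But there is no such operation: cutting a thickened $(D^\kappa,S^{\kappa-1})$ out of $(V,M_1)$ and gluing anything back in either changes $M_1$ or leaves $\pi_\kappa(V,M_1)$ unchanged. (The $\kappa=0$ picture makes this vivid: a generator is a component of $V$ missing $M_1$, and no internal modification of $V$ with fixed boundary can make it touch $M_1$.) The paper's move is genuinely different. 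The surgery data of Definition~\ref{defn:ZComplex} is an embedding of $\Lambda\times\overline V$ into the \emph{ambient} $\bR\times(0,1)\times(-1,1)^{N-1}$, meeting $W$ only in $\Lambda\times\partial_-D^{\kappa+1}\times\bR^{d-\kappa}$ and extending past the outermost window (conditions~(\ref{it:ZHeightFn})--(\ref{it:ZIntersection})). One then glues in the one-parameter \emph{standard family} $\mathcal P_t$ of Section~\ref{sec:standard-family}; this does not preserve level sets (critical values of the height function pass through the windows $(a_i-\epsilon_i,a_i+\epsilon_i)$, which is exactly why one must work in $X_\bullet$), and it is the properties of $\mathcal P_1$ recorded in Proposition~\ref{prop:StdFamilyMorphisms}(\ref{it:kConn}) that force $\kappa$-connectivity at the end. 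No new simplicial coordinate is introduced.

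Your treatment of $\theta$-structures is also too optimistic. The claim that ``the obstruction lies over the contractible surgery disc, hence vanishes'' is precisely what fails for the naive move: the paper explicitly notes (and Figure~\ref{fig:0-surgeryMorphismsBasic} versus Figure~\ref{fig:0-surgeryMorphisms} illustrates) that the straightforward family does not carry $\theta$-structures in general, and a refined family is required. The correct statement is Proposition~\ref{prop:StdFamilyMorphisms}, whose proof uses the specific geometry of $\mathcal P_t$ via Remark~\ref{remark:graph-of-morphisms-surgery}.

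Finally, the paper's auxiliary space is \emph{bi}-semi-simplicial, $D^\kappa_{\theta,L}(\bR^N)_{\bullet,\bullet}$, not a single semi-simplicial $Y_\bullet$: the second simplicial direction records $(q+1)$-tuples of pairwise disjoint surgery data, and is essential for the flag-complex argument (Theorem~\ref{thm:SimplicialTech}) that proves the augmentation is a weak equivalence (Theorem~\ref{thm:SurgeryComplexMor}). Your ``contractibility in the relevant range'' should be replaced by this device; it is not a routine general-position statement.
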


The proof of Theorem \ref{thm:kappafiltration} consists of performing
surgery on morphisms, in order to make them more highly connected
relative to their outgoing boundary.  Making this idea into a proof
has two main ingredients.  Firstly, we construct for each morphism in
$\mathcal{C}_{\theta,L}^{\kappa-1}$ a contractible space of surgery
data.  The space is defined in Definition~\ref{defn:ZComplex}, and the
precise statement is Theorem~\ref{thm:SurgeryComplexMor}.  Secondly,
we implement the surgery described by the surgery data, using a
standard one-parameter family of manifolds defined in
Section~\ref{sec:standard-family}.

In order to motivate some of the more technical constructions, let us
first give an informal account of this technique.  For simplicity, we
suppose that $N=\infty$, that we have no tangential structure, that $L
= \emptyset$, and that $\kappa=0$. We first apply the equivalence
(\ref{eq:PosetModel}) to reduce the problem to studying the map
$$BD^0 \lra BD^{-1}$$
of classifying spaces of posets. Let
$$\sigma = (t_0, t_1;a_0, a_1;\epsilon_0, \epsilon_1;W) \in BD^{-1}$$
be a point on a 1-simplex (for example), where $(t_0,t_1) \in \Delta^1$ are the barycentric coordinates. We will describe a way of producing a path from its image in $\vert X^{-1}_\bullet \vert$ into the subspace $\vert X_\bullet^{0} \vert$. The proof of Theorem \ref{thm:kappafiltration} will be a systematic, parametrised version of this construction.

If the cobordism $W\vert_{[a_0, a_1]}$ is already $0$-connected relative to its outgoing boundary, then the image of $\sigma$ in $\vert X^{-1}_\bullet \vert$ already lies in the subspace $\vert X_\bullet^{0} \vert$, and we are done. If not, we may choose distinct points
$$\{f_\alpha : * \to W \vert_{[a_0, a_1]}\}_{\alpha \in
  \Lambda}$$
such that the pair $(W \vert_{[a_0, a_1]}, (W \vert_{a_1}) \cup \bigcup_\alpha f_\alpha(*))$ is $0$-connected. We then choose tubular neighbourhoods of these points to obtain codimension 0 embeddings $\hat{f}_\alpha : D^d \to W \vert_{[a_0, a_1]}$, which we can extend to an embedding
$$e_\alpha : (S^0, \{+1\}) \times D^d \lra ([a_0, \infty) \times \bR^\infty, [a_1+\epsilon_1, \infty) \times \bR^\infty).$$
As the original points $f_\alpha(*)$ were distinct, we may suppose the embeddings $e_\alpha$ are disjoint. Now on each $e_\alpha(S^0 \times D^{d})$ we do the surgery move shown in Figure \ref{fig:0-surgeryMorphismsBasic}, a move similar in spirit, though much simpler, than that described in \cite[\S 6.2]{GMTW}. 
\begin{figure}[htb]
\includegraphics[bb=0 0 337 159]{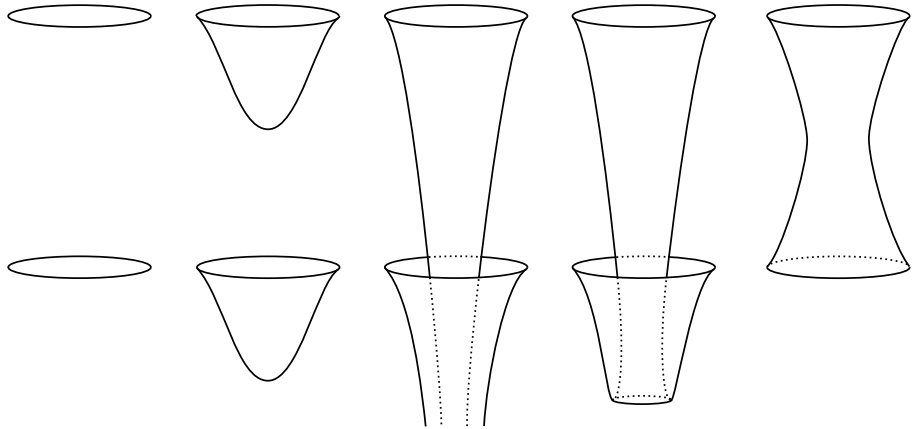}
\caption{The basic move for surgery on morphisms.}
\label{fig:0-surgeryMorphismsBasic}
\end{figure}

More precisely, Figure \ref{fig:0-surgeryMorphismsBasic} describes a
continuous 1-parameter family of $d$-manifolds $\mathcal{P}_t$, $t \in
[0,1]$, depicted (for $d=2$) by its values at times $t= 0, \frac14,
\frac24, \frac34, 1$.  The family comes equipped with functions to
$\R$, depicted in the figure as the height function.  The family
starts at the manifold $\mathcal{P}_0 = S^0 \times D^d$, and we may
cut out each $e_\alpha(S^0 \times D^{d})$ from $W$ and glue in
$\mathcal{P}_t$, to obtain a 1-parameter family of manifolds $W_t$,
each equipped with a height function $W_t \to \R$, with $W_0 = W$.
The values $\{a_0, a_1\}$ do not remain regular throughout this move,
so this does not describe a path in the space $BD^{-1}$. However, the
intervals $(a_i-\epsilon_i, a_i+\epsilon_i)$ do only contain isolated
critical values, so it does describe a path in the space $\vert
X_\bullet^{-1} \vert$. Furthermore, at the end of the move we obtain a
manifold $W_1 = \overline{W}$ such that $(\overline{W} \vert_{[a_0,
  a_1]}, \overline{W} \vert_{a_1})$ is $0$-connected, and hence a
point in $\vert X_\bullet^0 \vert$.  By Proposition~\ref{prop:X}, this
proves that $\pi_0(BD^0) \to \pi_0(BD^{-1})$ is surjective, as
required.

This surgery move generalises easily to the case when $N$ is finite
(but large enough), $L \neq \emptyset$, and $\kappa > 0$ (the analogue
of the surgery move will start with $S^\kappa \times
D^{d-\kappa}$). However, it does not generalise well to the case of
arbitrary tangential structures (to understand how it can fail, we
suggest that the reader attempt to impose a family of framings to the
family of 2-manifolds in Figure
\ref{fig:0-surgeryMorphismsBasic}). One way to fix this would be to
use the surgery move described in \cite[\S 6.2]{GMTW}, but that does
not seem to generalise to $\kappa > 0$. Instead we modify the surgery
move in Figure~\ref{fig:0-surgeryMorphismsBasic} as shown in Figure
\ref{fig:0-surgeryMorphisms}.
\begin{figure}[htb]
\includegraphics[bb=0 0 338 168]{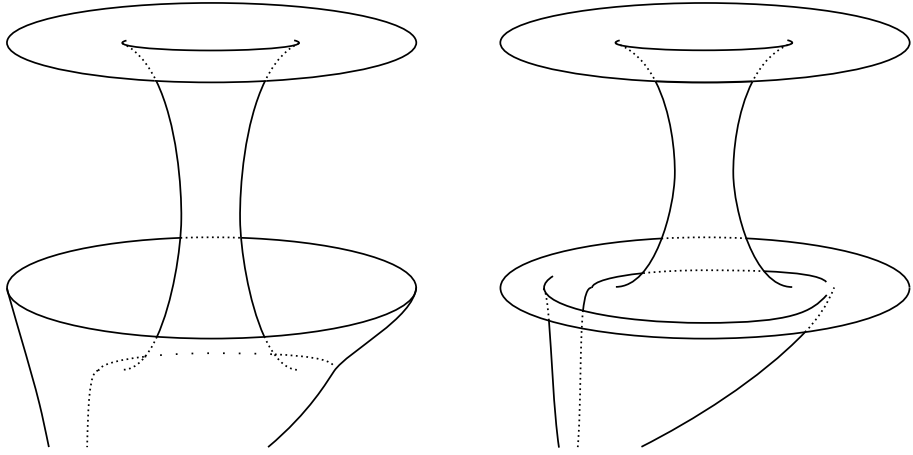}
\caption{In the refined move for surgery on morphisms, these pictures
  replace the last two frames in
  Figure~\ref{fig:0-surgeryMorphismsBasic}.}
\label{fig:0-surgeryMorphisms}
\end{figure}
As we shall see (in the proof of Proposition \ref{prop:StdFamilyMorphisms}, where we
also explain the analogous process for $\kappa > 0$) there is a
canonical way of extending any tangential structure on $\{-1\} \times
D^d$ to the resulting 1-parameter family of manifolds.

\subsection{Surgery data}\label{sec:SurgeryDataMor}

In order to implement the ideas discussed above, we will fatten the semi-simplicial space
$D^\kappa_{\theta,L}(\R^N)_\bullet$ up to a bi-semi-simplicial space
$D^\kappa_{\theta,L}(\R^N)_{\bullet, \bullet}$ which includes suitable
surgery data.  The
space $D^\kappa_{\theta,L}(\R^N)_{\bullet,\bullet}$ is described in
Definition \ref{defn:ZDComplex} below, using the following notation.
Let $V \subset \overline{V} \subset \bR^{\kappa+1} \times \bR^{d-\kappa}$ be the
subspaces
\begin{equation*}
  V = (-2,0) \times \R^d \quad\quad \overline{V} = [-2,0] \times \R^d
\end{equation*}
and let $h : \overline{V} \to [-2,0] \subset \bR$ denote projection to
the first coordinate, which we call the \emph{height
  function}. Let $\partial_- D^{\kappa+1} \subset \partial D^{\kappa+1}$ denote the
lower hemisphere (i.e.\ $\partial_- D^{\kappa+1} = \partial
D^{\kappa+1} \cap ([-1,0] \times \bR^{\kappa})$). We shall also use the notation
$[p]^\vee = \Delta([p],[1])$ when $[p] \in \Delta_\mathrm{inj}$.  The
elements of $[p]^\vee$ are in bijection with $\{0,\dots, p+1\}$, using
the convention that $\phi: [p] \to [1]$ corresponds to the number $i$
with $\phi^{-1}(1) = \{i,i+1, \dots, p\}$.
Finally, we fix once and for all an uncountable set $\Omega$.

\begin{definition}\label{defn:ZComplex}
  Let $x=(a, \epsilon, (W,\ell_W)) \in {D}^{\kappa-1}_{\theta,
    L}(\bR^N)_p$ and define $Z_q(x)$ to be the set of triples
  $(\Lambda, \delta, e)$, where $\Lambda \subset \Omega$ is a finite
  set, $\delta: \Lambda \to [p]^\vee \times [q]$ is a function, and
  \begin{equation*}
    e: \Lambda \times \overline{V} \hookrightarrow \R \times (0,1)
    \times (-1,1)^{N-1}
  \end{equation*}
  is an embedding, satisfying the conditions below.  We shall write
  $\Lambda_{i,j} = \delta^{-1}(i,j)$, $e_{i,j} = e\vert_{\Lambda_{i,j}
    \times \overline{V}}$ and $D_{i,j} = e_{i,j}\left(\Lambda_{i,j}
    \times \partial_-D^{\kappa+1} \times \{0\}\right)$ for
  $0 \leq i \leq p+1$ and $0 \leq j \leq q$.
  \begin{enumerate}[(i)]
  \item\label{it:ZHeightFn} On each subset $(x_1 \circ
    e\vert_{\{\lambda\} \times \overline{V}})^{-1}(a_k-\epsilon_k,
    a_k+\epsilon_k) \subset \{\lambda\} \times \overline{V}$, the
    height function $x_1 \circ e$ coincides with the height function
    $h$ up to an affine transformation.
    
  \item\label{it:ZInfty} $e$ sends $\Lambda \times h^{-1}(0)$ into
    $x_1^{-1}(a_p+\epsilon_p, \infty)$.
    
  \item For $i > 0$, $e$ sends $\Lambda_{i,j} \times h^{-1}(-3/2)$
    into $x_1^{-1}(a_{i-1}+\epsilon_{i-1}, \infty)$.
    
  \item\label{it:Z4} $e$ sends $\Lambda \times h^{-1}(-2)$ into $x_1^{-1}(-\infty, a_0-\epsilon_0)$.
  
  \item\label{it:ZIntersection} $e^{-1}(W) = \Lambda
    \times \partial_-D^{\kappa+1} \times \bR^{d-\kappa}$.

  \item For each $j$ and each $i \in \{1, \ldots, p\}$, the pair
    $$\left(W\vert_{[a_{i-1},a_i]}, W\vert_{a_i} \cup D_{i,j}\vert_{[a_{i-1}, a_i]}\right)$$
    is $\kappa$-connected.\label{it:EnoughSurgeryData}
  \end{enumerate}
  For each $x$, $Z_\bullet(x)$ is a semi-simplicial set: Given an
  injective map $k:[q] \to [q']$, we replace $\Lambda$ by the subset
  $\delta^{-1}([p]^\vee \times \IM(k))$, compose $\delta$ with
  $[p]^\vee \times k^{-1}$, and restrict $e$.  Explicitly, the face
  map $d_j$ forgets the embeddings $e_{*,j}$
\end{definition}

Note that the set $Z_q(x)$ consists of those $(q+1)$-tuples of
elements of $Z_0(x)$ which are disjoint.

\begin{definition}\label{defn:ZDComplex}
  We define a bi-semi-simplicial space ${D}^{\kappa}_{\theta,
    L}(\bR^N)_{\bullet,
    \bullet}$ as a set by
  \begin{equation*}
    {D}^{\kappa}_{\theta, L}(\bR^N)_{p,q} = \big\{(x,y)
    \,\big\vert\, x \in
    {D}_{\theta, L}^{\kappa-1}(\bR^N)_p, y \in Z_q(x)\big\}
  \end{equation*}
  topologised as a subspace of
  $${D}_{\theta, L}^{\kappa-1}(\bR^N)_p \times \left ( \coprod_{\Lambda
      \subset \Omega} C^\infty(\Lambda \times \overline{V}, \bR^{N+1})
  \right )^{(p+2)(q+1)}.$$ The space ${D}^{\kappa}_{\theta,
    L}(\bR^N)_{p,q}$ is functorial in $[p] \in \Delta_\mathrm{inj}$ by
  composing $\delta: \Lambda \to [p]^\vee \times [q]$ with the induced
  map $[p']^\vee \to [p]^\vee$ and functorial in $[q] \in
  \Delta_\mathrm{inj}$ in the same way as in
  Definition~\ref{defn:ZComplex}.  Explicitly, the face map $d_i$ in
  the $q$ direction forgets the embeddings $e_{*, i}$ and in the $p$
  direction takes the union of $e_{i,*}$ and $e_{i+1,*}$.  We shall
  write $D^\kappa_{\theta,L}(\R^N)_{p,-1} =
  D^{\kappa-1}_{\theta,L}(\R^N)_{p}$, and there is an augmentation map
  $D^\kappa_{\theta,L}(\R^N)_{p,q} \to
  D^\kappa_{\theta,L}(\R^N)_{p,-1}$ which forgets all surgery data.
\end{definition}

The main result concerning this bi-semi-simplicial space is the
following, whose proof we defer until Section \ref{sec:Connectivity}.

\begin{theorem}\label{thm:SurgeryComplexMor}
  Under the assumptions of Theorem \ref{thm:kappafiltration}, the
  augmentation map
  \begin{equation*}
    D^\kappa_{\theta, L}(\bR^N)_{\bullet, \bullet} \lra
    D^{\kappa-1}_{\theta, L}(\bR^N)_{\bullet}
  \end{equation*}
  induces a weak homotopy equivalence after geometric
  realisation.
\end{theorem}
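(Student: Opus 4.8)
The plan is to split the statement into a formal semi-simplicial reduction and a geometric existence statement about embedded surgery handles; the latter is the real content and is established in Section~\ref{sec:Connectivity}.

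First, since the geometric realisation of a bi-semi-simplicial space may be performed one simplicial direction at a time, it suffices to show that for each fixed $p$ the augmentation $\vert D^\kappa_{\theta, L}(\bR^N)_{p, \bullet}\vert \to D^\kappa_{\theta, L}(\bR^N)_{p,-1} = D^{\kappa-1}_{\theta, L}(\bR^N)_p$ is a weak homotopy equivalence; then $[p] \mapsto \vert D^\kappa_{\theta, L}(\bR^N)_{p, \bullet}\vert$ maps by a levelwise weak equivalence to $D^{\kappa-1}_{\theta, L}(\bR^N)_\bullet$, and the lemma on realisations of levelwise equivalences finishes the proof. For fixed $p$, the augmented semi-simplicial space $D^\kappa_{\theta, L}(\bR^N)_{p, \bullet} \to D^{\kappa-1}_{\theta, L}(\bR^N)_p$ is an \emph{augmented topological flag complex}: by the remark after Definition~\ref{defn:ZComplex}, $D^\kappa_{\theta, L}(\bR^N)_{p,q}$ is the subspace of $(q{+}1)$-tuples of $0$-simplices lying over a common $x$ whose surgery-datum images are pairwise disjoint, disjointness of compact images being an open condition, and the topology is the evident subspace topology. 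One may then apply the standard criterion for such complexes used in \cite{GMTW, GR-W}: it is enough to check that (a) the augmentation $D^\kappa_{\theta, L}(\bR^N)_{p,0} \to D^{\kappa-1}_{\theta, L}(\bR^N)_p$ is a Serre microfibration with non-empty fibres, and (b) for every $x \in D^{\kappa-1}_{\theta, L}(\bR^N)_p$ and any finite collection of elements of $Z_0(x)$ there is a further element disjoint from all of them — conditions (a) and (b) forcing each fibre, qua flag complex, to be weakly contractible.

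The microfibration in (a) is a parametrised tubular-neighbourhood statement: the connectivity clauses (\ref{it:EnoughSurgeryData}) defining $Z_0(x)$ are stable under small perturbations of $W$ and of the discs $D_{i,j}$ by compactness, while the normalisation of the height function (\ref{it:ZHeightFn}), the transversality $e^{-1}(W) = \Lambda \times \partial_- D^{\kappa+1}\times \bR^{d-\kappa}$ of (\ref{it:ZIntersection}) and the behaviour near $x_1 = \pm\infty$ as in (\ref{it:Z4}) can be maintained for a short time as $x$ varies, by isotopy extension. Granting non-emptiness, (b) follows by general position: the inequality $\kappa + 1 + d < N$ leaves ample room in the $(-1,1)^{N-1}$-directions to shrink one further piece of surgery data and push it off the finitely many compact images already present, without disturbing its intersection with $W$ or its height profile. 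Everything thus reduces to the non-emptiness of $Z_0(x)$, in a form strong enough to feed the microfibration argument, and this is precisely what is deferred to Section~\ref{sec:Connectivity}. Concretely, one must show that every morphism $W\vert_{[a_{i-1},a_i]}$ of $\mathcal{C}^{\kappa-1}_{\theta, L}$, which is $(\kappa{-}1)$-connected relative to its outgoing boundary $W\vert_{a_i}$, carries a finite system of disjoint embedded thickened handles $e_{i,j}(\Lambda_{i,j}\times \overline{V})$ of the prescribed profile whose surgery renders each pair $(W\vert_{[a_{i-1},a_i]}, W\vert_{a_i}\cup D_{i,j}\vert_{[a_{i-1},a_i]})$ $\kappa$-connected. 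Generators of the relative $\pi_\kappa$ are representable by disjoint framed embedded discs exactly because $2\kappa \leq d-2$; the handle hypothesis on $L$ keeps them off $\bR\times L$; and largeness of $N$ gives room to route them from $W\vert_{a_i}$ out to infinity in the required shape. Making all of this precise at once — the framings, the mutual disjointness, the interaction with $L$, and the constraints coming from the height function — is the main obstacle, and is where the hypotheses $2\kappa\leq d-2$, $\kappa+1+d<N$ and the handle structure of $L$ are genuinely used.
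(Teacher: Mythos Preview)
Your overall strategy matches the paper's: fix $p$, recognise an augmented topological flag complex, and verify the hypotheses of the simplicial criterion (Theorem~\ref{thm:SimplicialTech}). However, there is a genuine gap in your treatment of the orthogonality condition~(b).

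In $D^\kappa_{\theta,L}(\bR^N)_{p,0}$ the surgery datum $e$ embeds $\Lambda \times \overline{V}$ with $\overline{V} = [-2,0] \times \bR^d$, and orthogonality requires the \emph{full} embeddings to be disjoint. But $e^{-1}(W) = \Lambda \times \partial_- D^{\kappa+1} \times \bR^{d-\kappa}$ is a codimension-zero submanifold of $W$, so a new piece cannot be ``pushed off'' the existing ones by a small perturbation while retaining condition~(\ref{it:ZIntersection}): inside $W$ the targets to avoid are open sets of full dimension. (Your phrase ``compact images'' is also not quite right, since $\overline{V}$ is non-compact.) The paper's remedy is to introduce an auxiliary bi-semi-simplicial space $\widetilde{D}^\kappa_{\theta,L}(\bR^N)_{\bullet,\bullet}$ in which $e$ is only required to be an embedding near the \emph{core} $C = [-2,0] \times D^\kappa \times \{0\}$, and orthogonality is disjointness of cores. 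Since the core is compact and $(\kappa+1)$-dimensional, with $\kappa$-dimensional trace in $W$, transversality now gives disjointness from $2\kappa < d$ inside $W$ and $2(\kappa+1) < N+1$ in the ambient space. One then checks separately (by a shrinking isotopy) that the inclusion $D^\kappa \hookrightarrow \widetilde{D}^\kappa$ is a levelwise weak equivalence. Your ``shrink'' language gestures at this, but without formally introducing the core-based complex the argument as written does not go through.

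Two smaller points. First, the paper verifies that the augmentation has \emph{local sections} (via a version of Cerf's isotopy extension theorem that preserves the height function over the closed sets $\cup_i [a_i-\epsilon_i, a_i+\epsilon_i]$), which is the hypothesis of Theorem~\ref{thm:SimplicialTech}; this is not the same as a Serre microfibration. Second, the paper actually proves Theorem~\ref{thm:SurgeryComplexMor} under the weaker bound $2\kappa \leq d-1$; the hypothesis $2\kappa \leq d-2$ is only invoked later, in Lemma~\ref{lem:MorSurgeryDesiredEffect}.
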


In fact, we shall prove this theorem with condition (\ref{item:10}) of
Theorem~\ref{thm:kappafiltration} replaced by the weaker condition
$2\kappa \leq d-1$.  The stronger assumption $2\kappa \leq d-2$ will
be used in Lemma~\ref{lem:MorSurgeryDesiredEffect}.

\subsection{The standard family}
\label{sec:standard-family}

We will now construct a one-parameter family of submanifolds of $V =
(-2,0) \times \R^d$ which formalises the family of manifolds depicted
in Figures \ref{fig:0-surgeryMorphismsBasic} and \ref{fig:0-surgeryMorphisms}. Let us write coordinates in
$\bR^{\kappa+1} \times \bR^{d-\kappa}$ as $(u,v)$.  First define an
element $\widetilde{\mathcal{P}}_0 \in \Psi_d(\R \times \R^\kappa
\times \R^{d-\kappa})$ as
\begin{equation*}
  \widetilde{\mathcal{P}}_0 = \partial D^{\kappa+1} \times
  \R^{d-\kappa}.
\end{equation*}
Choose a function $\varphi : [0,\infty) \to [0,\infty)$ that is the
identity function on a neighbourhood of $[1/2, \infty)$, takes value
$1/4$ near 0, and has $\varphi'' \geq 0$. We then define an embedding
by
\begin{eqnarray*}
    g' : \bR^{\kappa+1} \times \partial D^{d-\kappa} & \lra &
    D^{\kappa+1} \times \bR^{d-\kappa} \\
(u,v) & \longmapsto & (u/\varphi(|u|), \varphi(|u|) \cdot v),
\end{eqnarray*}
and another embedding $g:
\R^{\kappa+1} \times \partial D^{d-\kappa} \to [-2,1] \times \R^\kappa
\times \R^{d-\kappa}$ by
\begin{equation*}
  g(u,v) = g'(u,v) + \tau(u)\left(\frac{v_1 - 1}2,0,0\right)
\end{equation*}
where $\tau: \R^{\kappa+1} \to [0,1]$ is a bump function supported in a
small neighbourhood of the point $u_0= (-1/2,0) \in \R \times
\R^\kappa$, having $\tau(u_0) = 1$, $\tau(u) < 1$ otherwise, and no
critical points in $\tau^{-1}((0,1))$.  We can arrange that the
support of $\tau$ be small enough that it is contained in the region
where $\varphi(|u|) = |u|$.  We let $\widetilde{\mathcal{P}}_1\subset
\R^{d+1}$ denote the image of $g$ and $\widetilde{\mathcal{P}}'_1$ be
the image of $g'$. We then define
$$\mathcal{P}_0 ,\mathcal{P}_1 \in \Psi_d(V)$$
by intersecting the manifolds $\widetilde{\mathcal{P}}_0,
\widetilde{\mathcal{P}}_1$ with the open set $V = (-2,0) \times \R^d$.

To construct $\mathcal{P}_t \in \Psi_d(V)$ for intermediate values of
$t \in [0,1]$, we first observe that $\widetilde{\mathcal{P}}_0$ and
$\widetilde{\mathcal{P}}'_1$ agree on the subset $\vert v\vert \geq
1/2$ and that $\widetilde{\mathcal{P}}_1$ agrees with them on the
smaller subset $\vert v\vert \geq 1$ (when the support of the bump
function $\tau$ is sufficiently small).  Starting with the two
submanifolds $\widetilde{\mathcal{P}}_0$ and
$\widetilde{\mathcal{P}}_1 \subset \R \times \R^\kappa \times
\R^{d-\kappa}$, we then pull the region $\{(u,v) \;\vert \;\vert v
\vert < 1\}$ downwards by decreasing the first coordinate
in $\R \times \R^d$, until the region where the submanifolds may
disagree is moved completely outside of $V$.  This gives two
one-parameter families of submanifolds which, upon restricting to $V$,
give two paths in $\Psi_d(V)$ starting at $\mathcal{P}_0$ and
$\mathcal{P}_1$ and ending at the same point in $\Psi_d(V)$.
Concatenating one path with the reverse of the other, we get the
desired path from $\mathcal{P}_0$ to $\mathcal{P}_1$.

Spelling this process out in a little more detail, we first choose a
function $\rho : [0,\infty) \to [0,\infty)$ taking the value $1$ near
$[0, 1]$, the value $0$ near $[2, \infty)$, and which is
strictly decreasing on $\rho^{-1}(0,1)$.  We then define embeddings
\begin{align*}
  H_t : \bR \times \bR^{\kappa} \times \bR^{d-\kappa} & \lra  \bR \times \bR^{\kappa} \times \bR^{d-\kappa}\\
  (s, x, y) & \longmapsto (s - t \cdot \rho(\vert y \vert), x,y)
\end{align*}
which for all $t$ restrict to the identity for $\vert y \vert \geq
2$.  Define one-parameter families of manifolds by
\begin{align*}
  \mathcal{P}^0_t &= V \cap H_t(\widetilde{\mathcal{P}}_0) =
  (H_{-t}\vert_V)^{-1}(\widetilde{\mathcal{P}}_0)\\
  \mathcal{P}^1_t &= V \cap H_t(\widetilde{\mathcal{P}}_1) =
  (H_{-t}\vert_V)^{-1}(\widetilde{\mathcal{P}}_1).
\end{align*}
The second description shows that these are closed subsets of $V$ and
describe continuous functions $\R \to \Psi_d(V)$.  It is easy to see
that we have $\mathcal{P}^0_t = \mathcal{P}^1_t \in \Psi_d(V)$ for $t
\geq 3$, and we then define the path $\mathcal{P}_t$ as the
concatenation
$$\mathcal{P}_0 = \mathcal{P}^0_0 \leadsto \mathcal{P}^0_{3} = \mathcal{P}^1_{3}
\leadsto \mathcal{P}^1_0 = \mathcal{P}_1$$ in $\Psi_d(V)$,
reparametrised so that the path has length 1.  We collect the most
important properties of this family in
Proposition~\ref{prop:StdFamilyMorphisms} below.  The following remark
partially explains how it relates to an ordinary $\kappa$-surgery.
\begin{remark}\label{remark:graph-of-morphisms-surgery}
  Let $Q(u,v) = - |u|^2 + |v|^2$, where as usual $(u,v) \in
  \R^{\kappa+1} \times \R^{d-\kappa}$.  For $t \in [0,3]$ the function
  $(u,v) \mapsto H_t({u}/{|u|},v)$ defines a diffeomorphism
  to $\mathcal{P}^0_t$ from an open subset of $Q^{-1}(t-3)$ (namely
  the inverse image of $V$ by that function) and similarly the
  function $(u,v) \mapsto H_t \circ g(u,{v}/{|v|})$ defines a
  diffeomorphism to $\mathcal{P}^1_t$ from an open subset of
  $Q^{-1}(3-t)$.  The inverses of these diffeomorphisms give smooth
  embeddings $\mathcal{P}^0_t \to Q^{-1}(t-3)$ and $\mathcal{P}^1_t
  \to Q^{-1}(3-t)$ and it is easy to verify that for $t=3$ the two
  resulting embedings $\mathcal{P}^0_3 = \mathcal{P}^1_3 \to
  Q^{-1}(0)$ agree, and so glue to a continuous family of embeddings
  $\mathcal{P}_t \to Q^{-1}(6t-3)$.

  The continuous map $t \mapsto \mathcal{P}_t$ has \emph{graph} given
  by $\mathcal{P} = \{(t,x) \in [0,1] \times V \,\,|\,\, x \in
  \mathcal{P}_t\}$.  The above remarks give an embedding $\mathcal{P}
  \to Q^{-1}([-3,3])$ and it is easy to verify that the image is
  disjoint from the straight lines from $0$ to $p_0 = (-1/2,0,-1/2,0)
  \in \R \times \R^\kappa \times \R \times \R^{d-\kappa-1}$ and from
  $p_0$ to $p_1 = (-1/2,0,-\sqrt{13}/2,0)$.  Thus we get a
  diffeomorphism from $\mathcal{P}$ to an open subset of the
  contractible set
  \begin{equation*}
    \mathcal{Q} = Q^{-1}([-3,3]) - \big([0,p_0] \cup [p_0,p_1]\big).
  \end{equation*}
\end{remark}

\begin{proposition}\label{prop:StdFamilyMorphisms}
  For $2\kappa \leq d-1$, the 1-parameter family $\mathcal{P}_t \in
  \Psi_d(V)$, defined for $t \in [0,1]$, has the following properties.
  \begin{enumerate}[(i)]
  \item The height function, i.e.\ the restriction of $h: V \to
    (-2,0)$ to $\mathcal{P}_t \subset V$, has isolated critical
    values\label{it:IsolatedCriticalValues}.
    
  \item $\mathcal{P}_0 = \Int(\partial_- D^{\kappa+1}) \times
    \bR^{d-\kappa}$, where $\partial_- D^{\kappa+1} = \partial
    D^{\kappa+1} \cap ([-1,0] \times \R^\kappa)$.\label{it:InitialValue}
    
  \item Independently of $t \in [0,1]$ we have
    \begin{equation*}
      \mathcal{P}_t - (\R^{\kappa+1} \times
      B^{d-\kappa}_{3}(0)) = \Int(\partial_- D^{\kappa+1}) \times
      (\R^{d-\kappa} - B^{d-\kappa}_{3}(0)).
    \end{equation*}
    For ease of notation we write $\mathcal{P}^\partial_t$ for this
    closed subset of $\mathcal{P}_t$.\label{it:ConstantOnBoundary}
    
  \item For all $t$ and each pair of regular values $-2 < a < b < 0$
    of the height function, the pair
    \begin{equation}
      \label{eq:17}
      (\mathcal{P}_t \vert_{[a,b]}, \mathcal{P}_t\vert_b \cup
      \mathcal{P}^\partial_t\vert_{[a,b]})
    \end{equation}
    is $\kappa$-connected.
    
    \label{it:kConnMoving}
  \item For each pair of regular values $-2 < a < b < 0$ of the height
    function, the pair
    $$(\mathcal{P}_1 \vert_{[a, b]}, \mathcal{P}_1 \vert_{b})$$
    is $\kappa$-connected.\label{it:kConn}
\end{enumerate}
Furthermore, if $\mathcal{P}_0$ is
equipped with a $\theta$-structure $\ell$ we can upgrade this,
continuously in $\ell$, to a 1-parameter family $\mathcal{P}_t(\ell)
\in \Psi_\theta(V)$ starting from $(\mathcal{P}_0, \ell)$ such that
\begin{enumerate}[(i$^\prime$)]
\setcounter{enumi}{2}
\item The path $\mathcal{P}_t(\ell)$ is constant as $\theta$-manifolds
  near $\mathcal{P}^\partial_t$.
\end{enumerate}
\end{proposition}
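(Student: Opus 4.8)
The plan is to obtain (i)--(iii) by direct inspection of the defining formulas, to extract (iv) and (v) from the level-set geometry of the quadratic form $Q$ appearing in Remark~\ref{remark:graph-of-morphisms-surgery}, and to obtain the tangential refinement from the contractibility of the set $\mathcal{Q}$ in that remark. For (i): on each of $\widetilde{\mathcal{P}}_0$, $\widetilde{\mathcal{P}}_1$ and their images under the shears $H_t$, the height function $h$ is by construction a function of the $\R^{\kappa+1}$-coordinate $u$ and the radial coordinate $|v|$ alone, so all of its critical points lie over $\{u=\pm e_1\}$ and the critical values are read off from the finitely many ``plateau'' values of $\varphi$, $\rho$ and $\tau$; this makes the set of critical values finite, hence isolated, and $\mathcal{P}_t$ is a finite concatenation of such pieces. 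For (ii) and (iii) one intersects the formulas for $\widetilde{\mathcal{P}}_0$ and $\widetilde{\mathcal{P}}_1$ with $V$ and with the region $\{|v|\ge 3\}$: there the shears are the identity and $\widetilde{\mathcal{P}}_0$ agrees with $\widetilde{\mathcal{P}}_1$, so $\mathcal{P}_t\cap\{|v|\ge 3\}$ is the asserted set and is independent of $t$, while $\widetilde{\mathcal{P}}_0\cap V=\Int(\partial_-D^{\kappa+1})\times\R^{d-\kappa}$ gives (ii).

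The substance, and the step I expect to be the main obstacle, is (iv), of which (v) is a sharpening. Here I would use the family of embeddings $\mathcal{P}_t\hookrightarrow Q^{-1}(6t-3)$ furnished by Remark~\ref{remark:graph-of-morphisms-surgery}, where $Q(u,v)=-|u|^2+|v|^2$ has signature $(\kappa+1,d-\kappa)$. Under this embedding $h$ pulls back to an explicit function on the quadric $Q^{-1}(c)$ (of the form $u_1/|u|-t\rho(|v|)$ after the identifications of the remark), whose critical behaviour can be analysed directly; the outgoing slice $\mathcal{P}_t|_b$ becomes one of its level sets, and the cylindrical part $\mathcal{P}^\partial_t$ becomes the region of large $|v|$, on which the level sets $Q^{-1}(c)$ carry the product foliation coming from $Q$. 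From this one checks: if $[a,b]$ contains no critical value of $h$, then the inclusion $\mathcal{P}_t|_b\cup\mathcal{P}^\partial_t|_{[a,b]}\hookrightarrow\mathcal{P}_t|_{[a,b]}$ is a homotopy equivalence (flow the gradient); and if $[a,b]$ does contain a critical value, then using that $Q^{-1}(c)\simeq S^\kappa$ for $c<0$, that $Q^{-1}(c)\simeq S^{d-\kappa-1}$ for $c>0$, and that $Q^{-1}(0)$ is a cone, the pair becomes up to homotopy of the form $(Z,Y)$ with $Z$ contractible and $Y\simeq\partial(D^\kappa\times D^{d-\kappa})=S^{d-1}$, hence $(d-1)$-connected and in particular $\kappa$-connected since $\kappa\le\tfrac{d-1}2<d-1$. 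The sharp hypothesis $2\kappa\le d-1$ enters exactly here: it is what rules out a handle of index $\le\kappa$, so that the pair is never less than $\kappa$-connected. Carrying this out uniformly in $t\in[0,1]$ and in the pair of regular values $a<b$ — including the transitional frames where a critical value crosses $a$ or $b$, and the concatenation frame $c=0$ — gives (iv). For $t=1$ one has $\mathcal{P}_1=\widetilde{\mathcal{P}}_1\cap V$, the surgered model; there the relevant critical level of $h$ lies below the outgoing boundary, so the cylindrical part is not needed to kill $\pi_\kappa$, and the same computation yields (v) with $\mathcal{P}_1|_b$ alone.

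For the final clause, the key point is that $\mathcal{Q}=Q^{-1}([-3,3])-\big([0,p_0]\cup[p_0,p_1]\big)$ is contractible: it deformation retracts onto $Q^{-1}(0)$ with the cone-ray $[0,p_0]$ removed, hence onto a cone, hence onto a point. Its tangent bundle is therefore trivial, and it splits as $\epsilon^1$ (normal to the foliation of $Q^{-1}([-3,3])$ by the level sets of $Q$) together with the bundle of tangent spaces along that foliation. Given a $\theta$-structure $\ell$ on $\mathcal{P}_0$, restrict it to the leaf meeting $\mathcal{P}_0$ and extend it — with no obstruction, since $\mathcal{Q}$ is contractible and the relevant bundle is trivial — to a bundle map from the foliation-tangent bundle over $\mathcal{Q}$ to $\theta^*\gamma$; the space of such extensions of $\ell$ is contractible, so the choice can be made continuously in $\ell$, and over the product region corresponding to $\mathcal{P}^\partial$ it can be taken to be the product structure. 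Pulling back to the leaves meeting the graph $\mathcal{P}\subset\mathcal{Q}$ then produces the $1$-parameter family $\mathcal{P}_t(\ell)\in\Psi_\theta(V)$ with $\mathcal{P}_0(\ell)=(\mathcal{P}_0,\ell)$ that is constant as $\theta$-manifolds near $\mathcal{P}^\partial_t$.
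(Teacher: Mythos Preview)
Your treatment of (i)--(iii) and of the tangential refinement is essentially the paper's: the paper likewise extracts (i)--(iii) from the construction, and for the $\theta$-structures uses exactly the contractibility of $\mathcal{Q}$ to build a bundle map $r:T_v\mathcal{Q}\to T\mathcal{P}_0$ which is the identity over $\mathcal{P}_0$ and over each $\mathcal{P}^\partial_t$, and then sets $\mathcal{P}_t(\ell)=(\mathcal{P}_t,\ell\circ r_t)$.

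The argument for (iv), however, has a genuine gap. Your claim that when $[a,b]$ contains a critical value the pair is homotopy equivalent to $(Z,Y)$ with $Z$ contractible and $Y\simeq S^{d-1}$ is not correct for all $t$, and the homotopy types of the quadrics $Q^{-1}(c)$ do not bear on this: for fixed $t$ the manifold $\mathcal{P}_t$ sits in a \emph{single} quadric $Q^{-1}(6t-3)$, and the height function $h$ is a different function on it. Concretely, take $t$ in the second half of the path (say $t=1$), $a<-1$, and $b$ close to $0$. In the parametrisation $\widetilde{\mathcal{P}}_1\cong\R^{\kappa+1}\times S^{d-\kappa-1}$ via $g$, one has $|v|=\varphi(|u|)$, so $\mathcal{P}^\partial_1=\{|u|\geq 3\}$ and $h=u_1/|u|$ there. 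One checks that $\mathcal{P}^\partial_1\vert_{[a,b]}$ deformation retracts into $\mathcal{P}_1\vert_b$ (flow the band $\{u_1/|u|\in[a,b]\}$ up to $u_1/|u|=b$), so
\[
Y=\mathcal{P}_1\vert_b\cup\mathcal{P}^\partial_1\vert_{[a,b]}\ \simeq\ \mathcal{P}_1\vert_b\ \simeq\ \R^\kappa\times S^{d-\kappa-1}\ \simeq\ S^{d-\kappa-1},
\]
not $S^{d-1}$. For $\kappa\geq 1$ the pair is then only $\kappa$-connected, never $(d-1)$-connected: indeed the paper's Morse analysis shows that for these $t$ the (perturbed) height function has a critical point of index $d-\kappa-1$, so relative to the outgoing boundary one attaches a $(\kappa+1)$-cell, giving exactly $\kappa$-connectivity and no more. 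The same phenomenon already occurs on the first half of the path once $\mathcal{P}^0_s$ has passed $s=1$: the moving critical point has index $\kappa$, not $0$, and the pair is $(d-\kappa-1)$-connected, not $(d-1)$-connected.

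What the paper does instead is this. For $a>-1$ it observes that the gradient flow deforms $\mathcal{P}^\partial_t\vert_{[a,b]}$ into $\mathcal{P}_t\vert_b$, reducing to $(\mathcal{P}_t\vert_{[a,b]},\mathcal{P}_t\vert_b)$. For $a<-1$ (where this fails, since $\mathcal{P}^\partial_t\vert_{[a,b]}$ is non-compact in the height direction) it introduces a \emph{modified height function} $\overline h(u,v)=h(u,v)+\lambda(|v|)$ with $\lambda$ vanishing on $[0,4]$ and a diffeomorphism $(4,\infty)\to(0,\infty)$; this bends the cylindrical end upwards so that $(\mathcal{P}_t\cap\overline h^{-1}[a,b],\,\mathcal{P}_t\cap\overline h^{-1}(b))\hookrightarrow(\mathcal{P}_t\vert_{[a,b]},\,\mathcal{P}_t\vert_b\cup\mathcal{P}^\partial_t\vert_{[a,b]})$ is a homotopy equivalence of pairs, and $\overline h$ is now proper. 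One then verifies, one critical value at a time, that $\overline h$ can be perturbed near each critical level to a Morse function with at most one critical point, of index $0$, $\kappa$, or $d-\kappa-1$ on $\mathcal{P}^0_s$ and of index $d-\kappa-1$ on $\mathcal{P}^1_s$. Since $2\kappa\leq d-1$ forces all of these to be $\leq d-\kappa-1$, standard Morse theory gives $\kappa$-connectivity; this is precisely where the hypothesis enters, and it is the sharp bound --- not the stronger $(d-1)$-connectivity you assert. Property (v) is then the special case $t=1$: here the single critical value is $-1$, and for $a<-1<b$ the pair $(\mathcal{P}_1\vert_{[a,b]},\mathcal{P}_1\vert_b)$ is a relative $(d-1)$-cell, hence $(d-2)$-connected.
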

\begin{proof}
  We have seen properties
  (\ref{it:IsolatedCriticalValues})--(\ref{it:ConstantOnBoundary})
  during the construction (the statement
  in~(\ref{it:ConstantOnBoundary}) would still be true with $3$
  replaced by $2$, but we wish to emphasise the smaller
  set).  For property (\ref{it:kConnMoving}) we consider two
  cases depending on the value of $a$.  In the case $a > -1$, the
  pair~\eqref{eq:17} is homotopy equivalent to the pair
  \begin{equation*}
    (\mathcal{P}_t\vert_{[a,b]}, \mathcal{P}_t \vert_b),
  \end{equation*}
  using e.g.\ the gradient flow trajectories of $h$ to deform
  $\mathcal{P}^\partial_t\vert_{[a,b]}$ back to
  $\mathcal{P}^\partial_t\vert_{b}$.  In the case $a < -1$ we consider
  the \emph{modified height function}, defined using the coordinates
  $(u,v) \in \R^{\kappa + 1} \times \R^{d-\kappa}$ as
  $\overline{h}(u,v) = h(u,v) + \lambda(|v|)$, where $\lambda:
  [0,\infty) \to [0,\infty)$ is a smooth function which is 0 on
  $[0,4]$ and restricts to a diffeomorphism $(4,\infty) \to
  (0,\infty)$.  We claim that the inclusion of pairs
  \begin{equation}\label{eq:22}
    (\mathcal{P}_t \cap \overline{h}^{-1}([a,b]), \mathcal{P}_t
    \cap \overline{h}^{-1}(b)) \lra 
    (\mathcal{P}_t\vert_{[a,b]}, \mathcal{P}_t\vert_{b}
    \cup \mathcal{P}^\partial_t\vert_{[a,b]})
  \end{equation}
  is a homotopy equivalence.  To define a homotopy inverse, we first
  consider the continuous, piecewise smooth function $\rho_t:
  [0,\infty) \to (0,\infty)$ defined for $t \leq b$ by
  \begin{align*}
    \rho_t(s) &= 1 & &\text{for $s \in [0,2]$},\\
    \rho_t(s) &= \frac{\lambda^{-1}(b-t)}s& & \text{for $s \in [3,\infty)$},
  \end{align*}
  and by linear interpolation for $s \in [2,3]$.  Then the
  function $(u,v) \mapsto (u,v \cdot\rho_{u_1}(|v|))$ restricts to a
  homotopy inverse of~\eqref{eq:22}, where both homotopies are given
  by straight lines in $\R^{d+1}$.

  In either case, the connectivity question is reduced to studying the
  inverse image of an interval relative to its outgoing boundary and
  can be studied as in ordinary Morse theory one critical level at a
  time.  The proof of~(\ref{it:kConnMoving}) will be finished once we
  establish that for each critical value of $\overline{h}:
  \mathcal{P}_t \to \R$ in the interval $(a,b)$, the function can be
  perturbed in a neighbourhood of the critical set contained in
  $\overline{h}^{-1}((a,b))$ to a Morse function with at most a
  critical point of index $\leq d-\kappa -1$.  (In the case $a > -1$
  we have $h = \overline{h}$ near any critical point of $h$, so it
  suffices to consider $\overline{h}$.)  It is easy to verify that
  $\overline{h}:\mathcal{P}^0_t \to \R$ has at most two critical
  values in $(-2,0)$.  One critical value moves with $t$ and is
  homotopically Morse of index $0$ for $0 \leq t < 1$ and index
  $\kappa$ for $1 < t < 3$ (meaning that the function can be perturbed
  to a Morse function with one critical point of that index).  The
  other is at $-1$ and can be cancelled (meaning that the function can
  be perturbed to a non-singular function there).  Since $2\kappa \leq
  d-1$ and hence $\kappa \leq d-\kappa -1$, the index is at most
  $d-\kappa-1$ as claimed.  Similarly, one verifies that
  $\overline{h}: \mathcal{P}^1_t \to \R$ has at most two critical
  values in $(-2,0)$, one of which is $-1$ and can be cancelled, the
  other of which moves with $t$ and is homotopically Morse of index
  $d-\kappa-1$.

  Property (\ref{it:kConn}) can be proved in a similar way. In the case $a < -1 < b$ the pair is a relative $(d-1)$-cell, so it
  is $(d-2)$-connected and hence $\kappa$-connected (since $d \geq 2$
  and $2\kappa \leq d-1$).  In all other cases the inclusion
  $\mathcal{P}_t \vert_b \to \mathcal{P}_t\vert_{[a,b]}$ is a homotopy
  equivalence.

  To establish the extra properties which can be obtained given a
  $\theta$-structure $\ell$ on $\mathcal{P}_0 = \Int(\partial_-
  D^{\kappa+1}) \times \bR^{d-\kappa}$, we again use the graph
  $\mathcal{P} = \{(t,x) \in [0,1] \times \R^{d+1} \,\,|\,\, x \in
  \mathcal{P}_t\}$ and its identification with an open subset of the
  manifold $\mathcal{Q}$ from
  Remark~\ref{remark:graph-of-morphisms-surgery}.  The tangent bundles
  $T\mathcal{P}_t$ assemble to a $d$-dimensional vector bundle
  $T_v\mathcal{P} \to \mathcal{P}$ which then becomes identified with
  the restriction of the vector bundle $T_v \mathcal{Q} = \Ker(DQ:
  T\mathcal{Q} \to T[-3,3])$ and since both $\mathcal{P}_0$ and
  $\mathcal{Q}$ are contractible, there is no obstruction to picking a
  vector bundle map $r: T_v \mathcal{Q} \to T\mathcal{P}_0$ which is
  the identity (with respect to the identifications) over
  $\mathcal{P}_0$ and each $\mathcal{P}^\partial_t =
  \mathcal{P}^\partial_0 \subset \mathcal{P}_0$.  We can then restrict
  $r$ to $r_t: T \mathcal{P}_t \to T\mathcal{P}_0$ and let
  $\mathcal{P}_t(\ell)$ have the $\theta$-structure $\ell \circ r_t$.
\end{proof}

Let $(a, \epsilon, (W, \ell_W), e) \in D^\kappa_{\theta, L}(\bR^N)_{p,
  0}$, with $e = \{e_{i,0}\}_{i=0}^{p+1}$  (where we omit $\Lambda$ and
$\delta: \Lambda \to [p]^\vee$ from the notation).  We construct a
1-parameter family of $\theta$-manifolds
\begin{equation*}
  \mathcal{K}^t_{e}(W, \ell_W) \in
  \Psi_\theta((a_0-\epsilon_0, a_p+\epsilon_p) \times \R^N),
\end{equation*}
$t \in [0,1]$, by letting it be equal to $W\vert_{(a_0-\epsilon_0,
  a_p+\epsilon_p)}$ outside of the images of the
$e_{i,0}\vert_{\Lambda_{i,j} \times V}$, and on each
$e_{i,0}(\{\lambda\} \times V)$ we let it be given by
$e_{i,0}(\{\lambda \} \times \mathcal{P}_t(\ell_W \circ D e_{i,0}))$. This gives a
$\theta$-manifold, as by the properties established above,
$\mathcal{P}_t(\ell_W \circ D e_{i,0})$ and $\mathcal{P}_0(\ell_W \circ D
e_{i,0})$ agree as $\theta$-manifolds near the set $(-2,0) \times
\bR^\kappa \times (\bR^{d-\kappa} - B^{d-\kappa}_{3}(0))$.

\begin{lemma}\label{lem:MorSurgeryDesiredEffect}
  Let $2\kappa \leq d-2$. The tuple $(a, \epsilon,
  \mathcal{K}^t_{e}(W, \ell_W))$ is an element of
  $X^{\kappa-1}_p$.  If either $t=1$ or $(W, \ell_W) \in
  D^\kappa_{\theta, L}(\bR^N)_p$, then $(a,\epsilon,
  \mathcal{K}^t_{e}(W, \ell_W))$ lies in the subspace
  $X^\kappa_p \subset X^{\kappa-1}_p$.
\end{lemma}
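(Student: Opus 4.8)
The plan is to check that the tuple $(a,\epsilon,\mathcal{K}^t_e(W,\ell_W))$ satisfies conditions (\ref{item:1})--(\ref{it:MorConn}) of Definition~\ref{defn:X} (condition (\ref{it:ObConn}) being vacuous, as throughout this section $X^\kappa$ abbreviates $X^{\kappa,-1}$), and in the two special cases to strengthen (\ref{it:MorConn}) from $(\kappa-1)$- to $\kappa$-connectivity. Condition (\ref{it:Disjoint}) involves only $(a,\epsilon)$, which is untouched. For (\ref{item:1}) and (\ref{item:2}): by Proposition~\ref{prop:StdFamilyMorphisms}(\ref{it:ConstantOnBoundary}) the family $\mathcal{P}_t$ agrees with $\mathcal{P}_0=\Int(\partial_- D^{\kappa+1})\times\bR^{d-\kappa}$ outside $\bR^{\kappa+1}\times B^{d-\kappa}_3(0)$, and by Definition~\ref{defn:ZComplex}(\ref{it:ZIntersection}) the inserted $\mathcal{P}_0$-portions are exactly the parts of $W$ met by $e$; hence $\mathcal{K}^t_e(W,\ell_W)$ agrees with $W\vert_{(a_0-\epsilon_0,a_p+\epsilon_p)}$ outside a compact subset of the image of $e$, which lies in $\bR\times(0,1)\times(-1,1)^{N-1}$. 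So $\mathcal{K}^t_e(W,\ell_W)$ is still contained in $(-1,1)^N$, still equals $(a_0-\epsilon_0,a_p+\epsilon_p)\times L$ over $x_2^{-1}(-\infty,0]$, and is still closed as a subspace; that it is a genuine smooth $\theta$-submanifold is part of the construction, since by the final clause of Proposition~\ref{prop:StdFamilyMorphisms} each $\mathcal{P}_t(\ell_W\circ D e_{i,0})$ is constant as a $\theta$-manifold near $\mathcal{P}^\partial_t$ and therefore glues smoothly, as a $\theta$-manifold, to $W$.

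The substantive point is condition (\ref{it:MorConn}). I would use the composition principle for connectivity relative to the outgoing boundary: if a cobordism is a composite $X_0\circ\cdots\circ X_m$ with $X_m$ outermost and each $X_j$ is $c$-connected relative to its own outgoing boundary, then so is the composite (excision identifies the relative homology of the composite modulo $X_{\ge j}$ with that of $X_{<j}$ modulo a middle level, and $\pi_1$ is handled by van Kampen). Given regular values $t_0<t_1$ of $x_1$ in $\cup_k(a_k-\epsilon_k,a_k+\epsilon_k)$, I cut $[t_0,t_1]$ at regular values lying inside each of the intervals $(a_k-\epsilon_k,a_k+\epsilon_k)$ that it meets; these exist by Proposition~\ref{prop:StdFamilyMorphisms}(\ref{it:IsolatedCriticalValues}), and by Definition~\ref{defn:ZComplex}(\ref{it:ZHeightFn}) they remain regular for the inserted $\mathcal{P}_t$-pieces, whose height functions match $h$ affinely there. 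Each resulting piece of $\mathcal{K}^t_e(W,\ell_W)$ is obtained from the corresponding piece of $W$ by excising the tubes $e_{i,0}(\Lambda_{i,0}\times\mathcal{P}_0\vert_{[\cdot]})$ and regluing $e_{i,0}(\Lambda_{i,0}\times\mathcal{P}_t\vert_{[\cdot]})$ along a common part containing the relevant portion of $\mathcal{P}^\partial$. An excision argument then identifies the change in the relative homology of the piece, modulo its outgoing boundary, with the relative homology of the inserted tube modulo $\mathcal{P}_t\vert_b\cup\mathcal{P}^\partial_t\vert_{[a,b]}$; by Proposition~\ref{prop:StdFamilyMorphisms}(\ref{it:kConnMoving}) the latter vanishes in degrees $\le\kappa$, and since the removed $\mathcal{P}_0$-tubes are products (hence carry no relative homology), the trade introduces nothing in degrees $\le\kappa$. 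Thus each piece of $\mathcal{K}^t_e(W,\ell_W)$ is at least as connected relative to its outgoing boundary as the corresponding piece of $W$, which is $(\kappa-1)$-connected; the composition principle now gives that $\mathcal{K}^t_e(W,\ell_W)\vert_{[t_0,t_1]}$ is $(\kappa-1)$-connected relative to its outgoing boundary, proving membership in $X^{\kappa-1}_p$.

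For the upgrade to $X^\kappa_p$ the same bookkeeping applies, now with $\kappa$-connectivity as the target. If $(a,\epsilon,(W,\ell_W))\in D^\kappa_{\theta,L}(\bR^N)_p$, then each piece of $W$ is already $\kappa$-connected relative to its outgoing boundary and, since trading tubes introduces nothing in degrees $\le\kappa$, the conclusion carries over verbatim. If instead $t=1$, the input is Definition~\ref{defn:ZComplex}(\ref{it:EnoughSurgeryData}): adjoining the cores $D_{i,0}\vert_{[a_{i-1},a_i]}$ already makes each floor $W\vert_{[a_{i-1},a_i]}$ $\kappa$-connected relative to its outgoing boundary, so the relative $\pi_\kappa$ of the floor is generated by these cores; the standard family at $t=1$ realises exactly the $\kappa$-surgeries on the cores (cf.\ Remark~\ref{remark:graph-of-morphisms-surgery}), so in $\mathcal{K}^1_e(W,\ell_W)$ the classes carried by the cores die, while Proposition~\ref{prop:StdFamilyMorphisms}(\ref{it:kConn}) guarantees that the surgered pieces contribute nothing to relative homology in degrees $\le\kappa$. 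By excision each floor of $\mathcal{K}^1_e(W,\ell_W)$ is then $\kappa$-connected relative to its outgoing boundary, and the composition principle assembles this into (\ref{it:MorConn}) with $\kappa$-connectivity for an arbitrary pair of regular values. I expect the main obstacle to be exactly this excision/surgery comparison in the top degree $\kappa$: one must verify that the $\kappa$-surgeries genuinely kill the relative $\pi_\kappa$ rather than merely trading one class for another, and this is where the hypothesis $2\kappa\le d-2$ — strictly stronger than the $2\kappa\le d-1$ under which Proposition~\ref{prop:StdFamilyMorphisms} was proved — is needed: it forces the dual spheres produced by the surgeries to have dimension $d-\kappa-1\ge\kappa+1$, hence to carry no homotopy in the controlled range $\le\kappa$, so that connectivity is genuinely raised. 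Keeping all of this uniform over $t\in[0,1]$ and over admissible pairs $(t_0,t_1)$ is the remaining routine work.
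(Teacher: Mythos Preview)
Your overall strategy---verify (\ref{item:1})--(\ref{it:Disjoint}) directly, establish (\ref{it:MorConn}) by decomposing $[t_0,t_1]$ into elementary pieces, analyse each piece via a gluing comparison with the standard family, then compose---matches the paper's. But there are two genuine gaps.

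\textbf{Homology versus homotopy.} Condition~(\ref{it:MorConn}) asks for vanishing of relative \emph{homotopy} groups, and your excision argument only controls homology. You cannot pass from one to the other via relative Hurewicz without first knowing the pair $(W_t\vert_{[a,b]},W_t\vert_b)$ is $1$-connected, and that is part of what is at stake. The paper fills this gap as follows. Let $X\subset W\vert_{[a,b]}$ be the complement of the thickened cores $e_{i,0}(\Lambda_{i,0}\times\Int(\partial_-D^{\kappa+1})\times B_3^{d-\kappa}(0))$. Then $(X,X\vert_b)$ is obtained from $(W\vert_{[a,b]},W\vert_b)$ by deleting embedded copies of $(D^\kappa,\partial D^\kappa)$, and a \emph{transversality} argument shows this leaves $\pi_i$ unchanged for $i\le d-\kappa-2$; the hypothesis $2\kappa\le d-2$ is exactly what makes this range include $i\le\kappa$. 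Two homotopy pushout squares---which do propagate connectivity of pairs, unlike bare homological excision---then transport the connectivity of $(X,X\vert_b)$ to $(W_t\vert_{[a,b]},W_t\vert_b)$. So the hypothesis $2\kappa\le d-2$ is essential already for the first assertion of the lemma, not only for the $t=1$ upgrade; your attribution of it to ``dual spheres of dimension $d-\kappa-1$'' is not the mechanism used.

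\textbf{The $t=1$ case.} Your claim that ``the classes carried by the cores die'' is the desired conclusion, not an argument. The difficulty is that the surgery literally destroys $D_{i,0}=e_{i,0}(\Lambda_{i,0}\times\partial_-D^{\kappa+1}\times\{0\})$, so one cannot simply track it into $W_1$. The paper instead passes to a parallel translate $\tilde D_{i,0}=e_{i,0}(\Lambda_{i,0}\times\partial_-D^{\kappa+1}\times\{v\})$ with $|v|>4$, which lies in $\mathcal{P}^\partial_t\subset X$ for every $t$ and is isotopic to $D_{i,0}$ inside $W$. The same transversality-plus-pushout argument then shows that $(\widetilde W_t\vert_{[a,b]},\widetilde W_t\vert_b\cup\tilde D_{i,0}\vert_{[a,b]})$ is $\kappa$-connected for all $t$ (here $\widetilde W_t$ has only the $\Lambda_{i,0}$-surgeries performed), and finally Proposition~\ref{prop:StdFamilyMorphisms}(\ref{it:kConn}) is invoked at $t=1$ to homotope $\tilde D_{i,0}\vert_{[a,b]}$ into $\widetilde W_1\vert_b$ relative to $\tilde D_{i,0}\vert_b$, collapsing the pair to $(\widetilde W_1\vert_{[a,b]},\widetilde W_1\vert_b)$. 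This is how property~(\ref{it:kConn}) actually enters; your sketch cites it but does not deploy it.
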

\begin{proof}
  We must verify conditions (\ref{item:1})--(\ref{it:ObConn}) of
  Definition \ref{defn:X}. Condition (\ref{item:1}) is true by
  definition, and certainly (\ref{item:2}) is satisfied as the
  embeddings $e_{i,0}$ are disjoint from $\bR \times L$. For
  (\ref{it:Disjoint}) and (\ref{it:ObConn}) there is nothing to say.

  For (\ref{it:MorConn}), consider regular values $a < b \in
  \cup_i(a_i-\epsilon_i, a_i+\epsilon_i)$ of the height function $x_1
  : {W}_t = \mathcal{K}^t_{e}(W, \ell_W) \to \bR$. The cobordism
  ${W}_t \vert_{[a,b]}$ is obtained from $W \vert_{[a,b]}$ by cutting
  out embedded images of cobordisms $\mathcal{P}_0
  \vert_{[a_{\lambda},b_{\lambda}]}$ indexed by $\lambda \in \Lambda =
  \amalg_i \Lambda_{i,0}$ and gluing in $\mathcal{P}_t
  \vert_{[a_{\lambda},b_{\lambda}]}$, where $a_{\lambda} <
  b_{\lambda}$ are regular values of the height function on
  $\mathcal{P}_0$ and $\mathcal{P}_t$. If we denote by $X$ the
  complement of the embedded $e_{i,0}(\Int(\partial_- D^{\kappa+1})
  \times B^{d-\kappa}_{3}(0))$ in the manifold $W\vert_{[a,b]}$, there
  are homotopy push-out squares
  \begin{equation*}
    \xymatrix{
      X\vert_{b} \ar[r] \ar[d]& W_t\vert_{b} \ar[d]\\
      X \ar[r] & W_t\vert_{b} \cup X
    }
  \end{equation*}
  and
  \begin{equation*}
    \xymatrix{
      {\coprod_{\lambda \in \Lambda} \mathcal{P}_t\vert_{b_\lambda}
        \cup \left(\mathcal{P}^\partial_t \vert_{[a_\lambda,
            b_\lambda]}\right)} \ar[r] \ar[d] & W_t\vert_{b} \cup X
      \ar[d]\\
      {\coprod_{\lambda \in \Lambda} \mathcal{P}_t
        \vert_{[a_{\lambda},b_{\lambda}]}}  \ar[r] &
      W_t\vert_{[a,b]}.}
  \end{equation*}
  The left hand map of the second square is a disjoint union of the
  maps discussed in property (\ref{it:kConnMoving}) of
  Proposition~\ref{prop:StdFamilyMorphisms}, so is $\kappa$-connected.
  As this square is a homotopy push-out, the right hand map is also
  $\kappa$-connected.

The pair $(X, X\vert_b)$ is obtained from the manifold pair
$(W\vert_{[a,b]}, W\vert_b)$ by cutting out embedded copies of
$(D^\kappa, \partial D^\kappa)$. By transversality we see that this
does not change relative homotopy groups in dimensions $* \leq
d-\kappa-2$, which includes $* \leq \kappa$ by our assumption that
$2\kappa \leq d-2$. In particular, suppose the pair $(W\vert_{[a,b]},
W\vert_{b})$ is $k$-connected, with $k \leq \kappa$, then the pair
$(X, X\vert_b)$ is $k$-connected too. As the first square above is a
homotopy push-out square, the inclusion $W_t\vert_b \to W_t\vert_b
\cup X$ also has this connectivity.

Hence the composition $W_t\vert_b \to W_t\vert_b \cup X \to
W_t\vert_{[a,b]}$ has the same connectivity as $W\vert_b \to
W\vert_{[a,b]}$, up to a maximum of $\kappa$. This establishes that
the tuple $(a, \epsilon, \mathcal{K}^t_{e}(W, \ell_W))$ is an element of
$X^{\kappa-1}_p$, and also that it lies in $X^\kappa_p$ if $(W, \ell_W)$
lies in $D^\kappa_{\theta, L}(\bR^N)$. When $t=1$, there is a little
more to say.

\noindent\textit{Step 1.} Suppose $a < b \in (a_i-\epsilon_i,
a_i+\epsilon_i)$. Then $(W\vert_{[a,b]}, W\vert_b)$ is
$\infty$-connected and so $(W_1\vert_{[a,b]}, W_1\vert_b)$ is
$\kappa$-connected, by the discussion above.

\noindent\textit{Step 2.} Suppose $a \in (a_{i-1}-\epsilon_{i-1},
a_{i-1}+\epsilon_{i-1})$ and $b \in (a_i-\epsilon_i,
a_i+\epsilon_i)$. We now do the surgeries for $\Lambda_{i,0} \subset
\Lambda$ first, giving a family of manifolds $\widetilde{W}_t$.
We claim that the pair $(\widetilde{W}_1\vert_{[a,b]},
\widetilde{W}_1\vert_{b})$ is $\kappa$-connected. Once this is
established, doing the remaining surgeries to obtain $W_1$ does not
change this property, as we have seen above.

Recall from Definition~\ref{defn:ZComplex}(\ref{it:EnoughSurgeryData})
that the pair $(W_0\vert_{[a,b]}, (W_0\vert_{b}) \cup
(D_{i,0}\vert_{[a,b]}))$ is $\kappa$-connected, where $D_{i,0} =
e_{i,0}(\Lambda_{i,0} \times \partial_- D^{\kappa+1} \times \{0\})
\subset W = W_0$.  If we write $\tilde D_{i,0} =
e_{i,0}(\Lambda_{i,0} \times \partial_- D^{\kappa+1} \times \{v\}
\subset W = W_0$ for some $v \in \R^{d-\kappa} - B_4^{d-\kappa}(0)$,
then the pair $(W_0\vert_{[a,b]}, (W_0\vert_{b}) \cup (\tilde
D_{i,0}\vert_{[a,b]}))$ is also $\kappa$-connected.  Now the subset
$\tilde D_{i,0} \subset W$ is contained in $e_{i,0}(\Lambda_{i,0}
\times \mathcal{P}_0^\partial)$, so we can regard $\tilde D_{i,0}$ as
a subset of $\widetilde{W}_t$ for all $t \in [0,1]$.  The same transversality
argument as before now shows that $(X,X\vert_b \cup \tilde
D_{i,0}\vert_{[a,b]})$ is also $\kappa$-connected, and the same gluing
argument shows that $(\widetilde{W}_t\vert_{[a,b]}, \widetilde{W}_t\vert_b \cup \tilde
D_{i,0}\vert_{[a,b]})$ is $\kappa$-connected for all $t \in [0,1]$.
When $t=1$, Proposition~\ref{prop:StdFamilyMorphisms}(\ref{it:kConn})
shows that the inclusion $\tilde D_{i,0}\vert_{[a,b]} \to
\widetilde{W}_1\vert_{[a,b]}$ is homotopic relative to $\tilde D_{i,0}\vert_b$ to
a map into $\widetilde{W}_1\vert b$, and hence $(\widetilde{W}_1\vert_{[a,b]}, \widetilde{W}_1\vert_b)$ is
$\kappa$-connected.

\noindent\textit{Step 3.} For general $a < b \in \cup_i(a_i-\epsilon_i,
a_i+\epsilon_i)$, we may choose regular values in each intermediate
interval $(a_j-\epsilon_j, a_j+\epsilon_j)$. By the previous case,
this expresses ${W}_1 \vert_{[a, b]}$ as a composition of cobordisms which are each $\kappa$-connected relative to their outgoing
boundaries, and the hence the composition also has that property.
\end{proof}

\subsection{Proof of Theorem \ref{thm:kappafiltration}}\label{sec:proof-theor-refthm:k}
We begin with the composition
$$\vert D^\kappa_{\theta, L}(\bR^N)_{\bullet, \bullet} \vert \lra \vert D^{\kappa-1}_{\theta, L}(\bR^N)_{\bullet} \vert \lra \vert X_\bullet^{\kappa-1} \vert,$$
where the first map (induced by the augmentation) is a homotopy equivalence by Theorem \ref{thm:SurgeryComplexMor} and the second is a homotopy equivalence by Proposition \ref{prop:X}. We will define a homotopy
$$\surg : [0,1] \times \vert D^\kappa_{\theta, L}(\bR^N)_{\bullet, \bullet} \vert \lra \vert X_\bullet^{\kappa-1} \vert$$
starting from this map so that $\surg(1,-)$ factors through the
continuous injection $\vert X_\bullet^{\kappa} \vert \to \vert
X_\bullet^{\kappa-1}\vert$. Furthermore, there is an inclusion
$$\vert D^{\kappa}_{\theta, L}(\bR^N)_{\bullet} \vert \hookrightarrow \vert D^\kappa_{\theta, L}(\bR^N)_{\bullet, 0} \vert \hookrightarrow \vert D^\kappa_{\theta, L}(\bR^N)_{\bullet, \bullet} \vert$$
as manifolds equipped with no surgery data, and $\surg$ will be
constant on this subspace. The existence of a homotopy with these
properties establishes Theorem \ref{thm:kappafiltration}, as follows:
there is a diagram
\begin{equation*}
\xymatrix{
{\vert D_{\theta, L}^{\kappa}(\bR^N)_{\bullet} \vert} \ar[r] \ar[d] &
\vert X_\bullet^{\kappa} \vert \ar[d]\\
{\vert D_{\theta, L}^{\kappa}(\bR^N)_{\bullet, \bullet} \vert}
\ar[r]^-{\surg(0, -)} \ar[ru]^-{\surg(1,-)} & \vert X_\bullet^{\kappa-1}
\vert
}
\end{equation*}
where the square commutes, the horizontal maps are weak homotopy
equivalences, the top triangle commutes exactly and the bottom
triangle commutes up to the homotopy $\surg$. Taking homotopy groups
we see that the vertical maps are also weak equivalences. Under the
equivalence $B\mathcal{C}_{\theta, L}^{\kappa}(\bR^N) \simeq
|X_\bullet^{\kappa}|$, and similarly for $(\kappa-1)$, we obtain Theorem
\ref{thm:kappafiltration}.

To define the surgery map $\surg$ we will give a collection of
maps
$$\surg_{p,q} : [0,1] \times {D}^{\kappa}_{\theta, L}(\bR^N)_{p,q}
\times \Delta^q \lra X^{\kappa-1}_p$$ compatible on their faces. The
construction of the last section gives a 1-parameter family
\begin{eqnarray*}
  \mathcal{K}^r : D^\kappa_{\theta, L}(\bR^N)_{p, 0} &\lra& X^{\kappa-1}_p\\
  (a, \epsilon, W, e) &\longmapsto& (a, \epsilon,
  \mathcal{K}^{r}_{e}(W)),
\end{eqnarray*}
for $r \in [0,1]$, such that $\mathcal{K}^1$ lands in $X_p^{\kappa}$. When
$q=0$, we set
\begin{equation*}
  \surg_{p,0}(r, (a,\epsilon,W, e)) = (a,
  \epsilon,\mathcal{K}^{r}_{e}(W)) \in X_p^{\kappa-1},
\end{equation*}
More generally, for $q \geq 0$ we have $e = \{e_{i,j}\}$, and for each
$j$ we get an element $(a,\epsilon,W,e_{*,j}) \in D^\kappa_{\theta,
  L}(\bR^N)_{p, 0}$.  We then set
\begin{equation*}
  \surg_{p,q}(r, (a,\epsilon,W, e), s) = (a, \epsilon,
  \mathcal{K}^{\bar{s}_q \cdot r}_{e_{*,q}} \circ \cdots \circ
  \mathcal{K}^{\bar{s}_0 \cdot r}_{e_{*,0}}(W)),
\end{equation*}
where $\bar{s}_j = s_j / \max(s_k)$.  Note that some $\bar{s}_j$ is
always equal to 1, so when $r=1$, some $\mathcal{K}^1_{e_{*,j}}$ is
applied to $W$ making each morphism $\kappa$-connected relative to its
outgoing boundary. The remaining $\mathcal{K}^{\bar{s}_k}_{e_{*, k}}$
do not change this property, by Lemma
\ref{lem:MorSurgeryDesiredEffect}, and so the map $\surg_{p,q}(1,-)$
factors through the subspace $X_p^{\kappa}$.

The resulting map from $\amalg_q [0,1] \times
D_{\theta,L}^\kappa(\R^N)_{p,q} \times \Delta^q$ factors through a map
\begin{equation*}
  \surg_p: [0,1] \times |D_{\theta,L}^\kappa(\R^N)_{p,\bullet}| \to
  X_p^{\kappa-1},
\end{equation*}
which together form a map of semi-simplicial spaces
with geometric realisation $\surg : [0,1] \times \vert
D^\kappa_{\theta, L}(\bR^N)_{\bullet, \bullet} \vert \to \vert
X_\bullet^{\kappa-1} \vert$. On the subspace $\vert
D^{\kappa}_{\theta, L}(\bR^N)_{\bullet} \vert \hookrightarrow \vert
D^\kappa_{\theta, L}(\bR^N)_{\bullet, 0} \vert$, the homotopy is
constant as there is no surgery data.  At $r=1$ it factors through
$\vert X_\bullet^\kappa \vert$.


\section{Surgery on objects below the middle dimension}
\label{sec:surg-objects-below}

In this section we wish to study the filtration
\begin{equation*}
  \mathcal{C}_{\theta, L}^{\kappa, l}(\bR^N) \subset \cdots \subset
  \mathcal{C}_{\theta, L}^{\kappa,1}(\bR^N) \subset
  \mathcal{C}_{\theta, L}^{\kappa,0}(\bR^N) \subset
  \mathcal{C}^{\kappa, -1}_{\theta, L}(\bR^N) =
  \mathcal{C}^\kappa_{\theta, L}(\bR^N)
\end{equation*}
and in particular establish the following theorem.  The reader mainly
interested in Theorems~\ref{thmcor:rational-coho} and \ref{thm:main-A}
can take $d=2n$, $\kappa = n-1$, $\theta = \theta^n: BO(2n)\langle
n\rangle \to BO(2n)$, $L \cong D^{2n-1}$, and $N = \infty$ (but the
proof does not simplify much in this special case).

\begin{theorem}\label{thm:lfiltration}
Suppose that the following conditions are satisfied
\begin{enumerate}[(i)]
\item $2(l+1) < d$,
\item $l \leq \kappa$,
\item\label{item:12} $l \leq d-\kappa-2$,
\item $l+2 + d < N$,
\item $L$ admits a handle decomposition only using handles of index $< d-l-1$,
\item\label{item:13} the map $\ell_L : L \to B$ is $(l+1)$-connected.
\end{enumerate}
Then the map
$$B\mathcal{C}_{\theta, L}^{\kappa, l}(\bR^N) \lra B\mathcal{C}_{\theta, L}^{\kappa, l-1}(\bR^N)$$
is a weak homotopy equivalence.
\end{theorem}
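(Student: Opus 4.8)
The plan is to follow, step by step, the template set by the proof of Theorem~\ref{thm:kappafiltration} in Section~\ref{sec:surgery-morphisms}, now performing surgery on \emph{objects} rather than morphisms. First I would invoke Proposition~\ref{prop:X} to replace $B\mathcal{C}^{\kappa,l}_{\theta,L}(\R^N)$ and $B\mathcal{C}^{\kappa,l-1}_{\theta,L}(\R^N)$ by the geometric realisations $|X^{\kappa,l}_\bullet|$ and $|X^{\kappa,l-1}_\bullet|$ of the more flexible semi-simplicial models of Definition~\ref{defn:X}; the hypotheses $2(l+1)<d$, $l\le\kappa$ and $l\le d-\kappa-2$ are exactly what is needed to guarantee the inequalities required there for both values of the second index, and $X^{\kappa,l}_\bullet\subset X^{\kappa,l-1}_\bullet$ is a levelwise-closed inclusion. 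The goal is then to produce a homotopy that deforms an arbitrary compact family of objects, moving through the larger category, into the subspace in which every slice is $l$-connected to $B$.

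Second, I would fatten $D^{\kappa,l-1}_{\theta,L}(\R^N)_\bullet$ to a bi-semi-simplicial space $D^{\kappa,l}_{\theta,L}(\R^N)_{\bullet,\bullet}$ carrying \emph{object surgery data}, in direct analogy with Definitions~\ref{defn:ZComplex} and~\ref{defn:ZDComplex}. For $x=(a,\epsilon,(W,\ell_W))\in D^{\kappa,l-1}_{\theta,L}(\R^N)_p$, a $q$-simplex of $Z_\bullet(x)$ should consist of a finite set $\Lambda\subset\Omega$, a function $\delta\colon\Lambda\to\{0,\dots,p\}\times[q]$, and disjoint embeddings of a fixed neighbourhood of $S^l\times D^{d-1-l}$ crossed with a height interval, disjoint from $\R\times L$, each meeting $W$ in a copy of $S^l\times D^{d-1-l}\times(\text{interval})$ contained in a single $x_1^{-1}(a_i-\epsilon_i,a_i+\epsilon_i)$ and equipped with a $\theta$-structure on the trace of the associated $(l+1)$-surgery extending $\ell_W$ (this last piece of data is available precisely for embedded $l$-spheres whose image in $B$ is null-homotopic, which is why it is here, and not in the morphism version, that the map $\ell_L\colon L\to B$ being $(l+1)$-connected and $L$ having a handle decomposition of low index enter — they let the surgery data be placed disjointly from $L$ and cover the kernel of $\pi_l(W\vert_{a_i})\to\pi_n(B)$). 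The crucial ``enough surgery data'' condition, replacing Definition~\ref{defn:ZComplex}(\ref{it:EnoughSurgeryData}), is that after performing all surgeries indexed by $\Lambda_{i,j}$ on $W\vert_{a_i}$ the resulting $(d-1)$-manifold is $l$-connected to $B$. Face maps forget, resp.\ amalgamate, surgery data as before, there is an augmentation $D^{\kappa,l}_{\theta,L}(\R^N)_{\bullet,\bullet}\to D^{\kappa,l-1}_{\theta,L}(\R^N)_\bullet$, and an inclusion of the no-surgery-data subspace $D^{\kappa,l}_{\theta,L}(\R^N)_\bullet\hookrightarrow D^{\kappa,l}_{\theta,L}(\R^N)_{\bullet,0}$. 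I would state the analogue of Theorem~\ref{thm:SurgeryComplexMor} — that the augmentation is a weak equivalence after realisation — and defer its proof to Section~\ref{sec:Connectivity}, since the contractibility argument there is designed to be uniform across the three surgery theorems.

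Third comes the heart of the matter: a \emph{standard family} for object surgery, the counterpart of Proposition~\ref{prop:StdFamilyMorphisms}. Given a $\theta$-structure on (a neighbourhood of) $S^l\times D^{d-1-l}\times\R$ together with an extension over the trace of the $(l+1)$-surgery, I would build a one-parameter family $\mathcal{P}^{\mathrm{obj}}_t$, $t\in[0,1]$, of $d$-manifolds in a fixed chart $V$ which is the product cylinder at $t=0$, agrees with it near $\partial V$ for all $t$, has height function with isolated critical values, and at $t=1$ has every relevant slice equal to the surgered manifold $D^{l+1}\times S^{d-2-l}$ — so that cutting out the embedded copies of $S^l\times D^{d-1-l}\times(\text{interval})$ from $W$ and gluing in the family implements the surgery on each $W\vert_{a_i}$. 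As in Remark~\ref{remark:graph-of-morphisms-surgery}, this is a ``round-trip'' move whose graph sits inside a contractible standard model, so the $\theta$-structure propagates continuously once the initial trace data is supplied; the genuinely new point compared with the morphism case is that the initial slice is homotopy equivalent to $S^l$ rather than being contractible, which is exactly why the trace datum is part of the surgery data. The key property to check is the analogue of Proposition~\ref{prop:StdFamilyMorphisms}(\ref{it:kConnMoving}): for every $t$ and every pair of regular values $-2<a<b<0$ of the height function, the pair $(\mathcal{P}^{\mathrm{obj}}_t\vert_{[a,b]},\mathcal{P}^{\mathrm{obj}}_t\vert_b\cup\mathcal{P}^{\mathrm{obj},\partial}_t\vert_{[a,b]})$ is $\kappa$-connected. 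The surgery move introduces critical points of index $l+1$, and by the dual count these can only affect $\kappa$-connectivity relative to the outgoing boundary through handles of index $\ge d-\kappa$; the hypothesis $l\le d-\kappa-2$, i.e.\ $l+1\le d-\kappa-1$, is precisely what keeps us safe, while $2(l+1)<d$ and $l\le\kappa$ are used, exactly as in Lemma~\ref{lem:MorSurgeryDesiredEffect}, so that cutting out the cores of the embedded handles leaves the relevant relative homotopy groups undisturbed and so that at $t=1$ the surgered object is indeed $l$-connected to $B$.

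Finally, I would assemble the standard family into operators $\mathcal{K}^r$ and a homotopy $\surg\colon[0,1]\times|D^{\kappa,l}_{\theta,L}(\R^N)_{\bullet,\bullet}|\to|X^{\kappa,l-1}_\bullet|$, built simplexwise by applying $\mathcal{K}^{\bar s_j\cdot r}_{e_{*,j}}$ in order just as in Section~\ref{sec:proof-theor-refthm:k}, starting from the composite of the augmentation with the equivalence $|D^{\kappa,l-1}_{\theta,L}(\R^N)_\bullet|\xrightarrow{\ \sim\ }|X^{\kappa,l-1}_\bullet|$ of Proposition~\ref{prop:X}. By the properties of the standard family, $\surg(1,-)$ factors through the continuous injection $|X^{\kappa,l}_\bullet|\hookrightarrow|X^{\kappa,l-1}_\bullet|$, and $\surg$ is constant on $|D^{\kappa,l}_{\theta,L}(\R^N)_\bullet|$, where there is no surgery data. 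The resulting square (commuting on the nose on the upper triangle, up to $\surg$ on the lower one), together with the weak equivalences provided by Proposition~\ref{prop:X} for both $l$ and $l-1$ and by the deferred augmentation equivalence, forces $|D^{\kappa,l}_{\theta,L}(\R^N)_\bullet|\to|X^{\kappa,l}_\bullet|$ to be a weak equivalence, which gives the theorem. I expect the main obstacle to be the construction and verification of the object standard family in paragraph three — in particular, keeping the $\theta$-structure bookkeeping consistent (which is what confines the surgeries to kernel classes, and is responsible for hypotheses~(\ref{item:13}) and the handle-index condition on $L$) while simultaneously guaranteeing that no morphism leaves $\mathcal{C}^\kappa_{\theta,L}$ at any time during the move.
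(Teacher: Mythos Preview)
Your overall architecture matches the paper's, but the assembly in your final step cannot be carried out ``just as in Section~\ref{sec:proof-theor-refthm:k}'', and this is a genuine gap. In the morphism case, surgery data is indexed by $[p]^\vee\times[q]$ and the $p$-direction face map on $D^{\kappa}_{\theta,L}(\R^N)_{\bullet,\bullet}$ \emph{amalgamates} adjacent pieces $e_{k,*}\cup e_{k+1,*}$; since $\mathcal{K}^r_{e_{*,j}}$ depends only on the union of the $e_{i,j}$, the maps $\surg_{p,q}$ are automatically simplicial in $p$. In the object case the data is naturally indexed by $[p]\times[q]$ (one piece per regular value), and the $p$-face map $d_k$ must \emph{forget} both $a_k$ and all $e_{k,*}$. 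Your formula $\mathcal{K}^{\bar s_j r}_{e_{*,j}}$ then fails the identity $d_k\surg_{p,q}=\surg_{p-1,q}\,d_k$: the left side has already performed surgery along $e_{k,j}$, the right side never sees it. The paper fixes this by a different mechanism. Theorem~\ref{thm:SurgeryComplexOb} asserts that \emph{both} the augmentation and the inclusion $|D^{\kappa,l}_{\theta,L}(\R^N)_{\bullet,0}|\hookrightarrow|D^{\kappa,l}_{\theta,L}(\R^N)_{\bullet,\bullet}|$ are weak equivalences, so one may work with $D_{\bullet,0}$ alone. Surgery times are then parametrised by a cube $t\in[0,1]^{p+1}$, one coordinate per regular value, and the compatibility becomes $d_i\surg_p(d^it,x)=\surg_{p-1}(t,d_ix)$ where $d^i:[0,1]^p\to[0,1]^{p+1}$ inserts a zero --- setting $t_i=0$ makes $\mathcal{K}^0_{e_i}$ the identity, matching the forgotten data. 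The gluing over $\Delta^p$ then needs auxiliary maps $\lambda,\psi:\Delta^p\to[0,1]^{p+1}$ (Lemma~\ref{lem:glue}) rather than the $\bar s_j$ rescaling.

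A related problem is your localisation of the embedded surgery data to $x_1^{-1}(a_i-\epsilon_i,a_i+\epsilon_i)$. A standard family living in such a bounded chart and implementing $l$-surgery on the slice must carry two critical points, of indices $l+1$ and $d-l-1$ (as in Section~\ref{sec:surg-objects-middle}); the latter contributes a handle of index $l+1$ relative to the outgoing boundary, destroying $\kappa$-connectivity of morphisms whenever $l<\kappa$. The paper's embedding $e_{i,j}$ instead extends over the whole of $(a_i-\epsilon_i,a_p+\epsilon_p)$, and its standard family (Proposition~\ref{prop:StdFamilyObjectsBelowMid}) slides a single index-$(l+1)$ critical point in from the right; it is this one-critical-point move for which hypothesis~(\ref{item:12}) alone suffices to preserve condition~(\ref{it:MorConn}) of Definition~\ref{defn:X}.
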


The proof will be similar in spirit to that of the last section, in so
far as we will define a contractible space of surgery data and
describe a surgery move which compresses $B\mathcal{C}_{\theta,
  L}^{\kappa, l-1}(\bR^N)$ into the subspace $B\mathcal{C}_{\theta,
  L}^{\kappa, l}(\bR^N)$. In the same way that the surgery move of the
last section was a refinement of that of \cite{GMTW}, the surgery move
we use in this and the next section is a refinement of that of
\cite{GR-W}. Let us first give an informal account of this move, and
for simplicity suppose that $N=\infty$, that we have no tangential
structure (i.e. we consider $\theta = \mathrm{id}: BO(d) \to BO(d)$),
that $L=\emptyset$, and that $d>2$, $l=0$ and $\kappa=0$. We first
apply the equivalence (\ref{eq:PosetModel}) to reduce the problem to
studying the map
$$BD^{0, 0} \lra BD^{0, -1}$$
of classifying spaces of posets. Let
$$\sigma = (t_0, t_1;a_0, a_1;\epsilon_0, \epsilon_1;W) \in BD^{0,-1}$$
be a point on a 1-simplex (for example), and let us suppose that $W
\vert_{a_1}$ is already connected (so $\pi_0(W\vert_{a_1})$ injects
into $\pi_0(BO(d))$). We will describe a way of producing a path from
the image of this point in $\vert X^{0, -1}_\bullet \vert$ into the
subspace $\vert X_\bullet^{0, 0} \vert$.

If $W \vert_{a_0}$ is already connected, then the point $\sigma$ already lies in $\vert X_\bullet^{0,0}\vert$ and there is nothing to prove. Otherwise, let us choose disjoint embeddings
$$\{f_\alpha : S^0 \hookrightarrow W \vert_{a_0}\}_{\alpha \in \Lambda}$$
such that if we perform 0-surgery along all of these embeddings, the resulting $(d-1)$-manifold is connected. As $\kappa=0$, the cobordism $W \vert_{[a_0, a_1]}$ is path connected relative to its top, and so we can extend the $f_\alpha$ to smooth maps
$$\hat{f}_\alpha : (a_0-\epsilon_0, a_1+\epsilon_1) \times S^0 \lra W$$
such that the standard height function (i.e.\ the projection to $(a_0
- \epsilon_0, a_1 + \epsilon_1)$) and $x_1 \circ \hat{f}_\alpha$ agree
inside $(x_1 \circ \hat{f}_\alpha)^{-1}(\cup (a_i-\epsilon_i,
a_i+\epsilon_i))$. As we have supposed that $d>2$, we may assume that
these $\hat{f}_\alpha$ are mutually disjoint embeddings. By taking a
tubular neighbourhood, we extend the $\hat{f}_\alpha$ to embeddings
$$\hat{e}_\alpha : (a_0-\epsilon_0, a_1+\epsilon_1) \times \R^{d-1} \times S^0 \hookrightarrow W$$
which are still mutually disjoint, and extend these further to
disjoint embeddings
$$e_\alpha : (a_0-\epsilon_0, a_1+\epsilon_1) \times \R^{d-1} \times D^1 \hookrightarrow \bR \times \bR^\infty$$
such that $e_\alpha^{-1}(W) = (a_0-\epsilon_0, a_1+\epsilon_1) \times
\R^{d-1} \times S^0$. It is clear that we can arrange the same
relationship between the standard height function on $(a_0-\epsilon_0,
a_1+\epsilon_1) \times \R^{d-1} \times D^1$ and the function $x_1 \circ
e_\alpha$ as we have over $(a_0-\epsilon_0, a_1+\epsilon_1) \times
\R^{d-1} \times S^0$.

The surgery move is then given by gluing the trace of a 0-surgery on
$\R^{d-1} \times S^0$ inside of $(a_0-\epsilon_0, a_1+\epsilon_1)
\times \R^{d-1} \times D^1$, into $\bR \times \bR^\infty$ using each of
the embeddings $e_\alpha$, as shown in
Figure~\ref{fig:Half0-surgeryObjects}.
\begin{figure}[h]
\includegraphics[bb=0 0 336 132]{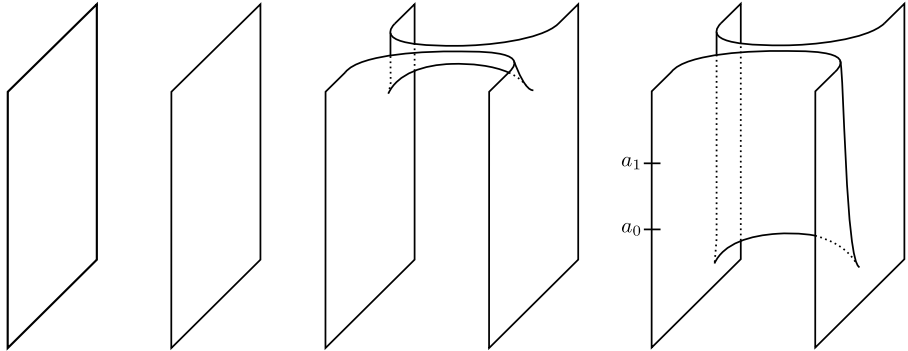}
\caption{The surgery move for surgery on objects below the middle dimension.}\label{fig:Half0-surgeryObjects}
\end{figure}
This does not define a path in $B D^{0, -1}$, as $(a_1-\epsilon_1,
a_1+\epsilon_1)$ will contain a critical value at some points during
the path. However, it does define a path in $\vert X_\bullet^{0,-1}
\vert$. Furthermore, if we let $\overline{W}$ be the manifold obtained
at the end of the path, then $\overline{W} \vert_{a_0}$ is obtained
from $W \vert_{a_0}$ by doing 0-surgery along the data
$\{\hat{e}_\alpha \vert_{\{a_0\} \times \R^{d-1} \times S^0}\}_{\alpha
  \in \Lambda}$ and so is connected. Also, $\overline{W} \vert_{a_1}$
is obtained from $W \vert_{a_1}$ by doing 0-surgery along the data
$\{\hat{e}_\alpha \vert_{\{a_1\} \times \R^{d-1}\times S^0}\}_{\alpha
  \in \Lambda}$, and as it was connected to start with (and $d>2$), it
remains connected. Hence $(t_0, t_1;a_0, a_1;\epsilon_0,
\epsilon_1;\overline{W}) \in \vert X_\bullet^{0,0} \vert$, as
required.

This surgery move generalises well to $l >0$, to finite (but large
enough) $N$, and to non-empty $L$, but to make it work with general
tangential structures $\theta$ we must equip the surgery data
$\{e_\alpha\}_{\alpha \in \Lambda}$ with extra data describing how to
induce a $\theta$-structure on the surgered manifold. We will first
give a definition of $\theta$-surgery, then describe the standard
family, and finally go on to describe the semi-simplicial space of
surgery data analogous to that of Section \ref{sec:SurgeryDataMor}.

\subsection{$\theta$-surgery}\label{sec:ThetaSurgery}

Consider a $\theta$-manifold $(M, \ell_M)$ and an embedding $e :
\bR^{d-l-1} \times S^l \hookrightarrow M$, and let $C$ be the
$d$-dimensional cobordism obtained as the trace of the surgery along
$e$. Thus $\partial_{in} C = M$ and $\partial_{out} C = \overline{M}$
is the result of the surgery.

The data of a \emph{$\theta$-surgery} on $M$ is an embedding $e$ as
above along with a $\theta$-structure $\ell$ on $C$ which agrees with
$\ell_M$ on $M$. This induces a $\theta$-structure on
$\overline{M}$. We will typically give the data of a $\theta$-surgery
extending an embedding $e$ by giving an extension of the
$\theta$-structure $\ell_M \circ De$ on $\bR^{d-l-1} \times S^l$ to
$\bR^{d-l-1} \times D^{l+1}$.  Up to homotopy, this is the same as
specifying a null-homotopy of the map $S^l \to B$ underlying the
$\theta$-structure on $\R^{d-l-1} \times S^l$.

\subsection{The standard family}\label{sec:ObSurgStdFam}

Let us construct the one-parameter family of manifolds depicted in
Figure~\ref{fig:Half0-surgeryObjects}. Choose a function $\rho : \bR \to \bR$ which is the identity on
$(-\infty, 1/2)$, has nowhere negative derivative, and has $\rho(t)=1$
for all $t \geq 1$. We define
$$K = \{(x, y) \in \bR^{d-l}\times \bR^{l+1}  \vert \,\, \vert y
\vert^2 = \rho(\vert x \vert ^2-1)\},$$ a smooth $d$-dimensional
submanifold, contained in $\bR^{d-l} \times D^{l+1}$, which outside of
the set $B^{d-l}_{\sqrt{2}}(0) \times D^{l+1}$ is identically equal to
$\bR^{d-l}\times S^l$. Let
$$h = x_1 : K \lra \bR$$
denote the first of the $x$ coordinates, which is the height function
we will consider on $K$. This function is Morse with precisely two
critical points: $(-1,0,\ldots,0)$ of index $l+1$ and $(1,0,\ldots,0)$ of
index $d-l-1$.

We now define a 1-parameter family of
$d$-dimensional submanifolds $\mathcal{P}_t$ inside $(-6,-2) \times
\R^{d-l-1} \times D^{l+1}$ in the following way.  Pick a smooth
one-parameter family of embeddings $\lambda_s: (-6,-2) \to (-6,0)$, such that $\lambda_0 =
\mathrm{Id}$, that $\lambda_s\vert_{(-6,-4)} = \mathrm{id}$ for all
$s$, and that $\lambda_1(-3) = -1$.  Then we get an embedding
$\lambda_t \times \mathrm{Id}_{\bR^d}: (-6,-2)\times \R^d \to (-6,0) \times \R^d$ and
define
\begin{equation*}
  \mathcal{P}_t = (\lambda_t \times \mathrm{Id}_{\bR^d})^{-1}(K) \in
  \Psi_d((-6,-2)\times \R^{d-l-1} \times \R^{l+1}).
\end{equation*}
It is easy to verify that $\mathcal{P}_t$ agrees with $(-6,-2) \times
\R^{d-l-1} \times S^l$ outside $(-4,-2) \times B_{\sqrt{2}}^{d-l-1}(0)\times D^{l+1}$, independently
of $t$.

We shall also need a tangentially structured version of this
construction, given a structure $\ell: TK\vert_{(-6,0)} \to
\theta^* \gamma$.  For this purpose, let $\omega : [0, \infty) \to
[0,1]$ be a smooth function such that $\omega(r) = 0$ for $r \geq 2$
and $\omega(r) = 1$ for $r \leq \sqrt{2}$.  We define a 1-parameter
family of embeddings by
\begin{eqnarray*}
  \psi_t : (-6,-2) \times \bR^{d-l-1} \times \R^{l+1} &\lra&
  (-6,0) \times \bR^{d-l-1} \times \R^{l+1}\\
  (s, x, y) &\longmapsto& (\lambda_{t\omega(\vert x \vert)}(s),x,  y), 
\end{eqnarray*}
It is easy to see that we also have $\psi_t^{-1}(K) = (\lambda_t
\times \mathrm{Id}_{\bR^d})^{-1}(K) = \mathcal{P}_t$, and we define a $\theta$-structure
on $\mathcal{P}_t$ by pullback along $\psi_t$.  This gives a family
\begin{equation*}
  \mathcal{P}_t(\ell) \in \Psi_\theta((-6,-2) \times \R^{d-l-1} \times
  \R^{l+1}),
\end{equation*}
and we record some important properties in the following
proposition. We will omit $\ell$ from the notation when it is
unimportant.
\begin{proposition}\label{prop:StdFamilyObjectsBelowMid}
  The elements $\mathcal{P}_t(\ell) \in \Psi_\theta((-6,-2) \times
  \R^{d-l-1} \times \R^{l+1})$ are $\theta$-submanifolds of $(-6,-2)
  \times \R^{d-l-1} \times D^{l+1}$ satisfying
  \begin{enumerate}[(i)]
  \item $\mathcal{P}_0(\ell) = K\vert_{(-6,-2)} = (-6,-2) \times \bR^{d-l-1} \times
    S^l$ as $\theta$-manifolds.\label{it:StdFamilyObjectsBelowMid.InitialValue}
    
  \item For all $t$, $\mathcal{P}_t(\ell)$ agrees with
    $K\vert_{(-6,-2)}$ as $\theta$-manifolds outside of $(-4,-2)
    \times B_{2}^{d-l-1}(0) \times D^{l+1}$.
    \label{it:StdFamilyObjectsBelowMid.ConstantOnBoundary}
    
  \item For all $t$ and each pair of regular values $-6 <a < b < -2$ of
    the height function $h : \mathcal{P}_t \to \bR$, the pair
    \begin{equation*}
      (\mathcal{P}_t\vert_{[a,b]}, \mathcal{P}_t\vert_b)
    \end{equation*}
    is
    $(d-l-2)$-connected.\label{it:StdFamilyObjectsBelowMid.kConnMoving}
    
  \item For each regular value $a$ of $h : \mathcal{P}_t \to (-6,-2)$,
    the manifold $\mathcal{P}_t \vert_{a}$ is either isomorphic to
    $\mathcal{P}_0 \vert_{a}$ or is obtained from it by
    $l$-surgery.\label{it:StdFamilyObjectsBelowMid.lConnMoving}
    
  \item The only critical value of $h: \mathcal{P}_1 \to (-6,-2)$
    is $-3$, and for $a \in (-3,-2)$, $\mathcal{P}_1 \vert_{a}$ is
    obtained by $l$-surgery from $\mathcal{P}_0 \vert_{a} =
    \bR^{d-l-1} \times S^l$ along the standard
    embedding.\label{it:StdFamilyObjectsBelowMid.lConn}
  \end{enumerate}
  In~(\ref{it:StdFamilyObjectsBelowMid.lConnMoving})
  and~(\ref{it:StdFamilyObjectsBelowMid.lConn}), the
  $\theta$-structure on the surgered manifold is determined (up to
  homotopy, cf.\ Section \ref{sec:ThetaSurgery}) by the
  $\theta$-structure on $K\vert_{(-6,0)}$.
\end{proposition}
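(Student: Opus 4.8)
The plan is to reduce each clause to ordinary Morse theory on the model manifold $K$, using the observation that for every $t$ both $\lambda_t \times \mathrm{Id}_{\R^d}$ and $\psi_t$ restrict to diffeomorphisms of $\mathcal{P}_t$ onto the open subset $K\vert_{(-6,c_t)}$ of $K$, where $c_t = \lim_{s \to -2^-}\lambda_t(s) \in (-6,0]$ --- this is just a restatement of $\psi_t^{-1}(K) = (\lambda_t \times \mathrm{Id}_{\R^d})^{-1}(K) = \mathcal{P}_t$ noted in the construction. Under $\lambda_t \times \mathrm{Id}_{\R^d}$ the height function $h = x_1$ on $\mathcal{P}_t$ corresponds to $\lambda_t^{-1}$ composed with $h = x_1$ on $K$, and since $\lambda_t$ is a monotone embedding of intervals, $h\vert_{\mathcal{P}_t}$ has exactly the same critical points and indices as $h\vert_K$, which we already know are one index-$(l+1)$ point at $x_1 = -1$ and one index-$(d-l-1)$ point at $x_1 = 1$. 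First I would dispatch (\ref{it:StdFamilyObjectsBelowMid.InitialValue}) and (\ref{it:StdFamilyObjectsBelowMid.ConstantOnBoundary}). The former is immediate: $\lambda_0 = \mathrm{Id}$ makes $\psi_0$ the inclusion, and on $K\vert_{(-6,-2)}$ we have $\vert x\vert^2 - 1 \geq x_1^2 - 1 \geq 3$ (where $x$ is the $\R^{d-l}$-coordinate), so $\rho(\vert x\vert^2 - 1) = 1$ and hence $K\vert_{(-6,-2)} = (-6,-2) \times \R^{d-l-1} \times S^l$. For the latter, $\psi_t$ coincides with $\psi_0$ wherever $x_1 \leq -4$ (as $\lambda_r\vert_{(-6,-4)} = \mathrm{id}$ for all $r$) or the $\R^{d-l-1}$-coordinate has norm $\geq 2$ (as $\omega$ vanishes there), and these two regions together exhaust the complement of $(-4,-2) \times B_2^{d-l-1}(0) \times D^{l+1}$.

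Next I would prove (\ref{it:StdFamilyObjectsBelowMid.kConnMoving}). Under the identification above, $(\mathcal{P}_t\vert_{[a,b]}, \mathcal{P}_t\vert_b)$ corresponds to $(K\vert_{[\lambda_t(a),\lambda_t(b)]}, K\vert_{\lambda_t(b)})$; since $b < -2$ forces $\lambda_t(b) < 0$, this cobordism lies over $(-6,0)$ and therefore contains at most the critical value $-1$. If it contains no critical value, the outgoing boundary includes by a homotopy equivalence; if it contains $-1$, then viewing the cobordism from its outgoing boundary it is built up to homotopy from $K\vert_{\lambda_t(b)}$ by attaching a single cell of dimension $d - (l+1) = d-l-1$, so the pair is $(d-l-2)$-connected. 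For (\ref{it:StdFamilyObjectsBelowMid.lConnMoving}) and (\ref{it:StdFamilyObjectsBelowMid.lConn}) I would analyse the level sets directly, using $\mathcal{P}_t\vert_a \cong K\vert_{\lambda_t(a)}$ with $\lambda_t(a) \in (-6,c_t) \subset (-6,0)$ (so $\lambda_t(a) < 1$): for a regular value $c < -1$ there is no critical value in $(-6,c]$, whence $K\vert_c \cong K\vert_{-2} = \R^{d-l-1}\times S^l$; and for $-1 < c < 1$ the level set $K\vert_c$ is obtained from $\R^{d-l-1}\times S^l$ by passing the index-$(l+1)$ critical point at $x_1 = -1$, i.e.\ by $l$-surgery along the standard embedding $\{0\}\times S^l \hookrightarrow \R^{d-l-1}\times S^l$. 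Taking $c = \lambda_t(a)$ (regular, hence $\neq -1$) gives (\ref{it:StdFamilyObjectsBelowMid.lConnMoving}); for (\ref{it:StdFamilyObjectsBelowMid.lConn}), the critical values of $h\vert_{\mathcal{P}_1}$ are the $a$ with $\lambda_1(a) \in \{-1,1\}$, the value $1$ is never attained since $\lambda_1$ maps into $(-6,0)$, and $\lambda_1(a) = -1$ holds only for $a = -3$ (as $\lambda_1(-3) = -1$ and $\lambda_1$ is injective), so $-3$ is the unique critical value; and for $a \in (-3,-2)$ one has $\lambda_1(a) \in (-1,0) \subset (-1,1)$, so $\mathcal{P}_1\vert_a = K\vert_{\lambda_1(a)}$ is the $l$-surgery on $\R^{d-l-1}\times S^l$ along the standard embedding.

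Finally, for the $\theta$-structure statement accompanying (\ref{it:StdFamilyObjectsBelowMid.lConnMoving})--(\ref{it:StdFamilyObjectsBelowMid.lConn}): in each case above where $\mathcal{P}_t\vert_a$ is exhibited as an $l$-surgery on $\mathcal{P}_0\vert_a$, the trace of that surgery is the image under $\psi_t$ of the subcobordism $\mathcal{P}_t\vert_{[a',a]}$ of $\mathcal{P}_t(\ell)$ around the single critical point, and its $\theta$-structure is by construction the pullback $\ell \circ D\psi_t$ of $\ell\vert_{K\vert_{(-6,0)}}$. This subcobordism together with that $\theta$-structure is precisely a $\theta$-surgery datum in the sense of Section~\ref{sec:ThetaSurgery}, and it depends only on $\ell\vert_{K\vert_{(-6,0)}}$, so it determines the induced $\theta$-structure on $\mathcal{P}_t\vert_a$ up to homotopy, as asserted. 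The only step I expect to require genuine care is (\ref{it:StdFamilyObjectsBelowMid.kConnMoving}): one must check that the $\lambda_t$-reparametrisation does not disturb the index bookkeeping, and that the interval $[\lambda_t(a),\lambda_t(b)]$ genuinely misses the index-$(d-l-1)$ critical value at $x_1 = 1$ (which it does, because $b < -2$ pushes the whole interval below $0$); the rest is unwinding definitions.
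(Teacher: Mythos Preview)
Your argument is correct and matches the paper's: both identify $\mathcal{P}_t$ (via $\lambda_t \times \mathrm{Id}$) with $K$ restricted to an interval of heights inside $(-6,0)$, observe that only the index-$(l+1)$ critical point can appear there, and read off all five claims from standard Morse theory. One minor overclaim worth fixing: $\psi_t$ is \emph{not} a diffeomorphism of $\mathcal{P}_t$ onto all of $K\vert_{(-6,c_t)}$ (its image misses points of $K$ with $\lvert x'\rvert \geq 2$ and height in $[-2,c_t)$), but you never actually use this --- your Morse-theoretic identifications go through $\lambda_t \times \mathrm{Id}$, and $\psi_t$ is only needed to pull back the $\theta$-structure, for which being an embedding into $K\vert_{(-6,0)}$ suffices.
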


The precise meaning of the word isomorphic
in~(\ref{it:StdFamilyObjectsBelowMid.lConnMoving}) above is the
following: By (\ref{it:StdFamilyObjectsBelowMid.ConstantOnBoundary})
we know that the manifolds are \emph{equal}\ outside $(-4,-2) \times
B^{d-l-1}_{2}(0) \times D^{l+1}$. Being isomorphic means that the
identity extends to a diffeomorphism which preserves
$\theta$-structures up to a homotopy of bundle maps which is constant
outside $(-4,-2) \times B^{d-l-1}_{2}(0) \times D^{l+1}$.
\begin{proof}
  (\ref{it:StdFamilyObjectsBelowMid.InitialValue}) and
  (\ref{it:StdFamilyObjectsBelowMid.ConstantOnBoundary}) follow easily
  from the properties of $\lambda_t$ and $\psi_t$, and the fact that
  $K$ agrees with $\R^{d-l} \times S^l$ outside $B_{\sqrt{2}}^{d-l}
  \times \R^{l+1}$.  It follows from the properties of $\omega$ that
  the $\theta$-structures agree outside $B_2^{d-l} \times \R^{l+1}$.
  For (\ref{it:StdFamilyObjectsBelowMid.kConnMoving}), the Morse
  function $\mathcal{P}_t \to (-6,-2)$ has at most one critical point,
  and that has index $l+1$.  If the critical value is in $(a,b)$, then
  the pair is $(d-l-2)$-connected, otherwise
  $\mathcal{P}\vert_{[a,b]}$ deformation retracts to
  $\mathcal{P}\vert_b$.  The fact that the Morse function has at most
  one critical point, of index $l+1$, also implies
  (\ref{it:StdFamilyObjectsBelowMid.lConnMoving}) by definition of
  surgery (and $\theta$-surgery, cf.\ Section~\ref{sec:ThetaSurgery}).
  Finally, the property that $\lambda_1(-3) = -1$ and $\lambda_1(-4) =
  -4$ implies that $h: \mathcal{P}_1 \to (-6,-2)$ does have a critical
  point of index $l+1$, with critical value $-3$, which
  proves~(\ref{it:StdFamilyObjectsBelowMid.lConn}).
\end{proof}

\subsection{Surgery data}

We can now describe the semi-simplicial space of surgery data out of which we will construct a ``perform surgery" map. In the following section we will describe how to construct this map.

Before doing so, we choose once and for all, smoothly in the
data $(a_i, \epsilon_i, a_p, \epsilon_p)$, increasing diffeomorphisms
\begin{equation}\label{eq:reparam}
\varphi = \varphi(a_i, \epsilon_i, a_p, \epsilon_p) : (-6,-2) \cong
(a_i-\epsilon_i, a_p+\epsilon_p)
\end{equation}
sending $-3$ to $a_i
-\tfrac{1}{2}\epsilon_i$ and $-4$ to $a_i - \tfrac34
\epsilon_i$.

\begin{definition}\label{defn:YComplex}
  Let $x = (a, \epsilon, (W, \ell_W)) \in D^{\kappa, l-1}_{\theta,
    L}(\bR^N)_p$, and write $M_i = W\vert_{a_i}$. Define the set
  $Y_q(x)$ to consist of tuples $(\Lambda, \delta, e,\ell)$, where
  $\Lambda \subset \Omega$ is a finite set, $\delta: \Lambda \to
  [p]\times [q]$ is a function,
  \begin{equation*}
    e: \Lambda \times (-6,-2) \times \R^{d-l-1} \times D^{l+1} \hookrightarrow \R
    \times (0,1) \times (-1,1)^{N-1}
  \end{equation*}
  is an embedding, and $\ell: T(\Lambda \times K\vert_{(-6,0)}) \to
  \theta^* \gamma$ is a bundle map, satisfying the conditions below.
  We shall write $\Lambda_{i,j} = \delta^{-1}(i,j)$ and
  \begin{equation*}
    e_{i,j}: \Lambda_{i,j} \times (a_i -
    \epsilon_i, a_p + \epsilon_p) \times \R^{d-l-1} \times D^{l+1} \to \R
    \times (0,1) \times (-1,1)^{N-1}
  \end{equation*}
  for the embedding obtained by restricting $e$ and reparametrising
  using~\eqref{eq:reparam}.
  \begin{enumerate}[(i)]
  \item $e^{-1}(W) = \Lambda \times (-6,-2) \times \bR^{d-l-1} \times
    S^l$. We let
    $$\partial e : \Lambda \times (-6,-2) \times \bR^{d-l-1} \times
    S^l \hookrightarrow W$$ denote the embedding restricted to the
    boundary.
    
  \item\label{it:YCxHeightFun} For any $i=0,\dots, p$ and $t \in
    (a_i-\epsilon_i, a_i+\epsilon_i)$, we have $(x_1 \circ
    e_{i,j})^{-1}(t) = \Lambda_{i,j} \times \{t\} \times \bR^{d-l-1}
    \times D^{l+1}$.
    
  \item\label{item:11} The composition $\ell_W \circ D \partial
    e:T(\Lambda \times K\vert_{(-6,-2)}) \to \theta^* \gamma$ agrees
    with the restriction of $\ell$.
  \end{enumerate} 
  If we let $\ell_{i,j}$ denote the restriction of $\ell$ to
  $T(\Lambda_{i,j} \times K\vert_{(-6,0)})$, the data $(e_{i,j},
  \ell_{i,j})$ is enough to perform $\theta$-surgery on $M_i$ (as
  $K\vert_{(-6,0)}$ is the trace of an $l$-surgery), and we further
  insist that
  \begin{enumerate}[(i)]
    \setcounter{enumi}{3}
  \item For each $j = 0,\dots, q$ and $i = 0, \dots, p$, the resulting
    $\theta_{d-1}$-manifold $\overline{M}_i$ has the property that
    $\pi_k(\overline{M}_i) \to \pi_k(B)$ is injective for $k \leq
    l$.\label{it:EnoughSurgeryDataOb}
  \end{enumerate}
  For each $x$, $Y_\bullet(x)$ is a semi-simplicial set in the same
  way as in Definition~\ref{defn:ZComplex}.
\end{definition}

Note that the set $Y_q(x)$ consists of those $(q+1)$-tuples of
elements of $Y_0(x)$ which are disjoint.
\begin{definition}\label{defn:DoubleCxOb}
  We define a bi-semi-simplicial space $D^{\kappa, l}_{\theta,
    L}(\bR^N)_{\bullet, \bullet}$ (augmented in the second
  semi-simplicial direction) as a set by
  \begin{equation*}
    D^{\kappa, l}_{\theta, L}(\bR^N)_{p, q} = \{(x,y)\,\vert\, x \in
    D^{\kappa, l-1}_{\theta, L}(\bR^N)_p, y \in Y_q(x)\},
  \end{equation*}
  and topologise it as a subspace of
  $${D}_{\theta, L}^{\kappa, l-1}(\bR^N)_p \times \left( \coprod_{\Lambda
      \subset \Omega}C^\infty(\Lambda \times V, \bR^{N+1}) \times \Bun
    \left(T(\Lambda \times K\vert_{(-6,0)}), {\theta}^*\gamma \right)
  \right)^{(p+1)(q+1)}$$ where $V$ denotes the manifold $(-6,-2)
  \times \bR^{d-l-1} \times D^{l+1}$.  Explicitly, the face map $d_k$
  in the $q$ direction forgets the surgery data $(e_{i,j},
  \ell_{i,j})$ with $j=k$, and the face map $d_k$ in the $p$ direction
  forgets both the surgery data $(e_{i,j}, \ell_{i,j})$ with $i=k$ and
  the $k$th regular value.
\end{definition}

The main result about this
bi-semi-simplicial space of manifolds equipped with surgery data is
the following, whose proof we defer until Section
\ref{sec:Connectivity}.

\begin{theorem}\label{thm:SurgeryComplexOb}
  Under the assumptions of Theorem \ref{thm:lfiltration}, the maps
  \begin{equation*}
    \vert D^{\kappa, l}_{\theta, L}(\bR^N)_{\bullet, 0} \vert \lra
    \vert D^{\kappa, l}_{\theta, L}(\bR^N)_{\bullet, \bullet} \vert
    \lra \vert D^{\kappa, l-1}_{\theta, L}(\bR^N)_{\bullet}\vert
  \end{equation*}
  are weak homotopy equivalences, where first map is the inclusion of
  0-simplices and the second is the augmentation, in the second
  simplicial direction.
\end{theorem}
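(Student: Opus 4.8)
The plan is to deduce both assertions from a single contractibility property of spaces of surgery data, proved for each fixed simplicial degree $p$ in Section~\ref{sec:Connectivity}; the rest is a formal manipulation with (bi-)semi-simplicial spaces. Concretely, I would isolate two claims, to be established in Section~\ref{sec:Connectivity} under the hypotheses of Theorem~\ref{thm:lfiltration}: (I) for each $p$, the augmentation map $\vert D^{\kappa,l}_{\theta,L}(\R^N)_{p,\bullet}\vert \to D^{\kappa,l-1}_{\theta,L}(\R^N)_{p}$ is a weak homotopy equivalence; and (II) for each $p$, the map $D^{\kappa,l}_{\theta,L}(\R^N)_{p,0} \to D^{\kappa,l-1}_{\theta,L}(\R^N)_{p}$ which forgets the single piece of surgery data is a weak homotopy equivalence. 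Granting these, I apply the lemma that a levelwise weak equivalence of semi-simplicial spaces is a weak equivalence after geometric realisation, now in the $p$-direction (using that the bi-semi-simplicial realisation may be formed in either order): from (I), the semi-simplicial map $[p]\mapsto\vert D^{\kappa,l}_{\theta,L}(\R^N)_{p,\bullet}\vert\ \to\ [p]\mapsto D^{\kappa,l-1}_{\theta,L}(\R^N)_{p}$ is a levelwise weak equivalence, so $\vert D^{\kappa,l}_{\theta,L}(\R^N)_{\bullet,\bullet}\vert \to \vert D^{\kappa,l-1}_{\theta,L}(\R^N)_{\bullet}\vert$ is a weak equivalence; from (II) likewise $\vert D^{\kappa,l}_{\theta,L}(\R^N)_{\bullet,0}\vert \to \vert D^{\kappa,l-1}_{\theta,L}(\R^N)_{\bullet}\vert$ is a weak equivalence. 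The latter factors through the former via the inclusion of $0$-simplices, so two-out-of-three shows the inclusion $\vert D^{\kappa,l}_{\theta,L}(\R^N)_{\bullet,0}\vert \to \vert D^{\kappa,l}_{\theta,L}(\R^N)_{\bullet,\bullet}\vert$ is a weak equivalence, which is the first assertion.

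The geometric heart, deferred to Section~\ref{sec:Connectivity}, is then to analyse, for a $p$-simplex $x=(a,\epsilon,(W,\ell_W))\in D^{\kappa,l-1}_{\theta,L}(\R^N)_{p}$ with objects $M_i=W\vert_{a_i}$, the space $Y_0(x)$ of surgery data of Definition~\ref{defn:YComplex}. Since $x$ lies in $D^{\kappa,l-1}_{\theta,L}(\R^N)_{p}$, each $M_i\to B$ is already injective on $\pi_k$ for $k\le l-1$, and a point of $Y_0(x)$ records a finite system of $l$-surgeries — embedded copies of $\R^{d-l-1}\times S^l$, extended cylindrically into $W$ compatibly with the height function (condition (\ref{it:YCxHeightFun})) and disjoint from $\R\times L$, together with $\theta$-structures on their traces (equivalently, null-homotopies in $B$ of the surgery spheres) — whose effect is to make $\pi_k(\overline M_i)\to\pi_k(B)$ injective for $k\le l$, for every $i$ at once. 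The first thing to prove is that $Y_0(x)\neq\emptyset$: one represents a generating set of $\ker(\pi_l(M_i)\to\pi_l(B))$ by embedded $l$-spheres with trivialised normal bundle in the $(d-1)$-manifold $M_i$, which is possible because $2(l+1)<d$ (general position for the embeddings and their normal bundles) and because $\ell_L\colon L\to B$ is $(l+1)$-connected and $L$ has handles only of index $<d-l-1$ (so the spheres may be pushed off $L$ and no classes supported on $L$ need be killed); extends them cylindrically across $W$ to its outgoing boundary, using $l\le\kappa$ together with the fact that the morphism is $\kappa$-connected relative to its outgoing boundary; keeps the data mutually disjoint, using that $l+2+d<N$ leaves ample room in $\R^{N+1}$; and equips the surgery traces with $\theta$-structures via the null-homotopies, which exist precisely because the spheres were chosen in the kernel. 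That surgery along such data does not spoil the $\kappa$-connectivity of the surrounding morphisms uses condition (\ref{item:12}) ($l\le d-\kappa-2$), via a transversality argument on the complement of the surgery region, exactly as in Lemma~\ref{lem:MorSurgeryDesiredEffect}.

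The second, more delicate thing is that $Y_0(x)$ is in fact \emph{weakly contractible}, not merely non-empty: any two sufficient (possibly redundant) systems of surgery data can be joined by a path, and all higher homotopy groups vanish. Here the slack built into Definition~\ref{defn:YComplex} is essential — the index set $\Lambda$ may be any finite subset of the uncountable $\Omega$, embeddings may be slid around in the high-dimensional ambient space, and the chosen generating systems may be modified — and this is where a parametrised version of the constructions above is carried out. Statement (II) then follows from weak contractibility of $Y_0(x)$ once one checks that $D^{\kappa,l}_{\theta,L}(\R^N)_{p,0}\to D^{\kappa,l-1}_{\theta,L}(\R^N)_{p}$ is a Serre microfibration, which holds because away from critical values an isotopy of $W$ can be covered by an isotopy of the embedded surgery data. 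Statement (I) follows from the same analysis by the standard disjointification argument: by the remark after Definition~\ref{defn:YComplex}, $D^{\kappa,l}_{\theta,L}(\R^N)_{p,\bullet}$ is degreewise the space of $(q+1)$-tuples of pairwise disjoint elements of $Y_0(x)$, and since over any point — and continuously in families — one can always produce a further piece of surgery data disjoint from finitely many given ones (again using $l+2+d<N$), the augmented semi-simplicial space $D^{\kappa,l}_{\theta,L}(\R^N)_{p,\bullet}\to D^{\kappa,l-1}_{\theta,L}(\R^N)_{p}$ is a homotopy resolution and hence realises to a weak equivalence.

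The main obstacle is exactly the pair of inputs (I) and (II) to be supplied in Section~\ref{sec:Connectivity}: not the formal assembly above, nor even the bare non-emptiness of $Y_0(x)$, but the simultaneous, height-function- and $L$-compatible construction of $\theta$-surgery data for all objects of a $p$-simplex, followed by the proof that the resulting space of such data is weakly contractible. This is the same difficulty that appears for surgery on morphisms (Theorem~\ref{thm:SurgeryComplexMor}) and for surgery on objects in the middle dimension, and I would treat the three cases uniformly, with the dimension hypotheses $2(l+1)<d$, $l\le\kappa$, $l\le d-\kappa-2$, $l+2+d<N$, the handle bound on $L$, and the $(l+1)$-connectivity of $\ell_L$ entering exactly as indicated.
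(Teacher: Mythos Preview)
Your plan for the augmentation map is essentially what the paper does: for each fixed $p$ one verifies that the augmented semi-simplicial space $D^{\kappa,l}_{\theta,L}(\R^N)_{p,\bullet} \to D^{\kappa,l-1}_{\theta,L}(\R^N)_p$ realises to a weak equivalence, and then realises in the $p$-direction. The paper packages the fibrewise input as an \emph{augmented topological flag complex} and checks the three hypotheses of Theorem~\ref{thm:SimplicialTech} (local sections, non-emptiness of $Y_0(x)$, existence of an element orthogonal to any finite collection). Your non-emptiness sketch and your ``disjointification'' are exactly conditions~(ii) and~(iii) of that theorem. One technical point you omit: the paper first passes to a variant $\widetilde{Y}_\bullet$ in which $e$ is only required to embed a neighbourhood of a lower-dimensional \emph{core}; this is what makes the general-position arguments go through cleanly.

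Your claim~(II), however, fails, and with it your route to the first map. The space $Y_0(x)$ is \emph{not} weakly contractible: each element carries a finite label set $\Lambda \subset \Omega$, and $Y_0(x)$ is topologised as a disjoint union over such $\Lambda$, so two pieces of surgery data indexed by distinct subsets of $\Omega$ lie in different path components even when the underlying embeddings coincide. Thus $D^{\kappa,l}_{\theta,L}(\R^N)_{p,0} \to D^{\kappa,l-1}_{\theta,L}(\R^N)_p$ is not a weak equivalence for any fixed $p$, and your two-out-of-three argument does not get off the ground. The paper's proof of the first map is genuinely different and does not factor through any levelwise statement in $p$: one passes to an auxiliary bi-semi-simplicial space $D'_{\bullet,\bullet}$ in which the intervals $[a_i-\epsilon_i, a_i+\epsilon_i]$ may overlap, and then writes down an explicit retraction $|D'_{\bullet,\bullet}| \to |D'_{\bullet,0}|$ which reinterprets a $(p,q)$-simplex (namely $(p+1)$ regular values each with $(q+1)$ pieces of surgery data) as a $((p{+}1)(q{+}1){-}1,0)$-simplex with $(p{+}1)(q{+}1)$ possibly-repeated regular values, each carrying a single piece. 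That retraction, together with the already-established augmentation equivalence, gives the first map by a different two-out-of-three.
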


In fact, we shall prove this theorem assuming the conditions of
Theorem~\ref{thm:lfiltration} except (\ref{item:12}).  That condition
will be used in the proof of Lemma~\ref{lem:ObSurgeryDesiredEffect}.

\subsection{Proof of Theorem \ref{thm:lfiltration}}\label{sec:proof-theor-refthm:l}

We now go on to prove Theorem \ref{thm:lfiltration}, so suppose that
the inequalities in the statement of that theorem are satisfied:
$2(l+1) < d$, $l \leq \kappa$, $l \leq d-\kappa-2$, $l+2 + d < N$, $L$
admits a handle decomposition using only handles of index $< d-l-1$,
and and the map $\ell_L: L \to B$ is $(l+1)$-connected.

Let $(a, \epsilon, (W, \ell_W), e,\ell) \in D_{\theta, L}^{\kappa,
  l}(\bR^N)_{p,0}$.  For each $i = 0,\dots, p$, we have an embedding
$e_i = e_{i,0}$ and a bundle map $\ell_i = \ell_{i,0}$, from which we
shall construct a 1-parameter family of elements
$\mathcal{K}^t_{e_{i}, \ell_{i}}(W, \ell_W) \in \Psi_\theta((a_0 -
\epsilon_0, a_p + \epsilon_p) \times \R^N)$, $t \in [0,1]$ as
follows. Changing the first coordinate of the manifolds
$\mathcal{P}_t(\ell_i)$ by composing with the reparametrisation
functions of (\ref{eq:reparam}), we get a family of manifolds
\begin{equation*}
  \overline{\mathcal{P}}_t(\ell_i) \in \Psi_\theta((a_i-\epsilon_i, a_p+\epsilon_p)
  \times \bR^{d-l-1} \times \R^{l+1}) 
\end{equation*}
having all the properties of Proposition~\ref{sec:ObSurgStdFam}, where property
(\ref{it:StdFamilyObjectsBelowMid.lConn}) now holds for all regular
values in $(a_i-\tfrac{1}{2}\epsilon_i, a_p+\epsilon_p)$.  Then for $t
\in [0,1]$, let
\begin{equation*}
  \mathcal{K}^t_{e_{i}, \ell_{i}}(W, \ell_W) \in
  \Psi_\theta((a_0-\epsilon_0, a_p+\epsilon_p)\times \bR^N)
\end{equation*}
be equal to $W \vert_{(a_0-\epsilon_0, a_p+\epsilon_p)}$ outside the
image of $e_{i}$, and on $e_{i}(\Lambda_i \times (a_i-\epsilon_i,
a_p+\epsilon_p) \times \bR^{d-l-1} \times D^{l+1})$ be given by
$e_{i}(\Lambda_i \times \overline{\mathcal{P}}_t(\ell_i))$.  This
gives a $\theta$-manifold, because $\Lambda_i \times
\overline{\mathcal{P}}_t(\ell_i)$ and $\Lambda_i \times
\overline{\mathcal{P}}_0(\ell_i)$ agree as $\theta$-manifolds outside
of $(a_i-\tfrac34 \epsilon_i, a_p+\epsilon_p) \times
B^{d-l-1}_{2}(0) \times D^{l+1}$.

As the embeddings $e_{i}$ are all disjoint, this procedure can be
iterated, and for a tuple $t = (t_0, \ldots, t_p) \in [0,1]^{p+1}$ we let
$$
\mathcal{K}^{t}_{e, \ell}(W, \ell_W) = \mathcal{K}^{t_p}_{e_{p},
  \ell_{p}} \circ \cdots \circ \mathcal{K}^{t_0}_{e_{0}, \ell_{0}}(W,
\ell_W) \in \Psi_\theta((a_0-\epsilon_0, a_p+\epsilon_p)\times \bR^N).
$$

\begin{lemma}\label{lem:ObSurgeryDesiredEffect}
  Firstly, the tuple $(a, \tfrac{1}{2}\epsilon, \mathcal{K}^{t}_{e,
    \ell}(W, \ell_W))$ is an element of $X_p^{\kappa, l-1}$. Secondly,
  if $t_i=1$---so the surgery for the regular value $a_i$ is fully
  done---then for any regular value $b$ of $x_1 : \mathcal{K}^{t}_{e,
    \ell}(W, \ell_W) \to \bR$ in the interval
  $(a_i-\tfrac{1}{2}\epsilon_i, a_i+\tfrac{1}{2}\epsilon_i)$ we have
  that
  $$\pi_j(\mathcal{K}^{t}_{e, \ell}(W, \ell_W) \vert_{b}) \lra
  \pi_j(B)$$ is injective for $j \leq l$.
\end{lemma}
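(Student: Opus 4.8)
\emph{Proof proposal.} The plan is to imitate the proof of Lemma~\ref{lem:MorSurgeryDesiredEffect}, using the one-parameter family of Proposition~\ref{prop:StdFamilyObjectsBelowMid} in place of the one used there and feeding in the obstruction-theoretic conditions built into the definition of $Y_q(x)$. Throughout I write $b$ (or $b_0<b_1$) for a regular value lying in a window, to keep it apart from the surgery parameter tuple $t=(t_0,\dots,t_p)$.

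For the first assertion I would verify conditions~(\ref{item:1})--(\ref{it:ObConn}) of Definition~\ref{defn:X} for the tuple $(a,\tfrac12\epsilon,\mathcal{K}^{t}_{e,\ell}(W,\ell_W))$, regarded as a $p$-simplex of $X_p^{\kappa,l-1}$. Conditions~(\ref{item:1}), (\ref{item:2}) and~(\ref{it:Disjoint}) are formal: the modification is supported inside the images of the $e_{i,0}$, which land in $\bR\times(0,1)\times(-1,1)^{N-1}$ and hence miss $\bR\times L$; the standard family stays in the box; and halving the $\epsilon_i$ only shrinks the intervals. This last point is exactly why one halves: by the normalisation of~\eqref{eq:reparam} (which sends $-3\mapsto a_i-\tfrac12\epsilon_i$) and Proposition~\ref{prop:StdFamilyObjectsBelowMid}(\ref{it:StdFamilyObjectsBelowMid.lConn}), the only new critical value the $e_{i,0}$-surgery can produce near $a_i$ sits at $a_i-\tfrac12\epsilon_i$, i.e.\ at the left endpoint of the halved window, so the open halved window stays regular and lies on the surgered side. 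For condition~(\ref{it:MorConn}), with $b_0<b_1$ regular values in the union of the halved windows, I would rerun the two homotopy push-out squares from the proof of Lemma~\ref{lem:MorSurgeryDesiredEffect}: excising the embedded standard tubes from the pair $(W\vert_{[b_0,b_1]},W\vert_{b_1})$ does not change relative homotopy groups in degrees $\leq d-l-2$ by transversality (the tubes deformation retract onto submanifolds of dimension $l+1$), and $d-l-2\geq\kappa$ by the hypothesis $l\leq d-\kappa-2$; while the left vertical map of the second square is a disjoint union of the inclusions of Proposition~\ref{prop:StdFamilyObjectsBelowMid}(\ref{it:StdFamilyObjectsBelowMid.kConnMoving}), hence $(d-l-2)$- and a fortiori $\kappa$-connected. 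Since $x\in D_{\theta,L}^{\kappa,l-1}(\bR^N)_p$, the cobordism $W\vert_{[b_0,b_1]}$ is $\kappa$-connected relative to its top, and the push-out squares transport this to $\mathcal{K}^{t}_{e,\ell}(W,\ell_W)\vert_{[b_0,b_1]}$. Finally, for condition~(\ref{it:ObConn}) with $l-1$ in place of $l$: by Proposition~\ref{prop:StdFamilyObjectsBelowMid}(\ref{it:StdFamilyObjectsBelowMid.lConnMoving}) every level set of $\mathcal{K}^{t}_{e,\ell}(W,\ell_W)$ over a regular value $b$ in a halved window is obtained from $W\vert_b$ by a (possibly empty) disjoint family of $l$-surgeries along spheres carrying $\theta$-surgery data, hence null-homotopic in $B$; since $2(l+1)<d$ such a surgery leaves $\pi_j$ unchanged for $j<l$ compatibly with the maps to $B$, so $(l-1)$-injectivity is inherited from $x$.

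For the second assertion, fix $i$ with $t_i=1$ and a regular value $b\in(a_i-\tfrac12\epsilon_i,a_i+\tfrac12\epsilon_i)$ of $x_1:\mathcal{K}^{t}_{e,\ell}(W,\ell_W)\to\bR$. Since $x\in D_{\theta,L}^{\kappa,l-1}(\bR^N)_p$, the closed interval $[a_i-\epsilon_i,a_i+\epsilon_i]$ consists of regular values of $x_1:W\to\bR$, so $W\vert_b$ is identified with $M_i$ and the embeddings meeting this level are standard slices by Definition~\ref{defn:YComplex}(\ref{it:YCxHeightFun}). As $t_i=1$ and $b>a_i-\tfrac12\epsilon_i$, Proposition~\ref{prop:StdFamilyObjectsBelowMid}(\ref{it:StdFamilyObjectsBelowMid.lConn}) identifies the effect of the $e_{i,0}$-surgery at level $b$ with the full standard $\theta$-surgery along $(e_{i,0},\ell_{i,0})$, producing $\overline M_i$; the tubes $e_{i',0}$ with $i'>i$ do not reach level $b$, and since the $e_{i',0}$ are disjoint, the remaining surgeries (those along $e_{i',0}$, $i'<i$, performed to the partial extents $t_{i'}$, each at worst an $l$-surgery along a $B$-null sphere by Proposition~\ref{prop:StdFamilyObjectsBelowMid}(\ref{it:StdFamilyObjectsBelowMid.lConnMoving})) merely further modify $\overline M_i$. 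By Definition~\ref{defn:YComplex}(\ref{it:EnoughSurgeryDataOb}), $\pi_k(\overline M_i)\to\pi_k(B)$ is injective for $k\leq l$, and I would conclude using the elementary fact that $l$-surgery along a $B$-null sphere $\sigma$ preserves this: for $j<l$ the homotopy groups do not change; and for $j=l$, since $2(l+1)<d$ the trace $C$ has $\pi_l(C)$ isomorphic both to $\pi_l(\partial_{out}C)$ and to the quotient of $\pi_l(\partial_{in}C)$ by the normal subgroup generated by $[\sigma]$, so $\pi_l(\partial_{out}C)\to\pi_l(B)$ is the factorisation of $\pi_l(\partial_{in}C)\to\pi_l(B)$ through that quotient, hence still injective. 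Iterating over the $e_{i',0}$ with $i'<i$ gives the claim.

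I expect the main obstacle to be bookkeeping rather than any new idea: for a fixed regular value $b$ in the halved $i$-th window one must keep track of exactly which surgery tubes reach level $b$ and in what state (untouched, partially, or fully surgered) they sit there, and check that forming the complement $X$ and regluing the standard pieces interact correctly with the push-out squares — essentially repeating the case analysis of Steps~1--3 in the proof of Lemma~\ref{lem:MorSurgeryDesiredEffect}, now for a standard family whose traces are products away from a single critical level. The genuine input is supplied by Proposition~\ref{prop:StdFamilyObjectsBelowMid} and by condition~(\ref{it:EnoughSurgeryDataOb}) of Definition~\ref{defn:YComplex}.
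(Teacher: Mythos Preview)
Your proposal is correct and follows essentially the same route as the paper: both verify conditions~(\ref{item:1})--(\ref{it:ObConn}) of Definition~\ref{defn:X} by rerunning the push-out argument of Lemma~\ref{lem:MorSurgeryDesiredEffect} with Proposition~\ref{prop:StdFamilyObjectsBelowMid}(\ref{it:StdFamilyObjectsBelowMid.kConnMoving}) supplying the $(d-l-2)$-connectivity, and both handle the second assertion by performing the $e_i$-surgery first (invoking Definition~\ref{defn:YComplex}(\ref{it:EnoughSurgeryDataOb})) and then arguing that the remaining $l$-surgeries preserve injectivity via the trace cobordism. The paper packages this last step as the commutative triangle~\eqref{fig:TS} (using that $\pi_j(M)\to\pi_j(C)$ is onto and $\pi_j(\overline{M})\to\pi_j(C)$ is an isomorphism for $j\leq d-l-3$) rather than your quotient formulation, and it treats all $e_{i',0}$ with $i'\neq i$ uniformly as ``remaining surgeries'' without singling out whether the tubes with $i'>i$ reach level $b$---but these are differences of phrasing, not substance.
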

\begin{proof}
  For the first part we must verify the conditions of Definition
  \ref{defn:X}. Conditions (\ref{item:1})--(\ref{it:Disjoint}) are
  immediate from the properties of $(a, \epsilon)$ that we start with,
  the disjointness of the surgery data from $\bR \times L$, and the
  fact that the standard family $\mathcal{P}_t$ has isolated critical
  values.

  For condition (\ref{it:MorConn}) we proceed exactly as in the proof
  of Lemma \ref{lem:MorSurgeryDesiredEffect}, using property
  (\ref{it:StdFamilyObjectsBelowMid.kConnMoving}) of the standard
  family, that the pair $(\mathcal{P}_t\vert_{[a,b]},
  \mathcal{P}_t\vert_{b})$, and hence the homotopy equivalent pair
  \begin{equation*}
    \big(\mathcal{P}_t\vert_{[a,b]}, (\mathcal{P}_t\vert_{b}) \cup
    ([a,b] \times (\R^{d-l-1} - B_2^{d-l-1}(0)) \times S^l) \big)
  \end{equation*}
  is $(d-l-2)$-connected, and so in particular $\kappa$-connected as
  we have supposed that $l \leq d-\kappa-2$.

  For condition (\ref{it:ObConn}), let $b \in
  (a_i-\tfrac{1}{2}\epsilon_i, a_i+\tfrac{1}{2}\epsilon_i)$ be a
  regular value of the height function on $\mathcal{K}^{t}_{e,
    \ell}(W, \ell_W)$, and define $\theta_{d-1}$-manifolds
  \begin{eqnarray*}
    \overline{M} & = & \mathcal{K}^{t}_{e,\ell}(W, \ell_W) \vert_{b}\\
    M & = & W \vert_{b}.
\end{eqnarray*}
By
Proposition~\ref{prop:StdFamilyObjectsBelowMid}~(\ref{it:StdFamilyObjectsBelowMid.lConnMoving}),
the $\theta_{d-1}$-manifold $\overline{M}$ is obtained from $M$ by
performing $\theta$-$l$-surgeries.  Let $C : M \leadsto \overline{M}$
be the $\theta$-cobordism given by the trace of these surgeries. We
have the commutative diagram
\begin{equation}\label{fig:TS}
\begin{aligned}
\xymatrix{
\pi_j(M) \ar[r]^-i \ar[rd] & \pi_j(C) \ar[d] & \ar[l]_-{\overline{\imath}} \ar[ld] \pi_j(\overline{M})\\
& \pi_j(B)
}
\end{aligned}
\end{equation}
and $C$ is obtained by attaching $(l+1)$-cells to $M$ or by attaching
$(d-l-1)$-cells to $\overline{M}$.  Hence $i$ is surjective for $j
\leq l$ and $\overline{\imath}$ is bijective for $j \leq d-l-3$.  Since
our assumption $2(l+1) < d$ implies $l \leq d-l-3$, as long as $j \leq
l$, the right hand diagonal map is injective whenever the left hand
one is, and in particular for all $j \leq l-1$.

We now prove the second part, so suppose $t_i=1$. We construct the
manifold $\mathcal{K}^{t}_{e, \ell}(W, \ell_W)$ by first taking
$\mathcal{K}^{1}_{e_{i}, \ell_{i}}(W, \ell_W)$ and then performing the
remaining surgeries to it. Let $\widetilde{M} =
\mathcal{K}^{1}_{e_{i}, \ell_{i}}(W, \ell_W) \vert_{b}$, so that
$\overline{M}$ is obtained from $\widetilde{M}$ by $l$-surgery.

We first show that $\pi_j(\widetilde{M}) \to \pi_j(B)$ is injective
for $j \leq l$. By property (\ref{it:EnoughSurgeryDataOb}) of the
complex of surgery data, $(e_i, \ell_i)$ is enough surgery data on $M
= W\vert_b$ to make the map on $\pi_l$ be injective after performing
it. By property (\ref{it:StdFamilyObjectsBelowMid.lConn}) of the
standard family, as $b >a_i-\tfrac{1}{2}\epsilon_i$ the manifold
$\widetilde{M}$ has all of this surgery done, and so
$\pi_j(\widetilde{M}) \to \pi_j(B)$ is injective for $j \leq l$.

By the previous argument, with $M$ replaced with $\widetilde{M}$
in~\eqref{fig:TS}, the remaining surgeries do not change this
injectivity property.
\end{proof}

In the composition
$$\vert D^{\kappa, l}_{\theta, L}(\bR^N)_{\bullet, \bullet} \vert \lra \vert D^{\kappa, l-1}_{\theta, L}(\bR^N)_{\bullet} \vert \lra \vert X_\bullet^{\kappa, l-1}\vert$$
both maps are homotopy equivalences by Theorem \ref{thm:SurgeryComplexOb} and Proposition \ref{prop:X} respectively. There is also an inclusion
$$\vert D^{\kappa, l}_{\theta, L}(\bR^N)_{\bullet} \vert \lra \vert D^{\kappa, l}_{\theta, L}(\bR^N)_{\bullet, 0} \vert \lra \vert D^{\kappa, l}_{\theta, L}(\bR^N)_{\bullet, \bullet} \vert$$
as the subspace of manifolds equipped with no surgery data, and the second map is a weak homotopy equivalence by Theorem \ref{thm:SurgeryComplexOb}.

We define a map
\begin{eqnarray*}
  \surg_p : [0,1]^{p+1} \times D^{\kappa, l}_{\theta, L}(\bR^N)_{p,0} &
  \lra & X_p^{\kappa, l-1}\\
  (t, (a, \epsilon, (W, \ell_W), e,
  \ell)) & \longmapsto & (a,
  \tfrac{1}{2}\epsilon, \mathcal{K}^{t}_{e,
    \ell}(W, \ell_W)), 
\end{eqnarray*}
which has the desired range by the first part of Lemma
\ref{lem:ObSurgeryDesiredEffect}, and furthermore sends $(1,\ldots,1)
\times D^{\kappa, l}_{\theta, L}(\bR^N)_{p,0}$ into $X_p^{\kappa,
  l}$. On the boundary of the cube this map has further distinguished
properties: one is given by the second part of Lemma
\ref{lem:ObSurgeryDesiredEffect}. The second is that, by Proposition
\ref{prop:StdFamilyObjectsBelowMid}
(\ref{it:StdFamilyObjectsBelowMid.InitialValue}), we have an equality
$\mathcal{K}^0_{e_i, \ell_i}(W') = W'$ of $\theta$-submanifolds of
$(a_0-\epsilon_0, a_p+\epsilon_p) \times \bR^N$. Thus we obtain the
formula
\begin{equation}\label{eq:FaceMapsMatch}
d_i \surg_p(d^i t, x) = \surg_{p-1}(t, d_ix)
\end{equation}
where $d^i : [0,1]^p \to [0,1]^{p+1}$ adds a zero in the $i$th
position, and the $d_i$ are the face maps of the semi-simplicial
spaces $D^{\kappa, l}_{\theta, L}(\bR^N)_{\bullet,0}$ and
$X_\bullet^{\kappa,l-1}$.

We wish to assemble the maps $\surg_p$ to a homotopy $\surg :
[0,1]\times \vert {D}^{\kappa, l}_{\theta, L}(\bR^N)_{\bullet,0} \vert
\to \vert X_\bullet^{\kappa, l-1} \vert$. Hence we define $\lambda,
\psi : \Delta^p \to [0,1]^{p+1}$ by the formul\ae
\begin{eqnarray*}
  \lambda_i(t) &=& \min\left (1, 2\overline{t}_i \right)\\
  \psi_i(t) &=& \max\left(0, 2\overline{t}_i -1 \right),
\end{eqnarray*}
where again $\overline{t}_i = t_i/\max(t_j)$, and a map $H : [0,1]
\times \Delta^p \to [0,1]^{p+1}\times \Delta^p$ by
$$H(s,t) = \left(s\cdot\lambda(t) , \frac{\psi(t)}{\sum_j
    \psi_j(t)}\right).$$ These may be used to form the composition
$$F_p : [0,1] \times {D}^{\kappa, l}_{\theta, L}(\bR^N)_{p,0} \times \Delta^p \overset{H}\lra {D}^{\kappa, l}_{\theta, L}(\bR^N)_{p,0} \times [0,1]^{p+1} \times \Delta^p \overset{\surg_p \times \Delta^p}\lra X_p^{\kappa, l-1} \times \Delta^p.$$

\begin{lemma}\label{lem:glue}
  These maps glue to a homotopy $\surg : [0,1]\times \vert
  {D}^{\kappa, l}_{\theta, L}(\bR^N)_{\bullet,0} \vert \to \vert
  X_\bullet^{\kappa, l-1} \vert$.
\end{lemma}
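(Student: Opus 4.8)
The plan is to check that the continuous maps
\begin{equation*}
  G_p \;=\; (\text{quotient})\circ F_p : [0,1]\times {D}^{\kappa, l}_{\theta, L}(\bR^N)_{p,0} \times \Delta^p \lra \vert X_\bullet^{\kappa, l-1}\vert
\end{equation*}
are compatible with the face operators of the semi-simplicial space $[p]\mapsto {D}^{\kappa, l}_{\theta, L}(\bR^N)_{p,0}$, and then to invoke the standard fact that such a compatible family descends to a map on the geometric realisation. Since $[0,1]$ is compact, a homotopy $[0,1]\times\vert {D}^{\kappa, l}_{\theta, L}(\bR^N)_{\bullet,0}\vert\to\vert X_\bullet^{\kappa, l-1}\vert$ is exactly a levelwise-compatible family of maps of the above form, and since we work with semi-simplicial spaces there are no degeneracies to worry about, so only the relations $(d_i x, u)\sim(x, d^i u)$ need to be respected.

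First I would record that $H$ is well defined and continuous: for $t\in\Delta^p$ one has $\max_j t_j>0$, so $\overline t$ is defined and $\max_j\overline t_j=1$, whence $\psi_j(t)=1$ for at least one $j$, so $\sum_j\psi_j(t)\ge 1$ and the normalisation never produces $\tfrac00$; thus $s\cdot\lambda(t)\in[0,1]^{p+1}$ and $\psi(t)/\sum_j\psi_j(t)\in\Delta^p$ depend continuously on $(s,t)$. Continuity of $\surg_p$ is immediate from the construction of $\mathcal{K}^t_{e,\ell}$ in Section~\ref{sec:proof-theor-refthm:l}, so each $G_p$ is continuous. The only genuine computation is the interaction of $\lambda$ and $\psi$ with the face inclusion $d^i$: inserting a $0$ in the $i$th coordinate of $u\in\Delta^{p-1}$ changes neither $\max_k u_k$ nor $\sum_j\psi_j(u)$, and it sends the new coordinate to $\lambda_i=\psi_i=0$, so $\lambda(d^i u)=d^i(\lambda(u))$ as points of $[0,1]^{p+1}$ and $\psi(d^i u)/\sum_j\psi_j(d^i u)=d^i\big(\psi(u)/\sum_j\psi_j(u)\big)$ as points of $\Delta^p$. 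Feeding this into the identity $(\text{quotient})(y, d^i v)=(\text{quotient})(d_i y, v)$ on $\vert X_\bullet^{\kappa,l-1}\vert$ and then the face relation \eqref{eq:FaceMapsMatch} (applied with the cube coordinate $s\lambda(u)\in[0,1]^p$) gives
\begin{equation*}
  G_p(s,x,d^i u)=(\text{quotient})\big(\surg_{p-1}(s\lambda(u), d_i x),\,\psi(u)/\textstyle\sum_j\psi_j(u)\big)=G_{p-1}(s, d_i x, u),
\end{equation*}
which is precisely the required compatibility, so the $G_p$ glue to the homotopy $\surg$.

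I do not expect a serious obstacle here. The substantive content — that the partially-performed surgery map matches its own face operators — has already been isolated as \eqref{eq:FaceMapsMatch}; what remains is formal. The two small points needing care are that $H$ avoids the indeterminate expression $\tfrac00$ (handled by $\max_j\overline t_j=1$ on $\Delta^p$) and the bookkeeping identities $\lambda\circ d^i=d^i\circ\lambda$ and $\psi\circ d^i=d^i\circ\psi$, both of which follow at once from the explicit formulae for $\lambda$ and $\psi$.
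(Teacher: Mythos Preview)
Your proof is correct and follows essentially the same approach as the paper's. The paper's proof consists of exactly the same two ingredients: the face-compatibility identity \eqref{eq:FaceMapsMatch} and the observations that $\lambda(d^i t)=d^i(\lambda(t))$, $\psi(d^i t)=d^i(\psi(t))$ and $\sum_j\psi_j(d^i t)=\sum_j\psi_j(t)$; you have simply filled in slightly more detail on well-definedness of $H$ and the gluing formalism.
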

\begin{proof}
  The points $F_p(s, x, d^i t)$ and $F_{p-1}(s, d_ix, t)$ are identified
  under the usual face maps among the $X_p^{\kappa, l-1} \times
  \Delta^p$.  This follows immediately from the formula
  (\ref{eq:FaceMapsMatch}) and the observation that $\lambda(d^it) =
  d^i(\lambda(t))$, $\psi(d^it) = d^i(\psi(t))$ and $\sum_j
  \psi_j(d^it) = \sum_j \psi_j(t)$.
\end{proof}

By construction, the map $\surg(1, -) : \vert {D}^{\kappa, l}_{\theta,
  L}(\bR^N)_{\bullet,0} \vert \to \vert X_\bullet^{\kappa, l-1} \vert$
factors through the continuous injection $\vert X_\bullet^{\kappa, l}
\vert \to \vert X_\bullet^{\kappa,l-1}\vert$.  This may be seen at the
level of the maps $F_p$, since the domain of $F_p$ is covered by the
$2^{p+1}$ closed sets obtained by requiring for each $i$ either
$\lambda_i(t) = 1$ or $\psi_i(t) = 0$, on each of which the map
$F_p(1,-)$ composed with $X_p^{\kappa,l-1} \times \Delta^p \to \vert
X_\bullet^{\kappa,l-1} \vert$ factors through $\vert
X_\bullet^{\kappa,l}\vert$: If $\lambda_i(t) = 1$, the surgery near
the regular value $a_i$ is completely done (and so by the second part
of Lemma \ref{lem:ObSurgeryDesiredEffect} it does not matter what the
remaining surgeries do near the regular value $a_i$); if not, then
$\psi_i(t) = 0$ and by the face identifications, we can forget the
regular value $a_i$.

The homotopy $\surg$ is constant on the subspace $\vert D^{\kappa,
  l}_{\theta, L}(\bR^N)_{\bullet} \vert \hookrightarrow \vert
D^{\kappa, l}_{\theta, L}(\bR^N)_{\bullet, 0} \vert$, and by the
argument in Section~\ref{sec:proof-theor-refthm:k} we deduce the
weak equivalence in Theorem~\ref{thm:lfiltration}.

\begin{remark}
  It is possible to weaken condition~(\ref{item:13}) of
  Theorem~\ref{thm:lfiltration} to the map $\ell_L: L \to B$ being
  $l$-connected, in which case the method of
  Section~\ref{sec:surg-objects-middle} below can be used to prove
  that the inclusion gives a weak equivalence from
  $B\mathcal{C}_{\theta, L}^{\kappa, l}(\bR^N)$ to a union of path
  components of $B\mathcal{C}_{\theta, L}^{\kappa, l-1}(\bR^N)$.
\end{remark}


\section{Surgery on objects in the middle dimension}
\label{sec:surg-objects-middle}

We now restrict our attention to even dimensions, and write
$d=2n$. Given a collection of path components $\mathcal{A} \subset
\pi_0(\Ob(\mathcal{C}_{\theta, L}^{n-1, n-2}(\bR^N)))$, in Definition \ref{defn:CobCatA} we defined
$$\mathcal{C}_{\theta,
  L}^{n-1, \mathcal{A}}(\bR^N) \subset \mathcal{C}_{\theta, L}^{n-1,
  n-2}(\bR^N)$$
  to be the full subcategory on this collection of
objects. To state our main theorem concerning these subcategories, we
first need a definition.

\begin{definition}\label{defn:Reversible}
  We say a tangential structure $\theta$ is \emph{reversible} if
  whenever there is a morphism $C : M \leadsto N$ in
  $\mathcal{C}_{\theta, L}$, there also exists a morphism
  $\overline{C} : N \leadsto M$ in this category, whose underlying
  manifold is the reflection of $C$.
\end{definition}

In Proposition \ref{prop:Reversible}, we prove that this property is
equivalent to $\theta$ being \emph{spherical}, as defined in
Section~\ref{sec:intr-stat-results} (i.e.\ the $2n$-sphere admits a
$\theta$-structure extending any given structure on a disc).  We can
now state our main theorem concerning these subcategories, analogous
to Theorem \ref{thm:lfiltration} but in the middle dimension.  The
reader mainly interested in Theorems~\ref{thmcor:rational-coho} and
\ref{thm:main-A} can take $\theta = \theta^n: BO(2n)\langle n\rangle
\to BO(2n)$, $L \cong D^{2n-1}$, $N = \infty$, and $\mathcal{A}$ the
class of objects which are either diffeomorphic to $S^{2n-1}$ with its
standard smooth structure and $\theta$-structure or are \emph{not}
$\theta$-bordant to $S^{2n-1}$.  (Again, the proof does not simplify
much in this special case).

\begin{theorem}\label{thm:MidSurgery}
  Suppose that
  \begin{enumerate}[(i)]
  \item $2n \geq 6$,
  
  \item $3n+1 < N$,
  
  \item $\theta$ is reversible,
  
  \item $L$ admits a handle decomposition only using handles of index less than
    $n$,
    
  \item the map $\ell_L : L \to B$ is $(n-1)$-connected,
  
  \item\label{it:Afull} the natural map $\mathcal{A} \to \pi_0(B\mathcal{C}_{\theta, L}^{n-1, n-2}(\bR^N))$ is surjective.
  \end{enumerate}
  Then
  $$B\mathcal{C}_{\theta, L}^{n-1, \mathcal{A}}(\bR^N) \lra
  B\mathcal{C}_{\theta, L}^{n-1, n-2}(\bR^N)$$ is a weak homotopy
  equivalence.
\end{theorem}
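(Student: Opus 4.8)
The plan is to run the same machine as in Sections~\ref{sec:surgery-morphisms} and~\ref{sec:surg-objects-below}, with $\theta$-surgery in the middle dimension ($d=2n$, $l=n-1$, surgery along embedded $(n-1)$-spheres in the $(2n-1)$-dimensional object slices) in place of surgery below the middle dimension, and with the numerical ``desired effect'' replaced by membership in the chosen collection $\mathcal{A}$. First, by the poset models of Section~\ref{sec:PosetModels}, the equivalence~\eqref{eq:PosetModelA}, and Proposition~\ref{prop:X} (case $d=2n$), it suffices to show that the inclusion $|X_\bullet^{n-1,\mathcal{A}}|\to|X_\bullet^{n-1,n-2}|$ is a weak homotopy equivalence, where $X_\bullet^{n-1,n-2}$ denotes $X_\bullet^{n-1,\mathcal{A}}$ with $\mathcal{A}$ the set of all path components.

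Next I would build, mimicking Definitions~\ref{defn:YComplex} and~\ref{defn:DoubleCxOb} with $l=n-1$, an augmented bi-semi-simplicial space $D_{\theta,L}^{n-1,\mathcal{A}}(\bR^N)_{\bullet,\bullet}\to D_{\theta,L}^{n-1,n-2}(\bR^N)_\bullet$ whose $(p,q)$-simplices consist of a $p$-simplex $x=(a,\epsilon,(W,\ell_W))$ of $D_{\theta,L}^{n-1,n-2}(\bR^N)_\bullet$ together with a finite set $\Lambda$, a function $\delta\colon\Lambda\to[p]\times[q]$, an embedding of $\Lambda$ copies of the standard trace $K\vert_{(-6,-2)}$ of an $(n-1)$-surgery (Section~\ref{sec:ObSurgStdFam}, $l=n-1$) along the outgoing boundaries of the slices $W\vert_{a_i}$, and a $\theta$-extension $\ell$ over $K\vert_{(-6,0)}$. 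The genuinely new feature is that ``being in $\mathcal{A}$'' is \emph{not} preserved by further middle-dimensional $\theta$-surgery, so the ``desired effect'' condition must be one inherited by the face maps of Definition~\ref{defn:DoubleCxOb}: concretely I would require that, for every subset $S$ of the regular values $a_0<\dots<a_p$ (and every $q$-layer), performing the $\theta$-surgeries recorded over the indices in $S$ turns each $W\vert_{a_i}$ with $a_i\in S$ into a $\theta_{d-1}$-manifold lying in $\mathcal{A}$. As in Definition~\ref{defn:DoubleCxOb} the object-only subspace $D_{\theta,L}^{n-1,\mathcal{A}}(\bR^N)_\bullet$ (those $x$ already lying in $\mathcal{A}$, with the empty datum) sits inside $D_{\theta,L}^{n-1,\mathcal{A}}(\bR^N)_{\bullet,0}$. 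The key input, which I would defer to Section~\ref{sec:Connectivity} alongside Theorems~\ref{thm:SurgeryComplexMor} and~\ref{thm:SurgeryComplexOb}, is that under the hypotheses of the theorem both the augmentation $|D_{\theta,L}^{n-1,\mathcal{A}}(\bR^N)_{\bullet,\bullet}|\to|D_{\theta,L}^{n-1,n-2}(\bR^N)_\bullet|$ and the inclusion of $0$-simplices $|D_{\theta,L}^{n-1,\mathcal{A}}(\bR^N)_{\bullet,0}|\to|D_{\theta,L}^{n-1,\mathcal{A}}(\bR^N)_{\bullet,\bullet}|$ are weak homotopy equivalences. This contractibility-of-surgery-data statement is the main obstacle: given a finite family of objects of $\mathcal{C}_{\theta,L}^{n-1,n-2}(\bR^N)$ one must produce a contractible space of middle-dimensional $\theta$-surgery data pushing every slice into $\mathcal{A}$, and this is where $2n\geq 6$ (general position and the Whitney trick), $3n+1<N$, the handle condition on $L$, and above all the reversibility of $\theta$ (equivalently, sphericalness, by Proposition~\ref{prop:Reversible}) together with condition~(\ref{it:Afull}) enter: reversibility removes the obstructions to realising a prescribed class in $\pi_{n-1}$ of a $(2n-1)$-manifold by an embedded sphere with trivial normal bundle and a compatible null-homotopy to $B$, and to surgering away the dual classes that a middle-dimensional surgery creates, while~(\ref{it:Afull}) guarantees that $\mathcal{A}$ is a legitimate target in each $\theta$-bordism class.

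Granting this, I would use the one-parameter family $\mathcal{P}_t(\ell)$ of Section~\ref{sec:ObSurgStdFam} with $l=n-1$, for which Proposition~\ref{prop:StdFamilyObjectsBelowMid} continues to hold; here property~(\ref{it:StdFamilyObjectsBelowMid.kConnMoving}) gives exactly $(d-l-2)=(n-1)$-connectivity, which is all that is needed to keep morphisms $(n-1)$-connected relative to their outgoing boundary, and the failure of the strict inequality $2(l+1)<d$ of Theorem~\ref{thm:lfiltration} is harmless since we no longer try to improve injectivity of $\pi_{n-1}\to\pi_{n-1}(B)$, only to reach $\mathcal{A}$. Exactly as in Section~\ref{sec:proof-theor-refthm:l} one checks the analogue of the first part of Lemma~\ref{lem:ObSurgeryDesiredEffect}: the partially surgered families $\mathcal{K}^t_{e,\ell}(W,\ell_W)$ define points of $X_p^{n-1,n-2}$ for every $t$, morphisms staying $(n-1)$-connected relative to the outgoing boundary by the transversality and homotopy-pushout argument of that section, and objects staying in $\mathcal{C}_{\theta,L}^{n-1,n-2}$ because each object slice of $\mathcal{P}_t$ is either unchanged or obtained from $W\vert_{a_i}$ by one $\theta$-$(n-1)$-surgery and the diagram~\eqref{fig:TS} still gives bijectivity on $\pi_j$ for $j\leq d-l-3=n-2$. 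Using the functions $\lambda,\psi\colon\Delta^p\to[0,1]^{p+1}$ and the gluing of Lemma~\ref{lem:glue}, these assemble into a homotopy
\begin{equation*}
  \surg\colon[0,1]\times|D_{\theta,L}^{n-1,\mathcal{A}}(\bR^N)_{\bullet,0}|\lra|X_\bullet^{n-1,n-2}|,
\end{equation*}
constant on $|D_{\theta,L}^{n-1,\mathcal{A}}(\bR^N)_\bullet|$, starting from the composite of the augmentation $|D_{\theta,L}^{n-1,\mathcal{A}}(\bR^N)_{\bullet,0}|\to|D_{\theta,L}^{n-1,n-2}(\bR^N)_\bullet|$ with the equivalence of Proposition~\ref{prop:X}, and with time-$1$ map factoring through $|X_\bullet^{n-1,\mathcal{A}}|$: on each of the $2^{p+1}$ closed pieces of the domain the regular values split into those whose surgery is complete and those which are forgotten, and on the resulting face exactly the completed surgeries have been performed, so the face-stable ``desired effect'' condition puts each retained object slice into $\mathcal{A}$.

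Finally I would conclude as in Section~\ref{sec:proof-theor-refthm:l}: write $f$ for the inclusion $|X_\bullet^{n-1,\mathcal{A}}|\to|X_\bullet^{n-1,n-2}|$, $g=\surg(1,-)$, $h=\surg(0,-)$, $i$ for the inclusion $|D_{\theta,L}^{n-1,\mathcal{A}}(\bR^N)_\bullet|\hookrightarrow|D_{\theta,L}^{n-1,\mathcal{A}}(\bR^N)_{\bullet,0}|$, and $j$ for the equivalence $|D_{\theta,L}^{n-1,\mathcal{A}}(\bR^N)_\bullet|\to|X_\bullet^{n-1,\mathcal{A}}|$ of Proposition~\ref{prop:X}. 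By the previous paragraph and Proposition~\ref{prop:X}, $h$ is a weak equivalence; $\surg$ provides a homotopy $f\circ g\simeq h$; and $g\circ i=j$ on the nose since $\surg$ is constant on the object-only subspace. On homotopy groups $j_*=g_*i_*$ forces $g_*$ surjective and $h_*=f_*g_*$ forces $g_*$ injective, so $g$ is a weak equivalence and hence so is $f$; by the first paragraph, this proves the theorem.
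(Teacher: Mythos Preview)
Your proof has a genuine gap: you propose to reuse the one-parameter standard family $\mathcal{P}_t(\ell)$ of Section~\ref{sec:ObSurgStdFam} with $l=n-1$, but this family is inadequate in the middle dimension. That family has a single index-$n$ critical point and, once the surgery at index $i$ is complete, the level set at \emph{every} $a_k$ with $k\geq i$ has been altered by $(n-1)$-surgery. In Section~\ref{sec:surg-objects-below} this was harmless because, via diagram~\eqref{fig:TS} and the inequality $2(l+1)<d$, doing further $l$-surgery preserved the connectivity condition. Here $2(l+1)=d$, and membership in $\mathcal{A}$ is certainly not preserved under further $(n-1)$-surgery (e.g.\ if $\mathcal{A}$ is a single diffeomorphism class). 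Your ``for every subset $S$'' condition is an attempt to absorb this, but it does not: on the closed piece indexed by $S$ you forget the regular values $a_i$ with $i\notin S$ via the face identification in $|X^{n-1,n-2}_\bullet|$, yet the underlying manifold $\mathcal{K}^{\lambda(t)}_{e,\ell}(W)$ is unchanged by that face map---the partially performed surgeries $e_j$ with $j\notin S$, $j<i$ still modify the level set at $a_i$. So ``exactly the completed surgeries have been performed'' is false, and the retained slice need not lie in $\mathcal{A}$. Even if you strengthened the condition to cover all such subsets, constructing surgery data satisfying it (the analogue of Proposition~\ref{prop:YNonEmpty}) becomes a badly coupled problem across levels, and you have not indicated how to solve it.

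The paper takes a different route. It builds a \emph{new} standard family $\mathcal{P}_t(\ell)$ using \emph{both} critical points of $K$ together with a width parameter (and an extra ambient $\bR$-factor so the handle can shrink to width zero without self-intersection). The key output is Proposition~\ref{prop:StdFamilyObjectsInMid}~(\ref{it:StdFamilyObjectsInMid.lConnMoving}): for regular values outside the interval corresponding to $(a_i-\tfrac12\epsilon_i,a_i+\tfrac12\epsilon_i)$ the level set is \emph{isomorphic} as a $\theta$-manifold to the original one. Thus doing the surgery at index $i$ leaves all other retained slices literally unchanged, and Lemma~\ref{lem:ObSurgeryDesiredEffectMid} goes through with the simple condition~(\ref{it:EnoughSurgeryDataObMid}) of Definition~\ref{defn:YComplexMid}. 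Making that isomorphism hold for $\theta$-manifolds is exactly what forces the notion of an \emph{extendible} $\theta$-structure on $K$ (Definition~\ref{defn:Extendible}) and is where reversibility of $\theta$ is used. This localisation of the surgery move is the essential new idea in the middle dimension; without it, the argument of Section~\ref{sec:proof-theor-refthm:l} does not go through.
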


The surgery move that we will employ is similar to that of the last
section, but has a crucial difference. In the last section, when we
performed the surgery move to make $a_i$ be a good regular value, we
glued a family of manifolds having the effect of performing
$l$-surgery on the level sets $W\vert_{a_i}$, but at the same time
performing $l$-surgery on all higher level sets.  In
Section~\ref{sec:surg-objects-below}, $l < (d-2)/2 = n-1$, and
therefore performing $l$-surgery on a $(2n-1)$-manifold which is
$l$-connected preserves its $l$-connectedness.  In this section, we
will need to change level sets by doing $(n-1)$-surgery on
$(2n-1)$-manifolds, and this is much more delicate.  For example, any
1-manifold can be made connected by performing 0-surgeries, but
performing further 0-surgeries will disconnect it again.

Instead we use a modified surgery move, which will let us perform
$(n-1)$-surgery on a level set $W\vert_{a}$ and leave all other level
sets $W\vert_{b}$ unchanged, except when $b$ is very close to $a$.
For $n=1$, this was done in \cite{GR-W}, and the construction there
generalises to higher $n$.  Let us briefly recall and depict the case
$n=1$.  We start with the same surgery data as in
Section~\ref{sec:surg-objects-below}, a collection of embeddings
$$\{e_\alpha : (a_0-\epsilon_0, a_1+\epsilon_1) \times D^{1} \times
D^1 \hookrightarrow \bR \times \bR^\infty\}_{\alpha \in \Lambda},$$
but glue in to the image of each $e_\alpha$ the path of manifolds
shown in Figure \ref{fig:Full0-surgeryObjects}.
\begin{figure}[h]
  \includegraphics[bb=0 0 336 132]{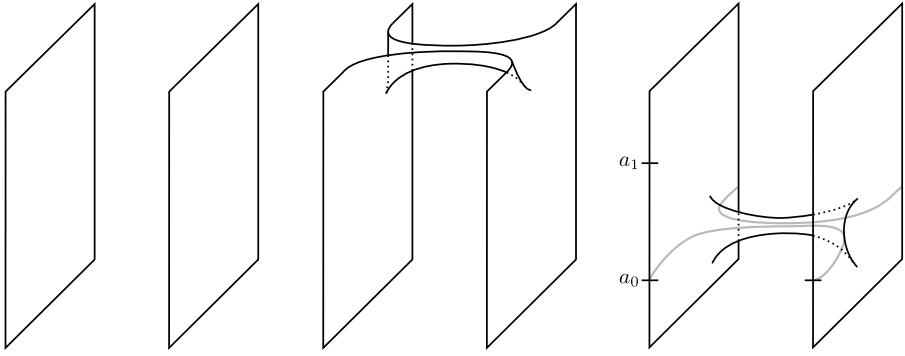}
  \caption{The surgery move on objects in the middle dimension. In the
    last frame we have indicated the level set at $a_0$ in light grey,
    to emphasise that it \emph{is} modified by the
    surgery.}\label{fig:Full0-surgeryObjects}
\end{figure}
This defines a path in the space $\vert X_\bullet^{0, -1} \vert$, and
if the handle in Figure \ref{fig:Full0-surgeryObjects} which we have
moved into the manifold is ``thin" enough (with respect to the height
function) then the manifold $\overline{W}$ obtained at the end of the
path has $\overline{W} \vert_{a_0}$ and $\overline{W} \vert_{a_1}$
both connected, and so lies in $\vert X_\bullet^{0,0}\vert$.

In order to make sense of this surgery move in the presence of
$\theta$-structures, we must equip the 1-parameter family of manifolds
shown in Figure \ref{fig:Full0-surgeryObjects} with
$\theta$-structures which start at a given structure, are constant
near the vertical boundaries, and at the end of the path the level
sets above and below the handle should be isomorphic as
$\theta$-manifolds to the level sets before the handle was added.
This last property does not hold in general: for example, if we equip
the original manifold in Figure \ref{fig:Full0-surgeryObjects} with a
framing, one may easily see (using the Poincar{\'e}--Hopf theorem)
that there is no framing on the final manifold consistent with these
requirements.  As we will see, this problem goes away when $\theta$ is
assumed to be reversible.  Let us first discuss the reversibility condition
in more detail.

\subsection{Reversibility}

Recall that a tangential structure $\theta: B \to BO(d)$ is called
\emph{spherical} if any structure on a disc $D \subset S^d$ extends to
one on $S^d$.  (When $B$ is path connected, this is equivalent to the
$d$-sphere admitting any structure at all.)  Let us first discuss some
related conditions on tangential structures $\theta: B \to BO(d)$.
\begin{definition}
  \label{def:once-stable}
  A tangential structure $\theta: B \to BO(d)$ is \emph{once-stable}
  if there exists a map $\bar{\theta}: \bar{B} \to BO(d+1)$ and a
  commutative diagram
  \begin{equation*}
  \xymatrix{
  B \ar[r] \ar[d]_{\theta} & \bar{B} \ar[d]_{\bar{\theta}} \\
    BO(d) \ar[r] & BO(d+1)
  }
  \end{equation*}
  which is homotopy pullback.

  A tangential structure $\theta$ is \emph{weakly once-stable} if
  there exists such a diagram for which $\pi_i(BO(d),B) \to
  \pi_i(BO(d+1),\bar B)$ is surjective for $i = d+1$ and bijective for
  $i \leq d$, for all basepoints.
\end{definition}

From the commutative diagram in the definition, there is a bundle map
$\epsilon^1 \oplus \theta^* \gamma \to \bar{\theta}^* \gamma$. Hence a
$\theta$-structure $TW \to \theta^*\gamma$ on a $d$-manifold $W$
induces a bundle map $\epsilon^1 \oplus TW \to \bar \theta^* \gamma$.
If $\theta$ is weakly once-stable we may deduce the converse, that a bundle map $\epsilon^1 \oplus TW \to \bar \theta^*\gamma$ is
homotopic to one that arises from a $\theta$-structure.  More
precisely, we have the following useful lemma.

\begin{lemma}\label{lemma:stable-extension}
  Let $\theta: B \to BO(d)$ be weakly once-stable.  Let $W$ be a
  $d$-manifold and let $\ell: TW\vert_{A} \to \theta^* \gamma$ be a
  $\theta$-structure defined on a closed submanifold $A \subset W$.
  Then $\ell$ extends to a $\theta$-structure $TW \to \theta^*\gamma$
  if and only if the stabilised bundle map $\epsilon^1 \oplus \ell:
  \epsilon^1 \oplus TW\vert_{A} \to \epsilon^1 \oplus \theta^* \gamma$
  extends to a bundle map over all of $W$.
\end{lemma}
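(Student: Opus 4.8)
The plan is to phrase the extension problem as a lifting problem against the homotopy pullback square defining weak once-stability, and then run obstruction theory cell-by-cell on the relative CW pair $(W,A)$. First I would set up the relevant classifying maps. A $\theta$-structure on $W$ is the same as a map $TW \colon W \to BO(d)$ (the Gauss map) together with a lift to $B$; the given $\ell$ on $A$ provides such a lift over $A$. Stabilising, the bundle map $\epsilon^1 \oplus TW$ gives a map $W \to BO(d+1)$, and an extension of $\epsilon^1 \oplus \ell$ over $W$ is exactly a lift of this map to $\bar B$ extending the given one on $A$. So one direction of the ``if and only if'' is immediate: a $\theta$-structure extending $\ell$ stabilises to a $\bar\theta$-structure extending $\epsilon^1 \oplus \ell$, using the map $\epsilon^1 \oplus \theta^*\gamma \to \bar\theta^*\gamma$ coming from the square.

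For the substantive direction, suppose the stabilised bundle map extends over $W$; I want to produce a $\theta$-structure. Replace $\theta \colon B \to BO(d)$ and $\bar\theta \colon \bar B \to BO(d+1)$ by fibrations, so that the square becomes a strict pullback up to the stated connectivity. The data in hand is: a map $g \colon W \to \bar B$ covering $\epsilon^1 \oplus TW \colon W \to BO(d+1)$ and restricting over $A$ to the stabilisation of $\ell$; and the Gauss map $TW \colon W \to BO(d)$, which lifts the composite $W \xrightarrow{g} \bar B \to BO(d+1)$ along $BO(d) \to BO(d+1)$ (this lift being $TW$ itself, by definition of the stabilisation). I want a lift $W \to B$ of $TW$ extending $\ell\vert_A$ and compatible with $g$ up to homotopy. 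Consider the homotopy fibre $F$ of $B \to \bar B \times_{BO(d+1)} BO(d)$; the weak once-stability hypothesis says precisely that $\pi_i(BO(d),B) \to \pi_i(BO(d+1),\bar B)$ is iso for $i \le d$ and epi for $i = d+1$, which by comparing the two long exact sequences of pairs forces $F$ to be $d$-connected (i.e.\ $\pi_i(F) = 0$ for $i \le d$). Since $\dim W = d$, the relative CW pair $(W,A)$ has cells of dimension $\le d$, so the standard obstruction theory for lifting against a map with $d$-connected homotopy fibre shows that the lift $\ell$ over $A$ extends over all of $W$, with no obstructions and uniquely up to homotopy rel $A$. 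The resulting map $W \to B$, composed with $\theta$, recovers the Gauss map (since the lift was against the map to the fibre product over $BO(d)$), hence gives a genuine $\theta$-structure on $TW$ restricting to $\ell$ on $A$.

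The main obstacle is bookkeeping the diagram carefully enough that ``$d$-connected homotopy fibre'' really does kill all obstructions on a $d$-dimensional pair: one must check that the obstruction groups to extending the lift over the $i$-cells of $(W,A)$ land in $\pi_{i-1}(F)$ or $\pi_i(F)$, which vanish for $i \le d$, and that the primary obstruction class is the only one because higher ones live in groups that are likewise zero in this range; this is where the ``surjective for $i = d+1$'' clause is needed (it handles the top-dimensional cells, where the relevant obstruction to \emph{existence} of an extension over a $d$-cell sits in $\pi_d(F) = 0$, while the hypothetical obstruction to \emph{uniqueness} in $\pi_{d+1}$ does not arise because $\dim W = d$). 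A secondary subtlety is the fibrant replacement: one should either work with genuine fibrations from the start or invoke that the homotopy pullback condition is insensitive to such replacements, so that ``$d$-connected homotopy fibre'' is a well-defined statement independent of model. Neither of these is deep, but both require stating the obstruction-theoretic input in the right relative form — I would cite the standard fact that a map with $n$-connected homotopy fibre induces a bijection on homotopy classes of maps out of (and extensions over) CW complexes of dimension $\le n$, applied with $n = d$ to $(W, A)$.
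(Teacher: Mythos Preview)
Your approach is essentially the same as the paper's: both rephrase the problem as a lifting problem against the map $B \to \widetilde{B} := \bar B \times_{BO(d+1)} BO(d)$, show this map has enough connectivity using the weak once-stability hypothesis, and then invoke obstruction theory on the $d$-dimensional relative CW pair $(W,A)$. The paper phrases the connectivity as ``the pair $(\widetilde{B},B)$ is $d$-connected'' and solves the lifting square directly by compression; you phrase it via the homotopy fibre $F$.

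There is, however, an off-by-one in your indexing. From the long exact sequence of the triple $(BO(d),\widetilde{B},B)$ together with $\pi_*(BO(d),\widetilde{B}) \cong \pi_*(BO(d+1),\bar B)$, the weak once-stability hypothesis gives $\pi_i(\widetilde{B},B)=0$ for $i \le d$, i.e.\ the pair is $d$-connected. Since $\pi_{i-1}(F) \cong \pi_i(\widetilde{B},B)$, this yields only $\pi_j(F)=0$ for $j \le d-1$: the fibre is $(d-1)$-connected, not $d$-connected. Correspondingly, the obstruction to extending a lift over a $k$-cell of $(W,A)$ lies in $\pi_{k-1}(F)$, not $\pi_k(F)$; for the top cells ($k=d$) this is $\pi_{d-1}(F)$, and it is precisely the ``surjective for $i=d+1$'' clause that forces $\pi_d(\widetilde{B},B) = \pi_{d-1}(F) = 0$. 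So your instinct about where that clause is used is right, but the group in which the top obstruction sits is one degree lower than you wrote. With this correction the argument goes through (you only need existence, i.e.\ surjectivity on homotopy classes, which $(d-1)$-connectivity of $F$ provides for complexes of dimension $\le d$); the cleaner bookkeeping is the paper's: $(\widetilde{B},B)$ is $d$-connected and $(W,A)$ has relative cells of dimension $\le d$, so the diagonal lift exists.
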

\begin{proof}
Without loss of generality, we may assume that $\theta$ and $\bar
  \theta$ are Serre fibrations.  Let us write $s: BO(d) \to BO(d+1)$
  for the stabilisation map, and let us pick a classifying map $t: W
  \to BO(d)$ for the tangent bundle. Tangential structures on $TW$ then correspond to lifts of $t$ along some fibration, and tangential structures on $\epsilon^1 \oplus TW$ correspond to lifts of $s \circ t$ along some fibration.
  
  We write $\widetilde{\theta} : \widetilde{B} \to BO(d)$ for the
  pullback of $\bar{\theta}$, so the commutative diagram in Definition
  \ref{def:once-stable} gives a map $i : B \to \widetilde{B}$ over
  $BO(d)$. A $\bar{\theta}$-structure on $\epsilon^1 \oplus TW$ is
  then nothing but a $\widetilde{\theta}$-structure on $TW$.
  
The long exact sequence on homotopy for the various fibrations combine to give 
$$\cdots \lra \pi_i(\widetilde{B}, B) \lra \pi_{i}(BO(d), B) \lra \pi_{i}(BO(d+1), \bar{B}) \lra \pi_{i-1}(\widetilde{B}, B) \lra \cdots$$
from which we deduce that $(\widetilde{B}, B)$ is $d$-connected. Now,
the situation described in the statement is a lifting problem
\begin{equation*}
  \xymatrix{
    A\ar[r]\ar[d] & B\ar[d]\\
    W \ar[r]\ar@{..>}[ur]& {\tilde B,}
  }
\end{equation*}
which has a solution as $(W, A)$ has
cells of dimension at most $d$, and $(\widetilde{B}, B)$ is
$d$-connected.
\end{proof}

\begin{lemma}
  \label{lem:weakly-once-stable}
  The tangential structure $\theta: B \to BO(d)$ is weakly once-stable
  if and only if it is spherical.
\end{lemma}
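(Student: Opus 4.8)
\emph{Proof plan.} The plan is to treat the two implications separately; as usual one may assume $B$ is path-connected, the general case following by arguing one component at a time. \emph{Weakly once-stable $\Rightarrow$ spherical:} suppose $\bar\theta\colon \bar B \to BO(d+1)$ is a diagram as in Definition~\ref{def:once-stable}, and let $\ell$ be a $\theta$-structure on the lower hemisphere $D = \partial_- D^{d+1} \subset S^d = \partial D^{d+1}$; I want to extend $\ell$ over $S^d$. By Lemma~\ref{lemma:stable-extension} (applied with $W = S^d$ and $A = D$) it suffices to extend the stabilised bundle map $\epsilon^1 \oplus \ell\colon \epsilon^1 \oplus TS^d|_D \to \epsilon^1 \oplus \theta^*\gamma$ over all of $S^d$. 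But $\epsilon^1 \oplus TS^d$ is trivial — trivialise it using the outward normal of the standard embedding $S^d \subset \R^{d+1}$ — so, having fixed such a trivialisation, such an extension is the same datum as an extension over $S^d$ of a map $D \to \mathrm{Fr}(\epsilon^1 \oplus \theta^*\gamma)$ into the frame bundle. Since $D$ is contractible and $(S^d,D)$ is a CW pair, every such map extends: homotope it to a constant map, extend that constant map over $S^d$, and propagate the homotopy back using the homotopy extension property of $(S^d,D)$. Hence $\ell$ extends, so $\theta$ is spherical.

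\emph{Spherical $\Rightarrow$ weakly once-stable:} since $B$ is nonempty the disc $D$ admits a $\theta$-structure, which by sphericity extends over $S^d$; thus $S^d$ carries a $\theta$-structure, equivalently the classifying map $\tau\colon S^d \to BO(d)$ of $TS^d$ lifts along $\theta$. Replacing $\theta$ by a fibration, fix a strict lift $\tilde\tau\colon S^d \to B$. As $TS^d$ is stably trivial, $s\tau$ is nullhomotopic, where $s\colon BO(d)\to BO(d+1)$ is the stabilisation map. I would then set $\bar B = B\cup_{\tilde\tau} D^{d+1}$, define $\bar\theta\colon \bar B \to BO(d+1)$ to equal $s\theta$ on $B$ and a nullhomotopy $N$ of $s\tau$ on the cell — this makes the square of Definition~\ref{def:once-stable} commute strictly — and choose $N$ so that the lift $S^d \to \mathrm{hofib}(s) \simeq S^d$ assembled from $\tau$ and $N$ is a homotopy equivalence. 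This is possible because the fibre inclusion $\mathrm{hofib}(s)\hookrightarrow BO(d)$ itself classifies $TS^d$, so $\tau$ admits such a lift.

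It then remains to check that $\pi_i(BO(d),B)\to \pi_i(BO(d+1),\bar B)$ is an isomorphism for $i\le d$ and surjective for $i=d+1$. I would factor it as $\pi_i(BO(d),B)\to\pi_i(BO(d+1),B)\to\pi_i(BO(d+1),\bar B)$ and feed this into the long exact sequences of the triples $(BO(d+1),BO(d),B)$ and $(BO(d+1),\bar B,B)$ — read off via homotopy fibres, since these are maps rather than inclusions — using $\pi_i(BO(d+1),BO(d))\cong\pi_{i-1}(S^d)$ (the homotopy fibre of $s$ being $S^d$). The connecting homomorphism $\pi_{d+1}(BO(d+1),BO(d))\cong\pi_d(S^d)\to\pi_d(BO(d),B)$ carries a generator to the class of $\tau$, which vanishes exactly because $\tau$ lifts to $B$; this is where sphericity enters, and it forces $\pi_i(BO(d),B)\to\pi_i(BO(d+1),B)$ to be an isomorphism for all $i\le d$. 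Since $(\bar B,B)$ is $d$-connected, the second map is an isomorphism for $i\le d$ and surjective for $i=d+1$, so the composite is an isomorphism in the range $i\le d$. For $i=d+1$ one computes that the cokernel of $\pi_{d+1}(BO(d),B)\to\pi_{d+1}(BO(d+1),B)$ is detected by the projection onto $\pi_{d+1}(BO(d+1),BO(d))\cong\pi_d(S^d)\cong\Z$, and that the characteristic element of the attached cell maps onto a generator of this $\Z$ — which is precisely what the choice of $N$ above guarantees.

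\emph{Where the difficulty lies:} the first implication, and the $i\le d$ part of the second, are routine bookkeeping once Lemma~\ref{lemma:stable-extension} and the long exact sequences are in place. The delicate point is surjectivity in degree $d+1$ in the second implication: one must see that attaching a \emph{single} $(d+1)$-cell — along a lift of the tangent classifying map of $S^d$, and with the correctly chosen nullhomotopy of $s\tau$ — is exactly enough to absorb the copy of $\Z=\pi_d(S^d)$ of cokernel produced by the fibration $S^d\to BO(d)\to BO(d+1)$, while introducing no new obstruction in dimensions $\le d$.
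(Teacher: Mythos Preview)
Your proof is correct, and for the forward direction (weakly once-stable $\Rightarrow$ spherical) it is essentially the paper's argument: both invoke Lemma~\ref{lemma:stable-extension} and the triviality of $\epsilon^1\oplus TS^d$.

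For the converse, the construction is the same---attach a $(d{+}1)$-cell to $B$ along a lift $\tilde\tau$ of the tangent classifying map of $S^d$, and extend $s\theta$ over the cell via a nullhomotopy of $s\tau$---but the verification differs. The paper argues that $H_i(\bar B,B)\to H_i(BO(d{+}1),BO(d))$ is surjective for $i=d{+}1$ and an isomorphism below, then invokes relative Hurewicz to pass to homotopy and finally to the required statement about $\pi_i(BO(d),B)\to\pi_i(BO(d{+}1),\bar B)$. You instead work directly with the long exact sequences of the triples $(BO(d{+}1),BO(d),B)$ and $(BO(d{+}1),\bar B,B)$, which is more elementary (no Hurewicz) and makes the role of sphericity transparent: it is exactly the vanishing of the connecting map $\pi_{d+1}(BO(d{+}1),BO(d))\to\pi_d(BO(d),B)$.

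Your explicit care in choosing the nullhomotopy $N$ so that $(\tau,N)$ represents a generator of $\pi_{d+1}(BO(d{+}1),BO(d))\cong\bZ$ is a point the paper passes over. When $d$ is even this is automatic: the boundary map $\partial\colon\pi_{d+1}(BO(d{+}1),BO(d))\to\pi_d(BO(d))$ sends a generator to $[TS^d]$, which has infinite order (detected by the Euler class), so $\partial$ is injective and \emph{any} preimage of $[\tau]$ is a generator. But for $d\in\{1,3,7\}$ one has $[\tau]=0$ and the image of the cell can be any integer depending on $N$; your argument handles this correctly while the paper's ``it follows'' does not quite. Since the paper only needs the even-dimensional case this is harmless there, but your version is the one that proves the lemma as stated for arbitrary $d$.
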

\begin{proof}
  Given any bundle map $\ell: T S^d\vert_{D} \to \theta^* \gamma$ we
  can of course extend the stabilised map to $\epsilon^1 \oplus TS^d
  \to \epsilon^1 \oplus \theta^* \gamma$, and if $\theta$ is weakly
  once stable, the above lemma implies that the $\theta$ structure
  extends to all of $S^d$.

  Conversely, given a spherical structure $\theta: B \to BO(d)$ we may
  pick $\theta$-structures $\ell_i: TS^d \to \theta^* \gamma$, one for
  each path component of $B$, and form $\bar{B}$ by attaching an
  $(n+1)$-cell to $B$ along each map.  The compositions $S^d \to B \to
  BO(d) \to BO(d+1)$ are null-homotopic, so we obtain an extension
  $\bar{\theta} : \bar{B} \to BO(d+1)$.
  
  It follows that $H_i(\bar B, B)
  \to H_i(BO(d+1),BO(d))$ is surjective for $i = d+1$ and an
  isomorphism for $i \leq d$, even with local coefficients.  By the
  Hurewicz theorem, $\pi_i(\bar B, B) \to \pi_i(BO(d+1),BO(d))$ is
  surjective for $i=d+1$ and bijective for $i \leq d$, for all
  basepoints.  It follows that $\pi_i(BO(d),B) \to \pi_i(BO(d+1),\bar
  B)$ is surjective for $i = d+1$ and bijective for $i \leq d$.
\end{proof}

We now show that these conditions on $\theta$ are also equivalent to
reversibility.

\begin{proposition}\label{prop:Reversible}
  The tangential structure $\theta$ is reversible if and only if it is
  spherical.
\end{proposition}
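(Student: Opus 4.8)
The plan is to prove both implications, using the equivalence between sphericity and weak once-stability already established in Lemma~\ref{lem:weakly-once-stable}, together with Lemma~\ref{lemma:stable-extension}. The key point is that a morphism $C : M \leadsto N$ in $\mathcal{C}_{\theta, L}$ and its reflection $\overline{C} : N \leadsto M$ have \emph{stably} identified tangent bundles after adding a trivial line: if $\ell_C : TC \to \theta^*\gamma$ is a $\theta$-structure, then reflecting the first coordinate gives a bundle isomorphism $T\overline{C} \cong TC$ covering the reflection diffeomorphism, except that the normal direction to a level set is reversed. Concretely, reflection negates the $x_1$-direction, so the naive transported structure $\overline{\ell_C}$ on $T\overline{C}$ differs from a genuine $\theta$-structure by an orientation issue in one coordinate; however, \emph{stabilising} by $\epsilon^1$ absorbs this, because $\epsilon^1 \oplus T\overline{C}$ and $\epsilon^1 \oplus TC$ become isomorphic as bundles over $\overline{C} \cong C$ in a way compatible with the boundary conditions $\bR \times L$.

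First I would prove ``spherical $\Rightarrow$ reversible''. Given a morphism $C : M \leadsto N$ with $\theta$-structure $\ell_C$, form the reflection $\overline{C}$ of the underlying manifold; it is a cobordism $N \leadsto M$ agreeing with $\bR \times L$ along the relevant boundary. Transport $\ell_C$ through the reflection diffeomorphism to obtain a bundle map $\ell_C' : T\overline{C} \to \theta^*\gamma$ defined a priori only up to the sign ambiguity above; more precisely, on the two ends $\{0\}\times M$ and $\{t\}\times N$ (and on $\bR\times\partial L$) the incoming/outgoing conventions interchange, but the \emph{stabilised} bundle map $\epsilon^1 \oplus \ell_C' : \epsilon^1 \oplus T\overline{C}\vert_{\partial\overline{C}} \to \epsilon^1\oplus\theta^*\gamma$ is exactly the stabilisation of the prescribed $\theta$-structures on $M$, $N$, $L$ (since those structures were themselves defined as bundle maps out of $\epsilon^1 \oplus T(\text{manifold})$, the stabilisation makes the incoming/outgoing distinction disappear). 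Since $\theta$ is spherical, it is weakly once-stable by Lemma~\ref{lem:weakly-once-stable}, so Lemma~\ref{lemma:stable-extension} applies with $W = \overline{C}$ and $A = \partial\overline{C}\cup(\bR\times L)$: the stabilised structure extends over $\overline{C}$, hence $\ell_C'$ itself is homotopic rel boundary to a genuine $\theta$-structure $\ell_{\overline{C}}$. This gives the required morphism $\overline{C} : N \leadsto M$.

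Next I would prove the converse, ``reversible $\Rightarrow$ spherical''. It suffices to produce a $\theta$-structure on $S^{2n}$ extending a given one on an embedded disc $D \subset S^{2n}$ (the path-connected reduction is as in Definition~\ref{defn:spherical}). Regard $S^{2n}$ as a closed cobordism: embed a small disc neighbourhood of $\partial_- D^{2n+1}$ appropriately and realise $S^{2n}$ (with $D$ removed, say, or with the standard structure on a disc) as a morphism in $\mathcal{C}_{\theta, L}$ from some object to itself, or more simply build a morphism $C : M \leadsto M$ whose underlying manifold is $S^{2n}$ with two discs removed and which carries the given structure near one of those discs. Reversibility supplies $\overline{C} : M \leadsto M$; gluing $C$ to $\overline{C}$ along $M$ yields a $\theta$-structure on the double, which contains $S^{2n}$ and the prescribed structure on $D$. (Alternatively, and perhaps more cleanly: a morphism from $\emptyset$ to $\emptyset$ whose underlying manifold is $S^{2n}$ with the structure prescribed near $D$ on one hemisphere; reversibility plus composition lets one close it up. One must check the bookkeeping of which hemisphere carries the prescribed structure, using the incoming/outgoing restriction conventions of Definition~\ref{defn:tangential-structures}.)

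\textbf{Main obstacle.} The delicate point is the sign/orientation bookkeeping in the first implication: verifying precisely that reflection of a morphism, after stabilisation by $\epsilon^1$, matches the prescribed $\theta$-structures on the \emph{source} and \emph{target} with their correct incoming/outgoing conventions, and that the submanifold $\bR \times L$ is genuinely preserved as a $\theta$-manifold (not merely up to homotopy) — or, if only up to homotopy, that this is enough to land in $\mathcal{C}_{\theta,L}$ after a homotopy of the extension. Once this is set up carefully, Lemma~\ref{lemma:stable-extension} does all the real work. The converse is then essentially a packaging argument with no further homotopy-theoretic input.
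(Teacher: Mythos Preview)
Your overall strategy matches the paper's: use Lemma~\ref{lemma:stable-extension} for ``spherical $\Rightarrow$ reversible'', and realise the disc as a morphism for the converse. But the step you correctly flag as the main obstacle is precisely where the content lies, and your sketch does not resolve it. The assertion that the stabilised transported map $\epsilon^1 \oplus \ell_C'\vert_{\partial\overline{C}}$ agrees with the stabilisation of the prescribed boundary structures is not correct as stated: the two differ by reflection in the normal line of $T\overline{C}\vert_{\partial\overline{C}}$, and $\mathrm{id}_{\epsilon^1} \oplus (\text{that reflection})$ still has determinant $-1$, so stabilising by one $\epsilon^1$ does not make the sign ``disappear''. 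What the paper actually does is \emph{extend the reflection over the interior}: pick a nowhere-vanishing section of $\epsilon^1 \oplus TC$ which over $\partial C$ equals the inward-pointing normal to $T(\partial C) \subset TC\vert_{\partial C}$ (possible since this bundle has rank $d+1 > d = \dim C$), and precompose $\epsilon^1 \oplus \ell$ with the reflection in the hyperplane orthogonal to that section. On the boundary this yields exactly the stabilisation of the reversed structure, and the extension over the interior is now built in; Lemma~\ref{lemma:stable-extension} then finishes.

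For ``reversible $\Rightarrow$ spherical'', your first suggestion (take $C: M \leadsto M$ with $C$ an $S^{2n}$ minus two discs, then glue $C$ to $\overline{C}$) does not produce $S^{2n}$: doubling a cylinder along both ends gives $S^1 \times S^{2n-1}$. The paper's argument is your alternative, stated without the detour: regard the given $\theta$-structured disc as a morphism $\emptyset \leadsto S^{d-1}$; reversibility provides a $\theta$-structure on the reflected disc, regarded as a morphism $S^{d-1} \leadsto \emptyset$; composing gives $S^d$ with the prescribed structure on one hemisphere. No further bookkeeping is needed, and the $L$ plays no role here.
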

\begin{proof}
  If $\theta$ is reversible and a structure on $D^d$ is given, we
  think of $D^d$ as a morphism from the empty set to $S^{d-1}$.  By
  assumption, a compatible structure exists on the disc, thought of as
  a morphism from $S^{d-1}$ to the empty set.

  For the reverse direction we use Lemma~\ref{lemma:stable-extension}.
  Suppose given a cobordism $C: M \leadsto N$ with $\theta$-structure
  $\ell: TC \to \theta^* \gamma$.  Let $\overline{C} : N \leadsto M$
  be the cobordism whose underlying manifold is $C$, but regarded as a
  morphism in the other direction.  Since $TC\vert_{\partial C} =
  \epsilon^1 \oplus T(\partial C)$, we may reflect in the
  $\epsilon^1$-direction to get a reversed $\theta$-structure near
  $\partial \bar{C} = N \amalg M$, and our task is to extend the
  reversed structure to $\bar{C}$.  By the lemma, it suffices to
  extend the stabilised bundle map, but that is easy: Pick a non-zero
  section of the vector bundle $\epsilon^1 \oplus TC$ which over
  $\partial C$ is the inwards pointing normal to $T(\partial C)
  \subset TC \vert_{\partial C}$, and reflect the stabilised bundle
  map in that field.
\end{proof}

One key property of reversible tangential structures is that they
allow us to connect-sum $\theta$-manifolds, which of course is not
possible in general: the connect-sum of framed manifolds is not
typically framable. In fact, more is true. We can perform arbitrary
surgeries on a $\theta$-manifold and find a $\theta$-structure on the
new manifold.

\begin{proposition}\label{prop:ConnectSum}
  Let $(M, \ell_M)$ be a $d$-dimensional $\theta$-manifold, and suppose that
  $$e : S^{n-1} \times D^{d-n+1} \hookrightarrow M$$
  is a piece of surgery data such that the map $S^{n-1} \to B$ induced
  by $e \circ \ell_M$ is null-homotopic.  Then if $\theta$ is
  reversible, the surgered manifold
  $$\overline{M} = (M - \Int(S^{n-1} \times D^{d-n+1})) \cup_{S^{n-1} \times S^{d-n}} (D^n \times S^{d-n})$$
  admits a $\theta$-structure which agrees with $\ell_M$ on $(M -
  \Int(S^{n-1} \times D^{d-n+1}))$.
\end{proposition}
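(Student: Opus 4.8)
The plan is to build the $\theta$-structure on $\overline M$ by exhibiting a $\theta$-structure on the trace of the surgery and restricting it to the outgoing boundary, exactly as set up in Section~\ref{sec:ThetaSurgery}. Let $C$ be the trace of the surgery along $e$; it is obtained from $M \times [0,1]$ by attaching a single $n$-handle $D^n \times D^{d-n+1}$ along $e(S^{n-1} \times D^{d-n+1}) \subset M \times \{1\}$, so that $\partial_{in} C = M$ and $\partial_{out} C = \overline M$. By the discussion in Section~\ref{sec:ThetaSurgery}, a $\theta$-structure on $C$ restricting to $\ell_M$ on $M$ induces one on $\overline M$ agreeing with $\ell_M$ outside the surgery region, so it suffices to extend $\ell_M$ over $C$.

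First I would observe that $C$ deformation retracts onto $M \cup_{e} D^n = M$ with an $n$-cell attached along the class $[e|_{S^{n-1}}] \in \pi_{n-1}(M)$, so up to homotopy the inclusion $M \hookrightarrow C$ is a cofibration with $C/M \simeq S^n$. An obstruction-theoretic extension of the bundle map would therefore be governed by the $\theta$-structure on the attached cell; but rather than grind through obstruction theory directly, I would reduce to Lemma~\ref{lemma:stable-extension}. By Proposition~\ref{prop:Reversible}, $\theta$ reversible is equivalent to $\theta$ spherical, which by Lemma~\ref{lem:weakly-once-stable} is equivalent to being weakly once-stable; so Lemma~\ref{lemma:stable-extension} applies with $W = C$ and $A = M$. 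Thus it is enough to extend the \emph{stabilised} bundle map $\epsilon^1 \oplus \ell_M : \epsilon^1 \oplus TM \to \epsilon^1 \oplus \theta^*\gamma$ over all of $C$.

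The extension of the stabilised map is now a purely cell-by-cell bundle-theoretic problem, and the only cell not already in $M$ is the single $n$-cell $D^n \times \{0\}$ of the handle. The stabilised bundle $\epsilon^1 \oplus TC$ restricted to this disc is trivial (the disc is contractible), and the obstruction to extending the bundle map over it lies in $\pi_{n-1}$ of the fibre of the stabilised structure map, which is detected by the composite $S^{n-1} = \partial D^n \to M \to \bar B$; by hypothesis the map $S^{n-1} \to B$ induced by $\ell_M \circ De$ is null-homotopic, hence so is its image in $\bar B$, so the obstruction vanishes and the extension exists. (Concretely: pick a trivialisation $\epsilon^1 \oplus TC|_{D^n \times D^{d-n+1}} \cong \underline{\R^{d+1}}$; the given map on the boundary sphere is then a map $S^{n-1} \to \bar\theta^*\gamma$ lying over a null-homotopic map to $BO(d+1)$, i.e.\ a map to the fibre, and its null-homotopy is precisely what the hypothesis provides.) Applying Lemma~\ref{lemma:stable-extension} then upgrades this to an honest $\theta$-structure on $C$, and restricting to $\overline M$ finishes the proof.

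The main obstacle is making the reduction to the stable problem clean: one must be careful that Lemma~\ref{lemma:stable-extension} is applied to the pair $(C, M)$ — which has cells of dimension at most $d$, as required — and that ``agrees with $\ell_M$ on $M$'' is preserved through the stabilise–extend–destabilise process. The null-homotopy hypothesis on $S^{n-1} \to B$ is used exactly once, to kill the single obstruction class in degree $n-1 \le d$; everything above that degree is automatically handled because $(\widetilde B, B)$ is $d$-connected, which is the content built into Lemma~\ref{lemma:stable-extension}.
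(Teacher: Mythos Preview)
There is a dimension mismatch that breaks your application of Lemma~\ref{lemma:stable-extension}. In Proposition~\ref{prop:ConnectSum} the manifold $M$ is $d$-dimensional, so the trace $C$ is $(d+1)$-dimensional; hence $TC$ has rank $d+1$ and a ``$\theta$-structure on $C$'' (a bundle map $TC \to \theta^*\gamma$) is not defined. Lemma~\ref{lemma:stable-extension} is stated for $d$-manifolds $W$, and while you correctly note that the pair $(C,M)$ has relative cells only in dimension $n \le d$, the hypothesis ``$W$ is a $d$-manifold'' is what makes the statement meaningful, not merely what makes the obstruction theory go through: there is nothing for the lemma to output when $W = C$. (Section~\ref{sec:ThetaSurgery}, which you invoke, concerns surgery on $(d-1)$-dimensional manifolds, whose trace is $d$-dimensional; that is why a $\theta$-structure on the trace makes sense there but not here.)

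The fix is minor and recovers exactly the paper's argument. Your extension of the stabilised map over $C$ is correct and is the heart of the proof: the null-homotopy produces a bundle map $TC \to \epsilon^1 \oplus \theta^*\gamma$ extending $\epsilon^1 \oplus \ell_M$. Now restrict to $\overline M \subset \partial C$, using $TC|_{\overline M} \cong \epsilon^1 \oplus T\overline M$, to get a bundle map $\epsilon^1 \oplus T\overline M \to \epsilon^1 \oplus \theta^*\gamma$ which agrees with the stabilisation of $\ell_M$ on $M - \Int(S^{n-1} \times D^{d-n+1})$. \emph{Then} apply Lemma~\ref{lemma:stable-extension} to the $d$-manifold $W = \overline M$ with $A = M - \Int(S^{n-1} \times D^{d-n+1})$; this is legitimate and yields the desired $\theta$-structure on $\overline M$.
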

\begin{proof}
  If we let $V$
  denote the trace of the surgery, then the $\theta$-structure on $M$
  and a choice of null-homotopy of $e \circ \ell_M$ induces a bundle map $TV \to
  \epsilon^1 \oplus \theta^* \gamma$, and by restriction a bundle map
  $\epsilon^1 \oplus T\overline{M} \to \epsilon^1 \oplus \theta^*
  \gamma$, which we can assume agrees with the stabilisation of
  $\ell_M$ on $(M - \Int(S^{n-1} \times D^{d-n+1})) \subset \overline{M}$.  But when $\theta$
  is weakly once-stable, Lemma~\ref{lemma:stable-extension} says that this bundle map can be replaced with one induced from a $\theta$-structure.
\end{proof}

For tangential structures that are once-stable (not just weakly), we
can say that for a $d$-manifold $W$ with a fixed $\theta$-structure $\ell_0 : TW\vert_{\partial W} \to \theta^*\gamma$, the stabilisation map
$$\Bun^\partial(TW, \theta^*\gamma ; \ell_0) \lra \Bun^\partial(\epsilon^1 \oplus TW, \bar{\theta}^*\gamma; \epsilon^1 \oplus \ell_0)$$
is a weak homotopy equivalence. (Weakly once-stable only implies
that this map is 0-connected.) We shall not make explicit use of the
stronger condition in this paper, but point out that most of the
naturally occuring tangential structures \emph{are} once-stable.  In
particular, the following construction will be our main source of
once-stable tangential structures.  Let $W$ be a connected
$d$-dimensional manifold with basepoint, and $\tau : W \to BO(d)$ be
its Gauss map, which we may assume to be pointed. For each $k$ there
are Moore--Postnikov factorisations of $\tau$
$$W \overset{j_k}\lra B_W(k) \overset{p_k}\lra BO(d)$$
where $\pi_*(j_k)$ is an isomorphism for $* < k$ and an epimorphism
for $* = k$, and $\pi_*(p_k)$ is an isomorphism for $* > k$ and a
monomorphism for $*=k$. These connectivity properties characterise
$B_W(k)$, by obstruction theory. Then $\theta_W(k) = p_k$ is a
tangential structure.

\begin{lemma}
  The tangential structure $\theta_W(k) : B_W(k) \to BO(d)$ is
  once-stable for any $k \leq d$.
\end{lemma}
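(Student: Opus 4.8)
The plan is to realise the stabilised structure as a Moore--Postnikov stage of the stabilised Gauss map. Write $s : BO(d) \to BO(d+1)$ for the stabilisation map, so that $s\circ\tau : W \to BO(d+1)$ classifies $\epsilon^1\oplus TW$, and take $\bar\theta := \bar p_k : \bar B \to BO(d+1)$ to be the $k$th Moore--Postnikov stage of $s\circ\tau$, with factorisation $W \xrightarrow{\bar j_k} \bar B \xrightarrow{\bar p_k} BO(d+1)$. I will then produce the top horizontal map of the required square and check that the square is a homotopy pullback.

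First I would construct a map $f : B_W(k) \to \bar B$ with $\bar p_k\circ f = s\circ p_k$ and $f\circ j_k \simeq \bar j_k$. Since $\bar p_k\circ\bar j_k = s\circ\tau = s\circ p_k\circ j_k$, this is a relative lifting problem for the fibration $\bar p_k$ over the pair $(B_W(k),W)$. That pair is $k$-connected (immediate from $j_k$ being an isomorphism on $\pi_i$ for $i<k$ and an epimorphism on $\pi_k$), while $\mathrm{hofib}(\bar p_k)$ has $\pi_i=0$ for $i\ge k$ (it is the $k$th Moore--Postnikov stage). Hence every obstruction, living in $H^{i+1}(B_W(k),W;\pi_i(\mathrm{hofib}(\bar p_k)))$ with local coefficients, vanishes: for $i+1\le k$ because the pair is $k$-connected, and for $i\ge k$ because the coefficient group is trivial. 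This produces the commutative square with left vertical $p_k=\theta_W(k)$, right vertical $\bar p_k=\bar\theta$, and bottom map $s$.

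It then remains to see that the square is a homotopy pullback, which (the base $BO(d)$ being connected) is equivalent to the induced map $F := \mathrm{hofib}(p_k) \to \mathrm{hofib}(\bar p_k) =: \bar F$ being a weak equivalence. Both of these are ``truncations'': from the fibration sequence $\mathrm{hofib}(j_k) \to \mathrm{hofib}(\tau) \to F$ and the $(k-1)$-connectivity of $\mathrm{hofib}(j_k)$ one reads off $\pi_i(F)\cong\pi_i(\mathrm{hofib}(\tau))$ for $i<k$ and $\pi_i(F)=0$ for $i\ge k$, and likewise $\pi_i(\bar F)\cong\pi_i(\mathrm{hofib}(s\tau))$ for $i<k$ and $0$ above; moreover $f$ identifies $F\to\bar F$ with the comparison $\mathrm{hofib}(\tau) \to \mathrm{hofib}(s\tau)$ on homotopy groups in degrees below $k$. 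That comparison fits into a fibration sequence $\mathrm{hofib}(\tau) \to \mathrm{hofib}(s\tau) \to \mathrm{hofib}(s)$ with $\mathrm{hofib}(s)\simeq S^d$; since $S^d$ is $(d-1)$-connected, $\pi_i(\mathrm{hofib}(\tau))\to\pi_i(\mathrm{hofib}(s\tau))$ is an isomorphism for $i\le d-2$ and an epimorphism for $i=d-1$. As $k\le d$, the interval $i<k$ lies within $i\le d-2$ whenever $k\le d-1$, and in that case $F\to\bar F$ is a weak equivalence and we are done.

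The step I expect to be the real obstacle is the endpoint $k=d$, where the range $i<k$ reaches $i=d-1$ and the $(d-1)$-connectivity of $S^d$ is just barely insufficient: one must either verify that the residual connecting homomorphism $\pi_d(S^d)\to\pi_{d-1}(\mathrm{hofib}(\tau))$ does not obstruct the comparison after truncation, or instead choose $\bar B$ so that $\mathrm{hofib}(\bar p_d)$ equals $F$ on the nose --- i.e.\ show that the fibration $F\to B_W(d)\to BO(d)$ descends along the $d$-connected map $s$, which is again an obstruction-theoretic problem, but one using the specific structure of the pair $(BO(d+1),BO(d))$ rather than purely formal input. Every other step above is formal.
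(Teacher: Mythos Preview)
Your approach coincides with the paper's: it too takes $\bar B$ to be the $k$th Moore--Postnikov stage of $s\circ\tau$ and then simply appeals to the $d$-connectivity of $s$. Your argument fills in the details the paper omits, and is correct for $k \leq d-1$.

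You are also right that $k=d$ is a genuine obstacle; in fact the lemma is \emph{false} there. Take $W$ to be a parallelisable $d$-manifold (e.g.\ $W=\bR^d$), so that $\tau$ is null-homotopic and $B_W(d)=BO(d)\langle d\rangle$. For $d\notin\{1,3,7\}$ this structure is not even spherical --- a $BO(d)\langle d\rangle$-structure on $S^d$ would force $[TS^d]=0$ in $\pi_d(BO(d))$, i.e.\ $TS^d$ trivial --- and hence not once-stable, since once-stable implies weakly once-stable implies spherical (by the lemmas immediately preceding this one). So the difficulty you flagged cannot be resolved by a cleverer choice of $\bar B$: it is an inaccuracy in the stated range, and the paper's one-line proof has the same gap. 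In the paper's applications only $k\leq n<2n=d$ ever arises, and your argument handles that range cleanly.
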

\begin{proof}
  We let $\bar{B}_W(k)$ denote the same Moore--Postnikov construction
  applied to the composition $W \to BO(d) \to BO(d+1)$. The claim then
  follows as $BO(d) \to BO(d+1)$ is
  $d$-connected.
\end{proof}

\begin{remark}
  There do exist tangential structures which are reversible but not
  once-stable, which justifies our emphasis on reversibility.  An
  interesting example is $BU(3) \to BO(6)$, which is reversible as
  $S^6$ admits an almost complex structure, but is not once-stable: if
  it were pulled back from a fibration $f: \bar{B} \to BO(7)$, one can
  easily use the Serre spectral sequence to check that the kernel of
  the map $f^*$ on $\bF_2$-cohomology would be the ideal $ I = (w_1,
  w_3, w_5) \subset H^*(BO(7);\bF_2)$, but this is not closed under
  the action of the Steenrod algebra as $\mathrm{Sq}^4(w_5) = w_4 \cdot w_5 +
  w_3 \cdot w_6 + w_2 \cdot w_7 \nin
  I$.
\end{remark}

\subsection{The standard family}

We will prove Theorem~\ref{thm:MidSurgery} by performing
$(n-1)$-surgery on objects until we reach an object in $\mathcal{A}$,
just as in Section~\ref{sec:surg-objects-below} we performed
$l$-surgery on objects to make them $l$-connected (relative to
$L$).  As in that section, the surgery shall be
performed by gluing in a suitable family of manifolds along certain
families of embeddings, whose existence we shall prove in
Section~\ref{sec:Connectivity}.  The standard family to be glued in is
very similar to that in Section~\ref{sec:surg-objects-below}, where we
started with a certain submanifold $K \subset \R^{d-l} \times
\R^{l+1}$.  In this section, $l = n-1$, so we have a submanifold $K
\subset \R^{n+1} \times D^n$ defined as follows.  We first chose a
smooth function $\rho : \bR \to \bR$ which is the identity on
$(-\infty, \tfrac{1}{2})$, has nowhere negative derivative, and has
$\rho(t)=1$ for all $t \geq 1$, and we let
$$K = \{(x, y) \in \bR^{n+1} \times \bR^n \,\vert \,\,
\vert y \vert^2 = \rho(\vert x \vert ^2-1)\}.$$ The first coordinate
restricts to a Morse function $h = x_1: K \to \R$ with exactly two critical points: $(-1,0,\ldots,0;0)$ and $(+1,0,\ldots,0;0)$ both of index $n$.

In Section~\ref{sec:surg-objects-below}, we constructed from $K$ a
one-parameter family of manifolds $\mathcal{P}_t \subset (-6,-2)
\times \R^{d-l-1} \times \bR^{l+1}$, obtained from $K$ by moving the lowest critical
point down as $t \in [0,1]$ increases, as in
Figure~\ref{fig:Half0-surgeryObjects}.  In this section we shall need
a two-parameter family $\mathcal{P}_{t,w}\subset \R \times (-6,-2)
\times \R^{n} \times \R^n$ which is constructed from $\{0\} \times K$
by moving \emph{both} critical points down as $t \in [0,1]$ increases,
as in Figure~\ref{fig:Full0-surgeryObjects}.  As $w \in [0,1]$
decreases, we shrink the width of the handle so that the distance
between the two critical values is $2w$.  In order for the manifold to
stay embedded in the limit $w = 0$, we need an extra ambient dimension.

Let us first construct a 1-parameter family of submanifolds $K_w
\subset \R \times \R^{n+1} \times D^n$ such that $K_1 = \{0\} \times
K$.  Let $\mu : \bR \to [0,1]$ be a smooth function which is zero on $(2,\infty)$ and
identically 1 on $(-\infty,\sqrt{2})$, and define a 1-parameter family
of embeddings
\begin{eqnarray*}
  \varphi_w : \bR^{n+1} \times D^n & \lra & \bR \times \bR^{n+1} \times D^n\\
  (x,y) & \longmapsto & (x_1(1-w)\mu(\vert x \vert), x_1 (1- (1-w)\mu(\vert x \vert)), x_2, \ldots, x_{n+1},y).
\end{eqnarray*}
We now let
$$K_w = \varphi_w(K) \subset \bR \times \bR^{n+1} \times D^n$$
for $w \in [0,1]$.  A calculation shows that (for $w > 0$) the
critical points of the height function $h : K_w \to \bR$, which is now projection to the second coordinate, are $\varphi_{w}(\pm 1, 0, \ldots, 0)$ and so lie at
heights $\pm w$. They remain Morse of index $n$.

We now define a 2-parameter family of $d$-dimensional submanifolds
$\mathcal{P}_{t,w}$ inside $\bR \times (-6,-2) \times \R^{n} \times D^{n}$ in
much the same way as $\mathcal{P}_t$ was constructed from $K$ in
Section~\ref{sec:ThetaSurgery}.  Apart from the extra width parameter,
the main difference is that in this section we will use a larger part
of $K$, including \emph{both} critical points.  Pick a smooth
one-parameter family of embeddings $\lambda_s: (-6,-2) \to (-6,2)$,
such that $\lambda_0 = \mathrm{id}$, that $\lambda_s\vert_{(-6,-5)} =
\mathrm{Id}$ for all $s$, and that $\lambda_1(-4) = -1$ and
$\lambda_1(-3) = 1$.  Then we get embeddings $\mathrm{Id}_{\bR} \times \lambda_t \times  \mathrm{Id}_{\bR^{2n}}: \bR \times (-6,-2)\times \R^{2n} \to \bR \times (-6,2) \times \R^{2n}$ and define
\begin{equation*}
  \mathcal{P}_{t,w} = (\mathrm{Id}_{\bR} \times \lambda_t \times  \mathrm{Id}_{\bR^{2n}})^{-1}(K_w) \in
  \Psi_d(\bR \times (-6,-2)\times \R^{n}\times \R^n).
\end{equation*}
It is easy to verify that $\mathcal{P}_{t,w}$ agrees with $\{0\} \times (-6,-2) \times \R^{n} \times S^{n-1}$ outside $(-2,2) \times (-5,-2) \times B_{2}^{n}(0)\times D^n$, independently of $t$ and
$w$.

We shall also need a tangentially structured version of this
construction, given a structure $\ell: TK\vert_{(-6,2)} \to \theta^*
\gamma$.  For this purpose, let $\omega=\mu: \R \to [0,1]$ be the function defined above and
define a 1-parameter family of embeddings by
\begin{eqnarray*}
  \psi_t : \bR \times (-6,-2) \times \bR^{n} \times \R^{n} &\lra& \bR \times
  (-6,2) \times \bR^{n} \times \R^{n}\\
  (s; x_1, \ldots, x_{n+1}; y) &\longmapsto& (s;\lambda_{t\omega(\vert x \vert)}(x_1), x_2, \ldots, x_{n+1};  y), 
\end{eqnarray*}
It is easy to see that we also have
$\psi_t^{-1}(K_w) = (\mathrm{Id}_{\bR} \times \lambda_t \times  \mathrm{Id}_{\bR^{2n}})^{-1}(K_w) =
\mathcal{P}_{t,w}$, and we define a $\theta$-structure on
$\mathcal{P}_{t,w}$ by pullback along $\psi_t$.  This gives a
two-parameter family
\begin{equation*}
  \mathcal{P}_{t,w}(\ell) \in \Psi_\theta(\bR \times (-6,-2) \times
  \R^{n}\times \R^n).
\end{equation*}
Let $P : [0,1] \to [0,1]^2$ be
the piecewise linear path with $P(0) = (0,0)$, $P(\frac12) = (1,0)$
and $P(1) = (1,1)$, and define a one-parameter family
\begin{equation*}
  \mathcal{P}_{t}(\ell) = \mathcal{P}_{P(t)}(\ell) \in \Psi_\theta(\bR \times (-6,-2) \times
  \R^{n}\times \R^n).
\end{equation*}
We will omit $\ell$ from the notation when it is unimportant. We record some important properties of this family in
Proposition~\ref{prop:StdFamilyObjectsInMid} below, using the
following definition.
\begin{definition}\label{defn:Extendible} Let $\ell
  : TK \to \theta^*\gamma$ be a $\theta$-structure on $K$.  Recall
  that outside of $\R \times B_2^{n}(0) \times D^n$ the manifold $K$
  agrees with $\R \times \bR^n \times S^{n-1}$.  We say that $\ell$ is
  \emph{extendible} if the $\theta$-structure $\ell \vert_{\R \times
    (\bR^n - B_2^n(0)) \times S^{n-1}}$ extends to a
  $\theta$-structure on the whole of $\R \times \bR^n \times S^{n-1}$.
\end{definition}
\begin{proposition}\label{prop:StdFamilyObjectsInMid}
Suppose $\ell$ is extendible.  The elements $\mathcal{P}_{t}(\ell) \in \Psi_\theta(\bR \times (-6,-2) \times
  \R^{n}\times \R^n)$ are $\theta$-submanifolds of $\bR \times (-6,-2) \times \R^{n}
  \times D^{n}$ satisfying
  \begin{enumerate}[(i)]
  \item $\mathcal{P}_{0}(\ell) = K_1\vert_{(-6,-2)} = \{0\} \times (-6,-2) \times \bR^{n} \times S^{n-1}$ as $\theta$-manifolds.\label{it:StdFamilyObjectsInMid.InitialValue}
  
  \item For all $t$, $\mathcal{P}_{t}(\ell)$ agrees with
    $K_{1}\vert_{(-6,-2)}$ as a $\theta$-manifold, outside of $(-2,2) \times (-5,-2)
    \times B_{2}^{n}(0) \times D^{n}$
    \label{it:StdFamilyObjectsInMid.ConstantOnBoundary}
    
  \item For all $t$ and each pair of regular values $-6 <a < b
    < -2$ of the height function $h : \mathcal{P}_{t} \to \bR$, the
    pair
    \begin{equation*}
      (\mathcal{P}_{t}\vert_{[a,b]}, \mathcal{P}_{t}\vert_b)
    \end{equation*}
    is $(n-1)$-connected.\label{it:StdFamilyObjectsInMid.kConnMoving}
    
  \item Let $a$ be a regular value of $h : \mathcal{P}_t(\ell) \to (-6,-2)$. If $a$ is outside of $(-4,-3)$ then the manifold $\mathcal{P}_t(\ell)\vert_a$ is isomorphic to $\mathcal{P}_0(\ell)\vert_a = \{a\} \times \{0\} \times \R^n \times S^{n-1}$ as a $\theta$-manifold. If $a$ is inside of $(-4,-3)$ then the manifold $\mathcal{P}_t(\ell)\vert_a$ is either isomorphic to $\mathcal{P}_0(\ell)\vert_a$ as a $\theta$-manifold, or is obtained from it by $(n-1)$-surgery along the standard embedding.\label{it:StdFamilyObjectsInMid.lConnMoving}
  
  \item  The critical values of $h:
    \mathcal{P}_{1}(\ell) \to (-6,-2)$ are $-4$ and $-3$.  For $a \in (-4,-3)$, $\mathcal{P}_{1}(\ell)
    \vert_{a}$ is obtained by $(n-1)$-surgery from $\mathcal{P}_{0}(\ell)
    \vert_{a} = \{0\} \times \bR^{n} \times S^{n-1}$ along the
    standard embedding.
    \label{it:StdFamilyObjectsInMid.lConn}
  \end{enumerate}
  In~(\ref{it:StdFamilyObjectsInMid.lConnMoving}) and (\ref{it:StdFamilyObjectsInMid.lConn}), the
  $\theta$-structure on the surgered manifold is determined (up to
  homotopy) by the $\theta$-structure $\ell$ on $K\vert_{(-6,2)}$.
\end{proposition}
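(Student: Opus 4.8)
The plan is to run the same argument as for Proposition~\ref{prop:StdFamilyObjectsBelowMid}, the only new features being that the height function now carries \emph{two} critical points instead of one, and that the width parameter $w$ (together with the auxiliary $\R$-coordinate) is what keeps the family embedded as the handle is thinned. Properties \ref{it:StdFamilyObjectsInMid.InitialValue} and \ref{it:StdFamilyObjectsInMid.ConstantOnBoundary}, and the assertion that each $\mathcal{P}_{t}(\ell)$ really is a $\theta$-submanifold of $\R\times(-6,-2)\times\R^{n}\times D^{n}$, are bookkeeping: since $K\subset\R^{n+1}\times D^{n}$ and none of $\varphi_w,\lambda_t,\psi_t$ moves the last $\R^{n}$-coordinate, the constraint $|y|\le 1$ persists; embeddedness (in particular in the limit $w=0$, where the two critical points have the same height) is immediate from the formula for $\varphi_w$, which separates them in the auxiliary $\R$-coordinate; and since $\lambda_0=\mathrm{id}$ and $\varphi_w$ is the identity wherever $\mu$ vanishes, while in the height window $(-6,-2)$ the manifold $K$ is already the standard cylinder $(-6,-2)\times\R^{n}\times S^{n-1}$, one reads off $\mathcal{P}_0(\ell)=K_1|_{(-6,-2)}$ and the ``agrees outside\dots'' statement; the $\theta$-statements follow because $\psi_t$, and hence the pulled-back structure, is the identity wherever $\omega=\mu$ is constant.

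For \ref{it:StdFamilyObjectsInMid.kConnMoving} I would argue exactly as before. The height function $h\colon\mathcal{P}_t\to(-6,-2)$ is (homotopically) Morse with critical points only of index $n$, inherited from the two index-$n$ critical points of $h\colon K\to\R$. Hence for regular values $a<b$ the cobordism $\mathcal{P}_t|_{[a,b]}$ is obtained from $\mathcal{P}_t|_a$ by attaching handles of index $n$, equivalently from $\mathcal{P}_t|_b$ by attaching handles of index $2n-n=n$ read from above; in either case the pair $(\mathcal{P}_t|_{[a,b]},\mathcal{P}_t|_b)$ has the homotopy type of a relative CW pair with cells only in dimension $n$, so it is $(n-1)$-connected. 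When $(a,b)$ contains no critical value the inclusion $\mathcal{P}_t|_b\hookrightarrow\mathcal{P}_t|_{[a,b]}$ is a homotopy equivalence.

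For \ref{it:StdFamilyObjectsInMid.lConnMoving} and \ref{it:StdFamilyObjectsInMid.lConn} I would track the two critical values of $h|_{\mathcal{P}_t}$ along the path $P$: on the first leg $t\colon 0\to1$ (with $w=0$) both critical points slide down, the auxiliary coordinate keeping them apart even though their heights coincide; on the second leg $w\colon0\to1$ (with $t=1$) the handle is fattened back to width $2$ in the height direction. Since the critical values of $h|_K$ are $\mp1$ and $\lambda_1(-4)=-1$, $\lambda_1(-3)=1$, at $t=1$ the two critical values of $h|_{\mathcal{P}_1}$ are $-4$ and $-3$. For $a\in(-4,-3)$ only the lower critical point has been crossed, so $\mathcal{P}_1|_a$ is obtained from $\R^{n}\times S^{n-1}$ by a single $(n-1)$-surgery along the standard embedding, with induced $\theta$-structure determined by $\ell$ as in Section~\ref{sec:ThetaSurgery}; this is \ref{it:StdFamilyObjectsInMid.lConn}. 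For a regular value $a\notin(-4,-3)$ and any $t$, either both critical values lie above $a$ --- and then $\mathcal{P}_t|_a$ is literally the standard product --- or both lie below $a$, in which case $\mathcal{P}_t|_a$ is again $\R^{n}\times S^{n-1}$ \emph{as a manifold} (two cancelling $(n-1)$-handles), and the only content is that it is standard \emph{as a $\theta$-manifold}: its $\theta$-structure is the restriction of $\ell$ to a copy of $\R^{n}\times S^{n-1}$ inside $K|_{(-6,2)}$, and its discrepancy with the standard structure, rel the cylindrical end $\{|y|\ge2\}$ where both are pinned down, is precisely the obstruction whose vanishing is the extendibility of $\ell$ (Definition~\ref{defn:Extendible}). (Concretely, extendibility produces a $\theta$-structure on all of $\R\times\R^{n}\times S^{n-1}$ agreeing with $\ell$ on the end; using it when defining the family makes all the ``returned'' level sets isomorphic to the standard one rel the end, since slices of a product over an interval are.)

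The main obstacle is exactly this last point in \ref{it:StdFamilyObjectsInMid.lConnMoving}: without a hypothesis on $\ell$ the level sets that come back after the handle has passed need not be standard as $\theta$-manifolds --- the framing example recorded just before this subsection shows the obstruction can be non-zero --- and extendibility is the hypothesis that rescues the argument; note that this is the only place the hypothesis of the proposition is used, and reversibility itself plays no role here (it enters later, in Section~\ref{sec:Connectivity}, when producing surgery data along which $\ell$ is extendible). Everything else is a parametrised version of ordinary Morse theory, and I expect the one genuinely fiddly point, once the $\theta$-obstruction has been isolated, to be matching the loose region appearing in \ref{it:StdFamilyObjectsInMid.ConstantOnBoundary} against the sharp window $(-4,-3)$ in \ref{it:StdFamilyObjectsInMid.lConnMoving}.
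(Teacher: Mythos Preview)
Your proposal is correct and follows essentially the same approach as the paper: (i) and (ii) are bookkeeping from the construction, (iii) is Morse theory with only index-$n$ critical points, and (iv)--(v) are proved by tracking how the two critical values move along the two legs of the path $P$ (coalesced at a single height on the $w=0$ leg, then spread to $-4$ and $-3$ on the $w$-leg), with extendibility supplying the $\theta$-isomorphism for level sets lying above both critical points. The one phrase I would soften is ``literally the standard product'': because of the reparametrisations $\varphi_w$ and $\psi_t$, the level set is not always equal on the nose to $\{0\}\times\{a\}\times\R^n\times S^{n-1}$ even when both critical values are above $a$, but it is isomorphic to it as a $\theta$-manifold rel the cylindrical end, which is all that is needed and is what the paper asserts.
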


The precise meaning of the word isomorphic
in~(\ref{it:StdFamilyObjectsInMid.lConnMoving}) above is the following: By
(\ref{it:StdFamilyObjectsInMid.ConstantOnBoundary}) we know that the
manifolds are \emph{equal} outside $(-2,2) \times (-5,-2) \times
B^{n}_{2}(0) \times D^n$.  Being isomorphic means that the
identity extends to a diffeomorphism which preserves
$\theta$-structures up to a homotopy of bundle maps which is constant
outside $(-2,2) \times (-5,-2) \times B^{n}_{2}(0) \times D^n$.

\begin{proof}
  (\ref{it:StdFamilyObjectsInMid.InitialValue}) and
  (\ref{it:StdFamilyObjectsInMid.ConstantOnBoundary}) follow easily
  from the properties of $\lambda_t$ and $\psi_t$, and the fact that
  $K$ agrees with $\R^{n+1} \times S^{n-1}$ outside $B_{2}^{n+1}(0)
  \times \R^{n}$.  For (\ref{it:StdFamilyObjectsInMid.kConnMoving}),
  the Morse function $\mathcal{P}_{t,w} \to (-6,-2)$ has at most two
  critical point, both of index $n$.  If a critical value is in
  $(a,b)$, then the pair is $(n-1)$-connected, otherwise it is $\infty$-connected as
  $\mathcal{P}_{t,w}\vert_{[a,b]}$ deformation retracts to
  $\mathcal{P}_{t,w}\vert_b$.  
  
  To see (\ref{it:StdFamilyObjectsInMid.lConnMoving}) and (\ref{it:StdFamilyObjectsInMid.lConn}), first suppose that $t \in [0, \frac12]$. Then $P(t) = (?,0)$ so $\mathcal{P}_t(\ell) = \mathcal{P}_{?,0}(\ell)$. The function $K_0 \to \R$ has exactly one critical value, $0$, so
  $K_0\vert_{a}$ is diffeomorphic to $\{a\} \times \R^n \times
  S^{n-1}$ for all regular values $a$, and extendibility
  implies that they are also isomorphic as $\theta$-manifolds. If instead $t \in [\frac12,1]$ then $P(t) = (1,?)$, and so $\mathcal{P}_t(\ell) = \mathcal{P}_{1,?}(\ell)$. The fact that the function $K_w \to \R$
  has exactly two critical points with value $\pm w$ implies that
  $K_w\vert_{a}$ is diffeomorphic to $\{a\} \times \R^n \times
  S^{n-1}$ for regular values $a \in \R- (-1,1) \subset \R- (-w,w)$ and extendibility
  implies that they are also isomorphic as $\theta$-manifolds. When $w=1$, for regular values $a \in (-1,1)$ we have that $K_1\vert_a$ is obtained from $\{a\} \times \R^n \times
  S^{n-1}$ by $(n-1)$-surgery along the standard embedding.
\end{proof}

\subsection{Surgery data}\label{sec:MidDimSurgeryData}

We can now describe the semi-simplicial space of surgery data in the
middle dimension. It is similar to the space of surgery data below the
middle dimension, but taking into account the slightly different range
of definition of the standard family in this case.

Before doing so, we choose once and for all, smoothly in the data
$(a_i, \epsilon_i, a_p, \epsilon_p)$ increasing diffeomorphisms
\begin{equation}
  \label{eq:16}
  \psi = \psi(a_i, \epsilon_i, a_p, \epsilon_p) : (-6,-2) \cong
  (a_i-\epsilon_i, a_p+\epsilon_p)
\end{equation}
sending $[-4, -3]$ linearly onto $[a_i -\tfrac{1}{2}\epsilon_i, a_i +\tfrac{1}{2}\epsilon_i]$.

\begin{definition}\label{defn:YComplexMid}
  Let $x= (a, \epsilon, (W, \ell_W)) \in D^{n-1, n-2}_{\theta,
    L}(\bR^N)_p$, and write $M_i = W\vert_{a_i}$. Define the set
  $Y_q(x)$ to consist of tuples $(\Lambda,\delta,e,\ell)$, where
  $\Lambda$ and $\delta$ are as in
  Definition~\ref{defn:YComplex},
\begin{equation*}
e: \Lambda \times \bR \times (-6,-2) \times \R^{n} \times D^{n} \hookrightarrow \R
    \times (0,1) \times (-1,1)^{N-1}
\end{equation*}
is an embedding, and $\ell$ is a bundle map
  $T(\Lambda \times K) \to \theta^* \gamma$.  (In
  Definition~\ref{defn:YComplex}, it was only defined on $T(\Lambda
  \times K\vert_{(-6,0)}$.)  Define $\Lambda_{i,j}$, $e_{i,j}$ and
  $\ell_{i,j}$ in the same way as in Definition~\ref{defn:YComplex}.
  This data is required to satisfy the following conditions.
  \begin{enumerate}[(i)]
  \item $e^{-1}(W) = \Lambda \times \{0\} \times (-6,-2) \times
    \bR^{n} \times \partial D^{n}$.  We let
    $$\partial e : \Lambda \times \{0\} \times (-6,-2) \times \bR^{n}
    \times \partial D^n \hookrightarrow W$$ denote the embedding
    restricted to the boundary.
    
  \item\label{it:YComplexMidHeight} For $t \in \cup_i (a_i-\epsilon_i,
    a_i+\epsilon_i)$, we have $(x_1 \circ e_{i,j})^{-1}(t) =
    \Lambda_{i,j} \times \bR \times\{t\} \times \bR^{n} \times D^{n}$.
    
  \item\label{it:YComplexMidTheta} The composition $\ell_W \circ D(\partial e): T(\Lambda \times
    K\vert_{(-6,-2)}) \to \theta^* \gamma$ agrees with the restriction
    of $\ell$.
    
  \item For each $\lambda \in \Lambda$, the restriction of $\ell$
    to $T(\{\lambda\} \times K)$ is extendible.
  \end{enumerate}
For each $j$, the data $(e_{i,j}, \ell_{i,j})$ is enough to perform $\theta$-surgery on $M_i$ (as $K\vert_{(-6, 0)}$ is the trace of an $(n-1)$-surgery), and we further insist that
  \begin{enumerate}[(i)]
    \setcounter{enumi}{4}
  \item The resulting $\theta$-manifold $\overline{M}_i$ lies in
    $\mathcal{A}$.\label{it:EnoughSurgeryDataObMid}
  \end{enumerate}
  For each $x$, $Y_\bullet(x)$ is a semi-simplicial set.
\end{definition}

Define a bi-semi-simplicial space $D^{n-1, \mathcal{A}}_{\theta,
  L}(\bR^N)_{\bullet, \bullet}$ (augmented in the second
semi-simplicial direction) from this, as in Definition
\ref{defn:DoubleCxOb}. The main result about this bi-semi-simplicial
space of manifolds equipped with surgery data is the following, whose
proof we defer until Section \ref{sec:Connectivity}.

\begin{theorem}\label{thm:SurgeryComplexObMid}
  Under the assumptions of Theorem~\ref{thm:MidSurgery}, the maps
  \begin{equation*}
    \vert D^{n-1, \mathcal{A}}_{\theta,
      L}(\bR^N)_{\bullet, 0} \vert \lra \vert D^{n-1,
      \mathcal{A}}_{\theta, L}(\bR^N)_{\bullet, \bullet} \vert
     \lra
    \vert D^{n-1, n-2}_{\theta, L}(\bR^N)_{\bullet}\vert
  \end{equation*}
  are weak homotopy equivalences, where the first map is the inclusion
  of 0-simplices and the second is the augmentation, in the second
  simplicial direction.
\end{theorem}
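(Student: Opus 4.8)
The plan is to deduce this by the same method used for Theorems~\ref{thm:SurgeryComplexMor} and~\ref{thm:SurgeryComplexOb}, all three of which have the same formal shape and are treated uniformly in Section~\ref{sec:Connectivity}. For each fixed $p$, the semi-simplicial space $[q]\mapsto D^{n-1,\mathcal{A}}_{\theta,L}(\bR^N)_{p,q}$, augmented over $D^{n-1,n-2}_{\theta,L}(\bR^N)_p = D^{n-1,\mathcal{A}}_{\theta,L}(\bR^N)_{p,-1}$, is a topological flag complex: a $q$-simplex is a $(q+1)$-tuple of pieces of surgery data $y \in Y_0(x)$ lying over a common $x$ and having pairwise disjoint embeddings, and disjointness is an open condition. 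Granting the connectivity statement below, the standard resolution argument (a microfibration with weakly contractible fibres is a weak homotopy equivalence, applied fibrewise over $D^{n-1,n-2}_{\theta,L}(\bR^N)_p$) shows the augmentation $\vert D^{n-1,\mathcal{A}}_{\theta,L}(\bR^N)_{p,\bullet}\vert \to D^{n-1,n-2}_{\theta,L}(\bR^N)_p$ is a weak homotopy equivalence; realising over $[p]\in\Delta_{\mathrm{inj}}$ gives the second map of the theorem, and the first map (inclusion of $0$-simplices) is then obtained by the same argument that yields the corresponding statement in Theorem~\ref{thm:SurgeryComplexOb}.

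The connectivity statement is: given a compact family, parametrised by a disc $D^k$, of $p$-simplices $x=(a,\epsilon,(W,\ell_W))$ of $D^{n-1,n-2}_{\theta,L}(\bR^N)_p$ together with finitely many pairwise disjoint pieces $y_1,\dots,y_m \in Y_0(x)$ of surgery data, one can, after a homotopy of the family rel $\partial D^k$, produce a further piece $y_0 \in Y_0(x)$ disjoint from all the $y_j$. The proof treats one level set $M_i = W\vert_{a_i}$ at a time. Since $\ell_L\colon L\to B$ is $(n-1)$-connected and $L$ has only handles of index $< n$, each $M_i$ is $(n-2)$-connected over $B$, and by hypothesis~(\ref{it:Afull}) there is an object of $\mathcal{A}$ which is $\theta$-bordant to $M_i$ through $\mathcal{C}^{n-1,n-2}_{\theta,L}$; expressing such a $\theta$-bordism as a composite of $(n-1)$-surgeries along null-homotopic spheres, i.e.\ connected sums with twisted or untwisted copies of $S^n\times S^{n-1}$, selects embedded spheres $S^{n-1}\hookrightarrow M_i$ whose surgery carries $M_i$ into $\mathcal{A}$. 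One thickens these to tubular neighbourhoods, extends the resulting embeddings across the intervals $(a_i-\epsilon_i,a_i+\epsilon_i)$ compatibly with the height function as required in Definition~\ref{defn:YComplexMid}, and pushes them off into a disjoint part of $\bR\times(0,1)\times(-1,1)^{N-1}$; the hypothesis $3n+1<N$ provides enough room for the general-position arguments needed to keep these embeddings embedded, disjoint from $y_1,\dots,y_m$ and from $\bR\times L$, and to carry out the construction continuously in the $D^k$ parameter. Finally one supplies the bundle map $\ell\colon T(\Lambda\times K)\to\theta^*\gamma$ extending $\ell_W\circ D(\partial e)$ and satisfying the extendibility condition of Definition~\ref{defn:Extendible}: this is where reversibility of $\theta$ enters, through Proposition~\ref{prop:ConnectSum} (equivalently Lemma~\ref{lemma:stable-extension}), which guarantees that the $\theta$-surgery can be performed, the extendibility condition being arranged because the relevant structure on $\bR\times\bR^n\times S^{n-1}$ is unobstructed once stably trivialised.

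The main obstacle is exactly this step: producing surgery data whose effect on each level set lands in $\mathcal{A}$ and which is compatible with the $\theta$-structure. In the middle dimension, $(n-1)$-surgery on a $(2n-1)$-manifold is not connectivity-preserving the way sub-middle surgery is — performing further $(n-1)$-surgeries can undo earlier ones — so one cannot simply glue in the family of Section~\ref{sec:surg-objects-below}; instead the surgery must be realised by the one-level-at-a-time move of Figure~\ref{fig:Full0-surgeryObjects}, encoded in the two-parameter standard family of Proposition~\ref{prop:StdFamilyObjectsInMid}, whose whole purpose is to alter a single level set $W\vert_{a_i}$ while leaving the others unchanged. Moreover the naive move is obstructed at the level of $\theta$-structures — for framings, say, by the Poincar\'e--Hopf theorem — and it is reversibility of $\theta$ (equivalently sphericity, by Proposition~\ref{prop:Reversible}) that removes this obstruction, which is why that hypothesis is imposed. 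The parametrised refinement of the connectivity statement, and the deduction of the two weak equivalences from it, go exactly as in the proofs of Theorems~\ref{thm:SurgeryComplexMor} and~\ref{thm:SurgeryComplexOb}.
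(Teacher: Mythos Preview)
Your overall structure is right --- the proof does proceed by verifying the hypotheses of the simplicial technique (Theorem~\ref{thm:SimplicialTech}) for each fixed $p$, after passing to the variant $\widetilde D$ where only a neighbourhood of the core is required to be embedded, and the first map is handled exactly as in Section~\ref{sec:ProofSecondPart}. But two steps in your outline are genuine gaps.

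First, the orthogonality condition~(\ref{it:2b}) of Theorem~\ref{thm:SimplicialTech} cannot be verified by general position alone. The part of the core of a piece of surgery data that lies inside $W$ is $n$-dimensional in the $2n$-manifold $W$, so a transverse pair of such cores meets in finitely many points, not in the empty set. The hypothesis $3n+1<N$ only handles the ambient part (making the $(n+1)$-dimensional cores disjoint in $\bR^{N+1}$ and disjoint from $W$ away from their boundaries); it does nothing for intersections inside $W$. The paper removes these intersections one at a time using the Whitney trick (Proposition~\ref{prop:RunOutOfNames}), constructing Whitney discs that respect the height function over each $(a_j-\epsilon_j,a_j+\epsilon_j)$. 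This is exactly where $2n\geq 6$ is used. The same issue recurs when extending $\partial e$ from $W\vert_{a_i}$ all the way up to $W\vert_{a_p+\epsilon_p}$ (not merely across $(a_i-\epsilon_i,a_i+\epsilon_i)$ as you wrote): the extension through each $W\vert_{[a_j,a_{j+1}]}$ generically has self-intersections and meets $[a_j,a_{j+1}]\times L$, and these must again be removed by Whitney moves.

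Second, your phrase ``expressing such a $\theta$-bordism as a composite of $(n-1)$-surgeries'' hides the hardest step. Hypothesis~(\ref{it:Afull}) gives only a zig-zag of morphisms in $\mathcal{C}^{n-1,n-2}_{\theta,L}$ from $M_i$ to an object of $\mathcal{A}$; reversibility straightens this to a single bordism $C$, and interior surgery makes $(C,M_i)$ and $(C,A)$ both $(n-1)$-connected. But a bordism which is $(n-1)$-connected relative to both ends need not admit a Morse function with only index-$n$ critical points: one must cancel handles of other indices, and this is where handle-trading and the $s$-cobordism techniques enter (Lemma~\ref{lem:SurgeryDataExists}). In particular one may need to stabilise $C$ by $\#^g(S^n\times S^n)$ to make $H_n(\widetilde C,\widetilde M_i)$ a free $\bZ[\pi]$-module and to arrange that the change-of-basis matrix represents zero in $\mathrm{Wh}(\pi)$, before the index-$(n-1)$ handles can be cancelled against index-$n$ ones.
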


\subsection{Proof of Theorem \ref{thm:MidSurgery}}

The proof of this theorem will be almost identical with that of
Theorem \ref{thm:lfiltration}. Thus, suppose that the
conditions in the statement of Theorem \ref{thm:MidSurgery} are
satisfied, and let $(a, \epsilon, (W, \ell_W), e,\ell) \in D^{n-1,
  \mathcal{A}}_{\theta, L}(\bR^N)_{p,0}$.  For each $i = 0,\dots, p$,
we have an embedding $e_i = e_{i,0}$ and a bundle map $\ell_i =
\ell_{i,0}$, and precisely as in Section \ref{sec:proof-theor-refthm:l} we may construct a one-parameter family of elements $\mathcal{K}^{t}_{e_{i}, \ell_{i}}(W, \ell_W) \in
\Psi_\theta((a_0-\epsilon_0, a_p+\epsilon_p)\times \bR^N)$ for $t \in [0,1]$. From this, for each tuple $t = (t_0, \dots, t_p) \in [0,1]^{p+1}$ we may form the element
$$
\mathcal{K}^{t}_{e, \ell}(W, \ell_W) = \mathcal{K}^{t_p}_{e_{p}, \ell_{p}} \circ \cdots \circ \mathcal{K}^{t_0}_{e_{0}, \ell_{0}}(W, \ell_W) \in \Psi_\theta((a_0-\epsilon_0,
a_p+\epsilon_p)\times \bR^N).
$$
To apply the same proof as that of Theorem \ref{thm:lfiltration}, we
need an analogue of Lemma \ref{lem:ObSurgeryDesiredEffect} to tell us
how the manifold improves when we apply the various surgery
operations.

\begin{lemma}\label{lem:ObSurgeryDesiredEffectMid}
  Firstly, the tuple $(a, \tfrac{1}{2}\epsilon, \mathcal{K}^{t}_{e,
    \ell}(W, \ell_W))$ is an element of $X_p^{n-1, n-2}$. Secondly, if
  $t_i$ is $1$---so the surgery for the regular value $a_i$
  is fully done---then for each regular value $b \in
  (a_i-\tfrac{1}{2}\epsilon_i, a_i+\tfrac{1}{2}\epsilon_i)$ of $x_1 :
  \mathcal{K}^{t}_{e, \ell}(W, \ell_W) \to \bR$, the
  $\theta$-manifold $\mathcal{K}^{t}_{e, \ell}(W, \ell_W)
  \vert_{b}$ lies in $\mathcal{A}$.
\end{lemma}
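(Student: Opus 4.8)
The plan is to mirror the proof of Lemma~\ref{lem:ObSurgeryDesiredEffect} almost verbatim, replacing the standard family of Proposition~\ref{prop:StdFamilyObjectsBelowMid} by that of Proposition~\ref{prop:StdFamilyObjectsInMid} and the surgery data of Definition~\ref{defn:YComplex} by that of Definition~\ref{defn:YComplexMid}. Recall that for disjoint surgery data the operations $\mathcal{K}^{t_j}_{e_j,\ell_j}$ commute, and that each of them alters level sets only over the sub-interval of $(a_j-\tfrac12\epsilon_j, a_j+\tfrac12\epsilon_j)$ which corresponds under~\eqref{eq:16} to $(-4,-3)\subset(-6,-2)$, by Proposition~\ref{prop:StdFamilyObjectsInMid}(\ref{it:StdFamilyObjectsInMid.lConnMoving}); in particular the various surgeries do not interfere with one another.

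For the first assertion I would verify conditions (\ref{item:1})--(\ref{it:ObConn}) of Definition~\ref{defn:X} for the tuple $(a,\tfrac12\epsilon,\mathcal{K}^t_{e,\ell}(W,\ell_W))$. Conditions (\ref{item:1})--(\ref{it:Disjoint}) hold by the properties of the data we start with, the disjointness of the surgery embeddings from $\bR\times L$, and the fact that $\mathcal{P}_t$ has isolated critical values. For condition (\ref{it:MorConn}) I would argue exactly as in Lemma~\ref{lem:MorSurgeryDesiredEffect}: the cobordism $\mathcal{K}^t_{e,\ell}(W,\ell_W)\vert_{[a,b]}$ is obtained from $W\vert_{[a,b]}$ by cutting out copies of $\mathcal{P}_0\vert_{[a_\lambda,b_\lambda]}$ and gluing in $\mathcal{P}_t\vert_{[a_\lambda,b_\lambda]}$, so the two homotopy push-out squares of that proof apply; the left-hand map of the second square is $(n-1)$-connected by Proposition~\ref{prop:StdFamilyObjectsInMid}(\ref{it:StdFamilyObjectsInMid.kConnMoving}), and removing the surgery cores does not change relative homotopy groups in degrees $\leq n-1$ because $2(n-1)=d-2$, so the morphism stays $(n-1)$-connected relative to its outgoing boundary. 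For condition (\ref{it:ObConn}) with $l=n-2$, Proposition~\ref{prop:StdFamilyObjectsInMid}(\ref{it:StdFamilyObjectsInMid.lConnMoving}) shows that a level set $\overline{M}=\mathcal{K}^t_{e,\ell}(W,\ell_W)\vert_b$ with $b\in(a_i-\tfrac12\epsilon_i,a_i+\tfrac12\epsilon_i)$ is either isomorphic to $M=W\vert_b\cong M_i$ or obtained from it by $\theta$-$(n-1)$-surgery; in the latter case the trace $C:M\leadsto\overline{M}$ is obtained by attaching $n$-cells to either end, so in the diagram~\eqref{fig:TS} both $\pi_j(M)\to\pi_j(C)$ and $\pi_j(\overline{M})\to\pi_j(C)$ are isomorphisms for $j\leq n-2$, and since $\pi_j(M_i)\to\pi_j(B)$ is injective for $j\leq n-2$, so is $\pi_j(\overline{M})\to\pi_j(B)$.

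For the second assertion, suppose $t_i=1$. By the localisation property recalled above, a level set $\mathcal{K}^t_{e,\ell}(W,\ell_W)\vert_b$ at a regular value $b\in(a_i-\tfrac12\epsilon_i,a_i+\tfrac12\epsilon_i)$ agrees with $\mathcal{K}^1_{e_i,\ell_i}(W,\ell_W)\vert_b$. By Proposition~\ref{prop:StdFamilyObjectsInMid}(\ref{it:StdFamilyObjectsInMid.lConn}) this is exactly the $\theta$-manifold obtained by performing on $M_i$ the $\theta$-$(n-1)$-surgery prescribed by the datum $(e_i,\ell_i)$, namely $\overline{M}_i$, which lies in $\mathcal{A}$ by condition~(\ref{it:EnoughSurgeryDataObMid}) of Definition~\ref{defn:YComplexMid}.

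The delicate point is that we are now working at the critical middle dimension $n-1$, where (unlike in Section~\ref{sec:surg-objects-below}) performing $(n-1)$-surgeries does not generally preserve higher connectivity, and may fail even to admit a compatible $\theta$-structure. The first difficulty is circumvented because the standard family of Proposition~\ref{prop:StdFamilyObjectsInMid} modifies level sets only in a narrow interval, so distinct surgeries are independent and condition~(\ref{it:ObConn}) need only be checked for $j\leq n-2$, where the symmetric trace of a middle-dimensional surgery preserves $\pi_j$; the second is handled by the extendibility built into the surgery data together with reversibility of $\theta$, which is precisely what makes Proposition~\ref{prop:StdFamilyObjectsInMid} available. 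Consequently the present lemma is essentially bookkeeping once Proposition~\ref{prop:StdFamilyObjectsInMid} and Definition~\ref{defn:YComplexMid} are in hand, the only real work being to track these localisation and connectivity statements through the gluing arguments.
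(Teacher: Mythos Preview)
Your proposal is correct and follows essentially the same approach as the paper: the first part is handled by pointing to the argument of Lemma~\ref{lem:ObSurgeryDesiredEffect} (with $\kappa=l=n-1$), and the second part uses Proposition~\ref{prop:StdFamilyObjectsInMid}(\ref{it:StdFamilyObjectsInMid.lConnMoving}) to see that the surgeries $\mathcal{K}^{t_j}_{e_j,\ell_j}$ for $j\neq i$ leave the level set at $b$ unchanged up to isomorphism, so that $\overline{M}$ is isomorphic to the surgered manifold $\overline{M}_i\in\mathcal{A}$. One small terminological point: when you say the level set ``agrees with'' $\mathcal{K}^1_{e_i,\ell_i}(W,\ell_W)\vert_b$, Proposition~\ref{prop:StdFamilyObjectsInMid}(\ref{it:StdFamilyObjectsInMid.lConnMoving}) only gives isomorphism of $\theta$-manifolds (not equality), but since $\mathcal{A}$ is a union of path components this suffices.
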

\begin{proof}
  For the first part we must verify the conditions of Definition
  \ref{defn:X}. This part of the argument of Lemma
  \ref{lem:ObSurgeryDesiredEffect} applies equally well when
  $\kappa=n-1$, $l=n-1$.

  For the second part, we suppose $t_i=1$. Let $b \in
  (a_i-\tfrac{1}{2}\epsilon_i, a_i+\tfrac{1}{2}\epsilon_i)$ be a
  regular value of the height function on $\mathcal{K}^{t}_{e,
    \ell}(W, \ell_W)$ and define $\theta$-manifolds
  \begin{eqnarray*}
    \overline{M} & = & \big(\mathcal{K}^{t}_{e, \ell}(W,
    \ell_W)\big)\vert_b\\
    \widetilde{M} & = & \big(\mathcal{K}^{t_i}_{e_{i}, \ell_{i}}(W,
    \ell_W)\big)\vert_b\\ 
    M & = & W\vert_b.
  \end{eqnarray*}
  By Definition \ref{defn:YComplexMid} (\ref{it:EnoughSurgeryDataObMid}), performing surgery on $M$ using the data
  $(e_{i}, \ell_{i})$ gives a $\theta$-manifold in
  $\mathcal{A}$. By Proposition \ref{prop:StdFamilyObjectsInMid} (\ref{it:StdFamilyObjectsInMid.lConn}), $\mathcal{K}^{t_i}_{e_{i}, \ell_{i}}(W,
  \ell_W)$ has this surgery done, so $\widetilde{M}$ lies in
  $\mathcal{A}$. Now $\overline{M}$ is obtained from $\widetilde{M}$
  by applying the remaining operations $\mathcal{K}^{t_j}_{e_{j}, \ell_{j}}$
  for $j \neq i$, but by Proposition \ref{prop:StdFamilyObjectsInMid} (\ref{it:StdFamilyObjectsInMid.lConnMoving}), applying each of these only changes $\widetilde{M}$ up to isomorphism (because $b \in
  (a_i-\tfrac{1}{2}\epsilon_i, a_i+\tfrac{1}{2}\epsilon_i)$, so it is not in $(a_j-\tfrac{1}{2}\epsilon_j, a_j+\tfrac{1}{2}\epsilon_j)$), so $\overline{M}$ lies in $\mathcal{A}$.
\end{proof}

As in Section \ref{sec:proof-theor-refthm:l} we define a map
\begin{eqnarray*}
  \surg_p : [0,1]^{p+1} \times D^{n-1,
    \mathcal{A}}_{\theta, L}(\bR^N)_{p,0} & \lra & X_p^{n-1, n-2}\\
  \big(t, (a, \epsilon, (W, \ell_W),e,\ell)\big) & \longmapsto & (a,
  \tfrac{1}{2}\epsilon, \mathcal{K}^{t}_{e, \ell}(W, \ell_W)),
\end{eqnarray*}
which has the desired range by the first part of Lemma
\ref{lem:ObSurgeryDesiredEffectMid}. The argument of Section \ref{sec:proof-theor-refthm:l} gives maps
\begin{equation*}
F_p : [0,1] \times D_{\theta, L}^{n-1, \mathcal{A}}(\bR^N)_{p,0} \times \Delta^p \lra X_p^{n-1, n-2} \times \Delta^p
\end{equation*}
which glue to a homotopy $\surg : [0,1]\times \vert
  {D}^{n-1, \mathcal{A}}_{\theta, L}(\bR^N)_{\bullet,0} \vert \to
  \vert X_\bullet^{n-1, n-2} \vert$ which is constant on the subspace $\vert D^{n-1,
  \mathcal{A}}_{\theta, L}(\bR^N)_{\bullet} \vert \hookrightarrow
\vert D^{n-1, \mathcal{A}}_{\theta, L}(\bR^N)_{\bullet,0} \vert$ of manifolds equipped with no surgery data. It also provides a factorisation of the map $\surg(1,-)$ through the continuous injection $\vert X_\bullet^{n-1, \mathcal{A}} \vert \to  \vert X_\bullet^{n-1, n-2} \vert$. The
argument in Section~\ref{sec:proof-theor-refthm:k} then gives the
weak equivalence in Theorem~\ref{thm:MidSurgery}.


\section{Contractibility of spaces of surgery data}\label{sec:Connectivity}

In order to finish the proofs of the results of the last three
sections, we must supply proofs of Theorems
\ref{thm:SurgeryComplexMor}, \ref{thm:SurgeryComplexOb} and
\ref{thm:SurgeryComplexObMid} concerning the bi-semi-simplicial spaces
of manifolds equipped with surgery data.  For convenience we assume
that the domain $B$ of the map $\theta$ defining the tangential
structure is path connected.  In the category
$\mathcal{C}_\theta^{\kappa,l}$, this implies that objects are path
connected as long as $l > -1$, and morphisms are path connected if in
addition $\kappa > -1$.

\subsection{The first part of Theorems \ref{thm:SurgeryComplexOb} and
  \ref{thm:SurgeryComplexObMid}}\label{sec:ProofSecondPart}

Theorems
\ref{thm:SurgeryComplexOb} and \ref{thm:SurgeryComplexObMid} both
assert that two maps are weak equivalences.  In either theorem, the
proof for the first map will use that the second map is a weak
equivalence, but is otherwise simpler, so we first consider the maps
$$\vert D^{\kappa, l}_{\theta, L}(\bR^N)_{\bullet, 0} \vert \lra \vert
D^{\kappa, l}_{\theta, L}(\bR^N)_{\bullet, \bullet} \vert
\quad\quad\quad \vert D^{n-1, \mathcal{A}}_{\theta,
  L}(\bR^N)_{\bullet, 0} \vert \lra \vert D^{n-1,
  \mathcal{A}}_{\theta, L}(\bR^N)_{\bullet, \bullet} \vert.$$

\begin{proof}[Proof (assuming the second part)]
  The proof in both cases is the same, so let us write $D_{\bullet,
    \bullet}$ for either $D^{\kappa, l}_{\theta, L}(\bR^N)_{\bullet,
    \bullet}$ or $D^{n-1, \mathcal{A}}_{\theta, L}(\bR^N)_{\bullet,
    \bullet}$. We define, for this proof only, a bi-semi-simplicial
  space $D'_{\bullet, \bullet}$ in the same way as $D_{\bullet,
    \bullet}$ except that the usual inequalities $a_i + \epsilon_i <
  a_{i+1}-\epsilon_{i+1}$ and $\epsilon_i > 0$ are replaced by $a_i
  \leq a_{i+1}$ and $\epsilon_i \geq 0$ (so the intervals
  $[a_i-\epsilon_i, a_i+\epsilon_i]$ are allowed to overlap).

  The inclusion $D_{\bullet, \bullet} \hookrightarrow D'_{\bullet,
    \bullet}$ is easily seen to be a levelwise weak homotopy
  equivalence, by spreading the $a_i$ out and making the $\epsilon_i$
  positive but small, so it is enough to work with $D'_{\bullet,
    \bullet}$ throughout and show that $\vert D'_{\bullet, 0} \vert
  \to \vert D'_{\bullet, \bullet} \vert$ is a weak homotopy
  equivalence.

To do so, we describe a retraction $r : \vert D'_{\bullet, \bullet}
\vert \to \vert D'_{\bullet, 0} \vert$ which will be a weak homotopy
inverse to the inclusion. The map $r$ does not change the underlying
manifold $W \in \psi_\theta(N+1, 1)$, but only modifies the $a_i$ and
barycentric coordinates. There is a map
$$D'_{p, q} \lra D'_{(p+1)(q+1)-1, 0}$$
given by considering $(p+1)$ regular values, each equipped with
$(q+1)$ pieces of surgery data, as $(p+1)(q+1)$ not-necessarily
distinct regular values, each with a \emph{single} piece of surgery
data. There is also a map $\Delta^p \times \Delta^q \to
\Delta^{(p+1)(q+1)-1} \subset \bR^{(p+1)(q+1)}$ with $(j + (q+1)i)$th
coordinate given by $(t, s) \mapsto s_i t_j$. Taking
the product of these maps gives
$$r_{p,q} : D'_{p, q} \times \Delta^p \times \Delta^q \lra D'_{(p+1)(q+1)-1, 0} \times \Delta^{(p+1)(q+1)-1}$$
which glue together to give the map $r: |D'_{\bullet, \bullet}| \to
|D'_{\bullet,0}|$.  It is clear that $r$ is a retraction (i.e.\ left
inverse to the inclusion), so the induced map on homotopy groups is
surjective.  To see that it is injective, we use the map
$|D'_{\bullet,\bullet}| \to |D'_{\bullet}|$ induced by the
augmentation in the second bi-semi-simplicial direction (by forgetting
all surgery data).  This is a weak equivalence by the second part of
Theorem \ref{thm:SurgeryComplexOb} or \ref{thm:SurgeryComplexObMid}
respectively, but it clearly factors as
$$\vert D'_{\bullet, \bullet} \vert \overset{r}\lra \vert D'_{\bullet,
  0} \vert \lra \vert D'_{\bullet} \vert,$$
where the second map is
again induced by the augmentation in the second bi-semi-simplicial
direction.  Therefore $r$ is also injective on homotopy groups, and
hence a weak homotopy equivalence.
\end{proof}

\subsection{A simplicial technique}

In order to give the proofs of Theorems \ref{thm:SurgeryComplexMor}, \ref{thm:SurgeryComplexOb} and \ref{thm:SurgeryComplexObMid}, we need a technique for showing that for certain augmented semi-simplicial spaces $X_\bullet \to X_{-1}$, the map $\vert X_\bullet \vert \to X_{-1}$ is a weak homotopy equivalence. The semi-simplicial spaces occurring in those theorems are all of the following special type.

\begin{definition}
  Let $X_\bullet \to X_{-1}$ be an augmented semi-simplicial space. We
  say it is an \emph{augmented topological flag complex} if
  \begin{enumerate}[(i)]
  \item The map $X_n \to X_0 \times_{X_{-1}} X_0 \times_{X_{-1}}
    \cdots \times_{X_{-1}} X_0$ to the $(n+1)$-fold product---which
    takes an $n$-simplex to its $(n+1)$ vertices---is a homeomorphism
    onto its image, which is an open subset.
    
  \item A tuple $(v_0, \ldots, v_n) \in X_0 \times_{X_{-1}} X_0
    \times_{X_{-1}} \cdots \times_{X_{-1}} X_0$ lies in $X_n$ if and
    only if $(v_i, v_j) \in X_1$ for all $i < j$.
  \end{enumerate}
  If elements $v$, $w \in X_0$ lie in the same fibre over $X_{-1}$ and
  $(v,w) \in X_1$, we say $w$ is \emph{orthogonal} to $v$.  (We do not
  require the relation to be symmetric, although in our
  applications it will be.)  If $X_{-1}=*$ we omit
  the adjective augmented.
\end{definition}

The semi-simplicial space $Z_\bullet(a, \epsilon, (W,\ell_W)) \to *$
from Definition \ref{defn:ZComplex} and the semi-simplicial spaces
$Y_\bullet(a, \epsilon, (W, \ell_W)) \to *$ from Definitions
\ref{defn:YComplex} and \ref{defn:YComplexMid} are topological flag
complexes. Furthermore, ${D}^{\kappa}_{\theta, L}(\bR^N)_{p,\bullet}
\to {D}^{\kappa-1}_{\theta, L}(\bR^N)_{p}$ from Definition
\ref{defn:ZDComplex}, $D^{\kappa, l}_{\theta, L}(\bR^N)_{p, \bullet}
\to D^{\kappa, l-1}_{\theta, L}(\bR^N)_{p}$ from Definition
\ref{defn:DoubleCxOb} and $D^{n-1, \mathcal{A}}_{\theta, L}(\bR^N)_{p,
  \bullet} \to D^{n-1, n-2}_{\theta, L}(\bR^N)_{p}$ from Section
\ref{sec:MidDimSurgeryData} are all augmented topological flag
complexes. In all cases this is immediate from the definition:
firstly, a $p$-simplex of these semi-simplicial spaces consists of
$(p+1)$-tuples of surgery data, which are each 0-simplices; secondly,
the pieces of surgery data are subject to the requirement that they
are all disjoint, but disjointness is a property that can be verified
pairwise.

\begin{theorem}\label{thm:SimplicialTech}
  Let $X_\bullet \to X_{-1}$ be an augmented topological flag
  complex. Suppose that
  \begin{enumerate}[(i)]
  \item\label{item:4} The map $\epsilon : X_0 \to X_{-1}$ has local
    sections (in the strong sense that given any $x \in X_0$, there is
    a neighbourhood $U \subset X_{-1}$ of $\epsilon(x)$ and a
    section $s: U \to X_0$ with $s(\epsilon(x)) = x$).
  \item \label{it:2a} $\epsilon: X_0 \to X_{-1}$ is
    surjective.
  \item \label{it:2b} For any $p \in X_{-1}$ and any (non-empty)
    finite set $\{v_1, \ldots, v_n\} \subset \epsilon^{-1}(p)$ there
    exists a $v \in \epsilon^{-1}(p)$ with $(v_i,v) \in X_1$ for all
    $i$.
  \end{enumerate}
  Then $\vert X_\bullet \vert \to X_{-1}$ is a weak homotopy equivalence.
\end{theorem}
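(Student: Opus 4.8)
The plan is to show that the augmentation $\epsilon_\bullet\colon |X_\bullet|\to X_{-1}$ is surjective on $\pi_0$ and induces isomorphisms on $\pi_k$ for every $k\ge 1$ and every basepoint; by the standard reformulation this amounts to solving, for each $k\ge 0$, lifting problems of the form
\[
\begin{array}{ccc}
S^{k-1} & \longrightarrow & |X_\bullet|\\
\downarrow & & \downarrow\\
D^{k} & \xrightarrow{\ f\ } & X_{-1}
\end{array}
\]
(with the top map $g$ a lift of $f|_{S^{k-1}}$) up to a homotopy of $f$ rel $S^{k-1}$: one must produce $\bar g\colon D^k\to|X_\bullet|$ with $\bar g|_{S^{k-1}}=g$ and $\epsilon_\bullet\circ\bar g$ homotopic to $f$ rel $S^{k-1}$. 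Since $S^{k-1}$ is compact, $g(S^{k-1})$ lies in a finite skeleton $|X_\bullet|^{(n)}$, and since $X_\bullet$ is a topological flag complex $g$ admits a \emph{vertex description}: a finite open cover $\{U_\alpha\}$ of $S^{k-1}$, continuous maps $v_\alpha\colon U_\alpha\to X_0$ and a subordinate partition of unity $\{\rho_\alpha\}$, so that $g(x)$ is the point of $|X_\bullet|$ with barycentric coordinate $\rho_\alpha(x)$ at the vertex $v_\alpha(x)$, any two vertices $v_\alpha(x),v_\beta(x)$ occurring over a common $x$ being orthogonal. That the $v_\alpha$ may be taken continuous uses that $X_1\subset X_0\times_{X_{-1}}X_0$, and more generally $X_n\subset X_0^{\,n+1}$, is open.

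The heart of the argument is to ``cone off'' $g$ over $D^k$. Using that $\epsilon\colon X_0\to X_{-1}$ has local sections (condition (i)) one first arranges, at the cost of a homotopy of $f$ rel $S^{k-1}$, that the vertex data extends over a collar of $S^{k-1}$ lifting $f$, the finitely many extensions staying pairwise orthogonal because orthogonality is an open condition. One then builds over $D^k$ a finite open cover $\{V_\beta\}$ together with continuous sections $w_\beta\colon V_\beta\to X_0$ of $f$, chosen one at a time: at a point one uses surjectivity (condition (ii)) and the common--orthogonal--complement property (condition (iii)) to pick an element of the fibre orthogonal to all of the finitely many vertex values of the $v_\alpha$ and the previously chosen $w_{\beta'}$ lying over that point; one spreads it to a local section by (i); and one shrinks its domain using the openness of $X_1$ so orthogonality holds on a neighbourhood. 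Compactness of $D^k$ gives a finite subcover, and a further refinement makes the $w_\beta$ mutually orthogonal on overlaps. Finally, with a partition of unity $\{\sigma_\beta\}$ subordinate to $\{V_\beta\}$ and a cone coordinate $r\colon D^k\to[0,1]$, $r^{-1}(0)=S^{k-1}$, one defines $\bar g(x)$ to be the point of $|X_\bullet|$ in the simplex on the vertices $v_\alpha(x)$ and $w_\beta(x)$ (taken wherever the relevant chart contains $x$) with barycentric coordinates $(1-r(x))\rho_\alpha(x)$ and $r(x)\sigma_\beta(x)$. This is a genuine point of $|X_\bullet|$ precisely because the flag condition in the definition promotes pairwise orthogonality of its vertices to membership of a single simplex; it restricts to $g$ where $r=0$; and it lifts $f$ up to the collar reparametrisation, i.e.\ up to a homotopy rel $S^{k-1}$, since all vertices involved lie over the correct point of $X_{-1}$. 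This solves the lifting problem, so $\epsilon_\bullet$ is a weak homotopy equivalence.

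I expect the main obstacle to be exactly the construction of the mutually orthogonal system of cone vertices $\{w_\beta\}$ over $D^k$ together with the radial extension of the boundary vertex data: condition (iii) supplies orthogonal complements only \emph{pointwise} and only against \emph{finite} vertex sets, so turning it into a finite collection of \emph{continuous} local sections that are compatibly orthogonal — while simultaneously interpolating the base point along the cone direction using the local sections of $X_0\to X_{-1}$ from condition (i) — requires a careful finite induction leaning on compactness of $D^k$ and on the openness of the orthogonality relation $X_1\subset X_0\times_{X_{-1}}X_0$. This is where the genuinely topological content of the hypotheses (over and above the purely combinatorial ``every finite simplex has a cone vertex'') is used.
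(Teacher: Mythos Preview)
Your overall strategy matches the paper's: reduce to a relative lifting problem for $(D^k,S^{k-1})$, extract finitely many ``boundary vertices'' from $g$, build over $D^k$ locally defined sections of $\epsilon$ orthogonal to these, and interpolate using simplicial coordinates and the flag condition. But two of your steps need more than you have written, and the paper handles them differently.

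First, your assertion that $g\colon S^{k-1}\to |X_\bullet|$ admits a ``vertex description'' via continuous maps $v_\alpha\colon U_\alpha\to X_0$ is not automatic. A point of $|X_\bullet|$ lying on a face has ambiguous vertex coordinates, and there is no globally continuous vertex projection from $|X_\bullet|$ to $X_0$. The paper introduces, for each $t\in(0,1)$, the open set $U_t\subset |X_\bullet|$ of points none of whose simplicial coordinates equal $t$; on $U_t$ there is a continuous map to $\coprod_p X_p\times\Delta^p$ which forgets the vertices with coordinate at most $t$. Covering $S^{k-1}$ by finitely many $\hat f^{-1}(U_{t_i})$ with $t_i<\tfrac12$, and then replacing each simplicial coordinate $s$ by $\max(0,2s-1)$, produces exactly the finite vertex data $(\overline P_{-1},\overline g_{-1})$ you want.

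Second, and more seriously, the step ``a further refinement makes the $w_\beta$ mutually orthogonal on overlaps'' is a genuine gap. When you construct $w_\beta$ at a point and spread it by a local section, you can shrink its domain so as to retain orthogonality to the finitely many \emph{previously constructed} $w_{\beta'}$ whose closed domains contain that point; but after shrinking there is no reason the resulting domains still cover $D^k$, and no evident refinement repairs this without undoing the orthogonality already achieved. The paper sidesteps the issue: it does \emph{not} ask for mutual orthogonality within a single finite family. Instead it builds an \emph{infinite} sequence of finite covers $P_0,P_1,P_2,\ldots$ of $D^k$ by local sections $g_n\colon P_n\to X_0$, requiring only that $g_m(q)$ be orthogonal to $g_n(p)$ for $n<m$ over the same base point. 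This is achievable by precisely your inductive use of (iii), (i), and openness of $X_1$, since the finitely many earlier $\overline P_n$ are compact. One then assembles the $P_n$ into a poset $P\subset\bN\times\Omega\times D^k$; its nerve maps simplicially to $X_\bullet$, and a short microfibration argument (each fibre of $|N_\bullet P|\to D^k$ is the realisation of a semi-simplicial set in which every finite collection of vertices has a common cone vertex, hence contractible) yields a section $D^k\to |N_\bullet P|\to |X_\bullet|$. The total order on $\bN$ replaces the mutual orthogonality you were trying to arrange.

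You were right to flag this as the crux; the fix is not a refinement of a single cover but a reorganisation into ordered levels.
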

Condition (\ref{it:2a}) can be viewed as the special case $n=0$ of
condition~(\ref{it:2b}), but we prefer to keep the cases $n=0$ and $n
> 0$ separate.

\begin{remark}
  \label{rem:flag-complex-easy-case-of-theorem}
  To motivate the proof of this theorem, let us first consider the
  case where $X_{-1}=*$ and that each $X_i$ is discrete, so $\vert
  X_\bullet \vert$ has the structure of a $\Delta$-complex. Then any
  map $f: S^n \to \vert X_\bullet \vert$ may be homotoped to be
  simplicial, for some triangulation of $S^n$, and so hits finitely
  many vertices $v_1, \ldots, v_k$. By (\ref{it:2b}) there exists a $v
  \in X_0$ such that $(v_i,v)$ is a 1-simplex for all $i$. But then
  the map $f$ extends to the join
  \begin{equation*}
    f * \{v\} :S^n * \{v\} \lra \vert X_\bullet \vert
  \end{equation*}
  and so $f$ is null-homotopic.
\end{remark}
The proof we give below follows this in spirit, although is
necessarily more complicated when the $X_i$ carry a topology.  To deal
with the topology, we require the following technical result.

\begin{proposition}\label{prop:microfib}
  Let $Y_\bullet$ be a semi-simplicial set, and $X$ be a Hausdorff
  space. Let $Z_\bullet \subset Y_\bullet \times X$ be a
  sub-semi-simplicial set which in each degree is an open subset.  For
  $x \in X$, let $Z_\bullet(x) \subset Y_\bullet$ be the
  sub-semi-simplicial set defined by $Z_\bullet \cap (Y_\bullet \times
  \{x\}) = Z_\bullet(x) \times \{x\}$ and suppose that
  $|Z_\bullet(x)|$ is contractible for all $x \in X$.  Then the map
  $\pi: |Z_\bullet| \to X$ is a Serre fibration with contractible
  fibres.
\end{proposition}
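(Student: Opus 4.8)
The plan is to verify the homotopy lifting property directly against cubes, exploiting the combinatorial nature of $Y_\bullet$ to reduce lifting problems to finitely many simplices at a time. First I would recall the standard reformulation: $\pi: |Z_\bullet| \to X$ is a Serre fibration if and only if for every $k$ and every commuting square
\begin{equation*}
  \xymatrix{
    D^k \times \{0\} \ar[r] \ar[d] & |Z_\bullet| \ar[d]^{\pi} \\
    D^k \times [0,1] \ar[r] & X
  }
\end{equation*}
a lift $D^k \times [0,1] \to |Z_\bullet|$ exists. The contractibility of the fibres then gives the second assertion once the fibration property is known, since the fibre over $x$ is precisely $|Z_\bullet(x)|$ by hypothesis. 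So the whole content is the lifting property.

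The key reduction is this. Since $Y_\bullet$ is a semi-simplicial \emph{set} (discrete in each degree), $|Y_\bullet|$ is a CW complex and $|Z_\bullet| \subset |Y_\bullet| \times X$ is an open subset of $|Y_\bullet| \times X$ fibrewise over $X$; more precisely, a point of $|Z_\bullet|$ lying over a point $(\sigma, t) \in |Y_\bullet| \times \Delta^n$ in the interior of an $n$-simplex $\sigma \in Y_n$ is recorded by the pair $(\sigma, x)$ with $x \in X$ such that $(\sigma, x) \in Z_n$, i.e.\ $\sigma \in Z_n(x)$. A map from a compact space such as $D^k \times \{0\}$ into $|Z_\bullet|$ factors through a finite subcomplex, hence involves only finitely many simplices $\sigma_1, \dots, \sigma_m$ of $Y_\bullet$. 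For each $\sigma_j \in Y_{n_j}$, the set $U_j = \{x \in X : \sigma_j \in Z_{n_j}(x)\}$ is open in $X$ (this is exactly the openness hypothesis on $Z_\bullet$ in each degree, together with discreteness of $Y_{n_j}$). I would then observe that the given map $f_0: D^k \to |Z_\bullet|$ together with the homotopy $g: D^k \times [0,1] \to X$ determine, for each point of $D^k \times [0,1]$, a requirement that its image lies over an open set of the form $U_j$; covering $D^k \times [0,1]$ by finitely many boxes on each of which the relevant simplex is constant, one reduces to the following local statement: given $\sigma \in Y_n$, a box $C \subset D^k \times [0,1]$, and a map $C \to U_\sigma = \{x : \sigma \in Z_n(x)\}$ together with prescribed behaviour on $C \cap (D^k \times \{0\})$ lying over $\sigma$, one can lift. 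But over $U_\sigma$ the subspace of $|Z_\bullet|$ lying (in the obvious sense) ``at $\sigma$'' is just $\mathrm{int}(\Delta^n) \times U_\sigma$ together with its faces, and lifting against a box into a product with a contractible simplex factor is elementary.

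The main obstacle — and where care is genuinely needed — is gluing these local lifts across the faces of the triangulation of $|Y_\bullet|$: when the homotopy $g$ pushes a point out of $U_\sigma$, the lift must move to a face of $\sigma$ (or to a different simplex sharing that face), and one must ensure the lifts constructed on adjacent boxes agree on overlaps and remain continuous as one crosses simplex boundaries. The standard way to organise this is to induct over the skeleta of a sufficiently fine triangulation of $D^k \times [0,1]$ adapted to the cover $\{U_j \times (\text{open sets in } D^k \times [0,1])\}$, using at each stage that the relevant piece of $|Z_\bullet|$ deformation retracts onto the part already lifted — here is where contractibility of $|Z_\bullet(x)|$ is used again, now \emph{uniformly} in $x$ over a small open set, via the local-section-free structure of an open subset of $|Y_\bullet| \times X$. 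I expect this bookkeeping to be the technical heart of the argument, but it is of a routine homotopy-theoretic nature once the reduction to finitely many simplices and the openness of the $U_j$ are in place.
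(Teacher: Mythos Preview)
Your overall strategy---reduce to finitely many simplices using compactness of $D^k$ and discreteness of $Y_\bullet$, then exploit the openness of each $U_\sigma=\{x:\sigma\in Z_\bullet(x)\}$---is sound, but the paper proceeds rather differently and your sketch has a genuine gap at the step you label ``routine bookkeeping''.

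The paper's argument is a two-line citation: first, $\pi$ is a \emph{microfibration} (i.e.\ the lifting problem can always be solved over $D^k\times[0,\epsilon]$ for \emph{some} $\epsilon>0$, depending on the data); second, one invokes Weiss's lemma \cite[Lemma~2.2]{WeissCatClass} that a microfibration with weakly contractible fibres is a Serre fibration. The microfibration property is where the openness hypothesis enters and is genuinely easy: cover $D^k$ by finitely many closed pieces $K_i$ on each of which $f_0$ lifts through the quotient map to some $\{\sigma_i\}\times U_{\sigma_i}\times\Delta^{n_i}$; on each $K_i$ the obvious ``keep the $\Delta^{n_i}$-coordinate fixed and follow $g$ in $X$'' lift works as long as $g(K_i\times[0,\epsilon_i])\subset U_{\sigma_i}$, which compactness guarantees for some $\epsilon_i>0$; the pieces then glue because the face relations in $|Y_\bullet|$ are independent of the $X$-coordinate. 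Weiss's lemma then does all the work of promoting this to the full Serre lifting property.

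Your proposal attempts to bypass Weiss's lemma and lift over all of $D^k\times[0,1]$ directly. The difficulty you identify---what happens when $g$ moves out of $U_\sigma$ and the lift must change simplex---is exactly the content of Weiss's lemma, and it is \emph{not} routine. In particular, your appeal to contractibility of $|Z_\bullet(x)|$ ``uniformly in $x$ over a small open set'' is unjustified: pointwise contractibility of fibres gives no control on how the contracting homotopies vary with $x$, and there is no local triviality to exploit (indeed $|Z_\bullet|$ is typically not open in $|Y_\bullet|\times X$, and the fibres need not even be homeomorphic). Your proposed induction over a triangulation of $D^k\times[0,1]$ also has a circularity: you cover by boxes ``on each of which the relevant simplex is constant'', but which simplex is relevant at time $s>0$ depends on the lift you have not yet constructed. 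Weiss's argument avoids this by an iterated use of the microfibration property together with the contractibility of fibres, building the full lift in infinitely many stages; reproducing this is possible but is a theorem in its own right, not bookkeeping.
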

\begin{proof}
  This follows from \cite[Proposition 2.7]{Stability} and \cite[Lemma
  2.2]{WeissCatClass}.
\end{proof}
\begin{corollary}\label{corlem:AdmitsSection}
  Let $\Omega$ be a set and $X$ a Hausdorff space and let
  \begin{equation*}
    P \subset \bN \times \Omega \times X
  \end{equation*}
  be a subset which is open (when $\bN$ and $\Omega$ is given the
  discrete topology) and such that the projection $P \to \bN \times X$
  is surjective.  We give $\bN \times \Omega \times X$ the partial
  order defined by
  \begin{equation*}
    \text{$(n,\alpha,x) < (m,\beta,y)$ iff $n < m$ and $x = y$}
  \end{equation*}
  and give the subspace $P$ the induced order.  Then the natural map
  $\pi: \vert N_\bullet P \vert \to X$ is a Serre fibration with
  contractible fibres.
\end{corollary}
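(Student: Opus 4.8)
The plan is to deduce the corollary from Proposition~\ref{prop:microfib} by exhibiting the nerve $N_\bullet P$ as the restriction to a single fibre-type of a sub-semi-simplicial set of $Y_\bullet \times X$ for a suitable semi-simplicial \emph{set} $Y_\bullet$. First I would set $Y_\bullet = N_\bullet(\bN \times \Omega)$, the nerve of the partially ordered set $\bN \times \Omega$ where $(n,\alpha) < (m,\beta)$ if and only if $n < m$; this is a semi-simplicial set, whose $p$-simplices are chains $(n_0,\alpha_0) < \cdots < (n_p,\alpha_p)$, i.e.\ tuples with $n_0 < n_1 < \cdots < n_p$. Next I would define $Z_\bullet \subset Y_\bullet \times X$ in degree $p$ to consist of those $\big((n_0,\alpha_0) < \cdots < (n_p,\alpha_p),\, x\big)$ for which $(n_i,\alpha_i,x) \in P$ for every $i$. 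Since $P$ is open in $\bN \times \Omega \times X$, and a point of $Y_p \times X$ carries only finitely many coordinates in the discrete factor, the set $Z_p$ is open in $Y_p \times X$; and the face maps of $Y_\bullet$, which delete one vertex of a chain, clearly preserve $Z_\bullet$, so $Z_\bullet$ is a sub-semi-simplicial set. By construction, for fixed $x \in X$ the fibre semi-simplicial set $Z_\bullet(x) \subset Y_\bullet$ is exactly $N_\bullet P_x$, where $P_x = \{(n,\alpha) : (n,\alpha,x) \in P\}$ with the induced order, and $\vert N_\bullet P\vert = \vert Z_\bullet\vert$ as spaces over $X$ (the topology on both sides is the quotient of $\coprod_p (\text{discrete set}) \times \Delta^p$, with $X$ entering only through which simplices are present).

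It then remains to verify the hypothesis of Proposition~\ref{prop:microfib}, namely that $\vert Z_\bullet(x)\vert = \vert N_\bullet P_x\vert$ is contractible for every $x \in X$. Here I would use the surjectivity assumption: for each $x$ and each $n \in \bN$ there is some $\alpha$ with $(n,\alpha,x) \in P$, so $P_x$ is non-empty and meets the level $n$ for every $n$. The poset $P_x$ has the property that any two elements $(n,\alpha), (m,\beta)$ with $n = m$ are incomparable, and for any finite subset of $P_x$ one may pick an element at a strictly larger level $n$ (using surjectivity to find the missing $\alpha$), which is then $> $ every element of the subset. In other words $P_x$ is a poset in which every finite subset has an upper bound; such a poset has contractible nerve, since any finite subcomplex of $\vert N_\bullet P_x\vert$ is contained in the star of a single vertex (any simplex, being a finite chain, has an upper bound $v$, and lies in the closed star of $v$), hence $\vert N_\bullet P_x\vert$ is weakly contractible, and being a CW complex, contractible. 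This is essentially the discrete model explained in Remark~\ref{rem:flag-complex-easy-case-of-theorem}, applied fibrewise.

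With all hypotheses of Proposition~\ref{prop:microfib} in place, we conclude that $\pi : \vert Z_\bullet\vert \to X$ is a Serre fibration with contractible fibres, and via the identification $\vert Z_\bullet\vert = \vert N_\bullet P\vert$ over $X$ this is precisely the statement of the corollary. The only point that requires a little care—and the main (modest) obstacle—is checking that $Z_p$ is genuinely open in $Y_p \times X$ and that the passage between $\vert N_\bullet P\vert$ and $\vert Z_\bullet\vert$ is a homeomorphism over $X$ rather than merely a continuous bijection; both follow from the fact that $Y_\bullet$ is levelwise discrete, so that geometric realisation commutes with the relevant constructions and openness of $P$ in the discrete-times-$X$ sense transfers directly. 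I would also remark that the hypothesis "$P \to \bN \times X$ surjective" is used exactly to guarantee that $P_x$ hits every level, which is what makes the fibre nerves contractible; without it the fibres could be empty over part of $X$.
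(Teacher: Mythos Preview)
Your proposal is correct and follows essentially the same approach as the paper: apply Proposition~\ref{prop:microfib} with $Y_\bullet = N_\bullet(\bN \times \Omega)$ and $Z_\bullet = N_\bullet P$, and verify contractibility of the fibres $Z_\bullet(x) = N_\bullet P_x$ via the upper-bound argument of Remark~\ref{rem:flag-complex-easy-case-of-theorem}. The paper's proof is just a terser version of yours, citing that remark directly rather than spelling out the cofinality/star argument.
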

\begin{proof}
  We apply Proposition~\ref{prop:microfib} with $Y_\bullet = N_\bullet
  (\bN \times \Omega)$ and $Z_\bullet = N_\bullet P$.  For $x \in X$,
  the semi-simplicial subset $Z_\bullet(x) \subset N_\bullet(\bN \times
  \Omega)$ is contractible by the argument in
  Remark~\ref{rem:flag-complex-easy-case-of-theorem}.
\end{proof}

\begin{proof}[Proof of Theorem \ref{thm:SimplicialTech}]
  We begin with an element of the relative homotopy group of the pair
  of spaces $(X_{-1}, \vert X_\bullet \vert)$,
  \begin{equation*}
  \xymatrix{
  \partial D^k \ar[r]^{\hat{f}} \ar[d] & \vert X_\bullet \vert \ar[d]^-{\epsilon}\\
    D^k \ar[r]^{f} & X_{-1}
  }
  \end{equation*}
  and we will show that after changing $\hat{f}$ by a fibrewise
  homotopy there is a diagonal map $D^k \to \vert X_\bullet \vert$
  making the diagram commutative, so the homotopy class is
  trivial.
  
  We first explain how to construct a continuous map $D^k \to
  |X_\bullet|$, making the lower triangle commute, ignoring the upper
  triangle for a moment.  To do this we first pick an infinite set
  $\Omega$ (topologised discretely) and note that it suffices to find
  open sets $P_n \subset \Omega \times D^k$ together with maps $g_n:
  P_n \to X_0$ with the properties that the projection $\pi_n: P_n \to
  D^k$ is surjective, that $\epsilon \circ g_n = f \circ \pi_n$, and
  that for all $x \in D^k$ and $n < m$, any $p \in \pi_n^{-1}(x)$ and
  $q \in \pi_m^{-1}(x)$ have $(g_n(p), g_m(q)) \in X_1$.  Namely,
  given such $(P_n,g_n)$ we can let $P = \cup \{n\} \times P_n
  \subset \bN \times \Omega \times D^k$ and assemble the $g_n$ to a
  simplicial map $g: N_\bullet P \to X_\bullet$.  By
  Corollary~\ref{corlem:AdmitsSection}, the map $\pi: |N_\bullet P|
  \to D^k$ is a Serre fibration with contractible fibres, so we may
  pick a section $s: D^k \to |N_\bullet P|$.  Then the composition
  $|g| \circ s: D^k \to |X_\bullet|$ gives a diagonal map in the
  diagram, making the lower triangle commute.

  The $(P_n,g_n)$ will be constructed by an inductive procedure, for
  which it is useful to construct a slightly stricter structure.
  If we write $\overline{P}_n \subset \Omega \times D^k$ for the
  closure, we will demand an extension $\overline{g}_n: \overline{P}_n \to X_0$
  satisfying
  \begin{enumerate}[(i)]
  \item\label{item:17} the projection $\overline{\pi}_n:
    \overline{P}_n \to D^k$ is proper, and the restriction $\pi_n: P_n
    \to D^k$ is surjective,
  \item\label{item:18} $\epsilon \circ \overline{g}_n = f \circ
    \overline{\pi}_n$,
  \item\label{item:19} for all $x \in D^k$ and $n < m$, any $p \in
    \overline{\pi}_n^{-1}(x)$ and $q \in \overline{\pi}_m^{-1}(x)$
    have $(\overline{g}_n(p), \overline{g}_m(q)) \in X_1$.
  \end{enumerate}
  The properness of $\overline{\pi}_n$ is equivalent to the
  compactness of $\overline{P}_n$, which in turn is equivalent to the
  image of $\overline{P}_n$ in $\Omega$ being finite.  For the
  construction, we first pick for each $x \in D^k$ an element $g_x(x)
  \in \epsilon^{-1}(f(x)) $ which is orthogonal to each element of the
  finite set $\cup_{i < n} \overline{g}_i(\overline{\pi}_i^{-1}(x))$,
  as is possible by assumption.  Then, since $\epsilon$ has local
  sections, we can extend to a map $g_x: V_x \to X_0$ which is a lift
  of $D^k \to X_{-1}$, defined on a neighbourhood $V_x$ of $x$. The
  maps
  \begin{equation*}
    \overline{g}_i \times g_x: \overline{P}_i \times_{D^k} V_x \lra
    X_0 \times_{X_{-1}} X_0
  \end{equation*}
  for $i < n$ all send $\overline{P}_i \times_{D^k} \{x\}$ into the
  open subset $X_1$, so by properness of $\overline{\pi}_i$ we can
  ensure that all these maps have image in $X_1$, after perhaps
  shrinking the open set $V_x$.  If we let $U_x \subset V_x$ be a
  smaller neighbourhood of $x$ with $\overline{U}_x \subset V_x$, then
  $g_x$ restricts to a continuous map $\overline{U}_x \to X_0$.  The
  sets $U_x$ give an open cover of $D^k$, and we let $U_{x_1}, ...,
  U_{x_m}$ be a finite subcover.  Finally, we pick distinct $\omega_1,
  \dots, \omega_m \in \Omega$, disjoint from the image of $\cup_{i <
    n} P_i \to \Omega$ and let
  \begin{equation*}
    P_n = \bigcup_{i=1}^m \{\omega_i\} \times U_{x_i}
    \subset \Omega \times  D^k
  \end{equation*}
  and define the map $\overline{g}_n: \overline{P}_n \to X_0$ by
  $\overline{g}_n(\omega_i,y) = g_{x_i}(y)$.  The sequence of
  $(P_n,\overline{g}_n)$ thus constructed will satisfy the
  properties~(\ref{item:17}), (\ref{item:18}) and (\ref{item:19})
  above, and hence gives a lift $D^k \to |X_\bullet|$.

  The construction of the lift $D^k \to |X_\bullet|$ so far has not
  used the given $\hat{f}$ in any way, so we should not expect the
  upper triangle to commute.  We shall add an extra step preceding the
  above inductive construction of $(P_n,\overline{g}_n)$, in order to
  fix this.  Namely, we shall construct a compact subset
  $\overline{P}_{-1} \subset \Omega \times \partial D^k$ and a
  continuous $\overline{g}_{-1}: \overline{P}_{-1} \to X_0$ such that
  after changing $\hat{f}$ by a fibrewise homotopy, all vertices of
  $\hat{f}(x)$ are contained in
  $\overline{g}_{-1}(\overline{\pi}_{-1}^{-1}(x))$, where
  $\overline{\pi}_{-1}: \overline{P}_{-1} \to \partial D^k$ again
  denotes the projection.  In the inductive construction of $(P_n,
  \overline{g}_n)$, we can then ensure that condition~(\ref{item:19})
  above is satisfied also for $n=-1$ when $x \in \partial D^k$.  Then
  all vertices of $\hat{f}(x)$ will be orthogonal to all vertices of
  $|g| \circ s(x)$, so these two points are connected by the straight
  line inside the join of the simplices that contain them.  These
  straight lines then assemble to a fibrewise homotopy between $\hat
  f$ and $|g| \circ s\vert_{\partial D^k}$.

  To construct $P_{-1}$ and $\overline{g}_{-1}$ we shall, for the rest
  of this proof, replace the usual coordinates $(t_0, \dots, t_p) \in
  \Delta^k$ (which are non-negative numbers with $\sum t_i = 1$) by
  the coordinates $s_i = t_i/\max(t_i)$ (which are non-negative
  numbers with $\max(s_i) = 1$).  Points in $|X_\bullet|$ are then
  written as $(y,s_0, \dots, s_q)$ where $y \in X_q$, $s_i \geq 0$ and
  $\max{s_i} = 1$.  For $t \in (0,1)$ we shall write $U_{t} \subset
  |X_\bullet|$ for the subset where no $s_i$ is equal to $t$.  There
  is a function $U_t \to \amalg_{p} X_p \times \Delta^p$ which
  to $(y,s_0, \dots, s_q) \in |X_\bullet|$ associates
  $(\theta^*(y),s_{\theta(0)}, \dots, s_{\theta(p)})$, where $\theta:
  [p] \to [q]$ is the order-preserving monomorphism defined as the
  composition of the unique order-preserving bijection $[p] \cong \{i
  \in [q] | s_i > t\}$ and the inclusion to $[q]$, and $\theta^*:X_q
  \to X_p$ is the corresponding face map.  It is easy to verify that
  $U_t \subset |X_\bullet|$ is open (in the usual quotient topology
  from $\amalg_{p} X_p \times \Delta^p \to |X_\bullet|$) and that the
  function $U_{t} \to \amalg_{p} X_{p}\times \Delta^p$ is
  continuous (when $U_{t} \subset |X_\bullet|$ is given the subspace
  topology).  Furthermore it is clear that any infinite collection of
  numbers $t \in (0,1)$ will give a cover of $|X_\bullet|$ by the
  corresponding $U_t$'s.

  Proceeding with the construction of $P_{-1}$ and
  $\overline{g}_{-1}$, we may cover $\partial D^k$ by the open sets
  $\hat{f}^{-1}(U_t)$ with $t \in (0,\frac 12)$, and hence by
  compactness we can find a finite cover of $\partial D^k$ by open
  sets $U_i \subset \partial D^k$ such that each closure
  $\overline{U}_i$ is contained in some $\hat{f}^{-1}(U_t)$, and hence
  we get a continuous map $\overline{U}_i \to \amalg X_p$.  Writing
  $\overline{U}_{i,p}$ for the subspace mapping into $X_p \subset
  X_0^{p+1}$, we get a continuous adjoint $\overline{g}_{i,p}: [p]
  \times \overline{U}_{i,p} \to X_0$ such that $\overline{g}_{i,p}([p]
  \times \{x\}) \subset X_0$ consists of the vertices of
  $\hat{f}(x)$ with simplicial coordinate greater than $t_i$.  In
  particular, this set contains all vertices of $\hat{f}(x)$ with
  simplicial coordinate $\geq \frac12$.  We can then pick an injection
  $\amalg_{i,p} [p] \to \Omega$ and let $\overline{P}_{-1}$ be the
  image of the resulting embedding
  \begin{equation*}
    \coprod_{i,p} [p] \times \overline{U}_{i,p} \lra \Omega
    \times \partial D^k,
  \end{equation*}
  and assemble the $\overline{g}_{i,p}$ to a map $\overline{g}_{-1}:
  \overline{P}_{-1} \to X_0$.  After changing $\hat{f}$ by composing
  with the self-map of $|X_\bullet|$ which replaces all simplicial
  coordinates $s_i$ by $\max(0,2s_i-1)$ (which is obviously continuous
  and fibrewise homotopic to the identity), the finite set
  $\overline{g}_{-1} (\overline{\pi}_{-1}^{-1}(x)) \subset X_0$
  contains all vertices of $\hat{f}(x)$, as required.

  After replacing the simplicial coordinates of $\hat{f}$ as
  described, the restriction to $U_{i,p} \subset \partial D^k$
  factors through $X_p \times \Delta^p \to |X_\bullet|$.  This implies
  that the linear homotopy from $\hat{f}$ to $|g| \circ
  s\vert_{\partial D^k}$ is continuous on each $U_{i,p}$.
\end{proof}

\subsection{Proof of Theorem \ref{thm:SurgeryComplexMor}}
\label{sec:proof-theor-refthm:s}

Recall that this theorem states that the augmentation
$$D^\kappa_{\theta, L}(\bR^N)_{\bullet, \bullet} \lra
D^{\kappa-1}_{\theta, L}(\bR^N)_{\bullet}$$ induces a weak homotopy
equivalence after geometric realisation, as long as the conditions of Theorem \ref{thm:kappafiltration} are satisfied. In fact, we only require the following weaker set of conditions:
\begin{enumerate}[(i)]
\item $2\kappa \leq d-1$,

\item $\kappa+1+d < N$,

\item $L$ admits a handle decomposition only using handles of index $< d-\kappa-1$.
\end{enumerate}

We will use Theorem~\ref{thm:SimplicialTech} to
prove that for each $p$ the augmentation map induces a weak equivalence
\begin{equation*}
  |D^\kappa_{\theta,L}(\bR^N)_{p,\bullet}| \lra D^{\kappa-1}_{\theta,L}(\bR^N)_p.
\end{equation*}
Theorem~\ref{thm:SimplicialTech} does not apply directly to the
augmentation $D^\kappa_{\theta,L}(\bR^N)_{p,\bullet} \to
D^{\kappa-1}_{\theta,L}(\bR^N)_{p}$, but we will show that it does apply
after replacing with weakly equivalent spaces.

Recall that an element of $D^\kappa_{\theta,L}(\bR^N)_{p,q}$ consists
of an element $(a,\epsilon, (W, \ell_W)) \in D_{\theta,L}^{\kappa-1}(\R^N)_p$,
together with an element $(\Lambda, \delta, e) \in Z_q(a,\epsilon,(W,\ell_W))$,
where $\Lambda \subset \Omega$ is a finite set equipped with a map
$\delta: \Lambda \to [p]^\vee \times [q] = \{0,\dots, p+1\} \times
\{0, \dots, q\}$ and $e$  is an embedding $e: \Lambda \times \overline{V} \hookrightarrow \R \times (0,1) \times (-1,1)^{N-1}$.

\begin{definition}
  The \emph{core} of $\overline{V}$ is the submanifold $C = [-2,0] \times
  D^\kappa \times \{0\} \subset \overline{V} = [-2,0] \times
  \R^\kappa \times \R^{d-\kappa}$.  Let $\widetilde{Z}_\bullet(a,
  \epsilon, (W,\ell_W))$ be the semi-simplicial space defined as in Definition
  \ref{defn:ZComplex} except that instead of demanding that $e:
  \Lambda \times \overline{V} \to \R \times (0,1) \times (-1,1)^{N-1}$ be an embedding,
  we demand only it be a smooth map which restricts to an embedding of
  a neighbourhood of $\Lambda \times C$.  We still require that $e$
  satisfy the numbered conditions listed in
  Definition~\ref{defn:ZComplex}.  Let $\widetilde{D}^\kappa_{\theta,
    L}(\bR^N)_{\bullet, \bullet} \to D^{\kappa-1}_{\theta,
    L}(\bR^N)_{\bullet}$ be the augmented bi-semi-simplicial space
  defined as in Definition \ref{defn:ZDComplex}, but using
  $\widetilde{Z}_\bullet(x)$ instead of ${Z}_\bullet(x)$.
\end{definition}

\begin{proposition}\label{prop:TildeIncIsEq}
The inclusion ${D}^\kappa_{\theta, L}(\bR^N)_{\bullet, \bullet} \hookrightarrow \widetilde{D}^\kappa_{\theta, L}(\bR^N)_{\bullet, \bullet}$ induces a weak homotopy equivalence in each bidegree, and so on geometric realisation.
\end{proposition}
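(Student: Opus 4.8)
The plan is to prove the statement bidegree by bidegree: since a levelwise weak homotopy equivalence of bi-semi-simplicial spaces induces one on geometric realisations, it suffices to show that $D^\kappa_{\theta,L}(\bR^N)_{p,q}\hookrightarrow\widetilde D^\kappa_{\theta,L}(\bR^N)_{p,q}$ is a weak homotopy equivalence for each fixed $p$ and $q$. Both sides lie over $D^{\kappa-1}_{\theta,L}(\bR^N)_p$ by forgetting the surgery data, and over a point $x=(a,\epsilon,(W,\ell_W))$ the only difference between the two fibres is the condition on the map $e\colon\Lambda\times\overline{V}\to\bR\times(0,1)\times(-1,1)^{N-1}$: in the first it is an embedding of all of $\Lambda\times\overline{V}$, in the second merely a smooth map that restricts to an embedding on a neighbourhood of the $(\kappa+1)$-dimensional core $\Lambda\times C$. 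Conditions (\ref{it:ZHeightFn})--(\ref{it:EnoughSurgeryData}) are imposed in both cases.

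To prove the inclusion is a weak equivalence I would run a compression argument. Given a map $F\colon(D^k,\partial D^k)\to(\widetilde D^\kappa_{\theta,L}(\bR^N)_{p,q},D^\kappa_{\theta,L}(\bR^N)_{p,q})$ representing a relative homotopy class, write $F(z)=(x_z,\Lambda,\delta,e_z)$; here $\Lambda$ and $\delta$ are constant on the connected disc $D^k$ (they are locally constant with values in a discrete set), so only $x_z$ and $e_z$ vary, and $e_z$ is a genuine embedding for $z\in\partial D^k$. The goal is to deform $F$, rel $\partial D^k$ and through families still satisfying (\ref{it:ZHeightFn})--(\ref{it:EnoughSurgeryData}) with $e$ an embedding near $\Lambda\times C$, until every $e_z$ is an embedding of all of $\Lambda\times\overline{V}$. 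The geometric input making this possible is that $\overline{V}$ is a thickening of its core $C$, which is $(\kappa+1)$-dimensional, together with the hypotheses $2\kappa\le d-1$ and $\kappa+1+d<N$, which give $N-\kappa>2(\kappa+1)$; thus the core, and hence regular neighbourhoods of it, sits in the target in codimension large enough that embeddings of it and parametrised isotopies between such are unobstructed.

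Concretely the deformation has two stages. First I would standardise near infinity: on $\Lambda\times\{|v|\ge 2\}$ the constraints on $e$ are soft — condition (\ref{it:ZIntersection}) there only records that $\Lambda\times(\partial_-D^{\kappa+1}\times\bR^{d-\kappa})$ maps into $W$, and $W$ is cylindrical in that region — so the space of admissible behaviours at infinity is contractible, and each $e_z$ may be homotoped (rel a neighbourhood of $\Lambda\times C$, rel $\partial D^k$, through admissible families) to agree with one fixed standard embedding on $\Lambda\times\{|v|\ge 2\}$. Second, on the remaining compact part I would resolve the now compactly supported failure of $e_z$ to be an embedding by parametrised general position in the large-codimension target: push the finitely many sheets apart, carrying out all perturbations so as to fix $\Lambda\times C$ (preserving the embedding-near-core property and condition (\ref{it:EnoughSurgeryData}), which only sees $e$ on $\Lambda\times\partial_-D^{\kappa+1}\times\{0\}\subset\Lambda\times C$), tangent to $W$ along $\Lambda\times(\partial_-D^{\kappa+1}\times\bR^{d-\kappa})$ and transverse to it elsewhere (preserving condition (\ref{it:ZIntersection})), and without disturbing the first coordinate over the preimages of the intervals $(a_k-\epsilon_k,a_k+\epsilon_k)$ (preserving (\ref{it:ZHeightFn})--(\ref{it:Z4})).

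The main obstacle I anticipate is the bookkeeping of conditions (\ref{it:ZHeightFn})--(\ref{it:EnoughSurgeryData}) while perturbing, above all condition (\ref{it:ZIntersection}), which pins down $e^{-1}(W)$ on the nose: the general-position moves must be performed inside a collar of $\Lambda\times(\partial_-D^{\kappa+1}\times\bR^{d-\kappa})$ in such a way that points are pushed off $W$ without creating new points of $e^{-1}(W)$, and the interface between the standardised ends and the perturbed compact region forces a careful choice of that region and a collar argument there. Granting this, the two-stage deformation yields the required compression, so the inclusion is a weak homotopy equivalence in each bidegree, and the statement about geometric realisations follows at once.
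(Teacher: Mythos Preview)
Your approach misses the key idea and, as written, has a genuine gap.

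The paper's argument is far simpler than what you propose. One constructs, once and for all, an isotopy of self-embeddings $j_t\colon\overline{V}\to\overline{V}$, $t\in[1,\infty)$, with $j_1=\mathrm{id}$, $j_t\vert_C=\mathrm{id}$, $j_t(\overline{V})$ contained in the $(1/t)$-neighbourhood of $C$ for large $t$, and such that each $j_t$ preserves both the height function $h$ and the submanifold $\Int(\partial_-D^{\kappa+1})\times\bR^{d-\kappa}$. Precomposing $e$ with $\mathrm{id}_\Lambda\times j_t$ then gives a global deformation of $\widetilde D^\kappa_{\theta,L}(\bR^N)_{p,q}$ into itself, preserving all conditions (\ref{it:ZHeightFn})--(\ref{it:EnoughSurgeryData}) automatically (condition~(\ref{it:EnoughSurgeryData}) only sees $e$ on the core, which $j_t$ fixes). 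On any compact family the cores are already embedded, so by compactness there is a uniform $\epsilon>0$ such that $e$ is an embedding on the $\epsilon$-neighbourhood of $\Lambda\times C$; once $1/t<\epsilon$ the deformed family lands in $D^\kappa_{\theta,L}(\bR^N)_{p,q}$. No general position, no dimension counts, no case analysis near infinity.

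Your two-stage plan runs into trouble at both stages. First, the claim that ``$W$ is cylindrical in that region'' (meaning the region of $\overline{V}$ where $|v|\ge2$) is a confusion: $W$ has no a priori relationship to the $v$-coordinate of $\overline{V}$ until $e$ is applied, and condition~(\ref{it:ZIntersection}) constrains $e$ on all of $\Lambda\times\partial_-D^{\kappa+1}\times\bR^{d-\kappa}$, including at large $|v|$, to land in $W$ --- this is not soft. Second, and more seriously, the general-position step does not work with the available hypotheses: $\overline{V}$ is $(d+1)$-dimensional and the target is $(N+1)$-dimensional, so to separate sheets by transversality you would need roughly $2(d+1)\le N+1$, whereas the hypothesis $\kappa+1+d<N$ together with $2\kappa\le d-1$ only gives $N>(3d+1)/2$. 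Your inequality $N-\kappa>2(\kappa+1)$ is about the \emph{core}, but the core is already embedded; the problem is the full thickened $\overline{V}$, and there the codimension is too small. The shrinking trick sidesteps this entirely by never asking for general position.
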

\begin{proof}
  It is easy to see that there is an isotopy of embeddings $j_t: \overline{V} \to
  \overline{V}$, $t \in [1,\infty)$, such that $j_1 = \mathrm{Id}$, $j_t\vert_C =
  \mathrm{Id}$ for all $t$ and $j_t(\overline{V})$ is contained in the
  $(1/t)$-neighbourhood of $C$ for large $t$, and also such that every
  $j_t$ preserves the submanifold $\Int(\partial_- D^{\kappa+1}) \times
  \bR^{d-\kappa}$ and preserves the height function $h: \overline{V} \to [-2,0]$.

  Precomposing the embedding $e: \Lambda \times \overline{V} \to
  \R\times (0,1) \times (-1,1)^{N-1}$ with the maps $\mathrm{Id}_{\Lambda} \times j_t$ induces a
  deformation $[1,\infty) \times \widetilde{Z}_q(a,\epsilon,(W,\ell_W)) \to
  \widetilde{Z}_q(a,\epsilon,(W,\ell_W))$ and in turn $[1,\infty) \times
  \widetilde{D}^\kappa_{\theta,L}(\R^N)_{p, q} \to
  \widetilde{D}^\kappa_{\theta,L}(\R^N)_{p, q}$.  Elements of
  $\widetilde{Z}_q(a,\epsilon,(W,\ell_W))$ have disjoint cores, so in a compact
  family $K \to \widetilde{D}^\kappa_{\theta,L}(\R^N)_{p, q}$, there
  exists an $\epsilon > 0$ such that the $\epsilon$-neighbourhoods of
  all cores are also disjoint.  Composing with the deformation of
  $\widetilde{D}^\kappa_{\theta,L}(\R^N)_{p, q}$, the map from $K$
  will eventually deform into ${D}^\kappa_{\theta,L}(\R^N)_{p, q}$.
  It follows easily from this that the relative homotopy groups
  vanish.
\end{proof}

In order to prove Theorem \ref{thm:SurgeryComplexMor}, we will show that for each $p$ the map
$$\widetilde{D}^\kappa_{\theta, L}(\bR^N)_{p, \bullet} \lra D^{\kappa-1}_{\theta, L}(\bR^N)_{p}$$
is a weak homotopy equivalence after geometric realisation, by
applying Theorem \ref{thm:SimplicialTech}. Hence we must verify the
conditions of that theorem.  First we establish
condition~(\ref{item:4}).

\begin{proposition}\label{prop:LocalSections}
  The map $\widetilde{D}^\kappa_{\theta, L}(\bR^N)_{p, 0} \to
  D^{\kappa-1}_{\theta, L}(\bR^N)_{p}$ has local sections.
\end{proposition}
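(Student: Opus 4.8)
The statement is local, so I fix a point $x=(a,\epsilon,(W,\ell_W),\Lambda,\delta,e)$ of $\widetilde{D}^\kappa_{\theta,L}(\bR^N)_{p,0}$ lying over $y=(a,\epsilon,(W,\ell_W))\in D^{\kappa-1}_{\theta,L}(\bR^N)_p$, and build a section over a neighbourhood of $y$ taking the value $x$ at $y$. The section will keep the combinatorial data $\Lambda$ and $\delta$ fixed and only deform $a$, $\epsilon$ and the smooth map $e$; recall that for surgery on morphisms the map $e$ carries no tangential data, so the $\theta$-structures play no role beyond the fact that $\ell_{W'}$ is part of the target datum and comes along automatically. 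I would first absorb the dependence on $(a,\epsilon)$: for $(a',\epsilon')$ close to $(a,\epsilon)$ the intervals $[a_i'-\epsilon_i',a_i'+\epsilon_i']$ remain pairwise disjoint, and one can choose --- smoothly in $(a',\epsilon')$ and equal to the identity at $(a,\epsilon)$ --- a diffeomorphism $\gamma_{a',\epsilon'}$ of $\bR$, the identity outside a compact set, restricting to the unique affine map $[a_i-\epsilon_i,a_i+\epsilon_i]\to[a_i'-\epsilon_i',a_i'+\epsilon_i']$ on each window. Replacing $W'$ by $(\gamma_{a',\epsilon'}^{-1}\times\mathrm{id})(W')$ reduces us to the case where the marked values and widths are held at $(a,\epsilon)$, the affine-in-height condition~(\ref{it:ZHeightFn}) of Definition~\ref{defn:ZComplex} being preserved because $\gamma_{a',\epsilon'}$ is affine on each window; at the end one pushes the resulting $e'$ forward by $\gamma_{a',\epsilon'}\times\mathrm{id}$.

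The heart of the argument is the transport of $e$ as $(W,\ell_W)$ varies near $(W,\ell_W)$ in $\psi_\theta(N+1,1)$. By the definition of the topology on $\psi_\theta(N+1,1)$ from \cite[\S2]{GR-W} there is a neighbourhood $\mathcal{U}$ of $(W,\ell_W)$ and a continuous family of embeddings $\Phi_{W'}$, defined on a fixed open neighbourhood $\Omega$ of $e(\Lambda\times\overline{V})$, with $\Phi_W$ the inclusion, each $\Phi_{W'}$ $C^\infty$-close to the inclusion, $\Phi_{W'}(W\cap\Omega)=W'\cap\Phi_{W'}(\Omega)$, and --- after post-composing with a fibrewise reparametrisation of $x_1$ supported away from the windows, which is permissible since $[a_i-\epsilon_i,a_i+\epsilon_i]$ remains a set of regular values --- equal to the identity in the $x_1$-direction over $x_1^{-1}(\bigcup_i[a_i-\epsilon_i,a_i+\epsilon_i])$. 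Extending $\Phi_{W'}$ to a diffeomorphism of $\bR\times\bR^N$, setting $e':=\Phi_{W'}\circ e$, and defining $s(a',\epsilon',(W',\ell_{W'}))=(a',\epsilon',(W',\ell_{W'}),\Lambda,\delta,e')$, one checks the conditions of Definition~\ref{defn:ZComplex}: condition~(\ref{it:ZIntersection}) holds because $(e')^{-1}(W')=e^{-1}(\Phi_{W'}^{-1}(W'))=e^{-1}(W)$; the remaining height conditions~(\ref{it:ZHeightFn})--(\ref{it:Z4}) hold because $\Phi_{W'}$ respects the relevant $x_1$-ranges; the map $e'$ restricts to an embedding near $\Lambda\times C$ because $\Phi_{W'}$ is an embedding; and the connectivity condition~(\ref{it:EnoughSurgeryData}) persists because it is open --- for $\mathcal{U}$ small the pairs appearing in~(\ref{it:EnoughSurgeryData}) for $(W',e')$ differ from those for $(W,e)$ only by a diffeomorphism, so their connectivities agree. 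Continuity of $s$, after reinstating $\gamma_{a',\epsilon'}$, follows from continuity of $\Phi_{W'}$ and of $\gamma_{a',\epsilon'}$.

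The point requiring care --- and the one I expect to be the main obstacle --- is the construction of the family $\Phi_{W'}$ along the \emph{non-compact} part of $e(\Lambda\times\overline{V})$: the locus $e^{-1}(W)=\Lambda\times\partial_-D^{\kappa+1}\times\bR^{d-\kappa}$ is non-compact, so $e$ meets $W$ along an unbounded region, whereas a nearby $W'$ is controlled only over compact subsets. I would resolve this by observing that, in view of Proposition~\ref{prop:StdFamilyMorphisms}(\ref{it:ConstantOnBoundary}), the surgery modifies $W$ only inside the compact region $e(\Lambda\times\overline{V}_0)$ for a suitable compact $\overline{V}_0\subset\overline{V}$, while outside $e(\Lambda\times\overline{V}_0)$ the map $e$ has a standard form near the faces $h^{-1}(0)$ and $h^{-1}(-2)$ of $\overline{V}$ --- dictated by conditions~(\ref{it:ZInfty}) and~(\ref{it:Z4}) --- which is compatible with the behaviour of $W$ there. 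Hence $\Phi_{W'}$ may be chosen to equal the identity over the part of $\Omega$ where $e$ is in this standard form and to agree with the compactly supported graphing diffeomorphism provided by the topology of $\psi_\theta(N+1,1)$ over the compact active region, which suffices to arrange $\Phi_{W'}(W)=W'$ throughout $\Omega$.

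With the family $\Phi_{W'}$ in hand, the verifications above are routine, and one obtains Proposition~\ref{prop:LocalSections}, hence condition~(\ref{item:4}) of Theorem~\ref{thm:SimplicialTech} for the augmented topological flag complex $\widetilde{D}^\kappa_{\theta,L}(\bR^N)_{p,\bullet}\to D^{\kappa-1}_{\theta,L}(\bR^N)_p$.
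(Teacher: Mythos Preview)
Your overall strategy---transport the surgery datum $e$ by a family of ambient diffeomorphisms carrying $W$ to nearby $W'$---is the same as the paper's, and your reduction of the $(a,\epsilon)$-dependence via $\gamma_{a',\epsilon'}$ is reasonable (the paper handles this more implicitly, by choosing the height-preserved set large enough to contain all perturbed windows). However, the step you treat as a routine adjustment is precisely where the paper does its technical work.

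You want $\Phi_{W'}$ to preserve $x_1$ over the windows, and propose to achieve this by ``post-composing with a fibrewise reparametrisation of $x_1$ supported \emph{away from} the windows''. A map equal to the identity over the windows cannot correct the height behaviour of $\Phi_{W'}$ there, so this does not work as written. The paper instead restricts to a compact slice $M = W\vert_{[t_0,t_1]}$, uses that $\Emb_\partial(M,[t_0,t_1]\times(-1,1)^N) \to \Emb_\partial(M,[t_0,t_1]\times(-1,1)^N)/\Diff(M)$ has local sections to lift the family of nearby manifolds to a family of embeddings, and then proves a height-preserving version of Cerf's first isotopy extension theorem: given a closed set $C\subset[t_0,t_1]$ of regular values, one chooses the tubular neighbourhood of $M$ so that over $x_1^{-1}(C)$ its fibres lie in level sets of $x_1$, and then the resulting ambient diffeomorphism $\varphi(u)\in\Diff([t_0,t_1]\times(-1,1)^N)$ is automatically height-preserving over $C$. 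This is the key ingredient, and your sketch does not supply a substitute.

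Your non-compactness discussion also misidentifies the issue. The non-compactness of $e^{-1}(W)=\Lambda\times\partial_-D^{\kappa+1}\times\bR^{d-\kappa}$ is in the $\bR^{d-\kappa}$-direction of $\overline{V}$, not near the height-extremes $h^{-1}(0)$ and $h^{-1}(-2)$; conditions~(\ref{it:ZInfty}) and~(\ref{it:Z4}) constrain only the $x_1$-range of those faces and give $e$ no ``standard form'' in the $\bR^{d-\kappa}$-direction. Declaring $\Phi_{W'}=\id$ outside a compact region means it no longer carries $W$ to $W'$ there, so condition~(\ref{it:ZIntersection}) would fail for $e'=\Phi_{W'}\circ e$ on the unbounded part of $e^{-1}(W)$. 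The paper's device of restricting to $[t_0,t_1]$ with $(x_1\circ e)(\Lambda\times\overline{V})\subset(t_0,t_1)$, and building $\varphi(u)$ as a compactly supported diffeomorphism carrying $W\vert_{[t_0,t_1]}$ to $W'\vert_{[t_0,t_1]}$, is what makes $(\varphi(u)\circ e)^{-1}(W')=e^{-1}(W)$ automatic.
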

\begin{proof}
  Let's consider a point $x \in
  \widetilde{D}^\kappa_{\theta,L}(\R^N)_{p,0}$, given by elements $(a,
  \epsilon, (W,\ell_W)) \in D^{\kappa-1}_{\theta, L}(\bR^N)_{p}$ and
  $(\Lambda, \delta, e) \in \widetilde{Z}_0(a, \epsilon, (W,\ell_W))$.  Choose $t_0 <
  a_0-\epsilon_0$ and $t_1 > a_p+\epsilon_p$ which are regular values
  for $x_1 : W \to \bR$, and such that $(x_1 \circ
  e)(\Lambda \times \overline{V}) \subset (t_0, t_1)$. There is a
  (connected) open neighbourhood $U \subset D^{\kappa-1}_{\theta,
    L}(\bR^N)_{p}$ of $(a, \epsilon, (W,\ell_W))$ on which the $t_i$ remain
  regular values.

  The inclusion $U \hookrightarrow  D^{\kappa-1}_{\theta,
    L}(\bR^N)_{p}$ has graph $\Gamma \subset U \times \R^N$.  All
  fibres of the projection $\pi: \Gamma \vert_{[t_0, t_1]} \to U$ are
  diffeomorphic to the same manifold $M = W\vert_{[t_0,t_1]}$.
  Sending a point in $U$ to its fibre defines a function
  \begin{eqnarray*}
    F: U & \lra & \mathrm{Emb}_\partial(M, [t_0, t_1] \times (-1,1)^N) / \Diff(M)\\
    u & \longmapsto & \pi^{-1}(u)
  \end{eqnarray*}
  where $\mathrm{Emb}_\partial$ denotes embeddings which send the
  boundary to the boundary, and the definition of the topology on
  $\Psi_\theta(\bR \times \bR^N)$ makes this continuous (manifolds
  near to a point $W \in \psi_\theta(N+1, 1) \subset \Psi_\theta(\bR
  \times \bR^N)$ look like a section of the normal bundle of $W$
  inside a compact set, e.g.\ inside $[t_0, t_1] \times [-1,1]^N$).

  We now require two results on spaces of embeddings. Firstly, the map
  $$\mathrm{Emb}_\partial(M, [t_0, t_1] \times (-1,1)^N) \lra
  \mathrm{Emb}_\partial(M, [t_0, t_1] \times (-1,1)^N) / \Diff(M)$$ is
  well-known to be a principal $\Diff(M)$-bundle, and has local
  sections (see e.g.\ \cite{MR613004}).  Thus, after perhaps passing
  to a smaller open neighbourhood, which we will still call $U$, $F$
  has a lift $\widetilde{F} : U \to \mathrm{Emb}_\partial(M, [t_0,
  t_1] \times (-1,1)^N)$, and we call $f = \widetilde{F}(a, \epsilon,
  (W,\ell_W))$.

Secondly, we need the following generalisation of a
  technical theorem of Cerf \cite[2.2.1 Th{\'e}or{\`e}me 5]{Cerf} (the
  ``first isotopy and extension theorem''), an especially elementary
  proof of which was given by Lima \cite{Lima}. We follow Lima's proof.

  \begin{lemma}
    Let $C \subset [t_0, t_1]$ be a closed subset and let $S
    \subset \Emb_\partial(M, [t_0, t_1] \times (-1,1)^N)$ be the open subset
    of those embeddings $e$ for which $\pi_1 \circ e : M \to [t_0,
    t_1]$ has no critical values inside $C$.

    Given an $f \in S$, there is a neighbourhood $U$ of $f$ in $S$ and
    a continuous map $\varphi : U \to \Diff([t_0, t_1] \times
    (-1,1)^N)$ such that $\varphi(g) \circ f$ and $g$ have the same
    image, and $\varphi(g)$ is height-preserving over $C$.
  \end{lemma}
\begin{proof}
  Consider $M$ to be a submanifold of $[t_0, t_1] \times (-1,1)^N$ via
  $f$. We choose a tubular neighbourhood $\pi: T \to M$ of radius
  $\epsilon$ which over the boundary and $x_1^{-1}(C)$ has fibres
  contained in level sets of $x_1$ (this is possible as $C$ is closed
  and consists of regular values).  If $g \in S$ is sufficiently close
  to $f$, it will have image in $T$ and we may define an element
  $\bar{\varphi}(g) \in C^\infty(M,M)$ by
  $$\bar{\varphi}(g)(x) = \pi(g(x)).$$
  This is a diffeomorphism for $g = f$, and so there is a
  neighbourhood $U'$ of $f$ in $S$ where this remains true.  We get a
  function $\bar{\varphi}: U' \to \Diff(M)$ and for each $g \in U'$ we
  define a new embedding $G = G(g): M \to [t_0, t_1] \times (-1,1)^N$
  by $G = g\circ (\bar{\varphi}(g)^{-1})$.  It has the same image as
  $g$ and has $\pi(G(x)) = x$.  Therefore $x$ and $G(x)$ have the same
  height when $x \in x_1^{-1}(C)$.

  Let $\lambda$ be a bump function which is 1 on $[0,\epsilon/4)$ and
  0 on $[\epsilon/2,\infty)$.  Now let
  $$\varphi(g)(x) = x + \lambda(\vert x - \pi(x)\vert)\cdot(G(\pi(x)) - \pi(x))$$
  define a compactly-supported smooth map $\varphi(g) \in
  \mathcal{C}^\infty_c([t_0, t_1] \times (-1,1)^N)$. For $g=f$ it is a
  diffeomorphism, and so there is a smaller neighbourhood $U$ of $f$
  in $S$ where this remains true.  We get a function $\varphi: U \to
  \Diff_c([t_0, t_1] \times (-1,1)^N)$.

  By construction $\varphi(g) \circ f(x) = \varphi(g)(x) = x + (G(x) -
  x) = G(x)$, so $\varphi(g) \circ f$ has the same image as $g$. Also,
  if $x \in x_1^{-1}(C)$ then the vector $G(\pi(x)) - \pi(x)$ has no
  component in the $x_1$ direction, so $x_1(\varphi(g)(x)) = x_1(x)$
  and $\varphi(g)$ is height function preserving over $C$.
\end{proof}

  It now follows that $\pi$ also has this local structure: after
  posibly shrinking $U$, there is a map
  $$\varphi : U \lra \Diff([t_0, t_1] \times (-1,1)^N)$$
  with the properties described in the lemma, such that
  $\Gamma\vert_{[t_0,t_1]} \subset U \times [t_0, t_1] \times
  (-1,1)^N$ 
  is obtained from $W\vert_{[t_0, t_1]}$ by applying the family of
  diffeomorphisms $\varphi$.

  The element $(\Lambda, \delta, e) \in \widetilde{Z}_0(a, \epsilon, (W,\ell_W))$ has
  surgery data
  $$e : \Lambda \times \overline{V} \hookrightarrow [t_0, t_1] \times (0,1)
  \times (-1,1)^{N-1},$$ so we attempt to define a section $U \to
  \widetilde{D}^\kappa_{\theta, L}(\bR^N)_{p, 0}$ by sending $u$ to
  the point $(\Lambda, \delta, \varphi(u) \circ e)$.  We must verify that this is
  indeed an element of $\widetilde{Z}_0(u)$ by checking the conditions
  of Definition \ref{defn:ZComplex}. Conditions
  (\ref{it:ZHeightFn})--(\ref{it:Z4}) hold as $\varphi$ is height
  preserving over each $[a_i-\epsilon_i, a_i+\epsilon_i]$. Condition
  (\ref{it:ZIntersection}) holds by construction, as $\varphi(x)$ is a
  diffeomorphism which carries $W$ into $\pi^{-1}(x)$. Condition
  (\ref{it:EnoughSurgeryData}) need not hold in general, but it does
  hold at the point $(a, \epsilon, (W,\ell_W))$, and is an open condition.
  Thus, after possibly replacing $U$ with a smaller open set, this
  does define a continuous section as required.
\end{proof}

Next, we establish condition~(\ref{it:2b}) in
Theorem~\ref{thm:SimplicialTech}.
\begin{proposition}\label{prop:MorSurgA}
  Fix a point $(a, \epsilon, (W,\ell_W)) \in D^{\kappa-1}_{\theta,
    L}(\bR^N)_p$, and let $v_1$, \dots, $v_k \in \widetilde{Z}_0(a,
  \epsilon, (W,\ell_W))$ be a non-empty collection of pieces of surgery data
  (not necessarily forming a $(k-1)$-simplex).  Then, if $2\kappa < d$
  and $\kappa+1+d < N$, there exists a piece of surgery data $v \in
  \widetilde{Z}_0(a, \epsilon, (W,\ell_W))$ such that each $(v_i, v)$ is a
  1-simplex.
\end{proposition}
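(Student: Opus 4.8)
The plan is to establish the present proposition by a general position argument; combined with Propositions~\ref{prop:TildeIncIsEq} and~\ref{prop:LocalSections}, it verifies condition~(\ref{it:2b}) of Theorem~\ref{thm:SimplicialTech} for the augmented topological flag complex $\widetilde{D}^\kappa_{\theta, L}(\bR^N)_{p, \bullet} \to D^{\kappa-1}_{\theta, L}(\bR^N)_{p}$, and hence proves Theorem~\ref{thm:SurgeryComplexMor}. Observe first that surgery data for morphisms carries \emph{no} tangential structure (in contrast with Definition~\ref{defn:YComplex}), so we need only produce embeddings. Among the numbered conditions of Definition~\ref{defn:ZComplex}, all but~(\ref{it:EnoughSurgeryData}) merely constrain the position of $e$ relative to the intervals $(a_k-\epsilon_k,a_k+\epsilon_k)$ and relative to $W$; once the cores are chosen, these conditions can always be met, using both the codimension hypothesis $\kappa+1+d<N$ and the relaxation in $\widetilde{Z}_\bullet$ that $e$ be an embedding only near the core $\Lambda\times C$. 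The real content is condition~(\ref{it:EnoughSurgeryData}): for each $i\in\{1,\dots,p\}$ the cores $D_{i,0}\vert_{[a_{i-1},a_i]}$ must render the pair $(W\vert_{[a_{i-1},a_i]},\,W\vert_{a_i}\cup D_{i,0}\vert_{[a_{i-1},a_i]})$ $\kappa$-connected.

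Since $(W,\ell_W)\in D^{\kappa-1}_{\theta,L}(\bR^N)_p$, each pair $(W\vert_{[a_{i-1},a_i]},W\vert_{a_i})$ is $(\kappa-1)$-connected, and since $W$ is compact the relative homotopy group $\pi_\kappa(W\vert_{[a_{i-1},a_i]},W\vert_{a_i})$ is finitely generated over the appropriate group ring, for every basepoint in every path component (relative Hurewicz together with finiteness of the homotopy type). First I would choose finitely many maps $(D^\kappa,\partial D^\kappa)\to (W\vert_{[a_{i-1},a_i]},W\vert_{a_i})$ representing generators; by general position --- and it is precisely here that $2\kappa<d=\dim W\vert_{[a_{i-1},a_i]}$ is used --- these may be taken to be disjoint embeddings, collared over the levels $x_1^{-1}(a_i-\epsilon_i,a_i+\epsilon_i)$ compatibly with $x_1$, and then thickened (using the large codimension $N-d$) to a piece of surgery data $v$ of the standard shape of Definition~\ref{defn:ZComplex}. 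Since $D_{i,0}\vert_{[a_{i-1},a_i]}$ then hits a generating set, such a $v$ satisfies~(\ref{it:EnoughSurgeryData}).

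It remains to make $v$ disjoint from $v_1,\dots,v_k$, i.e.\ $(v_m,v)\in\widetilde{Z}_1$ for every $m$ (note the $v_m$ themselves need not be pairwise orthogonal, and the case $k=0$, giving surjectivity of $\epsilon$, is the same construction with no old data to avoid). The cores of $v_1,\dots,v_k$ meet $W$ in a compact subset $K$ of dimension $\kappa$, and a closed regular neighbourhood $\nu K\subset W$ deformation retracts onto a complex of dimension at most $\kappa$. Because $2\kappa<d$, transversality shows that the inclusion $W\vert_{[a_{i-1},a_i]}-\Int(\nu K)\hookrightarrow W\vert_{[a_{i-1},a_i]}$ is surjective on $\pi_\kappa$ for all basepoints, so the generating disks above may be chosen with interiors avoiding $\nu K$; and off $W$ the inequality $\kappa+1+d<N$ leaves ample room to route the remainder of the core of $v$ clear of the cores of the $v_m$. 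Then $v\in\widetilde{Z}_0(a,\epsilon,(W,\ell_W))$ satisfies~(\ref{it:EnoughSurgeryData}) and $(v_m,v)\in\widetilde{Z}_1$ for all $m$, as required. The main obstacle I anticipate is not any of these homotopy-theoretic inputs --- which are entirely contained in the two dimension inequalities $2\kappa<d$ and $\kappa+1+d<N$ --- but the bookkeeping needed to realise the abstractly chosen generating disks as embeddings of the exact standard form, with the prescribed behaviour of $x_1\circ e$ near the slices $a_k$ (conditions (\ref{it:ZHeightFn})--(\ref{it:ZIntersection}) of Definition~\ref{defn:ZComplex}), while simultaneously respecting disjointness from the given data.
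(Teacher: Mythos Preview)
Your approach is correct in outline, but it is considerably more elaborate than what the paper does, and it essentially re-proves the harder Proposition~\ref{prop:MorSurgB} (non-emptiness of $\widetilde{Z}_0$) as a subroutine.  The paper's argument exploits the hypothesis that the collection $v_1,\dots,v_k$ is \emph{non-empty}: it simply takes $v$ to be a copy of $v_1$ (with a fresh label set $\Lambda$ disjoint from the $\Lambda^j$) and then perturbs its core into general position with respect to the cores of all the $v_j$.  Since $v_1$ already satisfies condition~(\ref{it:EnoughSurgeryData}), and that condition depends only on the isotopy class of the embedded core discs, a small perturbation preserves it; there is no need to reconstruct generating discs for $\pi_\kappa$ from scratch.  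The transversality step is then exactly as you describe: $\kappa$-dimensional cores inside the $d$-manifold $W$ become disjoint when $2\kappa<d$, and the $(\kappa+1)$-dimensional cores in the ambient $\bR^{N+1}$ become disjoint when $2(\kappa+1)\leq \kappa+d+2<N+1$.

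What your route buys is a self-contained argument that simultaneously handles condition~(\ref{it:2a}) of Theorem~\ref{thm:SimplicialTech} (the case $k=0$), whereas the paper separates this off into Proposition~\ref{prop:MorSurgB}.  The cost is that your construction-from-scratch requires the full bookkeeping you allude to (extending the discs up through each $W\vert_{[a_j,a_{j+1}]}$ to reach $W\vert_{a_p+\epsilon_p}$, arranging the height-function compatibility of conditions~(\ref{it:ZHeightFn})--(\ref{it:Z4}), and avoiding $\bR\times L$), all of which the paper defers to Proposition~\ref{prop:MorSurgB} and avoids entirely here.  Note in particular that your argument implicitly needs the handle hypothesis on $L$ to keep the new cores away from $\bR\times L$, whereas the paper's copy-and-perturb argument does not---and indeed the proposition as stated does not assume it.
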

\begin{proof}
  Each $v_j$ is given by a set $\Lambda^j$ (which is a subset of the
  uncountable set $\Omega$), a function $\delta^j : \Lambda^j \to [p]^\vee$ and a map $e^j: \Lambda^j \times \overline{V} \to \R
  \times (0,1) \times (-1,1)^{N-1}$, satisfying certain properties.  We first pick a
  set $\Lambda$ which is disjoint from all $\Lambda^j$ and a
  bijection $\phi: \Lambda \to \Lambda^1$, let $\delta =\delta^1 \circ \varphi : \Lambda \to [p]^\vee$, and then set
  \begin{equation*}
    \tilde{e} = e^1 \circ (\phi \times \mathrm{Id}_{\overline{V}}): \Lambda \times \overline{V}
    \lra \R \times (0,1) \times (-1,1)^{N-1}.
  \end{equation*}
  This gives a new element of $\widetilde{Z}_0(a,\epsilon,(W,\ell_W))$, but it is
  of course not orthogonal to $v_1$ (and not necessarily orthogonal to
  the other $v_j$).  We then perturb $\tilde e$ inside the class of
  functions satisfying the requirements of
  Definition~\ref{defn:ZComplex}, to a new function $e: \Lambda \times
  \overline{V} \to \R \times (0,1) \times (-1,1)^{N-1}$ whose core is in general position with
  respect to the cores of the $v_j$.  More explicitly, $\tilde e$
  restricts to a map
  \begin{equation*}
    \Lambda \times \partial_- D^{\kappa+1} \times \bR^{d-\kappa} \lra W,
  \end{equation*}
  and we first perturb this so that $\Lambda \times \partial_-
  D^{\kappa+1} \times \{0\}$ is transverse in $W$ to the corresponding
  part of the other embeddings, and remains disjoint from $L$, then we extend this perturbation to a
  map $e: \Lambda \times \overline{V} \to \R \times (0,1) \times (-1,1)^{N-1}$ whose restriction
  to the interior of $C$ is transverse to the corresponding part of
  the other embeddings.  In the first step we make
  $\kappa$-dimensional manifolds transverse in a $d$-dimensional
  manifold, and in the second we make $(\kappa+1)$-dimensional
  manifolds disjoint in an $(N+1)$-dimensional manifold.  As $2\kappa
  < d$ and $2(\kappa+1) \leq \kappa + d + 2 < N+1$, the new core will
  actually be disjoint from all other cores, producing the required
  element $v \in \widetilde{Z}_0(a,\epsilon,(W,\ell_W))$.
\end{proof}

Finally, we establish condition~(\ref{it:2a}) of
Theorem~\ref{thm:SimplicialTech}.
\begin{proposition}\label{prop:MorSurgB}
  $\widetilde{Z}_0(a, \epsilon, (W,\ell_W))$ is non-empty as long as $2\kappa <
  d$, $\kappa+1+d < N$, and $L$ admits a handle decomposition only using handles of index $< d-\kappa-1$.
\end{proposition}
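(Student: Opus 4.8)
The plan is to build the required data separately over each of the $p$ morphisms and then combine them. For fixed $i\in\{1,\dots,p\}$ write $W_i=W\vert_{[a_{i-1},a_i]}$ and $M_i=W\vert_{a_i}$; since $(a,\epsilon,(W,\ell_W))$ lies in $D^{\kappa-1}_{\theta,L}(\R^N)_p$, the numbers $a_{i-1},a_i$ are regular values of $x_1$, the pair $(W_i,M_i)$ is $(\kappa-1)$-connected for all basepoints, and $W_i$ is a product along $\R\times L$. Note also that the embeddings occurring in Definition~\ref{defn:ZComplex} have image in $\R\times(0,1)\times(-1,1)^{N-1}\subset\{x_2>0\}$, which is therefore disjoint from $\R\times L\subset\{x_2\leq0\}$.

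First I would produce the surgery discs in $W_i$. As $W_i$ is compact, $(W_i,M_i)$ has the homotopy type of a finite relative CW pair, so the first possibly non-zero relative homotopy group $\pi_\kappa(W_i,M_i)$ is finitely generated over the appropriate group ring at every basepoint (by the relative Hurewicz theorem when $\kappa\geq2$, and directly from compactness when $\kappa\leq1$), and there are finitely many path components. I would choose a finite generating set, represent it by maps $(D^\kappa,\partial D^\kappa)\to(W_i,M_i)$ which meet $M_i$ collaredly inside the product region $x_1^{-1}((a_i-\epsilon_i,a_i])$ and otherwise lie in $x_1^{-1}((a_{i-1}+\epsilon_{i-1},a_i-\epsilon_i))$, and — using $2\kappa<d=\dim W_i$ (hence also $2(\kappa-1)<d-1=\dim M_i$) — put these in general position to make them pairwise disjoint embeddings with disjoint, embedded boundary spheres in $M_i$. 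By the long exact sequence of the triple $(W_i,\,M_i\cup\text{discs},\,M_i)$, adjoining these discs to $M_i$ makes the pair $\kappa$-connected while leaving all lower relative homotopy groups zero, which is exactly condition (\ref{it:EnoughSurgeryData}) of Definition~\ref{defn:ZComplex}. To place the discs in $\{x_2>0\}$ I would use the hypothesis on $L$: having a handle decomposition with handles of index $<d-\kappa-1$, the submanifold $L$ is a regular neighbourhood in $M_i$ of a spine $P$ with $\dim(\R\times P)\leq d-\kappa-1$; since $\kappa+(d-\kappa-1)=d-1<d$, a generic isotopy moves each disc off $\R\times P$ (arranging the boundary spheres in $M_i$ first, where the count is even better), and pushing along the product collar of $\R\times L$ then carries the discs into $W_i\cap\{x_2>0\}$.

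Finally I would fatten each disc $D_0$ to a genuine piece of surgery data. Its normal bundle in $W_i$ is trivial, so a thin tubular neighbourhood realises an embedding $\partial_-D^{\kappa+1}\times\R^{d-\kappa}\hookrightarrow W_i\cap\{x_2>0\}$ with $D_0$ playing the role of the cap around the south pole, which by the choice of position lies in the bulk $x_1^{-1}((a_{i-1}+\epsilon_{i-1},a_i-\epsilon_i))$ of the morphism. I would then prolong $D_0$ upward — staying embedded in $W$ (condition (\ref{it:ZIntersection})) and affine-in-$h$ over each of the windows $(a_i-\epsilon_i,a_i+\epsilon_i),\dots,(a_p-\epsilon_p,a_p+\epsilon_p)$ (condition (\ref{it:ZHeightFn})) — to a copy of all of $\partial_-D^{\kappa+1}$ whose equator lies in $x_1^{-1}(a_p+\epsilon_p,\infty)$ (condition (\ref{it:ZInfty})); this leaves $D_0=D_{i,0}\vert_{[a_{i-1},a_i]}$ unchanged, so (\ref{it:EnoughSurgeryData}) still holds. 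Since in $\widetilde Z_0$ the map $e$ need only be an embedding near the core $C\subset\overline{V}$, the part of $\overline{V}$ mapping off $W$ only has to be positioned in the $\{x_2>0\}$ region just beside $W$ so as to satisfy conditions (\ref{it:ZHeightFn})--(\ref{it:Z4}) near $C$, in particular plunging below $x_1=a_0-\epsilon_0$ near $h=-2$; there is room to spare as $\dim\overline{V}=d+1<N+1$. Carrying this out for every $i$ and then perturbing so that all cores $e(\{\lambda\}\times C)$ — which are $(\kappa+1)$-dimensional inside the $(N+1)$-dimensional ambient, and $2(\kappa+1)<N+1$ — are pairwise disjoint, by the same general-position argument as in Proposition~\ref{prop:MorSurgA}, produces the required element of $\widetilde Z_0(a,\epsilon,(W,\ell_W))$. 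The step I expect to be genuinely delicate is the appeal to the handle-index hypothesis on $L$ in order to push the surgery discs into $\{x_2>0\}$ without spoiling condition (\ref{it:EnoughSurgeryData}); the rest is transversality with comfortable codimension under the standing inequalities $2\kappa<d$ and $\kappa+1+d<N$.
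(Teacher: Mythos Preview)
Your approach is essentially the same as the paper's: represent generators of $\pi_\kappa(W_i,M_i)$ by embedded discs (using $2\kappa<d$), push them off $L$ via the handle-index hypothesis, extend them through the remaining morphisms so the boundary reaches above $a_p+\epsilon_p$, and finally thicken and extend off $W$ using $\kappa+1+d<N$.

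There is one point you should make explicit, namely the ``prolong $D_0$ upward'' step. You assert that the disc can be extended through $W$ so that its equator reaches $x_1^{-1}((a_p+\epsilon_p,\infty))$, but you do not say why. The paper carries this out inductively over each successive cobordism $W\vert_{[a_j,a_{j+1}]}$: since $(a,\epsilon,(W,\ell_W))\in D^{\kappa-1}_{\theta,L}(\R^N)_p$, each such cobordism is $(\kappa-1)$-connected relative to its outgoing boundary, so the boundary sphere $\partial D^\kappa\subset W\vert_{a_j}$ extends to a map $\partial D^\kappa\times[0,1]\to W\vert_{[a_j,a_{j+1}]}$ ending in $W\vert_{a_{j+1}}$; this extension is made an embedding using $2\kappa<d$, kept disjoint from $\R\times L$ by the same spine argument, and glued on via a diffeomorphism $D^\kappa\cup(\partial D^\kappa\times[0,1])\approx D^\kappa$ chosen compatibly with the height functions. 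Without invoking this connectivity, there is no reason the sphere can be pushed through at all.

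A smaller point: your description of the thickening is muddled. You first say the tubular neighbourhood of $D_0$ in $W_i$ realises $\partial_-D^{\kappa+1}\times\R^{d-\kappa}$, then say $D_0$ is only ``the cap around the south pole'', and only after that do you prolong to all of $\partial_-D^{\kappa+1}$. It is cleaner (and this is what the paper does) to first prolong the bare $\kappa$-disc all the way through $W$, identify the result with $\partial_-D^{\kappa+1}\times\{0\}$, and only then thicken by the trivial normal bundle and extend into $\R\times(0,1)\times(-1,1)^{N-1}$.
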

\begin{proof}
  For each $i = 1, \ldots, p$ we consider the pair $(W \vert_{[a_{i-1},
    a_{i}]}, W \vert_{a_{i}})$.  Since it is $(\kappa-1)$-connected,
  it is homotopy equivalent to a finite relative CW complex $(X,W
  \vert_{a_i})$ with cells of dimension $\geq \kappa$ only.  Since
  $2\kappa < d$, the homotopy equivalence $(X,W \vert_{a_i}) \to (W
  \vert _{[a_{i-1},a_i]}, W\vert_{a_i})$ may be assumed to restrict to
  a smooth embedding of the relative $\kappa$-cells.  If we pick a subset
  $\Lambda_{i,0} \subset \Omega$ with one element for each relative $\kappa$-cell (choosing disjoint sets for each $i$), we
  may therefore pick an embedding
  $$\hat{e}_{i,0} : \Lambda_{i,0} \times (D^{\kappa}, \partial
  D^\kappa) \lra (W \vert_{[a_{i-1} + \epsilon_{i+1}, a_{i} +
    \epsilon_i]}, W \vert_{a_{i} + \epsilon_i}),$$ which we may assume
  collared on $[a_i-\epsilon_i, a_i + \epsilon_i]$, such that the pair
  \begin{equation*}
    (W \vert_{[a_{i-1}, a_{i}]}, W \vert_{a_{i}} \cup
    \IM(\hat{e}_{i,0})\vert_{[a_{i-1},a_i]})
  \end{equation*}
  is $\kappa$-connected. Furthermore, $\R \times L \subset W$ has a
  core of dimension $< d-\kappa$, by our assumption on the indices of
  handles of $L$, and so we may suppose that the embedding
  $\hat{e}_{i,0}$ is disjoint from $\R \times L$. As $2\kappa < d$ we may also suppose that the images of the $\hat{e}_{i,0}$ are mutually disjoint.

  The embedding
  $$\hat{e}_{i,0} \vert_{\Lambda_{i,0} \times \partial D^\kappa} :
  \Lambda_{i,0} \times \partial D^\kappa \times \{0\} \lra W
  \vert_{a_i + \epsilon_i} \subset W \vert_{[a_i + \epsilon_i, a_{i+1}
    + \epsilon_{i+1}]}$$ extends to an embedding of $\Lambda_{i,0}
  \times \partial D^\kappa \times [0,1]$, where $\Lambda_{i,0}
  \times \partial D^\kappa \times \{1\}$ is sent into $W
  \vert_{a_{i+1} + \epsilon_{i+1}}$ and is collared on the
  $\epsilon$-neighbourhoods of both boundaries. This may be seen as
  follows: to extend $\hat{e}_{i,0} \vert_{\Lambda_{i,0}
    \times \partial D^\kappa}$ to a continuous map having this
  property is possible as $\pi_{\kappa-1}(W \vert_{[a_{i-1}, a_{i}]},
  W \vert_{a_{i}})=0$, but this may then be perturbed to be an
  embedding as $2\kappa < d$. As above, this may be made disjoint from
  $\R \times L$, and they can be made mutually disjoint.

  We may glue the two embeddings together.  Using a suitable
  diffeomorphism $D^\kappa \approx D^\kappa \cup \big(\partial
  D^\kappa \times [0,1]\big)$, this gives a new embedding of
  $\Lambda_{i,0} \times D^\kappa$.  Continuing in this way, we obtain
  an extension of $\hat{e}_{i,0}$ to an embedding
  $$\tilde{e}_{i,0} : \Lambda_{i,0} \times (D^{\kappa}, \partial
  D^\kappa) \lra (W \vert_{[a_{i-1} + \epsilon_{i-1}, a_{p} +
    \epsilon_p]}, W \vert_{a_{p} + \epsilon_p})$$ which is disjoint
  from $\R \times L$, and which are mutually disjoint.  Identifying $D^\kappa$ with the disc
  $\partial_- D^{\kappa+1} \subset [-1,0] \times \R^{\kappa+1}$
  gives a height function $D^\kappa \to [-1,0]$ and if we pick the
  diffeomorphisms $D^\kappa \approx D^\kappa \cup \big(\partial
  D^\kappa \times [0,1]\big)$ carefully, we can arrange that on each
  $\tilde{e}_{i,0}^{-1} (W\vert_{(a_k-\epsilon_k, a_k+\epsilon_k)})$,
  the embedding $\tilde e_{i,0}$ is height function preserving up to
  an affine transformation.

  We now want to extend the $\tilde{e}_{i,0}$ from $\Lambda_{i,0}
  \times (\partial_- D^{\kappa+1} \times \{0\}) \subset
  \Lambda_{i,0} \times \overline{V}$ to the whole of $\Lambda_{i,0}
  \times \overline{V}$ so that it satisfies the conditions of
  Definition \ref{defn:ZComplex}. As $\kappa+1+d < N$, there is no
  trouble with extending the maps $\tilde{e}_{i,0}$ to disjoint maps $e_{i,0}$
  from $\Lambda_{i,0} \times \overline{V}$ to $\bR \times (0,1) \times
  (-1,1)^{N-1}$ satisfying conditions
  (\ref{it:ZHeightFn})--(\ref{it:ZIntersection}) of Definition
  \ref{defn:ZComplex}: we first extend each $\tilde{e}_{i,0}$ to an
  embedding of $[-2,0] \times \R^\kappa \times \{0\}$ (which is
  possible as $2(\kappa+1) \leq d+\kappa+1 < N$), then make this
  intersect $W$ only in $\partial_-D^{\kappa+1}$ (which is possible as
  $\kappa+1+d < N$), and finally thicken it up by $\bR^{d-\kappa}$.
  Property (\ref{it:EnoughSurgeryData}) is ensured by the way we chose
  $\hat{e}_{i,0}$.
  
  Finally, we let $\Lambda = \coprod_{i=1}^p \Lambda_{i,0}$, $\delta : \Lambda \to [p]^\vee$ be given by $\delta(\Lambda_{i,0}) = i \in [p]^\vee$, and $e = \coprod_{i=1}^p e_{i,0}$. The data $(\Lambda, \delta, e)$ thus lies in $\widetilde{Z}_0(a, \epsilon, (W,\ell_W))$.
\end{proof}

\subsection{Proof of Theorem \ref{thm:SurgeryComplexOb}}
\label{sec:proof-theor-refthm:s-1}

We have already proved the first part of this theorem in Section \ref{sec:ProofSecondPart}. Recall that the second part states that the augmentation map
$$D^{\kappa, l}_{\theta, L}(\bR^N)_{\bullet, \bullet} \lra D^{\kappa, l-1}_{\theta, L}(\bR^N)_{\bullet}$$
induces a weak homotopy equivalence after geometric realisation, as long as the conditions of Theorem \ref{thm:lfiltration} are satisfied. In fact, we only require the following weaker set of conditions:
\begin{enumerate}[(i)]
\item $2(l+1) < d$,

\item $l \leq \kappa$,

\item $l+2+d < N$,

\item $L$ admits a handle decomposition only using handles of index $< d-l-1$,

\item the map $\ell_L : L \to B$ is $(l+1)$-connected.
\end{enumerate}

We will proceed much as in the previous section. Recall that each
point of $D^{\kappa, l}_{\theta, L}(\bR^N)_{p, 0}$ lying over $(a,
\epsilon, (W, \ell_W)) \in D^{\kappa, l-1}_{\theta, L}(\bR^N)_{p}$ is a tuple $(\Lambda, \delta, e, \ell)$ where $\Lambda \subset \Omega$ is a subset, $\delta : \Lambda \to [p] \times [0]$ is a function, 
$$e : \Lambda \times (-6,-2) \times
\bR^{d-l-1} \times D^{l+1} \hookrightarrow \bR \times (0,1) \times
(-1,1)^{N-1}$$
is an embedding, and $\ell : T(\Lambda \times K\vert_{(-6,0)}) \to \theta^*\gamma$ is a bundle map (where $K$ is defined in Section \ref{sec:ObSurgStdFam}). Let us define
$$C = (-6,-2) \times \{0\} \times D^{l+1}
\subset (-6,-2) \times \bR^{d-l-1} \times
D^{l+1}$$ and call it the \emph{core}. Shrinking in the
$\bR^{d-l-1}$-direction gives an isotopy from the identity map of
$(-6,-2) \times \bR^{d-l-1} \times D^{l+1}$
into any neighbourhood of its core.

\begin{definition}\label{defn:yComplexExtended}
  Let $\widetilde{Y}_\bullet(a, \epsilon, (W, \ell_W))$ be the
  semi-simplicial space defined as in Definition \ref{defn:YComplex}, expect we only ask for $e$ to be a smooth map which restricts to an embedding on a neighbourhood of $\Lambda \times C \subset \Lambda \times (-6,-2) \times \bR^{d-l-1} \times
D^{l+1}$. Note that condition
  (\ref{it:EnoughSurgeryDataOb}) still makes sense: although the
  surgery data is no longer disjoint, it is still disjoint when
  restricted to a small enough neighbourhood of each core.

  Let $\widetilde{D}^{\kappa, l}_{\theta, L}(\bR^N)_{\bullet, \bullet}
  \to D^{\kappa, l-1}_{\theta, L}(\bR^N)_{\bullet}$ be the augmented
  bi-semi-simplicial space defined as in Definition
  \ref{defn:DoubleCxOb}, but using $\widetilde{Y}_\bullet(a, \epsilon,
  (W, \ell_W))$ instead of ${Y}_\bullet(a, \epsilon, (W, \ell_W))$.
\end{definition}

We have the following analogue of Proposition \ref{prop:TildeIncIsEq},
although the proof is slightly more complicated in this case, due to
the tangential structures on the surgery data.

\begin{proposition}\label{prop:TildeIncIsEqOb}
  The inclusion ${D}^{\kappa, l}_{\theta, L}(\bR^N)_{\bullet, \bullet}
  \hookrightarrow \widetilde{D}^{\kappa, l}_{\theta,
    L}(\bR^N)_{\bullet, \bullet}$ induces a weak homotopy equivalence
  in each bidegree, and so on geometric realisation.
\end{proposition}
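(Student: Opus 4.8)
The plan is to mimic the proof of Proposition~\ref{prop:TildeIncIsEq}, but now carrying along the bundle map $\ell$ on the surgery data. As in that proof, I would first produce an isotopy of embeddings $j_t : (-6,-2) \times \bR^{d-l-1} \times D^{l+1} \to (-6,-2) \times \bR^{d-l-1} \times D^{l+1}$, for $t \in [1,\infty)$, with $j_1 = \mathrm{id}$, with $j_t$ equal to the identity on a neighbourhood of the core $C$ for all $t$, with $j_t$ height-function preserving and preserving the boundary submanifold $(-6,-2) \times \bR^{d-l-1} \times S^l$, and with $j_t\big((-6,-2) \times \bR^{d-l-1} \times D^{l+1}\big)$ contained in the $(1/t)$-neighbourhood of $C$ for $t$ large. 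This is the same shrinking-in-the-$\bR^{d-l-1}$-direction isotopy mentioned just before Definition~\ref{defn:yComplexExtended}. Precomposing the embedding $e$ with $\mathrm{id}_\Lambda \times j_t$ gives a deformation of $\widetilde{Y}_q(a,\epsilon,(W,\ell_W))$, and hence of $\widetilde{D}^{\kappa,l}_{\theta,L}(\bR^N)_{p,q}$.

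\textbf{Key steps.}
First, I need to check that this deformation is well-defined on $\widetilde{D}^{\kappa,l}$: since $j_t$ fixes $C$ and is height-preserving over the relevant intervals, conditions (i)--(iii) of Definition~\ref{defn:YComplex} are preserved (the bundle map $\ell$ on $T(\Lambda \times K\vert_{(-6,0)})$ does not move at all, and the compatibility condition~(\ref{item:11}) involves only $\partial e$, which is unchanged since $j_t$ fixes a neighbourhood of the core which contains the boundary locus $(-6,-2) \times \bR^{d-l-1} \times S^l$ near $C$\,---\,actually one must be slightly careful here, as the boundary is $\Lambda \times (-6,-2) \times \bR^{d-l-1} \times S^l$, not contained in a neighbourhood of $C$; so instead I should arrange $j_t$ to preserve the boundary submanifold setwise and be the identity on it, which is easy). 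Condition~(\ref{it:EnoughSurgeryDataOb}) involves only $\ell$ and the surgered manifold $\overline{M}_i$, which depend only on the germ of $e$ near the core, so this is preserved too. Second, as in the earlier proof: any compact family $g : S^n \to |\widetilde{D}^{\kappa,l}_{\theta,L}(\bR^N)_{\bullet,\bullet}|$ lands in a finite skeleton, hence involves finitely many pieces of surgery data with pairwise disjoint cores, so there is $\epsilon > 0$ such that the $\epsilon$-neighbourhoods of all cores appearing in the family are pairwise disjoint and disjoint from $\bR \times L$. Composing with the deformation for $t$ large enough pushes all the (now fattened) embeddings inside those $\epsilon$-neighbourhoods, hence they become genuinely disjoint embeddings, landing the family in $|{D}^{\kappa,l}_{\theta,L}(\bR^N)_{\bullet,\bullet}|$; the same works relatively, so all relative homotopy groups vanish and the inclusion is a weak equivalence. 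Since this works in each bidegree (the deformation is bisimplicial, being given by precomposition which commutes with the face maps forgetting surgery data), it works on geometric realisation.

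\textbf{Main obstacle.}
The one genuinely new point compared to Proposition~\ref{prop:TildeIncIsEq} is the bundle map $\ell$: I must make sure the deformation does not disturb it. The cleanest way is to note that $\ell$ is data on $T(\Lambda \times K\vert_{(-6,0)})$, a manifold abstract from the embedding, and the deformation acts only on $e$, leaving $\ell$ fixed; the only interaction is the compatibility condition (iii)/(\ref{item:11}) relating $\ell_W \circ D(\partial e)$ to the restriction of $\ell$, which is preserved because $\partial e$ is. So in fact the tangential-structure bookkeeping causes no trouble, and the proof is essentially identical to the untangentially-structured case; the remark in the text that it is ``slightly more complicated'' refers precisely to having to verify this non-interaction carefully. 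I would also remark that the identical argument, mutatis mutandis (using the shrinking isotopy in the $\bR^n$-direction of $\bR \times (-6,-2) \times \R^n \times D^n$, fixing the core $C$ and the extendibility-relevant region), proves the analogous statement for $\widetilde{D}^{n-1,\mathcal{A}}_{\theta,L}(\bR^N)_{\bullet,\bullet}$ needed in the middle-dimensional case.
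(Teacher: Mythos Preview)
There is a genuine gap at the point where you handle the compatibility condition~(\ref{item:11}). You correctly notice midway that the boundary locus $(-6,-2)\times\bR^{d-l-1}\times S^l$ is not contained in any neighbourhood of the core $C=(-6,-2)\times\{0\}\times D^{l+1}$, but your proposed fix---arranging $j_t$ to be the identity on this boundary---is incompatible with the requirement that $j_t$ eventually have image in the $(1/t)$-neighbourhood of $C$. Any shrinking isotopy into a small neighbourhood of $C$ must move points of the boundary with large $\bR^{d-l-1}$-coordinate, so $\partial e$ genuinely changes under the deformation, and condition~(\ref{item:11}) is \emph{not} preserved if $\ell$ is left fixed.

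The paper's resolution is to deform $\ell$ as well. One picks a shrinking isotopy $\psi_t:\bR^{d-l-1}\to\bR^{d-l-1}$ fixing the origin, and applies it in the $\bR^{d-l-1}$ direction both to the domain of $e$ and to the manifold $K\vert_{(-6,0)}$ on which $\ell$ is defined. For the latter one applies $\psi_t$ on $h^{-1}((-6,-2])$, takes the identity on $h^{-1}((-\sqrt{2},0))$ (near the critical point of $h$, where $K$ is no longer a product with $\bR^{d-l-1}$), and interpolates in between. Precomposing $\ell$ with the resulting self-embedding of $K\vert_{(-6,0)}$ keeps condition~(\ref{item:11}) valid, since on $K\vert_{(-6,-2)}$ both $e$ and $\ell$ are now precomposed with the same map. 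This is exactly the ``slightly more complicated'' step the paper alludes to: it is not a matter of verifying non-interaction, but of actively moving the tangential-structure data in step with the embedding.
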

\begin{proof}
  This is very similar to Proposition \ref{prop:TildeIncIsEq}.  We
  pick an isotopy of maps $\psi_t: \R^{d-l-1} \to \R^{d-l-1}$, $t \in
  [0,\infty)$ which starts at the identity, has $\psi_t(0) = 0$ for
  all $t$, and has image in the ball of radius $1/t$ for all $t$.
  Applying $\psi_t$ in the $\R^{d-l-1}$ direction gives an isotopy of
  self-embeddings of $\Lambda \times (-6,-2) \times \R^{d-l-1} \times D^{l+1}$.  Similarly, we can
  get an isotopy of self-embeddings of the manifold $K\vert_{(-6,0)}$
  from Section~\ref{sec:ObSurgStdFam}, which applies $\psi_t$ in the
  $\R^{d-l-1}$ direction on $h^{-1}((-6,-2])$, is the identity on
  $h^{-1}((-\sqrt{2},0))$, and interpolates inbetween.  Precomposing
  with these isotopies gives a homotopy of self-maps of
  $\widetilde{D}^{\kappa, l}_{\theta, L}(\bR^N)_{\bullet, \bullet}$,
  which eventually deforms any compact space into ${D}^{\kappa,
    l}_{\theta, L}(\bR^N)_{\bullet, \bullet}$.
\end{proof}

Therefore it is enough to show that for each $p$, the augmentation map
$$\widetilde{D}^{\kappa, l}_{\theta, L}(\bR^N)_{p, \bullet} \lra
{D}^{\kappa, l-1}_{\theta, L}(\bR^N)_{p}$$ which forgets all surgery
data induces a weak homotopy equivalence after geometric realisation,
which we do by establishing the conditions of Theorem
\ref{thm:SimplicialTech}.  The proofs that conditions (\ref{item:4}) and
(\ref{it:2b}) hold are very similar to the analogous case in
Section~\ref{sec:proof-theor-refthm:s}, so we consider those first.

\begin{proposition}
  The map $\widetilde{D}^{\kappa, l}_{\theta, L}(\bR^N)_{p, 0} \to
  {D}^{\kappa, l-1}_{\theta, L}(\bR^N)_{p}$ has local sections.
\end{proposition}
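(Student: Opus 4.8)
The statement is the analogue, for surgery on objects below the middle dimension, of Proposition~\ref{prop:LocalSections}, and the plan is to run the same argument together with one extra step that transports the bundle-map part of the surgery data. So suppose given a point of $\widetilde{D}^{\kappa, l}_{\theta, L}(\bR^N)_{p,0}$, consisting of $(a, \epsilon, (W, \ell_W)) \in D^{\kappa, l-1}_{\theta, L}(\bR^N)_p$ together with a tuple $(\Lambda, \delta, e, \ell) \in \widetilde{Y}_0(a, \epsilon, (W, \ell_W))$. First I would choose regular values $t_0 < a_0 - \epsilon_0$ and $t_1 > a_p + \epsilon_p$ of $x_1 : W \to \bR$ so that $e(\Lambda \times (-6,-2) \times \bR^{d-l-1} \times D^{l+1})$ lies in $(t_0, t_1) \times (0,1) \times (-1,1)^{N-1}$, and a connected open neighbourhood $U$ of $(a, \epsilon, (W, \ell_W))$ on which $t_0, t_1$ remain regular and the intervals $[a_i - \epsilon_i, a_i + \epsilon_i]$ still consist of regular values. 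Writing $M = W\vert_{[t_0,t_1]}$, the fibres of the graph projection $\pi : \Gamma\vert_{[t_0,t_1]} \to U$ are all diffeomorphic to $M$, so exactly as in Proposition~\ref{prop:LocalSections} one obtains, after shrinking $U$, a lift $\widetilde F : U \to \Emb_\partial(M, [t_0,t_1] \times (-1,1)^N)$ of the classifying map $U \to \Emb_\partial(M, [t_0,t_1]\times(-1,1)^N)/\Diff(M)$, normalised so that its value at the basepoint is the inclusion.

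Applying the generalisation of Cerf's first isotopy and extension theorem from the proof of Proposition~\ref{prop:LocalSections}, with closed set $C = \bigcup_{i=0}^p [a_i - \epsilon_i, a_i + \epsilon_i]$, then produces, after possibly shrinking $U$ again, a continuous map $\varphi : U \to \Diff_c([t_0,t_1]\times(-1,1)^N)$ which is height-function preserving over $C$, satisfies $\varphi_u(M) = \pi^{-1}(u)$, and has $\varphi_u = \mathrm{id}$ at the basepoint (this last point using the normalisation of $\widetilde F$). Writing $(W_u, \ell_{W_u})$ for the $\theta$-manifold underlying $u \in U$, set $e_u := \varphi_u \circ e$, extended by the identity outside $[t_0,t_1]\times(-1,1)^N$. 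Since $\varphi_u$ carries $W\vert_{[t_0,t_1]}$ onto $W_u\vert_{[t_0,t_1]}$ and $e$ has image in the interior of this region, $e_u^{-1}(W_u) = e^{-1}(W) = \Lambda \times (-6,-2)\times\bR^{d-l-1}\times S^l$, which is condition~(i) of Definition~\ref{defn:YComplex}; the height-function condition~(\ref{it:YCxHeightFun}) holds for $e_u$ and all $u$ because $\varphi_u$ preserves $x_1$ over $C$; and condition~(\ref{it:EnoughSurgeryDataOb}), being open, holds after a further shrinking of $U$.

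It remains to choose a bundle map $\ell_u : T(\Lambda \times K\vert_{(-6,0)}) \to \theta^*\gamma$, depending continuously on $u$, equal to $\ell$ at the basepoint, and restricting over $T(\Lambda \times K\vert_{(-6,-2)})$ to $\beta_u := \ell_{W_u} \circ D(\varphi_u \circ \partial e)$, as demanded by condition~(\ref{item:11}). The assignment $u \mapsto \beta_u$ is continuous into $\Bun(T(\Lambda \times K\vert_{(-6,-2)}), \theta^*\gamma)$, and at the basepoint $\varphi_u = \mathrm{id}$, so $\beta$ there equals $\ell_W \circ D\partial e$, which by condition~(\ref{item:11}) for the given data is the restriction of $\ell$. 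Now the restriction map
\begin{equation*}
  \mathrm{res} : \Bun(T(\Lambda \times K\vert_{(-6,0)}), \theta^*\gamma) \lra \Bun(T(\Lambda \times K\vert_{(-6,-2)}), \theta^*\gamma)
\end{equation*}
along the codimension-zero submanifold $\Lambda \times K\vert_{(-6,-2)} \subset \Lambda \times K\vert_{(-6,0)}$ is a fibration, so I would lift $u \mapsto \beta_u$ through $\mathrm{res}$ to a continuous $u \mapsto \ell_u$ with $\ell_u = \ell$ at the basepoint (shrinking $U$ once more so this is possible). Then $u \mapsto (\Lambda, \delta, e_u, \ell_u)$ is a continuous section of $\widetilde{D}^{\kappa, l}_{\theta, L}(\bR^N)_{p,0} \to D^{\kappa, l-1}_{\theta, L}(\bR^N)_p$ over $U$ taking the prescribed value at $(a,\epsilon,(W,\ell_W))$, which is condition~(\ref{item:4}) of Theorem~\ref{thm:SimplicialTech}.

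The genuinely new ingredient compared with Proposition~\ref{prop:LocalSections} is precisely this last step, transporting the $\theta$-structure $\ell$ on the trace $\Lambda \times K\vert_{(-6,0)}$ along with the moving boundary restriction $\beta_u$, and I expect the main (though mild) obstacle to be supplying the point-set input it needs: that $\mathrm{res}$ is a Hurewicz fibration (the standard fact that restriction of bundle maps along a closed submanifold inclusion is a fibration) and that a sufficiently small $U$ admits its basepoint as a cofibration, or, if one prefers to avoid that, re-deriving the lift of $u \mapsto \beta_u$ by an explicit obstruction-theoretic argument using that $K\vert_{(-6,0)}$ is obtained from $K\vert_{(-6,-2)}$ by attaching a single $(l+1)$-handle with $l+1 < d$. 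Everything else is a transcription of the morphism case.
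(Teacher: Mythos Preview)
Your approach is correct and matches the paper's own proof, which is simply the one-line ``Exactly as in the proof of Proposition~\ref{prop:LocalSections}.'' You have in fact done more than the paper by spelling out the transport of the bundle map $\ell$, which the paper leaves implicit.

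One small correction: the inclusion $K\vert_{(-6,-2)} \hookrightarrow K\vert_{(-6,0)}$ is \emph{open}, not closed, so your appeal to ``restriction of bundle maps along a closed submanifold inclusion is a fibration'' does not apply as stated. This is easily repaired: since $\beta_u = \ell_{W_u} \circ D(\varphi_u \circ \partial e)$ is the pullback of a bundle map defined on the compact manifold $W_u\vert_{[t_0,t_1]}$, it extends continuously to the closed submanifold $\Lambda \times K\vert_{(-6,-2]}$, and restriction from $K\vert_{(-6,0)}$ to $K\vert_{(-6,-2]}$ \emph{is} a fibration (the inclusion of a closed collared submanifold is a cofibration). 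Lifting through that and then restricting back to $K\vert_{(-6,-2)}$ gives the required $\ell_u$. Your own suggested alternative, via the handle attachment, would also work.
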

\begin{proof}
  Exactly as in the proof of Proposition \ref{prop:LocalSections}.
\end{proof}

\begin{proposition}
  Fix a point $(a, \epsilon, (W, \ell_W)) \in D^{\kappa, l-1}_{\theta,
    L}(\bR^N)_p$, and let $v_1$, \ldots, $v_k \in \widetilde{Y}_0(a,
  \epsilon, (W, \ell_W))$ be a non-empty collection of pieces of surgery data. Then, if $2(l+1) < d$ and $l+2+d < N$, there exists a piece
  of surgery data $v \in \widetilde{Y}_0(a, \epsilon, (W, \ell_W))$ such that
  each $(v_i, v)$ is a 1-simplex.
\end{proposition}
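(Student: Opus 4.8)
The plan is to mimic the proof of Proposition~\ref{prop:MorSurgA} essentially verbatim, taking care of the extra $\theta$-structure data that the surgery elements now carry. First I would recall that each $v_j$ consists of a finite set $\Lambda^j \subset \Omega$, a function $\delta^j : \Lambda^j \to [p]$, a smooth map $e^j : \Lambda^j \times (-6,-2) \times \bR^{d-l-1} \times D^{l+1} \to \bR \times (0,1) \times (-1,1)^{N-1}$ restricting to an embedding near the core, and a bundle map $\ell^j : T(\Lambda^j \times K\vert_{(-6,0)}) \to \theta^*\gamma$, subject to conditions (i)--(iv) of Definition~\ref{defn:YComplex}. To produce the required $v$, I would pick a set $\Lambda$ disjoint from all $\Lambda^j$ together with a bijection $\phi : \Lambda \to \Lambda^1$, set $\delta = \delta^1 \circ \phi$, and transport the data of $v_1$ along $\phi$: let $\tilde{e} = e^1 \circ (\phi \times \mathrm{id})$ and $\ell = \ell^1 \circ (D\phi)$. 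This is a new element of $\widetilde{Y}_0(a, \epsilon, (W, \ell_W))$, but it is literally a relabelled copy of $v_1$ and so not orthogonal to it.

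Next I would perturb $\tilde e$ (keeping $\ell$ fixed, since the tangential structure data lives on the abstract manifold $K$ and is unaffected by an isotopy of the embedding) to a map $e$ whose core is in general position with respect to the cores of all the $v_j$. Concretely, $\tilde e$ restricts on the boundary to a map $\Lambda \times (-6,-2) \times \bR^{d-l-1} \times S^l \to W$; I would first perturb this so that the core piece $\Lambda \times (-6,-2) \times \{0\} \times S^l$ is transverse in $W$ to the corresponding pieces of the other embeddings (and remains disjoint from $\bR \times L$), then extend the perturbation to all of $\Lambda \times (-6,-2) \times \bR^{d-l-1} \times D^{l+1}$ so that the restriction to (a neighbourhood of) the interior of $C$ is transverse to the cores of the other $v_j$. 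In the first step we are making manifolds of dimension $(d-l-1) + 1 = d-l$ transverse in the $d$-manifold $W$ (actually it is cleaner to note that the relevant intersection is of the $l$-spheres, giving $(l+1)$-dimensional pieces after thickening, but the key point is dimension count), and in the second step we are separating $(l+2)$-dimensional cores in the $(N+1)$-dimensional ambient manifold. Since $2(l+1) < d$ forces $2(l+1) \leq d-1 < d$ and $2(l+2) = 2l+4 \leq l + d + 2 < N+1$ (using $l < d$), the perturbed cores become disjoint, hence so do suitable neighbourhoods of them, producing the desired $v \in \widetilde{Y}_0(a, \epsilon, (W, \ell_W))$ with each $(v_i, v)$ a $1$-simplex.

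Finally I would observe that condition (iv) of Definition~\ref{defn:YComplex} (the resulting surgered $\theta_{d-1}$-manifold $\overline{M}_i$ has $\pi_k(\overline{M}_i) \to \pi_k(B)$ injective for $k \leq l$) holds for $v$ because it held for $v_1$: the surgered manifold depends only on the isotopy class of the embedding near the core together with the bundle map $\ell$, both of which $v$ shares with (the relabelling of) $v_1$ up to isotopy, and since small perturbations of an embedding are isotopic, $\overline{M}_i$ for $v$ is diffeomorphic to that for $v_1$ preserving $\theta$-structure up to homotopy. The main obstacle, as in Proposition~\ref{prop:MorSurgA}, is simply to organise the dimension counts so that the general-position argument succeeds while keeping all the constraints of Definition~\ref{defn:YComplex}; there is nothing genuinely new here beyond carrying the bundle map $\ell$ passively along, which is harmless since it is not perturbed.
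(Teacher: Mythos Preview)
Your approach is essentially the same as the paper's, which also copies $v_1$ and perturbs its core into general position using the same dimension counts. One small correction: you cannot literally keep $\ell$ fixed, since condition~(iii) of Definition~\ref{defn:YComplex} forces $\ell\vert_{K\vert_{(-6,-2)}} = \ell_W \circ D(\partial e)$, so perturbing $e$ forces you to adjust $\ell$ on $K\vert_{(-6,-2)}$ and then re-extend over $K\vert_{(-6,0)}$ by homotopy extension; after this the $\theta$-surgery data is isotopic to that of $v_1$ and your verification of condition~(iv) goes through. (Also, your first dimension count should read $(l+1)$, as you note parenthetically; the paper in addition records the intermediate step of making the interior of the core miss $W$, which uses $(l+2)+d < N+1$.)
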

\begin{proof}
  This is essentially the same as Proposition \ref{prop:MorSurgA}:
  first we let $v = v_1$, then we perturb it to have its cores
  transverse to the cores of all the $v_j$. We first do the
  perturbation on the part of the cores inside $W$. On the boundary
  the cores are $(l+1)$-dimensional, so disjoint when they are
  transverse as $2(l+1) < d$. We now make sure the cores intersect $W$
  only on their boundary, which is possible as $l+2 + d < N$. We
  finally make sure that the cores are also disjoint on their
  interiors, which is possible as $(l+2) + (l+2) \leq (l+2) + d < N$.
\end{proof}

Finally, we establish condition~(\ref{it:2a}).

\begin{proposition}\label{prop:YNonEmpty}
  $\widetilde{Y}_0(a, \epsilon, (W, \ell_W))$ is non-empty as long as
  $2(l+1) < d$, $l \leq \kappa$, $l+2+d < N$,  $L$ admits a handle decomposition only using handles of index $< d-l-1$, and the map $\ell_L : L \to B$ is $(l+1)$-connected.
\end{proposition}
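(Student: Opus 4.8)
The plan is to produce an explicit element of $Y_0(a,\epsilon,(W,\ell_W))\subseteq\widetilde{Y}_0(a,\epsilon,(W,\ell_W))$, assembled from data chosen one object level at a time. Fix $i\in\{0,\dots,p\}$ and write $M=W\vert_{a_i}$; this is an object of $\mathcal{C}^{\kappa,l-1}_{\theta,L}(\bR^N)$, so $\pi_j(M)\to\pi_j(B)$ is injective for $j\le l-1$, which together with the hypothesis that $\ell_L\colon L\to B$ is $(l+1)$-connected says precisely that $(M,L)$ is $(l-1)$-connected. I would first record three homotopy-theoretic observations. (a) Since $\ell_M$ restricts to $\ell_L$ on $L$, the composite $\pi_j(L)\to\pi_j(M)\to\pi_j(B)$ is the isomorphism $\ell_{L*}$ for $j\le l$; combined with injectivity of $\pi_{l-1}(M)\to\pi_{l-1}(B)$ this forces $\pi_{l-1}(L)\to\pi_{l-1}(M)$ to be injective, so the boundary map $\pi_l(M,L)\to\pi_{l-1}(L)$ vanishes and $\pi_l(M,L)\cong\pi_l(M)/j_*\pi_l(L)$, where $j\colon L\hookrightarrow M$; moreover $\pi_l(M)\to\pi_l(B)$ is onto (already on $\pi_l(L)$), so every class in $\pi_l(M)/j_*\pi_l(L)$ is represented by an element of $\ker(\pi_l(M)\to\pi_l(B))$. (b) The module $\pi_l(M,L)\cong\pi_l(M)/j_*\pi_l(L)$ is finitely generated over $\bZ[\pi_1 M]$: for $l\ge2$ this is the relative Hurewicz theorem applied to the finite relative CW pair $(M,L)$ (using that $\pi_1 L\to\pi_1 M$ is an isomorphism), and for $l\le1$ the analogous finiteness is elementary, being a statement about the finitely many handles of the compact cobordism $M\setminus\Int(L)$. (c) Because $L$ has a handle decomposition using only handles of index $<d-l-1$, the dual decomposition builds $L$ from a collar of $\partial L$ using handles of index $\ge l+1$; hence $M$ is built from $M\setminus\Int(L)$ by attaching handles of index $\ge l+1$, so $M\setminus\Int(L)\hookrightarrow M$ is $l$-connected and in particular $\pi_l(M\setminus\Int(L))\to\pi_l(M)$ is onto.

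Now choose a finite $\bZ[\pi_1 M]$-module generating set of $\pi_l(M)/j_*\pi_l(L)$, lift each generator into $\ker(\pi_l(M)\to\pi_l(B))$ using (a), and represent it by a map $S^l\to M$; by (c) homotope these maps into $M\setminus\Int(L)$ and then into $M\cap x_2^{-1}(0,\infty)$ using the collar of $\partial L$, so that they are automatically disjoint from $\bR\times L$. Since $2l<d-1$, general position makes them finitely many disjoint embeddings $\phi_\alpha\colon S^l\hookrightarrow M\cap x_2^{-1}(0,\infty)$. Each composite $S^l\to M\to B$ is null-homotopic by construction, so $\epsilon^1\oplus\phi_\alpha^*TM$ is trivial, whence the normal bundle $\nu_\alpha$ of $\phi_\alpha$ in $M$ is stably trivial; as its rank is $(d-1)-l>l$ it is trivial, so we may thicken to framed embeddings $\psi_\alpha\colon\bR^{d-l-1}\times S^l\hookrightarrow M\cap x_2^{-1}(0,\infty)$. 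Choosing null-homotopies of the maps $S^l\to B$ determines $\theta$-structures on the traces $K\vert_{(-6,0)}$ of the corresponding $l$-surgeries, compatible with $\ell_W$ along the boundary as in Section~\ref{sec:ThetaSurgery} (no reversibility is needed here). Performing these surgeries on $M$ yields $\overline{M}$; since the $[\psi_\alpha]$ generate $\pi_l(M)/j_*\pi_l(L)$ as a module, $\pi_l(L)\to\pi_l(\overline{M})$ (which is $\pi_l(M)$ modulo the submodule generated by the $[\psi_\alpha]$) is onto, and $\overline{M}$ is again an object of $\mathcal{C}^{\kappa,l-1}_{\theta,L}(\bR^N)$ because attaching $(l+1)$-cells and removing $(d-l-1)$-handles does not change $\pi_{\le l-1}$ (as $d-l-1>l-1$). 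Applying the argument of (a) to $\overline{M}$ then gives $\pi_l(\overline{M},L)\cong\pi_l(\overline{M})/j_*\pi_l(L)=0$, which by the hypothesis on $\ell_L$ is equivalent to $\pi_k(\overline{M})\to\pi_k(B)$ being injective for $k\le l$; this is condition~\ref{it:EnoughSurgeryDataOb}.

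Finally, I would run this construction for all $i=0,\dots,p$ with pairwise disjoint index sets $\Lambda_i\subset\Omega$, extend each $\psi_\alpha$ to the full surgery tube $e_{i,0}$ on $\Lambda_i\times(a_i-\epsilon_i,a_p+\epsilon_p)\times\bR^{d-l-1}\times D^{l+1}$ with the required height-function behaviour over the intervals $(a_k-\epsilon_k,a_k+\epsilon_k)$, keeping all the tubes disjoint from one another and from $\bR\times L$ — possible since the cores are at most $(l+2)$-dimensional and $l+2+d<N$, exactly as in the last part of the proof of Proposition~\ref{prop:MorSurgB} — and then set $\Lambda=\coprod_i\Lambda_i$, $\delta(\Lambda_i)=(i,0)$, $e=\coprod_ie_{i,0}$ and $\ell=\coprod_i\ell_{i,0}$. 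The resulting tuple lies in $Y_0(a,\epsilon,(W,\ell_W))$, proving it non-empty.

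I expect the crux to be observation (b) together with the need to realise the surgery data \emph{simultaneously} as finitely many embeddings, disjoint from $L$, and with null-homotopic (hence framed) normal data. Disjointness from $L$ is the subtle point, and is precisely what the hypothesis on the handle indices of $L$ buys us, via the dual-handle argument in (c); finiteness in (b) is where compactness of $M$ enters; and the low-dimensional cases $l\in\{0,1\}$, though genuinely easier since they reduce to connectivity statements about $M\setminus\Int(L)$, must be fitted into the same template.
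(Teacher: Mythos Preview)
Your construction of the surgery data at each individual level $W\vert_{a_i}$ is correct and closely parallels the paper's: observations (a)--(c) reproduce the paper's splitting and finite-generation arguments, and your treatment of the normal bundle (stably trivial, hence trivial since its rank $d-1-l$ exceeds $l$) is a clean alternative to the paper's explicit Smale--Hirsch step.

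There is, however, a genuine gap in the final paragraph. The surgery tube $e_{i,0}$ must satisfy $e_{i,0}^{-1}(W)=\Lambda_i\times(a_i-\epsilon_i,a_p+\epsilon_p)\times\bR^{d-l-1}\times S^l$, so the $S^l$-boundary part of the tube must land \emph{inside $W$} over the entire interval $(a_i-\epsilon_i,a_p+\epsilon_p)$. You have only constructed $\psi_\alpha:\bR^{d-l-1}\times S^l\hookrightarrow W\vert_{a_i}$ at the single level $a_i$; to extend through each subsequent cobordism $W\vert_{[a_j,a_{j+1}]}$ you must push the embedded $S^l\subset W\vert_{a_j}$ up to $W\vert_{a_{j+1}}$, and this is exactly where the hypothesis $l\le\kappa$ is needed: the obstruction lies in $\pi_l(W\vert_{[a_j,a_{j+1}]},W\vert_{a_{j+1}})$, which vanishes because morphisms are $\kappa$-connected relative to their outgoing boundary. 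Your justification (``possible since the cores are at most $(l+2)$-dimensional and $l+2+d<N$'') addresses only the ambient extension of the $D^{l+1}$-cap away from $W$, and your reference to the \emph{last} part of Proposition~\ref{prop:MorSurgB} points only to that ambient step---the relevant analogue is rather the \emph{middle} part of that proof, where $\pi_{\kappa-1}(W\vert_{[a_{i-1},a_i]},W\vert_{a_i})=0$ is used to propagate $\partial D^\kappa$ upward. Without this, you never use $l\le\kappa$, and the proposed element does not satisfy condition~(i) of Definition~\ref{defn:YComplex}. The paper carries this out explicitly: extend $f_i\vert_{a_j}$ to a continuous map over $[a_j,a_{j+1}]$ using $l\le\kappa$, then immerse and embed the $(l+1)$-dimensional core using $2(l+1)<d$, keep it disjoint from $[a_j,a_{j+1}]\times L$, and only \emph{then} thicken by $D^{l+1}$ in the ambient $\bR^{N+1}$ using $l+2+d<N$.
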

\begin{proof}
  For each $i$, we consider the map $\pi_*(W\vert_{a_i}) \to
  \pi_*(B)$, induced by the tangential structure.  By assumption, this
  map is injective for $* \leq
  l-1$.  Since $\{a_i\} \times L \subset W\vert_{a_i}$, and $L \to B$
  is assumed $(l+1)$-connected, we
  deduce that the map $L \to W\vert_{a_i}$ is $(l-1)$-connected,
  $W\vert_{a_i} \to B$ is $l$-connected, that
  \begin{equation}
    \label{eq:14}
    \pi_l(W\vert_{a_i}) \lra \pi_l(B) \approx \pi_l(L)
  \end{equation}
  is split surjective, and that $\pi_l(L) \to \pi_l(W\vert_{a_i})$ is split
  injective.  We first claim that the kernel of~\eqref{eq:14} is
  finitely generated as a module over $\pi_1(L)$ (interpreted
  appropriately when $l=0$ and $l=1$; we shall leave the necessary
  modifications of the following argument in those two cases to the
  reader).  Since the kernel is isomorphic to the cokernel of the
  splitting, we deduce the exact sequence
  \begin{equation}
    \label{eq:15}
    \pi_l(W\vert_{a_i},L) \lra \pi_l(W\vert_{a_i}) \lra \pi_l(B) \lra 0.
  \end{equation}
As $(W\vert_{a_i},L)$ is $(l-1)$-connected, we can find a relative CW-complex $(K,L)$, where $K$ is built from $L$ by attaching only cells of dimension $\geq l$, and a weak homotopy equivalence $p: (K, L) \to (W\vert_{a_i},L)$. Since $(W\vert_{a_i}, L)$ has the homotopy type of a CW pair, this map has a homotopy inverse $q: (W\vert_{a_i}, L) \to (K,L)$, and since $W\vert_{a_i}$
  is compact, its image in $K$ is contained in a finite subcomplex $K'
  \subset K$. Then $\pi_l(K') \to \pi_l(W\vert_{a_i})$ is
  surjective.  Since $\pi_l(W\vert_{a_i}) \to \pi_l(W\vert_{a_i},L)$
  is also surjective (as $\pi_{l-1}(L) \to \pi_{l-1}(W\vert_{a_i})$ is
  split injective), we conclude that $\pi_l(K', L) \to
  \pi_l(W\vert_{a_i},L)$ is surjective, and hence that
  $\pi_l(W\vert_{a_i},L)$ is a finitely generated module over
  $\pi_1(L)$, as claimed.
  
  Let $\{\hat{f}_\alpha : S^l \to W\vert_{a_i}\}_{\alpha \in
    \Lambda_i}$ be a finite collection of elements which generate the
  kernel of $\pi_l(W\vert_{a_i}) \to \pi_l(B)$, where $\Lambda_i \subset \Omega$ are disjoint subsets.  As the vector bundle
  $\epsilon^1 \oplus TW\vert_{a_i}$ is pulled back from $B$, it
  becomes trivial when pulled back via $\hat{f}_\alpha$ so we can pick
  an isomorphism $\epsilon^1 \oplus \hat{f}_\alpha^*(TW\vert_{a_i})
  \cong \epsilon^{d}$.  As $l+1 < d$ this isomorphism can be
  destabilised to an isomorphism $\hat{f}_\alpha^*(TW\vert_{a_i})
  \cong \epsilon^{d-1} \cong \epsilon^{d-l-1} \oplus TS^l$ and by
  Smale--Hirsch theory $\hat{f}_\alpha$ is then homotopic to an
  immersion with trivial normal bundle.  We can make this immersion
  self-transverse, and as $2l < d-1$ it is then an embedding with
  trivial normal bundle.  Thus each $\hat{f}_\alpha$ gives rise to an
  embedding ${f}_\alpha : \bR^{d-l-1} \times S^l \hookrightarrow
  W\vert_{a_i}$ representing the same homotopy class.  As $2l < d-1$
  we may also assume that the ${f}_\alpha$ are disjoint, so we obtain
  an embedding
  $${f}_i\vert_{a_i} : \Lambda_i \times \{a_i\} \times \bR^{d-l-1}
  \times S^l \hookrightarrow W\vert_{a_i},$$
  and as $L$ only has handles of index $< d-l-1$, we may suppose this embedding is disjoint from $L$.
  As $l+1 + d < N$, this
  extends to an embedding
  $${e}_i \vert_{a_i} : \Lambda_i \times \{a_i\} \times \bR^{d-l-1}
  \times D^{l+1} \hookrightarrow \{a_i\} \times (0,1) \times(-1,1)^{N-1}$$ which
  intersects $W\vert_{a_i}$ precisely on the boundary.  Furthermore,
  as each $S^l \overset{\hat{f}_\alpha}\to W\vert_{a_i} \to B$ is
  null-homotopic, the $\theta$-structure $\ell\vert_{a_i} \circ
  D{f}_\alpha$ extends to a $\theta$-structure on $\bR^{d-l-1} \times
  D^{l+1}$ and so gives ${f}_i\vert_{a_i}$ the data of a
  $\theta$-surgery, cf.\ Section \ref{sec:ThetaSurgery}.

  We can extend the map ${e}_i \vert_{a_i}$ to an embedding $\Lambda_i
  \times (a_i-\epsilon_i, a_i+\epsilon_i) \times \bR^{d-l-1} \times
  D^{l+1} \hookrightarrow (a_i-\epsilon_i, a_i+\epsilon_i) \times (0,1) \times
  (-1,1)^{N-1}$ using just the cylindrical structure of $W$ over
  $(a_i-\epsilon_i, a_i+\epsilon_i)$, but we wish to extend it to an
  embedding of $\Lambda_i \times (a_i-\epsilon_i, a_p+\epsilon_p)
  \times \bR^{d-l-1} \times D^{l+1}$, which is cylindrical over each
  $(a_j-\epsilon_j, a_j+\epsilon_j)$ and intersects $W$ precisely on
  the boundary. We will do this by extending it step-by-step over each
  interval $[a_j, a_{j+1}]$: if it is defined up to $a_j$ we have an
  embedding
  $${e}_i \vert_{a_j} : \Lambda_i \times \{a_j\} \times \bR^{d-l-1}
  \times D^{l+1} \hookrightarrow \{a_j\} \times (0,1) \times (-1,1)^{N-1},$$ and as the
  pair $(W \vert_{[a_j, a_{j+1}]}, W\vert_{a_{j+1}})$ is
  $\kappa$-connected, and $l\leq \kappa$, on the boundary this extends
  to a continuous map
  $${f}_i \vert_{[a_j, a_{j+1}]} : \Lambda_i \times [a_j, a_{j+1}]
  \times \bR^{d-l-1} \times S^l \lra W \vert_{[a_j, a_{j+1}]}.$$ By
  the Smale--Hirsch argument above, we may perturb this to be a
  self-transverse immersion of the core, and hence an embedding of the
  core as $2(l+1) < d$, while keeping it as it was near
  $a_j$. Shrinking in the $\bR^{d-l-1}$-direction, we can ensure that
  it is an embedding of the whole manifold, and then make the
  embedding cylindrical over the necessary $\epsilon$-neighbourhood of
  the ends and disjoint from $[a_j, a_{j+1}] \times L$. Finally, as $l+2 + d < N$ we may extend this to an
  embedding
  $${e}_i \vert_{[a_j, a_{j+1}]} : \Lambda_i \times [a_j, a_{j+1}]
  \times \bR^{d-l-1} \times D^{l+1} \hookrightarrow [a_j, a_{j+1}]
  \times (0,1) \times (-1,1)^{N-1}$$ which is cylindrical over each $(a_j-\epsilon_j,
  a_j+\epsilon_j)$ and intersects $W$ precisely on the boundary. In
  total we obtain an embedding
  $${e}_i : \Lambda_i \times (a_i-\epsilon_i, a_p+\epsilon_p) \times
  \bR^{d-l-1} \times D^{l+1} \hookrightarrow (a_i-\epsilon_i,
  a_p+\epsilon_p) \times (0,1) \times (-1,1)^{N-1}$$ which is cylindrical over each
  $(a_j-\epsilon_j, a_j+\epsilon_j)$ and intersects $W$ precisely on
  the boundary. Furthermore, by doing the above in increasing order of
  $i$, we can ensure that the different $e_i$ have disjoint
  cores: while constructing $e_i$ make sure that its core stays
  disjoint from those of the $e_j$ for all $j < i$, which is possible
  as $2(l+1) < d$ and $2(l+2) < N$.
  
  We let $\Lambda = \coprod_{i=0}^p \Lambda_{i}$,  $\delta : \Lambda \to [p] \times [0]$ be given by $\delta(\Lambda_i) = (i,0)$, and $e$ be given by $\coprod_{i=0}^p e_i$, reparametrised using the $\varphi(a_i, \epsilon_i, a_p, \epsilon_p)$. Then the data $(\Lambda, \delta, e)$ gives the embedding part of the data of an element of
  $\widetilde{Y}_0(a, \epsilon, (W, \ell_W))$, and we must now provide
  the bundle part. Under the chosen diffeomorphism
  $$K \vert_{(-6,-2)} = (-6, -2) \times \bR^{d-l-1} \times S^l
  \cong_{\varphi(a_i, \epsilon_i, a_p, \epsilon_p)} (a_i -\epsilon_i, a_p+\epsilon_p) \times \bR^{d-l-1}
  \times S^l,$$ the embedding $f_i : \Lambda_i \times (a_i-\epsilon_i,
  a_p+\epsilon_p) \times \bR^{d-l-1} \times S^l \hookrightarrow W$
  gives a $\theta$-structure $\ell_i \vert_{(-6,-2)} = \ell_W \circ D
  f_i$ on $\Lambda_i \times K \vert_{(-6,-2)}$. For $\ell_i$ we may
  take any extension of this $\theta$-structure to $\Lambda_i \times
  K$, and so only need to know that such an extension exists. This is
  a purely homotopical problem, and homotopically $K\vert_{(-6,0)}$ is
  obtained from $K \vert_{(-6,-2)}$ by attaching a $D^{l+1}$, so the
  extension problem can be solved if and only if $\ell_W \circ D
  f_i\vert_{a_i} : T(\Lambda_i \times \{a_i\} \times \bR^{d-l-1}
  \times S^l) \to \theta^*\gamma$ extends over $\Lambda_i \times
  \{a_i\} \times \bR^{d-l-1} \times D^{l+1}$, but we have seen above
  that it does, because $\Lambda_i \times \{a_i\} \times \bR^{d-l-1}
  \times S^l \to W\vert_{a_i} \to B$ is null-homotopic.
\end{proof}

\subsection{Proof of Theorem \ref{thm:SurgeryComplexObMid}}

Recall that the statement of the theorem is as follows. We work in
dimension $2n$, and fix a tangential structure $\theta$ which is
reversible (cf.\ Definition \ref{defn:Reversible}), a
$(2n-1)$-manifold with boundary $L$ equipped with $\theta$-structure,
and a collection $\mathcal{A} \subset \pi_0(\Ob(\mathcal{C}_{\theta,
  L}^{n-1, n-2}(\bR^N)))$ of objects. This allows
us to define the augmented bi-semi-simplicial space
$$D_{\theta, L}^{n-1, \mathcal{A}}(\bR^N)_{\bullet, \bullet} \lra
D_{\theta, L}^{n-1, n-2}(\bR^N)_\bullet$$ of surgery data, and the second part of Theorem
\ref{thm:SurgeryComplexObMid} states that if the conditions of Theorem \ref{thm:MidSurgery} are satisfied, then the induced map on geometric realisation is a weak homotopy
equivalence. (We have already proved the first part of Theorem
\ref{thm:SurgeryComplexObMid} in Section \ref{sec:ProofSecondPart}.) We recall that these conditions are:
\begin{enumerate}[(i)]
\item $2n \geq 6$,

\item $3n+1 < N$,

\item $\theta$ is reversible,

\item $L$ admits a handle decomposition only using handles of index $< n$,

\item $\ell_L : L \to B$ is $(n-1)$-connected,

\item the natural map $\mathcal{A} \to \pi_0(B\mathcal{C}_{\theta,
    L}^{n-1, n-2}(\bR^N))$ is surjective.
\end{enumerate}

\noindent Note that the penultimate condition implies
that for any object, the map $M \to B$ induced by the tangential
structure induces a surjection on $\pi_*$ for $* < n$.

In many respects the proof of this theorem is very similar to what we
did in Section~\ref{sec:proof-theor-refthm:s-1}, but in that section
we often used the inequality $2(l+1) < d$ so that pairs of transverse
$(l+1)$-dimensional submanifolds of a $d$-manifold are automatically
disjoint. In Theorem \ref{thm:SurgeryComplexObMid}, $d=2n$ and the
analogue of $l$ is $(n-1)$ so this observation fails. Instead, we will
use a version of the Whitney trick to separate $n$-dimensional
submanifolds of our $2n$-manifolds; this accounts for the restriction
$2n \geq 6$ in the statement of the theorem.

We proceed precisely as in Definition \ref{defn:yComplexExtended} by
for $(a, \epsilon, (W,\ell_W)) \in D_{\theta, L}^{n-1, n-2}(\bR^N)_p$
letting $\widetilde{Y}_\bullet(a, \epsilon, (W,\ell_W))$ be the
analogue of $Y_\bullet(a, \epsilon, (W,\ell_W))$ from Definition
\ref{defn:YComplexMid}, where instead of asking that $e$ be an embedding, we only ask for it to be a smooth map which restricts to an embedding on a neighbourhood of $\Lambda \times C$. We use this to define the bi-semi-simplicial space
$\widetilde{D}_{\theta, L}^{n-1, \mathcal{A}}(\bR^N)_{\bullet,
  \bullet}$, and by the same argument as Proposition
\ref{prop:TildeIncIsEqOb}, the inclusion
$$D_{\theta, L}^{n-1, \mathcal{A}}(\bR^N)_{\bullet, \bullet}
\hookrightarrow \widetilde{D}_{\theta, L}^{n-1,
  \mathcal{A}}(\bR^N)_{\bullet, \bullet}$$ is a weak homotopy
equivalence in each bidegree. We are now left to verify the conditions
of Theorem \ref{thm:SimplicialTech} for the augmented semi-simplicial
spaces
$$\widetilde{D}_{\theta, L}^{n-1, \mathcal{A}}(\bR^N)_{p, \bullet}
\lra D_{\theta, L}^{n-1, n-2}(\bR^N)_p.$$ That the map on 0-simplices
has local sections is proved as in the previous two sections.

\begin{proposition}\label{prop:RunOutOfNames}
  Fix a point $(a, \epsilon, (W, \ell_W)) \in D^{n-1, n-2}_{\theta,
    L}(\bR^N)_p$, and let $v_1$, \ldots, $v_k \in \widetilde{Y}_0(a,
  \epsilon, (W, \ell_W))$ be a non-empty collection of pieces of surgery data.  Then if $2n \geq 6$ and $3n+1 < N$ there exists a piece of
  surgery data $v \in \widetilde{Y}_0(a, \epsilon, (W, \ell_W))$ such that each
  $(v_i, v)$ is a 1-simplex.
\end{proposition}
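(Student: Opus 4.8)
The plan is to follow the template of Proposition~\ref{prop:MorSurgA} and its object-dimensional analogue in Section~\ref{sec:proof-theor-refthm:s-1}: produce $v$ by cloning one of the given $v_i$ and then perturbing it off the cores of all the $v_j$. The only genuinely new ingredient, and the source of the hypothesis $2n \geq 6$, is that now the cores have parts inside $W$ of dimension exactly half of $\dim W = 2n$, so general position alone does not separate them and a Whitney-trick argument is needed.

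Concretely, first I would pick $\Lambda \subset \Omega$ disjoint from the underlying sets of $v_1, \dots, v_k$, together with a bijection onto the underlying set of $v_1$, and transport the data $(e, \ell)$ of $v_1$ across it to obtain $\widetilde v$. Since $\widetilde v$ performs exactly the same $\theta$-surgeries on the $M_i = W\vert_{a_i}$ as $v_1$ does, up to an isotopy of the surgery data, condition~(\ref{it:EnoughSurgeryDataObMid}) of Definition~\ref{defn:YComplexMid} still holds and $\widetilde v \in \widetilde{Y}_0(a, \epsilon, (W, \ell_W))$; but its core is a parallel translate of that of $v_1$, in particular not disjoint from it. Next I would perturb $\widetilde v$, within the class of data allowed by Definition~\ref{defn:YComplexMid} (keeping the height condition~(\ref{it:YComplexMidHeight}), the compatibility~(\ref{it:YComplexMidTheta}), and extendibility), so as to put its core in general position relative to the cores of all the $v_j$. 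It is convenient to treat separately the part of a core lying outside $W$ and the part lying in $W$. The part outside $W$ is a submanifold of the $(N+1)$-dimensional manifold $\bR \times (0,1) \times (-1,1)^{N-1}$ of dimension at most $n+2$, and since $n \geq 3$ we have $2(n+2) \leq 3n+1 < N+1$, so after general position these parts are disjoint. The part of a core in $W$ is an $n$-dimensional submanifold of the $2n$-manifold $W$ with trivial normal bundle (the surgery data supplies the trivialisation); after a further \emph{cylindrical} perturbation we may assume that all intersections of the core of $\widetilde v$ with the cores $B_j := \mathrm{core}(v_j) \cap W$ lie over the complement of the intervals $(a_i - \epsilon_i, a_i + \epsilon_i)$ — over these intervals the cores are products of spheres of dimension $n-1$ inside the $(2n-1)$-manifolds $W\vert_t$, and $2(n-1) < 2n-1$ lets us make them disjoint there — and that these intersections are finite in number and transverse.

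It remains to cancel the finitely many intersection points of the core of $\widetilde v$ with each $B_j$ by the Whitney trick inside $W$; this is where $2n \geq 6$ is used, and one also uses the ambient room $3n+1 < N$ to re-extend the surgery data afterwards, and $2 + n < 2n$ to keep the Whitney discs disjoint from the remaining cores (so each Whitney move is supported in an isotopy of $W$ not disturbing the other $B_k$, and not affecting the regions over $(a_i - \epsilon_i, a_i + \epsilon_i)$, hence not affecting condition~(\ref{it:EnoughSurgeryDataObMid})). For the Whitney trick one needs the algebraic intersection numbers of $\widetilde v$'s core with each $B_j$ to vanish; for $j$ equal to the cloned index this is automatic, as the two submanifolds are parallel copies of one another and have trivial normal bundle, and for the remaining $j$ one must arrange it using the trivial normal bundles together with the freedom in choosing the surgical attaching classes subject to condition~(\ref{it:EnoughSurgeryDataObMid}) and condition~(\ref{it:Afull}) of Theorem~\ref{thm:MidSurgery}. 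Having made the cores disjoint in $W$ as well, I would isotopy-extend, re-establish general position outside $W$, and thicken up to an honest smooth map $e$ restricting to an embedding near the core (there is room since $2n+2 < N+1$), obtaining the required $v \in \widetilde{Y}_0(a, \epsilon, (W, \ell_W))$ with $(v_i, v)$ a $1$-simplex for every $i$; that the height function, the $\theta$-structure, extendibility and membership in $\mathcal{A}$ all survive the process is checked exactly as for Proposition~\ref{prop:StdFamilyObjectsInMid} and Proposition~\ref{prop:MorSurgA}. I expect the Whitney step to be the main obstacle — both the verification that the intersection numbers can be forced to vanish without violating condition~(\ref{it:EnoughSurgeryDataObMid}) and the $\pi_1$-bookkeeping needed to produce embedded Whitney discs — while the rest is a routine adaptation of the two preceding propositions.
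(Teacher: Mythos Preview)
Your overall strategy---clone $v_1$, perturb to general position, then remove the remaining intersections inside $W$ by a Whitney-type argument---matches the paper's, and the dimension counts for the parts of the cores outside $W$ are essentially the ones the paper uses.

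The gap is in your Whitney step. You propose to cancel intersection points \emph{in pairs} via the classical Whitney trick, which requires the algebraic intersection number (over $\bZ[\pi_1]$) of your core with each $B_j$ to vanish. Your justification for this---parallel copies for the cloned index, and ``freedom in choosing the surgical attaching classes'' for the others---does not hold up: there is no reason the core of $v_1$ should have vanishing intersection number with the core of $v_2$, and changing the attaching maps risks destroying condition~(\ref{it:EnoughSurgeryDataObMid}). You flag this yourself as the main obstacle, and indeed it is one your approach cannot get past.

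The paper sidesteps the obstruction entirely by a different Whitney move. The cores inside $W$ are not closed $n$-manifolds but cylinders on $S^{n-1}$ extending all the way up to (and beyond) $W\vert_{a_p}$. Given a single intersection point $x$ between two such cylinders, one follows each cylinder \emph{upwards} from $x$ to $W\vert_{a_p}$ and then joins the two endpoints by a path in $W\vert_{a_p}$; since $\kappa = n-1 \geq 2$, the relevant relative $\pi_1$ groups vanish and this loop bounds a disc. The result is a triangle-shaped Whitney disc with its apex at $x$ and its opposite edge in $W\vert_{a_p}$, and flowing along it pushes $x$ off the top of $W$, reducing the intersection count by one. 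Because one cancels each intersection point against the boundary rather than against another intersection point, no intersection-number hypothesis and no framing obstruction arise (the paper remarks this explicitly). The only remaining care is to build the Whitney disc compatibly with the height function over each $(a_j-\epsilon_j, a_j+\epsilon_j)$, which is done piecewise over the intervals $[a_j, a_{j+1}]$ using the $(n-1)$-connectivity of each $(W\vert_{[a_j,a_{j+1}]}, W\vert_{a_{j+1}})$.
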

\begin{proof}
  Let us
  write $v_j = (\Lambda^j, \delta^j, e^j, \ell^j)$. First we let $v = v_1$, then we
  perturb it to have its cores transverse to the cores of all the
  $v_j$. We first do the perturbation on the part of the cores inside
  $W$. On the boundary the cores are $n$-dimensional, so when they are
  transverse, they intersect in a finite set of points. We now make
  sure the cores intersect $W$ only on their boundary, which is
  possible as $(n+1) + 2n < N$. We finally make sure that the cores
  are also disjoint on their interiors, which is possible as $2(n+1) <
  N$.

  We are left with surgery data $v$ whose core is disjoint from the
  cores of $v_j$ away from $W$, and on $W$ intersects the other cores
  transversely. It has a finite number of transverse intersections
  with all the other cores in $W$, so it is enough to give a procedure
  which reduces the number of intersections by 1.  Let $x$ be such an
  intersection point, between $v$ and some $v_j$.  More precisely,
  suppose it is a point of intersection of the cylinders
  $$e_i(\Lambda_i \times (-6,-2) \times \{0\}
  \times S^{n-1}) \quad\quad e_k^j(\Lambda_k^j \times (-6,-2) \times \{0\} \times S^{n-1}).$$

  \begin{claim}
    Let $T \subset \R^2$ denote the triangle $\{(x,y) \,|\, y \leq 0, y
    +1 \geq |x|\}$ and $U$ a small open neighbourhood of it, e.g.\
    defined by $y < \epsilon$, $y+1 < |x| + \epsilon$.  There is a
    \emph{Whitney disc} $w : U \hookrightarrow W$ such that
    \begin{enumerate}[(i)]
    \item $w$ is disjoint from $\bR \times L$.
    
    \item\label{it:WhitneyDiscs1} $w\vert_{[-1,1] \times \{0\}}$ is a
      path in $W \vert_{a_p}$ which on its interior is disjoint from
      all the cores.
      
    \item\label{it:WhitneyDiscs2} The inverse image of the first
      cylinder is the line on $\partial T$ from $(0,-1)$ to $(-1,0)$.
      The inverse image of the second cylinder is the line from
      $(0,-1)$ to $(1,0)$.
      
    \item\label{it:WhitneyDiscHeight} The height functions $x_1 \circ 
      w$ and $y: T \to \R$ agree up to an affine transformation inside
      each $(x_1 \circ w)^{-1}(a_j-\epsilon_j, a_j+\epsilon_j)$.
    \end{enumerate}
  \end{claim}
  Given such a disc, we can extend it to a standard neighbourhood
  $w(U) \times \bR^{n-1} \times \bR^{n-1} \subset W$ as in the proof
  of \cite[Theorem 6.6]{MilnorHCob}. Note the argument is easier in
  this case as we are canceling intersection points against the
  boundary instead of against each other, and so no framing problems
  arise. We can further extend this to a neighbourhood
  $$w(U) \times \bR^{n-1} \times \bR^{n-1} \times \bR^{N+1-2n} \subset
  \bR \times (0, 1) \times \bR^{N-1}.$$ There is a compactly supported vector field on $U$ which is
  $\partial / \partial x$ on $D^2_-$, and we extend it using bump
  functions in the euclidean directions to this open subset of
  $\bR \times (0, 1) \times \bR^{N-1}$. The flow associated to this vector field gives a
  1-parameter family of diffeomorphisms $\phi_t$, and flowing $e_i$
  along using $\phi_t$ will eventually lead to a new $e_i$ whose core
  has one fewer intersection point with other cores (at least inside
  $W\vert_{(a_0-\epsilon_0, a_p]}$, but we can then use the
  cylindrical structure of $W\vert_{(a_p-\epsilon_p, a_p+\epsilon_p)}$
  to remove any intersections above $a_p$). It will still satisfy
  condition (\ref{it:YComplexMidHeight}) of Definition
  \ref{defn:YComplexMid} by property (\ref{it:WhitneyDiscHeight})
  above, and the other conditions are clear.

  It remains to prove the claim. If it were not for property
  (\ref{it:WhitneyDiscHeight}) the argument is clear: choose an
  embedded path from $x$ in each cylinder up to $W
  \vert_{a_p}$. Together these give an element of $\pi_1(W\vert_{[a_i,
    a_p]}, W\vert_{a_p})$ which is $0$ as $\kappa = n-1 \geq 2$, and
  so this extends to a continuous map $w\vert_{T} : T \to W$ which
  gives these two paths along the lower part of its boundary and lies
  in $W\vert_{a_p}$ in the top part of its boundary. As $2\cdot 2 <
  2n$, this map may be perturbed to be an embedding into $W$, still
  enjoying these two properties. Finally, as $2 + n < 2n$,
  $w\vert_{D^2_-}$ can be made disjoint from the other cores on its
  interior. This may now be extended to a map on $U$, enjoying
  properties (\ref{it:WhitneyDiscs1}) and (\ref{it:WhitneyDiscs2}).

  To obtain property (\ref{it:WhitneyDiscHeight}) as well, we instead
  build up the embedding $w\vert_T$ in pieces inside each $W
  \vert_{[a_j, a_{j+1}]}$, which is possible as each
  $\pi_1(W\vert_{[a_j, a_{j+1}]}, W\vert_{a_{j+1}})$ is 0.
\end{proof}

In the proof of Proposition \ref{prop:YNonEmpty}, it was easy to see
that for an object $M \in \mathcal{C}_{\theta, L}^{\kappa,
  l-1}(\bR^N)$ there exists a piece of $\theta$-surgery data $e : \Lambda
\times \bR^{d-l-1} \times S^l \hookrightarrow M$ such that the
resulting manifold $\overline{M}$ has $\pi_l(\overline{M}) \to
\pi_l(B)$ injective, so satisfies condition
(\ref{it:EnoughSurgeryDataOb}) of Definition \ref{defn:YComplex}. In
the present situation we have $M \in \mathcal{C}_{\theta, L}^{n-1,
  n-2}(\bR^N)$ and require surgery data so that $\overline{M} \in
\mathcal{A}$, to satisfy condition (\ref{it:EnoughSurgeryDataObMid})
of Definition \ref{defn:YComplexMid}. This is rather more difficult,
and we first describe how to accomplish this step.

\begin{lemma}\label{lem:SurgeryDataExists}
  Let $M \in \mathcal{C}_{\theta, L}^{n-1, n-2}(\bR^N)$ be an object,
  and suppose that $\theta$ is reversible, $2n \geq 6$, $L$ has a
  handle structure with only handles of index $< n$, $\ell_L : L \to
  B$ is $(n-1)$-connected, and $\mathcal{A}$ contains an object in the same path component of
  $B\mathcal{C}_{\theta, L}^{n-1, n-2}(\bR^N)$ as $M$. Then there is a
  piece of $\theta$-surgery data, given by an embedding $e : \Lambda
  \times \bR^{d-l-1} \times S^l \hookrightarrow M$ disjoint from $L$
  and a compatible bundle map $T(\Lambda \times \R^{d-l-1} \times
  D^{l+1}) \to \theta^* \gamma$, such that the resulting surgered
  manifold $\overline{M}$ lies in $\mathcal{A}$.
\end{lemma}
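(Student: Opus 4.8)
The plan is to reduce the statement to a purely homotopy-theoretic extension problem, solved using the Moore--Postnikov-type classification of objects and the reversibility of $\theta$. First I would observe that since $M$ and the chosen object $A \in \mathcal{A}$ lie in the same path component of $B\mathcal{C}_{\theta,L}^{n-1,n-2}(\bR^N)$, there is a highly connected cobordism $C : M \leadsto A$ in the category $\mathcal{C}_{\theta,L}^{n-1,n-2}(\bR^N)$, equipped with a $\theta$-structure restricting to the given structures on $M$ and $A$. (Here I use that the classifying space of the non-unital category is path connected between $M$ and $A$ precisely when such a cobordism exists.) Because $C$ is $(n-1)$-connected relative to both boundaries, Morse theory (or handle theory) lets us assume $C$ is built from $M \times [0,1]$ by attaching handles of index $n$ only; equivalently it is the trace of a sequence of $(n-1)$-surgeries on $M$, followed by $n$-surgeries — but by turning handles upside down and using that $(A,L)$ is $(n-1)$-connected and $\ell_L$ is $(n-1)$-connected, all the surgery can be arranged to be $(n-1)$-surgery on $M$. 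This gives the embedding $e : \Lambda \times \bR^{n} \times S^{n-1} \hookrightarrow M$ (so $d-l-1 = n$, $l = n-1$), disjoint from $L$ after a general-position argument using $2n \geq 6$ and the handle-index bound on $L$.

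Next I would produce the $\theta$-structure data. The cobordism $C$ carries a $\theta$-structure $\ell_C$ restricting to $\ell_M$; restricting $\ell_C$ to the trace of each $(n-1)$-surgery gives a bundle map $T(\Lambda \times \bR^{n} \times D^{n}) \to \theta^*\gamma$ compatible with $\ell_M \circ De$, which is exactly the surgery data required. The surgered manifold $\overline M$ is then diffeomorphic (over $\theta$, up to homotopy of structures) to $A$, hence lies in $\mathcal{A}$. The one subtlety is that the cobordism $C$ I start with might realize $A$ only up to the isomorphism relation defining $\pi_0$ of the category; but since $\mathcal{A}$ is a union of path components of the object space, being in the same component as $A$ is the same as lying in $\mathcal{A}$, so this causes no trouble.

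The main obstacle — and the place where reversibility of $\theta$ enters essentially — is arranging that the $(n-1)$-surgeries on $M$ can be done with \emph{$\theta$-structures} at all, i.e.\ that the relevant attaching spheres $S^{n-1} \to M \to B$ are null-homotopic so that each trace admits a $\theta$-structure extending $\ell_M$ (cf.\ Section~\ref{sec:ThetaSurgery} and Proposition~\ref{prop:ConnectSum}). A priori the handle decomposition of the cobordism $C$ coming from Morse theory need not have $\theta$-trivial attaching spheres. The fix is to first perform extra $\theta$-trivial $(n-1)$-surgeries on $M$ — possible by Proposition~\ref{prop:ConnectSum} since $\theta$ is reversible — to connect-sum copies of $S^n \times S^n - \Int(D^{2n})$ worth of handles, or more directly to replace $C$ by a $\theta$-cobordism whose handles all have trivial attaching data; this is where the argument of Proposition~\ref{prop:ConnectSum} and Lemma~\ref{lemma:stable-extension} do the real work, using that $\theta$ is (weakly) once-stable so that a stable bundle-map extension destabilizes. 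Concretely, I would: (1) choose any highly connected $\theta$-cobordism $C$ from $M$ to some $A' \in \mathcal{A}$ in the same component; (2) decompose $C$ into handles of index $n$; (3) use reversibility to modify both $C$ and its handle structure so that each handle is attached along a $\theta$-null-homotopic sphere; (4) read off $(e,\ell)$ from the trace of the resulting $(n-1)$-surgeries; (5) check disjointness from $L$ by general position, using $2n\ge 6$ and the handle-index hypothesis on $L$, and null-homotopy of attaching maps in $B$ using that $\ell_L$ is $(n-1)$-connected and $(M,L)$ is $(n-2)$-connected. The verification that $\overline M \in \mathcal{A}$ is then immediate since $\overline M \cong A'$ as $\theta$-manifolds and $\mathcal{A}$ is closed under isomorphism.
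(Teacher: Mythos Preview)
Your proposal has the right overall shape but contains a genuine gap at the crucial step, and misidentifies where reversibility does its work. The hard part of this lemma is precisely what you treat as automatic: producing a handle decomposition of $C$ relative to $M$ using \emph{only} handles of index $n$. Knowing that $(C,M)$ and $(C,A)$ are $(n-1)$-connected lets you cancel handles down to indices $n-1$ and $n$, but eliminating the index $(n-1)$ handles is an $s$-cobordism-type problem: you must show the boundary map $\partial_n: C_n(\widetilde C,\widetilde M)\to C_{n-1}(\widetilde C,\widetilde M)$ of $\bZ[\pi]$-modules can be made the projection onto a subset of basis elements. This requires first stabilising $C$ by $\#^g(S^n\times S^n)$ to make $H_n(\widetilde C,\widetilde M)$ free and kill the class in $\mathrm{Wh}(\pi)$, then invoking the Basis/Normal Form theorem to realise the change of basis geometrically. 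None of this follows from ``Morse theory''; it is the main content of the proof and uses $2n\ge 6$ essentially.

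There are two further issues. First, being in the same path component of $B\mathcal{C}_{\theta,L}^{n-1,n-2}$ gives only a \emph{zig-zag} of morphisms between $M$ and $A$, not a single cobordism; reversibility is what lets you reverse the backwards arrows to obtain one morphism $C$. Second, morphisms in this category are only $(n-1)$-connected relative to their \emph{outgoing} boundary, so you must first do interior surgery on $C$ along $(n-1)$-spheres to make $(C,M)$ also $(n-1)$-connected (and reversibility, via Proposition~\ref{prop:ConnectSum}, is needed to restore a $\theta$-structure after this surgery and after the stabilisation by $S^n\times S^n$). By contrast, your worry about $\theta$-null-homotopy of the attaching spheres is a non-issue: once $C$ carries a $\theta$-structure and has only index-$n$ handles, the core disc of each handle lies in $C$ and witnesses the required null-homotopy in $B$, so the bundle data on $\Lambda\times\bR^n\times D^n$ comes for free from $\ell_C$.
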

\begin{proof}
  Part of this proof is very similar to \cite[pp.\ 722--724]{Kreck}.

  We first claim that if there is a morphism $W : M_0 \leadsto M_1 \in
  \mathcal{C}_{\theta, L}^{n-1, n-2}(\bR^N)$, then there is another,
  $W'$ say, with the property that $(W',M_0)$ is also
  $(n-1)$-connected.  (By definition, $(W',M_1)$ is
  $(n-1)$-connected.)  In fact, we claim that it is possible to do
  surgery along a finite set of embeddings of $S^{n-1} \times D^{n+1}$
  into the interior of $W$ (and disjoint from $L$), such that the resulting cobordism $W'$ is
  $(n-1)$-connected with respect to both boundaries.  Let us first
  point out that doing any such $(n-1)$-surgery does not change the property that
  $\pi_{k}(W,M_1) = 0$ for $k \leq (n-1)$: up to homotopy it amounts
  to cutting out a manifold of codimension $(n+1)$ and then attaching
  a cells of dimension $n$ and $2n$.  We have assumed that $L \to B$
  is $(n-1)$-connected so $\pi_k(M_0) \to \pi_k(B)$ is surjective for
  $k \leq n-1$.  Since $M_0 \in \mathcal{C}_{\theta,L}^{n-1,n-2}$, it
  is an isomorphism for $k \leq n-2$ and similarly for $M_1$.  Since
  $\pi_k(M_1) \to \pi_k(W)$ is an isomorphism for $k \leq n-2$, we
  conclude that $\pi_k(M_0) \to \pi_k(W)$ is an isomorphism for $k \leq
  n-2$, but it need not be surjective for $k=n-1$.  In fact, the long
  exact sequence in homotopy groups identifies the cokernel with
  $\pi_{n-1}(W,M_0) \cong H_{n-1}(\widetilde{W}, \widetilde{M}_0)$.  The fact that
  $\pi_{n-1}(M_0) \to \pi_{n-1}(B)$ is surjective implies that the composition
  \begin{equation*}
    \Ker\bigl(\pi_{n-1}(W) \to \pi_{n-1}(B)\bigr) \lra \pi_{n-1}(W) \lra
    \pi_{n-1}(W,M_0)
  \end{equation*}
  is still surjective.  By the Hurewicz theorem, $\pi_{n-1}(W,M_0) \cong
  H_{n-1}(\widetilde{W}, \widetilde{M}_0)$ is finitely generated as a module
  over $\pi_1$, and we have proved that there exist finitely many
  elements $\alpha_i \in \Ker(\pi_{n-1}(W) \to \pi_{n-1}(B))$ which
  generate the cokernel of $\pi_{n-1}(M_0) \to \pi_{n-1}(W)$.  These
  elements may be represented by disjoint embedded framed spheres in
  the interior of $W$, and as $L$ has a handle structure with only handles of index $< n$ they can be made disjoint from $L$, and we let $W'$ denote the result of performing
  surgery. Both pairs $(W',M_0)$ and $(W',M_1)$ are now
  $(n-1)$-connected, and by Proposition \ref{prop:ConnectSum}, $W'$
  again admits a $\theta$-structure.

  We now return to the proof of the lemma. There is a zig-zag of
  morphisms in the category $\mathcal{C}_{\theta,L}^{n-1,n-2}(\bR^N)$
  from $M$ to an object of $\mathcal{A}$, as $\mathcal{A}$ was assumed
  to hit the path component of $M$. By the above discussion we can
  suppose that it is a zig-zag of $\theta$-cobordisms which are
  $(n-1)$-connected relative to both ends. Then, by reversibility, we
  can reverse the backwards-pointing arrows and obtain a single
  morphism
  $$(C, \ell_C) : (M, \ell_{M}) \leadsto (A, \ell_A) \in
  \mathcal{C}_{\theta,L}^{n-1,n-2}(\bR^N),$$which is $(n-1)$-connected relative to both ends, so $\pi_*(C, A) =
  \pi_*(C, M) = 0$ for $* \leq n-1$.

  If such a cobordism $C$ admits a Morse function with only critical
  points of index $n$, then the descending manifolds of the critical
  points, and $\ell_C$ restricted to them, gives the required
  $\theta$-surgery data. It remains to produce such a Morse function.

  If $\pi_1(L)=0$ then all of the manifolds appearing above are also
  simply-connected, and we deduce by Poincar{\'e} duality and the
  Universal coefficient theorem that $H_*(C, M)$ is concentrated in
  degree $n$ and is free abelian. We can choose a self-indexing Morse
  function on $C$ and as in the proof of the $h$-cobordism theorem we
  can first modify it to have no critical points of index 0 or 1
  \cite[Theorem 8.1]{MilnorHCob}, do the same to the negative of the
  Morse function to remove critical points of index $2n$ and $(2n-1)$,
  and finally by the Basis Theorem \cite[Theorem 7.6]{MilnorHCob} we
  can diagonalise the differentials in the Morse homology complex, and
  so modify the Morse function to only have critical points of index
  $n$.

  When $\pi_1(L) \neq 0$ we must go to a little more trouble, and use
  techniques from the proof of the $s$-cobordism theorem. As these are
  less well known, we go into more detail, but recommend
  \cite{LuckSCobordism} and \cite{KervaireSCobordism} for details of
  that argument. As above, pick a self-indexing Morse function on $C$
  and let us write
  $$\pi = \pi_1(L) = \pi_1(M) = \pi_1(C) = \pi_1(A)$$
  for the common fundamental group, and $\bZ[\pi]$ for its integral
  group ring.

  When $M \hookrightarrow C$ is 1-connected, \cite[Theorem
  8.1]{MilnorHCob} is still true: we may modify the Morse function to
  have no critical points of index 0 or 1, and as above do the same on
  the opposite Morse function to eliminate critical points of index
  $2n$ and $(2n-1)$. The cores of the handles given by this Morse
  function on the universal cover give a cell complex with cellular
  chain complex $C_*(\widetilde{C}, \widetilde{M})$, and
  $C_*(\widetilde{C}, \widetilde{A})$ for the opposite Morse
  function. These are chain complexes of based free
  $\bZ[\pi]$-modules, and geometric Poincar{\'e} duality gives an
  isomorphism
  $$C_*(\widetilde{C}, \widetilde{M}) \cong \mathrm{Hom}_{\bZ[\pi]}(C_{2n-*}(\widetilde{C}, \widetilde{A}), \bZ[\pi])$$
  of chain complexes, by sending basis elements to their ``dual" basis
  elements (we use the convention of \cite[Ch.\ 2]{Wall} to interchange right and left $\bZ[\pi]$-module structures).
  
  The chain complex $C_{2n-*}(\widetilde{C}, \widetilde{A})$ is one of
  free $\bZ[\pi]$-modules and $0 = \pi_*(C, A) = \pi_*(\widetilde{C},
  \widetilde{A}) = H_*(\widetilde{C}, \widetilde{A} ; \bZ)$ for $*
  \leq n-1$, so it is acyclic in degrees $2n-* \leq n-1$. By the
  Universal coefficient spectral sequence, the same is true for its
  $\bZ[\pi]$-dual and so $C_*(\widetilde{C}, \widetilde{M})$ is
  acyclic for $* \geq n+1$. Furthermore $0 = \pi_*(C, M) =
  \pi_*(\widetilde{C}, \widetilde{M}) = H_*(\widetilde{C},
  \widetilde{M} ; \bZ)$ for $* \leq n-1$, so the homology of
  $C_*(\widetilde{C}, \widetilde{M})$ is concentrated in degree
  $n$. By the usual modification technique, we can use handle
  exchanges to modify the Morse function to only have critical points
  of index $n$ and $(n-1)$.  We are left with a short exact sequence
  of $\bZ[\pi]$-modules
  $$0 \lra H_n(\widetilde{C}, \widetilde{M} ; \bZ) \lra
  C_n(\widetilde{C}, \widetilde{M}) \overset{\partial_n}\lra
  C_{n-1}(\widetilde{C}, \widetilde{M}) \lra 0.$$

  The rightmost term is a free $\bZ[\pi]$-module and so this sequence
  is split: in particular, $H_n(\widetilde{C}, \widetilde{M} ; \bZ)$
  is stably free as a $\bZ[\pi]$-module. If $H_n(\widetilde{C},
  \widetilde{M} ; \bZ)$ is not actually free as a $\bZ[\pi]$-module,
  there cannot exist a Morse function on $C$ with only critical points
  of index $n$. In this case we replace $C$ by $C \#^g S^n\times S^n$
  for $g$ sufficiently large (and this manifold admits a
  $\theta$-structure by Proposition \ref{prop:ConnectSum}).  This has the effect
  of adding on a large free $\bZ[\pi]$-module to $H_n(\widetilde{C},
  \widetilde{M} ; \bZ)$, so we may assume that this homology group is
  now free, and pick a basis of it.

  Choosing a splitting of the short exact sequence above, we obtain an
  isomorphism
  \begin{equation}
    \label{eq:7}
    C_n(\widetilde{C}, \widetilde{M}) \cong H_n(\widetilde{C},
    \widetilde{M} ; \bZ) \oplus C_{n-1}(\widetilde{C}, \widetilde{M})
  \end{equation}
  of based free $\bZ[\pi]$-modules, and so an element of
  $K_1(\bZ[\pi])$. However, the basis we chose for $H_n(\widetilde{C},
  \widetilde{M} ; \bZ)$ was not geometrically meaningful and we are
  free to change it.  After possibly stabilising $C$ further, it is
  possible to choose a basis for which \eqref{eq:7} represents the
  zero class in $K_1(\bZ[\pi])$, and hence in the Whitehead group
  $\mathrm{Wh}(\pi)$ too.  We may then use the modification lemma to
  rearrange the index $n$ critical points of the Morse function so
  that $\partial_n : C_n(\widetilde{C}, \widetilde{M}) \to
  C_{n-1}(\widetilde{C}, \widetilde{M})$ is simply projection onto the
  first few basis elements: this allows us to cancel all the critical
  points of index $(n-1)$.
\end{proof}

\begin{proposition}
  $\widetilde{Y}_0(a, \epsilon, (W, \ell_W))$ is non-empty as long as
  $3n+1 < N$, $2n \geq 6$, $\theta$ is reversible, $L$ admits a handle
  structure with only handles of index $< n$, $\ell_L : L \to B$ is
  $(n-1)$-connected, and the natural map $\mathcal{A} \to
  \pi_0(B\mathcal{C}_{\theta, L}^{n-1, n-2}(\bR^N))$ is surjective.
\end{proposition}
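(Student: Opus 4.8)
The plan is to reduce the statement to Lemma~\ref{lem:SurgeryDataExists}, which already supplies $\theta$-surgery data on a \emph{single} object whose result lies in $\mathcal{A}$, and then to spread that data out over the interval $(a_i-\epsilon_i, a_p+\epsilon_p)$ exactly as in the proof of Proposition~\ref{prop:YNonEmpty}, substituting the Whitney trick of Proposition~\ref{prop:RunOutOfNames} at the one place where the old dimension count fails. Concretely, I would fix $(a,\epsilon,(W,\ell_W)) \in D^{n-1,n-2}_{\theta,L}(\bR^N)_p$ and work on each level set $M_i = W\vert_{a_i}$ in turn. Since $\mathcal{A} \to \pi_0(B\mathcal{C}^{n-1,n-2}_{\theta,L}(\bR^N))$ is surjective, $\mathcal{A}$ contains an object in the same path component as $M_i$, and the remaining hypotheses of Lemma~\ref{lem:SurgeryDataExists} ($2n \geq 6$, $\theta$ reversible, $L$ built from handles of index $< n$, $\ell_L$ being $(n-1)$-connected) are exactly those assumed here. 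The lemma then produces, for finite sets $\Lambda_i \subset \Omega$ chosen disjoint for different $i$, an embedding $\partial e_i : \Lambda_i \times \bR^n \times S^{n-1} \hookrightarrow M_i$ disjoint from $L$ together with a compatible bundle map, whose associated $\theta$-surgery turns $M_i$ into an object $\overline{M}_i \in \mathcal{A}$; in particular the attaching spheres map null-homotopically to $B$.

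Next I would extend each $\partial e_i$ to an embedding $\Lambda_i \times (a_i-\epsilon_i, a_p+\epsilon_p) \times \bR^n \times S^{n-1} \hookrightarrow W$, cylindrical over each $(a_j-\epsilon_j, a_j+\epsilon_j)$, built up over the successive intervals $[a_j, a_{j+1}]$: a continuous extension exists because each morphism $W\vert_{[a_j,a_{j+1}]}$ is $(n-1)$-connected relative to its outgoing boundary, and a Smale--Hirsch argument makes the $S^{n-1}$-core an immersion with trivial normal bundle, which (as $2(n-1) < 2n-1$) is embedded inside level sets. The one new feature is that the $n$-dimensional core $(-6,-2)\times S^{n-1}$ inside the $2n$-manifold $W$ meets earlier cores, and $\bR \times L$, transversally in finitely many points rather than being automatically disjoint; these excess intersections are removed by the Whitney-disc argument from the proof of Proposition~\ref{prop:RunOutOfNames}, which applies because $2n \geq 6$ makes $\pi_1(W\vert_{[a_j,a_k]}, W\vert_{a_k}) = 0$, and which can be arranged to respect the height function over the $\epsilon$-neighbourhoods and to stay away from $\bR \times L$. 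Since $3n+1 < N$ there is room to thicken the resulting embedding up over $\bR^n \times D^n$ to the embedding $e_i$ of Definition~\ref{defn:YComplexMid} (meeting $W$ only along the boundary), and I would reparametrise the height coordinate by the diffeomorphism $\psi(a_i,\epsilon_i,a_p,\epsilon_p)$ of~\eqref{eq:16}.

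Finally I would produce the bundle part $\ell$. On $\Lambda_i \times K\vert_{(-6,-2)}$ it is forced to be $\ell_W \circ D(\partial e_i)$; since $K$ is built from $K\vert_{(-6,-2)}$ by attaching an $n$-cell along an attaching sphere whose underlying map to $B$ is null-homotopic, this extends over $\Lambda_i \times K$, and one checks the extension can be taken extendible in the sense of Definition~\ref{defn:Extendible}, the relevant sphere again being null-homotopic in $B$. Setting $\Lambda = \coprod_i \Lambda_i$, $\delta(\Lambda_i) = (i,0)$, $e = \coprod_i e_i$, and $\ell$ the union of these extensions, one obtains an element of $\widetilde{Y}_0(a,\epsilon,(W,\ell_W))$: conditions (i)--(iv) of Definition~\ref{defn:YComplexMid} hold by construction, and condition~(\ref{it:EnoughSurgeryDataObMid}) holds because, by Proposition~\ref{prop:StdFamilyObjectsInMid}(\ref{it:StdFamilyObjectsInMid.lConn}), the $\theta$-surgery encoded by $(e_{i,0},\ell_{i,0})$ is precisely the one supplied by Lemma~\ref{lem:SurgeryDataExists}, so its outcome is $\overline{M}_i \in \mathcal{A}$. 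The main obstacle is this middle-dimensional disjointness: unlike in Proposition~\ref{prop:YNonEmpty}, cores of dimension $n$ in a $2n$-manifold are not generically disjoint, so the extension-over-intervals step must be interleaved with the Whitney trick rather than concluded by a naive transversality count, and bookkeeping the Whitney discs (height control and disjointness from $\bR \times L$) is where most of the work lies.
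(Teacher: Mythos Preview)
Your treatment of the embedding data matches the paper's proof and is correct: Lemma~\ref{lem:SurgeryDataExists} supplies the surgery data on each $M_i$, the extension over each $[a_j,a_{j+1}]$ proceeds as in Proposition~\ref{prop:YNonEmpty}, and the middle-dimensional self-intersections of the cores (and their intersections with $[a_j,a_{j+1}]\times L$) are removed by the Whitney trick exactly as in Proposition~\ref{prop:RunOutOfNames}.

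The gap is in your construction of the bundle map $\ell$ on $\Lambda_i \times K$. Your assertion that ``$K$ is built from $K\vert_{(-6,-2)}$ by attaching an $n$-cell'' is not correct: the height function on $K$ has \emph{two} critical points (at $x_1 = \pm 1$), both of index $n$, so $K \simeq S^n$ is obtained from $K\vert_{(-6,-2)} \simeq S^{n-1}$ by attaching two $n$-cells. The null-homotopy supplied by Lemma~\ref{lem:SurgeryDataExists} lets you extend the $\theta$-structure over the first cell, giving a structure on $K\vert_{(-6,0]}$, but extending over the second cell is a genuine obstruction problem in $\pi_{n-1}$ of the fibre of $\theta$, and knowing only that the attaching sphere maps null-homotopically to $B$ does not by itself resolve it. Your phrase ``one checks the extension can be taken extendible'' hides exactly this difficulty.

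This is precisely where the paper invokes reversibility. Rather than attacking the second cell directly, it first extends the structure from $K\vert_{(-6,0]}$ over the cylindrical region $K - (B^{n+1}_2(0)\times\bR^n)$ via a deformation-retraction argument (this step is what guarantees extendibility in the sense of Definition~\ref{defn:Extendible}), and then uses Lemma~\ref{lemma:stable-extension} to fill in the remaining compact piece of $K$. That lemma is exactly where the hypothesis that $\theta$ is reversible (equivalently, weakly once-stable) enters. You list reversibility among the hypotheses but never call on it in your bundle construction, so the missing step is real.
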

\begin{proof}
  We follow the proof of Proposition \ref{prop:YNonEmpty}, with a few
  changes. Let $d=2n$ and $l=n-1$. The first step of Proposition
  \ref{prop:YNonEmpty} is to produce for each $W\vert_{a_i}$ the
  $\theta$-surgery data $f_i\vert_{a_i}$. The method described in that
  proposition no longer works, and we use Lemma
  \ref{lem:SurgeryDataExists} to produce the necessary data
  instead. From this point up to constructing the maps $e_i
  \vert_{(a_i-\epsilon_i, a_i+\epsilon_i)}$ there is no difference,
  and the argument given in Proposition \ref{prop:YNonEmpty} goes through.

  It remains to explain how given an embedding $e_i\vert_{a_j}$ we can
  extend it to $e_i\vert_{[a_j, a_{j+1}]}$. We proceed in the same
  way: we have the embedding
  $$f_i \vert_{a_j} : \Lambda_i \times \{a_j\} \times \bR^{n} \times S^{n-1} \hookrightarrow W\vert_{a_j}$$
disjoint from $L$, which extends to a continuous map
  $$f_i \vert_{[a_j, a_{j+1}]} : \Lambda_i \times [a_j, a_{j+1}] \times \bR^{n} \times S^{n-1} \lra W\vert_{[a_j, a_{j+1}]}$$
  as $(W\vert_{[a_j, a_{j+1}]}, W\vert_{a_{j+1}})$ is
  $(n-1)$-connected by assumption. We can again make this be a
  self-transverse immersion of the core, but this no longer implies
  that the core is embedded: it will have isolated points of
  self-intersection. As $2n \geq 6$ we can remove these using the
  Whitney trick, as in the proof of Proposition
  \ref{prop:RunOutOfNames}. The core may still intersect the core of $[a_j , a_{j+1}] \times L$, as
  they are both of dimension $n$ inside a $2n$-manifold, but we can
  again use the Whitney trick to separate them. Given $f_i
  \vert_{[a_j, a_{j+1}]}$ which is an embedding of the core and whose
  core is disjoint from that of $[a_j, a_{j+1}] \times L$, we
  can shrink in the $\bR^n$ direction and isotope it to get an
  embedding disjoint from $[a_j, a_{j+1}] \times L$, and then extend this to $e_i
  \vert_{[a_j, a_{j+1}]}$ as in Proposition \ref{prop:YNonEmpty}.

  This gives the required embeddings $e_i$, which are then combined as in Proposition \ref{prop:YNonEmpty} to get $(\Lambda, \delta, e)$, the embedding part of the data of an element of $\widetilde{Y}_0(a, \epsilon, (W, \ell_W))$. The remaining bundle part of the
  data consists of an extendible (cf.\ Definition
  \ref{defn:Extendible}) $\theta$-structure $\ell$ on $\Lambda
  \times K$which agrees with $\ell_W \circ D(\partial e)$ on $\Lambda \times
  K \vert_{(-6, -2)}$, and such that the effect of the
  $\theta$-surgery described by this data (i.e.\ the restriction of
  $\ell$ to $K\vert_{(-6,0]}$) lies in $\mathcal{A}$.  We will
  describe a construction which for each $\lambda \in \Lambda$
  produces a $\theta$-structure $\ell_{\lambda}$ on $K
  \subset \R^{n+1} \times \R^n$; these are then combined in the obvious way.

  Firstly, there is a unique $\theta$-structure on the subspace
  \begin{equation}\label{eq:8}
    K\vert_{(-6,-2)} = \big((-6,-2) \times \R^n\big) \times S^{n-1},
  \end{equation}
  such that the embedding $\partial e$ preserves $\theta$-structures (i.e.\
  satisfies requirement (\ref{it:YComplexMidTheta}) of Definition~\ref{defn:YComplexMid}).
  Secondly, the manifold
  \begin{equation}
    \label{eq:9}
    K\vert_{(-6,0]} \subset \R^{n+1} \times \R^n
  \end{equation}
  is obtained from~\eqref{eq:8} by attaching an $n$-handle.  To extend
  the $\theta$-structure requires a null-homotopy of the structure map
  $S^{n-1} \to B$ from the $\theta$-structure on~\eqref{eq:8}, and
  this is provided as part of the $\theta$-surgery data in
  Lemma \ref{lem:SurgeryDataExists}.  Finally, we need to prove that this
  structure extends to a $\theta$-structure over all of $K$, which is furthermore extendible.  To
  see this, we observe that the restriction of this structure to the
  subspace
  \begin{equation}\label{eq:11}
    K\vert_{(-6,0]} - (B_2^{n+1}(0) \times \R^n) = \big((-6,0] \times
    \R^n - B_2^{n+1}(0) \big) \times S^{n-1}
  \end{equation}
  admits a (homotopically unique) extension to the manifold $\big(\R
  \times \R^n\big) \times S^{n-1}$, since this deformation retracts
  to~\eqref{eq:11}.  Restrict this extension to the manifold
  $\big(\R\times \R^n - B_2^{n+1}(0)\big) \times S^{n-1} = K -
  (B_2^{n+1}(0) \times \R^n)$ and glue it with the structure
  on~\eqref{eq:9}, to get a $\theta$-structure on
  \begin{equation}
    \label{eq:12}
    K\vert_{(-\infty,0]} \cup (K - B_2^{n+1}(0)\times \R^n).
  \end{equation}
  It remains to prove that this structure can be extended to all of
  $K$.  It is easy to see that the stabilised structure extends to a
  bundle map $\epsilon^1 \oplus TK \to \theta^* \gamma$, but then
  Lemma~\ref{lemma:stable-extension} implies that the unstabilised
  bundle map also extends.
\end{proof}


\section{Proofs of the main theorems}
\label{sec:ApplyingGC}

In this section, we use the results of
Sections~\ref{sec:surgery-morphisms}--\ref{sec:Connectivity} to prove
the theorems stated in Section~\ref{sec:intr-stat-results}.  As
explained in Remark~\ref{rem:limit-of-B-diff-theta},
Theorem~\ref{thm:main-A} follows from Theorem~\ref{thm:main-C-new},
which we prove in full detail in Sections~\ref{sec:category-mathcalc},
\ref{sec:UnivThetaEnds}, \ref{sec:group-completion},
and~\ref{sec:proof-theor-refthm:m} below.  Nevertheless, we shall
first outline in some detail how to deduce Theorem~\ref{thm:main-A}
directly from the results in
Sections~\ref{sec:surgery-morphisms}--\ref{sec:Connectivity}, in the
hope of putting the general case in a useful perspective.

In the following we shall work entirely in even dimension $d = 2n > 4$
and always set $N = \infty$.  (Suitably interpreted, all results hold
for sufficiently large finite $N$, but we shall not pursue this here.)

\subsection{Outline of proof of Theorem~\ref{thm:main-A}}
\label{sec:outl-proof-theor}

To apply the theorems in Sections~\ref{sec:surgery-morphisms}
and~\ref{sec:surg-objects-below}, we must specify a structure $\theta:
B \to BO(2n)$ and a $(2n-1)$-dimensional manifold $L$ with
$\theta$-structure $\ell_L: \epsilon^1 \oplus TL \to \theta^* \gamma$.
For the purpose of deducing Theorem~\ref{thm:main-A}, we let $\theta =
\theta^n: BO(2n)\langle n\rangle \to BO(2n)$ be the $n$-connective
cover, and let $L \subset (-1,0] \times \R^N$ be a $(2n-1)$-manifold with
collared boundary, diffeomorphic to $D^{2n-1}$.  Now, the inclusion
functors induce weak equivalences
\begin{equation*}
  B\mathcal{C}_{\theta^n,L}^{n-1,n-2} \simeq
  B\mathcal{C}_{\theta^n,L}^{n-1} \simeq
  B\mathcal{C}_{\theta^n,L} \simeq \psi_{\theta^n,L}(\infty,1) \simeq
  \psi_{\theta^n}(\infty,1)\simeq \Omega^{\infty-1} MT\theta^n,
\end{equation*}
obtained by applying Theorem~\ref{thm:lfiltration} $(n-1)$ times,
Theorem~\ref{thm:kappafiltration} $n$ times,
Proposition~\ref{prop:SpaceModelL} and
Proposition~\ref{prop:forget-L}, respectively, composed with the weak
equivalence $\psi_{\theta^n}(\infty,1) \simeq \Omega^{\infty-1}
MT\theta^n$ from \cite[Theorem 3.12]{GR-W}.

To apply the result of Section~\ref{sec:surg-objects-middle}, we must
specify a subset $\mathcal{A} \subset
\pi_0(\mathrm{Ob}(\mathcal{C}^{n-1,n-2}_{\theta^n,L}))$.  There is a
unique path component of
$\mathrm{Ob}(\mathcal{C}^{n-1,n-2}_{\theta^n,L})$ consisting of
manifolds diffeomorphic to $S^{2n-1}$ (with its standard smooth
structure).  Letting $\mathcal{A}$ consist of this path component,
$\mathcal{C}^{n-1,\mathcal{A}}_{\theta^n,L}$ is the full subcategory
of $\mathcal{C}^{n-1,n-2}_{\theta^n,L}$ on the objects in
$\mathcal{A}$.  It is clear that $\theta^n$ is spherical and hence
reversible (cf.\ Proposition~\ref{prop:Reversible}), that $L \cong
D^{2n-1}$ admits a handle decomposition using only handles of index
less than $n$ (since a single 0-handle suffices), and that the map
$\ell_L: D^{2n-1} \to BO(2n)\langle n\rangle$ is $(n-1)$-connected (it
is even $n$-connected).  Theorem~\ref{thm:MidSurgery} would give the
weak equivalence $B\mathcal{C}^{n-1,\mathcal{A}}_{\theta^n,L} \simeq
B\mathcal{C}^{n-1,n-2}_{\theta^n,L}$, except that that theorem
requires $\mathcal{A}$ to contain at least one object from each path
component of $B\mathcal{C}^{n-1,n-2}_{\theta^n,L}$, which may not hold
here.  Therefore we let $\overline{\mathcal{A}} \subset \pi_0(
\mathrm{Ob}(\mathcal{C}^{n-1,n-2}_{\theta^n,L}))$ be the union of
$\mathcal{A}$ and the set of path components of objects which map to a
path component of $B\mathcal{C}^{n-1,n-2}_{\theta^n,L}$ disjoint from
that of $\mathcal{A}$.  Theorem~\ref{thm:MidSurgery} does apply to
$\overline{\mathcal{A}}$ and gives the weak equivalence
\begin{equation*}
  B\mathcal{C}^{n-1,\overline{\mathcal{A}}}_{\theta^n,L} \simeq
  B\mathcal{C}^{n-1,n-2}_{\theta^n,L} \simeq \Omega^{\infty-1} MT\theta^n.
\end{equation*}
By definition, the inclusion
$B\mathcal{C}^{n-1,{\mathcal{A}}}_{\theta^n,L} \subset
B\mathcal{C}^{n-1,\overline{\mathcal{A}}}_{\theta^n,L}$ is just the
inclusion of a path component, and hence becomes a homeomorphism after
taking based loop space, so we get the weak equivalence
\begin{equation*}
  \Omega B \mathcal{C}^{n-1,\mathcal{A}}_{\theta^n,L} \simeq
  \Omega^\infty MT\theta^n.
\end{equation*}
The category $\mathcal{C}^{n-1,\mathcal{A}}_{\theta^n,L}$ is not quite
a monoid, since it contains multiple objects (namely all those
manifolds diffeomorphic to $S^{2n-1}$), but the space of objects is
path connected, and we let $\mathcal{M}$ be the endomorphism monoid of
some chosen object. Then the nerve $N_p\mathcal{M}$ is the fibre of the fibration
$N_p\mathcal{C}^{n-1,\mathcal{A}}_{\theta^n,L} \to
(N_0\mathcal{C}^{n-1,\mathcal{A}}_{\theta^n,L})^{p+1}$, and the
Bousfield--Friedlander theorem (\cite[Theorem B.4]{MR513569} or the
earlier special case \cite[Theorem 12.7]{IteratedLoopSpaces}) implies
that the inclusion $B\mathcal{M} \to
B\mathcal{C}^{n-1,\mathcal{A}}_{\theta^n,L}$ is a weak equivalence
(this can also be seen more geometrically as in \cite[Proposition
4.26]{GR-W}).  Altogether, we obtain a weak equivalence $\Omega B
\mathcal{M} \simeq \Omega^\infty MT\theta^n$.

The monoid $\mathcal{M}$ is described up to homotopy as
\begin{equation*}
  \mathcal{M} \simeq \coprod_{W} B\Diff(W,D),
\end{equation*}
where $W$ ranges over $(n-1)$-connected closed $2n$-manifolds
admitting a $\theta^n$-structure, and $D \subset W$ is a submanifold
equipped with a diffeomorphism $D \cong D^{2n}$.  (Admitting a
$\theta^n$-structure is equivalent to being parallelisable over the
$n$-skeleton.  Since the pair $(W,D)$ is $(n-1)$-connected, the space
of $\theta^n$-structures is contractible when it is non-empty.)  In
this description, the monoid structure corresponds to connected sum
and therefore $\mathcal{M}$ is homotopy commutative.  The classical
``group completion'' theorem (cf.\ \cite{McDuff-Segal}) then gives an
isomorphism in homology
\begin{equation*}
  H_*(\mathcal{M})[\pi_0 \mathcal{M}^{-1}] \stackrel{\cong}{\lra}
  H_*(\Omega^\infty MT\theta^n),
\end{equation*}
where the left hand side denotes the ring $H_*(\mathcal{M})$ localised
by inverting the multiplicative subset $\pi_0\mathcal{M}$.  Finally,
we claim that the localisation on the left hand side may be calculated
by inverting only the element of $\pi_0\mathcal{M}$ corresponding to
$T = S^n \times S^n$.  To see this, we use that if $W$ is an element
of $\mathcal{M}$, then there is another element $\overline{W}$ with
the same underlying manifold, but where the identification $D^{2n}
\cong D \subset W$ is changed by an orientation-reversing
diffeomorphism.  Then the connected sum $W \# \overline{W}$ may be identified with $\partial ((W - \Int(D))
\times [0,1])$, which we claim is diffeomorphic to the connected sum
of $b$ copies of $T$, where $b = \mathrm{rank}(H_n(W))$. This can be
seen by picking a minimal Morse function on the bounding manifold $(W
- \Int(D)) \times [0,1]$.  (It has homology $\bZ$ in degree 0 and
$\bZ^b$ in degree $n$ and is parallelisable; cancelling critical
points in a Morse function as in \cite{MilnorHCob} proves that $(W -
\Int(D)) \times [0,1]$ is diffeomorphic to the boundary connected sum
of $b$ copies of $S^{n} \times D^{n+1}$.)  Therefore the element $[W]
\in \pi_0\mathcal{M}$ is invertible in the ring $H_*(\mathcal{M})
[T^{-1}]$, with inverse $[T]^{-b}[\overline{W}]$.  The localisation by
inverting the element $[T]$ may be calculated as a direct limit, and
hence we have the homology equivalence
\begin{equation*}
  \hocolim(\mathcal{M} \stackrel{\cdot T}{\to} \mathcal{M}
  \stackrel{\cdot T}{\to} \cdots) \lra \Omega^\infty MT\theta^n,
\end{equation*}
which upon restricting to the appropriate path component gives
Theorem~\ref{thm:main-A}.\qed\medskip

We now embark on the detailed proof of Theorem~\ref{thm:main-C-new},
which will occupy
Sections~\ref{sec:category-mathcalc},~\ref{sec:UnivThetaEnds},
\ref{sec:group-completion}, and~\ref{sec:proof-theor-refthm:m}, as
follows.  Suppose given a spherical tangential structure $\theta: B
\to BO(2n)$, a $(2n-1)$-manifold $L$ which admits a handle structure
with handles of index less than $n$, and a $\theta$-structure $\ell_L:
\epsilon^1 \oplus TL \to \theta^* \gamma$ such that the underlying map
$L \to B$ is $(n-1)$-connected.  In this situation the results of
Section~\ref{sec:surgery-morphisms}--\ref{sec:Connectivity} apply, and
will be summarised in Section~\ref{sec:category-mathcalc} below as a
weak equivalence between $\Omega^\infty MT\theta$ and the loop space
of the classifying space of a category $\mathcal{C}$.  Then in
Section~\ref{sec:group-completion} we apply a version of the ``group
completion'' theorem to relate the homology of $\Omega B\mathcal{C}$
to the homology of morphism spaces of $\mathcal{C}$, suitably
localised using the theory of universal $\theta$-ends developed in
Section~\ref{sec:UnivThetaEnds}.  In
Section~\ref{sec:proof-theor-refthm:m} we explain how to apply these
results to prove Theorem~\ref{thm:main-C-new}. Finally, in
Section~\ref{sec:proof-lemma-refl} we explain how to deduce the
results about algebraic localisation from
Theorem~\ref{thm:main-C-new}.

\subsection{The category $\mathcal{C}$}\label{sec:category-mathcalc}

Suppose that $2n > 4$, let $\theta: B \to BO(2n)$ be a spherical
tangential structure, and $L$ be a $(2n-1)$-dimensional manifold with
boundary which admits a handle structure using handles of index at
most $(n-1)$. Let $\ell_L$ be a $\theta$-structure on $L$, \emph{and
  suppose that the underlying map $L \to B$ is $(n-1)$-connected}.

Picking a collared embedding $L \hookrightarrow (-1/2,0] \times
(-1,1)^{\infty-1}$, we have defined a category
$\mathcal{C}_{\theta,L}^{n-1,n-2}$.  Finally, let $\mathcal{A}\subset
\pi_0(\Ob(\mathcal{C}_{\theta,L}^{n-1,n-2}))$ be the set of objects
$(M,\ell)$ for which $M - \Int(L)$ is diffeomorphic to a handlebody
with handles of index at most $(n-1)$. In Definition \ref{defn:CobCatA} we have defined
\begin{equation*}
  \mathcal{C}_{\theta,L}^{n-1,{\mathcal{A}}} \subset
  \mathcal{C}_{\theta,L}^{n-1,n-2}
\end{equation*}
as the full subcategory on those objects contained in ${\mathcal{A}}$.

\begin{definition}\label{defn:setup} 
A morphism in
  $\mathcal{C}_{\theta,L}^{n-1,\mathcal{A}}$ is a manifold $W \subset
  [0,t] \times (-1,1)^\infty$ with $W \cap x_2^{-1}((-\infty,0]) =
  [0,t] \times L$ (equality as $\theta$-manifolds).  Write
  \begin{equation*}
    W^\circ = W - ([0,t] \times \Int(L))
  \end{equation*}
  for morphisms and similarly
  \begin{equation*}
    M^\circ = M - \Int(L)
  \end{equation*}
  for objects. Morphisms or objects $X \in
  \mathcal{C}_{\theta,L}^{n-1,\mathcal{A}}$ are completely determined
  by $X^\circ$ and we denote by $\mathcal{C}$ the category with
  \begin{align*}
    \Ob(\mathcal{C}) &= \{M^\circ \;\vert\; M \in \Ob(\mathcal{C}_{\theta,L}^{n-1,\mathcal{A}})\}\\
    \Mor(\mathcal{C}) &= \{W^\circ \;\vert\; W \in \Mor(\mathcal{C}_{\theta,L}^{n-1,\mathcal{A}})\}
  \end{align*}
  made into a topological category by insisting that the functor
  $\mathcal{C}_{\theta,L}^{n-1,\mathcal{A}} \to \mathcal{C}$ given by
  $X \mapsto X^\circ$ is an isomorphism of topological categories.
\end{definition}

Our work in Sections~\ref{sec:surgery-morphisms},
\ref{sec:surg-objects-below}, \ref{sec:surg-objects-middle}, and
\ref{sec:Connectivity} determines the homotopy type of the space
$\Omega B \mathcal{C}$, as follows.  (We emphasise that in this
section $L \to B$ is assumed $(n-1)$-connected and $\theta: B \to
BO(2n)$ is assumed spherical.)

\begin{theorem}\label{thm:gp-compl}
  There is a weak equivalence
  \begin{equation*}
    \Omega B\mathcal{C} \simeq \Omega^\infty MT\theta,
  \end{equation*}
  where loops are based at any object $P^\circ \in \Ob(\mathcal{C})$, and
  $MT\theta$ is the Thom spectrum associated to $\theta : B \to
  BO(2n)$.
\end{theorem}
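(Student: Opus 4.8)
The plan is to assemble the weak equivalence from the chain of equivalences already established in Sections~\ref{sec:surgery-morphisms}--\ref{sec:Connectivity}, together with the main theorem of \cite{GMTW} and the poset/flag models of Section~\ref{sec:defin-recoll}. First I would record the string of inclusion-induced weak equivalences
\begin{equation*}
  B\mathcal{C}_{\theta,L}^{n-1,\mathcal{A}} \simeq
  B\mathcal{C}_{\theta,L}^{n-1,n-2} \simeq
  B\mathcal{C}_{\theta,L}^{n-1} \simeq \cdots \simeq
  B\mathcal{C}_{\theta,L}^{0} \simeq
  B\mathcal{C}_{\theta,L} \simeq B\mathcal{C}_\theta,
\end{equation*}
where: the first equivalence is Theorem~\ref{thm:MidSurgery}, whose hypotheses hold because $\theta$ is spherical (hence reversible by Proposition~\ref{prop:Reversible}), $L$ has a handle structure with handles of index $< n$, $\ell_L : L \to B$ is $(n-1)$-connected, and $\mathcal{A}$ — the set of objects $M$ with $M - \Int(L)$ a handlebody of handles of index $\leq n-1$ — surjects onto $\pi_0(B\mathcal{C}_{\theta,L}^{n-1,n-2})$ by construction of $\mathcal{A}$; the next $(n-1)$ equivalences come from iterating Theorem~\ref{thm:lfiltration} with $\kappa = n-1$ and $l = n-2, n-3, \dots, 0$ (checking the inequalities $2(l+1) < 2n$, $l \leq n-1$, $l \leq n-\kappa-2 = n-1-\kappa$... here $\kappa = n-1$ so this reads $l \leq 1$, which requires care — see below); the next $n$ equivalences come from iterating Theorem~\ref{thm:kappafiltration} with $\kappa = n-1, n-2, \dots, 0$; and the last equivalence is the Corollary following Proposition~\ref{prop:forget-L}. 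Since $N = \infty$ throughout, the dimension hypotheses $\kappa+1+d < N$ etc.\ are automatic.

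Next I would combine this with the Galatius--Madsen--Tillmann--Weiss theorem, which gives a weak equivalence $\Omega B\mathcal{C}_\theta \simeq \Omega^\infty MT\theta$. Taking based loop spaces of the chain above (based at any object), and noting that $X \mapsto X^\circ$ is an isomorphism of topological categories $\mathcal{C}_{\theta,L}^{n-1,\mathcal{A}} \to \mathcal{C}$, hence induces a homeomorphism $B\mathcal{C}_{\theta,L}^{n-1,\mathcal{A}} \cong B\mathcal{C}$, I would conclude
\begin{equation*}
  \Omega B\mathcal{C} \cong \Omega B\mathcal{C}_{\theta,L}^{n-1,\mathcal{A}}
  \simeq \Omega B\mathcal{C}_\theta \simeq \Omega^\infty MT\theta.
\end{equation*}
One should remark that $B\mathcal{C}_\theta(\bR^N)$ and its subcategories are connected (the object space is connected once $L \to B$ is $(n-1)$-connected, as noted at the start of Section~\ref{sec:Connectivity}), so the based loop space does not depend on the basepoint component and may be based at any $P^\circ \in \Ob(\mathcal{C})$.

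The main obstacle I anticipate is bookkeeping the inequality hypotheses when iterating Theorems~\ref{thm:kappafiltration} and~\ref{thm:lfiltration}: one must check that at every stage of the filtration the relevant conditions (especially $l \leq d - \kappa - 2$ and $2\kappa \leq d-2$, and that $L$ has handles of index $< d - \kappa - 1$ resp.\ $< d - l - 1$) are satisfied. With $d = 2n$ and $\kappa$ fixed at $n-1$ for the $l$-filtration, the condition $l \leq d - \kappa - 2 = n - 1$ is fine for all $l \leq n-2$, and the handle condition on $L$ (handles of index $\leq n-1 < d - l - 1 = 2n - l - 1$) holds since $l \geq -1$ forces $2n - l - 1 \geq n$. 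For the $\kappa$-filtration, $2\kappa \leq 2n-2$ holds for $\kappa \leq n-1$, and the handle condition $n-1 < 2n - \kappa - 1$ holds for $\kappa \leq n-1$. So the hypotheses do check out, but writing this carefully is the one genuinely non-routine part; everything else is citation and functoriality of taking loop spaces.
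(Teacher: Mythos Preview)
Your overall outline is the same as the paper's, but there is a genuine gap in the step where you invoke Theorem~\ref{thm:MidSurgery}. You assert that ``$\mathcal{A}$ \dots\ surjects onto $\pi_0(B\mathcal{C}_{\theta,L}^{n-1,n-2})$ by construction of $\mathcal{A}$,'' but no such property follows from the definition of $\mathcal{A}$ (objects $M$ with $M - \Int(L)$ an $(n-1)$-handlebody). Indeed the paper explicitly warns in Section~\ref{sec:outl-proof-theor} that this surjectivity ``may not hold here,'' and its proof of Theorem~\ref{thm:gp-compl} circumvents the issue by enlarging $\mathcal{A}$ to $\overline{\mathcal{A}}$, the union of $\mathcal{A}$ with all path components of objects not mapping to a component of $B\mathcal{C}_{\theta,L}^{n-1,n-2}$ containing an element of $\mathcal{A}$. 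Theorem~\ref{thm:MidSurgery} then applies to $\overline{\mathcal{A}}$, and the inclusion $B\mathcal{C}_{\theta,L}^{n-1,\mathcal{A}} \hookrightarrow B\mathcal{C}_{\theta,L}^{n-1,\overline{\mathcal{A}}}$ is, by construction of $\overline{\mathcal{A}}$, a homeomorphism onto a union of path components; taking based loops then gives a weak equivalence.

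Your attempted justification via connectedness conflates two statements. The remark at the start of Section~\ref{sec:Connectivity} says that each \emph{object}, i.e.\ each manifold $M$, is path connected (because $B$ is path connected and $\pi_0(M) \to \pi_0(B)$ is injective); it does not say that $\Ob(\mathcal{C}_{\theta,L}^{n-1,n-2})$ or $B\mathcal{C}_{\theta,L}^{n-1,n-2}$ is connected. In general $B\mathcal{C}_\theta \simeq \Omega^{\infty-1}MT\theta$ has many path components, so this argument cannot rescue the step. Once you insert the $\overline{\mathcal{A}}$ trick, the rest of your chain (Theorems~\ref{thm:lfiltration} and~\ref{thm:kappafiltration}, the Corollary after Proposition~\ref{prop:forget-L}, and GMTW) is exactly what the paper does, and your hypothesis-checking for those iterations is fine.
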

\begin{proof}
  This is identical with the argument given in Section
  \ref{sec:outl-proof-theor}. Briefly, we define the set $\overline{\mathcal{A}}$ to be the union
  of $\mathcal{A}$ and all objects not in a path component of
  $B\mathcal{C}_{\theta,L}^{n-1,n-2}$ containing an element of
  $\mathcal{A}$, and use the string of weak equivalences
  \begin{equation*}
  B\mathcal{C}_{\theta,L}^{n-1,\overline{\mathcal{A}}} \simeq
    B\mathcal{C}_{\theta,L}^{n-1,n-2} \simeq
    B\mathcal{C}_{\theta,L}^{n-1} \simeq
    B\mathcal{C}_{\theta,L} \simeq \psi_{\theta,L}(\infty,1) \simeq
    \psi_\theta(\infty,1) \simeq
    \Omega^{\infty-1} MT\theta
  \end{equation*}
as well as the homeomorphism $B\mathcal{C} \cong B\mathcal{C}^{n-1,
  \mathcal{A}}_{\theta, L}$, and the fact that the inclusion
  $B\mathcal{C}^{n-1, \mathcal{A}}_{\theta, L} \to B\mathcal{C}^{n-1,
    \overline{\mathcal{A}}}_{\theta, L}$ is a homeomorphism onto the
  path components it hits.
\end{proof}

From now on we will work with the category $\mathcal{C}$, and we need
a lemma to translate what the connectivity conditions in
$\mathcal{C}_{\theta,L}^{n-1,\mathcal{A}}$ mean after cutting $\Int(L)$ out.

\begin{lemma}\label{lem:ConnectivityInCPrelim}\mbox{}
\begin{enumerate}[(i)]
\item\label{it:1:ConnectivityInCPrelim} Let $N$ be an object in $\mathcal{C}_{\theta, L}^{n-1, \mathcal{A}}$ and $W : M \leadsto N$ be a morphism in the larger category $\mathcal{C}_{\theta, L}^{n-1, n-2}$. Then the pair $(W, M)$ is $(n-1)$-connected.

\item \label{it:2:ConnectivityInCPrelim}Let $W^\circ : M^\circ \leadsto N^\circ$ be a morphism in $\mathcal{C}$. Then the pairs $(W^\circ, M^\circ)$ and $(W^\circ, N^\circ)$ are $(n-1)$-connected.
\end{enumerate}
\end{lemma}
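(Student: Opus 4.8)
The two parts of the lemma are closely related, and I would prove (\ref{it:1:ConnectivityInCPrelim}) first and then bootstrap to (\ref{it:2:ConnectivityInCPrelim}). For (\ref{it:1:ConnectivityInCPrelim}), the point is that although a morphism $W$ in $\mathcal{C}_{\theta,L}^{n-1,n-2}$ is by definition only $(n-1)$-connected relative to its \emph{outgoing} boundary $N$, the hypothesis that the outgoing object $N$ lies in $\mathcal{A}$ — i.e.\ that $N^\circ = N - \Int(L)$ is a handlebody with handles of index at most $(n-1)$ — forces the stronger statement. The plan is to combine three connectivity facts: first, $(W,N)$ is $(n-1)$-connected by membership of $W$ in $\mathcal{C}_{\theta,L}^{n-1}$; second, the pair $(N, L)$ is $(n-1)$-connected, since $N^\circ$ is obtained from $L$ by attaching handles of index $\leq n-1$ (this is exactly the defining property of $\mathcal{A}$, using $n>3$ so that the handle and connectivity formulations agree, as remarked in the introduction); and third, $(W, L)$ is then $(n-1)$-connected by the long exact sequence of the triple $(W, N, L)$. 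Finally, $(M, L)$ is $(n-2)$-connected because $M$ is an object of $\mathcal{C}_{\theta,L}^{n-1,n-2}$, which by the parenthetical remark in the definition of $\mathcal{C}_{\theta,L}^{\kappa,l}$ (using that $L \to B$ is $(n-1)$-connected, hence in particular $(n-2)$-connected) means $(M,L)$ is $(n-2)$-connected; one then runs the long exact sequence of the triple $(W,M,L)$ to conclude that $(W,M)$ is $(n-2)$-connected, and upgrades to $(n-1)$-connected by a Mayer--Vietoris / excision argument comparing $(W,M)$ with $(W^\circ, M^\circ)$ where the cylinder $[0,t]\times L$ has been removed.

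Actually, the cleanest route is to do everything after excising the product cylinder. Since $W \cap x_2^{-1}((-\infty,0]) = [0,t]\times L$ and similarly for the objects, excision gives that the inclusion of pairs $(W^\circ, \partial_0 W^\circ \cup \partial_1 W^\circ) \hookrightarrow$ (appropriate pair) identifies relative homology, and more usefully that $\pi_i(W, M) \cong \pi_i(W^\circ, M^\circ)$ for all $i$ once one notes that $(W, W^\circ)$ and $(M, M^\circ)$ are both excisive in the relevant range — here one uses that $[0,t]\times L$ is a codimension-zero submanifold glued along $[0,t]\times \partial L$, and that $L$ itself is $(n-1)$-connected so contributes nothing to low-dimensional relative homotopy. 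Thus part (\ref{it:2:ConnectivityInCPrelim}) for the outgoing boundary, $(W^\circ, N^\circ)$ being $(n-1)$-connected, is simply the restatement of $W \in \mathcal{C}_{\theta,L}^{n-1}$ after this excision. For the incoming boundary $(W^\circ, M^\circ)$: given a morphism $W^\circ : M^\circ \leadsto N^\circ$ in $\mathcal{C}$, the outgoing object $N^\circ$ by construction lies in $\mathcal{A}$ (every object of $\mathcal{C}$ does), so we are in the situation of part (\ref{it:1:ConnectivityInCPrelim}), and we get that $(W^\circ, M^\circ)$ is $(n-1)$-connected from the already-established part (\ref{it:1:ConnectivityInCPrelim}) after excision.

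So the real content is part (\ref{it:1:ConnectivityInCPrelim}), and its proof is the chase through the triple $(W, N, L)$ together with the triple $(W, M, L)$. Spelled out: from $(W,N)$ being $(n-1)$-connected and $(N,L)$ being $(n-1)$-connected we get $(W,L)$ is $(n-1)$-connected; from $(M,L)$ being $(n-2)$-connected — wait, we actually want $(W,M)$ to be $(n-1)$-connected, and the triple $(W,M,L)$ gives $\pi_i(M,L) \to \pi_i(W,L) \to \pi_i(W,M) \to \pi_{i-1}(M,L)$, so for $i \leq n-1$ we need $\pi_i(W,L) = 0$ (have it) and $\pi_{i-1}(M,L) = 0$, i.e.\ $(M,L)$ $(n-2)$-connected (have it, since $M$ is an object of $\mathcal{C}_{\theta,L}^{n-1,n-2}$ and $L\to B$ is $(n-1)$-connected), giving $\pi_i(W,M) = 0$ for $i \leq n-1$ — but we must be careful at $i = n-1$, where the sequence reads $\pi_{n-1}(M,L) \to \pi_{n-1}(W,L)\to \pi_{n-1}(W,M)\to \pi_{n-2}(M,L) = 0$, and since $\pi_{n-1}(W,L) = 0$ already we get $\pi_{n-1}(W,M) = 0$. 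Good.

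\textbf{Main obstacle.} The one genuinely delicate point is the excision step identifying $\pi_*(W, M)$ with $\pi_*(W^\circ, M^\circ)$ (and likewise for objects and for $N$): one must check that removing the product piece $[0,t] \times \Int(L)$, glued in along $[0,t] \times \partial L$, does not alter relative homotopy groups in degrees $\leq n-1$. This is where the hypothesis that $L$ admits a handle decomposition with handles of index $< n$ enters — it guarantees $L$ is $(n-1)$-connected, wait, no: a handlebody with handles of index $< n$ need not be highly connected (e.g.\ $S^1 \times D^{n}$ has a $1$-handle). What one actually uses is that the \emph{pair} $([0,t]\times L, [0,t]\times\partial L)$ and its interaction with $W^\circ$ is homotopically a product, so by a Mayer--Vietoris/van Kampen argument the relevant relative homotopy groups of $W$ rel $M$ agree with those of $W^\circ$ rel $M^\circ$; this is standard but must be done with care regarding basepoints and the non-simply-connected case, exactly as in the analogous ``cutting out $L$'' arguments already used in the paper (e.g.\ Remark~\ref{rem:IndependenceL} and Proposition~\ref{prop:forget-L}). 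I would phrase it as: the square of inclusions $[0,t]\times\partial L \to [0,t]\times L$, $[0,t]\times \partial L\to W^\circ$ is a homotopy pushout presenting $W$, and similarly for $M$, so the map of pairs $(W^\circ, M^\circ) \to (W, M)$ induces iso on all homotopy groups by excision for homotopy pushouts — no connectivity hypothesis on $L$ is needed for this, only that the gluing is along a codimension-zero piece of the boundary.
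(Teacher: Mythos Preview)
Your argument for part~(\ref{it:1:ConnectivityInCPrelim}) is the same as the paper's: chase the triples $(W,N,L)$ and $(W,M,L)$, using that $(W,N)$ and $(N,L)$ are $(n-1)$-connected and $(M,L)$ is $(n-2)$-connected. One small slip: you write ``$N^\circ$ is obtained from $L$ by attaching handles of index $\leq n-1$'', but $N^\circ$ and $L$ are glued along $\partial L$, not one built from the other. The correct statement (which the paper makes explicit) is that $N^\circ$ being a handlebody with handles of index $\leq n-1$ is equivalent, by turning the handle structure upside down, to $N$ being obtained from $L$ by attaching handles of index $\geq n$; that is what makes $(N,L)$ $(n-1)$-connected.

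For part~(\ref{it:2:ConnectivityInCPrelim}) your plan is right in outline---compare $(W^\circ,M^\circ)$ with $(W,M)$ by an excision argument---but the justification you give at the end is not sufficient. The claim ``the map of pairs $(W^\circ,M^\circ)\to(W,M)$ induces iso on all homotopy groups by excision for homotopy pushouts --- no connectivity hypothesis on $L$ is needed'' is too strong: homotopy excision (Blakers--Massey) only gives an isomorphism in a range depending on the connectivities of the pieces, and here one of those connectivities is exactly what you are trying to establish, so the argument is circular. The paper sidesteps this as follows. First observe that $W$ deformation retracts to $W^\circ\cup N$ (retract $[0,t]\times L$ to $\{t\}\times L$); since $n\geq 3$ and all the relevant inclusions are built from $n$-cells or higher, every space in sight has the same fundamental group $\pi$. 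Then ordinary excision in $\bZ[\pi]$-homology gives
\[
H_*(W,N;\bZ[\pi])\;\cong\;H_*(W^\circ\cup N,N;\bZ[\pi])\;\cong\;H_*(W^\circ,N^\circ;\bZ[\pi]),
\]
and now the relative Hurewicz theorem (on the universal cover) converts the vanishing of these groups in degrees $\leq n-1$ into the desired vanishing of $\pi_*(W^\circ,N^\circ)$. The pair $(W^\circ,M^\circ)$ is handled the same way. So your ``main obstacle'' paragraph correctly locates the delicate step, but the resolution is not a bare homotopy-pushout excision; it is the passage through $\bZ[\pi]$-homology, for which the equality of fundamental groups (using $n\geq 3$) is essential.
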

\begin{proof}
By definition, $\mathcal{A}$ consists of manifolds $M$ such that $M
  - \Int(L)$ admits a handle structure with handles of index at most
  $(n-1)$, and reversing such a handle structure we see that this is
  equivalent to $M - \Int(L)$ being obtained from $\partial L$ by
  attaching handles of index at least $n$.
    
  The pairs $(N, \{1\} \times L)$ and $(W, N)$ are both $(n-1)$-connected, from which we deduce that $(W, \{1\} \times L)$, and so $(W, \{0\} \times L)$, is also $(n-1)$-connected. As $(M, \{0\} \times L)$ is $(n-2)$-connected, we deduce that $(W, M)$ is $(n-1)$-connected, which establishes the first part.
  
  For the second part, observe that
  $W$ deformation retracts to $W^\circ \cup N$.  Therefore all pairs
  $(N,L)$, $(M,L)$, $(W,N)$ and $(W,W^\circ)$ are homotopy equivalent
  to relative CW complexes with relative cells of dimension at least
  $n$.  As $n \geq 3$, it follows that all the inclusions between $L$,
  $\partial L$, $N$, $N^\circ, M$, $M^\circ, W$, $W^\circ,$ and
  $\partial W^\circ$ induce isomorphisms on fundamental groups, and we write $\pi$ for
  the common fundamental group. There are isomorphisms
  $$H_*(W, N ;\bZ[\pi]) \cong H_*(W^\circ \cup N, N ; \bZ[\pi]) \cong H_*(W^\circ, N^\circ ;\bZ[\pi])$$
  given by the homotopy equivalence $W^\circ \cup N \simeq W$ and
  excision of $\Int(L)$ respectively, and so $H_*(W^\circ,
  N^\circ;\bZ[\pi])=0$ for $* \leq n-1$. Hence $(W^\circ,N^\circ)$ is
  $(n-1)$-connected, and the same argument applies to the pair
  $(W^\circ, M^\circ)$.
\end{proof}

\begin{definition}\label{defn:setup2}
  Recall from the proof of Proposition \ref{prop:forget-L} that we
  constructed a $\theta$-manifold $D(L)$ which is diffeomorphic to the
  double of $L$.  This contains $L \subset D(L)$ with its standard
  $\theta$-structure, and we write $\overline{L}$ for the
  $\theta$-manifold $D(L)- \Int(L)$. As $L$ has a handle structure
  with handles of index at most $(n-1)$, $D(L)$ can be obtained from
  $L$ by attaching handles of index at least $n$.  We extend the
  embedding of $L$ to an embedding $D(L)\to (-1,1)^\infty$ to get
  objects $D(L) \in \mathcal{C}_{\theta,L}^{n-1,\mathcal{A}}$ and
  $D(L)^\circ = \overline{L} \in \mathcal{C}$.
\end{definition}

The relevance of the category $\mathcal{C}$ to
Theorem~\ref{thm:main-C-new} is evident from the following proposition.

\begin{proposition}\label{prop:cut-L}
  For any object $P \in \mathcal{C}_{\theta,L}^{n-1,\mathcal{A}}$,
  there is a weak equivalence $\varphi_P: \mathcal{C}(\overline{L},
  P^\circ) \to \mathcal{N}^\theta(P,\ell_P)$ such that if $K : P
  \leadsto P'$ is a morphism in
  $\mathcal{C}_{\theta,L}^{n-1,\mathcal{A}}$, then the diagram
  \begin{equation*}
    \xymatrix{
      {\mathcal{C}(\overline{L}, P^\circ)} \ar[r]^{-\circ K^\circ}
      \ar[d]_{\varphi_P} & {\mathcal{C}(\overline{L}, (P')^\circ)}
      \ar[d]_{\varphi_{P'}} \\
      {\mathcal{N}^\theta(P, \ell_P)} \ar[r]^{-\circ K} &
      {\mathcal{N}^\theta(P', \ell_{P'})}
    }
  \end{equation*}
  commutes, i.e.\ $\varphi_P$ is a natural transformation of functors
  $\mathcal{C}_{\theta,L}^{n-1,\mathcal{A}} \to \mathbf{Top}$.
\end{proposition}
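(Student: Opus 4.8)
The plan is to construct the equivalence $\varphi_P$ by unwinding the definitions of both sides and exhibiting them as weakly equivalent homotopy-orbit spaces, and then to verify naturality at the point-set level. Recall that by Definition~\ref{defn:N} we have a homeomorphism
\begin{equation*}
  \mathcal{N}^\theta(P,\ell_P) = \coprod_{V} B\Diff^\theta(V;\ell_P),
\end{equation*}
where $V$ ranges over diffeomorphism classes of compact manifolds with $\partial V = P$ such that $(V,P)$ is $(n-1)$-connected. On the other side, a morphism $\overline{L} \to P^\circ$ in $\mathcal{C}$ is (after undoing the $X \mapsto X^\circ$ identification) a morphism $D(L) \to P$ in $\mathcal{C}_{\theta,L}^{n-1,\mathcal{A}}$, i.e.\ a $\theta$-manifold $W \subset [0,t]\times (-1,1)^\infty$ with $W \cap x_2^{-1}((-\infty,0]) = [0,t]\times L$, with $W\vert_0 = D(L)$ and $W\vert_t = P$. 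First I would note that gluing such a $W$ along its incoming boundary $D(L) = L \cup_{\partial L} \overline{L}$ to the canonical $\theta$-nullbordism $V_0$ of $D(L)$ constructed in the proof of Proposition~\ref{prop:forget-L} (the manifold called $V$ there, with $V : \emptyset \leadsto D(L)$) produces a compact $\theta$-manifold $\widehat{W} = V_0 \cup_{D(L)} W$ with $\partial \widehat{W} = P$. By Lemma~\ref{lem:ConnectivityInCPrelim}(\ref{it:1:ConnectivityInCPrelim}) the pair $(W, D(L))$ is $(n-1)$-connected, and since $D(L)$ is obtained from $L$ by attaching handles of index $\geq n$ and $V_0$ deformation retracts onto $L$, a Mayer--Vietoris / van Kampen argument (exactly as in the proof of Lemma~\ref{lem:ConnectivityInCPrelim}(\ref{it:2:ConnectivityInCPrelim})) shows $(\widehat{W}, P)$ is $(n-1)$-connected. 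Thus $\widehat{W}$ represents a point of $\mathcal{N}^\theta(P,\ell_P)$.

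The key step is then to promote this construction to a map of homotopy-orbit spaces. Using the embedding models, $\mathcal{C}(\overline{L}, P^\circ)$ is, up to the usual reparametrisation of the interval coordinate, homeomorphic to $\coprod_W (\Emb^\partial(W^\circ, \ldots) \times \Bun^\partial(\ldots;\ell))/\Diff(W^\circ,\partial)$, and the operation $W \mapsto V_0 \cup_{D(L)} W$ induces, for each fixed diffeomorphism type, a map of the corresponding Borel constructions: a choice of collared embedding of $V_0$ and of its $\theta$-structure is contractible, so the space of such choices does not affect the weak homotopy type, and $\Diff(W^\circ,\partial) = \Diff(W,\partial)$ maps to $\Diff(\widehat{W}, \partial)$ by extending by the identity on $V_0$. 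The resulting map $\varphi_P : \mathcal{C}(\overline{L},P^\circ) \to \mathcal{N}^\theta(P,\ell_P)$ is a weak equivalence summand-by-summand: the point is that $V_0 \cup_{D(L)} (-)$ sets up a bijection between diffeomorphism classes of morphisms $D(L)\leadsto P$ in $\mathcal{C}_{\theta,L}^{n-1,\mathcal{A}}$ and diffeomorphism classes of $(n-1)$-connected nullbordisms of $P$ — with inverse given by removing (an open collar of) $L$ together with the canonical handlebody collar, an operation which is well-defined because $V_0^\circ = \overline{L}$ can always be re-identified with the $\overline{L}$-collar inside any nullbordism whose boundary contains $P \supset \partial L$, using that $\overline{L}$ and any handlebody are built from $\partial L$ by handles of index $\geq n$ (so the identification is unique up to isotopy by general position) — and on each summand the map of Borel constructions is the identity on the diffeomorphism group and a weak equivalence on the bundle-space and embedding-space factors. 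Here I would invoke Definition~\ref{defn:BDifftheta} and the contractibility of spaces of collars and of the relevant choices of embedding.

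The naturality square is checked directly on representatives: given $K : P \leadsto P'$, both composites send a morphism $W : D(L)\leadsto P$ to $V_0 \cup_{D(L)} W \cup_P K$ (after the appropriate translation in the first coordinate), since $(V_0 \cup_{D(L)} W) \cup_P K = V_0 \cup_{D(L)} (W \cup_P K)$ by associativity of gluing, and the bundle and embedding data are transported identically. Hence the square commutes on the nose, not just up to homotopy. The main obstacle, I expect, will be the bookkeeping in the previous paragraph: making precise that $V_0 \cup_{D(L)}(-)$ is a \emph{bijection} on diffeomorphism classes and that its inverse is continuous on each Borel construction. The subtlety is that given an arbitrary $(n-1)$-connected nullbordism $\widehat{W}$ of $P$ one must canonically locate a copy of $\overline{L}$ (equivalently, of the handlebody-double $V_0$) inside it so as to cut it out; this requires the uniqueness-up-to-isotopy of embeddings of handlebodies of index $\geq n$ into $(n-1)$-connected $2n$-manifolds rel their common boundary $\partial L$, which is exactly the kind of general-position statement the rest of the paper relies on (and which is governed by the inequalities $2n > 4$, $n \geq 3$). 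Once that is in hand, the weak equivalence and naturality are formal.
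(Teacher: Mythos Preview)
Your definition of $\varphi_P$ (precompose with the canonical nullbordism $V$ of $D(L)$) and your check of the naturality square are exactly what the paper does. The difference is entirely in how the weak equivalence is proved, and the paper's route is much shorter and avoids precisely the obstacle you flag.

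Rather than argue summand-by-summand, the paper writes down an explicit homotopy inverse, recycling the idea from the proof of Proposition~\ref{prop:forget-L}. The cylinder $[0,1]\times P$ contains a copy of $V$, namely $[1/4,3/4]\times L$, which is diffeomorphic to $V$ with homotopic $\theta$-structure. Cutting this out produces a $\theta$-cobordism $C : P \amalg D(L) \leadsto P$ that still contains $I\times L$. Gluing $C$ along its incoming $P$-boundary defines a continuous map
\[
\mathcal{N}^\theta(P,\ell_P)\;\overset{\simeq}{\lra}\;\mathcal{C}_\theta^{n-1}(\emptyset,P)\;\lra\;\mathcal{C}_{\theta,L}^{n-1}(D(L),P)\;\cong\;\mathcal{C}(\overline{L},P^\circ),
\]
and one checks this is homotopy inverse to $\varphi_P$ by the same ``slide $V$ off'' move as in Figure~\ref{fig:AddingL}. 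The point is that the copy of $V$ being removed lives inside the \emph{fixed} cylinder $C$, not inside the unknown nullbordism $\widehat{W}$, so no embedding or isotopy-uniqueness statement for handlebodies is needed.

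Your route has a second, related gap beyond the one you identify. You assert that on each summand the map of Borel constructions is ``the identity on the diffeomorphism group'', but extending by the identity on $V_0$ only gives a homomorphism $\Diff(W^\circ,\partial)\to\Diff(\widehat{W},\partial\widehat{W})$; a diffeomorphism of $\widehat{W}$ fixing $P$ has no a~priori reason to preserve the embedded $V_0$. To get a weak equivalence of Borel constructions you would need this homomorphism to be a weak equivalence, which is essentially a \emph{parametrised} form of the handlebody-uniqueness you already flagged. That is plausible (Whitney trick plus isotopy extension, using $2n\ge 6$), but it is genuine work, and the paper's punctured-cylinder argument sidesteps it entirely.
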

\begin{proof}
  $\varphi_P$ is defined as the composition
  \begin{equation*}
    \varphi_P : \mathcal{C}(\overline{L}, P^\circ) \cong
    \mathcal{C}_{\theta,L}^{n-1,\mathcal{A}}(D(L), P) \overset{V \circ
      -}\lra \mathcal{C}_{\theta}^{n-1}(\emptyset, P) \overset{\simeq}{\lra}
    \mathcal{N}^\theta(P, \ell_P),
  \end{equation*}
  where $V : \emptyset \leadsto D(L)$ is the $\theta$-cobordism
  constructed in the proof of Proposition \ref{prop:forget-L} and the
  last map is $(t,W) \mapsto W - t\cdot e_1$.  It is clear that the
  square commutes, so it remains to show that $\varphi_P$ is a
  homotopy equivalence. To do this, consider first the trivial
  cobordism $P \times [0,1]$. This contains $L \times [1/4, 3/4]$,
  which is diffeomorphic to $V$ and has a homotopic
  $\theta$-structure. Cutting this out gives a $\theta$-cobordism $P
  \amalg D(L) \leadsto P$ containing $L \times I$. Composition along
  the incoming $P$ of this $\theta$-cobordism defines a continuous map
  $$\mathcal{N}^\theta(P, \ell_P) \overset{\simeq}\lra \mathcal{C}_\theta^{n-1}(\emptyset, P) \lra \mathcal{C}_{\theta,L}^{n-1}(D(L), P) = \mathcal{C}_{\theta,L}^{n-1,\mathcal{A}}(D(L), P) \cong \mathcal{C}(\overline{L}, P^\circ)$$
which is homotopy inverse to $\varphi_P$.
\end{proof}

\subsection{Universal $\theta$-ends and the proof of Addendum~\ref{add:1}}\label{sec:UnivThetaEnds}

Let $\theta: B \to BO(2n)$ be spherical.  Recall from Definition
\ref{defn:universalthetaend} that a \emph{universal $\theta$-end} is a
submanifold $K \subset [0,\infty) \times \bR^\infty$ with
$\theta$-structure $\ell_K$ such that $x_1 : K \to [0,\infty)$ has the
natural numbers as regular values. We insist that
\begin{enumerate}[(i)]
\item Each $K\vert_{[i,i+1]}$ is a highly connected cobordism, i.e.\ is $(n-1)$-connected relative to either end,\label{it:UnivThetaEnd:2}

\item For each highly connected $\theta$-cobordism $W : K\vert_i
  \leadsto P$, there is an embedding $j : W \hookrightarrow
  K\vert_{[i,\infty)}$, and a homotopy $\ell_K \circ Dj \simeq
  \ell_W$, both relative to $K\vert_i$.\label{it:UnivThetaEnd:3}
\end{enumerate}

We wish to have the notion of universal $\theta$-end available to us in the cobordism category $\mathcal{C}$. Let $K\vert_0, K\vert_1, \ldots$ be a sequence of objects in $\mathcal{C}$, and $K\vert_{[i-1,i]} : K\vert_{i-1} \to K\vert_i$ be a sequence of morphisms in $\mathcal{C}$. For integers
$0 \leq a < b$, let us write
$$K\vert_{[a,b]} = K\vert_{[a, a+1]} \circ K\vert_{[a+1, a+2]} \circ \cdots \circ K\vert_{[b-1, b]}$$
for the composition of the morphisms from $K\vert_a$ to
$K\vert_b$. There are natural inclusions $K\vert_{[0,a]} \subset
K\vert_{[0, a+1]} \subset \cdots$ and we let $K$ denote the union: a
non-compact smooth manifold with $\theta$-structure. The symbol
$K\vert_{[a,b]}$ is not ambiguous, and we can also make sense of
$K\vert_{[a, \infty)} = \cup_{b > a} K\vert_{[a,b]}$. 

\begin{definition}
Say that $K$ is a \emph{universal $\theta$-end in $\mathcal{C}$} if, in the notation just introduced, properties (\ref{it:UnivThetaEnd:2}) and (\ref{it:UnivThetaEnd:3}) above hold, where in (\ref{it:UnivThetaEnd:3}) we require $W$ to be a morphism in $\mathcal{C}$.
\end{definition}

Proposition~\ref{prop:addendum-plus-version-in-C} below proves
Addendum~\ref{add:1}, together with a version for universal
$\theta$-ends in $\mathcal{C}$.  Before giving the proof, we make some
preparations.

\begin{lemma}\label{lem:7.8}
  Let $W : N \leadsto M$ be a highly connected cobordism.  There exist
  cobordisms $F : M \leadsto M$ and $G : N \leadsto N$ such that
  $W\circ F$ and $G \circ W$ both admit handle structures using only
  handles of index $n$. Similarly, if $W$ is a morphism in the
  category $\mathcal{C}$, then $F$ and $G$ can be taken to be
  morphisms in this category, with the same conclusion (in this case,
  attaching handles along embeddings $S^{n-1} \times D^n \to
  \Int(N)$).
\end{lemma}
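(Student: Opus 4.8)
The plan is to reduce the lemma to the handle--trading argument already carried out in the proof of Lemma~\ref{lem:SurgeryDataExists}. First I would record the homotopy--theoretic input. Since $(W,N)$ and $(W,M)$ are both $(n-1)$-connected and $2n\geq 6$ (so $n\geq 3$ and, all the manifolds being connected, they share a common fundamental group $\pi$), the pair $(W,M)$ being $(n-1)$-connected gives $H^{j}(W,M;\bZ[\pi])=0$ for $j\leq n-1$ by the universal coefficient spectral sequence, and Lefschetz duality $H_k(W,N;\bZ[\pi])\cong H^{2n-k}(W,M;\bZ[\pi])$ then forces $H_k(W,N;\bZ[\pi])=0$ for $k\geq n+1$; together with the vanishing for $k\leq n-1$ this shows $H_*(W,N;\bZ[\pi])$ is concentrated in degree $n$. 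Standard handle trading (killing index $\leq n-1$ handles from the $N$-side and index $\geq n+2$ handles from the $M$-side, as in \cite[Theorem 8.1]{MilnorHCob}) produces a handle decomposition of $W$ rel $N$ with handles of index $n$ and $n+1$ only, whose $\bZ[\pi]$-chain complex exhibits $H_n(W,N;\bZ[\pi])$ as a stably free module.

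Next I would construct $F$. For a large integer $g$, let $F:M\leadsto M$ be a self-cobordism realising the interior connected sum with $g$ copies of $S^n\times S^n$, built from $M\times[0,1]$ by attaching handles of index $n$ inside $M\times(0,1)$, so that $W\circ F$ is diffeomorphic to the interior connected sum $W\#^g(S^n\times S^n)$. Since $\theta$ is spherical, hence reversible, Proposition~\ref{prop:ConnectSum} provides $\theta$-structures on $F$ and on $W\circ F$ restricting correctly; and because the connected sums are performed in the interior, both $(W\circ F,N)$ and $(W\circ F,M)$ remain $(n-1)$-connected. Now I would invoke, essentially verbatim, the second half of the proof of Lemma~\ref{lem:SurgeryDataExists}: each interior connected sum with $S^n\times S^n$ adds a free $\bZ[\pi]$-summand to $H_n(\widetilde{W},\widetilde{N})$, so for $g$ large this homology module becomes free; a geometric basis then carries a Whitehead torsion obstruction in $\mathrm{Wh}(\pi)$ which, after possibly enlarging $g$, can be made to vanish; finally the modification lemma for Morse functions rearranges the handles of $W\circ F$ rel $N$ so that only those of index $n$ survive. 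For $G$ I would do the mirror construction at the incoming end: take $G:N\leadsto N$ realising $G\circ W\cong W\#^{g'}(S^n\times S^n)$ for $g'$ large and run the same argument to obtain a handle decomposition of $G\circ W$ rel $N$ with only $n$-handles.

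For the version inside $\mathcal{C}$, everything above must be carried out relative to $[0,t]\times L$. Since $L$ has a handle decomposition with handles of index $<n$, its core (and that of $[0,t]\times L$) has dimension $<n$, so all the $n$-dimensional attaching spheres and the tori $S^n\times S^n$ being summed in can be isotoped off $[0,t]\times L$; thus $F$ and $G$ may be taken to be morphisms of $\mathcal{C}$, built from $M\times I$ (resp.\ $N\times I$) by attaching handles along embeddings $S^{n-1}\times D^n\hookrightarrow\Int(M)$ (resp.\ $\Int(N)$) at interior levels. The connectivity statements required to run the handle--trading argument inside $\mathcal{C}$ (that the relevant pairs, after cutting out $\Int(L)$, are $(n-1)$-connected) are exactly those supplied by Lemma~\ref{lem:ConnectivityInCPrelim}.

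The main obstacle is arranging that \emph{only} handles of index $n$ remain: this cannot be achieved without stabilising, because $H_n(W,N;\bZ[\pi])$ need be only stably free and any geometric basis carries a Whitehead torsion obstruction, and the sole purpose of $F$ and $G$ is to absorb these two obstructions through interior connected sums with $S^n\times S^n$. The real work is therefore to re-run the $s$-cobordism--style bookkeeping of Lemma~\ref{lem:SurgeryDataExists} for a \emph{relative} handle decomposition, while simultaneously keeping track of the $\theta$-structures (via Proposition~\ref{prop:ConnectSum}) and of the fixed submanifold $L$.
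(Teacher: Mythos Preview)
Your proposal is correct and follows essentially the same approach as the paper: take $F$ and $G$ to be sufficiently many copies of $([0,1]\times M)\#(S^n\times S^n)$ and $([0,1]\times N)\#(S^n\times S^n)$ respectively, then invoke the handle-trading/$s$-cobordism argument from Lemma~\ref{lem:SurgeryDataExists}. The paper's proof is a two-line reference to that lemma, whereas you have unpacked the duality, stably-free, and Whitehead-torsion steps explicitly; your additional care about $\theta$-structures (via Proposition~\ref{prop:ConnectSum}) and about keeping everything disjoint from $[0,t]\times L$ in the $\mathcal{C}$ case is appropriate and not spelled out in the paper.
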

\begin{proof}
  The pairs $(W, M)$ and $(W, N)$ are both $(n-1)$-connected, so if we
  let $F$ and $G$ be sufficiently large multiples of $([0,1] \times M)
  \# (S^n \times S^n)$ and $([0,1] \times N) \# (S^n \times S^n)$
  respectively then, by the method used in the proof of Lemma
  \ref{lem:SurgeryDataExists}, both $W\circ F$ and $G
  \circ W$ admit the required handle decompositions.
\end{proof}

\begin{proposition}\label{prop:addendum-plus-version-in-C}
  Let $K\vert_{[i,i+1]}$ be a
  sequence of composable morphisms in $\mathcal{C}$ and let $K = \cup
  K\vert_{[0,i]}$ be the infinite composition.  Then $(K,\ell_K)$ is a
  universal $\theta$-end in $\mathcal{C}$ if and only if the following
  conditions hold.
  \begin{enumerate}[(i)]
  \item\label{it:add-C:1} For each integer $i$, the map
    $\pi_n(K\vert_{[i, \infty)}) \to \pi_n(B)$ is surjective, for all
    basepoints in $K$.
  \item\label{it:add-C:2} For each integer $i$, the map
    $\pi_{n-1}(K\vert_{[i, \infty)}) \to \pi_{n-1}(B)$ is injective,
    for all basepoints in $K$.
  \item\label{it:add-C:3} For each integer $i$, each path component of
    $K\vert_{[i, \infty)}$ contains a submanifold diffeomorphic to
    $S^n \times S^n - \Int(D^{2n})$, which in addition has
    null-homotopic structure map to $B$.
  \end{enumerate}
  Similarly, if $K\vert_{[i,i+1]}$ is a sequence of composable, highly
  connected cobordisms in $\mathcal{C}_\theta$, then $K$ is a universal
  $\theta$-end if and only if conditions~(\ref{it:add-C:1}),
  (\ref{it:add-C:2}) and (\ref{it:add-C:3}) hold. (I.e.,
  Addendum~\ref{add:1} is true).
\end{proposition}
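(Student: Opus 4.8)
The plan is to prove both statements simultaneously, since the arguments are parallel; I will phrase everything in the generality that covers both the category $\mathcal{C}$ and the cobordism category $\mathcal{C}_\theta$. The ``only if'' direction is the routine one: if $(K,\ell_K)$ is a universal $\theta$-end, then each $K\vert_{[i,i+1]}$ is highly connected by definition, and the absorption property can be applied to suitable test cobordisms to extract (\ref{it:add-C:1}), (\ref{it:add-C:2}), (\ref{it:add-C:3}). Concretely, for (\ref{it:add-C:3}) one takes $W = (\{i\}\times K\vert_i\times[0,1])\ \#\ (S^n\times S^n)$, which is highly connected with $W\vert_i = K\vert_i$, and whose $\theta$-structure can be chosen (using sphericality, via Proposition~\ref{prop:ConnectSum}) so that the embedded $S^n\times S^n - \Int(D^{2n})$ has null-homotopic structure map; absorbing $W$ into $K\vert_{[i,\infty)}$ plants the required submanifold. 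For (\ref{it:add-C:1}) and (\ref{it:add-C:2}), observe that the inclusion $K\vert_i \hookrightarrow K\vert_{[i,\infty)}$ factors through any absorbed $W$, and a highly connected cobordism $W$ from $K\vert_i$ realising a prescribed element of $\pi_n(B)$ not hit by $\pi_n(K\vert_i)$ (respectively killing a class in the kernel on $\pi_{n-1}$) exists by attaching handles; absorbing it forces the surjectivity/injectivity in the colimit.

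For the ``if'' direction, the core is: given highly connected $W : K\vert_i \leadsto P$, produce an embedding $j : W \hookrightarrow K\vert_{[i,\infty)}$ with $\ell_K\circ Dj \simeq \ell_W$ rel $K\vert_i$. The strategy is to first reduce $W$ to a standard form. By Lemma~\ref{lem:7.8} there is a cobordism $F : P \leadsto P$ (a morphism in the relevant category) such that $W\circ F$ admits a handle structure using only handles of index $n$; equivalently, $W\circ F$ is built from $(K\vert_i)\times[0,1]$ by attaching trivial $n$-handles and then $n$-handles along embedded framed $(n-1)$-spheres. So it suffices to embed cobordisms of this shape into $K\vert_{[i,\infty)}$ compatibly with $\theta$-structures. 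The point of conditions (\ref{it:add-C:1})--(\ref{it:add-C:3}) is exactly that $K\vert_{[i,\infty)}$ is ``generic enough'' to host any such elementary cobordism: (\ref{it:add-C:3}) supplies the embedded $S^n\times S^n - \Int(D^{2n})$'s which, after tubing, give room to realise the trace of an $n$-handle attachment with any prescribed attaching sphere and framing and any prescribed null-homotopy of its structure map; (\ref{it:add-C:1}) and (\ref{it:add-C:2}) guarantee that the attaching data of $W\circ F$ (which lives in $\pi_{n-1}$ and $\pi_n$ of level sets, measured against $B$) can be matched inside $K\vert_{[i,\infty)}$ after pushing far enough to the right, using that the colimit maps are surjective on $\pi_n$ and injective on $\pi_{n-1}$ into $\pi_*(B)$. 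The dimension hypotheses $2n>4$ make the relevant general-position and Whitney-trick arguments (exactly as in Sections~\ref{sec:Connectivity}) available, so that homotopy-theoretic matching of attaching data upgrades to embeddings.

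I expect the main obstacle to be the bookkeeping of $\theta$-structures throughout the handle-by-handle embedding, and in particular handling the factor $F$ coming from Lemma~\ref{lem:7.8}. One needs that embedding $W\circ F$ into $K\vert_{[i,\infty)}$ (together with its structure) is equivalent, for the purposes of universality, to embedding $W$; this is where one uses that $F$ itself, being built from $([0,1]\times P)\#(S^n\times S^n)$'s, can be realised inside $K\vert_{[j,\infty)}$ for large $j$ using (\ref{it:add-C:3}), so that ``$W$ followed by $F$ fits'' can be re-read as ``$W$ fits, and $F$ was already implicitly there''; more precisely one shows $K\vert_{[i,\infty)} \cong (W\circ F)\circ K'\vert_{[i',\infty)}$ and then that $W\circ F\circ(\text{stuff})$ reabsorbs $W$. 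Making this reabsorption precise — i.e. that the monoid of ``things $K$ absorbs'' is closed under the relevant compositions and contains the generators — is the delicate part, and it is essentially the same kind of argument as the uniqueness-of-universal-$\theta$-ends statement asserted (but not yet proved) in the introduction; I would prove a small lemma to that effect first. The sphericality of $\theta$ enters at every stage through Proposition~\ref{prop:ConnectSum} and Lemma~\ref{lemma:stable-extension}, which let us put $\theta$-structures on the $(S^n\times S^n)$-summands and on the traces of the surgeries with arbitrary prescribed null-homotopies.
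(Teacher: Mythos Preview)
Your overall structure matches the paper's, and the ``only if'' direction is essentially correct. But there are two problems with your ``if'' direction.

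First, the obstacle you flag is illusory. Since $W$ sits inside $W \circ F$ as a codimension-zero submanifold (it is literally the incoming half of the composed cobordism), any embedding $j : W \circ F \hookrightarrow K\vert_{[i,\infty)}$ relative to $K\vert_i$ with $\ell_K \circ Dj \simeq \ell_{W\circ F}$ restricts to an embedding of $W$ with the corresponding property. No reabsorption lemma is needed; the reduction to cobordisms built from $n$-handles is immediate.

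Second, and more substantively, your account of how a single $n$-handle gets embedded is too vague, and the phrase ``Whitney-trick arguments exactly as in Section~\ref{sec:Connectivity}'' is misleading. The paper's mechanism assigns the three conditions quite specific roles. Condition~(\ref{it:add-C:2}) shows the attaching sphere $S^{n-1} \hookrightarrow K\vert_i$ is null-homotopic in $K\vert_{[i,\infty)}$ (it is null in $B$ since it bounds the core disc in $W$, and $\pi_{n-1}$ injects), yielding a \emph{continuous} map $f : W \to K\vert_{[i,\infty)}$ rel $K\vert_i$. Condition~(\ref{it:add-C:1}) lets one add classes from $\pi_n(K\vert_{[i,\infty)})$ to $f$ so that $\ell_K \circ f \simeq \ell_W$ rel $K\vert_i$; this identifies $TW \cong f^*TK\vert_{[i,\infty)}$, and Smale--Hirsch upgrades $f$ to an immersion. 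Finally, condition~(\ref{it:add-C:3}) is used to remove self-intersections of the immersed core disc, but \emph{not} by the Whitney trick of Section~\ref{sec:Connectivity}: at each transverse double point one tubes one sheet into a disjoint embedded copy of $S^n \times S^n - \Int(D^{2n})$ and reroutes the two local discs so they become disjoint. This trades a single intersection point for a change in the homotopy class of $f$, which is harmless precisely because the tubed-in piece has null-homotopic structure map to $B$. The standard Whitney trick (pairing intersections of opposite sign) is neither used nor obviously available here; the $S^n \times S^n$ summands are doing the real work.
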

\begin{proof}
  We shall only give the proof in $\mathcal{C}$, the other case being
  completely analogous.  To prove the ``if'' direction, we must show
  that for each integer $i$ and each highly connected cobordism $W :
  K\vert_i \leadsto P$ with $\theta$-structure $\ell_W$, there is an
  embedding $j: W \hookrightarrow K\vert_{[i,\infty)}$ and a homotopy
  $\ell_K \circ Dj \simeq \ell_W$, all relative to $K\vert_i$.

  By Lemma \ref{lem:7.8}, for any such $W$ there is a cobordism $F : P
  \leadsto P$ so that $W \circ F$ admits a handle structure with
  handles of index $n$ only, so it suffices to consider the case where
  $W$ consists of a single $n$-handle relative to $K\vert_i$, attached
  along an embedding $S^{n-1} \times D^n \hookrightarrow
  {K}\vert_i$. We need to find an extension of this embedding into
  ${K}\vert_{[i, \infty)}$ (with the correct homotopy class of
  $\theta$-structure).  The map $S^{n-1} \times D^n \to {K}\vert_i \to
  {K}\vert_{[i,\infty)}$ is null-homotopic by assumption
  (\ref{it:add-C:2}): it is certainly null-homotopic when composed
  with ${K}\vert_{[i, \infty)} \to B$, because that composition is
  equal to the composition $S^{n-1} \times D^n \to {K}_i \to {W} \to
  B$.  Thus there is a continuous map $f : {W} \to {K}\vert_{[i,
    \infty)}$ relative to ${K}\vert_i$. Furthermore, as
  $\pi_n({K}\vert_{[i, \infty)}) \to \pi_n(B)$ is surjective by
  assumption (\ref{it:add-C:1}), we can change $f$ by adding on
  elements of $\pi_n({K}\vert_{[i, \infty)})$ so that
  \begin{equation*}
    {W} \overset{f}\lra {K}\vert_{[i, \infty)}
    \xrightarrow{\ell_{{K}}} B
  \end{equation*}
  is homotopic relative to ${K}\vert_i$ to $\ell_{{W}}$.  The
  $\theta$-structures on $W$ and $K$ now give bundle isomorphisms
  \begin{equation*}
    T{W} \cong \ell_{{W}}^*\theta^*\gamma \cong
    f^*\ell_{{K}}^*\theta^*\gamma \quad \text{and}\quad
    T{K}\vert_{[i,\infty)} \cong \ell_{{K}}^*\theta^*\gamma,
  \end{equation*}
  and hence an isomorphism $T{W} \cong f^*T{K}\vert_{[i, \infty)}$
  relative to ${K}\vert_i$, i.e.\ $f: W \to K$ is covered by a bundle
  map $TW \to TK$, which near $K\vert_i$ is the derivative of the
  embedding.  By Smale--Hirsch theory, we may therefore homotope $f: W
  \to K\vert_{[i,\infty)}$ to an immersion, without changing it near
  $K\vert_i$.

  Finally, we explain how to replace the immersion $f: W \to
  K\vert_{[i,\infty)}$ by an embedding.  It suffices to make $f$ an
  embedding near a core $(D^n, \partial D^n) \subset (W,K\vert_i)$
  of the $n$-handle, and we shall write $\hat f: D^n \to
  K\vert_{[i,\infty)}$ for the restriction of $f$.  After changing $f$
  by a small isotopy, we may assume that all self-intersections of
  $\hat{f}$ are transverse.  We shall explain how to remove one
  self-intersection point of $\hat{f}$, changing the homotopy class of
  $f$ in the process.  Around a self-intersection point, choose a
  coordinate $\bR^n \times \bR^n \hookrightarrow {K}\vert_{[i,
    \infty)}$ so that $\bR^n \times \{0\}$ and $\{0\} \times \bR^n$
  give local coordinates around the two preimages of the double
  point. By assumption (\ref{it:add-C:3}) we can find an embedded $S^n
  \times S^n - \Int(D^{2n}) \subset {K}\vert_{[i, \infty)}$ with
  null-homotopic map to $B$.  We can also assume it is disjoint from
  the image of $f$, since $W$ is compact.  Then we choose an embedded
  path from this $S^n \times S^n - \Int(D^{2n})$ to the patch $\bR^n
  \times \bR^n$, and thicken it up: inside this we have a subset
  diffeomorphic to the boundary connect sum
  \begin{equation*}
    (D^n \times D^n) \nat (S^n \times S^n - \Int(D^{2n})),
  \end{equation*}
  which the image of $\hat{f}$ intersects in $D^n \times \{0\} \cup
  \{0\} \times D^n$. Inside this subset there are embedded disjoint
  discs which give the same embedding on the boundary, and we can
  modify $\hat{f}$ by redefining it to have these discs as image
  instead. This reduces by 1 the number of geometric
  self-intersections of $\hat{f}$, and up to homotopy we have added an
  element of $\pi_n(S^n \times S^n - \Int(D^{2n}))$ to the homotopy
  class of $\hat{f}$. As $S^n \times S^n - \Int(D^{2n}) \to
  {K}\vert_{[i, \infty)} \to B$ was null-homotopic, we have not
  changed the homotopy class of $\hat{f}$ in $B$.

  After finitely many steps, we have changed $\hat{f}$ to an
  embedding. The corresponding embedding $f : {W} \to {K}\vert_{[i,
    \infty)}$ (obtained by thickening $\hat{f}$ up again) is homotopic
  to the original one after composing with $\ell_{{K}} : {K}\vert_{[i,
    \infty)} \to B$, so $\ell_{{K}} \circ f \simeq \ell_{{W}}$
  relative to ${K}\vert_i$. Hence the induced $\theta$-structure on
  ${W}$ is homotopic to the given one relative to ${K}\vert_i$.

  To prove the ``only if''
  direction, we must prove that any universal $\theta$-end
  $(K,\ell_K)$ satisfies the three conditions.  It is clear
  that~(\ref{it:add-C:3}) is necessary: For any $i$ we can let $W$ be
  the boundary connected sum of the cylinder $K\vert_i \times [i,i+1]$
  and the (parallelisable) manifold $S^n \times S^n - \Int(D^{2n})$
  equipped with a trivial $\theta$-structure.  Universality implies
  that this admits an embedding into $K\vert_{[i,\infty)}$, and hence
  $S^n \times S^n - \Int(D^{2n})$ does too.

  For property~(\ref{it:add-C:1}), it suffices to prove that for any
  $i$ and any $\alpha \in \pi_n(B)$, there exists a morphism $W_\alpha
  \in \mathcal{C}(K\vert_i, P)$ for some $P$, with $\alpha \in
  \IM(\pi_n(W_\alpha) \to \pi_n(B))$.  To construct such a manifold,
  we may represent $\alpha$ by a map $S^n \to B$ and lift the
  composition $\theta \circ \alpha: S^n \to B \to BO(2n)$ to a map $f:
  S^n \to BO(n)$.  If we let $D \to S^n$ be the disc bundle of the
  vector bundle classified by $f$, the tangent bundle of $D$ is
  classified by $\theta \circ \alpha$, and therefore admits a
  $\theta$-structure whose underlying map $S^n \simeq D \to B$
  represents $\alpha$.  We can then let $W_\alpha$ be the boundary
  connected sum of $K\vert_i \times [i,i+1]$ and $D$.

  Finally, for property~(\ref{it:add-C:2}), we use that each
  $K\vert_{[j,j+1]}$ is a highly connected cobordism to see that
  $\pi_{n-1}(K\vert_i) \to \pi_{n-1}(K\vert_{[i,\infty)})$ is
  surjective.  It therefore suffices to prove that for any $\alpha \in
  \Ker(\pi_{n-1}(K\vert_i) \to \pi_{n-1}(B))$, there exists a morphism
  $W_\alpha \in \mathcal{C}(K\vert_i,P)$ for some $P$, with $\alpha
  \in \Ker(\pi_{n-1}(K\vert_i) \to \pi_{n-1}(W_\alpha))$.  We may
  represent $\alpha$ by an embedding $S^{n-1} \to K\vert_i$.  Since
  the composition $S^{n-1} \to K\vert_i \to B \to BO(2n)$ is trivial,
  the normal bundle of the embedding is stably trivial and hence
  trivial, so we may extend to an embedding $f: S^{n-1} \times D^n \to
  K\vert_i$.  The underlying manifold of the morphism $W_\alpha$ is
  then defined as the trace of surgery along $f$, and a
  $\theta$-structure is constructed from a choice of null-homotopy of
  $S^{n-1} \to K\vert_i \to B$.
\end{proof}

The following three propositions establish further useful properties
of universal $\theta$-ends. The first proposition gives a refinement
of property (\ref{it:UnivThetaEnd:3}), which lets us exert more
control on the behaviour of the embedding $j$ which is provided by
(\ref{it:UnivThetaEnd:3}).  Propositions~\ref{prop:UniquenessThetaEnd}
and~\ref{prop:existence-theta-ends} give strong existence and
uniqueness properties for universal $\theta$-ends (and universal
$\theta$-ends in $\mathcal{C}$), which essentially say that a
universal $\theta$-end $(K, \ell_K)$ is determined up to
diffeomorphism (respecting $\theta$-structures) by $(K\vert_0,
\ell_K\vert_0)$.

\begin{proposition}\label{prop:HighConnComplement}
  If $(K, \ell_K)$ is a universal $\theta$-end (or a universal
  $\theta$-end in $\mathcal{C}$) then it also satisfies
\begin{enumerate}[(i$^\prime$)]
\setcounter{enumi}{1}
\item For each highly connected $\theta$-cobordism $W : K\vert_i
  \leadsto P$, there is a $k \gg i$, an embedding $j : W
  \hookrightarrow K\vert_{[i,k]}$, and a homotopy $\ell_K \circ Dj
  \simeq \ell_W$, both relative to $K\vert_i$, such that the complement
  of $j(W)$ is a cobordism $Z : P \leadsto K\vert_k$ which is highly
  connected.
\end{enumerate}
\end{proposition}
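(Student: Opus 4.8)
The plan is to upgrade the embedding produced by property $(\ref{it:UnivThetaEnd:3})$ so that its complement is again highly connected. Given a highly connected cobordism $W : K\vert_i \leadsto P$, property $(\ref{it:UnivThetaEnd:3})$ gives an embedding $j : W \hookrightarrow K\vert_{[i,\infty)}$ with $\ell_K \circ Dj \simeq \ell_W$ relative to $K\vert_i$. Since $W$ is compact, $j(W) \subset K\vert_{[i,k_0]}$ for some $k_0 \gg i$; for any $k \geq k_0$ we may regard $j$ as landing in $K\vert_{[i,k]}$, and its complement $Z_k : P \leadsto K\vert_k$ is a cobordism (after possibly isotoping $j$ slightly so that it is collared near $K\vert_i$, which does not change the homotopy class of its $\theta$-structure). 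The issue is that $Z_k$ need not be highly connected: the pair $(Z_k, K\vert_k)$ may fail to be $(n-1)$-connected, and the pair $(Z_k, P)$ may fail as well.

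The first step is to observe that $(Z_k, P)$ is automatically $(n-1)$-connected for $k$ large enough. Indeed, $K\vert_{[i,k]} = j(W) \cup Z_k$, glued along $P$, so there is a homotopy pushout expressing $K\vert_{[i,k]}$ in terms of $j(W) \simeq W$, $Z_k$ and $P$. Since $W : K\vert_i \leadsto P$ is highly connected, $(W,P)$ is $(n-1)$-connected, and since $K\vert_{[i,k]}$ is a highly connected cobordism (being a composite of such, using property $(\ref{it:UnivThetaEnd:2})$ and Lemma~\ref{lem:ConnectivityInCPrelim}), the pair $(K\vert_{[i,k]}, K\vert_k)$ is $(n-1)$-connected. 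Feeding these into the Mayer--Vietoris / van Kampen comparison for the pushout — exactly the kind of argument used repeatedly in Lemma~\ref{lem:ObSurgeryDesiredEffect} and Lemma~\ref{lem:ConnectivityInCPrelim} — shows that $(Z_k, P)$ is $(n-1)$-connected. (One must also check the fundamental-group statement, but all manifolds in sight have isomorphic $\pi_1$ since $n \geq 3$, as in Lemma~\ref{lem:ConnectivityInCPrelim}.)

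The second and main step is to arrange $(Z_k, K\vert_k)$ to be $(n-1)$-connected. This is where we exploit that we may take $k$ as large as we like and that $K$ absorbs arbitrary highly connected cobordisms. The idea is to enlarge $W$ before embedding it: replace $W$ by $W' = W \circ F$ where $F : P \leadsto P$ is chosen as in Lemma~\ref{lem:7.8} so that $W'$ admits a handle structure relative to $K\vert_i$ using only $n$-handles, and moreover choose $F$ with $H_n(F; \bZ[\pi])$ large and free. Then embed $W'$ into $K\vert_{[i,\ell]}$ using property $(\ref{it:UnivThetaEnd:3})$; its complement $Z'_\ell : P \leadsto K\vert_\ell$ now plays the role of $Z$, while $j(W) \subset j(W')$ still gives the desired embedding of the original $W$ (with correct $\theta$-structure), and the complement of $j(W)$ is $F \circ Z'_\ell$. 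By Poincar\'e--Lefschetz duality in the closed-up piece $K\vert_{[i,\ell]}$, the homology $H_*(K\vert_{[i,\ell]}, Z'_\ell; \bZ[\pi])$ is dual to $H_{2n-*}(K\vert_{[i,\ell]}, j(W'); \bZ[\pi])$, which vanishes for $2n - * \leq n - 1$ because $j(W') \hookrightarrow K\vert_{[i,\ell]}$ is built by attaching handles of index $\leq n$ relative to $K\vert_i$ while $K\vert_{[i,\ell]}$ is highly connected relative to $K\vert_\ell$ — so $H_*(K\vert_{[i,\ell]}, Z'_\ell; \bZ[\pi]) = 0$ for $* \geq n+1$, forcing $(Z'_\ell, K\vert_\ell)$ to be $(n-1)$-connected. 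Then $(F \circ Z'_\ell, K\vert_\ell)$ is $(n-1)$-connected as well, since it is obtained from $Z'_\ell$ by attaching further $n$-handles near the $P$-end, which does not affect connectivity relative to the other boundary. Finally one takes $k = \ell$ and $Z = F \circ Z'_\ell$.

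The hard part will be step two: getting the complement to be $(n-1)$-connected relative to $K\vert_k$ requires genuinely using the handle-theoretic structure and duality rather than just pushout arguments, and one has to be careful that enlarging $W$ to $W'$ does not destroy the homotopy class of the $\theta$-structure on the sub-embedding of $W$ (this is fine because Lemma~\ref{lem:7.8}'s $F$ is a connected sum of cylinders with $S^n \times S^n$ pieces carrying trivial structure, and property $(\ref{it:UnivThetaEnd:3})$ is applied to all of $W'$ at once, so the restriction to $W$ inherits a structure homotopic to $\ell_W$ relative to $K\vert_i$). The case of a universal $\theta$-end in $\mathcal{C}$ is identical, using Lemma~\ref{lem:ConnectivityInCPrelim} to translate the connectivity statements across the operation of cutting out $\Int(L)$, and using the version of Lemma~\ref{lem:7.8} for morphisms in $\mathcal{C}$.
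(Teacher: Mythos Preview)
Your overall strategy matches the paper's: enlarge $W$ to $W' = W \circ F$ with only $n$-handles relative to $K\vert_i$ (via Lemma~\ref{lem:7.8}), embed $W'$ using universality, and analyse the complement $Z'$ via the exact sequence of the triple $(K\vert_{[i,\ell]}, Z'_\ell, K\vert_\ell)$ together with excision $H_*(K\vert_{[i,\ell]}, Z'_\ell) \cong H_*(W',P)$. Your Step~1 (that $(Z,P)$ is $(n-1)$-connected by a pushout/excision argument) is also correct, and is exactly what the paper does at the end of its proof.

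The gap is in Step~2. Your claimed Poincar\'e--Lefschetz duality between $H_*(K\vert_{[i,\ell]}, Z'_\ell;\bZ[\pi])$ and $H_{2n-*}(K\vert_{[i,\ell]}, j(W');\bZ[\pi])$ is not a valid form of duality: after excision these are $H_*(W',P)$ and $H_{2n-*}(Z'_\ell,P)$, and there is no Lefschetz pairing relating them. More seriously, even granting that $H_*(K\vert_{[i,\ell]},Z'_\ell;\bZ[\pi])$ is concentrated in degree $n$ (which is true, by Poincar\'e duality for $W'$), this does \emph{not} force $(Z'_\ell, K\vert_\ell)$ to be $(n-1)$-connected. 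From the exact sequence of the triple one obtains
\[
H_n(K\vert_{[i,\ell]}, K\vert_\ell;\bZ[\pi]) \overset{\varphi}{\lra} H_n(W',P;\bZ[\pi]) \lra H_{n-1}(Z'_\ell, K\vert_\ell;\bZ[\pi]) \lra 0,
\]
and the whole question is whether $\varphi$ is surjective. This is a genuine geometric condition on the embedding $j'$, not a formal consequence of the handle structure of $W'$ or the high connectivity of $K\vert_{[i,\ell]}$; for an arbitrary embedding produced by universality there is no reason for it to hold. (Making $F$ larger only enlarges the target $H_n(W',P)$, so it does not help.)

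The paper supplies exactly the missing idea: after obtaining the initial embedding $j'$, it \emph{modifies} $j'$ by connect-summing each handle core with an embedded copy of $S^n\times S^n - \Int(D^{2n})$ lying in $K\vert_{[k',\infty)}$ with null-homotopic structure map (this is where property~(\ref{it:add:3}) of Addendum~\ref{add:1} is used). This produces embedded spheres $f_\beta$ dual to the handle cores $e_\alpha$, and then the intersection pairing identifies $\varphi([f_\beta])$ with the dual basis element to $e_\beta$ under the Poincar\'e isomorphism $H_n(W',P;\bZ[\pi]) \cong \Hom_{\bZ[\pi]}(H_n(W',K\vert_i;\bZ[\pi]),\bZ[\pi])$. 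That is what makes $\varphi$ surjective. Your argument needs this geometric modification step; the purely homological reasoning you give cannot close the gap.
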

\begin{proof}
  Let us treat the case of a universal $\theta$-end; working in
  $\mathcal{C}$ can be done in the same way. As $W$ is
  $(n-1)$-connected relative to either end, Lemma \ref{lem:7.8}
  applies, and for sufficiently large $g$, the manifold $W' = W \# (g
  S^n \times S^n) = W \circ (([0,1] \times P) \#(g S^n \times S^n))$
  admits a handle structure relative to $K\vert_i$ using handles of
  index $n$ only.

  By universality, there is an embedding of $\theta$-manifolds $j' :
  W' \hookrightarrow K\vert_{[i,k']}$ relative to $K\vert_i$. We wish
  to modify this embedding, and increase $k'$, so that if
  \begin{equation*}
    \{e_\alpha : (D^n \times D^n, D^n \times S^{n-1}) \hookrightarrow
    (W', K\vert_i)\}_{\alpha \in I}
  \end{equation*}
  denotes the collection of relative $n$-handles of $W'$, there exist
  embedded spheres $\{f_\beta : S^n \hookrightarrow
  K\vert_{[i,k']}\}_{\beta \in I}$ so that
  \begin{equation*}
    e_\alpha(\{0\} \times D^n) \cap f_\beta(S^n) =
    \begin{cases}
      \emptyset & \alpha \neq \beta\\
      \{*\} & \alpha = \beta.
    \end{cases} 
  \end{equation*}
  We can ensure this property as follows: by property (\ref{it:add:3})
  of Proposition \ref{prop:addendum-plus-version-in-C}, we may find an
  embedded $S^n \times S^n - \Int(D^{2n})$ in $K\vert_{[k',\infty)}$
  with null-homotopic structure map to $B$. We may form the
  connect-sum of $S^n \times D^n$ with the handle $e_\alpha$ away from
  the other handles, and let $f_\alpha$ be the embedding of $\{*\}
  \times S^n$. Repeating this for each handle, we can ensure the
  required property, and because the $S^n \times S^n - \Int(D^{2n})$
  we used had trivial structure map to $B$, the new embedding of $W'$
  we obtain still has the correct homotopy class of
  $\theta$-structure.

  We denote by $j' : W' \hookrightarrow K\vert_{[i,k]}$ this improved
  embedding, and $Z'$ the complement of the image of $j'$. The
  cobordism $W'$ only has relative $n$-handles, and $n \geq 3$, so
  $K\vert_i$, $W'$, $P$, $Z'$, $K\vert_{[i,k]}$ and $K\vert_{k}$ all
  have the same fundamental group, which we denote by $\pi$. To
  understand the connectivity of the pair $(Z', K\vert_{k})$, we look
  at the long exact sequence for $\bZ[\pi]$-homology of the triple
  $(K\vert_{[i,k]}, Z', K\vert_{k})$ and use excision
  $(K\vert_{[i,k]}, Z') \sim (W', P)$ to obtain the exact sequence
  \begin{equation*}
    H_n(K\vert_{[i,k]}, K\vert_{k};\bZ[\pi]) \overset{\varphi}\lra
    H_n(W', P;\bZ[\pi]) \lra H_{n-1}(Z', K\vert_{k};\bZ[\pi]) \lra 0.
  \end{equation*}
By Poincar{\'e} duality $H_n(W', P;\bZ[\pi]) \cong \Hom_{\bZ[\pi]}(H_n(W', K\vert_i;\bZ[\pi]), \bZ[\pi])$ we see that $\varphi$ is surjective, as $\varphi([f_\beta]) : H_n(W', K\vert_i;\bZ[\pi]) = \bZ[\pi]\langle e_\alpha \, \vert \, \alpha \in I\rangle \to \bZ[\pi]$ is the dual basis element to $e_\beta$, so $(Z', K\vert_k)$ is $(n-1)$-connected, as required. That the pair $(Z',P)$ is $(n-1)$-connected follows by the long exact sequence for $\bZ[\pi]$-homology of the triple $(K\vert_{[i,k]}, W', K\vert_{i})$ and excision $(Z',P) \sim (K\vert_{[i,k]}, W')$.

Finally, we note that if $Z$ denotes the complement of the image of $j = j'\vert_{W}$, then we have $Z = ([0,1] \times P) \#(g S^n \times S^n) \circ Z'$ so it is also a highly connected cobordism.
\end{proof}

\begin{proposition}\label{prop:UniquenessThetaEnd}
  Let $(K, \ell_K)$ and $(K', \ell_{K'})$ be universal $\theta$-ends,
  and suppose we are given a highly connected $\theta$-cobordism $W :
  K\vert_0 \leadsto K'\vert_0$. Then there is a diffeomorphism
  $\varphi : W \cup_{K'\vert_0} K' \cong K$, and a homotopy $\ell_K
  \circ D\varphi \simeq \ell_{W \cup K'}$, both relative to
  $K\vert_0$. Furthermore, there is a weak homotopy equivalence
  \begin{equation*}
    \hocolim_{i \to \infty} \mathcal{N}^\theta(K\vert_i,
    \ell_K\vert_i) \simeq \hocolim_{i \to \infty}
    \mathcal{N}^\theta(K'\vert_i, \ell_{K'}\vert_i). 
  \end{equation*}
\end{proposition}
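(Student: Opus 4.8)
The plan is to run a ``back-and-forth'' argument, interleaving the two infinite cobordisms $K$ and $K'$ and repeatedly invoking the refined universality property of Proposition~\ref{prop:HighConnComplement}. Throughout, $W$ is understood to carry a $\theta$-structure restricting to $\ell_K\vert_0$ and $\ell_{K'}\vert_0$ on its two ends, and I will freely use that a composition of highly connected cobordisms is again highly connected (an excision plus long exact sequence argument, as used repeatedly above).

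Concretely, I would construct inductively integers $0 = k_0 \le k_1 \le k_2 \le \cdots$ and $0 = k_0' \le k_1' \le \cdots$, together with highly connected $\theta$-cobordisms $Z_m : K'\vert_{k_{m-1}'} \leadsto K\vert_{k_m}$ and $Z_m' : K\vert_{k_m} \leadsto K'\vert_{k_m'}$ for $m \ge 1$, as follows. Apply Proposition~\ref{prop:HighConnComplement} to the universal $\theta$-end $K$ at level $0$ and the cobordism $W$ to obtain an embedding $j_1 : W \hookrightarrow K\vert_{[0,k_1]}$, relative to $K\vert_0$ and respecting $\theta$-structures up to homotopy, whose complement $Z_1 := K\vert_{[0,k_1]} - \Int(j_1(W))$ is highly connected. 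Then apply Proposition~\ref{prop:HighConnComplement} to $K'$ at level $0$ and the cobordism $Z_1$ (with its structure inherited from $K$) to get $j_1' : Z_1 \hookrightarrow K'\vert_{[0,k_1']}$ with highly connected complement $Z_1'$, then to $K$ at level $k_1$ and $Z_1'$, and so on, alternating. By construction one has, as $\theta$-manifolds and relative to the indicated incoming boundaries up to homotopy,
\begin{equation*}
  K\vert_{[0,k_1]} = W \cup_{K'\vert_0} Z_1, \qquad
  K\vert_{[k_m,k_{m+1}]} = Z_m' \cup_{K'\vert_{k_m'}} Z_{m+1},
\end{equation*}
and
\begin{equation*}
  K'\vert_{[0,k_1']} = Z_1 \cup_{K\vert_{k_1}} Z_1', \qquad
  K'\vert_{[k_m',k_{m+1}']} = Z_{m+1} \cup_{K\vert_{k_{m+1}}} Z_{m+1}'.
\end{equation*}
Writing $K$ as the increasing union of the $K\vert_{[k_m,k_{m+1}]}$ therefore presents it as $W \cup Z_1 \cup Z_1' \cup Z_2 \cup Z_2' \cup \cdots$, and re-bracketing the consecutive pairs $Z_m \cup Z_m'$ identifies this with $W \cup K'$, giving the desired diffeomorphism $\varphi : W \cup_{K'\vert_0} K' \cong K$. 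The homotopies $\ell_K \circ Dj_m \simeq \ell_{Z_{m-1}'}$ and $\ell_{K'} \circ Dj_m' \simeq \ell_{Z_m}$, all relative to the relevant incoming boundaries, are patched together inductively over the skeleta of the telescope to yield a homotopy $\ell_K \circ D\varphi \simeq \ell_{W \cup K'}$ relative to $K\vert_0$.

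For the statement about homotopy colimits, the same data gives maps of null-bordism moduli spaces, each obtained by gluing on a cobordism and translating in the first coordinate as in Definition~\ref{defn:N}:
\begin{equation*}
  \mathcal{N}^\theta(K\vert_0,\ell_K\vert_0) \xrightarrow{-\cup W}
  \mathcal{N}^\theta(K'\vert_0,\ell_{K'}\vert_0) \xrightarrow{-\cup Z_1}
  \mathcal{N}^\theta(K\vert_{k_1},\ell_K\vert_{k_1}) \xrightarrow{-\cup Z_1'}
  \mathcal{N}^\theta(K'\vert_{k_1'},\ell_{K'}\vert_{k_1'}) \to \cdots .
\end{equation*}
Since $W \cup Z_1 \simeq K\vert_{[0,k_1]}$, $Z_1 \cup Z_1' \simeq K'\vert_{[0,k_1']}$, and so on, as $\theta$-cobordisms relative to their incoming boundaries, each consecutive composite in this chain is homotopic to the corresponding stabilisation map defining the two directed systems $i \mapsto \mathcal{N}^\theta(K\vert_i,\ell_K\vert_i)$ and $i \mapsto \mathcal{N}^\theta(K'\vert_i,\ell_{K'}\vert_i)$. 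Thus these systems appear as mutually cofinal, up to homotopy, inside one interleaved system, and comparing mapping telescopes (after replacing the systems by homotopy-commutative ones) yields the asserted weak equivalence.

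The main obstacle is the bookkeeping of $\theta$-structures: Proposition~\ref{prop:HighConnComplement} only supplies the embeddings $j_m, j_m'$ together with homotopies of bundle maps relative to a single boundary component, so some care is needed to organise these homotopies compatibly along the entire infinite telescope, and, for the colimit statement, to verify that the squares of null-bordism moduli spaces commute up to coherent homotopy rather than strictly. Everything else is the routine reassociation of an infinite telescope together with the elementary stability of high connectivity under composition of cobordisms.
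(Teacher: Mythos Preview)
Your proposal is correct and follows essentially the same back-and-forth (Eilenberg swindle / Cantor--Bernstein) argument as the paper, using Proposition~\ref{prop:HighConnComplement} at each step to control the complement. The only cosmetic difference is that the paper first absorbs $W$ into $K'$ to reduce to the case $K\vert_0 = K'\vert_0$ with $W$ trivial, whereas you keep $W$ explicit as the first step of the interleaving; the paper also glosses over exactly the $\theta$-structure bookkeeping you flag as the main concern.
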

\begin{proof}
  By replacing $K'$ with $W \cup_{K'\vert_0} K'$, we may as well
  assume that $K\vert_0 = K'\vert_0$ as $\theta$-manifolds, and that
  $W$ is the trivial cobordism. As $K'$ is a universal $\theta$-end,
  we may find an embedding of $\theta$-manifolds $j_1:K\vert_{[0,1]}
  \hookrightarrow K'\vert_{[0,k'_1]}$ relative to $K\vert_0$, and by
  Proposition \ref{prop:HighConnComplement} we may suppose its
  complement $Z_1 : K\vert_1 \leadsto K'\vert_{k'_1}$ is highly
  connected. Now, as $K$ is a universal $\theta$-end, we may find an
  embedding of $\theta$-manifolds $j'_1:Z_1 \hookrightarrow
  K\vert_{[1,k_{1}]}$ relative to $K\vert_1$, again with
  highly-connected complement $Z_2 : K'\vert_{k'_1} \leadsto
  K\vert_{k_1}$. Together, $j_1^{-1}$ and $j'_1$ give an embedding of
  $\theta$-manifolds $K'\vert_{[0,k'_1]} \hookrightarrow K\vert_{[0,
    k_1]}$. Continuing in this way, we produce the required
  diffeomorphism $\varphi$ and homotopy.

For the second part, note that we have constructed a direct system
$$\mathcal{N}^\theta(K\vert_0) \overset{K\vert_{[0,1]}}\lra \mathcal{N}^\theta(K\vert_1) \overset{Z_1}\lra \mathcal{N}^\theta(K'\vert_{k'_1}) \overset{Z_2}\lra \mathcal{N}^\theta(K\vert_{k_1}) \lra \cdots$$
which contains cofinal subsystems which are also cofinal in either of the direct systems used to form the homotopy colimits in the statement.
\end{proof}

\begin{proposition}\label{prop:existence-theta-ends}
  Let $\pi_n(B)$ be countable.  Then for any object $(M,\ell_M) \in
  \mathcal{C}$ there exists a universal $\theta$-end $(K,\ell_K)$ in
  $\mathcal{C}$ with $(K\vert_0,\ell_K\vert_0) = (M,\ell_M)$.
  Furthermore, $K \cup ([0,\infty) \times L)$ is then a universal $\theta$-end.
\end{proposition}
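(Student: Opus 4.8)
The plan is to build $K$ as an infinite composition $K\vert_{[i,i+1]}$ of morphisms in $\mathcal{C}$, designed so that conditions (\ref{it:add-C:1})--(\ref{it:add-C:3}) of Proposition~\ref{prop:addendum-plus-version-in-C} hold, and then invoke that proposition to conclude $K$ is a universal $\theta$-end in $\mathcal{C}$. First I would fix, using countability of $\pi_n(B)$, a sequence $\alpha_1, \alpha_2, \dots \in \pi_n(B)$ in which every element of $\pi_n(B)$ occurs infinitely often; since $B$ may be disconnected one does this for each path component, but as $\pi_0(M) = \pi_0(B)$ is finite (the boundary $\partial L$ is compact) this is harmless. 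The cobordism $K\vert_{[i,i+1]} : K\vert_i \leadsto K\vert_{i+1}$ is then taken to be the boundary connected sum of three pieces: the cylinder $K\vert_i \times [i,i+1]$; a copy of the disc-bundle manifold $D_{\alpha_i}$ realising the class $\alpha_i$ on $\pi_n$ (constructed as in the proof of property~(\ref{it:add-C:1}) in Proposition~\ref{prop:addendum-plus-version-in-C}: lift $\theta\circ\alpha_i : S^n \to BO(2n)$ to $BO(n)$, take the disc bundle, give it the resulting $\theta$-structure); and a copy of $S^n \times S^n - \Int(D^{2n})$ with a trivial $\theta$-structure (which exists since $\theta$ is spherical). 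One must also kill the relevant part of $\pi_{n-1}$: since $K\vert_0 = M \to B$ need not be injective on $\pi_{n-1}$, I would interleave, for each generator of $\Ker(\pi_{n-1}(K\vert_i) \to \pi_{n-1}(B))$, a further summand which is the trace of an $(n-1)$-surgery along an embedded $S^{n-1} \times D^n$ representing that class (its normal bundle is trivial as the composite to $BO(2n)$ is null, and a $\theta$-structure comes from a null-homotopy to $B$); a bookkeeping argument ensures each such class is killed at some finite stage, exactly as in the "only if" direction of Proposition~\ref{prop:addendum-plus-version-in-C}. Each $K\vert_{[i,i+1]}$ is a highly connected cobordism because each of the summands is $(n-1)$-connected relative to either end (disc bundles, $S^n\times S^n$-pieces, and traces of $(n-1)$-surgeries all are), so condition~(\ref{it:UnivThetaEnd:2}) holds; we must also check each $K\vert_i$ lies in $\mathcal{A}$, i.e. $K\vert_i^\circ$ is a handlebody with handles of index $\leq n-1$, which follows since $M^\circ$ is such and each piece glued on only adds handles of index $\geq n$ when built relative to $L$, hence the reverse handle structure has handles of index $\leq n$; a small additional check (as in Lemma~\ref{lem:ConnectivityInCPrelim}(\ref{it:1:ConnectivityInCPrelim})) upgrades this to index $\leq n-1$ using $n > 3$. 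With this construction, conditions (\ref{it:add-C:1})--(\ref{it:add-C:3}) of Proposition~\ref{prop:addendum-plus-version-in-C} are immediate: every $\alpha \in \pi_n(B)$ is hit by some $D_{\alpha_j}$ with $j \geq i$, so the map from $\pi_n(K\vert_{[i,\infty)})$ is surjective; the $(n-1)$-surgeries eventually kill $\Ker(\pi_{n-1}(K\vert_i) \to \pi_{n-1}(B))$ and the $K\vert_{[j,j+1]}$ being highly connected means $\pi_{n-1}(K\vert_i) \twoheadrightarrow \pi_{n-1}(K\vert_{[i,\infty)})$, giving injectivity on $\pi_{n-1}$; and each $K\vert_{[i,i+1]}$ contains its own copy of $S^n\times S^n - \Int(D^{2n})$ with null-homotopic structure map.

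For the last sentence, I would argue that $K \cup ([0,\infty) \times L)$ is a universal $\theta$-end (in $\mathcal{C}_\theta$, not just in $\mathcal{C}$) by checking the same three conditions, now using the second half of Proposition~\ref{prop:addendum-plus-version-in-C}. Writing $\widehat K\vert_{[i,i+1]} = K\vert_{[i,i+1]} \cup ([i,i+1] \times L)$, we have $\widehat K\vert_{[i,\infty)} = K\vert_{[i,\infty)} \cup ([i,\infty) \times L)$, which deformation retracts onto $K\vert_{[i,\infty)}$ (collapsing the $[i,\infty) \times L$ part along the interval) union with $L$ along $\partial L = K\vert_i \cap ([i,\infty)\times L)$; since $L \to B$ is $(n-1)$-connected (an ambient hypothesis in this section) and $K\vert_i \to B$ agrees, the inclusion $K\vert_{[i,\infty)} \hookrightarrow \widehat K\vert_{[i,\infty)}$ is an iso on $\pi_j$ for $j \leq n-1$ and a surjection on $\pi_n$ (by van Kampen / Mayer--Vietoris with $\bZ[\pi_1]$-coefficients, using that $\pi_j(L, \partial L) = 0$ for $j \leq n-1$), from which conditions (\ref{it:add-C:1}) and (\ref{it:add-C:2}) are inherited. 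Condition~(\ref{it:add-C:3}) transfers verbatim since the $S^n\times S^n$-piece sits inside $K\vert_{[i,\infty)} \subset \widehat K\vert_{[i,\infty)}$ with the same null-homotopic structure map. It also must be checked that each $\widehat K\vert_{[i,i+1]}$ is a highly connected cobordism, which follows because $K\vert_{[i,i+1]}$ is and gluing on the product $[i,i+1]\times L$ along $[i,i+1]\times \partial L$ does not affect relative connectivity (Lemma~\ref{lem:ConnectivityInCPrelim}(\ref{it:2:ConnectivityInCPrelim}) is the analogue of this observation). Applying the $\mathcal{C}_\theta$-version of Proposition~\ref{prop:addendum-plus-version-in-C} finishes the proof.

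I expect the main obstacle to be the $\pi_{n-1}$-bookkeeping: one must arrange that the (a priori infinite, but finitely-generated-over-$\pi_1$) kernel $\Ker(\pi_{n-1}(K\vert_0) \to \pi_{n-1}(B))$ actually gets killed, being careful that doing an $(n-1)$-surgery at one stage does not resurrect kernel classes at a later stage, and that the maps $\pi_{n-1}(K\vert_i) \to \pi_{n-1}(K\vert_{i+1})$ behave coherently — this is where one needs that the $K\vert_{[j,j+1]}$ are highly connected so that $\pi_{n-1}$ only ever \emph{surjects} forward, never gains new elements. The cleanest way to organise this is to note that $\pi_{n-1}(K\vert_{[i,\infty)})$ is a quotient of $\pi_{n-1}(K\vert_i)$ and to run the same argument as in the "only if" direction of Proposition~\ref{prop:addendum-plus-version-in-C}, choosing at stage $i$ a finite generating set for the current kernel and surgering along it; since the kernel of $\pi_{n-1}(K\vert_0) \to \pi_{n-1}(B)$ is finitely generated over $\bZ[\pi_1]$ (Hurewicz / Noetherianity is not needed — finite generation follows from compactness of $K\vert_0$ and $(n-1) \geq 2$), one pass suffices and the bookkeeping is a finite, not transfinite, matter. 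Everything else is an application of results already in the excerpt, principally Proposition~\ref{prop:addendum-plus-version-in-C}, so the proposition is essentially a packaging statement.
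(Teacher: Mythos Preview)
Your overall strategy matches the paper's: build $K$ as an infinite composition of morphisms carrying the classes $\alpha$, throw in copies of $S^n\times S^n$, and invoke Proposition~\ref{prop:addendum-plus-version-in-C}. But you are missing one key move, and without it your construction does not stay inside the category~$\mathcal{C}$.

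The issue is your claim that each $K\vert_i$ lies in $\mathcal{A}$. After boundary-connect-summing the cylinder with a disc bundle $D_\alpha$, the outgoing object is $K\vert_i \# S(E_\alpha)$, where $S(E_\alpha)$ is the sphere bundle $\partial D_\alpha$. The fibre class $[S^{n-1}] \in \pi_{n-1}(S(E_\alpha))$ gives a nontrivial element of $\pi_{n-1}(K\vert_{i+1})$ which is killed in $B$ (it bounds in $D_\alpha$) but is not in the image of $\pi_{n-1}(L)$; concretely, with $M=S^{2n-1}$, $L=D^{2n-1}$, $E_\alpha$ trivial, one gets $K\vert_{i+1}=S^n\times S^{n-1}$, whose $\pi_{n-1}$ is $\bZ$ while $\pi_{n-1}(L)=0$. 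Thus $(K\vert_{i+1},L)$ is not $(n-1)$-connected and $K\vert_{i+1}\notin\mathcal{A}$. Your appeal to Lemma~\ref{lem:ConnectivityInCPrelim}(\ref{it:1:ConnectivityInCPrelim}) does not help here: that lemma concerns connectivity of $(W,M)$ for a morphism $W$, not membership of the outgoing object in $\mathcal{A}$.

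The paper fixes this with one extra step you omit: using reversibility of $\theta$ (Proposition~\ref{prop:Reversible}), each $W_\alpha : M \leadsto P$ is followed by a reverse $\overline{W}_\alpha : P \leadsto M$, so that after composing one has endomorphisms $W_\alpha \in \mathcal{C}(M,M)$ and every $K\vert_i$ equals $M$. This single trick also makes your $\pi_{n-1}$-bookkeeping worry evaporate: the kernel to be killed is always $\Ker(\pi_{n-1}(M)\to\pi_{n-1}(B))$, a fixed countable set (since $M$ is compact), so one simply enumerates $\pi_n(B)\cup\Ker(\pi_{n-1}(M)\to\pi_{n-1}(B))$ with each element repeated infinitely often and takes $K\vert_{[i,i+1]}=W_{\alpha_i}\#(S^n\times S^n)$. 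Your treatment of the ``Furthermore'' clause is more detailed than the paper's one-line justification but is fine.
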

\begin{proof}
  In the proof of Proposition~\ref{prop:addendum-plus-version-in-C} we saw that
  for each $\alpha \in \Ker(\pi_{n-1}(M) \to \pi_{n-1}(B))$,
  there exists a morphism $W_\alpha \in \mathcal{C}(M,P)$ with $\alpha
  \in \Ker(\pi_{n-1}(M) \to \pi_{n-1}(W_\alpha))$, and for each
  element $\alpha \in \pi_{n}(B)$, there exists a morphism $W_\alpha
  \in \mathcal{C}(M,P)$ with $\alpha \in \IM(\pi_n(W_\alpha) \to
  \pi_n(B))$.  A priori, the target $P$ depends on $\alpha$, but as $\theta$ has been assumed to be spherical, it is reversible (by Proposition~\ref{prop:Reversible}), and we may find
  another morphism $P \leadsto M$; after composing, we may assume that
  $M = P$ so we have endomorphisms $W_\alpha \in \mathcal{C}(M,M)$.
  We then construct a universal $\theta$-end in $\mathcal{C}$ by
  letting $K\vert_i = M$ for each integer $i\geq 0$ and letting each
  $K\vert_{[i,i+1]}$ be of the form $W_{\alpha_i}\# (S^n \times S^n)$,
  where the $\alpha_i$ form a sequence of elements of $\pi_n(B) \cup
  \Ker(\pi_{n-1}(M) \to \pi_{n-1}(B))$ in which each element occurs
  infinitely often.  (This is possible because $\pi_n(B)$ is assumed
  countable and $\pi_{n-1}(M)$ is automatically countable.) It then
  follows from Proposition~\ref{prop:addendum-plus-version-in-C} that
  $K$ is a universal $\theta$-end in
  $\mathcal{C}$.

  It is obvious that gluing $[0,\infty) \times L$ to a universal
  $\theta$-end in $\mathcal{C}$ gives a universal $\theta$-end, since
  the homotopical properties in Proposition~\ref{prop:addendum-plus-version-in-C}
  are clearly preserved.
\end{proof}

\begin{corollary}\label{cor:replace-with-end-in-C}
  Let $(K,\ell_K)$ be a universal $\theta$-end for which $P =
  (K\vert_0, \ell_K\vert_0)$ is an object of
  $\mathcal{C}_{\theta,L}^{n-1,\mathcal{A}}$.  Then we may isotope the
  proper embedding $K \to [0,\infty) \times (-1,1)^\infty$ and
  homotope the bundle map $\ell_K: TK \to \theta^*\gamma$, both
  relative to $K\vert_0$, after which $K$ is of the form $K^\circ \cup
  ([0,\infty) \times L)$ where $K^\circ$ is a universal $\theta$-end in
  $\mathcal{C}$.
\end{corollary}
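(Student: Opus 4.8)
The plan is to reduce Corollary~\ref{cor:replace-with-end-in-C} to Proposition~\ref{prop:existence-theta-ends} together with the uniqueness statement Proposition~\ref{prop:UniquenessThetaEnd}. First I would invoke Proposition~\ref{prop:existence-theta-ends} applied to the object $(M,\ell_M) = (P^\circ, \ell_P|_{P^\circ}) = (K\vert_0,\ell_K\vert_0)^\circ \in \Ob(\mathcal{C})$, which produces a universal $\theta$-end $K'^\circ$ in $\mathcal{C}$ with $(K'^\circ\vert_0, \ell_{K'}\vert_0) = (P^\circ, \ell_P|_{P^\circ})$, and such that $K' := K'^\circ \cup ([0,\infty) \times L)$ is a universal $\theta$-end (this uses that $\pi_n(B)$ is countable; if $\theta$ arises as in our applications this holds, and in any case this hypothesis is implicit in the setting of Theorem~\ref{thm:main-C-new} — one should remark that $B$ may be taken to be a CW-complex with countably many cells). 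Both $K$ and $K'$ are universal $\theta$-ends with $K\vert_0 = K'\vert_0 = P$ as $\theta$-manifolds.

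Next I would apply Proposition~\ref{prop:UniquenessThetaEnd} with $W$ the trivial cobordism $P\times[0,1]$ (which is highly connected since $(P,L)$ is $(n-1)$-connected because $P \in \mathcal{A}$, so $(P\times[0,1], \{i\}\times P)$ is trivially $(n-1)$-connected relative to either end). This yields a diffeomorphism $\varphi : K \to K'$ relative to $K\vert_0$, together with a homotopy $\ell_K \circ D\varphi \simeq \ell_{K'}$ relative to $K\vert_0$. The diffeomorphism $\varphi$ need not be the restriction of an ambient isotopy of $[0,\infty)\times(-1,1)^\infty$ a priori, but since both $K$ and $K'$ are properly embedded in $[0,\infty)\times\R^\infty = [0,\infty)\times(-1,1)^\infty$ with infinite-dimensional normal space, the standard argument (Whitney embedding / isotopy in the presence of infinitely many extra dimensions, compare the proof of Proposition~\ref{prop:forget-L}) shows that any such diffeomorphism, which is moreover the identity near $K\vert_0$, is realised by an ambient isotopy relative to $K\vert_0$. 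Applying this ambient isotopy to $K$ moves it onto $K' = K'^\circ \cup ([0,\infty)\times L)$, and transporting the bundle map along the isotopy together with the homotopy from Proposition~\ref{prop:UniquenessThetaEnd} puts $\ell_K$ into the form $\ell_{K'}$, which restricts to the standard product structure on $[0,\infty) \times L$ and to $\ell_{K'^\circ}$ on $K'^\circ$.

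Finally I would record that $K'^\circ$ is by construction a universal $\theta$-end \emph{in} $\mathcal{C}$, so taking $K^\circ := K'^\circ$ completes the proof. The main obstacle I anticipate is the passage from the abstract diffeomorphism $\varphi$ of Proposition~\ref{prop:UniquenessThetaEnd} to an ambient isotopy carrying $K$ precisely onto $K^\circ \cup ([0,\infty)\times L)$: one must be careful that the isotopy is proper (so that it moves the non-compact manifold $K$ correctly) and relative to $K\vert_0$, and that it is compatible with the product collar structure along $[0,\infty)\times\partial L$. This is handled by observing that $\varphi$ restricted to $[0,\infty)\times L \subset K$ already agrees with the inclusion into $K'$ (both are the standard $[0,\infty)\times L$, and $\varphi$ is the identity near $K\vert_0 = P$, hence may be taken to respect the canonical decomposition), so the isotopy only needs to be constructed on the interior part $K^\circ$, where the usual general-position argument in $\R^\infty$ applies exactly as in Proposition~\ref{prop:forget-L}. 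The homotopy of bundle maps is then a routine consequence.
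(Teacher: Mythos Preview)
Your approach is essentially the same as the paper's: construct a universal $\theta$-end $K'^\circ$ in $\mathcal{C}$ via Proposition~\ref{prop:existence-theta-ends}, observe that $K' = K'^\circ \cup ([0,\infty)\times L)$ is a universal $\theta$-end, and then invoke the uniqueness statement Proposition~\ref{prop:UniquenessThetaEnd} to identify $K$ with $K'$. Two remarks are in order.

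First, the countability of $\pi_n(B)$ is not an external hypothesis here: the paper observes that by Proposition~\ref{prop:addendum-plus-version-in-C} the map $\pi_n(K) \to \pi_n(B)$ is surjective, and since $K$ is a manifold its homotopy groups are countable, hence so is $\pi_n(B)$. This is cleaner than appealing to implicit assumptions about $B$.

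Second, your final paragraph contains an error. You write that ``$\varphi$ restricted to $[0,\infty)\times L \subset K$ already agrees with the inclusion into $K'$'', but the given $K$ is \emph{not} assumed to contain $[0,\infty)\times L$---that is precisely what we are trying to arrange. The diffeomorphism $\varphi: K \to K'$ from Proposition~\ref{prop:UniquenessThetaEnd} is only the identity on $K\vert_0$; it has no reason to respect any cylindrical piece. Fortunately this does not matter: the passage from the abstract diffeomorphism $\varphi$ (relative to $K\vert_0$) to an ambient isotopy of proper embeddings in $[0,\infty)\times(-1,1)^\infty$ follows from the usual fact that any two proper embeddings of a given manifold into infinite-dimensional Euclidean space, agreeing on the boundary, are properly isotopic relative to the boundary. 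The paper's proof simply says the two are ``isomorphic'' and leaves this routine step implicit.
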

\begin{proof}
  By Proposition~\ref{prop:addendum-plus-version-in-C}, the structure
  map $\ell_K: K \to B$ induces a surjection on $\pi_n$.  Since $K$ is
  a manifold, $\pi_n(K)$, and hence $\pi_n(B)$, is countable, so
  there exists a universal $\theta$-end in $\mathcal{C}$, by
  Proposition~\ref{prop:existence-theta-ends}.  Denoting this by
  $K^\circ$, the $\theta$-manifold $K^\circ \cup ([0,\infty) \times
  L)$ is a universal $\theta$-end, and hence by Proposition \ref{prop:UniquenessThetaEnd} is isomorphic to the original $K$.
\end{proof}

\subsection{Group completion}
\label{sec:group-completion}

Let us return to the category $\mathcal{C}$ of Section
\ref{sec:category-mathcalc}. Assigning to a morphism $W \in
\mathcal{C}(P_0, P_1)$ the corresponding 1-simplex in the nerve of
$\mathcal{C}$ gives a continuous map $\mathcal{C}(P_0, P_1) \to
\Omega_{P_0, P_1} B\mathcal{C}$, analogous to the map $\mathcal{M} \to
\Omega B\mathcal{M}$ in the outline in
Section~\ref{sec:outl-proof-theor}.  As in that section, the effect in
homology can be studied by a version of the ``group completion''
theorem.  The classical group completion theorem concerns a
topological \emph{monoid} $M$, and says that the map $H_*(M) \to
H_*(\Omega BM)$ is an algebraic localisation at the multiplicative
subset $\pi_0(M) \subset H_*(M)$.  The group completion theorem holds
under the assumption that this localisation \emph{admits a calculus of
  right fractions}, cf.\ \cite{McDuff-Segal}.  A similar result holds
for topological \emph{categories}, and here implies that $H_*(\Omega
B\mathcal{C})$ is a suitable direct limit of
$H_*(\mathcal{C}(P_0,P_1))$, generalising the localisation in the
monoid case.  As in the monoid case, some assumption is needed in
order to apply the group completion theorem:
Lemma~\ref{lem:GCbijection} below can be seen as a multi-object
version of admitting a calculus of right fractions.

\begin{theorem}\label{thm:GC}
  Let
  \begin{equation*}
    K\vert_0 \overset{K\vert_{[0,1]}}\lra K\vert_1
    \overset{K\vert_{[1,2]}}\lra K\vert_2 \overset{K\vert_{[2,3]}}\lra
    K\vert_3 \overset{K\vert_{[3,4]}}\lra \cdots
  \end{equation*}
  be a sequence of composable morphisms in $\mathcal{C}$ such that $K$
  is a universal $\theta$-end in the category $\mathcal{C}$ and
  $\mathcal{C}(\overline{L}, K\vert_0) \neq \emptyset$. Then there is
  a map
  \begin{equation*}
    \hocolim_{i \to \infty} \mathcal{C}(\overline{L}, K\vert_i) \lra
    \Omega B \mathcal{C}
  \end{equation*}
  which is a homology equivalence.
\end{theorem}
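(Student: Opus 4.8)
The plan is to apply a categorical version of the group completion theorem in the form given by McDuff--Segal, using the universal $\theta$-end $K$ to produce a cofinal, well-behaved diagram of morphism spaces. First I would recall that the map $\mathcal{C}(P_0,P_1) \to \Omega_{P_0,P_1} B\mathcal{C}$ sending a morphism to the corresponding edge-path fits into a homotopy-commutative diagram with composition, so that stabilising along the sequence $K\vert_{[0,1]}, K\vert_{[1,2]}, \dots$ gives a map $\hocolim_i \mathcal{C}(\overline{L},K\vert_i) \to \Omega_{\overline{L},K\vert_\infty} B\mathcal{C} \simeq \Omega B\mathcal{C}$ (the basepoint component being independent of choices since $B\mathcal{C}$ is connected, as it is an infinite loop space by Theorem~\ref{thm:gp-compl}). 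The content of the theorem is that this map is a homology isomorphism.

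The standard machine (as in \cite{McDuff-Segal}, and in the categorical refinement used in \cite{GR-W}) reduces this to a homological stability / "calculus of fractions" statement: the map is a homology equivalence provided that for every pair of objects $P, Q$ and every morphism the induced map on homology of morphism spaces, after stabilising, becomes an isomorphism, and more precisely that $H_*(\mathcal{C}(\overline{L}, -))$ becomes, after inverting the maps $K\vert_{[i,j]}$, a "constant" local system over the relevant subcategory. Concretely, one introduces the transport category or uses the bar construction $B(\ast, \mathcal{C}, \mathcal{C}(\overline{L},-))$; its realisation is $\Omega_{\overline{L},-} B\mathcal{C}$ fibred over $B\mathcal{C}$, and the homology of the fibre over $K\vert_i$ is computed by a spectral sequence whose input involves $H_*(\mathcal{C}(\overline{L},K\vert_j))$ for $j \geq i$. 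So the key step is:

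\textbf{Step 1 (cofinality and stabilisation).} Show that for any morphism $W : K\vert_i \leadsto Q$ in $\mathcal{C}$ there is a morphism $Z : Q \leadsto K\vert_j$ (some $j \gg i$) such that $Z\circ W$ is homotopic (as a morphism, i.e.\ after a suitable isotopy of embeddings and homotopy of $\theta$-structures) to $K\vert_{[i,j]}$. This is exactly what Propositions~\ref{prop:addendum-plus-version-in-C} and~\ref{prop:HighConnComplement} on universal $\theta$-ends give us: $W$ embeds into $K\vert_{[i,j]}$ with highly connected complement $Z$, and reassembling gives $Z \circ W \simeq K\vert_{[i,j]}$. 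This shows that the system of morphisms out of the $K\vert_i$'s is cofinal among all "sufficiently stabilised" morphisms, and that any self-map of $\mathcal{C}(\overline{L},K\vert_i)$ induced by a composable morphism is, after further stabilisation, identified with a map in the $K\vert_{[i,j]}$-tower. \textbf{Step 2 (group completion).} Feed this into the group completion theorem for categories: the hypothesis "admits a calculus of right fractions" is replaced by the statement of Step 1 (this is the content of Lemma~\ref{lem:GCbijection}, referred to in the text but not yet stated), and the conclusion is that $H_*(\Omega_{\overline{L},K\vert_\infty}B\mathcal{C})$ is the colimit $\colim_i H_*(\mathcal{C}(\overline{L},K\vert_i))$, i.e.\ that the map in the theorem is a homology equivalence. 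One should also check the elementary point that $\mathcal{C}(\overline{L},K\vert_i)\neq\emptyset$ for all $i$, which follows from the hypothesis $\mathcal{C}(\overline{L},K\vert_0)\neq\emptyset$ together with composition with the $K\vert_{[0,i]}$.

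The main obstacle is Step 1, specifically verifying that the embedding-with-highly-connected-complement property of universal $\theta$-ends translates into the precise homological-stability input the group completion theorem needs: one must be careful that the stabilisation maps $-\circ K\vert_{[i,j]}$ and the "fraction" maps $-\circ Z$ are genuinely homotopic after realisation, not merely that the underlying manifolds are diffeomorphic, so the $\theta$-structures and the embeddings into $[0,\infty)\times(-1,1)^\infty$ must be tracked throughout — this is where Proposition~\ref{prop:HighConnComplement} (giving highly connected complement $Z$, so that $Z$ is again a morphism of $\mathcal{C}$) is essential. A secondary technical point is ensuring the homotopy colimit is taken over a genuine sequential diagram (the morphisms $K\vert_{[i,i+1]}$ are honest morphisms, so $\hocolim$ is just the mapping telescope) and that cofinality arguments let us replace $\hocolim_i \mathcal{C}(\overline{L},K\vert_i)$ by any cofinal subsystem, which is used implicitly when matching up with the bar-construction model of $\Omega B\mathcal{C}$.
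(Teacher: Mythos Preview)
Your overall architecture is right --- this is a group-completion argument, and the universal $\theta$-end property is exactly the source of the ``calculus of fractions'' input --- but there is a real gap between your Step~1 and what the group completion theorem actually requires.

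The form of group completion used here (via \cite[Theorem~7.1]{GMTW}) takes as input that the functor $F_\infty = \hocolim_i \mathcal{C}(-,K\vert_i) : \mathcal{C}^{\mathrm{op}} \to \mathbf{Top}$ sends \emph{every} morphism $W \in \mathcal{C}(N,M)$ to a homology equivalence $F_\infty(M) \to F_\infty(N)$.  The map in question is \emph{pre}-composition by $W$, for arbitrary objects $N,M$.  Your Step~1, by contrast, is a statement about \emph{post}-composition with morphisms \emph{out of} $K\vert_i$: it says the sequence $(K\vert_i)$ is absorbing under post-composition.  This is not the same thing, and there is no direct way to feed your Step~1 into the machine.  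The paper bridges this by a trick you have not anticipated: given $W : N \to M$ (reduced via Lemma~\ref{lem:7.8} to a single $n$-handle), one ``rotates'' $W$ in the ambient $[0,t] \times (-1,1) \times \R^\infty$ to a cobordism $\overline{W}$ going in the second coordinate direction, and glues it to the cylinder on $K\vert_i$ to form $\langle W, K\vert_i\rangle : \langle M, K\vert_i\rangle \leadsto \langle N, K\vert_i\rangle$ between \emph{closed} manifolds.  Under the equivalence of Proposition~\ref{prop:cut-L}, pre-composition by $W$ on $\mathcal{C}(-,K\vert_i)$ becomes post-composition by $\langle W, K\vert_i\rangle$ on $\mathcal{N}^\theta(\langle -, K\vert_i\rangle)$.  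A further step pushes the $n$-handle across the collar of $\partial L$ so that $\langle W, K\vert_i\rangle \cong \langle M, W'\rangle$ for some morphism $W' : K\vert_i \leadsto X$ in $\mathcal{C}$ --- and only \emph{now} does the absorption property of the universal $\theta$-end apply, to $W'$ rather than to $W$.

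A second, smaller gap: even after this translation, Lemma~\ref{lem:GCbijection} requires \emph{two} diagonal fillers in the square
\begin{equation*}
\xymatrix{
{\mathcal{C}(M, K\vert_i)} \ar[rr]^{-\circ K\vert_{[i,k]}} \ar[d]_{W\circ -} && {\mathcal{C}(M, K\vert_k)} \ar[d]^{W\circ -} \\
{\mathcal{C}(N, K\vert_i)} \ar@{.>}[urr] \ar[rr]^{-\circ K\vert_{[i,k]}} && {\mathcal{C}(N, K\vert_k)},
}
\end{equation*}
one making the upper triangle commute and a (possibly different) one making the lower triangle commute.  Your Step~1 corresponds to the upper triangle only.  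The lower triangle requires a separate, and in fact more delicate, embedding argument: one must embed $\langle W, K\vert_k\rangle$ into $\langle N, K\vert_{[i,k]}\rangle$ relative to the \emph{outgoing} boundary, which is not a direct application of the absorption property and needs the handle to be slid into $K\vert_{[i,\infty)}$ via an auxiliary isotopy through the collar of $\partial L$.
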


The proof will be based on
Proposition~\ref{prop:FinftyHomologyEq} below.  Let $F_i :
\mathcal{C}^\mathrm{op} \to \mathbf{Top}$ denote the representable
functor $\mathcal{C}(-, K\vert_i)$ and let $F_\infty :
\mathcal{C}^\mathrm{op} \to \mathbf{Top}$ denote the (objectwise)
homotopy colimit of the natural transformations $F_i \to F_{i+1}$
given by  right composition with
$K\vert_{[i,i+1]}$.

\begin{proposition}\label{prop:FinftyHomologyEq}
  The functor
  $F_\infty$ sends each morphism in $\mathcal{C}$ to a homology
  equivalence.
\end{proposition}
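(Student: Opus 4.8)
The plan is to reduce the statement to a statement about the effect of right composition with a \emph{single} morphism $W : P_0 \leadsto P_1$ in $\mathcal{C}$ on the homotopy colimit $F_\infty(P_1) = \hocolim_i \mathcal{C}(P_1, K\vert_i)$ versus $F_\infty(P_0) = \hocolim_i \mathcal{C}(P_0, K\vert_i)$. The map in question sends a morphism $V : P_1 \leadsto K\vert_i$ to $V \circ W : P_0 \leadsto K\vert_i$. First I would use Lemma~\ref{lem:7.8} to reduce to the case where $W$ admits a handle structure relative to $P_1$ using only handles of index $n$: that lemma produces $F : P_1 \leadsto P_1$ with $W \circ F$ of this form, and right composition with $F$ induces an equivalence on $F_\infty$ since $F$ (being highly connected) embeds into some $K\vert_{[i,k]}$ with highly connected complement by Proposition~\ref{prop:HighConnComplement}, so the cofinality argument as in Proposition~\ref{prop:UniquenessThetaEnd} applies. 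Hence it suffices to treat $W$ a single relative $n$-handle, attached along an embedding $S^{n-1} \times D^n \hookrightarrow P_1$.

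For the single-handle case, the key idea is to construct, using universality of $K$, a homotopy inverse to $- \circ W$ up to the stabilisation maps. Given the attaching embedding $e : S^{n-1} \times D^n \hookrightarrow P_1$, I would produce a morphism $W' : P_0 \leadsto P_1$ "undoing" the handle: namely $W'$ is the trace of a surgery that reattaches, so that $W' \circ W$ and $W \circ W'$ are each highly connected cobordisms from $P_i$ to itself containing a copy of $S^n \times S^n - \Int(D^{2n})$ worth of extra topology. Concretely $W \circ W' \cong (P_0 \times [0,1]) \# (S^n \times S^n)$ after sufficiently many stabilisations, using the Morse-theoretic cancellation argument of \cite{MilnorHCob} exactly as in the proof of Lemma~\ref{lem:SurgeryDataExists} and in Section~\ref{sec:outl-proof-theor}. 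Since right composition with $(P_i \times [0,1]) \# (S^n \times S^n)$, followed by further stabilisation into $K$, becomes an equivalence on $F_\infty$ (again by Propositions~\ref{prop:HighConnComplement} and~\ref{prop:UniquenessThetaEnd}, because this manifold is highly connected and embeds cofinally into the tail of $K$), the maps $- \circ W$ and $- \circ W'$ are mutually inverse equivalences after passing to $F_\infty$. This shows $F_\infty$ sends $W$ to a \emph{homotopy} equivalence, which is stronger than required.

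The main obstacle is bookkeeping the stabilisations coherently: $W$ is only invertible "stably" (after connect-summing with copies of $S^n \times S^n$ and embedding into the tail of $K$), so the homotopy inverse to $- \circ W : F_\infty(P_1) \to F_\infty(P_0)$ is not $- \circ W'$ on the nose but only after composing with a shift in the $\hocolim$ index and with an equivalence built from Proposition~\ref{prop:HighConnComplement}. Making this into a genuine two-sided homotopy inverse requires choosing, compatibly in $i$, embeddings of $W' \circ W$ and $W \circ W'$ into $K\vert_{[\cdot,\cdot]}$ with highly connected complements, and checking the resulting zig-zags of stabilisation maps commute up to the coherences encoded in a homotopy colimit. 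This is precisely the same mechanism already used in Proposition~\ref{prop:UniquenessThetaEnd} (whose second half constructs exactly such an interleaved direct system with cofinal subsystems), so I would factor the argument through a lemma extracting that interleaving construction in the generality needed here; the remaining verifications — that the relevant pairs are $(n-1)$-connected, via Lemma~\ref{lem:ConnectivityInCPrelim} and excision with $\bZ[\pi]$-coefficients — are routine.
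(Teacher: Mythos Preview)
Your reduction via Lemma~\ref{lem:7.8} is on the right track but already contains a gap: you claim that $F_\infty(F)$ is an equivalence because $F$ ``embeds into some $K\vert_{[i,k]}$ with highly connected complement by Proposition~\ref{prop:HighConnComplement}''. But that proposition is about cobordisms whose \emph{source} is $K\vert_i$, whereas $F : P_1 \leadsto P_1$. There is no way to invoke universality of $K$ here without first knowing the proposition for $F$ itself. The paper sidesteps this by using \emph{both} $F$ and $G$ from Lemma~\ref{lem:7.8}: if the single-handle case is known, then $F_\infty(W \circ F)$ and $F_\infty(G \circ W)$ are homology isomorphisms, forcing $F_\infty(W)$ to be one as well, without ever needing $F_\infty(F)$ separately.

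The deeper gap is in the single-handle case. You propose a reverse cobordism $W'$ and assert that $W \circ W' \cong (P_0 \times [0,1]) \# g(S^n \times S^n)$ after stabilisation; this is not justified and is not what handle cancellation gives you --- having only index-$n$ handles relative to both ends does not force the cobordism to be a trivial cylinder connect-summed with $S^n \times S^n$'s. More fundamentally, even granting that claim, your argument that $F_\infty(W' \circ W) \simeq \mathrm{id}$ conflates \emph{left} composition (the action $W \circ -$ defining $F_\infty$ on morphisms) with \emph{right} composition (the stabilisation maps $- \circ K\vert_{[i,i+1]}$). Universality of $K$ and the cofinality arguments of Propositions~\ref{prop:HighConnComplement} and~\ref{prop:UniquenessThetaEnd} control only the latter: they absorb cobordisms with source $K\vert_i$, not self-cobordisms of $P_1$. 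The paper's proof introduces precisely the missing idea: via the equivalence $\mathcal{C}(M, K\vert_i) \simeq \mathcal{N}^\theta(\langle M, K\vert_i\rangle)$ of Proposition~\ref{prop:cut-L}, left composition by $W$ becomes \emph{right} composition by a cobordism $\langle W, K\vert_i\rangle$, and because objects of $\mathcal{C}$ are built from $\partial L$ by handles of index $\geq n$, the attaching sphere of $W$'s $n$-handle can be isotoped into a collar of $\partial L$ and hence into $K\vert_i$. Only then does universality of $K$ apply, and it yields the dotted maps in Lemma~\ref{lem:GCbijection} making both triangles commute up to homotopy --- giving a homology isomorphism, but (as the paper explicitly remarks, with the $B\Sigma_\infty$ example) \emph{not} a homotopy equivalence, contrary to your stronger claim.
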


Given this proposition, by \cite[Theorem 7.1]{GMTW} and the discussion
following it, the pull-back square
\begin{equation*}
\xymatrix{
F_\infty(\overline{L}) \ar[r] \ar[d] & B( \mathcal{C} \wr F_\infty )
\ar[d] & {\!\!\!\!\!\!\!\!\!\!\!\!\!\!\!\!\!\simeq \hocolim_i
B(\mathcal{C}\wr F(i)) \simeq *} \\
{\{\overline{L}\}} \ar[r] & {B \mathcal{C}}
}
\end{equation*}
is homology cartesian and so $F_\infty(\overline{L}) \to \Omega
B\mathcal{C}$ is a homology equivalence, which establishes Theorem
\ref{thm:GC}.

\begin{proof}[Proof of Proposition~\ref{prop:FinftyHomologyEq}]
  By Lemma~\ref{lem:7.8}, it suffices to prove that $F_\infty$ sends
  any cobordism admitting a handle structure with a single $n$-handle
  to a homology isomorphism: indeed, in the notation of that lemma,
  for any cobordism $W$, the functor $F_\infty$ sends both $W \circ F$
  and $G \circ W$ to homology isomorphisms, but then it must send $W$
  to one as well.  We therefore consider a cobordism $W \in
  \mathcal{C}(N,M)$ admitting a handle structure with a single
  $n$-handle.  The cobordism $W$ gives a map of direct systems
  \begin{equation*}
    \xymatrix{
      {\mathcal{C}(M, K\vert_0)} \ar[r] \ar[d]_{W \circ -} & {\mathcal{C}(M,
        K\vert_1)} \ar[r] \ar[d]_{W \circ -} & {\mathcal{C}(M, K\vert_2)} \ar[r]
      \ar[d]_{W \circ -} & {\mathcal{C}(M, K\vert_3)} \ar[r] \ar[d]_{W \circ
        -} & \cdots\\
      {\mathcal{C}(N, K\vert_0)} \ar[r] & {\mathcal{C}(N, K\vert_1)} \ar[r] & {\mathcal{C}(N, K\vert_2)} \ar[r] & {\mathcal{C}(N, K\vert_3)} \ar[r] & {\cdots}.
    }
  \end{equation*}
  Taking homotopy colimits of the rows gives a map $F_\infty(M) \to
  F_\infty(N)$, and Lemma \ref{lem:GCbijection} below
  implies that the induced map on homology is a bijection, finishing the proof of
  Proposition~\ref{prop:FinftyHomologyEq}.
\end{proof}

\begin{lemma}\label{lem:GCbijection}
  Let $W : N \leadsto M$ be a cobordism which is obtained by attaching
  a single $n$-handle to $N$. For each $i$ there is a $k \geq i$ such
  that the commutative square
  \begin{equation}\label{eq:20}
    \begin{gathered}
      \xymatrix{
        {\mathcal{C}(M, K\vert_i)} \ar[rr]^-{-\circ K\vert_{[i,k]}}
        \ar[d]_{W \circ -} && {\mathcal{C}(M, K\vert_k)} \ar[d]^{W \circ -}\\ 
        {\mathcal{C}(N, K\vert_i)} \ar@{.>}[rru] \ar[rr]^-{-\circ K\vert_{[i,k]}} && {\mathcal{C}(N, K\vert_k)}
      }
    \end{gathered}
  \end{equation}
  admits a dotted map making the top square commute up to homotopy,
  and a (possibly different) dotted map making the bottom square
  commute up to homotopy.
\end{lemma}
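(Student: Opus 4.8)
The plan is to build the two dotted maps by exploiting universality of $K$ in $\mathcal{C}$, together with the refined absorption property of Proposition~\ref{prop:HighConnComplement}. Write $W\in\mathcal{C}(N,M)$ for the given cobordism with a single $n$-handle relative to $N$. Since $W$ is a highly connected cobordism (being a single relative $n$-handle with $n\geq 3$), Proposition~\ref{prop:HighConnComplement} applies to the universal $\theta$-end $K$: for the regular value $i$ there is a $k\gg i$ and an embedding $j: W\cup_N K\vert_{[\,\cdot\,]}\hookrightarrow K\vert_{[i,k]}$ — more precisely, we should run the argument with the cobordism $W:K\vert_i\leadsto P$ formed by regarding $N$ as $K\vert_i$ (this makes sense because any element of $\mathcal{C}(M,K\vert_i)$ or $\mathcal{C}(N,K\vert_i)$ lets us view $W$ as a cobordism starting at an object that maps to $K\vert_i$; in fact the cleanest formulation is to apply universality to the cobordism $W$ itself viewed as starting at $K\vert_i=N$). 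Proposition~\ref{prop:HighConnComplement} gives an embedding $j: W\hookrightarrow K\vert_{[i,k]}$ relative to $K\vert_i=N$, with $\ell_K\circ Dj\simeq\ell_W$ rel $N$, and whose complement $Z: M\leadsto K\vert_k$ is a highly connected cobordism.

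First I would construct the dotted map making the \emph{bottom} triangle commute. Given $g\in\mathcal{C}(N,K\vert_i)$, the composite $W\circ g$ lies in $\mathcal{C}(M,K\vert_i)$, but I instead want an element of $\mathcal{C}(M,K\vert_k)$. The embedding $j:W\hookrightarrow K\vert_{[i,k]}$ rel $N$ identifies $K\vert_{[i,k]}$ with $W\cup_M Z$ up to a diffeomorphism preserving $\theta$-structures up to homotopy rel $N$; composing $g$ with this identification produces a map $\mathcal{C}(N,K\vert_i)\to\mathcal{C}(N,K\vert_k)$ which visibly factors as $g\mapsto j$-image-of-$(W\circ g)$ glued with $Z$, i.e.\ the bottom edge $-\circ K\vert_{[i,k]}$. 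The point is that this factorization \emph{also} gives the dotted map: define it by sending $g\in\mathcal{C}(N,K\vert_i)$ first to $W\circ g\in\mathcal{C}(M,K\vert_i)$ and then composing with $Z:M\leadsto K\vert_k$, yielding an element of $\mathcal{C}(M,K\vert_k)$; call this $\Phi(g)$. Then $W\circ\Phi(g)=W\circ(Z\circ (W\circ g))$; but $W\circ Z$ is, via $j$, isotopic to $K\vert_{[i,k]}$ rel $N$ with homotopic $\theta$-structure, so $W\circ\Phi(g)$ is homotopic to $(-\circ K\vert_{[i,k]})(W\circ g)$, which is the bottom-then-right route — wait, I must be careful about which triangle this is. Let me restate: the lower triangle has vertices $\mathcal{C}(N,K\vert_i)$, $\mathcal{C}(M,K\vert_k)$, $\mathcal{C}(N,K\vert_k)$, with the dotted arrow $\mathcal{C}(N,K\vert_i)\dashrightarrow\mathcal{C}(M,K\vert_k)$; commutativity up to homotopy means: the dotted arrow followed by $W\circ-$ should be homotopic to $-\circ K\vert_{[i,k]}$ on $\mathcal{C}(N,K\vert_i)$. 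So I set $\Phi=(Z\circ-)\circ(W\circ-):\mathcal{C}(N,K\vert_i)\to\mathcal{C}(M,K\vert_k)$, and then $(W\circ-)\circ\Phi=(W\circ Z\circ W)\circ-$, and since $W\circ Z\simeq j(W)\cup Z = K\vert_{[i,k]}$ rel $N$ (with $\theta$-structure), this equals $(K\vert_{[i,k]}\circ W)\circ-$ up to homotopy. Hmm, that is $-\circ K\vert_{[i,k]}$ precomposed with $W$, not quite the bottom edge. The correct assertion is rather: $W\circ Z$ together with the extra cylinder data is homotopic rel both ends to $K\vert_{[i,k]}$ \emph{as a cobordism $N\leadsto K\vert_k$}, and $W\circ\Phi(g)=(W\circ Z)\circ(W\circ g)$ uses only the $(W\circ g)$-part once — I would rewrite this carefully using associativity so that $W\circ\Phi(g)=(W\circ Z\circ W)\circ g$ and recognize $W\circ Z\circ W$ as absorbing to $K\vert_{[i,k]}\circ W'$ where $W'$ is a smaller relative handlebody; this requires choosing $j$ so that $j(W)$ sits at the \emph{start} of $K\vert_{[i,k]}$ and the leftover copy of $W$ can be cancelled. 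The honest fix is: choose $k$ and $j$ via Proposition~\ref{prop:HighConnComplement} applied to $W$ viewed as $W:K\vert_i\leadsto M$; then $K\vert_{[i,k]}\cong_\theta W\cup_M Z$ rel $K\vert_i$; define the dotted map $\mathcal{C}(N,K\vert_i)=\mathcal{C}(K\vert_i,K\vert_i)\to\mathcal{C}(M,K\vert_k)$ by $g\mapsto (W\cup_M Z)$-structure translated by $g$, i.e.\ literally $g\mapsto j_*(g)$ followed by identifying codomain; since $j$ is rel $N$, composing with $W\circ-$ recovers $-\circ K\vert_{[i,k]}$ up to the homotopy $\ell_K\circ Dj\simeq\ell_W$. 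This is the construction I would write out.

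For the \emph{top} triangle — vertices $\mathcal{C}(M,K\vert_i)$, $\mathcal{C}(N,K\vert_i)$, $\mathcal{C}(M,K\vert_k)$, dotted arrow $\mathcal{C}(N,K\vert_i)\dashrightarrow\mathcal{C}(M,K\vert_k)$ with $(\text{dotted})\circ(W\circ-)\simeq -\circ K\vert_{[i,k]}$ on $\mathcal{C}(M,K\vert_i)$ — I would use the \emph{same} embedding data: given $h\in\mathcal{C}(N,K\vert_i)$, the composite $Z\circ h\in\mathcal{C}(M,K\vert_k)$? No — $Z:M\leadsto K\vert_k$ so $Z$ cannot be post-composed with something ending at $K\vert_i$ and starting at $N$. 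Instead the dotted map for the top triangle takes $h\in\mathcal{C}(N,K\vert_i)$, regards $j:W\hookrightarrow K\vert_{[i,k]}$ rel $N$, and forms $h$ pushed along: this is subtler, and I expect this top-triangle construction to be the main obstacle. The idea is to use that $W$ itself embeds in $K\vert_{[i,k]}$ rel $N=K\vert_i$, so there is a retraction-like map turning a morphism ending at $N$ into one ending at $K\vert_k$ by "growing the $W$-handle inside $K\vert_{[i,k]}$"; concretely, given $h:N'\leadsto N$, we want $\widehat h:(N'\setminus\text{handle region})\cup W\leadsto K\vert_k$ — but the source object changes, so one must instead work with the specific source $N'=M$ only, and use $W\circ-$ to first land in $\mathcal{C}(M,K\vert_i)$ in reverse. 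Since $W$ is a single $n$-handle, $M$ is obtained from $N$ by an $n$-handle and conversely $N$ is obtained from $M$ by an $(n)$-handle of complementary type (an $(n)$-handle again, as $d=2n$); so "reversing $W$" is itself a single-handle cobordism $\overline W:M\leadsto N$ (using reversibility/sphericity of $\theta$ via Proposition~\ref{prop:Reversible}). Then the top-triangle dotted map is $h\mapsto (\text{absorb }\overline W\circ h)$ landing in $\mathcal{C}(M,K\vert_k)$ via universality applied to $\overline W$. The key computation is that $\overline W\circ W\simeq (\text{trivial})\#(S^n\times S^n)$ as cobordisms $N\leadsto N$ (handle cancellation plus the Whitney-trick/handle-slide analysis as in Lemma~\ref{lem:7.8} and Lemma~\ref{lem:SurgeryDataExists}), so after absorbing by $K$ this becomes homotopic to $-\circ K\vert_{[i,k]}$ once $k$ is taken large enough that $K\vert_{[i,k]}$ absorbs the extra $S^n\times S^n$ summand (property (\ref{it:add:3}) of Proposition~\ref{prop:addendum-plus-version-in-C}). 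Thus the single enlargement parameter $k$ — chosen large enough to simultaneously absorb $W$, $\overline W$, $W\circ\overline W$, $\overline W\circ W$, and the relevant $S^n\times S^n$'s — makes both triangles commute up to homotopy. The hard part, which I would treat with care, is verifying that the homotopies produced by Proposition~\ref{prop:HighConnComplement} (which are only "up to homotopy rel one end") are compatible enough to glue into the required triangle homotopies; this is a routine but fiddly diagram-chase once the embeddings and handle-cancellations are set up.
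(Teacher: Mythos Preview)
Your proposal has a genuine gap at its very starting point. You write ``apply universality to the cobordism $W$ itself viewed as starting at $K\vert_i=N$'', but $N$ is \emph{not} $K\vert_i$: in the lemma $N$ and $M$ are arbitrary objects of $\mathcal{C}$, while $K\vert_i$ is a slice of the universal end. The universal $\theta$-end property (and Proposition~\ref{prop:HighConnComplement}) only lets you absorb cobordisms whose incoming boundary is $K\vert_i$, so you cannot embed $W:N\leadsto M$ (or $\overline{W}:M\leadsto N$) into $K\vert_{[i,k]}$ relative to anything. Your subsequent composition formulae inherit this confusion: for instance ``$W\circ g\in\mathcal{C}(M,K\vert_i)$'' is ill-typed, since $g:N\leadsto K\vert_i$ and $W:N\leadsto M$ do not compose in either order.

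The paper's proof resolves precisely this issue, and the mechanism is the whole content of the lemma. One passes via Proposition~\ref{prop:cut-L} to null-bordisms of the closed manifolds $\langle M,K\vert_i\rangle=\overline{M}\cup K\vert_i$, so that precomposition by $W$ becomes postcomposition by a cobordism $\langle W,K\vert_i\rangle$ obtained by attaching an $n$-handle to the $\overline{M}$-half. The crucial geometric step is that $\overline{M}$ is built from $\partial L$ by handles of index $\geq n$, so the $(n-1)$-dimensional attaching sphere of the handle of $\overline{W}$ may be isotoped off those handles into a collar $[-\epsilon,0]\times\partial L\subset\overline{M}$, and hence slid across into $[0,\epsilon]\times\partial L\subset K\vert_i$. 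After this slide, $\langle W,K\vert_i\rangle\cong\langle M,W'\rangle$ for a single-$n$-handle cobordism $W':K\vert_i\leadsto X$ in $\mathcal{C}$, and \emph{now} Proposition~\ref{prop:HighConnComplement} applies to $W'$ to produce the complement $Z'$ and hence the dotted map. The bottom triangle uses the same collar trick, extending the attaching map across the cylinder $[i-\epsilon,i]\times[-\epsilon,\epsilon]\times\partial L$ before absorbing the handle into $K\vert_{[i,k]}$. Your reversibility/$\overline{W}$ idea does not substitute for this, because $\overline{W}$ still does not start at $K\vert_i$.
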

\begin{proof}
  The objects $M$ and $N$ in $\mathcal{C}$ are $(2n-1)$-dimensional
  submanifolds of $[0,1) \times \R^\infty$ (with $\theta$-structure),
  and the morphism $W \in \mathcal{C}(N,M)$ is a submanifold of $[0,t]
  \times [0,1) \times \R^\infty$.  Rotating $W$ in the first two
  coordinate directions gives a submanifold $\overline{W} \subset
  [0,t] \times (-1,0] \times \R^\infty$ with incoming boundary $\{0\}
  \times \overline{M}$ and outgoing boundary $\{t\} \times
  \overline{N}$.  As in the proof of Proposition~\ref{prop:forget-L},
  the $\theta$-structure on $M$ extends to a $\theta$-structure on the
  closed manifold $\overline{M} \cup M \subset (-1,1) \times
  \R^\infty$, giving an object $\langle M,M\rangle \in
  \mathcal{C}_\theta^{n-1}$ with a canonical null-bordism $V \in
  \mathcal{C}_\theta^{n-1}(\emptyset,\langle M,M\rangle)$.  Similarly,
  we have objects $\langle M,K\vert_i\rangle = \overline{M} \cup
  K\vert_i$ and $\langle N,K\vert_i\rangle = \overline{N} \cup
  K\vert_i$, and the submanifold $\overline{W} \cup ([0,t] \times
  K\vert_i) \subset [0,t] \times (-1,1) \times \R^\infty$ inherits a
  $\theta$-structure from $W$, giving an element of
  $\mathcal{C}_\theta^{n-1}(\langle M, K\vert_i\rangle, \langle N,
  K\vert_i\rangle)$ which we shall denote $\langle W, K\vert_i
  \rangle$. The resulting diagram
  \begin{equation*}
    \xymatrix{
      {\mathcal{C}(M,K\vert_i)} \ar[rrr]^-{W \circ -}\ar[d]_\simeq &&&
      {\mathcal{C}(N,K\vert_i)} \ar[d]^\simeq\\
      {\mathcal{N}^\theta(\langle M, K\vert_i\rangle)}
      \ar[rrr]^-{-\circ\langle W, K\vert_i \rangle}&&&
      {\mathcal{N}^\theta(\langle N,  K\vert_i\rangle)}
    }
  \end{equation*}
  homotopy commutes, where the vertical equivalences are as in
  Proposition~\ref{prop:cut-L}.  The diagram of solid arrows
  in~\eqref{eq:20} may now be replaced with
  \begin{equation*}
    \xymatrix{
      {\mathcal{N}^\theta(\langle M, K\vert_i\rangle)}
      \ar[rrr]^-{-\circ\langle M, {K}\vert_{[i,k]}\rangle}
      \ar[d]_{-\circ \langle W, K\vert_i \rangle} &&&
      {\mathcal{N}^\theta(\langle M, K\vert_k\rangle)}
      \ar[d]^{-\circ\langle W, K\vert_k \rangle}\\ 
      {\mathcal{N}^\theta(\langle N, K\vert_i\rangle)} \ar[rrr]^-{-\circ
        \langle  N, {K}\vert_{[i,k]}\rangle} &&&
      {\mathcal{N}^\theta(\langle N,  K\vert_k\rangle)}
    }
  \end{equation*}
  where $\langle M, {K}\vert_{[i,k]}\rangle = ([i,k] \times
  \overline{M}) \cup K\vert_{[i,k]} \subset [i,k] \times (-1,1) \times
  \R^\infty$, and similarly for $\langle N,
  {K}\vert_{[i,k]}\rangle$.
  
  Let us first show that there is a dotted map making the top triangle
  commute up to homotopy, for some $k \gg i$. We wish to find an
  embedding (of $\theta$-manifolds) of $\langle W, K\vert_i \rangle$
  into $\langle M, K\vert_{[i,k]}\rangle$ relative to $\langle M,
  K\vert_i \rangle$, with complement a $\theta$-cobordism $Z : \langle
  N, K\vert_i \rangle \leadsto \langle M, K\vert_k\rangle$. If we can
  ensure that $(Z,\langle M, K\vert_k\rangle)$ is $(n-1)$-connected,
  then gluing on $Z$ gives a map
  $$-\circ Z : {\mathcal{N}^\theta(\langle N, K\vert_i\rangle)} \lra {\mathcal{N}^\theta(\langle M, K\vert_k\rangle)}$$
  making the top triangle commute (as $\langle W, K\vert_i \rangle \circ Z \cong \langle M, K\vert_{[i,k]}\rangle$ as $\theta$-manifolds), as required.
  
  By definition of the category $\mathcal{C}$, $\overline{M}$ is
  obtained from its boundary, $\partial L$, by attaching handles of
  index $n$ and above. Thus, by transversality, the attaching map for
  the $n$-handle of $\overline{W}$ relative to $\overline{M}$ may be
  assumed to have image in a collar neighbourhood $[-\epsilon,0]
  \times \partial L \subset \overline{M}$. Thus $\langle W, K\vert_i
  \rangle$ may be obtained from $\langle M, {K}\vert_i\rangle$ by
  attaching a single $n$-handle along $f: S^{n-1} \times D^n
  \hookrightarrow [0,\epsilon] \times \partial L \subset K\vert_i$, so
  up to diffeomorphism (relative to its incoming boundary) the
  cobordism $\langle W, K\vert_i \rangle$ is of the form $\langle M,
  W'\rangle$ for some cobordism $W' : K\vert_i \leadsto X$ in
  $\mathcal{C}$. As $K\vert_{[i,\infty)}$ is a universal $\theta$-end
  in the category $\mathcal{C}$, there exists an embedding of
  $\theta$-manifolds $j' : W' \hookrightarrow K\vert_{[i,k]}$ relative
  to $K\vert_i$, for some $k \gg i$, and by Proposition
  \ref{prop:HighConnComplement} we may assume that its complement $Z'$
  is highly connected. Gluing $M$ back in, we obtain an embedding $j :
  \langle W, K\vert_i\rangle \hookrightarrow \langle M,
  K\vert_{[i,k]}\rangle$ relative to $\langle M, K\vert_{k} \rangle$
  whose complement $Z \cong \langle M, Z'\rangle$ is highly connected,
  as required.
    
  To produce the dotted map making the bottom triangle commute up to
  homotopy, we must produce an embedding relative to $\langle N,
  K\vert_k\rangle$ of $\langle W, K\vert_k \rangle$ into $\langle N,
  K\vert_{[i,k]}\rangle$, for some suitably large $k$, with an
  appropriate connectivity condition on its complement.  As we shall
  explain, this reduces to the same embedding problem as for the upper
  triangle.  We have collar neighbourhoods $[-\epsilon,0]
  \times \partial L \subset \overline{N}$ and $[0,\epsilon]
  \times \partial L \subset K\vert_i$, and as above, we can suppose
  $\overline{W}$ is obtained from $\overline{N}$ by attaching a single
  $n$-handle along a map $f : S^{n-1} \times D^n \hookrightarrow
  [-\epsilon,0] \times \partial L \subset \overline{N}$. We now
  consider $\overline{N}$ to lie inside
  \begin{equation*}
    ([i-\epsilon,i] \times (\overline{N} \cup K\vert_i)) \cup K\vert_{[i,\infty)},
  \end{equation*}
  where we may extend $f$ inside $[i-\epsilon,i] \times
  [-\epsilon,\epsilon] \times \partial L$ to an embedding of $[0,1]
  \times S^{n-1} \times D^n$ so that $\{1\} \times S^{n-1} \times D^n$
  is embedded into $\{i\} \times [0,\epsilon] \times \partial L
  \subset K\vert_i$.  Since $K\vert_{[i,\infty)}$ is a universal
  $\theta$-end in $\mathcal{C}$, we may extend the embedding of $\{1\}
  \times S^{n-1} \times D^n$ to an embedding of the handle $\{1\}
  \times D^n \times D^n$ into $K\vert_{[i,\infty)}$ having highly
  connected complement, and such that the $\theta$-structure is
  homotopic to the one given on the $n$-handle of $W$.  By
  compactness, the handle has image in $K\vert_{[i,k]}$ for some $k
  \gg i$, and we extend $f$ cylindrically to an embedding
  \begin{equation*}
    ([-1,1] \times S^{n-1}
    \cup \{1\} \times D^n) \times D^n \stackrel{f}\lra ([i-\epsilon,i]
    \times (\overline{N} \cup K\vert_i)) \cup (K\vert_{[i,k]} \cup
    [i,k] \times \overline{N})
  \end{equation*}
  which sends $\{-1\} \times S^{n-1} \times D^n$ to $\{k\} \times
  \overline{N}$.  The source of this map is diffeomorphic to a tubular
  neighbourhood of the $n$-handle in $W$, and the target is
  diffeomorphic relative to $K\vert_k \cup \overline{N} = \langle N, K
  \vert_k\rangle$ to $K\vert_{[i,k]} \cup ([i,k] \times \overline{N})
  = \langle N, K\vert_{[i,k]}\rangle$.
\end{proof}

The argument above can \emph{not} be improved to show that
$F_\infty$ sends each morphism in $\mathcal{C}$ to a weak homotopy
equivalence, since the dotted maps we constructed in no sense preserve
basepoints.  The case $n=0$ gives rise to the following example from
\cite{McDuff-Segal}: we have $F_\infty(\emptyset) \simeq \bZ \times
B\Sigma_\infty$ and the morphism $1 : \emptyset \leadsto \emptyset$
given by a single point induces the shift map on $\Sigma_\infty$, that
is, the map induced by the self-embedding given by $\{1, 2, \ldots\}
\cong \{2, 3, \ldots\} \hookrightarrow \{1, 2, \ldots \}$. This is not
surjective, so the map is not a homotopy equivalence; it is however a
homology equivalence, by the argument we have presented.

\subsection{Proof of Theorem \ref{thm:main-C-new}}
\label{sec:proof-theor-refthm:m}

In the situation of Theorem \ref{thm:main-C-new} we have a
$\theta$-manifold $(K, \ell_K)$ and a proper map $x_1 : K \to [0,
\infty)$ with the integers as regular values, satisfying the property
of being a universal $\theta$-end (cf.\ Definition
\ref{defn:universalthetaend}).  Let $\theta' : B' \to B
\overset{\theta}\to BO(2n)$ be obtained as the $n$th stage of the
Moore--Postnikov tower of $\ell_K : K \to B$, and $\ell'_K$ be the
$\theta'$-structure on $K$ given by the Moore--Postnikov
factorisation.  By Proposition~\ref{prop:addendum-plus-version-in-C},
the map $K \to B$ induces an injection in $\pi_{n-1}$ and a surjection
in $\pi_n$, so the $n$th and $(n-1)$st stages of the Moore--Postnikov
tower actually agree, and in particular the homotopy fibres of $B'\to
B$ are $(n-2)$-types.

The following two lemmas
allow us to work with $\theta'$-manifolds instead of
$\theta$-manifolds for many purposes.

\begin{lemma}\label{lem:ChangeOfThetaManifold}
  Let $W$ be a manifold with boundary $\partial W$, and suppose that
  $(W, \partial_0 W)$ is $(n-1)$-connected, for $\partial_0 W
  \subset \partial W$ a collection of boundary components. If
  $\ell'_{\partial_0 W}$ is a $\theta'$-structure with underlying
  $\theta$-structure $\ell_{\partial_0 W}$, then
  \begin{equation*}
    \Bun_\partial(TW, (\theta')^*\gamma;\ell'_{\partial_0 W}) \lra
    \Bun_\partial(TW, \theta^*\gamma;\ell_{\partial_0 W})
  \end{equation*}
  is a weak homotopy equivalence.  Consequently, the natural map
  induces a weak equivalence
  \begin{equation*}
    \mathcal{N}^{\theta'}(K\vert_i, \ell_K' \vert_i) \overset\simeq\lra
    \mathcal{N}^{\theta}(K\vert_i, \ell_K \vert_i).
  \end{equation*}
\end{lemma}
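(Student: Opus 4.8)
The plan is to reduce the statement to an obstruction-theoretic computation in the spirit of Lemma~\ref{lemma:stable-extension}. First I would replace $\theta$ and $\theta'$ by Serre fibrations, so that the comparison map $p : B' \to B$ becomes a fibration; write $F$ for its homotopy fibre. As recorded just before the lemma, $\ell_K : K \to B$ induces an injection on $\pi_{n-1}$ and a surjection on $\pi_n$ (Proposition~\ref{prop:addendum-plus-version-in-C}), so the $n$th and $(n-1)$st Moore--Postnikov stages of $\ell_K$ agree; consequently $\pi_i(B') \to \pi_i(B)$ is an isomorphism for $i \geq n$ and injective for $i = n-1$, so that $F$ is an $(n-2)$-type, i.e.\ $\pi_i(F) = 0$ for $i \geq n-1$.

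Next I would set up the comparison of bundle-map spaces. Since $\gamma$ is the tautological bundle over $BO(2n)$ we have $(\theta')^*\gamma = p^*(\theta^*\gamma)$, so by the universal property of pulled-back bundles a bundle map $TW \to p^*(\theta^*\gamma)$ restricting to $\ell'_{\partial_0 W}$ over $\partial_0 W$ is the same datum as a bundle map $TW \to \theta^*\gamma$ restricting to $\ell_{\partial_0 W}$, together with a lift along $p$ of the underlying map $W \to B$ extending the lift $\partial_0 W \to B'$ underlying $\ell'_{\partial_0 W}$. In other words there is a pullback square
\begin{equation*}
  \xymatrix{
    \Bun^\partial(TW, (\theta')^*\gamma; \ell'_{\partial_0 W}) \ar[r] \ar[d] & \Map_{\partial_0 W}(W, B') \ar[d]^{p_*}\\
    \Bun^\partial(TW, \theta^*\gamma; \ell_{\partial_0 W}) \ar[r] & \Map_{\partial_0 W}(W, B),
  }
\end{equation*}
where $\Map_{\partial_0 W}$ denotes the space of maps extending the base maps of $\ell'_{\partial_0 W}$, respectively $\ell_{\partial_0 W}$, and the horizontal maps send a bundle map to the map it induces on base spaces. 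As $p$ is a fibration, so is $p_*$, and the left-hand vertical map is its base change along the bottom map; in particular it is a fibration whose fibre over a $\theta$-structure $\ell_W$ is the space of lifts along $p$ of the underlying map $W \to B$, relative to the prescribed lift over $\partial_0 W$.

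I would then show this fibre is weakly contractible. Since $(W, \partial_0 W)$ is $(n-1)$-connected, $W$ is obtained from $\partial_0 W$, up to homotopy rel $\partial_0 W$, by attaching cells of dimension $\geq n$. Applying obstruction theory for sections rel $\partial_0 W$ of the pulled-back fibration over $W$ with fibre $F$: all the relevant obstruction groups (to the existence of such a lift, to a homotopy between two lifts, and more generally to the vanishing of every homotopy group of the lifting space) are of the form $H^{k}(W, \partial_0 W; \pi_{j}(F))$ with local coefficients and with $k \geq n$ and $j \geq n-1$, hence all vanish. Thus the space of lifts rel $\partial_0 W$ is weakly contractible. (This is the same obstruction-theoretic mechanism as in the proof of Lemma~\ref{lemma:stable-extension}, with the pair $(\widetilde B, B)$ there replaced by the fibre $F$ here.) Consequently the left vertical map of the square is a fibration with weakly contractible fibres, hence a weak homotopy equivalence, which is the first assertion of the lemma. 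For the second assertion, note that the forgetful map $\Bun^\partial(TW, (\theta')^*\gamma; \ell') \to \Bun^\partial(TW, \theta^*\gamma; \ell)$ is post-composition with the bundle map covering $p$, hence is $\Diff(W, \partial W)$-equivariant for the action by precomposition with derivatives; a $\Diff(W, \partial W)$-equivariant weak equivalence induces a weak equivalence of homotopy orbit spaces, so $B\Diff^{\theta'}(W; \ell') \to B\Diff^\theta(W; \ell)$ is a weak equivalence. Applying this to each highly connected nullbordism $W$ of $(K\vert_i, \ell_K\vert_i)$, with $\partial_0 W = \partial W$ (which is $(n-1)$-connected in $W$ by Definition~\ref{defn:N}), and using the decomposition $\mathcal{N}^\theta(P, \ell_P) = \coprod_W B\Diff^\theta(W; \ell_P)$, exhibits $\mathcal{N}^{\theta'}(K\vert_i, \ell'_K\vert_i) \to \mathcal{N}^\theta(K\vert_i, \ell_K\vert_i)$ as a disjoint union of weak equivalences.

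The main obstacle is essentially bookkeeping rather than anything deep: one must be careful with the local coefficient systems and with the dimensions of the relative cells of $(W,\partial_0 W)$ in the obstruction argument, and with the passage between the compact--open topology on the bundle-map spaces and the topology on the lifting spaces. All of this becomes routine once $\theta$ and $\theta'$ have been replaced by fibrations.
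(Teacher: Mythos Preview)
Your proof is correct and follows essentially the same approach as the paper: identify the homotopy fibre of the comparison map with the space of lifts of $\ell_W: W \to B$ along $B' \to B$ relative to $\partial_0 W$, and use that this lifting space is contractible because the fibre of $B' \to B$ is an $(n-2)$-type while $(W,\partial_0 W)$ has only relative cells of dimension $\geq n$. Your pullback-square formulation and explicit obstruction-theoretic bookkeeping are more detailed than the paper's terse argument, but the content is the same, as is the deduction of the second claim via equivariance and passage to homotopy orbits.
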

\begin{proof}
  As the homotopy fibres of $B' \to B$ are $(n-2)$-types and
  $(W, \partial_0 W)$ is $(n-1)$-connected,  the space of lifts
  \begin{equation*}
    \xymatrix{
      \partial_0 W \ar[r]^-{\ell'_{\partial_0 W}}\ar@{_(->}[d] & B'\ar[d]\\
      W \ar[r]^-{\ell_W}\ar@{..>}[ur] & B
    }
  \end{equation*}
  is contractible, for each $\theta$-structure $\ell_W$ on $W$
  restricting to $\ell_{\partial_0 W}$ on the boundary.  But this
  space of lifts is easily identified with the homotopy fibre of the
  map $\Bun_\partial(TW, (\theta')^*\gamma;\ell'_{\partial_0 W}) \to
  \Bun_\partial(TW, \theta^*\gamma;\ell_{\partial_0 W})$ over the
  point $\ell_W$.

  The last claim follows from the case $\partial_0 W = \partial W$ by
  forming the homotopy orbit space by the action of $\Diff(W,\partial
  W)$ and taking disjoint union over all $W$ with $\partial W =
  K\vert_i$ for which $(W,\partial W)$ is $(n-1)$-connected.
\end{proof}

\begin{lemma}\label{lem:ChangeOfThetaEnd}
  The $\theta'$-manifold $(K, \ell'_K)$ is a universal $\theta'$-end.
\end{lemma}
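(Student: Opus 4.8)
The plan is to verify the three homotopical conditions of Addendum~\ref{add:1} (equivalently, Proposition~\ref{prop:addendum-plus-version-in-C}) for $(K,\ell'_K)$, given that they hold for $(K,\ell_K)$. The key observation is that $\theta'$ is the $n$th, and simultaneously $(n-1)$st, stage of the Moore--Postnikov tower of $\ell_K : K \to B$, so the map $\ell'_K : K \to B'$ is $(n-1)$-connected and induces an isomorphism on $\pi_i$ for $i \geq n$; in particular $\pi_n(K) \to \pi_n(B')$ is an isomorphism and $\pi_{n-1}(K) \to \pi_{n-1}(B')$ is surjective, and the homotopy fibres of $B' \to B$ are $(n-2)$-types.

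First I would record that each $K\vert_{[i,i+1]}$ is still a highly connected cobordism --- this is a property of the underlying manifold and does not involve the tangential structure, so it is unchanged. Next, for condition~(\ref{it:add-C:1}) of Proposition~\ref{prop:addendum-plus-version-in-C}: the map $\pi_n(K\vert_{[i,\infty)}) \to \pi_n(B')$ fits into a commutative triangle with $\pi_n(K\vert_{[i,\infty)}) \to \pi_n(B)$ and $\pi_n(B') \to \pi_n(B)$, where the bottom map is an isomorphism (Moore--Postnikov stage $\geq n$). Since the composite is surjective by hypothesis on $(K,\ell_K)$, the map to $\pi_n(B')$ is surjective. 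For condition~(\ref{it:add-C:2}): similarly $\pi_{n-1}(K\vert_{[i,\infty)}) \to \pi_{n-1}(B')$ composed with the isomorphism $\pi_{n-1}(B') \xrightarrow{\cong} \pi_{n-1}(B)$ equals the map $\pi_{n-1}(K\vert_{[i,\infty)}) \to \pi_{n-1}(B)$, which is injective by hypothesis, hence the map to $\pi_{n-1}(B')$ is injective. (One must be slightly careful that these statements hold for all basepoints and all path components, but since each $K\vert_{[i,i+1]}$ is highly connected the relevant maps on $\pi_0$ are bijections, so this is routine.)

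For condition~(\ref{it:add-C:3}), each path component of $K\vert_{[i,\infty)}$ contains an embedded $S^n \times S^n - \Int(D^{2n})$ whose structure map to $B$ is null-homotopic; I would lift this null-homotopy along $B' \to B$. Since $S^n \times S^n - \Int(D^{2n})$ is homotopy equivalent to $S^n \vee S^n$, a $1$-dimensional-up-to-$n$ complex, and the map into $B$ is null-homotopic, the obstruction to lifting to a null-homotopic map into $B'$ vanishes: a constant map visibly lifts to a constant map into $B'$, and one then uses that the space of lifts of a fixed map $S^n \vee S^n \to B$ along the fibration $B' \to B$ is non-empty (the fibre is $(n-2)$-connected... one only needs the obstruction groups $H^{i+1}(S^n\vee S^n; \pi_i(\mathrm{Fib}))$ to vanish, and $\mathrm{Fib}$ is an $(n-2)$-type so these sit in degrees $i \leq n-2$, i.e.\ $i+1 \leq n-1 < n$, where $S^n \vee S^n$ has no cohomology). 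Thus the embedded $S^n \times S^n - \Int(D^{2n})$ also has null-homotopic structure map to $B'$. The main obstacle, such as it is, is bookkeeping with basepoints and path components in conditions (\ref{it:add-C:1})--(\ref{it:add-C:2}) and making the lifting argument in (\ref{it:add-C:3}) airtight; none of this is deep, but it must be done uniformly over $i$. Having verified all three conditions, Proposition~\ref{prop:addendum-plus-version-in-C} gives that $(K,\ell'_K)$ is a universal $\theta'$-end.
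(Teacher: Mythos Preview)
Your approach is correct but takes a different route from the paper. The paper verifies Definition~\ref{defn:universalthetaend} directly: given a highly connected $\theta'$-cobordism $(W,\ell'_W)$ out of $K\vert_i$, forget to the underlying $\theta$-structure $\ell_W$, use that $K$ is a universal $\theta$-end to obtain an embedding $j: W \hookrightarrow K\vert_{[i,\infty)}$ together with a homotopy $\ell_K \circ Dj \simeq \ell_W$ rel $K\vert_i$, and then invoke Lemma~\ref{lem:ChangeOfThetaManifold} to lift this homotopy to one of $\theta'$-structures. This is a two-line argument and bypasses Addendum~\ref{add:1} entirely.

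Your route via Addendum~\ref{add:1} works, but note two points. First, Addendum~\ref{add:1} is stated for \emph{spherical} structures, so you should record that $\theta'$ is spherical; this is an easy obstruction argument (the homotopy fibre of $B'\to B$ is an $(n-2)$-type and $(S^{2n},D^{2n})$ only has a $2n$-cell), but it is not free. Second, your assertion that $\pi_{n-1}(B')\to\pi_{n-1}(B)$ is an isomorphism is stronger than what the Moore--Postnikov factorisation gives (it gives injectivity); fortunately your argument for condition~(\ref{it:add-C:2}) only uses that the composite to $\pi_{n-1}(B)$ is injective, so the first factor is injective regardless. With those adjustments your argument goes through. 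The trade-off is that the paper's approach is shorter and packages the obstruction theory into a single appeal to Lemma~\ref{lem:ChangeOfThetaManifold}, whereas yours unpacks the homotopical content explicitly and makes the role of the Moore--Postnikov connectivity more visible.
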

\begin{proof}
  We verify the conditions of Definition
  \ref{defn:universalthetaend}. The cobordisms $K\vert_{[i-1,i]}$ are
  highly connected, as we have assumed that $K$ is a universal
  $\theta$-end.  If $(W : K\vert_i \leadsto P, \ell'_W)$ is a highly
  connected $\theta'$-cobordism, with underlying $\theta$-structure
  $\ell_W$, then by assumption there is an embedding $j : W \to
  K\vert_{[i,\infty)}$ and a homotopy $\ell_K \circ D j \simeq
  \ell_W$, all relative to $K\vert_i$, but then by Lemma
  \ref{lem:ChangeOfThetaManifold} there is also a homotopy $\ell'_K
  \circ D j \simeq \ell'_W$ relative to $K\vert_i$.
\end{proof}

We can now give the proof of
Theorem~\ref{thm:main-C-new}. Recall that the theorem asserts a
homology equivalence between the homotopy colimit of the direct system
\begin{equation}\label{eq:23}
  \mathcal{N}^{\theta}(K\vert_{0}, \ell_{K}\vert_{0})
  \xrightarrow{K\vert_{[0,1]}} \mathcal{N}^{\theta}(K\vert_{1},
  \ell_{K}\vert_{1}) \xrightarrow{K\vert_{[1,2]}}
  \mathcal{N}^{\theta}(K\vert_{2}, \ell_{K}\vert_{2})
  \xrightarrow{K\vert_{[2,3]}} \cdots
\end{equation}
and the infinite loop space $\Omega^\infty MT\theta'$.  By
Lemmas~\ref{lem:ChangeOfThetaManifold} and \ref{lem:ChangeOfThetaEnd},
it suffices to prove the theorem in the case $\theta = \theta'$, i.e.\
when $\ell_K: K \to B$ is $n$-connected. In order to apply Theorem~\ref{thm:GC}, we first
need to define a $\theta$-manifold $L$ (in order to have the category
$\mathcal{C}$ defined).  To do so, we pick a self-indexing Morse
function $f: K\vert_0 \to [0,2n-1]$ and let $L =
f^{-1}([0,n-\frac12])$.  Then the inclusions $L \to K\vert_0$ and
$K\vert_0 \to K$ are both $(n-1)$-connected, so the structure map $L
\to B$ is $(n-1)$-connected and we have defined the category
$\mathcal{C}$, satisfying Theorem~\ref{thm:GC}.  By
Proposition~\ref{prop:UniquenessThetaEnd} we may replace $(K,\ell_K)$
with any other universal $\theta$-end without changing the homotopy
type of the homotopy colimit~\eqref{eq:23}, as long as $K\vert_0$ is
unchanged, and by Corollary~\ref{cor:replace-with-end-in-C} there
exists a universal $\theta$-end of the form $K^\circ \cup ([0,\infty)
\times L)$, where $K^\circ$ is a universal $\theta$-end in
$\mathcal{C}$.  Now, by Proposition \ref{prop:cut-L} the direct
system~\eqref{eq:23} is homotopy equivalent to
\begin{equation*}
  \mathcal{C}(\overline{L}, {K}\vert_{0}^\circ)
  \xrightarrow{{K}\vert_{[0,1]}^\circ} \mathcal{C}(\overline{L},
  {K}\vert_{1}^\circ) \xrightarrow{{K}\vert_{[1,2]}^\circ}
  \mathcal{C}({L}, {K}\vert_{2}^\circ)
  \xrightarrow{{K}\vert_{[2,3]}^\circ} \cdots.
\end{equation*}
By Theorem~\ref{thm:GC}, the homotopy colimit is homology equivalent
to $\Omega B\mathcal{C}$, which in turn is weakly equivalent to
$\Omega^\infty MT\theta = \Omega^\infty MT\theta'$, by
Theorem~\ref{thm:gp-compl}.  \qed

\subsection{Proof of Lemma \ref{lem:Kcomm} and Theorem
  \ref{thm:main-D}}\label{sec:proof-lemma-refl}

Let us first show that $\mathcal{K} \subset \mathcal{K}_0$ is a
submonoid, and that it is commutative.  Recall that $\mathcal{K}_0$
was the set of isomorphism classes of highly connected cobordisms $K
\subset [0,1] \times \R^\infty$ with $\theta$-structure, starting and
ending at $(P,\ell_P)$ and that $\mathcal{K}$ is the subset admitting
representatives containing $[0,1] \times (P-A)$ with product
$\theta$-structure, where $A \subset P$ is a closed regular
neighbourhood of a simplicial complex of dimension at most $(n-1)$
inside $P$.  Let $K_0, K_1 : P \leadsto P$ be two such cobordisms and
let $K_i$ have support in $A_i$, a regular neighbourhood of a
simplicial complex $X_i$ of dimension at most $(n-1)$.  As $P$ is
$(2n-1)$-dimensional, we can perturb the $X_i$ to be disjoint and then
shrink the $A_i$ so they are disjoint.  But if $W_0$ and $W_1$ have
support in the disjoint sets $A_0$ and $A_1$, then $W_0 \circ W_1$ has
support in $A_0 \amalg A_1$ which is a regular neighbourhood of $X_0
\amalg X_1$ which is again a simplicial complex of dimension at most
$(n-1)$.  Furthermore $K_0 \circ K_1$ is isomorphic to the
$\theta$-bordism $K_{01}$ which is supported in $A_0 \amalg A_1$ and
agrees with $K_i$ on $[0,1] \times A_i$, and this in turn is
isomorphic to $K_1 \circ K_0$, so $\mathcal{K}$ is commutative.

Recall that we have a monoid map $\mathcal{K}' \to \mathcal{K}$, where
$\mathcal{K}'$ is defined like $\mathcal{K}$, but with $\theta'$
instead of $\theta$.  We saw in Lemma~\ref{lem:ChangeOfThetaManifold}
that the map $\mathcal{N}^{\theta'}(P,\ell'_P) \to
\mathcal{N}^\theta(P,\ell_P)$ is a weak equivalence, and we claim that a
similar obstruction theoretic argument shows that $\mathcal{K}' \to
\mathcal{K}$ is an isomorphism.  Explicitly, $[0,1] \times P$ has a
canonical lift of its $\theta$-structure to a $\theta'$-structure.  If
an element of $\mathcal{K}$ is represented by a cobordism $K$
supported in $A \subset P$, it contains the subset $(\{0\} \times P)
\cup ([0,1] \times (P - A)) \cup (\{1\} \times P)$ which has a
canonical $\theta'$-structure.  Because $A$ is a regular neighbourhood
of a simplicial complex of dimension at most $(n-1)$, the manifold $K$
is obtained up to homotopy from this subset by attaching cells of
dimension at least $n$, so up to homotopy there is a unique extension
of the lift. This shows that $\mathcal{K}' \to \mathcal{K}$ is a
bijection.

Before embarking on the proof of Theorem~\ref{thm:main-D}, we
establish the following useful strengthening of
assumption~(\ref{item:21}) of that theorem.

\begin{lemma}\label{lem:3KContains}
  Let $[W] \in \mathcal{K}$ be such that each path component of
  $W$ contains a submanifold diffeomorphic to $S^n \times S^n
  -\Int(D^{2n})$. Then each path component of $3W = W \circ W \circ W$ contains such a
  submanifold which in addition has null-homotopic structure map to
  $B$.
\end{lemma}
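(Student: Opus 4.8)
The plan is to realise, inside each path component of $3W$, a pair of disjointly embedded framed $n$-spheres meeting transversally in exactly one point and mapping null-homotopically to $B$; a closed regular neighbourhood of such a pair is then diffeomorphic to $S^n\times S^n-\Int(D^{2n})$ (being the plumbing of two trivial disc bundles), and it maps null-homotopically to $B$ because it deformation retracts onto the wedge of the two spheres.

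First I would record the shape of $W$. Since $[W]\in\mathcal{K}$, the cobordism $W$ contains $[0,1]\times(P-A)$ with product $\theta$-structure, for $A$ a regular neighbourhood of a simplicial complex of dimension $\leq n-1$; as $W$ is highly connected and $n\geq3$, the pairs $(W,\{0\}\times P)$ and $(W,\{1\}\times P)$ are $1$-connected, so $\pi_0(P)\to\pi_0(W)$ is bijective at each end, and the product structure over $P-A$ (which meets every component of $P$, since a closed $(2n-1)$-manifold cannot deformation retract onto a complex of dimension $\leq n-1$) forces the resulting bijection $\pi_0(\{0\}\times P)\cong\pi_0(\{1\}\times P)$ to be the identity. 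Thus $W=\coprod_k W_k$ with each $W_k:P_k\leadsto P_k$ a self-bordism of a component $P_k$ of $P$, and $3W=\coprod_k W_k^{\circ3}$. A path component $C$ of $3W$ is therefore $W_k^{\circ3}$ for some $k$, and by hypothesis $W_k$ contains an embedded $T^\circ\cong S^n\times S^n-\Int(D^{2n})$; placing one copy $T_i$ in each of the three layers gives pairwise disjoint $T_1,T_2,T_3\subset C$. Because the three layers are identified with $W_k$ by diffeomorphisms commuting with the maps to $B$, and choosing the basepoint of $T^\circ$ at the transverse intersection point of its two factor spheres, the factor spheres $\alpha_i,\beta_i\subset T_i$ satisfy $[\alpha_i\to B]=a$ and $[\beta_i\to B]=b$ in $\pi_n(B,b_0)$ independently of $i$, where $b_0\in B$ is their common image.

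Next I would build the pair by tubing. Choose disjoint embedded arcs $\delta_{12}$ from $T_1$ to $T_2$ and $\delta_{13}$ from $T_1$ to $T_3$, each with null-homotopic image in $B$: route $\delta_{12}$ from $T_1$ up through layer $1$ to a point $x\in P_k-A$ on the first interface and back down through layer $2$ along the path which, read through $W_k$, is the one used in layer $1$ precomposed with the product path over $x$; route $\delta_{13}$ similarly, crossing layer $2$ along $[0,1]\times\{x'\}$ with $x'\in P_k-A$ (which has constant image in $B$ by the product structure). Set $\alpha'=\alpha_1\#\bar\alpha_2$ and $\beta'=\beta_1\#\bar\beta_3$, the connecting tubes being thin neighbourhoods of small perturbations of $\delta_{12}$ and $\delta_{13}$; since $1+n<2n$ the arcs, hence the tubes, can be made disjoint from every $\alpha_j,\beta_j$ away from their endpoints, so $\alpha'$ and $\beta'$ are disjointly embedded $n$-spheres meeting transversally only at the point $p_1=\alpha_1\cap\beta_1$. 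Connected sums of framed embedded spheres along framed tubes are framed, so $\alpha'$ and $\beta'$ have trivial normal bundles, whence $\alpha'\cdot\alpha'=\beta'\cdot\beta'=0$ and $\alpha'\cdot\beta'=\pm1$. Transporting $[\bar\alpha_2\to B]=-a$ along $\delta_{12}$ (whose image in $B$ is trivial) gives $[\alpha'\to B]_{p_1}=a-a=0$, and likewise $[\beta'\to B]_{p_1}=b-b=0$; a regular neighbourhood of $\alpha'\cup\beta'$ in $C$ is then the required submanifold, and as $C$ was arbitrary this proves the lemma.

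The elementary intersection-form bookkeeping is exactly what forces three copies rather than two: tubing $\alpha_1\#\bar\alpha_2$ and $\beta_1\#\bar\beta_2$ from only two copies produces the even form $\left(\begin{smallmatrix}0&2\\2&0\end{smallmatrix}\right)$, so no hyperbolic pair appears, whereas using $\{T_1,T_2\}$ for $\alpha'$ and $\{T_1,T_3\}$ for $\beta'$ yields intersection number $\pm1$. I expect the genuinely delicate point to be not this computation but the upgrade from null-homology to null-homotopy of the structure map, i.e.\ arranging the connecting arcs to have null-homotopic image in $B$; this is where the product structure of $W$ over $P-A$ and the fact that $3W$ is assembled from three copies of the \emph{same} $W$ (so one path in $W$ serves for ascent and descent between consecutive layers) are used.
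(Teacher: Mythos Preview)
Your argument is correct and follows essentially the same strategy as the paper's proof: in each copy of $W$ one has the two core spheres $\alpha,\beta$ of the given $S^n\times S^n-\Int(D^{2n})$, and one tubes $\alpha_1$ to an orientation-reversed copy in a second layer (the paper phrases this as precomposing with a reflection of $S^n$) to kill its class in $\pi_n(B)$, and then uses the third layer to do the same for $\beta$ without disturbing the single transverse intersection at $p_1$.

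Where your write-up improves on the paper is in the treatment of the connecting arcs. The paper simply asserts that the connect-sum of a sphere representing $x$ with one representing $-x$ is null-homotopic in $B$, but as you correctly observe, this requires the tube to run along an arc whose image in $B$ is null-homotopic (otherwise one only gets $x - \gamma\cdot x$ for some $\gamma\in\pi_1(B)$). Your construction---running up in one copy of $W_k$ to a point of $P_k-A$ and then down along the \emph{same} path in the next copy, using the product structure over $P-A$ to cross intermediate layers---makes this explicit and is exactly the right fix. Your preliminary reduction to the case where each component of $W$ is a self-bordism of a single component $P_k$ is also a point the paper handles more tersely.
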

\begin{proof}
  Let us suppose that $W$ is path connected: otherwise we repeat the
  argument below for each path component. Finding an embedded $S^n
  \times S^n -\Int(D^{2n})$ is equivalent to finding two embedded
  $n$-spheres with trivial normal bundles, which intersect at a single
  point.  By assumption, this holds for $W$ so we have
  \begin{equation*}
    S^n \times S^n -\Int(D^{2n}) \hookrightarrow W
    \overset{\ell_W}\lra B,
  \end{equation*}
  which in $\pi_n$ induces a homomorphism $\bZ \oplus \bZ = \pi_n(S^n
  \times S^n -\Int(D^{2n})) \to \pi_n(B)$, sending the basis elements
  to $x, y \in \pi_n(B)$.

  In a separate copy of $W$ we have a framed embedding
  \begin{equation*}
    S^n \times \{*\} \overset{\text{reflection}}\cong S^n \times \{*\}
    \hookrightarrow S^n \times S^n -\Int(D^{2n}) \hookrightarrow W
  \end{equation*}
  which in $\pi_n(B)$ gives the element $-x$. Thus in $2W$, the
  connect-sum of this embedded framed sphere and the original one
  gives an embedded framed sphere with null-homotopic map to
  $B$. Using the third copy of $W$ we can fix the remaining sphere,
  without changing the property that the two spheres intersect
  transversely in one point.
\end{proof}

We shall first prove Theorem~\ref{thm:main-D} under an additional
countability hypothesis, namely we prove the following.

\begin{proposition}\label{prop:countable-L}
  Let $\theta$, $(P,\ell_P)$ and $\mathcal{K}$ be as in
  Theorem~\ref{thm:main-D}, and let $\mathcal{L} \subset
  \mathcal{K}$ be a submonoid satisfying conditions~(\ref{item:16}),
  (\ref{item:20}) and (\ref{item:21}) of that theorem.  Assume in
  addition that $\mathcal{L}$ is countable.  Then the induced morphism
  \begin{equation*}
    H_*(\mathcal{N}^{\theta}(P, \ell_P))[\mathcal{L}^{-1}] \lra
  H_*(\Omega^\infty MT\theta')
  \end{equation*}
  is an isomorphism.
\end{proposition}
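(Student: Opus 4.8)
The plan is to realise $\mathcal{L}$ as the stabilisation data of a universal $\theta$-end over $(P,\ell_P)$, and then to read off the conclusion from Theorem~\ref{thm:main-C-new}. First I would choose a sequence of representatives $K\vert_{[0,1]}, K\vert_{[1,2]}, K\vert_{[2,3]}, \dots$ of elements of $\mathcal{L}$, each regarded as a highly connected self-$\theta$-bordism of $(P,\ell_P)$, arranged — using that $\mathcal{L}$ is countable — so that every element of $\mathcal{L}$ occurs infinitely often. Stacking these up as in Section~\ref{sec:UnivThetaEnds} produces a $\theta$-manifold $K \subset [0,\infty)\times\R^\infty$ with $x_1$ having the integers as regular values and $K\vert_i = (P,\ell_P)$ for all $i$.

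I would then verify that $K$ is a universal $\theta$-end by checking the three conditions of Addendum~\ref{add:1}. Each $K\vert_{[i,i+1]}$ is a highly connected cobordism by the very definition of $\mathcal{K}_0 \supseteq \mathcal{L}$. Condition~(\ref{it:add:1}) follows from hypothesis~(\ref{item:16}) of Theorem~\ref{thm:main-D}: every element of $\mathcal{L}$ occurs as a sub-bordism of $K\vert_{[i,\infty)}$ beyond every stage $i$, so $\IM(\pi_n(K\vert_{[i,\infty)}) \to \pi_n(B))$ contains $\IM(\pi_n(K) \to \pi_n(B))$ for all $K \in \mathcal{L}$, hence is all of $\pi_n(B)$. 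Condition~(\ref{it:add:3}) follows from hypothesis~(\ref{item:21}) together with Lemma~\ref{lem:3KContains}: if $W \in \mathcal{L}$ contains an $S^n \times S^n - \Int(D^{2n})$ in a given path component, then $3W \in \mathcal{L}$, which also occurs beyond every stage, contains such a submanifold with null-homotopic structure map to $B$. Condition~(\ref{it:add:2}) must be deduced from hypothesis~(\ref{item:20}) by a van~Kampen/Mayer--Vietoris computation: one shows by induction over the stages that $\pi_{n-1}(K\vert_{[i,\infty)})$ is the quotient of $\pi_{n-1}(P)$ by the subgroup generated by the kernels $\Ker(\pi_{n-1}(P) \to \pi_{n-1}(K))$, $K \in \mathcal{L}$, using repeatedly that composites of highly connected cobordisms are highly connected (so the relevant boundary inclusions are $\pi_{n-1}$-surjective) and that gluing such a cobordism along $P$ kills exactly the kernel of $\pi_{n-1}(P)$ into it. Hypothesis~(\ref{item:20}) then says this subgroup contains $\Ker(\pi_{n-1}(P) \to \pi_{n-1}(B))$, which is precisely injectivity of $\pi_{n-1}(K\vert_{[i,\infty)}) \to \pi_{n-1}(B)$.

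With $K$ in hand, Theorem~\ref{thm:main-C-new} applies — $\theta$ is spherical, $2n>4$, and $\mathcal{N}^\theta(K\vert_0,\ell_K\vert_0) = \mathcal{N}^\theta(P,\ell_P) \neq \emptyset$ — and yields a homology equivalence $\hocolim_i \mathcal{N}^\theta(K\vert_i,\ell_K\vert_i) \to \Omega^\infty MT\theta''$, where $\theta''$ is the $n$th Moore--Postnikov stage of $\ell_K: K \to B$. By Proposition~\ref{prop:addendum-plus-version-in-C} the map $K\to B$ is injective on $\pi_{n-1}$ and surjective on $\pi_n$, so, exactly as in the proof of Theorem~\ref{thm:main-C-new}, its $n$th and $(n-1)$st Moore--Postnikov stages coincide; and since $P = K\vert_0 \hookrightarrow K$ is $(n-1)$-connected, the $(n-1)$st stage of $\ell_K$ agrees with the $(n-1)$st stage of $\ell_P: P \to B$ by uniqueness of Moore--Postnikov factorisations. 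Hence $\theta'' = \theta'$.

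Finally, since $K\vert_i = (P,\ell_P)$ for every $i$ and the bonding map $\mathcal{N}^\theta(K\vert_i,\ell_K\vert_i) \to \mathcal{N}^\theta(K\vert_{i+1},\ell_K\vert_{i+1})$ is, under these identifications, the self-map of $\mathcal{N}^\theta(P,\ell_P)$ induced by $K\vert_{[i,i+1]} \in \mathcal{L}$, taking homology and using that homology commutes with filtered homotopy colimits gives
\[
  H_*\bigl(\hocolim_i \mathcal{N}^\theta(K\vert_i,\ell_K\vert_i)\bigr) \cong \colim_i H_*(\mathcal{N}^\theta(P,\ell_P)) \cong H_*(\mathcal{N}^\theta(P,\ell_P))[\mathcal{L}^{-1}],
\]
the last isomorphism because the partial products of the sequence $\{K\vert_{[i,i+1]}\}_i$ are cofinal, under divisibility, in the commutative monoid $\mathcal{L}$. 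One then checks that under these identifications the composite isomorphism is induced by the Pontryagin--Thom map~\eqref{eq:21}, since every map involved is a Pontryagin--Thom map compatible with gluing highly connected cobordisms (Remark~\ref{rem:what-is-the-map}). The step I expect to be the main obstacle is verifying condition~(\ref{it:add:2}) of Addendum~\ref{add:1}: passing from the ``generation'' hypothesis~(\ref{item:20}) on $\pi_{n-1}(P)$ to injectivity of $\pi_{n-1}(K\vert_{[i,\infty)}) \to \pi_{n-1}(B)$ requires careful bookkeeping of the $\pi_{n-1}$-calculation for the infinite composite, in particular transporting the various kernels $\Ker(\pi_{n-1}(P) \to \pi_{n-1}(K))$ to a common stage.
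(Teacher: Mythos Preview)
Your proposal is correct and follows essentially the same approach as the paper, which is equally terse about verifying the conditions of Addendum~\ref{add:1}. For the step you flag as the main obstacle, the bookkeeping dissolves once you observe that for any $W \in \mathcal{K}$ the two boundary inclusions $P \hookrightarrow W$ induce the \emph{same} map on $\pi_{n-1}$ (by general position lift a class to $P - A$, then transport through $[0,1]\times(P-A)\subset W$); equivalently, use the commutativity of $\mathcal{K}$ (Lemma~\ref{lem:Kcomm}) to move the required $K_j$ to the front of the composite $K\vert_{[i,m+1]}$, so that $\beta_j \in \Ker\bigl(\pi_{n-1}(P)\to\pi_{n-1}(K_j)\bigr)$ visibly dies in $\pi_{n-1}(K\vert_{[i,\infty)})$.
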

\begin{proof}
  By countability of $\mathcal{L}$, we may pick a sequence of
  $\theta$-manifolds $(K\vert_{[i,i+1]}, \ell_i)$ which are
  self-bordisms of $(P,\ell_P)$ representing elements of
  $\mathcal{L}$, in a way that each element of $\mathcal{L}$ is
  represented infinitely often.  We then let $K$ be the infinite
  composition of the $K\vert_{[i,i+1]}$, and deduce from
  Addendum~\ref{add:1} that $(K,\ell_K)$ is a universal $\theta$-end.
  (That property~(\ref{it:add:3}) of the Addendum is satisfied follows
  from assumption~(\ref{item:21}) and Lemma~\ref{lem:3KContains}.)
  Then Theorem~\ref{thm:main-C-new} gives a homology equivalence
  \begin{equation*}
    \hocolim \mathcal{N}^\theta(P,\ell_P) \lra \Omega^\infty MT\theta',
  \end{equation*}
  where the homotopy colimit is over composition with the
  $K\vert_{[i,i+1]}$.  Taking homology turns the homotopy colimit into
  a colimit of the $\bZ[\mathcal{L}]$-module
  $H_*(\mathcal{N}^\theta(P,\ell_P))$ over multiplying with elements
  of $\mathcal{L}$, each element occuring infinitely many times.  But
  that precisely calculates the localisation at $\mathcal{L}$.
\end{proof}

The proposition above proves Theorem~\ref{thm:main-C-new} in the case
where $\mathcal{K}$ is countable.  (To apply
Proposition~\ref{prop:countable-L} with $\mathcal{L} = \mathcal{K}$,
we need to check that conditions~(\ref{item:16}), (\ref{item:20}) and
(\ref{item:21}) hold.  This is proved using the manifolds $W_\alpha$
from the proof of Proposition~\ref{prop:addendum-plus-version-in-C}.)
We will deduce the general case by a colimit argument, based on the
following result.

\begin{corollary}\label{cor:L-countable}
  Let $\theta$, $(P,\ell_P)$ and $\mathcal{K}$ be as in
  Theorem~\ref{thm:main-D}, and let $\mathcal{L} \subset
  \mathcal{K}$ be a submonoid satisfying conditions~(\ref{item:20}) and
  (\ref{item:21}) of that theorem, but not
  necessarily~(\ref{item:16}).  Assume in addition that $\mathcal{L}$
  is countable.  Then the induced morphism
  \begin{equation}\label{eq:25}
    H_*(\mathcal{N}^{\theta_\mathcal{L}}(P, \ell_P))[\mathcal{L}^{-1}] \lra
    H_*(\Omega^\infty MT\theta_\mathcal{L})
  \end{equation}
  is an isomorphism, where $\theta_\mathcal{L}: B_\mathcal{L} \to
  BO(2n)$ is obtained as the $n$th Moore-Postnikov factorisation of a
  certain map $\ell: X_\mathcal{L} \to B$, defined as follows.  Each
  self-bordism $(K,\ell_K)$ representing an element of $\mathcal{L}$
  has incoming boundary $P \subset K$, and we let $X_\mathcal{L}$ be
  obtained by gluing every such $K$ along their common incoming
  boundary; the structure maps $\ell_K$ then glue to the map $\ell:
  X_\mathcal{L} \to B$.
\end{corollary}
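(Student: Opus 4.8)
The plan is to deduce the corollary from Proposition~\ref{prop:countable-L} applied to the tangential structure $\theta_\mathcal{L}$ in place of $\theta$. The copy of $P$ inside $X_\mathcal{L}$, together with the Moore--Postnikov lift $X_\mathcal{L}\to B_\mathcal{L}$, equips $P$ with a $\theta_\mathcal{L}$-structure $\ell'_P$ lifting $\ell_P$; this is the structure implicit in~\eqref{eq:25}. Since $B_\mathcal{L}$ is the $n$th Moore--Postnikov stage of $X_\mathcal{L}\to B$, the map $\pi_i(B_\mathcal{L})\to\pi_i(B)$ is an isomorphism for $i>n$ and injective for $i=n$, so the homotopy fibre $F$ of $B_\mathcal{L}\to B$ has $\pi_i(F)=0$ for $i\ge n$, i.e.\ it is an $(n-1)$-type. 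Hence the obstruction-theoretic argument used to identify $\mathcal{K}'$ with $\mathcal{K}$ in Section~\ref{sec:proof-lemma-refl} (and in the proof of Lemma~\ref{lem:ChangeOfThetaManifold}) shows that each representative $(K,\ell_K)$ of an element of $\mathcal{L}$ admits an essentially unique lift to a $\theta_\mathcal{L}$-self-bordism of $(P,\ell'_P)$, compatibly with composition; and a representative with support in a regular neighbourhood of an $(n-1)$-complex is built from its canonical sub-cobordism by attaching cells of index $\ge n$, so this lift can be chosen to respect the support condition. Thus $\mathcal{L}$ maps to a countable submonoid of $\mathcal{K}_{\theta_\mathcal{L}}$ (the analogue of $\mathcal{K}$ for $\theta_\mathcal{L}$), and the Pontryagin--Thom map of Remark~\ref{rem:what-is-the-map} for $\theta_\mathcal{L}$ is a $\bZ[\mathcal{L}]$-module map agreeing with the one in the statement.

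It remains to check that $(\theta_\mathcal{L},(P,\ell'_P),\mathcal{L})$ satisfies the hypotheses of Proposition~\ref{prop:countable-L} and that the conclusion of that proposition is phrased in terms of $\Omega^\infty MT\theta_\mathcal{L}$. For the latter: $X_\mathcal{L}$ is obtained from $P$ by attaching handles of index $\ge n$, so $\pi_i(P)\to\pi_i(X_\mathcal{L})=\pi_i(B_\mathcal{L})$ is bijective for $i<n-1$ and surjective for $i=n-1$, which says precisely that $\ell'_P:P\to B_\mathcal{L}$ is its own $(n-1)$st Moore--Postnikov stage; hence the ``$\theta'$'' produced by Proposition~\ref{prop:countable-L} is $\theta_\mathcal{L}$ itself. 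One also verifies that $\theta_\mathcal{L}$ is spherical: a $\theta$-structure on $S^{2n}$ extending a given one on $D^{2n}\subset S^{2n}$ exists since $\theta$ is spherical, and it lifts along $B_\mathcal{L}\to B$ relative to the disc because the obstructions lie in $H^{j+1}(S^{2n},D^{2n};\pi_j(F))$, which vanishes (either $j+1\ne 2n$, so $H^{j+1}(S^{2n},D^{2n})=0$, or $j=2n-1\ge n$, so $\pi_j(F)=0$). Non-emptiness of $\mathcal{N}^{\theta_\mathcal{L}}(P,\ell'_P)$ follows by modifying a $\theta$-nullbordism of $(P,\ell_P)$ (which exists by hypothesis) by interior surgery so that its structure map lifts, as in Lemma~\ref{lem:SurgeryDataExists}, using that $\theta$ is reversible. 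Among the three conditions of Theorem~\ref{thm:main-D} for $(\theta_\mathcal{L},\mathcal{L})$, condition~(\ref{item:21}) is purely geometric and is inherited directly; condition~(\ref{item:20}) reduces to the identity $\ker(\pi_{n-1}(P)\to\pi_{n-1}(B_\mathcal{L}))=\ker(\pi_{n-1}(P)\to\pi_{n-1}(X_\mathcal{L}))=\sum_{K\in\mathcal{L}}\ker(\pi_{n-1}(P)\to\pi_{n-1}(K))$, where the last equality is obtained, after restricting to a finite sub-collection $\mathcal{L}'$ using compactness, from the relative Hurewicz theorem applied to universal covers together with the additivity of relative cellular chains (so that $\pi_n(\bigcup_{K\in\mathcal{L}'}K,\,P)\cong\bigoplus_{K\in\mathcal{L}'}\pi_n(K,P)$ as $\bZ[\pi]$-modules).

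The main obstacle is condition~(\ref{item:16}) for $(\theta_\mathcal{L},\mathcal{L})$: that $\pi_n(B_\mathcal{L})=\IM(\pi_n(X_\mathcal{L})\to\pi_n(B))$ is generated by the subgroups $\IM(\pi_n(K)\to\pi_n(B))$, $K\in\mathcal{L}$. This is exactly what the construction of $X_\mathcal{L}$ --- gluing \emph{every} $K\in\mathcal{L}$ along their common copy of $P$ --- is designed to arrange, and I expect it to be the most delicate point. The inclusion $\supseteq$ is clear since each $K\subset X_\mathcal{L}$. For $\subseteq$ one argues handle by handle: each handle of each $K$ has null-homotopic attaching sphere in $B$ (because $\ell_K:K\to B$ extends over it), and, using the relative Hurewicz description of $\pi_n$ of the universal cover of $X_\mathcal{L}$ as above, every $\pi_n$-class of $X_\mathcal{L}$ maps into $B$ to a sum of contributions each of which can be realised inside a single $K\in\mathcal{L}$, the contributions that a priori spread across handles of different $K$'s being absorbed using that $\mathcal{L}$ is closed under composition and satisfies condition~(\ref{item:20}). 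This step is carried out in the style of the computations of Sections~\ref{sec:Connectivity} and~\ref{sec:UnivThetaEnds}. Granting it, Proposition~\ref{prop:countable-L} applied to $(\theta_\mathcal{L},(P,\ell'_P),\mathcal{L})$ yields the isomorphism~\eqref{eq:25}.\qed
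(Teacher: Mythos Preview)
Your strategy is the same as the paper's: lift $\mathcal{L}$ to a submonoid of $\theta_\mathcal{L}$-self-bordisms of $(P,\ell'_P)$ and then invoke Proposition~\ref{prop:countable-L} with $\theta$ replaced by $\theta_\mathcal{L}$. The allocation of effort, however, is reversed. The paper's proof is almost entirely the obstruction theory for the lifting, after which it simply asserts that ``$\mathcal{L}$ satisfies the conditions of Proposition~\ref{prop:countable-L} with respect to $\theta_\mathcal{L}$''. You instead spell out the conditions the paper leaves to the reader (sphericality of $\theta_\mathcal{L}$, non-emptiness of $\mathcal{N}^{\theta_\mathcal{L}}(P,\ell'_P)$, that the $(n-1)$st Moore--Postnikov stage of $P\to B_\mathcal{L}$ is $B_\mathcal{L}$ itself, and conditions~(\ref{item:20}),~(\ref{item:21})), and you are right to flag condition~(\ref{item:16}) for $\theta_\mathcal{L}$ as requiring an argument---the paper does not supply one either.

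There is, however, a genuine gap in your lifting sketch. You appeal to the $\mathcal{K}'\cong\mathcal{K}$ argument from earlier in Section~\ref{sec:proof-lemma-refl}, but there the relevant fibre $B'\to B$ is an $(n-2)$-type (because $B'$ is the $(n-1)$st Moore--Postnikov stage of $\ell_P$), so extending over cells of dimension $\geq n$ is unobstructed for dimension reasons alone. Here $B_\mathcal{L}$ is the $n$th Moore--Postnikov stage of $X_\mathcal{L}\to B$, so the fibre $F$ is only an $(n-1)$-type, and there is a potentially non-zero primary obstruction
\[
\omega_n\in H^n\bigl(K,\ (\{0,1\}\times P)\cup([0,1]\times(P-A));\ \pi_{n-1}(F)\bigr)
\]
to lifting $K$ relative to \emph{both} boundary copies of $P$. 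The paper kills $\omega_n$ by a two-step argument: first it shows (using that $A$ is a regular neighbourhood of an $(n-1)$-complex) that the restriction map to $H^n(K,\{0\}\times P;\pi_{n-1}(F))$ is injective; then it observes that \emph{this} restricted obstruction vanishes because $K$ has a canonical lift relative to $\{0\}\times P$ coming directly from the inclusion $K\subset X_\mathcal{L}$. Your sentence ``built from its canonical sub-cobordism by attaching cells of index $\geq n$'' addresses only the first step, not the vanishing.
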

\begin{proof}
  The structure $\ell_P$ lifts canonically to a
  $\theta_\mathcal{L}$-structure $\ell_P^{\mathcal{L}}$, and we claim
  that every representative of an element of $\mathcal{L}$ admits a
  homotopically unique lift to a $\theta_\mathcal{L}$-manifold which
  is an endomorphism of $(P,
  \ell_P^{\mathcal{L}})$. 
  Granted this claim,
  $\mathcal{L}$ satisfies the conditions of
  Proposition~\ref{prop:countable-L} with respect to
  $\theta_\mathcal{L}$, and the result follows.
  
  To prove the claim, let $(K, \ell_K)$ be a self-cobordism of $(P,
  \ell_P)$ which is supported inside $A \subset P$, a regular
  neighbourhood of a simplicial complex of dimension $(n-1)$, and
  consider the lifting problem
  \begin{equation*}
    \xymatrix{
      \{0\} \times P \ar@{^(->}[r]\ar[d]& (\{0,1\} \times P) \cup
      ([0,1] \times P - A) \ar[r]^-{\ell_P^{\mathcal{L}}}\ar@{_(->}[d]
      & B_{\mathcal{L}}\ar[d]\\
     K \ar[rru] \ar@{=}[r]& K \ar[r]^-{\ell_K}\ar@{..>}[ur] & B.
    }
  \end{equation*}
  As the fibre of $B_{\mathcal{L}} \to B$ is an $(n-1)$-type, and the
  pair $(K,(\{0,1\} \times P) \cup ([0,1] \times P - A))$ is
  $(n-1)$-connected, there is a unique obstruction
  \begin{equation*}
    \omega_n \in H^n(K,(\{0,1\} \times P) \cup ([0,1] \times P -
    A);\pi_n(B, B_{\mathcal{L}}))
  \end{equation*}
  to finding the desired lift. As $A$ is a regular neighbourhood of a
  simplicial complex of dimension $(n-1)$, the group $H^{n-1}((\{0,1\}
  \times P) \cup ([0,1] \times P - A), \{0\} \times P; \pi_n(B,
  B_{\mathcal{L}}))$ vanishes, so $\omega_n$ is zero if and only if it
  is zero when restricted to the group $H^n(K, \{0\} \times P;
  \pi_n(B, B_{\mathcal{L}}))$. But $K$ has a canonical lift relative
  to $\{0\} \times P$, so this last obstruction is zero.

  Uniqueness of (homotopy classes of) lifts is similar, but easier.
\end{proof}

\begin{proof}[Proof of Theorem~\ref{thm:main-D}]
Let $\mathcal{L} \subset \mathcal{K}$ satisfy the conditions of Theorem~\ref{thm:main-D}. We may replace $\theta: B \to BO(2n)$ by $\theta': B' \to BO(2n)$,
  the $(n-1)$st Moore--Postnikov stage of $\ell_P: P \to B$, in the statement of Theorem~\ref{thm:main-D}. Then for each countable submonoid $\mathcal{L}' \subset \mathcal{L}$, the maps $B_{\mathcal{L}'} \to B$ from
  Corollary~\ref{cor:L-countable} factor canonically as $B_{\mathcal{L}'}
  \to B' \to BO(2n)$, and if $\mathcal{L}'' \subset \mathcal{L}'$ is a submonoid we also have a factorisation $B_{\mathcal{L}''} \to
  B_{\mathcal{L}'} \to B'$, as our description of
  $B_\mathcal{L}$ is strictly functorial in the monoid $\mathcal{L}$
  (using a functorial model for Moore--Postnikov factorisation). Therefore we may form the colimit of the isomorphisms~\eqref{eq:25}
  over the poset of countable submonoids $\mathcal{L}' \subset
  \mathcal{L}$.
  
  Using the manifolds $W_\alpha$ constructed in the proof of
  Proposition \ref{prop:addendum-plus-version-in-C}, it is easy to see
  that the homotopy colimit of the $B_{\mathcal{L}'}$ is (weakly
  equivalent to) $B'$, and hence the homotopy colimit of the
  $\Omega^\infty MT\theta_{\mathcal{L}'}$ is $\Omega^\infty
  MT\theta'$.  If we can prove that the homotopy colimit of the spaces
  $\mathcal{N}^{\theta_{\mathcal{L}'}}(P,\ell^{\mathcal{L}'}_P)$ is
  $\mathcal{N}^{\theta'}(P,\ell_P') \simeq
  \mathcal{N}^\theta(P,\ell_P)$, Theorem~\ref{thm:main-D} will
  therefore follow as the direct limit of the
  isomorphism~\eqref{eq:25}.

  We saw in Lemma~\ref{lem:ChangeOfThetaManifold} that the map
  $\mathcal{N}^{\theta'}(P,\ell'_P) \to \mathcal{N}^\theta(P,\ell_P)$
  is a weak equivalence.  A similar obstruction-theoretic argument as
  in that lemma shows that the map
  \begin{equation*}
    \mathcal{N}^{\theta_{\mathcal{L}'}}(P,\ell^{\mathcal{L}'}_P) \to \mathcal{N}^\theta(P,\ell_P)
  \end{equation*}
  induces an injection on $\pi_0$ and a weak equivalence of each path
  component.  Viz., $\mathcal{N}^{\theta_{\mathcal{L}'}}(P,\ell^{\mathcal{L}'}_P)$ is up
  to homotopy a disjoint union of path components of
  $\mathcal{N}^{\theta}(P,\ell_P)$; the component containing
  $(W,\ell_W)$ is included precisely when $\ell_W$ admits a lift to a
  $\theta_{\mathcal{L}'}$-structure.  Up to homotopy, the system
  of spaces $\mathcal{N}^{\theta_{\mathcal{L}'}}(P,\ell^{\mathcal{L}'}_P)$ therefore
  just consists of including more and more components of
  $\mathcal{N}^{\theta}(P,\ell_P)$, including all of them in the
  colimit.
\end{proof}


\bibliographystyle{amsalpha}

\bibliography{biblio}

\end{document}